\newtheorem{thm}{Theorem}[section]
\newtheorem*{thm*}{Theorem}
\newtheorem{prop}[thm]{Proposition}
\newtheorem*{prop*}{Proposition}
\newtheorem{lem}[thm]{Lemma}
\newtheorem*{lem*}{Lemma}
\newtheorem{exercise}{Exercise}
\newtheorem{theorem}{Theorem}[section]
\newtheorem{lemma}[theorem]{Lemma}
\newtheorem{proposition}[theorem]{Proposition}
\newtheorem{corollary}[theorem]{Corollary}
\newtheorem{definition}[theorem]{Definition}
\newtheorem{remark}[theorem]{Remark}
\newtheorem{FACT}[theorem]{Fact}
\newtheorem{AIM}[theorem]{Aim}
\newcommand\jwf[1]{\textcolor{red}{#1}}
\renewcommand\jwf[1]{}
\numberwithin{equation}{section}
\newcommand{\R}{\mathbf{R}}  
\newcommand{\av}{\text{av}}
\newcommand{\norm}[1]{\left\Vert#1\right\Vert}
\newcommand{\abs}[1]{\left\vert#1\right\vert}
\newcommand{\ssc}{\text{sc}}
\renewcommand{\epsilon}{\varepsilon}
\newcommand{\wh}{\widehat}
\newcommand{\wt}{\widetilde}
\newcommand{\wtilde}{\widetilde}
\newcommand{\cl}{\operatorname{cl}}
\providecommand{\ker}[1]{$\text{ker}\ {#1}$}
\newcommand{\N}{{\mathbb N}}
\newcommand{\rup}{\rotatebox[origin=c]{180}{{$\Upsilon$}}}
\title[Polyfold Constructions]{Polyfold and SFT Notes II:\\
Local-Local M-Polyfold Constructions}
\author{ J. W. Fish \and H. Hofer}
\email{hofer@ias.edu}
\email{joel.fish@umb.edu}
\date{\today}                                           
\begin{document}
\maketitle
\tableofcontents

\noindent These notes are the beginning chapters from the upcoming book \cite{FH-poly} 
\vspace{0.5cm}
\begin{center}
{\bf J. W. Fish and H. Hofer, \\
Polyfold Constructions: Tools, Techniques, and Functors}\\
\end{center}
\vspace{0.5cm}
\noindent which we make  available for the upcoming workshop\\

\begin{center}
{\bf Workshop on Symplectic Field Theory IX:}\\
\vspace{0.5cm}
{\bf POLYFOLDS FOR SFT}\\
\vspace{0.5cm}
Augsburg, Germany\\
 Monday, 27 August 2018 - Friday, 31 August 2018\\
\vspace{0.5cm}
A Precourse takes place on the preceding weekend:\\

Saturday, 25 August 2018 - Sunday, 26 August 2018
\end{center} 
\vspace{9cm}
\pagebreak


\section*{Introduction}

We aim to use polyfold theory to develop a a Fredholm Theory for SFT we
  need to construct  so-called M-polyfolds and strong bundles over them.
These arise as ambient spaces for a nonlinear Fredholm theory in the
  polyfold context.
The relevant  global M-polyfolds are build using a ``LEGO''-type system,
  \cite{Lego}, from smaller building blocks. The construction of  the
  ``LEGO"-pieces is described in the local-local theory, which is the
  subject of the current paper.

In a follow up paper  we shall show how  these pieces can be plugged
  together (fibered product constructions) to carry out more complex
  constructions.
This will rely on the abstract theory contained in the first part of this
  series, \\

\noindent {\bf J. W. Fish  and H. Hofer, Polyfold and SFT Notes I:\\
A Primer on Polyfolds and Construction tools.}\\

There we have provided an abstract theory which guarantees that
  local-local constructions, satisfying some properties, can be plugged
  together to produce local spaces with desired properties, i.e. a theory
  which guides the passage from the local-local to the local theory. 
This setup allows to recycle analysis in a controllable (i.e. checkable)
  fashion and to add novel construction which then
  automatically work with the other ingredients.  
A global theory is obtained from  interacting local pieces. This will be
  described in the next paper.

\jwf{This is a test}

\vspace{8cm}
\pagebreak

We first recall some concrete sc-smoothness results from the paper
  \cite{HWZ8.7}.
This paper contains a plethora of results useful in concrete
  constructions.
After this we shall study the situation of maps defined on a nodal Riemann
  surface as well as on the family of varying cylinders obtained by gluing.
This is done  in Section \ref{SECX1}. We refer to this as the {\bf nodal
  construction}.
In Section \ref{SECX2} a similar construction is carried out in the
  context where we also have a periodic orbit. More precisely we construct
  a M-polyfold structure describing how maps defined on finite cylinders
  decompose in the presence of periodic orbits when the cylinders
  get infinitely long and approaches  a nodal disk pair, while its image
  tries to approximate a cylinder over a periodic orbit.
This will be referred to as the {\bf  stretching near a periodic orbit
  construction}.
In Section \ref{SECX3} we summarize some classical constructions.

%
\section{Sc-Smoothness Results}
  \label{SEC_sc_smoothness_results}
The following discussion follows closely some of the topics in \cite{HWZ8.7}.

%
\subsection{Smoothness Versus Sc-Smoothness}
  \label{SEC_sc_smoothness_vs_smoothness}
It is important to know the relationship between the concepts of classical
  smoothness and sc-smoothness.
The proofs of the following results can be found  in  \cite{HWZ8.7}.
The first theorem gives implicitly an alternative definition of
  sc-smoothness.

\begin{theorem}[{Proposition 2.1, \cite{HWZ8.7}}] 
  \label{x1}\index{sc-differentiability}
Let $U$ be a relatively open subset of a partial  quadrant in  a sc-Banach
  space $E$ and let $F$ be another $\ssc$-Banach space.
Then a  $\ssc^0$-map $f\colon U\to F$ is of class $\ssc^1$  if and only if
  the following conditions hold true.
\begin{itemize}\label{sc-100}
  \item[(\em{1})]\
  For every $m\geq 1$,  the induced map $f\colon U_m\to  F_{m-1}$ is of
    class $C^1$.
  In particular, the derivative  
  $$
    df\colon U_m\to  \mathscr{L}(E_m,F_{m-1}),\quad x\mapsto df(x)
    $$
  is a  continuous map.
  \item[(\em{2})]\ 
  For every  $m\geq 1$ and every $x\in U_m$,  the bounded  linear operator
    $df(x)\colon E_m\to F_{m-1}$ has an extension to a bounded  linear
    operator $Df(x)\colon E_{m-1}\to F_{m-1}$. In addition, the map
  \begin{equation*}
    U_m\oplus  E_{m-1}\to  F_{m-1}, \quad  (x,h)\mapsto  Df(x)h
    \end{equation*}
  is continuous.
  \end{itemize}
\end{theorem}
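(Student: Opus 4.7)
The plan is to verify the two implications separately. The formal structure is clean, but the essential analytic point sits in the upgrade from joint continuity of $(x,h)\mapsto Df(x)h$ to operator-norm continuity of $df(\cdot)$; this is where the compact embedding $E_m \hookrightarrow E_{m-1}$ genuinely enters.

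For the forward direction I would assume $f$ is $\ssc^1$ and invoke the standard fact that $\ssc^1$-ness is preserved under shifting: for each $k\geq 0$, the shifted map $f^k\colon U^k\to F^k$ is again $\ssc^1$. Applying the level-$0$ differentiability of $f^k$ yields the Taylor estimate
\[
\|f(x+h) - f(x) - Df(x)h\|_k = o(\|h\|_{k+1})\qquad (h\to 0 \text{ in } E_{k+1})
\]
for every $x\in U_{k+1}$. Setting $m = k+1$ identifies $df(x) := Df(x)|_{E_m}$ as the classical Fr\'echet derivative of $f\colon U_m\to F_{m-1}$ at $x\in U_m$. Condition~(2) is then a direct reading of the $\ssc^0$-continuity of the tangent map $Tf\colon TU\to TF$, $(x,h)\mapsto (f(x), Df(x)h)$, at level $m-1$: this delivers both the bounded extension $Df(x)\colon E_{m-1}\to F_{m-1}$ and joint continuity of $(x,h)\mapsto Df(x)h$ on $U_m\oplus E_{m-1}\to F_{m-1}$.

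The missing piece for (1) is operator-norm continuity of $df\colon U_m\to \mathscr{L}(E_m, F_{m-1})$. I expect this to be the main obstacle: joint continuity in $(x,h)$ does not a priori upgrade to uniformity in $h$, and classical $C^1$-ness requires precisely such uniformity. The resolution uses the sc-structure: the inclusion $E_m\hookrightarrow E_{m-1}$ is compact, so the closed unit ball of $E_m$ is compact in $E_{m-1}$, and joint continuity of $(x,h)\mapsto Df(x)h$ on $U_m\times E_{m-1}$ is automatically uniform on that ball. This is precisely norm continuity of $x\mapsto df(x)$ into $\mathscr{L}(E_m, F_{m-1})$.

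For the reverse direction I would assume (1) and (2) and unwind the definition of $\ssc^1$. Specializing (1) to $m=1$ gives a classical Fr\'echet derivative $df(x)\colon E_1\to F_0$ for $x\in U_1$, with the Taylor estimate $\|f(x+h) - f(x) - df(x)h\|_0 = o(\|h\|_1)$ as $h\to 0$ in $E_1$. Condition~(2) at $m=1$ promotes $df(x)$ to a bounded operator $Df(x)\colon E_0\to F_0$ agreeing with $df(x)$ on $E_1$, so the Taylor estimate holds verbatim with $df(x)$ replaced by $Df(x)$; this is exactly the $\ssc^1$-differentiability requirement at $x\in U_1$. I would then check that the tangent map $Tf\colon TU\to TF$, $(x,h)\mapsto (f(x),Df(x)h)$, is $\ssc^0$: at level $m$ its first coordinate is continuous $U_{m+1}\to F_{m+1}$ by the assumed $\ssc^0$-ness of $f$, and its second coordinate is continuous $U_{m+1}\oplus E_m\to F_m$ by condition~(2) applied at index $m+1$.
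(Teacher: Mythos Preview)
The paper does not give its own proof of this statement; it cites \cite{HWZ8.7} (Proposition~2.1) and explicitly says at the start of the subsection that the proofs are to be found there. So there is nothing in the paper to compare your argument against line by line. That said, your proposal can still be evaluated on its own terms.

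Your reverse direction is clean and correct: condition~(1) at $m=1$ gives the pointwise Fr\'echet differentiability needed for the $\ssc^1$ definition, condition~(2) at $m=1$ supplies the bounded extension $Df(x)\colon E_0\to F_0$, and condition~(2) at index $m+1$ verifies that $Tf$ is $\ssc^0$ on level $m$. Nothing more is needed.

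The forward direction has a genuine circularity. You invoke the ``standard fact'' that $\ssc^1$ is preserved under index shift in order to obtain the Taylor estimate $\|f(x+h)-f(x)-Df(x)h\|_k=o(\|h\|_{k+1})$ for $k\geq 1$. But in this paper's logical order (and in \cite{HWZ8.7}), shifting for $\ssc^1$ is stated as a \emph{consequence} of the very theorem you are proving (see Proposition~\ref{sc_up} immediately following). Unwinding what it takes to prove shifting directly from the definition, one sees that the only nontrivial ingredient is exactly the higher-level Taylor estimate you are trying to extract---so the appeal to shifting begs the question.

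The fix keeps your correct compactness insight and adds one bootstrapping step. From the definition you get $f\colon U_1\to F_0$ Fr\'echet differentiable with $df(x)=Df(x)|_{E_1}$, and your compactness argument (unit ball of $E_1$ compact in $E_0$, plus joint continuity of $(x,h)\mapsto Df(x)h$ on $U_1\times E_0$) upgrades this to classical $C^1$. The fundamental theorem of calculus then gives
\[
f(x+h)-f(x)=\int_0^1 Df(x+th)h\,dt \qquad\text{in }F_0.
\]
For $x\in U_m$ and $h\in E_m$, both sides already lie in $F_{m-1}$ (the left by $\ssc^0$-ness of $f$, the right because the integrand is continuous into $F_{m-1}$ by condition~(2)), and since $F_{m-1}\hookrightarrow F_0$ is injective the identity holds in $F_{m-1}$. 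Subtracting $Df(x)h$ and estimating via your norm-continuity of $df\colon U_m\to\mathscr{L}(E_m,F_{m-1})$ yields the Taylor estimate at level $m-1$ without any appeal to shifting.
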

%

\begin{remark}
If $x\in U_\infty$ is  a smooth point in $U$  and $f\colon U\to F$ is a
  $\ssc^1$-map, then the linearization $$Df(x)\colon E\to F$$ is a
  sc-operator.
\end{remark}
%

A  consequence of Theorem \ref{x1} is the following result about lifting
  the indices\index{lifting the index}.

\begin{proposition} [{\bf Proposition 2.2, \cite{HWZ8.7}}] 
  \label{sc_up}\index{sc-differentiability under lifts} 
  \hfill\\
Let  $U$  and $V$ be  relatively open subsets of partial quadrants in
  sc-Banach spaces, and let $f\colon U\rightarrow V$ be $\ssc^k$.
Then $f\colon U^1\rightarrow V^1$ is also $\ssc^k$.
\end{proposition}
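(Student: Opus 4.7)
The plan is to use Theorem \ref{x1} as the working characterization of $\ssc^k$-smoothness and to transfer it from the original grading to the shifted grading, then handle higher $k$ by induction on the tangent map.

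First I would dispose of the trivial base case $k=0$. Since $f\colon U\to V$ is $\ssc^0$, the induced map $f\colon U_m\to V_m$ is continuous for every $m\geq 0$. In particular it is continuous for every $m\geq 1$, and under the identifications $(U^1)_{m-1}=U_m$ and $(V^1)_{m-1}=V_m$ this is exactly the statement that $f\colon U^1\to V^1$ is $\ssc^0$. So the passage to the $1$-shift is automatic at the level of continuity.

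Next I would handle $k=1$ by invoking Theorem \ref{x1} directly. To verify that $f\colon U^1\to V^1$ is $\ssc^1$, I need to check conditions (1) and (2) of Theorem \ref{x1} at every level $m\geq 1$ for the shifted sc-structure. Translating via $(U^1)_m = U_{m+1}$ and $(V^1)_{m-1}=V_m$, these conditions read as: $f\colon U_{m+1}\to V_m$ is $C^1$, the derivative $df\colon U_{m+1}\to\mathscr L(E_{m+1},F_m)$ is continuous, $df(x)$ extends to a bounded operator $Df(x)\colon E_m\to F_m$ for $x\in U_{m+1}$, and the map $(x,h)\mapsto Df(x)h$ is continuous from $U_{m+1}\oplus E_m$ to $F_m$. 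But this is precisely the content of conditions (1) and (2) of Theorem \ref{x1} applied to $f\colon U\to V$ at the level $m+1\geq 2$, which we have by hypothesis. Hence $f\colon U^1\to V^1$ is $\ssc^1$.

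For $k\geq 2$ I would induct. Recall that $f\colon U\to V$ is $\ssc^k$ iff its tangent map $Tf\colon TU\to TV$, defined by $Tf(x,h)=(f(x),Df(x)h)$ on $TU=U^1\oplus E$, is $\ssc^{k-1}$. Under the canonical identification $T(U^1)=(U^1)^1\oplus E^1=U^2\oplus E^1=(TU)^1$ (and similarly for $V$), the tangent map of the shifted function $f\colon U^1\to V^1$ is nothing but the restriction of $Tf$ to $(TU)^1$ regarded with target $(TV)^1$. By the inductive hypothesis applied to $Tf$, which is $\ssc^{k-1}$, the induced map $Tf\colon (TU)^1\to (TV)^1$ is $\ssc^{k-1}$. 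Hence $f\colon U^1\to V^1$ is $\ssc^k$, completing the induction.

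The only real obstacle is bookkeeping: one must be careful to verify that the shift operation commutes with the direct-sum construction in $TU$ and that the formula $Tf(x,h)=(f(x),Df(x)h)$ is compatible with the restriction to the shifted levels. Both are routine once the indices are lined up, and there is no genuine analytic content beyond the characterization provided by Theorem \ref{x1}.
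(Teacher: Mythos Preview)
Your argument is correct and is the standard proof: the $k=0$ case is an index shift, the $k=1$ case follows by reading off the conditions of Theorem~\ref{x1} at level $m+1$, and the inductive step uses the identification $T(U^1)=(TU)^1$ to pass to the tangent map. The paper itself does not supply a proof of this proposition---it is quoted verbatim from Proposition~2.2 of \cite{HWZ8.7} with the blanket remark that proofs can be found there---so there is nothing to compare against in the present text; your proof is exactly what one expects to find in that reference.
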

%
If a map is $\ssc^k$ we can deduce some classical smoothness properties.

\begin{proposition} [{Proposition 2.3, \cite{HWZ8.7}}] 
  \label{lower}\index{sc-smoothness versus $C^k$}
  \hfill
Let $U$ and $V$ be relatively op\-en subsets of partial quadrants in
  sc-Banach spaces.
If $f\colon U\to  V$ is $\ssc^k$, then for every $m\geq 0$,  the map
  $f\colon U_{m+k}\to  V_m$ is of class $C^k$.
Moreover,  $f\colon U_{m+l}\to  V_m$ is of class $C^l$ for every $0\leq
  l\leq k$.
\end{proposition}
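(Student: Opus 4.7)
The plan is to argue by induction on $k$. The base case $k=0$ is immediate: an $\ssc^0$ map is by definition sc-continuous, which is exactly continuity of $f\colon U_m\to V_m$ for every $m\ge 0$. The case $k=1$ follows directly from part (1) of Theorem \ref{x1}, which gives $f\colon U_n\to V_{n-1}$ is $C^1$ for every $n\ge 1$; setting $n=m+1$ yields the claim. The auxiliary statement that $f\colon U_{m+l}\to V_m$ is $C^l$ for $0\le l\le k$ will follow from the main claim once it is proved, simply by noting that an $\ssc^k$ map is also $\ssc^l$.

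For the inductive step, suppose the proposition holds at level $k$, and let $f$ be $\ssc^{k+1}$. The key observation is that $f$ being $\ssc^{k+1}$ means that the tangent map
\begin{equation*}
Tf\colon TU\to TV,\qquad (x,h)\mapsto (f(x), Df(x)h)
\end{equation*}
is $\ssc^k$. Since $(TU)_j = U_{j+1}\oplus E_j$ and $(TV)_j = V_{j+1}\oplus F_j$, applying the inductive hypothesis to $Tf$ at index $m$ shows that the map $(x,h)\mapsto (f(x), Df(x)h)$ is $C^k$ from $U_{m+k+1}\oplus E_{m+k}$ to $V_{m+1}\oplus F_m$.

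From this joint $C^k$ regularity one extracts that $x\mapsto Df(x)$ is $C^k$ as a map $U_{m+k+1}\to \mathscr{L}(E_{m+k}, F_m)$; this is the standard equivalence between a jointly $C^k$ map that is linear in one argument and a $C^k$ map into the corresponding space of bounded operators. Composing with the bounded linear restriction $\mathscr{L}(E_{m+k}, F_m)\to \mathscr{L}(E_{m+k+1}, F_m)$ induced by the sc-continuous inclusion $E_{m+k+1}\hookrightarrow E_{m+k}$, one obtains that $x\mapsto df(x)\in \mathscr{L}(E_{m+k+1},F_m)$ is of class $C^k$. On the other hand, Theorem \ref{x1}(1) (at level $m+k+1$) together with the continuous inclusion $V_{m+k}\hookrightarrow V_m$ already exhibits $f\colon U_{m+k+1}\to V_m$ as $C^1$ with derivative $df$. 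A $C^1$ map whose derivative is $C^k$ is $C^{k+1}$, which completes the inductive step.

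The main technical point is the linear-in-$h$ reduction $\,(x,h)\mapsto Df(x)h$ jointly $C^k\ \Longrightarrow\ x\mapsto Df(x)$ is $C^k$ into $\mathscr{L}$. This is not deep but must be handled carefully, because the $x$ and $h$ variables play asymmetric roles (the Banach topology on $\mathscr{L}$ is not automatic) and because the index shifts in the tangent-bundle filtration must be tracked precisely. Aside from this, the argument is a bookkeeping exercise, and the base case supplies essentially all of the analytic content via Theorem \ref{x1}(1).
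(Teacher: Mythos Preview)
The paper does not give its own proof of this proposition; it is simply quoted from \cite{HWZ8.7}. So there is nothing in-paper to compare against, and I assess your argument on its own merits.

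Your inductive scheme and the index bookkeeping are correct, but the step you yourself flag as ``the main technical point'' contains a genuine gap. The implication you call the ``standard equivalence'',
\[
(x,h)\mapsto Df(x)h \ \text{jointly } C^k \text{ and linear in } h \ \Longrightarrow\ x\mapsto Df(x)\in\mathscr{L}(E_{m+k},F_m)\ \text{is } C^k,
\]
is \emph{false} in infinite dimensions: it yields only $C^{k-1}$. Indeed, $A(x):=Df(x)$ is recovered as the partial derivative $D_h$ of the joint map, and passing to $D_h$ costs one order of differentiability. For a concrete counterexample at the level $k=1$, take $A(t)=\mathrm{diag}\!\bigl(n^{-1}\sin(nt)\bigr)$ acting on $\ell^2$: the evaluation $(t,h)\mapsto A(t)h$ is jointly $C^1$ (one checks that $D\Psi(t,h)(\tau,\eta)=\tau A'(t)h+A(t)\eta$ exists and depends norm-continuously on $(t,h)$), yet $A$ is not norm-differentiable, since for $n\approx\pi/\tau$ one has $\lVert A(\tau)-A(0)-\tau A'(0)\rVert\geq |\tau|(1-o(1))$. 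With this loss of one derivative, your inductive step only establishes that $f\colon U_{m+k+1}\to V_m$ is $C^k$, which was already known from the inductive hypothesis applied to $f$ itself.

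The remedy uses more of the sc-structure than the single application of the inductive hypothesis you invoke. The point is that $Tf$ is $\ssc^k$, so the hypothesis is available at \emph{every} level $m$ simultaneously, and in addition one has condition (2) of Theorem~\ref{x1} for $Tf$ (the extension of $D(Tf)(x,h)$ to the next lower level, with joint continuity). It is this extra uniformity in $h$ across adjacent levels---not available from a bare ``jointly $C^k$, linear in $h$'' argument---that closes the gap. See the proof in \cite{HWZ8.7} (or \cite{HWZ2017}) for the details.
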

%

\begin{remark}
We also note that a map $f:U\rightarrow F$ which is level-wise classically
  smooth is sc$^\infty$.
\end{remark}
%

The next result is very useful in proving that a given map between
  sc-Banach spaces is sc-smooth provided it has certain classical smoothness
  properties.

\begin{theorem}[{Proposition 2.4, \cite{HWZ8.7}}] 
  \label{ABC-x}
  \hfill
Let $U$ be a relatively open subset of a partial  quadrant in  a sc-Banach
  space $E$ and let $F$ be another $\ssc$-Banach space.
Assume that for every $m\geq 0$ and $0\leq l\leq k$,  the map $f\colon
  U\rightarrow V$  induces  a map
  $$
    f\colon U_{m+l}\rightarrow  F_m,
    $$
  which is of  class $C^{l+1}$. Then $f$ is $\ssc^{k+1}.$
\end{theorem}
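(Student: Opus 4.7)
The plan is to induct on $k$, with Theorem \ref{x1} serving as the main workhorse. For the base case $k=0$, the hypothesis reduces to $f\colon U_m\to F_m$ being $C^1$ for every $m\geq 0$. Composing with the continuous inclusion $F_m\hookrightarrow F_{m-1}$ gives $f\colon U_m\to F_{m-1}$ of class $C^1$ for $m\geq 1$, which supplies condition~(1) of Theorem~\ref{x1}. For condition~(2), I take $Df(x)\colon E_{m-1}\to F_{m-1}$ to be the classical Fr\'echet derivative of the $C^1$ map $f\colon U_{m-1}\to F_{m-1}$ at the point $x\in U_m\subset U_{m-1}$; uniqueness of the Fr\'echet derivative forces $Df(x)|_{E_m}=df(x)$, and joint continuity of $(x,h)\mapsto Df(x)h$ on $U_m\oplus E_{m-1}$ is inherited by restriction from $U_{m-1}\oplus E_{m-1}$. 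Hence $f$ is $\ssc^1$, handling the base case.

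For the inductive step, assume the statement for $k-1$ and that $f$ satisfies the hypothesis for $k$. Applying the base case to the $l=0$ portion of the hypothesis shows $f$ is $\ssc^1$, so the tangent map $Tf(x,h)=(f(x),Df(x)h)$ is defined as an $\ssc^0$ map $TU\to TF$. Since $f$ is $\ssc^{k+1}$ exactly when $Tf$ is $\ssc^k$, it suffices by the inductive hypothesis to verify that $Tf$ itself satisfies the hypothesis of the theorem for $k-1$: for every $m\geq 0$ and every $0\leq l\leq k-1$, the map
$$
Tf\colon U_{m+l+1}\oplus E_{m+l}\;\longrightarrow\;F_{m+1}\oplus F_m
$$
is of class $C^{l+1}$. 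The first coordinate $f\colon U_{m+l+1}\to F_{m+1}$ is $C^{l+1}$ by the hypothesis on $f$ with $(m',l')=(m+1,l)$.

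The main obstacle is the second coordinate $\psi(x,h):=Df(x)h$, mapping $U_{m+l+1}\oplus E_{m+l}\to F_m$. Since $\psi$ is linear in $h$, this reduces to showing $x\mapsto Df(x)$, viewed as $U_{m+l+1}\to\mathscr{L}(E_{m+l},F_m)$, is of class $C^{l+1}$. Direct estimates based only on $f\colon U_{m+l}\to F_m$ being $C^{l+1}$ yield only $C^l$; the extra order of regularity must be extracted by combining this with the stronger hypothesis that $f\colon U_{m+l+1}\to F_m$ is $C^{l+2}$ (the case $l'=l+1$, legal since $l\leq k-1$). Concretely, I would identify the candidate partial derivatives of $\psi$ with symmetrized higher Fr\'echet derivatives of $f$ evaluated on mixed tuples $(y_1,\ldots,y_j,h)$, where $y_i\in E_{m+l+1}$ and $h\in E_{m+l}$, and verify that the continuous extensions given by the lower-level hypothesis agree with the slot-symmetric values furnished by the higher-level hypothesis. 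Standard Taylor-remainder estimates then promote these formal partial derivatives to genuine joint $C^{l+1}$-regularity. This verification is the technical heart of the argument.
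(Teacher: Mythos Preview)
The paper does not give its own proof; the result is quoted from \cite{HWZ8.7}, Proposition~2.4. Your inductive strategy via Theorem~\ref{x1}---handling the base case directly, then verifying that $Tf$ inherits the hypothesis with $k$ replaced by $k-1$---is the natural one and is essentially the approach taken in \cite{HWZ8.7}. Your base case is complete and correct.

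For the inductive step you have correctly isolated the only nontrivial point. Writing out $D^{l+1}\psi(x,h)$, every term except one involves $D^jf(x)$ with $j\leq l+1$ acting on arguments from $E_{m+l}$, which is covered by the $C^{l+1}$-hypothesis on $U_{m+l}$. The remaining term is $D^{l+2}f(x)(y_1,\dots,y_{l+1},h)$ with $y_i\in E_{m+l+1}$ and $h\in E_{m+l}$; the $C^{l+2}$-hypothesis only supplies $D^{l+2}f(x)$ as a multilinear form on $E_{m+l+1}^{\,l+2}$, so one must show that a single slot extends continuously to $E_{m+l}$, with the resulting map still continuous in $x\in U_{m+l+1}$. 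Your proposed mechanism---combine symmetry of $D^{l+2}f$ with the fact that $D^{l+1}f$ already acts on $E_{m+l}^{\,l+1}$ and is continuous on $U_{m+l}$---is the right lever, but ``standard Taylor-remainder estimates'' understates what is required: one has to control the difference quotients $t^{-1}[D^{l+1}f(x+ty_0)-D^{l+1}f(x)](y_1,\dots,y_l,h)$ uniformly for $h$ in $E_{m+l}$-balls, and neither hypothesis alone gives this. Spelling out this extension carefully is precisely where \cite{HWZ8.7} spends its effort, and your sketch would benefit from making the interpolation between the two hypotheses explicit.
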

%
In the case that the target space $F= \R^N$, Theorem \ref{ABC-x} takes the
  following form.

\begin{corollary} [{Corollary 2.5, \cite{HWZ8.7}}] \label{ABC-y}
  \hfill\\
Let $U$ be a  relatively op\-en subset of a partial quadrant in a
  sc-Banach space and  $f\colon U\to  \R^N$.
If for some $k$ and all $0\leq l \leq k$ the map $f\colon 
  U_l \to  \R^N$ belongs to  $C^{l +1}$, then $f$ is $\ssc^{k+1}$.
\end{corollary}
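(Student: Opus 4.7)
The plan is to deduce this from Theorem \ref{ABC-x} by verifying its hypothesis for the special case $F=\R^N$, exploiting that every level of $\R^N$ equals $\R^N$ itself. The hypothesis of Theorem \ref{ABC-x} asks, for every $m\ge 0$ and $0\le l\le k$, that $f\colon U_{m+l}\to F_m$ be of class $C^{l+1}$. Since $F_m=\R^N$ for all $m$, we are reduced to showing that $f\colon U_{m+l}\to\R^N$ is of class $C^{l+1}$ for every such $m$ and $l$, whereas the given data supply this only in the case $m=0$.

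To bridge the gap, I would use that the sc-structure on $E$ gives a continuous linear (hence classically $C^\infty$) inclusion $\iota_{m,l}\colon E_{m+l}\hookrightarrow E_l$ whose restriction maps $U_{m+l}$ into $U_l$. For an arbitrary pair $(m,l)$ with $0\le l\le k$ I split into two cases. If $m+l\le k$, then the hypothesis directly gives that $f\colon U_{m+l}\to\R^N$ is of class $C^{(m+l)+1}$, which is in particular $C^{l+1}$. If instead $m+l>k$, I factor $f|_{U_{m+l}}$ as the composition of the inclusion $U_{m+l}\hookrightarrow U_k$ with $f\colon U_k\to\R^N$; by the chain rule, the composite is $C^{k+1}$, and since $k\ge l$ we have $k+1\ge l+1$, so it is at least $C^{l+1}$.

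Having verified the hypothesis of Theorem \ref{ABC-x} for all admissible $(m,l)$, the theorem immediately yields that $f$ is $\ssc^{k+1}$, which is the desired conclusion.

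There is no serious obstacle here; the only thing to be a little careful about is that ``$C^{l+1}$'' is meant with respect to the level-$(m+l)$ topology on $U_{m+l}$, so that composing with the continuous linear inclusion into $U_l$ (or $U_k$) legitimately preserves (or strengthens) the classical differentiability class.
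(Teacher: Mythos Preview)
Your argument is correct and is exactly the intended derivation: the corollary is just Theorem~\ref{ABC-x} specialized to the constant sc-structure $F=\R^N$, and the paper itself does not supply a proof but merely cites \cite{HWZ8.7}. One small simplification: the case split is unnecessary, since for every $m\ge 0$ and $0\le l\le k$ you can always factor $f|_{U_{m+l}}$ through the continuous linear inclusion $U_{m+l}\hookrightarrow U_l$ and use the hypothesis directly on $U_l$.
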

%

%
\subsection{The Fundamental Lemma} \label{FUND0}

The following results are taken from \cite{HWZ8.7}.
We begin by introducing several sc-Hilbert spaces.  
We denote by $L$ the sc-Hilbert space $L^2({\mathbb R}\times S^1, {\mathbb
  R}^N)$ equipped  the sc-structure $(L_m)_{m\in {\mathbb N}}$ defined by
  $L_m=H^{m,\delta_m}({\mathbb R} \times S^1,{\mathbb R}^N)$, where
  $(\delta_m)$ is a strictly increasing sequence starting at $\delta_0=0$.
That means that $L_m$ consists of all maps having partial derivatives up
  to order $m$  weighted by $e^{\delta_m |s|}$ belonging to $L^2$.

We  also introduce the sc-Hilbert spaces $F=H^{2,\delta_0}({\mathbb
  R}\times S^1,{\mathbb R}^N)$ with sc-structure, where level $m$
  corresponds to regularity $(m+2,\delta_m)$, with in this case
  $\delta_0>0$ and $(\delta_m)$ being a strictly increasing sequence.
Finally we introduce $E=H^{3,\delta_0}({\mathbb R}\times S^1,{\mathbb
  R}^N)$ with level $m$ corresponding to regularity $(m+3,\delta_m)$ and
  $(\delta_m)$  as in the $F$-case.
We shall  use the  so-called exponential gluing profile 
  $$
    \varphi (r)=e^{\frac{1}{r}}-e,  \quad \text{$r\in (0,1]$. }
    $$
The map $\varphi$ defines a diffeomorphism $(0,1]\rightarrow [0,\infty)$.
With the  nonzero complex number $a$ (gluing parameter) with $0<|a|<1$  we
  associate the gluing angle $\vartheta\in S^1$ and the gluing length $R$
  via  the formulae $$a=\abs{a}\cdot e^{2\pi i \vartheta}\quad
  \text{and}\quad R=\varphi (\abs{a}).$$
Note that $R\to \infty$ as $\abs{a}\to 0$.

The following two lemmata have many applications. 
In particular, they will be used to prove that  the transition maps
  between the local M-polyfolds which we shall construct later are
  sc-smooth.
The underlying result is Proposition 2.8 from \cite{HWZ8.7}. We have split
  the result into two parts.
The interested reader may consult the above-mentioned reference for
  proofs.  
We denote by  $B_{\frac{1}{2}}$  the manifold of  complex numbers $a$ with
  $|a|<1/2$.

\begin{lemma} [Fundamental Lemma I]\label{FUNDAMENTAL -I}
  \hfill\\
The following two maps are sc-smooth, where $f:{\mathbb R}\rightarrow
  {\mathbb R}$ is a smooth map which is constant outside of a compact set
  satisfying $f(+\infty)=0$ with possibly $f(-\infty)\neq 0$.  
We shall consider maps 
  $$
    \Gamma_i:B_\frac{1}{2}\oplus G\to  G,\ \ i=1,2
    $$
  where $G=L$, $G=F$ or $G=E$ and which we shall introduce below. 
We abbreviate $R=\varphi(|a|)$ for $a\neq 0$ which is a function of
  nonzero $a\in B_{\frac{1}{2}}$.
\begin{figure}[h]
  \begin{center}
  \includegraphics[width=3.5cm]{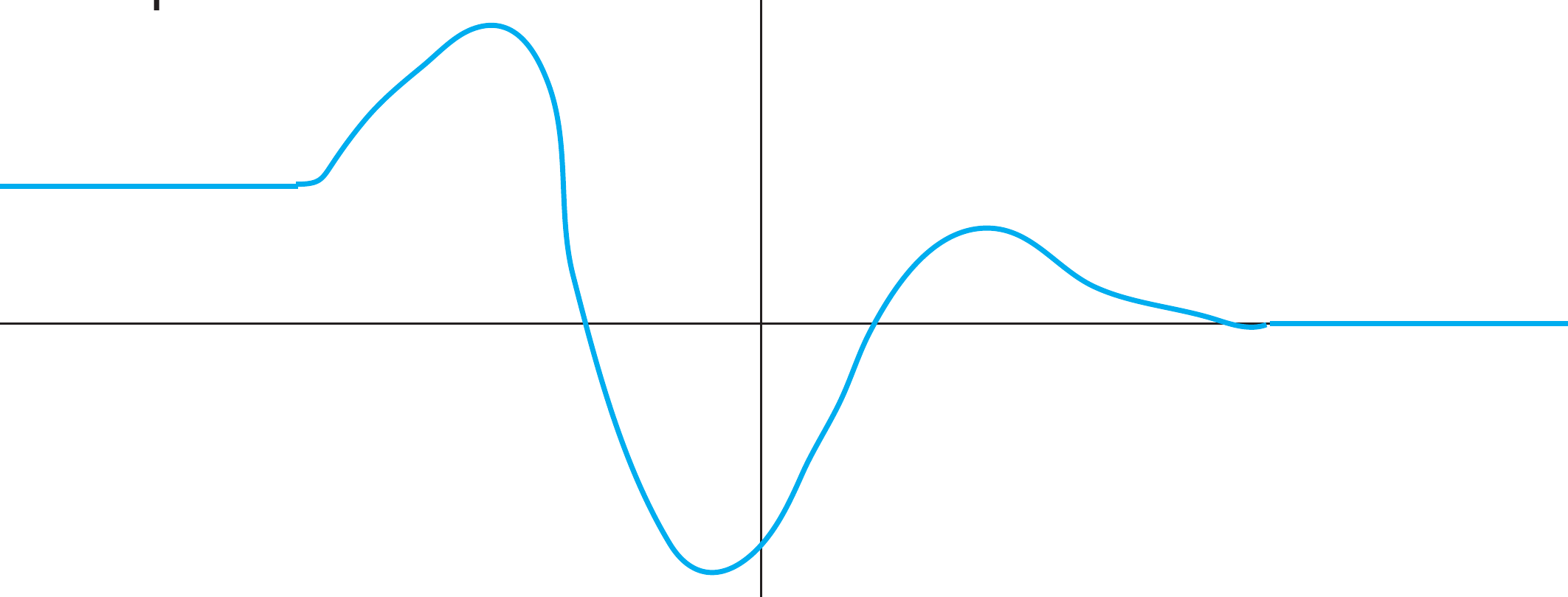}
  \end{center}
  \caption{The map $f$.}
  \end{figure}

\begin{itemize}
  \item[(1)]
  Define
  $$
    \Gamma_1(a,h)(s, t)=f\left(s -\frac{R}{2}\right)h (s, t)  
    $$
  if $a\neq 0$ and $\Gamma_1(0,h)=f(-\infty)h$ if $a=0$. \\
  \item[(2)]  Define 
  $$
    \Gamma_2(a,h)(s', t')=f\left(-s' -\frac{R}{2}\right)h(s',t')
    $$
  if $a\neq 0$ and $\Gamma_2(0,h)=f(-\infty)h$ if $a=0$.
  \end{itemize}

\end{lemma}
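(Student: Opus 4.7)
The plan is to apply Theorem \ref{ABC-x}. Fixing $m\geq 0$ and $l\geq 0$, we must show that $\Gamma_i:B_{\frac{1}{2}}\oplus G_{m+l}\to G_m$ is of class $C^{l+1}$. Since $\Gamma_i$ is linear in $h$, this reduces to showing that the multiplication operator
$$
M(a)\in \mathcal{L}(G_{m+l},G_m),\qquad M(a)h = f(s_a - R/2)\,h
$$
(with $s_a=s$ for $\Gamma_1$, $s_a=-s'$ for $\Gamma_2$, $R=\varphi(|a|)$ for $a\neq 0$, and $M(0)h=f(-\infty)h$) depends on $a\in B_{\frac{1}{2}}$ in a $C^{l+1}$ (in fact $C^\infty$) manner. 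Joint $C^{l+1}$-regularity of $(a,h)\mapsto M(a)h$ then follows from linearity in $h$ together with uniform boundedness of $M(a)$.

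Away from $a=0$ there is essentially nothing to do: $a\mapsto f(s_a - R(a)/2)$ is classically smooth into $C_b^\infty(\R\times S^1)$ via the smooth map $\varphi$ on $(0,\tfrac12]$, and multiplication by any element of $C_b^m$ with bounded derivatives is uniformly bounded on every weighted space $H^{m,\delta_m}$. The whole issue is smoothness across $a=0$. Set $\tau:=\delta_{m+l}-\delta_m>0$; this is the weight-gap provided by the sc-structure. For $a\neq 0$, the function $f(s_a - R/2)-f(-\infty)$ together with all of its $a$-derivatives is supported in the strip $|s_a - R/2|\leq C_f$, where $C_f$ depends only on $\supp(f-f(-\infty))$. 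On that strip one has $|s|\geq R/2 - C_f$, so
$$
e^{(\delta_m - \delta_{m+l})|s|}\;\leq\; e^{\tau C_f}\,e^{-\tau R/2}.
$$
Combining the support restriction, this exponential gain, the chain rule, and the $C^{m+l}$-bound on $f$ yields an estimate of the form
$$
\norm{\partial_a^\alpha\bigl(M(a) - M(0)\bigr)h}_{G_m}\;\leq\; C_\alpha\,P_\alpha(R)\,e^{-\tau R/2}\,\norm{h}_{G_{m+l}}
$$
for every multi-index $\alpha$ in the real coordinates on $B_{\frac{1}{2}}$, where $P_\alpha$ is a polynomial arising from the chain rule applied to the derivatives of $R=\varphi(|a|)$.

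The exponential gluing profile $\varphi(r)=e^{1/r}-e$ closes the argument: since $R\to\infty$ faster than any negative power of $|a|$ as $|a|\to 0$, each factor $P_\alpha(R)\,e^{-\tau R/2}$ vanishes to infinite order in $|a|$ at the origin. Hence every classical partial derivative of $M(a)$ extends continuously to $a=0$ with value zero, so $a\mapsto M(a)\in\mathcal{L}(G_{m+l},G_m)$ is $C^\infty$, and in particular $C^{l+1}$. Since $m$ and $l$ are arbitrary, Theorem \ref{ABC-x} delivers $\ssc^{k+1}$-regularity of $\Gamma_i$ for every $k$, i.e.\ sc-smoothness; the case of $\Gamma_2$ is identical after the substitution $s\mapsto -s'$. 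The main technical obstacle is the last displayed estimate: classical derivatives of $R(a)$ blow up at the origin, but the weight-gap factor $e^{-\tau R/2}=e^{-\tau(e^{1/|a|}-e)/2}$ is engineered precisely so as to dominate every polynomial expression in those derivatives, which is exactly the role of the exponential gluing profile.
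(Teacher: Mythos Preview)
The paper does not prove this lemma directly; it states that the result is Proposition~2.8 in \cite{HWZ8.7} and refers the reader there. Your argument captures the essential mechanism---the strictly increasing weight sequence produces an exponential gain $e^{-\tau R/2}$ on the support of $f-f(-\infty)$, and the exponential gluing profile guarantees that this gain dominates every polynomial (with logarithmic factors) in the $a$-derivatives of $R$---but there is a genuine gap at $l=0$. Theorem~\ref{ABC-x} requires, for each $m$, that $\Gamma_i:B_{\frac12}\oplus G_m\to G_m$ be of class $C^{1}$. For $l=0$ your weight gap is $\tau=\delta_{m}-\delta_{m}=0$, so the displayed estimate yields no decay at all. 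In fact the required $C^1$-property is false in general: for a suitably chosen $h_0\in G_m$ the tail $\norm{(f(\cdot-R/2)-f(-\infty))h_0}_{G_m}$ can be arranged to decay arbitrarily slowly as $R\to\infty$, while $|a|\sim 1/\ln(R+e)$, so the $a$-difference quotient at $a=0$ need not converge. Hence Theorem~\ref{ABC-x} is simply not applicable here.

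The remedy is to verify $\ssc^1$ directly via Theorem~\ref{x1}, which only asks for $C^1$-regularity of $\Gamma_i:B_{\frac12}\oplus G_m\to G_{m-1}$ (for $m\ge1$) together with the extension and continuity condition on the derivative. The built-in level drop supplies a strictly positive gap $\tau=\delta_m-\delta_{m-1}>0$, and then your estimate goes through verbatim to give both the $C^1$-property and condition~(2) of Theorem~\ref{x1}. Since the tangent map $T\Gamma_i$ again has the form ``multiplication by a translate of a fixed compactly supported function'' (now involving $f$ and $f'$, both supported in the same moving strip $|s-R/2|\le C_f$), the same argument applies to $T\Gamma_i$ and one iterates to obtain $\ssc^k$ for all $k$.
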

%

\begin{lemma} [Fundamental Lemma II]\label{FUNDAMENTAL -II}
  \hfill
The following two maps are sc-smooth, where $g:{\mathbb R}\rightarrow
  {\mathbb R}$ is a smooth compactly supported map
  $$
    \Gamma_i:B_\frac{1}{2}\oplus G\to  G,\ \ i=3,4
    $$
  where $G=L$, $G=F$ or $G=E$. 
We shall use the function $R$ as before.
\begin{figure}[h]
  \begin{center}
  \includegraphics[width=3.5cm]{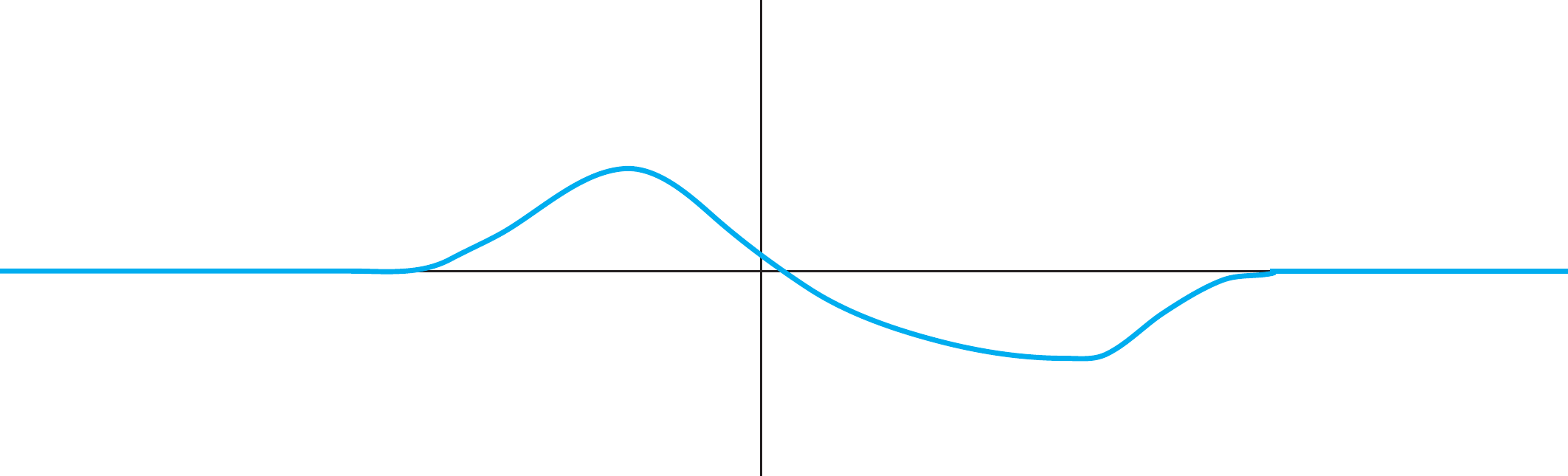}
  \end{center}
  \caption{The map $g$.}
  \end{figure}
\begin{itemize}
  \item[(1)] 
  Define  
  $$
    \Gamma_3(a,h)(s, t)=g\left(s -\frac{R}{2}\right)h(s-R,t-\vartheta)
    $$
  if  $a\neq 0$ and $\Gamma_2(0,h)=0$ if $a=0$.
  \item[(2)]  
  Define  
  $$
    \Gamma_4(a,h)(s', t')=g\left(-s' -\frac{R}{2}\right)h(s'+R,t'+\vartheta)
    $$
  if $a\neq 0$ and $\Gamma_4(0,h)=0$  if  $a=0$.
  \end{itemize}

\end{lemma}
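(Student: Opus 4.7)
The plan is to apply Theorem \ref{ABC-x}: it suffices to show that for every $m\geq 0$ and every $0\leq l\leq k$, the induced map $\Gamma_3\colon B_{\frac{1}{2}}\oplus G_{m+l}\to G_m$ is of class $C^{l+1}$, from which $\ssc^\infty$ follows by letting $k$ be arbitrary. I would split the argument into two regimes: $|a|$ bounded away from $0$, and the limit $a\to 0$. On the former, $R=\varphi(|a|)$ and $\vartheta$ are smooth functions of $a$, and $\Gamma_3$ factors as the composition of this smooth $(R,\vartheta)$-dependence with the translation $h\mapsto h(\,\cdot-R,\,\cdot-\vartheta\,)$ and multiplication by the smooth, compactly supported cutoff $g(\,\cdot-R/2\,)$. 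Each differentiation of the translation in $(R,\vartheta)$ costs one derivative of $h$, which is absorbed by passing from level $m+l$ to level $m$ since $\delta_{m+1}>\delta_m$ gives the embedding $H^{j+1,\delta_{m+1}}\hookrightarrow H^{j+1,\delta_m}$; meanwhile, the $R$-derivatives of $g(\,\cdot-R/2\,)$ are uniformly bounded. These standard observations yield the required $C^{l+1}$-smoothness locally in $(a,h)$ for $a$ bounded away from $0$.

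The essential point is continuity and smoothness at $a=0$. Because $\supp g$ is compact, the cutoff $g(s-R/2)$ localizes the output to $s\approx R/2$; substituting $s=s'+R$ in the $G_m$-norm integral gives
$$
  \|\Gamma_3(a,h)\|_{G_m}^2\lesssim \int\int |g(s'+R/2)|^2\,|h(s',t-\vartheta)|^2\,e^{2\delta_m(s'+R)}\,ds'\,dt.
$$
On the support of $g(\,\cdot+R/2\,)$ one has $s'\leq 0$ and $|s'|\sim R/2$, so factoring out the weight $e^{\delta_{m+l}|s'|}$ from $h$ yields an exponential factor $e^{(\delta_m-\delta_{m+l})R/2}$ and the bound
$$
  \|\Gamma_3(a,h)\|_{G_m}\leq C\,e^{(\delta_m-\delta_{m+l})R/2}\,\|h\|_{G_{m+l}}.
$$
For any $l\geq 1$ this is exponentially decaying in $R$ as $|a|\to 0$, so $\Gamma_3(a,h)\to 0=\Gamma_3(0,h)$. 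Mixed derivatives of order $l+1$ in $(a,h)$ generate, via chain rule applied to $\partial_a R=-|a|^{-2}e^{1/|a|}$ and $\partial_a\vartheta$, algebraic-in-$R$ prefactors together with at most $l+1$ additional $s$- and $t$-derivatives on $h$; those derivatives fit within the regularity of $h\in G_{m+l}$ (any missing derivative is accommodated by a slight increase of $l$), while the algebraic-in-$R$ prefactors are crushed by the exponential $e^{(\delta_m-\delta_{m+l})R/2}$.

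The principal obstacle is the combinatorial bookkeeping of these mixed derivatives: one must check term by term that each contribution of a Leibniz/chain-rule expansion maps $G_{m+l}\to G_m$ with a norm bound of the shape $P(R)\cdot e^{(\delta_m-\delta_{m+l})R/2}\cdot\|h\|_{G_{m+l}}$ for some polynomial $P$, and that each term together with its $a$-derivatives vanishes as $a\to 0$. Since $l\geq 1$ can be enforced at each stage, the exponential factor always dominates the polynomial, and the hypotheses of Theorem \ref{ABC-x} are met. The statement for $\Gamma_4$ is entirely symmetric, obtained by replacing $s'$ with $-s'$ and exchanging the roles of the two ends of the cylinder.
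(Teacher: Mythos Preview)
Your central estimate is exactly right: the compact support of $g$ localizes $s\approx R/2$ and $s'=s-R\approx -R/2$, and since $|s|+|s'|=R$ on that strip the output and input weights combine to give
\[
\|\Gamma_3(a,h)\|_{G_m}\;\lesssim\; e^{(\delta_m-\delta_{m+l})R/2}\,\|h\|_{G_{m+l}},
\]
which crushes any polynomial in $R$ coming from $\partial_a R$ whenever $l\geq 1$. That is the analytic heart of the matter.

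The gap is in the packaging. Theorem~\ref{ABC-x} explicitly requires the case $l=0$: you must show $\Gamma_3\colon B_{1/2}\oplus G_m\to G_m$ is of class $C^1$ for every $m\geq 0$. This fails. With $l=0$ there is no weight gap, so your estimate only gives $\|\Gamma_3(a,h)\|_{G_m}\lesssim \|h\|_{G_m\text{-mass of }h\text{ on the strip}}$, and for a generic $h\in G_m$ this tail mass can decay arbitrarily slowly in $R$ while $1/|a|\sim\ln R$ grows; hence $\|\Gamma_3(a,h)\|_{G_m}/|a|$ need not tend to $0$, and differentiability at $a=0$ fails. More structurally, each $a$-derivative of the shift $h(\cdot-R,\cdot-\vartheta)$ costs one spatial derivative of $h$, so $\Gamma_3\colon G_{m+l}\to G_m$ is at best $C^{l}$, not $C^{l+1}$. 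Your sentence ``$l\geq 1$ can be enforced at each stage'' and ``any missing derivative is accommodated by a slight increase of $l$'' is precisely where the argument breaks: Theorem~\ref{ABC-x} fixes $l$ and demands $C^{l+1}$, and you cannot raise $l$ mid-verification.

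The paper itself does not give a proof here but defers to \cite{HWZ8.7}, Proposition~2.8. The argument there proceeds via the characterization in Theorem~\ref{x1} rather than Theorem~\ref{ABC-x}: one checks that $\Gamma_3\colon G_m\to G_{m-1}$ is $C^1$ for $m\geq 1$ (your estimate with $l=1$ handles this, including the blow-up of $\partial_a R$) and then verifies the extension condition~(2) of Theorem~\ref{x1}, iterating to climb the sc-ladder. Your estimates are the right ingredients; they just need to be fed into Theorem~\ref{x1} instead of Theorem~\ref{ABC-x}.
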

%

Using these two results we shall be able to study several maps which will
  be important later on.
We denote by $\delta$ a strictly increasing sequence of real numbers
  $0\leq \delta_0<\delta_1<...$ and by $H^{3,\delta}_c({\mathbb R}^+\times
  S^1,{\mathbb R}^{N})$ the sc-Hilbert space which consists of maps $u$
  such that there exists a constant $c\in {\mathbb R}^N$ (called {\bf
  asymptotic constant}\index{asymptotic constant}) for which $u-c$ belongs
  to $H^{3,\delta_0}({\mathbb R}^+\times S^1,{\mathbb R}^N)$.  
The level $m$ consists of maps $u$ such that $u-c$ belongs to
  $H^{3+m,\delta_m}$.
Similarly we can define $H^{3,\delta}_c({\mathbb R}^-\times S^1,{\mathbb
  R}^N)$.
The maps of interest  and the corresponding results are given as follows.

\begin{proposition}[{\bf M1}]
The map 
  $$
    H^{3,\delta}_c({\mathbb R}^+\times S^1,{\mathbb R}^{N})\to  {\mathbb
    R}^{N}, \quad u\mapsto c
    $$
  which associates to  $u$  its  asymptotic constant $c$ is ssc-smooth.

\end{proposition}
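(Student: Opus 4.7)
The plan is to identify the sc-Banach space $H^{3,\delta}_c(\mathbb{R}^+\times S^1,\mathbb{R}^N)$ with a direct sum in such a way that the map $u\mapsto c$ becomes a continuous sc-linear projection; since continuous sc-linear maps are automatically sc-smooth, the proposition then follows immediately.

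First I would check that for each $u \in H^{3,\delta}_c(\mathbb{R}^+\times S^1,\mathbb{R}^N)$ the asymptotic constant $c$ is unique. If both $u-c_1$ and $u-c_2$ lie in $H^{3,\delta_0}(\mathbb{R}^+\times S^1,\mathbb{R}^N)$, then the constant function $c_1-c_2$ has finite weighted $L^2$-norm on the infinite half-cylinder $\mathbb{R}^+\times S^1$; for any weight $\delta_0\geq 0$ this forces $c_1=c_2$.

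Next I would introduce the splitting
$$
\Phi : H^{3,\delta}_c(\mathbb{R}^+\times S^1,\mathbb{R}^N) \longrightarrow H^{3,\delta}(\mathbb{R}^+\times S^1,\mathbb{R}^N) \oplus \mathbb{R}^N, \qquad u \longmapsto (u-c,\,c),
$$
with inverse $(v,c)\mapsto v+c$. By the very definition of the sc-structure on $H^{3,\delta}_c$, the level $m$ of the left-hand side corresponds under $\Phi$ to $H^{3+m,\delta_m}\oplus \mathbb{R}^N$, which is level $m$ on the right; moreover $\Phi$ and $\Phi^{-1}$ are bounded and linear on every level. Hence $\Phi$ is an sc-Banach space isomorphism, and the map of the proposition equals $\pi_2\circ \Phi$, where $\pi_2$ is the projection onto the $\mathbb{R}^N$-summand.

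Finally I would invoke the fact, recorded in the remark following Proposition \ref{lower}, that any level-wise classically smooth map is $\ssc^\infty$: on each level $\pi_2\circ\Phi$ is a bounded linear map, hence $C^\infty$, and so it is sc-smooth. The only step that is not purely formal is the uniqueness of the asymptotic constant $c$, but this reduces to the elementary observation that non-zero constants are not in the (weighted or unweighted) $L^2$ space over $\mathbb{R}^+\times S^1$; no genuine analytic difficulty is expected.
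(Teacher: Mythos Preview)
Your argument is correct and is precisely the content that the paper compresses into the single line ``The proof is trivial.'' The map is a level-preserving bounded linear operator (the second component of your sc-isomorphism $\Phi$), hence classically $C^\infty$ on every level, which is exactly ssc-smoothness; there is nothing more to compare.
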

%
\begin{proof}
The proof is trivial.
\end{proof}

For an element $r^+\in  H^{3,\delta}({\mathbb R}^+\times S^1,{\mathbb
  R}^{N})$ and a gluing parameter $a\in B_{\frac{1}{2}}$ we define
  ${[r^+]}_0=0$ if $a=0$, and if $a\neq 0$ with $R=\varphi(|a|)$
  \begin{eqnarray}\label{EQw8.10}
    {[r^+]}_a=\int_{S^1} r^+(R/2,t)\cdot dt
    \end{eqnarray}
We can use the following definition if $u\in H^{3,\delta}_c({\mathbb
  R}^+\times S^1,{\mathbb R}^{N})$.
Namely $[u]_0$ is the asymptotic constant and otherwise, i.e. $a\neq 0$,
  we use the same integral definition as in (\ref{EQw8.10}).

\begin{proposition} [{\bf M2}]
  \hfill\\
The map 
  $$
    B_\frac{1}{2}\times H^{3,\delta}({\mathbb R}^+\times S^1,{\mathbb
    R}^{N})\to  {\mathbb R}^{N},\quad (a,r^+)\mapsto  {[r^+]}_a
    $$
  is sc-smooth.   
In view of {\bf (M1)} the same holds for the map 
  $$
    B_\frac{1}{2}\times H_c^{3,\delta}({\mathbb R}^+\times S^1,{\mathbb
    R}^{N})\to  {\mathbb R}^{N},\quad (a,u)\mapsto  {[u]}_a.
    $$
\end{proposition}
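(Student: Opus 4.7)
The plan is to apply Corollary~\ref{ABC-y}: I will show that for every $l \ge 0$ the map
\[
\Phi \colon B_{1/2} \times H^{3+l,\delta_l}(\mathbb{R}^+ \times S^1, \mathbb{R}^N) \to \mathbb{R}^N, \qquad (a,r^+) \mapsto [r^+]_a,
\]
is of class $C^{l+1}$, which will automatically give sc-smoothness. For $a \neq 0$ this is routine: the gluing length $R(|a|) = e^{1/|a|} - e$ is smooth on $B_{1/2} \setminus \{0\}$, and at level $l$ the two-dimensional weighted Sobolev embedding $H^{3+l,\delta_l}(\mathbb{R}^+ \times S^1) \hookrightarrow C^{l+1}$ (with exponential weight $e^{-\delta_l s}$) renders the composition (smooth map $a \mapsto R/2$) $\circ$ (trace at $s = R/2$) $\circ$ (integration over $S^1$) manifestly smooth on a neighborhood of any $(a_0, r_0^+)$ with $a_0 \neq 0$.

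The substance lies at $a = 0$. The weighted Sobolev embedding provides
\[
|\partial_s^j r^+(s,t)| \;\le\; C_l\, \|r^+\|_{H^{3+l,\delta_l}}\, e^{-\delta_l s}, \qquad 0 \le j \le l+1,
\]
for $(s,t) \in \mathbb{R}^+ \times S^1$. On the other hand, a direct computation in real coordinates on $\mathbb{C}$ shows that each partial derivative $D_a^k(R/2)$ of order $k \ge 1$ admits a bound of the form $P_k\!\bigl(e^{1/|a|}/|a|^2\bigr)$ as $a \to 0$, for some polynomial $P_k$. Since $\Phi$ is linear in $r^+$, the chain rule (Fa\`a di Bruno) expresses any mixed partial derivative of total order $k \le l+1$ as a finite linear combination of terms
\[
\bigl(D_a^{k_1}(R/2)\bigr) \cdots \bigl(D_a^{k_j}(R/2)\bigr)\, \int_{S^1} \partial_s^j r^+(R/2, t)\,dt, \qquad k_1 + \cdots + k_j = k,\ 1 \le j \le k,
\]
each of which is dominated by
\[
C_l\, \|r^+\|_{H^{3+l,\delta_l}}\, Q_k\!\bigl(e^{1/|a|}/|a|^2\bigr)\, e^{-\delta_l (e^{1/|a|} - e)/2}.
\]
The double-exponential decay factor $e^{-\delta_l e^{1/|a|}/2}$ dominates every polynomial expression in $e^{1/|a|}$ and $|a|^{-1}$, so this bound tends to $0$ as $a \to 0$, uniformly for $r^+$ in bounded sets. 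Consequently every mixed derivative of order $\le l+1$ extends continuously across $\{a = 0\}$ by $0$ in the $a$-directions and by the obvious bounded linear functional (which is itself $0$ at $a=0$) in the $r^+$-direction, matching the defined value $[r^+]_0 = 0$. This shows $\Phi$ is $C^{l+1}$ at level $l$ for every $l$, and Corollary~\ref{ABC-y} closes the argument.

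The main obstacle is organizing the chain rule: the explicit Fa\`a di Bruno expansion of a function that factors through $a \mapsto R(|a|)/2$ is combinatorially awkward, but once one observes that each term picks up the decisive factor $e^{-\delta_l e^{1/|a|}/2}$, all the polynomial growth arising from differentiating $R$ is swallowed. Finally, the second assertion of the proposition follows at once by decomposing $u = v + c$ with $c$ the asymptotic constant and $v = u - c \in H^{3,\delta}$, observing that $[u]_a = [v]_a + c$ for every $a \in B_{1/2}$ (including $a = 0$), and combining the first part with Proposition~\textbf{(M1)}.
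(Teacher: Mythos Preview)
Your argument is correct. The key mechanism---that the double-exponential decay $e^{-\delta_l e^{1/|a|}/2}$ coming from the weighted Sobolev embedding annihilates every polynomial in $e^{1/|a|}$ and $|a|^{-1}$ produced by differentiating $R$---is exactly what makes the map $C^{l+1}$ at level $l$, and Corollary~\ref{ABC-y} then gives sc-smoothness. The one point worth stating a bit more explicitly is the passage from ``all partial derivatives on $\{a\neq 0\}$ extend continuously by $0$'' to ``the map is actually $C^{l+1}$ at $a=0$'': this follows from the elementary extension lemma (mean value theorem along segments avoiding the origin, using linearity in $r^+$ for the infinite-dimensional direction), but since that step is where the definition $[r^+]_0=0$ is used, it deserves a sentence.

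The paper itself does not prove this statement but simply cites \cite{HWZ8.7}, Lemma~2.19. Your direct argument via Corollary~\ref{ABC-y} is self-contained and is in fact the natural approach; the cited lemma presumably proceeds along the same lines, since the only mechanism available is the interplay between the exponential gluing profile and the weighted decay of $r^+$. So your proof is a legitimate substitute for the citation.
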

%
For a proof see \cite{HWZ8.7},  Lemma 2.19. 
We also note that there is a version for $r^-$ as well.\\

\begin{proposition} [{\bf M3}]
  \hfill\\
Let $f:{\mathbb R}\rightarrow {\mathbb R}$ be a smooth function which is
  constant outside of a compact set such that $f(+\infty)=0$.
However, $f(-\infty)$ may be nonzero. 
Define $f_0(s)= f(-\infty)$ and $f_a(s)=f(s-R/2)$ with $R=R(a)$. 
Then the map 
  $$ 
    B_\frac{1}{2}\times H^{3,\delta}({\mathbb R}^+\times S^1,{\mathbb
    R}^{N})\to  H^{3,\delta}({\mathbb R}^+\times S^1,{\mathbb R}^{N}),
    \quad (a,r^+)\mapsto  f_a(\cdot ){[r^+]}_a.
    $$
  is sc-smooth.
\end{proposition}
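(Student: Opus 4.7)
The plan is to verify the hypotheses of Theorem~\ref{ABC-x} level-wise, combining Proposition~{\bf M2} with the rapid growth of the gluing profile $\varphi$. By that criterion, sc-smoothness of $\Phi(a,r^+)(s,t) := f_a(s)\cdot[r^+]_a$ reduces to showing that for every $m,l\geq 0$ the induced map
\begin{equation*}
\Phi_{m,l} : B_{\frac{1}{2}} \oplus H^{3+m+l,\delta_{m+l}}(\mathbb{R}^+\times S^1,\mathbb{R}^N) \longrightarrow H^{3+m,\delta_m}(\mathbb{R}^+\times S^1,\mathbb{R}^N)
\end{equation*}
is of class $C^{l+1}$. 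Away from $a=0$ this is classical: $\Phi$ is linear in $r^+$ via the bounded functional $r^+\mapsto[r^+]_a$, and smooth in $a$ through $R(a)=\varphi(|a|)$ and the shift $f_a(s)=f(s-R/2)$. At $a=0$ one has $[r^+]_0=0$ because $r^+\in H^{3,\delta}$ has no asymptotic constant, so $\Phi(0,r^+)=0$; the task is to show that $\Phi_{m,l}$ and all its derivatives extend continuously by $0$ to $a=0$.

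The key estimate pairs growth and decay. The compactly supported $f_a$ has support contained in $[0,R/2+N]$ (with $N$ a radius containing the non-constant part of $f$), on which the weight $e^{\delta_m s}$ is bounded by $e^{\delta_m(R/2+N)}$, giving
\begin{equation*}
\|f_a\cdot c\|_{H^{3+m,\delta_m}(\mathbb{R}^+\times S^1)} \leq C\, e^{\delta_m R/2}\, |c|.
\end{equation*}
Dually, the argument behind Proposition~{\bf M2}, together with the weighted Sobolev embedding into continuous functions vanishing at infinity, yields
\begin{equation*}
|[r^+]_a| \leq C\, \|r^+\|_{H^{3+m+l,\delta_{m+l}}}\, e^{-\delta_{m+l}R/2}.
\end{equation*}
Multiplying produces the master bound
\begin{equation*}
\|\Phi_{m,l}(a,r^+)\|_{H^{3+m,\delta_m}} \leq C\, e^{-(\delta_{m+l}-\delta_m)R/2}\, \|r^+\|_{H^{3+m+l,\delta_{m+l}}}.
\end{equation*}
For $l\geq 1$, the strict sc-gap $\delta_{m+l}>\delta_m$ forces exponential decay as $|a|\to 0$ (since $R\to\infty$); for $l=0$, continuity at $a=0$ follows from the sharper fact that $e^{\delta_m s}r^+(s,\cdot)$ vanishes at infinity as an element of $C^0$, which upgrades the inequality to $e^{\delta_m R/2}[r^+]_a\to 0$ along every approaching sequence.

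Iterated $a$-derivatives are produced by the chain rule applied to $a\mapsto R(a)$ and to the shift $f_a$; they generate polynomial expressions in the derivatives $\varphi^{(j)}(|a|)$. Because $\varphi(r)=e^{1/r}-e$, the decay factor $e^{-(\delta_{m+l}-\delta_m)\varphi(|a|)/2}$ is double-exponential in $1/|a|$ and dominates every polynomial and every $e^{c/|a|}$-type term introduced by differentiation. Derivatives in $r^+$ are trivial by linearity. Consequently every iterated derivative of $\Phi_{m,l}$ extends continuously to $a=0$ with value zero, $\Phi_{m,l}\in C^{l+1}$, and Theorem~\ref{ABC-x} delivers sc-smoothness.

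The main obstacle will be the Fa\`a di Bruno-type bookkeeping for the chain-rule expansion at $a=0$ and the verification of the matched exponential estimates at every pair $(m,l)$, particularly the borderline $l=0$ case, where only the vanishing-at-infinity refinement of the weighted Sobolev embedding secures continuity at $a=0$.
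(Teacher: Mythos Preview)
Your overall strategy is sensible, but the plan to invoke Theorem~\ref{ABC-x} breaks down precisely at the borderline case $l=0$ that you flag at the end. Theorem~\ref{ABC-x} requires $\Phi_{m,0}:U_m\to F_m$ to be of class $C^1$, not merely continuous. Your vanishing-at-infinity argument shows $\Phi_{m,0}(a,r^+)\to 0$ for each fixed $r^+$, but $C^1$ demands continuity of the $r^+$-derivative in operator norm, and this fails. Take $h_a(s,t)=e^{-\delta_m R/2}\psi(s-R/2)$ for a bump $\psi$ with $\psi(0)=1$: then $\|h_a\|_{H^{3+m,\delta_m}}\asymp 1$, $[h_a]_a\asymp e^{-\delta_m R/2}$, while $\|f_a\|_{H^{3+m,\delta_m}}\asymp e^{\delta_m R/2}$, so $\|f_a\cdot[h_a]_a\|_{H^{3+m,\delta_m}}\asymp 1$ for all small $a$. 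Hence the linear map $h\mapsto f_a\cdot[h]_a$ has operator norm bounded away from zero and cannot converge to the zero operator at $a=0$. The same obstruction hits the $a$-derivative: it carries a factor $\partial_a R\sim R/|a|^2$ against a decay of only $e^{-(\delta_m-\delta_m)R/2}=1$, so it blows up.

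The exponent gap $\delta_{m+l}-\delta_m$ you rely on is zero exactly when you need $C^1$ on the diagonal. The paper's route (following \cite{HWZ8.7}, Lemma~2.20) avoids this by working instead through the characterization of $\text{sc}^1$ in Theorem~\ref{x1}, which only asks for $C^1$ with a \emph{one-level drop}, $U_m\to F_{m-1}$. There the relevant exponent is $\delta_m-\delta_{m-1}>0$, restoring genuine decay $e^{-(\delta_m-\delta_{m-1})R/2}$ that absorbs both the $\partial_a R$ growth and the operator-norm issue; one then verifies the extension property $Df(x):E_{m-1}\to F_{m-1}$ and iterates. Your estimates for $l\geq 1$ are essentially the right ones, but you must reorganize the argument around Theorem~\ref{x1} rather than Theorem~\ref{ABC-x}.
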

%
\noindent The proof is similar to  \cite{HWZ8.7}, Lemma 2.20. \\

\vspace{0.2cm}

\begin{proposition} [{\bf M4}] 
  \hfill\\
Let $f:{\mathbb R}\rightarrow {\mathbb R}$ be a smooth function which is
  constant outside of a compact set such that
  $f(+\infty)=0$ be as in {\bf (M3)} and  define $f_0(s)= f(-\infty)$ and
  $f_a(s)=f(s-R/2)$ with $R=R(a)$.
Then the map $B_\frac{1}{2}\oplus H^{3,\delta}({\mathbb R}^+\times
  S^1,{\mathbb R}^{N})\to  H^{3,\delta}({\mathbb R}^+\times S^1,{\mathbb
  R}^{N}),$
  $$
    (a,r^+)\mapsto f_a(.) \cdot r^+
    $$
  is sc-smooth.
\end{proposition}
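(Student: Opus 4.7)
My plan is to reduce the statement to Fundamental Lemma I (Lemma~\ref{FUNDAMENTAL -I}) via an extension--restriction sandwich. The map $(a,r^+)\mapsto f_a(\cdot)\,r^+$ is precisely the map $\Gamma_1$ of Lemma~\ref{FUNDAMENTAL -I}(1) restricted in its second argument from $\mathbb{R}\times S^1$ to the half-cylinder $\mathbb{R}^+\times S^1$, so it suffices to bridge the two domains by sc-smooth auxiliary maps.

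The first step is to construct a level-preserving bounded linear extension operator
\[
\mathcal{E}\colon H^{3,\delta}(\mathbb{R}^+\times S^1,\mathbb{R}^N)\longrightarrow H^{3,\delta}(\mathbb{R}\times S^1,\mathbb{R}^N),
\]
for example via a Seeley-type higher-order reflection $(\mathcal{E}r^+)(s,t)=\sum_{k=1}^{K}c_k\,r^+(-ks,t)$ for $s<0$, with coefficients $c_k$ chosen to match derivatives across $s=0$ up to the maximum order required. Boundedness on each weighted level reduces to the change of variables $u=-ks$, which yields
\[
\int_{-\infty}^0 |r^+(-ks,t)|^2\,e^{2\delta_m|s|}\,ds \;=\; k^{-1}\!\int_0^\infty |r^+(u,t)|^2\,e^{2\delta_m u/k}\,du,
\]
and this is controlled by the $\delta_m$-weighted norm of $r^+$ since $k\ge 1$. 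Being linear, bounded, and level-preserving, both $\mathcal{E}$ and the obvious restriction $\mathcal{R}\colon H^{3,\delta}(\mathbb{R}\times S^1,\mathbb{R}^N)\to H^{3,\delta}(\mathbb{R}^+\times S^1,\mathbb{R}^N)$ are sc-continuous, hence sc-smooth.

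Finally I would observe the identity
\[
f_a(\cdot)\,r^+ \;=\; \mathcal{R}\bigl(\Gamma_1(a,\mathcal{E}r^+)\bigr),
\]
valid in both the case $a=0$ (where both sides equal $f(-\infty)\,r^+$) and the case $a\ne 0$, since $\mathcal{E}r^+$ agrees with $r^+$ on $\mathbb{R}^+\times S^1$. The sc-smoothness of $\Gamma_1$ is given by Lemma~\ref{FUNDAMENTAL -I}(1) with $G=E$, and composition of sc-smooth maps is sc-smooth, so the map of {\bf (M4)} is sc-smooth. The one genuinely non-formal ingredient is arranging $\mathcal{E}$ to respect the exponentially weighted sc-structure simultaneously at every level; as the display above indicates, however, this reduces to a routine change-of-variables estimate once a Seeley-type formula is in hand.
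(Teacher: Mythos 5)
Your argument is correct, and it takes a different route from the one the paper (implicitly) follows: the paper simply defers to Lemma~2.21 of \cite{HWZ8.7}, where the half-cylinder statement is proved directly by the same kind of estimates that go into the Fundamental Lemma, whereas you reduce the half-cylinder case to the full-cylinder map $\Gamma_1$ of Fundamental Lemma~I by conjugating with a linear extension operator $\mathcal{E}$ and the restriction $\mathcal{R}$. The identity $f_a(\cdot)\,r^+=\mathcal{R}\bigl(\Gamma_1(a,\mathcal{E}r^+)\bigr)$ checks out in both cases $a=0$ and $a\neq 0$, and since linear level-preserving bounded operators are sc-smooth, the chain rule finishes the proof. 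What your approach buys is that no new hard analysis is needed beyond the Fundamental Lemma; what it costs is the extension operator. On that point, one caveat: a reflection with \emph{finitely} many terms $\sum_{k=1}^{K}c_k\,r^+(-ks,t)$ matches derivatives only up to a fixed finite order, so it is bounded $H^{m,\delta_m}\to H^{m,\delta_m}$ only for $m$ up to that order; since an sc-operator must be bounded on \emph{every} level $m\geq 0$ simultaneously, you need the genuine Seeley construction (an infinite sum with rapidly decaying coefficients, or reflection points $-b_ks$ with $b_k\to\infty$). Your change-of-variables estimate then still goes through, since $e^{2\delta_m u/k}\leq e^{2\delta_m u}$ for $k\geq 1$ and the polynomial factors $k^{j}$ from differentiating $r^+(-ks,t)$ are absorbed by the decay of the Seeley coefficients. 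With that standard adjustment the proof is complete.
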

%
\noindent The proof is similar to  \cite{HWZ8.7}, Lemma 2.21.\\

\vspace{0.2cm}

\begin{proposition} [{\bf M5}]
  \hfill\\
Let $g:{\mathbb R}\rightarrow {\mathbb R}$ be a smooth compactly supported
  map.
Then for a suitable $\varepsilon\in (0,1/2)$ the map $B_{\varepsilon}
  \oplus H^{3,\delta}({\mathbb R}^-\times S^1,{\mathbb R}^{N})\to
  H^{3,\delta}({\mathbb R}^+\times S^1,{\mathbb R}^{N})$,
  $$
    (a,r^-)\mapsto  g_a(.)\cdot  r^-(\cdot -R,\cdot -\vartheta),
    $$
  where for $s>R$ we define $ g_a(s)\cdot  r^-(s -R,t -\vartheta)=0$, is
  well-defined and  sc-smooth.
\end{proposition}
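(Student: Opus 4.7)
The plan is to realize the map in {\bf (M5)} as a composition of three sc-smooth maps: a bounded linear extension of $r^-$ from $\mathbb{R}^-\times S^1$ to $\mathbb{R}\times S^1$, the sc-smooth map $\Gamma_3$ of Fundamental Lemma II applied with $G=E$, and a bounded linear restriction back to $\mathbb{R}^+\times S^1$. Since $g$ is compactly supported, I would first fix $M>0$ with $\supp g\subset[-M,M]$ and choose $\varepsilon\in(0,1/2)$ small enough that $R(a)=\varphi(|a|)>2M$ for every nonzero $a\in B_\varepsilon$; this is possible because $\varphi(r)\to\infty$ as $r\to 0^+$. Then for such $a$ the shifted cutoff $g_a(s)=g(s-R/2)$ is supported in $[R/2-M,R/2+M]\subset(0,R)$, from which well-definedness and consistency with the ``zero for $s>R$'' convention follow immediately.

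Next I would introduce a bounded linear extension operator $\mathrm{ext}:H^{3,\delta}(\mathbb{R}^-\times S^1,\mathbb{R}^N)\to E$ and the natural restriction $\rho:E\to H^{3,\delta}(\mathbb{R}^+\times S^1,\mathbb{R}^N)$. A Seeley-type construction combined with a smooth cutoff in $s$ provides an $\mathrm{ext}$ which is bounded on each level $H^{3+m,\delta_m}\to H^{3+m,\delta_m}$; both $\mathrm{ext}$ and $\rho$ are then sc-operators, hence sc-smooth. The claim to verify is that the map in {\bf (M5)} agrees with $(a,r^-)\mapsto \rho(\Gamma_3(a,\mathrm{ext}(r^-)))$. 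For $a=0$ both sides are zero, while for $a\neq 0$ the composition reads $g(s-R/2)\cdot\mathrm{ext}(r^-)(s-R,t-\vartheta)$ restricted to $s\ge 0$. The support analysis above shows that on the support of $g_a$ the argument $s-R$ lies in $[-R/2-M,-R/2+M]\subset(-\infty,0)$, so $\mathrm{ext}(r^-)$ agrees with $r^-$ there; outside that support, both sides vanish. Sc-smoothness of the map then follows from sc-smoothness of each of the three factors.

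The main obstacle is that a naive extension of $r^-$ by zero across $s=0$ would lose regularity at the boundary and be incompatible with the high regularity required to invoke Fundamental Lemma II. The remedy is exactly the smallness of $\varepsilon$: the condition $R>2M$ ensures that the formula never samples the extension in the half-plane $s\ge 0$, leaving us free to choose a smooth, sc-bounded extension operator whose behaviour on $[0,\infty)$ is immaterial.
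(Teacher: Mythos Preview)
Your argument is correct and is essentially the intended one: the paper itself does not give a proof but only remarks that it is similar to \cite{HWZ8.7}, Lemma~2.22, and the natural way to obtain {\bf (M5)} from the results stated here is precisely your reduction to the full-line map $\Gamma_3$ of Fundamental Lemma~II via a level-wise bounded extension and restriction. Your choice of $\varepsilon$ so that $R(a)>2M$ ensures the extension is never sampled on $[0,\infty)$, which is exactly the point; the only thing to keep straight is that the extension operator lands in the sc-Hilbert space $E$ with the same weight sequence $(\delta_m)$, which your ``Seeley extension followed by cutoff'' construction handles.
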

%
\noindent The proof is similar to \cite{HWZ8.7}, Lemma 2.22.

%
\section{Nodal Constructions}
  \label{SECX1}\label{SEC_nodal_constructions}
Although the notions we need about nodal Riemann surfaces are standard,
  for the convenience of the reader, they are summarized
  in Appendix \jwf{[broken reference]} 

%
\subsection{Basic Construction}\label{SSEC2.1}

Before reading the following the reader should have a quick glance at
  Appendix \jwf{[broken reference because helmut didn't hand over all the
  files]} 

%
\subsubsection{The Basic Idea}\label{BASICidea}
The constructions in Section \ref{SECX1} are concerned with a smooth
description of maps defined on an annulus type Riemann surface which
  decomposes into a nodal disk as the modulus tends to $\infty$. 
Modulo technicalities the following describes some of the ingredients.
Consider the two half-cylinders ${\mathbb R}^\pm\times S^1$. 
Given a number $R>0$ we can consider the subsets $[0,R]\times S^1\subset
  {\mathbb R}^+\times S^1$ and $[-R,0]\times S^1\subset {\mathbb R}^-\times
  S^1$. 
For a given $d\in S^1$ and $R>0$ we consider the set $Z_{(R,d)}$
  \index{$Z_{(R,d)}$} consisting of all $\{(s,t),(s',t')\}$ such that
  $(s,t)\in [0,R]\times S^1,\ (s',t')\in [-R,0]\times S^1,\ s=s'+R,\
  t=t'+d$.
We have two natural bijections
  \begin{eqnarray}
    &[0,R]\times S^1\leftarrow Z_{(R,d)}\rightarrow [-R,0]\times S^1&\\
    &(s,t)\leftarrow \{(s,t),(s',t')\}\rightarrow (s',t').&\nonumber
    \end{eqnarray}
As $R\rightarrow \infty$ we could view ${\mathbb R}^+\times S^1 \coprod
  {\mathbb R}^-\times S^1$ as  some kind of limit domain of $Z_{(R,d)}$.
We note that other limits  are possible, for example  we could also keep
  track of $d$ as $R\rightarrow\infty$.
Such a variant  will be important  for the periodic orbit case which will
  be studied later.
Fix a smooth map $\beta:{\mathbb R}\rightarrow [0,1]$ satisfying
  \begin{itemize}
    \item[(1)] $\beta(s)=1$ for $s\leq -1$.
    \item[(2)] $\beta'(s)<0$ for $s\in (-1,1)$.
    \item[(3)] $\beta(s)+\beta(-s)=1$ for all $s$.
    \end{itemize}
First consider pairs $(u^+,u^-)$ of continuous maps $u^\pm:{\mathbb
  R}^\pm\times S^1\rightarrow {\mathbb R}^N$ such that the two limits 
  $$
    \lim_{s\rightarrow\pm\infty} u^\pm(s,t)  =:u_{\pm\infty}
    $$
  exist in ${\mathbb R}^N$ uniformly in $t$, independent of $t$ and
  satisfy
  $$
    u_\infty= u_{-\infty}.
    $$
Let us refer to $u_{\pm\infty}$ as the (common) {\bf nodal
  value}\index{common nodal value} associated to $u^\pm$. Given $(R,d)$ we
  can define a continuous map
  $$
    \oplus(R,d,u^+,u^-):Z_{(R,d)}\rightarrow {\mathbb R}^N
    $$
  via
  \begin{eqnarray}
    &&\oplus(R,d,u^+,u^-)(\{(s,t),(s',t')\}) \\
    &=& \beta(s-R/2)\cdot u^+(s,t) + \beta(-s'-R/2) \cdot
    u^-(s',t').\nonumber
    \end{eqnarray}
This is a gluing construction, where we construct from two maps defined on
  complementary cylinders a map on a finite cylinder.
If $R>>0$ we note that the restriction of $\oplus(R,d,u^+,u^-)$ to the
  middle loop
  $$
    t\rightarrow \{(R/2,t),(-R/2,t-d)\}
    $$
  is almost a constant loop, very close to the common nodal value.
Given a map $w:Z_{(R,d)}\rightarrow {\mathbb R}^N$ we can construct two
  maps $w^\pm$ as follows. 
First we consider the mean value around the loop in the middle of
  $Z_{(R,d)}$
  $$
    \text{av}(w) =\int_{S^1} w(\{(R/2,t),(-R/2,t-d)\} )dt,
    $$
  and then define on ${\mathbb R}^\pm\times S^1$
  \begin{eqnarray}\label{qwas6.3}
    w^+(s,t) &=& \beta(s-R/2-2)\cdot w(\{(s,t),(s-R,t-d)\}) \\
    &&+ (1-\beta(s-R/2-2))\cdot \text{av}(w)\nonumber \\
    w^-(s',t')&=& \beta(-s'-R/2-2)\cdot w(\{(s,t),(s-R,t-d)\})\nonumber\\
    && + (1-\beta(-s'-R/2-2))\cdot \text{av}(w).\nonumber
    \end{eqnarray}
Define $f(w):=(R,d,w^+,w^-)$ and note that $(R,d)$ is a function of $w$,
  i.e. its domain parameter.
One easily verifies
  $$
  \oplus\circ f(w) = w,
  $$
  which immediately implies that $r:=f\circ \oplus$ satisfies $r\circ
  r=r$. 
Having $\oplus$-constructions in mind this should sound familiar, see
  Section \jwf{[broken reference]} 
The current section is devoted to exploit the differential geometric
  content within the sc-smooth world of the above discussion.
For example we need to be precise about the regularity of the maps and one
  needs to suitably compactify the parameter set consisting of all $(R,d)$.
This will happen next.

%
\subsubsection{A Construction Functor}\label{SSEC2.1.1}
Denote by ${\mathcal D}=(D_x\sqcup D_y,\{x,y\})$ an unordered nodal disk
  pair, write ${\mathbb B}$ for  the complex manifold of associated  natural
  gluing parameters, which we recall consists of all formal expressions
  $r\cdot [\wh{x},\wh{y}]$ with $0\leq r< 1/4$.
Further denote by  $\varphi$  the exponential gluing profile. 
For $a\in {\mathbb B}$ we denote by $Z_a$ the glued Riemann surface.
A convenient definition is given in  Appendix \jwf{[broken reference]}

For $\delta_0\in (0,\infty)$ we define by $H^{3,\delta_0}_c({\mathcal
  D},{\mathbb R}^N)$\index{$H^{3,\delta_0}_c({\mathcal D},{\mathbb R}^N)$}
  the Hilbert space, whose elements $(u^x,u^y)$ are maps of class
  $(3,\delta_0)$, see Appendix \jwf{[broken reference]} 
  with matching asymptotic constant, i.e. the values taken by the map at
  $x$ and $y$ are the same.

Given a strictly increasing sequence $\delta=(\delta_i)\subset {\mathbb
  R}$  with $\delta_0>0$  it holds that $H^{3,\delta_0}_c({\mathcal
  D},{\mathbb R}^N)$ has an sc-Hilbert space structure for which level $m$
  corresponds to regularity $(m+3,\delta_m)$.
Equipped with this structure we denote it by
  $H^{3,\delta}_c=H^{3,\delta}_c({\mathcal D},{\mathbb R}^N)$.
Then ${\mathbb B}\times H^{3,\delta}_c({\mathcal D},{\mathbb R}^N)$ is an
  ssc-manifold.
Denote by $H^3(Z_a,{\mathbb R}^N)$ the obvious Sobolev space.
We define (for the moment as a set) the disjoint union
  \begin{eqnarray}\label{plot007}
    &\ \ \ \ X^{3,\delta_0}_{{\mathcal D},\varphi}\equiv
    X^{3,\delta_0}_{{\mathcal D},\varphi}({\mathbb R}^N)=
    H^{3,\delta_0}_c({\mathcal D},{\mathbb
    R}^N)\coprod\left(\coprod_{0<|a|<1/4} H^3(Z_a,{\mathbb R}^N)\right)&
    \end{eqnarray}
  and denote by \index{$X^{3,\delta_0}_{{\mathcal D},\varphi}$}
  $$
    p_{\mathbb B}:X^{3,\delta_0}_{{\mathcal D},\varphi}\rightarrow
    {\mathbb B}
    $$
  the obvious map, extracting the domain parameter, where obviously
  ${\mathcal D}\rightarrow 0$.
Our goal is the construction of a natural M-polyfold structure on
  $X^{3,\delta_0}_{{\mathcal D},\varphi}$ which can be viewed as a sc-smooth
  completion of the space of maps on finite cylinders (or annuli), where the
  cylinders become infinitely long, i.e, decompose into a nodal disk pair.
What follows is a M-polyfold construction via the $\oplus$-method.
Pick any smooth map $\beta:{\mathbb R}\rightarrow [0,1]$ satisfying
  $\beta(s)=1$ for $s\leq -1$, $\beta(s)+\beta(-s)=1$, and $\beta'(s)<0$ for
  $s\in (-1,1)$ and define the plus-gluing
  $$
    \oplus: {\mathbb B}\times H^{3,\delta}_c\rightarrow
    X^{3,\delta_0}_{{\mathcal D},\varphi}
    $$
  as follows, where we use the models for gluing of Riemann surfaces in
  Appendix \jwf{[broken reference]} 
If $a=0$ we put $\oplus(0,(u^x,u^y))=(u^x,u^y)$.  
For $a\neq 0$, $a=|a|\cdot [\wh{x},\wh{y}]$,  with $R=\varphi(|a|)$ and
  $h_{\wh{x}}$ and $h_{\wh{y}}$, see Appendix \jwf{[Broken reference]} 
  where $\{\wh{x},\wh{y}\}$ is a representative of $[\wh{x},\wh{y}]$,
  $\{z,z'\}\in Z_a$ so that   $h_{\wh{x}}(z)\cdot h_{\wh{y}}(z')=e^{-2\pi
  R}$, we define $\oplus(a,(u^x,u^y)):Z_a\rightarrow {\mathbb R}^N$ by
  \begin{eqnarray}\label{OPL1}
    \oplus(a,(u^x,u^y))((z,z'))&= & \beta(s_x(z)- R/2)\cdot u^x(z)\\
    && + \beta(s_y(z')-R/2))\cdot u^y(z'), \nonumber
    \end{eqnarray}
  where  $s_x(z)=-\frac{1}{2\pi}\cdot \ln(|h_{\wh{x}}(z)|)$ and
  $s_y(z')=-\frac{1}{2\pi}\cdot \ln(|h_{\wh{y}}(z')|)$.
We note that $\beta(s_x(z)- R/2)$ is a function of $a$ and $z$,  with
  $R=R(a)$.
The same holds for  $\beta(s_y(z')- R/2)$.  
Therefore we define 
  \begin{eqnarray}\label{EQNY2}\label{EQ_beta_a_xy}
    \beta^x_a(z)= \beta(s_x(z)- R/2)\ \ \text{and}\ \
    \beta^y_a(z')=\beta(s_y(z')- R/2)
    \end{eqnarray}
  and rewrite (\ref{OPL1}) as
  \begin{eqnarray}
    \oplus(a,(u^x,u^y))((z,z'))=  \beta^x_a(z)\cdot u^x(z) +
    \beta^y_a(z')\cdot u^y(z').
    \end{eqnarray}

\begin{remark}
  \hfill\\
We note that near the two boundary components there are suitable
  concentric annuli such that $\oplus$ acts as the identity over these
  annuli.
More precisely $\oplus(a,(u^x,u^y))(z,z')=u^x(z)$ for $z$ near $\partial
  D_x$ and $\oplus(a,(u^x,u^y))(z,z')=u^y(z')$ for $z'$ near $\partial D_y$.
\end{remark}
%

We note that we have the commutative diagram
  $$
    \begin{CD}
    {\mathbb B}\times H^{3,\delta}_c({\mathcal D},{\mathbb R}^N)@>\oplus
    >> X^{3,\delta_0}_{{\mathcal D},\varphi}({\mathbb R}^N)\\
    @V \text{pr}_1 VV  @V p_{\mathbb B}VV\\
    {\mathbb B} @= {\mathbb B}
    \end{CD}
    $$
The following theorem shows that $\oplus:{\mathbb B}\times
  H^{3,\delta}_c({\mathcal D},{\mathbb R})\rightarrow
  X^{3,\delta_0}_{{\mathcal D},\varphi}({\mathbb R}^N)$ is a
  $\oplus$-polyfold construction.
In addition we shall establish additionally properties which are useful.

\begin{theorem} \label{thm1}
For every natural number $N\geq 1$ and  strictly increasing weight
  sequence $\delta$ starting with $\delta_0>0$, the set
  $X^{3,\delta_0}_{{\mathcal D},\varphi}({\mathbb R}^N)$ has a (uniquely
  defined) metrizable topology ${\mathcal T}$, as well as uniquely defined
  M-polyfold structure characterized by the  requirement that there exists
  a map $f:X^{3,\delta_0}_{{\mathcal D},\varphi}({\mathbb R}^N)\rightarrow
  {\mathbb B}\times H^{3,\delta_0}_c({\mathcal D},{\mathbb R}^N)$
  preserving the fibers over ${\mathbb B}$, i.e. we have the commutative
  diagram
  $$
    \begin{CD}
    X^{3,\delta_0}_{{\mathcal D},\varphi}({\mathbb R}^N)  @> f >> {\mathbb
    B}\times H^{3,\delta_0}_c({\mathcal D},{\mathbb R}^N)\\
    @V p VV @V pr_1VV\\
    {\mathbb B} @= {\mathbb B}
    \end{CD}
    $$
  such that 
  \begin{itemize}
  \item[(1)] 
  $\oplus\circ f =Id$.
  \item[(2)] 
  $f\circ \oplus$ as a map ${\mathbb B}\times H^{3,\delta}_c({\mathcal
  D},{\mathbb R}^N)\rightarrow {\mathbb B}\times H^{3,\delta}_c({\mathcal
  D},{\mathbb R}^N)$ is  sc-smooth.
  \end{itemize}
The M-polyfold structure (associated to the weight sequence $\delta$) on
  the set $X=X^{3,\delta_0}_{{\mathcal D},\varphi}({\mathbb R}^N)$ is
  denoted by $X^{3,\delta}_{{\mathcal D},\varphi}({\mathbb R}^N)$ and  has
  then the following additional properties, where we abbreviate the spaces
  by $X^{3,\delta}$ and $H^{3,\delta}_c$.
We note that the above is a more precise statement of the fact that
  $\oplus$ defines a $\oplus$-polyfold construction, f.e. the existence of a
  global $f$.
\begin{itemize}
  \item[(3)] 
  The M-polyfold structure on $X^{3,\delta}$ does not depend on the choice
  of $\beta$ nor on the choice of $f$ with the stated properties.
  \item[(4)] 
  A map $h:Y\rightarrow X^{3,\delta}$, where $Y$ is a M-polyfold, is
  sc-smooth if and only if $f\circ h:Y\rightarrow {\mathbb B}\times
  H^{3,\delta}_c$ is sc-smooth.
  \item[(5)] 
  A map $k:X^{3,\delta}\rightarrow Y$, where $Y$ is a M-polyfold is
  sc-smooth if and only if $k\circ \oplus :{\mathbb B}\times
  H^{3,\delta}_c\rightarrow Y$ is sc-smooth. In particular $f$ and
  $\oplus$ are sc-smooth.
  \end{itemize}

\end{theorem}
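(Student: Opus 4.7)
The plan is to realize this as a concrete instance of the abstract $\oplus$-polyfold construction of Part I by exhibiting an explicit candidate for $f$, verifying the two axioms $\oplus\circ f=\mathrm{id}$ and sc-smoothness of $f\circ\oplus$, and then citing Part I for the M-polyfold structure, its uniqueness, and properties (3)--(5).

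First I would write down $f$ explicitly using the averaging/cutoff formula of equation \eqref{qwas6.3} from Section \ref{BASICidea}. For $w\in X^{3,\delta_0}_{{\mathcal D},\varphi}$ with $p_{\mathbb B}(w)=0$, set $f(w)=(0,w)$. For $p_{\mathbb B}(w)=a\neq 0$, using gluing coordinates one identifies $Z_a$ with a quotient of $[0,R]\times S^1\coprod[-R,0]\times S^1$ and defines $\mathrm{av}(w)$ as the integral of $w$ around the middle loop and $w^\pm:{\mathbb R}^\pm\times S^1\to{\mathbb R}^N$ by the formulas \eqref{qwas6.3}; then $f(w)=(a,(w^+,w^-))$. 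Note that by construction $w^\pm$ converge at $\pm\infty$ to the common constant $\mathrm{av}(w)$, so indeed $(w^+,w^-)\in H^{3,\delta}_c$.

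Next I would check $\oplus\circ f=\mathrm{id}$ pointwise. This is immediate for $a=0$. For $a\neq 0$ the shift by $2$ inside the cutoffs of \eqref{qwas6.3} arranges that on the effective support of $\beta^x_a$ and $\beta^y_a$ (i.e.\ $|s-R/2|\leq 1$ and $|s'+R/2|\leq 1$) we have $w^+=w$ and $w^-=w$; combined with the identity $\beta(s-R/2)+\beta(-s'-R/2)=\beta(s-R/2)+\beta((R-s)-R/2)=1$ valid on $Z_a$, equation \eqref{OPL1} collapses to $w$. This is the routine verification.

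The core technical step is sc-smoothness of $f\circ\oplus:{\mathbb B}\times H^{3,\delta}_c\to{\mathbb B}\times H^{3,\delta}_c$. I would compute $w=\oplus(a,(u^x,u^y))$ and plug into \eqref{qwas6.3}; the resulting expressions decompose into finitely many summands of three types: pointwise multiplication of $u^x$ or $u^y$ by a cutoff of the form $\beta(\pm s - R/2 + \mathrm{const})$ (handled by Fundamental Lemma I, i.e.\ Lemma \ref{FUNDAMENTAL -I}, and its corollary M4), pointwise multiplication by a cutoff of compact support composed with the translation by $(R,\vartheta)$ of the opposite half-cylinder map (handled by Fundamental Lemma II, i.e.\ Lemma \ref{FUNDAMENTAL -II}, and its corollary M5), and the average $\mathrm{av}(\oplus(a,(u^x,u^y)))$ multiplied by a cutoff of type M3. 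The average itself is an evaluation of type M2 applied to the relevant half-cylinder restriction. So $f\circ\oplus$ is a finite linear combination of sc-smooth maps, hence sc-smooth. This is the main obstacle, and it reduces entirely to bookkeeping that packages each summand into one of the lemmas above.

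Finally, having produced a pair $(\oplus,f)$ with $\oplus\circ f=\mathrm{id}$ and $f\circ\oplus$ sc-smooth, the map $r:=f\circ\oplus$ is an sc-smooth idempotent on ${\mathbb B}\times H^{3,\delta}_c$ and $\oplus$ restricts to a bijection $\mathrm{Im}(r)\to X^{3,\delta_0}_{{\mathcal D},\varphi}$. Pulling back the topology and the sc-smooth retract structure along $\oplus$ produces the metrizable topology $\mathcal T$ and the M-polyfold structure $X^{3,\delta}_{{\mathcal D},\varphi}({\mathbb R}^N)$; the universal property characterization (1)--(2) together with the abstract $\oplus$-polyfold theorem of Part I yields uniqueness. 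Properties (3)--(5) are then formal consequences of that abstract theorem: independence of $\beta$ and $f$ follows by comparing two choices via sc-smoothness of the associated retracts, while (4) and (5) are the universal factorization property for sc-smooth maps into and out of an M-polyfold built from a retract, using that $\oplus$ and $f$ are themselves sc-smooth.
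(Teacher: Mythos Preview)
Your proposal is correct and follows essentially the same route as the paper: the paper defines exactly the $f$ you describe via the shifted cutoffs $\beta^x_{a,-2},\beta^y_{a,-2}$ and the middle-loop average, verifies $\oplus\circ f=\mathrm{Id}$ by the identity $\beta^x_a\cdot\beta^x_{a,-2}=\beta^x_a$, and proves sc-smoothness of $f\circ\oplus$ by writing $(\eta^x,\eta^y)$ in the half-cylinder model, splitting off the common asymptotic constant $c$, and reducing to exactly the three map types you list (handled by the Fundamental Lemmas / Propositions 2.8 and 2.17 of \cite{HWZ8.7}). The only minor difference is that for (3) the paper does not treat $\beta$-independence as a purely formal consequence of Part I but explicitly checks that $f^2\circ\oplus^1$ is sc-smooth when $\oplus^1,\oplus^2$ use different cutoffs $\beta_1,\beta_2$; the computation is identical to the one for $f\circ\oplus$ except that the auxiliary function $\sigma$ becomes $\beta_2(s-2)(1-\beta_1(s))$, so your deferral to the abstract theory is legitimate but hides this small extra verification.
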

%
\begin{proof}
The main point is to show that $\oplus:{\mathbb B}\times
  H^{3,\delta}_c({\mathcal D},{\mathbb R}^N)\rightarrow
  X^{3,\delta_0}_{{\mathcal D},\varphi}({\mathbb R}^N) $ is
  $\oplus$-polyfold construction, where we can take a global $f$.
In view of this we have to construct $f$ and show the independence of the
  choice of $\beta$.
We define 
  \begin{eqnarray}
    f(u^x,u^y)=(0,(u^x,u^y)).
    \end{eqnarray}
If $a\in {\mathbb B}\setminus\{0\}$, say $a=|a|\cdot [\wh{x},\wh{y}]$, we
  are given a map $u:Z_a\rightarrow {\mathbb R}^N$.
Define $\text{av}(u)$, the average over the middle loop, with the help of
  a middle loop map  $\sigma_a:S^1\rightarrow Z_a$ introduced in Appendix
  \jwf{[broken reference]} \jwf{[broken reference]} 
  by 
  $$
    \text{av}(u) =\int_{S^1} u(\sigma_a(t)) dt.
    $$
We note that  the domain parameter is a function of $u$, i.e. $a=a(u)$.
This average does not depend on the choice of the middle loop. 
If $u=(u^x,u^y):Z_0\rightarrow {\mathbb R}^N$ we define
  $$
    \text{av}(u)= u^x(x)=u^y(y),
    $$
  which is the nodal value. 
In the following it happens very often that given a map on $Z_a$ we have
  to construct associated maps on $D_x$ and $D_y$ or vice a versa.
The elements of $Z_a$ are written $\{z,z'\}$ and satisfy
  $h_{\wh{x}}(z)\cdot h_{\wh{y}}(z')=e^{-2\pi R}$.
The elements of $D_x$ and $D_y$ are written $z$ and $z'$. 
There are, of course, elements $z$ and $z'$ which do not occur as an
  un-odered pair $\{z,z'\}$ in $Z_a$. As a consequence of this fact the
  formulae, which we shall write down, sometimes involve ingredients which
  might not be defined.
However, such occurrences always involve products where one of the
  well-defined expressions is $0$.
Hence our convention is that a non-defined value times a defined value
  zero takes the defined value zero.
With this in mind define if $a\neq 0$
  \begin{eqnarray}\label{Hf}
    f(u)=(a,(\eta^x,\eta^y))
    \end{eqnarray}as follows,
  where  $R=\varphi(|a|)$, and $\{z,z'\}\in Z_a$
  \begin{eqnarray*}                                                       
    \eta^x(z) &=&\beta\left(s_x(z)- R/2 -2\right)\cdot u(\{z,z'\}) \\
    && +\big(1-\beta(s_x(z)- R/2-2)\big)\cdot \text{av}(u)\\
    &=:& \beta_{a,-2}^x(z)\cdot u(\{z,z'\}) + (1-\beta_{a,-2}^x(z))\cdot
    \text{av}(u)
    \end{eqnarray*}
  and 
  \begin{eqnarray*}                                                       
    \eta^y(z') & =&\beta\left(s_y(z')- R/2-2\right)\cdot u(\{z,z'\})\\
    && + \left(1-\beta(s_y(z')- R/2-2)\right)\cdot\text{av}(u)\\
    &=:&\beta^y_{a,-2}(z')\cdot u(\{z,z'\}) +(1-\beta^y_{a,-2}(z'))\cdot
    \text{av}(u).
    \end{eqnarray*}
We also note that the pair has matching asymptotic constants, and for
  fixed $a$ the map $H^3(Z_a,{\mathbb R}^N)\rightarrow {\mathbb B}\times
  H^{3,\delta_0}_c({\mathcal D},{\mathbb R}^N)$ given by
  \begin{equation*}                                                       
    u\mapsto \big(a,\eta^x(u),\eta^y(u)\big)   
    \end{equation*}
  is linear and obviously an sc-operator. It is elementary to establish that
  \begin{equation*}                                                      
    \oplus\circ f = Id.
    \end{equation*}
Indeed, using $\beta^x_a\cdot\beta^x_{a,-2}=\beta^x_a$ and similarly for
  the $y$-expression, we compute, observing that
  $\beta^x_a(z)+\beta^y_a(z')=1$ for $\{z,z'\}\in Z_a$
  \begin{eqnarray*}
    &&(\oplus\circ f(u))(\{z,z'\})\\
    &=& \oplus(a,\eta^x,\eta^y)(\{z,z'\})\\
    &=&\beta^x_a(z) \cdot \eta^x(z) +\beta^y_a(z')\cdot \eta^y(z') \\
    &=&\beta^x_a(z) \cdot \left(\beta_{a,-2}^x(z)\cdot u(\{z,z'\}) +
    (1-\beta_{a,-2}^x(z))\cdot \text{av}(u)\right)\\
    &&+\beta^y_a(z')\cdot\left(\beta^y_{a,-2}(z')\cdot u(\{z,z'\})
    +(1-\beta^y_{a,-2}(z'))\cdot \text{av}(u)\right)\\
    &=&\beta^x_a(z)\cdot u(z,z')+\beta^y_a(z')\cdot u(\{z,z'\})\\
    &=&u(\{z,z'\}).
   \end{eqnarray*}
Next we  show that \(f\circ \oplus\) is sc-smooth.
To that end, we write 
  $$
    (a,(\eta^x,\eta^y))=f(\oplus(a,(\xi^x,\xi^y))),
    $$
  so that $\eta^x$ is given as follows.
  \begin{eqnarray}\label{EQNH4}
    \eta^x(z)
    &=&\beta^x_{a,-2}(z)\cdot\left(\beta^x_a(z)\cdot\xi^x(z)+
    \beta^y_a(z')\cdot\xi^y(z')\right)\\
    && +(1-\beta^x_{a,-2}(z))\cdot
    \text{av}(\oplus(a,(\xi^x,\xi^y)))\nonumber\\
    &=& \beta^x_a(z)\cdot \xi^x(z) +
    (\beta^x_{a,-2}(z)\cdot\beta^y_a(z'))\cdot \xi^y(z')\nonumber\\
    &&+ (1/2)\cdot (1-\beta^x_{a,-2}(z))\cdot \left(
    \text{av}_a(\xi^x)+\text{av}_a(\xi^y)\right)\nonumber
    \end{eqnarray}
Here 
  $$
    \text{av}_a(\xi^x) =\int_{S^1} \xi^x(\sigma^x_a(t))\cdot dt\ \
    \text{and}\ \ \text{av}_a(\xi^y)=
    \int_{S^1} \xi^y(\sigma^y_a(t))\cdot dt,
    $$
  using {\bf $a$-loops}, for the definition  see the end of Subsection
  \jwf{[broken reference]} 
Note that these averages do not depend on which $a$-loops were picked.
Similarly
  \begin{eqnarray}\label{EQNH10}
    \eta^y(z')&=& \beta^y_a(z')\cdot \xi^y(z')+(\beta^y_{a,-2}(z')\cdot
    \beta^x_a(z))\cdot \xi^x(z)\\
    &&(1/2)\cdot (1-\beta^y_{a,-2}(z'))\cdot
    (\av_a(\xi^y)+\text{av}_a(\xi^x)).\nonumber
    \end{eqnarray}
We note that $(0,(\xi^x,\xi^y))\rightarrow (\xi^x,\xi^y)$.
We need to show that the map
  \begin{eqnarray}\label{EQNR3}
    & {\mathbb B}\times H^{3,\delta}_c({\mathcal D},{\mathbb R}^N)\rightarrow
    H^{3,\delta}_c({\mathcal D},{\mathbb R}^N):(a,(\xi^x,\xi^y)) \rightarrow
    (\eta^x,\eta^y)&
    \end{eqnarray}
  is sc-smooth. 
By definition, the sc-Hilbert space $H^{3,\delta}_c({\mathcal D},{\mathbb
  R}^N)$ is sc-isomorphic to the codimension $N$ subspace $E$ of
  $H^{3,\delta}_c({\mathbb R}^+\times S^1,{\mathbb R}^N)\times
  H^{3,\delta}_c({\mathbb R}^-\times S^1,{\mathbb R}^N)$, consisting of
  elements with matching asymptotic constants. 
This isomorphism is given by the map
  \begin{align*}                                                          
    &\Sigma: H_c^{3, \delta}(\mathcal{D}, \mathbb{R}^N) \to E
    \\
    &\Sigma(\xi^x,\xi^y)=
    (\xi^x\circ\sigma^+_{\wh{x}}\, ,\, \xi^y\circ\sigma^-_{\wh{y}}),
    \end{align*}
    where we fix some decorations $\wh{x}$ and $\wh{y}$, and we employ the
    functions
  \begin{align}                                                           
    \sigma_{\widehat{x}}^+(s,t) &=
    h_{\widehat{x}}^{-1}\big(e^{-2\pi(s+it)}\big)\label{EQ_sig_xhat}
    \\ 
    \sigma_{\widehat{y}}^-(s',t') &=
    h_{\widehat{y}}^{-1}\big(e^{2\pi(s+it)}\big)\label{EQ_sig_yhat}
    \end{align}
  provided in Appendix \ref{SEC_holo_polar_coords}.
Hence, after conjugation with \(\Sigma\), the map in (\ref{EQNR3}) defines
  a map ${\mathbb B}\times E\rightarrow E$ and it suffices to show that it
  is sc-smooth.
It is also convenient to replace ${\mathbb B}$ by 
  ${\mathbb B}_{\mathbb C}=\{a\in {\mathbb C}\ |\ |a|<1/4\}$.  
We then need to re-express equations (\ref{EQNH4}) and (\ref{EQNH10}) in
  the E-setting (that is, via the conjugation by \(\Sigma\)) but we abuse
  notation by using the same symbols to denote the functions before and
  after conjugation.
In other words, we shall write \(\eta^x(s,t) = \eta^x\circ
  \sigma_{\widehat{x}}^+(s,t)\), and similarly for \(\eta^y\).
In order to proceed, we recall that \(\eta^x\) and \(\eta^y\) involve
  terms of the form \(\beta_a^x(z)\) and \(\beta_a^y(z')\), which are
  defined in equation (\ref{EQ_beta_a_xy}), and hence making use of the
  fact that
  \begin{align*}                                                          
    z = \sigma_{\widehat{x}}^+(s,t) \qquad\text{and}\qquad z' =
    \sigma_{\widehat{y}}^-(s',t'),
    \end{align*}
  we find that
  \begin{align*}                                                          
    s_x(\sigma_{\widehat{x}}^+(s,t)) 
    &= 
    -\frac{1}{2\pi} \ln \big|h_{\widehat{x}} \big( h_{\widehat{x}}^{-1}(
    e^{-2\pi(s+it)})\big)\big|\\
    &=
    s
    \end{align*}
  and 
  \begin{align*}                                                          
    s_y(\sigma_{\widehat{y}}^-(s',t')) 
    &= 
    -\frac{1}{2\pi} \ln \big|h_{\widehat{y}} \big( h_{\widehat{y}}^{-1}(
    e^{2\pi(s'+it')})\big)\big|\\
    &=
    -s',
    \end{align*}
  so that 
  \begin{align*}                                                          
    \beta_a^x(z)
    &=
    \beta_a^x\circ \sigma_{\widehat{x}}^+(s,t)
    \\
    &=
    \beta\big(s_x(\sigma_{\widehat{x}}^+(s,t)) -
    \textstyle{\frac{1}{2}}R\big)
    \\
    &=
    \beta(s - \textstyle{\frac{1}{2}}R)
    \end{align*}
  and
  \begin{align*}                                                          
    \beta_a^y(z')
    &=
    \beta_a^y\circ \sigma_{\widehat{y}}^-(s',t')
    \\
    &=
    \beta\big(s_x(\sigma_{\widehat{y}}^+(s',t')) -
    \textstyle{\frac{1}{2}}R\big)
    \\
    &=
    \beta(-s' - \textstyle{\frac{1}{2}}R)
    \end{align*}
Next, given $(a,(\xi^x,\xi^y))\in {\mathbb B}_{\mathbb C}\times E$, with
  $R=\varphi(|a|)$, $a=|a|\cdot e^{-2\pi i\theta}$, we introduce the
  abbreviations
  \begin{align*}                                                          
    \text{av}_a(\xi^x)&=\int_{S^1} \xi^x(R/2,t)\cdot dt\\
    \text{av}_a(\xi^y)&=\int_{S^1} \xi^y(-R/2,t)\cdot dt\\
    \text{av}_a(\xi^x,\xi^y)&=\frac{1}{2}\cdot
      \left(\text{av}_a(\xi^x)+\text{av}_a(\xi^y)\right),
    \end{align*}
  and define
  \begin{eqnarray}\label{EQN1}
    \sigma(s)=\beta(s-2)\cdot
    (1-\beta(s)).
    \end{eqnarray}
We also write $(\xi^x,\xi^y)=(c+r^x,c+r^y)$ with 
  \begin{align*}                                                          
    c&\in \mathbb{R}^N\\
    r^x&\in H^{3, \delta_0}(\mathbb{R}^+\times S^1 , \mathbb{R}^N) 
    \\
    r^y &\in H^{3, \delta_0}(\mathbb{R}^-\times S^1 , \mathbb{R}^N).
    \end{align*}
Finally, recalling that 
  \begin{align*}                                                          
    \beta(s)= \beta(s-2)\cdot \beta(s), \qquad s =
    s'+R\qquad\text{and}\qquad t = t'+\theta,
    \end{align*}
  we can re-express equations (\ref{EQNH4}) and (\ref{EQNH10}) in the
  E-setting as
  \begin{align}
    \eta^x(s,t)
    &
    = c + \beta\left(s-{\textstyle \frac{1}{2}}R\right) \cdot
    r^x(s,t) +\sigma\left(s-{\textstyle \frac{1}{2}}R\right) \cdot
    r^y(s-R,t-\theta) \\
    &
    \qquad + \left(1-\beta(s-{\textstyle \frac{1}{2}}R-2)\right)\cdot
    \text{av}_a(r^x, r^y)&\nonumber\\
    \eta^y(s',t')
    &
    = c + \beta\left(-s'+{\textstyle \frac{1}{2}}R\right) \cdot
    r^y(s,t) +\sigma\left(-s'-{\textstyle \frac{1}{2}}R\right) \cdot
    r^x(R-s',t'-\theta)\nonumber\\
    &
    \qquad + \left(1-\beta(-s'+{\textstyle \frac{1}{2}}R+2)\right)\cdot
    \text{av}_a(r^x, r^y).&\nonumber
    \end{align}
\jwf{ [I think there are typos in the above, which should instead be as
  follows]
\textcolor{red}{
\begin{align*}
    \eta^y(s',t')
    &
    = c + \beta\left(-s'-{\textstyle \frac{1}{2}}R\right) \cdot
    r^y(s',t') +\sigma\left(-s'-{\textstyle \frac{1}{2}}R\right) \cdot
    r^x(s'+R,t'+\theta)\nonumber\\
    &
    \qquad + \left(1-\beta(-s'+{\textstyle \frac{1}{2}}R+2)\right)\cdot
    \text{av}_a(r^x, r^y).&\nonumber
\end{align*}
}
}

It suffices by symmetry to establish the sc-smoothness of the map
  ${\mathbb B}_{\mathbb C}\times E\rightarrow H^{3,\delta}_c({\mathbb
  R}^+\times S^1,{\mathbb R}^N):(a,(\xi^x,\xi^y))\rightarrow \eta^x$.
This map is the sum of several simpler maps which involve the following
  linear sc-operators defining sc-smooth maps:
\begin{itemize}
  \item[(i)] 
  Extracting the asymptotic constant 
  $$
    E\rightarrow {\mathbb R}^N\subset H^{3,\delta}_c({\mathbb R}^+\times
    S^1,{\mathbb R}^N):(\xi^x,\xi^y)\rightarrow c.
    $$
  \item[(ii)] 
  Extracting the exponential decaying part 
  \begin{eqnarray*}
    &E\rightarrow H^{3,\delta}({\mathbb R}^+\times S^1,{\mathbb
    R}^N):(\xi^x,\xi^y)\rightarrow r^x&\\
    &E\rightarrow H^{3,\delta}({\mathbb R}^-\times S^1,{\mathbb
    R}^N):(\xi^x,\xi^y)\rightarrow r^y.&
    \end{eqnarray*}
  \item[(iii)] Extracting averages 
  \begin{eqnarray*}
    &H^{3,\delta}({\mathbb R}^+\times S^1,{\mathbb R}^N)\rightarrow
    {\mathbb R}^N:r^y\rightarrow
    \int_{S^1} r^x(R/2,t)\cdot dt&\\
    &H^{3,\delta}({\mathbb R}^-\times S^1,{\mathbb R}^N)\rightarrow
    {\mathbb R}^N:r^y\rightarrow
    \int_{S^1} r^y(R/2,t)\cdot dt.
    \end{eqnarray*}
  \end{itemize}
As a consequence we see immediately that the map
  $$
    {\mathbb B}_{\mathbb C}\times E\rightarrow {\mathbb R}^N:
    (\xi^x,\xi^y)\rightarrow c+ \text{av}_a(r^x,r^y)
    $$
  associating the asymptotic constant of $\eta^x$ is sc-smooth.  
To complete the proof one only needs to establish the sc-smoothness of the
  following maps:
  \begin{itemize}
    \item[(1)] 
    ${\mathbb B}_{\mathbb C}\times H^{3,\delta}({\mathbb R}^+\times
    S^1,{\mathbb R}^N)\rightarrow H^{3,\delta}({\mathbb R}^+\times
    S^1,{\mathbb R}^N):r^x\rightarrow [(s,t)\rightarrow
    \beta\left(s-{\textstyle \frac{1}{2}}R\right) \cdot r^x(s,t)]$.
    \item[(2)] 
    ${\mathbb B}_{\mathbb C}\times H^{3,\delta}({\mathbb R}^-\times
    S^1,{\mathbb R}^N)\rightarrow H^{3,\delta}({\mathbb R}^+\times
    S^1,{\mathbb R}^N):r^y\rightarrow [(s,t)\rightarrow
    \sigma\left(s-{\textstyle \frac{1}{2}}R\right) \cdot
    r^y(s-R,t-\theta)]$.
    \item[(3)] 
    ${\mathbb B}_{\mathbb C}\times E\rightarrow  H^{3,\delta}({\mathbb
    R}^+\times S^1,{\mathbb R}^N):$\\ $ (a,(\xi^x,\xi^y))\rightarrow
    [(s,t)\rightarrow \beta(s-{\textstyle \frac{1}{2}}R-2)\cdot
    \text{av}_a(r^x,r^y)]$
    \end{itemize}
The sc-smoothness of these maps follows from a direct application of
  Proposition 2.8 and Proposition 2.17 in \cite{HWZ8.7}, or the results in
  Subsection \ref{FUND0}.
We conclude that indeed, \(f\circ \oplus\) is sc-smooth.

Assume we have two constructions using the smooth cut-off functions
  $\beta_1$ and $\beta_2$.
We need to show that $Id: X^1\rightarrow X^2$ and $Id:X^2\rightarrow X^1$
  are sc-smooth where $X^i$ is the obvious abbreviation. 
The two cases are, of course, treated similarly, and we provide details
  for the first one.
Associated to the two constructions we have the maps $\oplus^1,\oplus^2$
  as well as$f^1, f^2$.
By construction $Id:X^1\rightarrow X^2$ is sc-smooth if and only if
  $f^2\circ Id\circ  \oplus^1=f^2\circ\oplus^1$ is sc-smooth.
The new expressions analogue to (\ref{EQN1}) are similarly as in the
  $f\circ \oplus$-case, namely with $f^2$ being the analogue choice,
  \begin{eqnarray}
    \sigma(s)=\beta_2(s-2)\cdot
    (1-\beta_1(s)).
    \end{eqnarray}
Again applications of Proposition 2.8 in \cite{HWZ8.7} and Proposition
  2.17 in \cite{HWZ8.7} lead to the desired result.
Alternatively we can use the results from Subsection \ref{FUND0}.
\end{proof}

In order to simplify notation define $X(N)= X^{3,\delta}_{{\mathcal
  D},\varphi}({\mathbb R}^N)$, so that  $X(N)$ and $X(M)$ are the
  M-polyfolds associated to ${\mathbb R}^N$ and ${\mathbb R}^M$. 
Given a smooth map $h:{\mathbb R}^N\rightarrow {\mathbb R}^N$ define
  $h_\ast: X(N)\rightarrow X(M):u\rightarrow h\circ u$.

\begin{proposition}\label{prop2}
The map $h_\ast:X(N)\rightarrow X(M)$ is sc-smooth.
\end{proposition}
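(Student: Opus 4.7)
The plan is to apply properties (4) and (5) of Theorem \ref{thm1} to reduce the claim to sc-smoothness of a map between sc-Hilbert manifolds whose structures are already understood. First, by (5), $h_\ast:X(N)\to X(M)$ is sc-smooth iff $h_\ast\circ \oplus_N:\mathbb{B}\times H_c^{3,\delta}(\mathcal{D},\mathbb{R}^N)\to X(M)$ is sc-smooth; then by (4) applied to the target $X(M)$, this is sc-smooth iff $\Phi:=f_M\circ h_\ast\circ \oplus_N$ is sc-smooth as a map
$\mathbb{B}\times H_c^{3,\delta}(\mathcal{D},\mathbb{R}^N)\to \mathbb{B}\times H_c^{3,\delta}(\mathcal{D},\mathbb{R}^M)$.
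It is this sc-smoothness that I would verify.

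Writing inputs in the $E$-picture (conjugation by $\Sigma$, as in the proof of Theorem \ref{thm1}) as $(a,(\xi^x,\xi^y))=(a,(c+r^x,c+r^y))$, the map $\Phi$ preserves the $\mathbb{B}$-factor and sends $(\xi^x,\xi^y)$ to $(\tilde\eta^x,\tilde\eta^y)$ of the form
\begin{align*}
\tilde\eta^x(s,t) &= \beta(s-\tfrac{R}{2}-2)\cdot h\bigl(c+\beta(s-\tfrac{R}{2})\, r^x(s,t)+\beta(-s+\tfrac{R}{2})\, r^y(s-R,t-\theta)\bigr)\\
&\quad + \bigl(1-\beta(s-\tfrac{R}{2}-2)\bigr)\cdot \text{av}_a\bigl(h\circ\oplus_N(a,(\xi^x,\xi^y))\bigr),
\end{align*}
with an analogous expression for $\tilde\eta^y$, and with the understanding that at $a=0$ the formula collapses to $(0,(h\circ\xi^x,h\circ\xi^y))$. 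I would then present $\Phi$ as a finite sum and composition of maps of the following types: (i) the Nemytsky operator $u\mapsto h\circ u - h(c)$ on the exponential-decay space $H^{3,\delta}(\mathbb{R}^\pm\times S^1,\mathbb{R}^N)$; (ii) multiplication by $R$-dependent cutoffs of the form $\beta(\pm s-R/2+\mathrm{const})$, handled by Fundamental Lemma I or Proposition \textbf{M4}; (iii) translation-and-cutoff terms of the form $g(\pm s-R/2)\cdot r^\mp(\pm s\mp R,t\mp\theta)$, handled by Fundamental Lemma II or Proposition \textbf{M5}; and (iv) evaluation of the middle-loop average $\text{av}_a$, handled by Propositions \textbf{M1}--\textbf{M3}.

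For the Nemytsky operator in (i), I would use that the Sobolev embedding $H^{3+m}(\mathbb{R}\times S^1)\hookrightarrow C^1$ on the two-dimensional cylinder makes $u\mapsto h\circ u - h(c)$ well defined and of class $C^{m+1}$ from $H^{3+m,\delta_m}$ to $H^{3+m,\delta_m}$, after which Theorem \ref{ABC-x} upgrades level-wise classical smoothness to sc-smoothness. The main technical point is preservation of the exponential weight $\delta_m$ under the Nemytsky operator, which follows from the integral identity $h(c+r)-h(c)=\int_0^1 Dh(c+tr)\,r\,dt$ together with uniform $L^\infty$ bounds on $Dh(c+tr)$ (and its derivatives) in an $H^{3,\delta_0}$-neighborhood of the zero exponential-decay part, since the decay of $r$ controls the decay of the entire integrand at rate $\delta_m$.

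I expect the main obstacle to be the appearance of $h$ composed with a genuinely nonlinear expression in both $r^x$ and a translate of $r^y$ in the neck region, which prevents a direct splitting of $\Phi$ into linear combinations of the previously treated building blocks. The remedy is that the argument of $h$ is itself an sc-smooth function valued in $H^{3+m,\delta_m}$ by the combination of steps (ii) and (iii), after which the Nemytsky result of step (i) and the chain rule for sc-smooth maps yield sc-smoothness of the full composite. Finally, compatibility at $a=0$ is automatic from the definitions of $f_M$ and $\oplus_N$, since the formulas defining $\tilde\eta^x,\tilde\eta^y$ specialize as described above.
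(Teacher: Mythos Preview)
Your reduction via properties (4) and (5) of Theorem~\ref{thm1} is correct, but the execution has a genuine gap at the Nemytsky step. You assert that ``the argument of $h$ is itself an sc-smooth function valued in $H^{3+m,\delta_m}$,'' and this is false as stated: the expression $\beta(s-R/2)\,r^x(s,t)+(1-\beta(s-R/2))\,r^y(s-R,t-\theta)$ is not an element of $H^{3,\delta_0}(\mathbb{R}^+\times S^1,\mathbb{R}^N)$. For $s\in[R/2+1,R]$ it equals $r^y(s-R,t-\theta)$ with $s-R$ near $0$, where $r^y$ need not be small at all; for $s>R$ it is not even well-defined. The outer cutoff $\beta(s-R/2-2)$ eventually kills these regions in the final product, but you cannot feed an object that is not in the domain of the Nemytsky operator into that operator first and multiply by a cutoff afterwards --- the chain-rule factorization you propose does not type-check. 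A repair is possible (insert an auxiliary cutoff such as $\beta(s-R/2-4)$ \emph{inside} the argument of $h$; this changes nothing on $\operatorname{supp}\beta(s-R/2-2)$ but produces a bona fide element of $H^{3,\delta_0}$), yet you have not carried it out.

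The paper avoids this difficulty entirely by a one-line factorization: with the same cutoff model used for $\oplus$ and for the explicit $f$ constructed in the proof of Theorem~\ref{thm1}, one has the identity $h_\ast=\oplus^M\circ h_\sharp\circ f^N$, where $h_\sharp(a,u^x,u^y)=(a,h\circ u^x,h\circ u^y)$ acts on the disk-pair space $\mathbb{B}\times H^{3,\delta}_c(\mathcal{D},\cdot)$ with no gluing present. The identity holds because $\beta_{a,-2}^x\equiv 1$ on $\operatorname{supp}(\beta_a^x)$ (and similarly over $y$), so $\eta^x(z)=u(\{z,z'\})$ wherever $\beta_a^x(z)\neq 0$, and hence the nonlinearity of $h$ never interacts with the partition of unity $\beta_a^x+\beta_a^y=1$. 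Now $h_\sharp$ is ssc-smooth by the level-wise classical result of Eliasson, while $f^N$ and $\oplus^M$ are sc-smooth by Theorem~\ref{thm1}, and the chain rule finishes the argument. This is both shorter than your route and sidesteps the neck-region Nemytsky issue completely.
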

%
\begin{proof} 
The map $h$ defines an ssc-smooth map ${\mathbb B}\times
  H^{3,\delta_0}_c({\mathcal D},{\mathbb R}^N)\rightarrow {\mathbb B}\times
  H^{3,\delta_0}_c({\mathcal D},{\mathbb R}^M)$ by $(a,u)\rightarrow
  (a,h\circ u)$, which follows from the level-wise classical results in
  \cite{El} on Fr\'echet differentiability.
In particular, the map is also sc-smooth and  we denote it by $h_\sharp$. 
Note that we have the commutative diagram, when we use for both situations
  the same cut-off and the previously given explicit example for $f$.
\begin{eqnarray}\label{DIAG1}
  \begin{CD}
  {\mathbb B}\times H^{3,\delta_0}_c({\mathcal D},{\mathbb R}^N) @>
  h_\sharp>> {\mathbb B}\times H^{3,\delta_0}_c({\mathcal D},{\mathbb
  R}^M)\\
  @A f^N AA         @V \oplus^M VV\\
  X(N) @> h_\ast >>   X(M).
  \end{CD}
\end{eqnarray}
Hence $h_\ast$ is the composition of sc-smooth maps and the result follows
  from the chain rule.
\end{proof}
%

\begin{remark}
The proof of Proposition \ref{prop2}  illustrates how we can verify if a
  map $h_{\ast}$ between `bad' spaces, i.e.  the M-polyfolds constructed by
  the $\oplus$-method, is sc-smooth.
Indeed, it can be checked by  studying the sc-smoothness of a map
  $h_{\sharp}$ between better spaces, and as is often the case such maps
  $h_{\ast}$ are classical contexts and have know properties.
In our case $h_{\sharp}$ is ssc-smooth and consequently als sc-smooth. 
The chain rule then implies sc-smoothness for $h_{\ast}$. 
These type of arguments, i.e. just writing down the right diagram and
  employing the chain rule will occur frequently.
\end{remark}
%

%
\subsubsection{Extension to Manifolds}
If $U$ is an open subset of ${\mathbb R}^N$, the subset of all $u\in X(N)$
  with image in $U$ is open and therefore has a M-polyfold structure.
Denote by $\mathfrak{M}$ the category of smooth manifolds without boundary
  and smooth maps between them.
Note that each connected component of such a manifold has a proper
  embedding into some ${\mathbb R}^N$.
From Theorem \ref{thm1} and Proposition \ref{prop2} one deduces
  immediately the following result.

\begin{theorem} \label{THM1.3}
Assume that ${\mathcal D}=(D_x\sqcup D_y,\{x,y\})$ is an un-ordered disk
  pair, $\varphi$ the exponential gluing profile, and $\delta$ an increasing
  sequence of weights starting at $\delta_0>0$.
Abbreviate $X(N):=X^{3,\delta}_{{\mathcal D},\varphi}({\mathbb R}^N)$.
The functorial construction, which associates to $N$ the M-polyfold $X(N)$
  and to a smooth map $h:{\mathbb R}^N\rightarrow {\mathbb R}^M$
  the sc-smooth map $h_\ast:X(N)\rightarrow X(M)$ has a unique extension
  to the category $\mathfrak{M}$ characterized uniquely by the following
  properties.
\begin{itemize}
  \item[(1)] 
  If $Q=\coprod Q_\lambda$, where the $Q_\lambda$ are the connected
  components, then $X(Q)=\coprod X(Q_\lambda)$.
  Moreover if $Q={\mathbb R}^N$ then $X(Q)=X(N)$.
  \item[(2)] 
  If $Q$ properly embeds into some ${\mathbb R}^N$ then, with
  $\phi:Q\rightarrow {\mathbb R}^N$ being such a smooth embedding  as a set
  $$
    X(Q)=\{u:Z_a\rightarrow Q\ | \phi\circ u\in X(N)\}.
    $$
  \item[(3)] 
  The map $X(Q)\rightarrow X(N): u\rightarrow \phi\circ u$, where
    $\phi:Q\rightarrow {\mathbb R}^N$ for some $N$  is a proper smooth
    embedding, is an  sc-smooth embedding of M-polyfolds.
  \end{itemize}
The properties {\em (1)}, {\em(2)}, and {\em(3)} uniquely characterize the
  sc-smooth structure on $X(Q)$.
For the M-polyfold structure on $X(Q)$ the following properties hold.
\begin{itemize}
\item[(4)] 
For an open neighborhood $U$ of $\phi(Q)$ and a smooth map
  $R:U\rightarrow U$ with $R(U)=\phi(Q)$ and $R\circ R=R$ the map from the
  open set $X(U)\rightarrow X(Q): u\rightarrow \phi^{-1}\circ R\circ u$
  is sc-smooth.
\item[(5)] 
The obvious map $p:X(Q)\rightarrow {\mathbb B}$ is sc-smooth and $p$ has
  the submersion property, see Definition \ref{I-DEF_submersion_property}
  from \cite{FH-notes-I}.
\end{itemize}
\end{theorem}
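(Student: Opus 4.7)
The plan is to define $X(Q)$ by realizing it as a sc-smooth retract inside a Euclidean model $X(N)$ via a proper Whitney embedding combined with a tubular-neighborhood retraction, and then to derive all listed properties from Proposition \ref{prop2} and the chain rule. Since every fiber $Z_a$ (including the nodal fiber $Z_0$, which is connected via the node $\{x\sim y\}$) is connected, any map $u : Z_a \to Q = \coprod Q_\lambda$ factors through a unique component, which immediately forces (1) and reduces the construction to the connected case. For connected $Q$ I would pick a proper smooth embedding $\phi : Q \hookrightarrow \mathbb{R}^N$, an open tubular neighborhood $U \supset \phi(Q)$, and a smooth retraction $R : U \to U$ with $R(U) = \phi(Q)$ and $R \circ R = R$; after multiplying by a bump function one may assume $R$ extends smoothly to all of $\mathbb{R}^N$.

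Let $X(U) \subset X(N)$ denote the open subset of maps taking values in $U$ (openness follows from the continuity of elements of $X(N)$ and the compactness of each $Z_a$). Proposition \ref{prop2} applied to the extended $R$ yields a sc-smooth self-map $R_\ast$ of $X(N)$, which restricts to a sc-smooth retraction of $X(U)$ whose fixed-point set is $\phi_\ast(X(Q))$, with $X(Q)$ defined set-theoretically by (2). Thus $\phi_\ast(X(Q))$ is a sc-smooth retract of the M-polyfold $X(U)$, so it inherits a canonical M-polyfold structure which I would transport back to $X(Q)$ via $\phi_\ast$. Properties (2), (3) and (4) are then built directly into the construction. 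To establish independence of the choices $(\phi, R)$, observe that for a second choice $(\phi', R')$ the identity $X(Q) \to X(Q)'$ is induced by post-composition with the smooth map $\phi' \circ \phi^{-1} \circ R : U \to \mathbb{R}^{N'}$, which is sc-smooth by another application of Proposition \ref{prop2}; the reverse direction is symmetric. The same argument yields the uniqueness statement in the theorem: any competing M-polyfold structure satisfying (1)--(3) makes $\phi_\ast$ a sc-smooth embedding onto the same subset of $X(N)$, so the identity between the two structures is sc-smooth in both directions.

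Property (5) then follows by post-composing sc-smooth local sections of the Euclidean projection $X(N) \to \mathbb{B}$ (built into the definition via the $\oplus$-construction of Theorem \ref{thm1}) with $\phi^{-1} \circ R$ to produce the required sections for $X(Q) \to \mathbb{B}$. The main obstacle is bookkeeping rather than substance: one must take care that all smooth maps entering Proposition \ref{prop2} are defined on the correct open subsets of Euclidean space, and one must match the construction in (5) against the precise Definition \ref{I-DEF_submersion_property} from \cite{FH-notes-I}. Neither presents a genuine difficulty; as anticipated by the remark following Proposition \ref{prop2}, the argument is entirely driven by drawing the right commutative diagram, passing to the Euclidean model via $\oplus$ and $f$, and invoking the chain rule.
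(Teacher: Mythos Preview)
Your proposal is correct and follows essentially the same route as the paper: embed $Q$ properly into $\mathbb{R}^N$, use a tubular retraction $R$ together with Proposition~\ref{prop2} to realize $\phi_\ast(X(Q))$ as a sc-smooth retract of the open set $X(U)\subset X(N)$, and verify independence of the embedding by extending $\psi\circ\phi^{-1}$ to a global smooth map and invoking Proposition~\ref{prop2} again. The only place the paper is more explicit than your sketch is (5), where it writes down the concrete retraction $\rho(u,b)=(\oplus(b,\bar f(u)),b)$ on $X(N)\times\mathbb{B}$ and then modifies it to $\widetilde\rho(v,b)=(R(\oplus(b,\bar f(v))),b)$ for $X(Q)$; this is exactly the ``compose with $R$'' step you anticipated, so your outline is on target.
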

%
\begin{proof}
One can apply Proposition
  \ref{I-PROP_extension_m_poly_construction_functors} from
  \cite{FH-notes-I}.

\jwf{[This text is kind of fucked up]}

in view of the previous discussions and it is clear that (1)--(3) will
  hold.
Since the situation is rather concrete and this is the first application
  of Proposition \ref{I-PROP_extension_m_poly_construction_functors} from
  \cite{FH-notes-I}
  we carry out the ideas which were used in its proof just to illustrate the
  procedure.

Property {(3)} says that   the subset $Y$  of $X(N)$ consisting of all
  $v\in X(N)$ with image in $\phi(Q)$ is a sub-M-polyfold and that for the
  induced M-polyfold structure the map $X(Q)\rightarrow Y:u\rightarrow
  \phi\circ u$ is an sc-diffeomorphism.
Since $\phi(Q)\subset {\mathbb R}^N$ is properly embedded we find a smooth
  map $R:{\mathbb R}^N\rightarrow {\mathbb R}^N$ and an open neighborhood
  $U$ of $\phi(Q)$ such that $(R|U)\circ (R|U)=R|U$ and $R(U)=\phi(Q)$.
Since the collection $X_U(N)$ of all $v\in X(N)$ with image in $U$  is
  open, we  see that $R|U$ defines an sc-smooth retraction
  $X_U(N)\rightarrow X_U(N)$ with image being the set $\Sigma_\phi$ of all
  $v\in X(N)$ with image in $\phi(Q)$.
Then by definition the map $X(Q)\rightarrow \Sigma_\phi$ is a bijection
  and we equip $X(Q)$ with the M-polyfold structure which makes it an
  sc-diffeomorphism.
This M-polyfold structure on $X(Q)$ might depend on the proper embedding
  $\phi$, and we denote it for the moment by $X_\phi(Q)$.
If $\psi:Q\rightarrow {\mathbb R}^M$ is a proper embedding, we find smooth
  maps $A:{\mathbb R}^N\rightarrow {\mathbb R}^M$
  and $B:{\mathbb R}^M\rightarrow {\mathbb R}^N$  such that
  $\psi\circ\phi^{-1} = A$ on $\phi(Q)$
  and $\phi\circ\psi^{-1}=B$ on $\psi(Q)$. 
The map ${(\psi\circ\phi^{-1})}_\ast: \Sigma_\phi\rightarrow \Sigma_\psi$
  is the restriction of an sc-smooth map and therefore sc-smooth. 
The same holds for ${(\phi\circ\psi^{-1})}_\ast.$
Hence ${(\psi\circ\phi^{-1})}_\ast $ is an sc-diffeomorphism.
We have the commutative diagram
  $$
    \begin{CD}
    X_\phi(Q)  @> Id >>X_{\psi}(Q)\\
    @V\phi_\ast VV  @V \psi VV\\
    \Sigma_\phi @>{(\psi\circ\phi^{-1})}_\ast>> \Sigma_\psi.
    \end{CD}
    $$
Hence $Id$ is sc-smooth and by reversing roles the same holds for the
  inverse.
Consequently the M-polyfold structure on $X(Q)$ does not depend on the
  choice of the proper embedding.
It is straight forward with the given definition of the M-polyfold
  structure, that a smooth map $f:Q\rightarrow P$ induces an sc-smooth map
  $f_\ast:X(Q)\rightarrow X(P)$.   
This completes the proof of showing that the functor $X$ has an extension
  to manifolds and it is clear that the properties uniquely determine this
  extension.
We have also verified {(4)}.

In order to prove {(5)} note that $p_{\mathbb B}:X(N)\rightarrow {\mathbb
  B}$ has the submersion property.
Take the usual sc-smooth maps $\oplus:{\mathbb B}\times
  H^{3,\delta}_c\rightarrow X(N)$ and $f:X(N)\rightarrow {\mathbb B}\times
  H^{3,\delta}_c$. We write $f(u) =(a(u),\bar{f}(u))$, where $a=a(u)$ is the
  domain parameter for $u$. The map $\bar{f}$ is sc-smooth into
  $H^{3,\delta}_c$.
We define the sc-smooth map  $\rho: X(N)\times {\mathbb B} \rightarrow
  X(N)\times {\mathbb B}$ by
  \begin{eqnarray}\label{sub-restZZ}
    \rho(u,b) =   (\oplus (b,\bar{f}(u)),b)
    \end{eqnarray}
Then 
  \begin{eqnarray*}
    \rho\circ\rho(u,b)&=& \rho(\oplus (b,\bar{f}(u)),b)\\
    &=&(\oplus (b, \bar{f}(\oplus (b,\bar{f}(u)))),b)\\
    &=& (\oplus\circ f\circ \oplus(b,\bar{f}(u)),b)\\
    &=& (\oplus(b,\bar{f}(u)),b)\\
    &=&\rho(u,b).
    \end{eqnarray*}
We note that $\rho(X(N)\times{\mathbb B})=\{(w,b)\ |\
  a(w)=b\}=:\text{Gr}(p_{\mathbb B})$.
If $Q\subset {\mathbb R}^N$ is a properly embedded submanifold take an
  open neighborhood $U=U(Q)$ with a smooth retraction $R:U\rightarrow U$
  satisfying $R(U)=Q$.
If $(u,a)$ satisfies $a(u)=a$ and $u\in X(Q)\subset X(N)$ we can define
  for nearby data $(v,b)\in X(Q)\times{\mathbb B}$
  $$
    \wt{\rho}(v,b) = (R(\oplus(b,\bar{f}(u))),b).
    $$
Then 
  \begin{eqnarray*}
    \wt{\rho}\circ\wt{\rho}(v,b)&=&\wt{\rho} (R(\oplus(b,\bar{f}(u))),b)\\
    &=&\wt{\rho}(v,b).
    \end{eqnarray*}
Hence for $X(Q)$ the submersion property holds.
\end{proof}

As a corollary of the previous result we note the following assertion which 
  follows from the observation that the definition (\ref{sub-restZZ}) of
  $\rho$ which involves a proper choice of $\bar{f}$ is the identity on
  boundary annuli.
This is important when we implement in the local constructions the idea of
  submersive $\oplus$-constructions with restrictions.

\begin{corollary}
Let ${\mathcal D}$ be an un-ordered disk pair the construction functor
  $X=X_{{\mathcal D},\varphi}^{3,\delta}$ has the following properties.
For every $N$ we have that the submersive $\oplus$-construction.
$$
  {\mathbb B}_{\mathcal D}\times H^{3,\delta}_c({\mathcal D},{\mathbb
  R}^N)\xrightarrow{\oplus} X(N)\xrightarrow{p} {\mathbb B}_{\mathcal D}
  $$
Given compact  concentric  boundary annuli for ${\mathcal D}$, i.e.
  $A_x\subset D_x$ and $A_y\subset D_y$, where
  $\sigma_{\wh{x}}^+([0,R_x]\times S^1)=A_x$ for some $R_x\in [0,20]$ and
  similarly for $A_y$ define by
  $$
    p_x:X\rightarrow H^3(A,{\mathbb R}^N)\ \text{and}\ \
    p_y:X(N)\rightarrow H^3(A_y,{\mathbb R}^N)
    $$
  the sc-smooth restriction maps. 
Then $(\oplus,\{p_x,,p_y\},\bar{a})$ is a submersive $\oplus$-construction
  with resrictions.
\end{corollary}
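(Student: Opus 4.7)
The plan is to verify that the tuple $(\oplus,\{p_x,p_y\},\bar{a})$ satisfies the three structural requirements of a submersive $\oplus$-construction with restrictions: (i) the usual submersive $\oplus$-construction data is already in place from Theorem \ref{THM1.3}, (ii) the restriction maps $p_x, p_y$ to the fixed boundary annuli $A_x, A_y$ are sc-smooth, and (iii) the distinguished retraction $\rho$ used to witness the submersion property restricts to the identity on the boundary annuli, so that $p_x \circ \rho = p_x \circ \mathrm{pr}_1$ and similarly for $p_y$.

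For (i) I would simply invoke Theorem \ref{thm1} together with Theorem \ref{THM1.3}(5), which already produce $\oplus$, the projection $p = p_{\mathbb{B}}$, and the submersion retraction $\rho$ of equation (\ref{sub-restZZ}). For (ii), the restriction maps $p_x,p_y$ are sc-smooth because for each $a\in {\mathbb{B}}$ the set $A_x$ lies in a region of $D_x$ where the glued surface $Z_a$ coincides isometrically with $D_x$ (via $\sigma_{\widehat{x}}^+$), so restriction is fiberwise a bounded linear operator and extends level-wise to a classically smooth (hence sc-smooth) map; in the nodal case $a=0$ it is just the usual restriction from $H^{3,\delta_0}_c$ to $H^3(A_x,\mathbb{R}^N)$.

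The crux is (iii), and this is where the remark after equation (\ref{EQ_beta_a_xy}) does the real work. That remark states that $\oplus(a,(u^x,u^y))(z,z')=u^x(z)$ for $z$ in a concentric annular neighborhood of $\partial D_x$, because the cutoff $\beta^x_a$ equals $1$ and $\beta^y_a$ equals $0$ there. Since $A_x$ is assumed to lie in such a neighborhood (using $R_x\in[0,20]$ and the fact that $\beta(s-R/2)=1$ for $s$ sufficiently below $R/2$), the map $\oplus$ restricted to $A_x$ reads off only the $x$-component. Applying this to $\oplus(b,\bar{f}(u))$ and comparing with $\oplus\circ f(u) = u$ gives $p_x(\oplus(b,\bar f(u))) = p_x(u)$, independent of the new domain parameter $b$. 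Consequently $p_x\circ\rho(u,b) = p_x(u)$, which is precisely the compatibility of $\rho$ with $p_x$. The analogous computation yields the compatibility for $p_y$.

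The main obstacle, modest but worth checking carefully, is the uniform choice of the concentric annuli: one must confirm that the specific cutoff $\beta$ used in the construction of $\oplus$ and $f$ (which involves the shift $-R/2$ and also $-R/2-2$) is identically $1$ on \emph{all} of $A_x$ for \emph{every} admissible gluing parameter $a\in\mathbb{B}_{\mathcal{D}}$. This is where the bound $R_x\in[0,20]$ enters: as $|a|\to 0$ we have $R(a)\to\infty$, so the cutoff region recedes away from $\partial D_x$, while for $|a|$ close to $1/4$ one uses that $A_x$ was chosen within the fixed region where $\sigma_{\widehat{x}}^+([0,R_x]\times S^1)$ sits well inside the support of $\beta\equiv 1$. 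Once this is observed, the remaining verifications reduce to the identity $\oplus\circ f = \mathrm{Id}$ and a chain of substitutions already used in the proof of Theorem \ref{THM1.3}(5).
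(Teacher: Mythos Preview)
Your proposal is correct and follows essentially the same approach as the paper. The paper's proof is a single sentence (``One just has in the construction of the $\rho$'s to pick the $\bar{f}$ carefully''), and you have unpacked precisely what that means: taking the explicit $f$ from the proof of Theorem~\ref{thm1}, observing via the remark after~(\ref{EQ_beta_a_xy}) that both $\oplus$ and $f$ act as the identity on the boundary annuli, and checking that the bound $R_x\le 20$ together with $R=\varphi(|a|)>e^4-e>50$ for $|a|<1/4$ keeps $A_x$ safely inside the region where all the relevant cutoffs equal $1$.
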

%
\begin{proof}
One just has in the construction of the $\rho$'s to pick the $\bar{f}$
  carefully.
\end{proof}

We have given before sufficient and necessary criteria for a map into
  $X(N)$ or for a map defined on $X(N)$ to be sc-smooth.
The canonical construction given in the previous theorem allows to give a
  similar criterion for $X(Q)$, which follows immediately from the criterion
  in the special and the construction.

\begin{proposition}\label{PROP1.4}
Let ${\mathcal D}$, $\varphi$, and $\delta$ be given and abbreviate
  $X(Q)=X^{3,\delta}_{{\mathcal D},\varphi}(Q)$. Assume that $Q$ is
  connected.
\begin{itemize}
  \item[(1)] 
  Let $Y$ be a M-polyfold and $A:Y\rightarrow X(Q)$ be a map. 
  Then $A$ is sc-smooth if and only if for one  proper smooth embedding
    $\phi:Q\rightarrow {\mathbb R}^N$ the map $Y\rightarrow {\mathbb
    B}\times H^{3,\delta}({\mathcal D},{\mathbb R}^N)$ defined by
    $$
      y\rightarrow f^N( \phi \circ A(y))
      $$
  is sc-smooth. 
  Here $f^N$ is the map constructed in Theorem \ref{thm1} occurring in the
    definition of a M-polyfold structure:
  $$
    X^{3,\delta}_{{\mathcal D},\varphi}({\mathbb R}^N)\rightarrow {\mathbb
    B}\times H^{3,\delta}_{c}({\mathcal D},{\mathbb R}^N).
    $$
  \item[(2)] 
  Let $Y$ be a M-polyfold. A map $B:X(Q)\rightarrow Y$ is sc-smooth if and
    only if for one smooth proper embedding $\phi:Q\rightarrow {\mathbb
    R}^N$ and tubular neighborhood $U=U(\phi(Q))$ which via
    $r:U\rightarrow U$ smoothly retracts to $\phi(Q)$ the composition
    $$
      {\mathcal U}\rightarrow Y: (a,(u^x,u^y))\rightarrow B(\phi^{-1}\circ r
      \circ \oplus(a,(u^x,u^y)))
      $$
    is sc-smooth.  
  Here ${\mathcal U}$ is the open subset of ${\mathbb B}\times
    H^{3,\delta}_c({\mathcal D},{\mathbb R}^N)$ consisting of all
    $(a,(u^x,u^y))$ so that $\oplus(a,(u^x,u^y))$ is a map having image in
    $U$.
  \end{itemize}
\end{proposition}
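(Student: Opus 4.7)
The plan is to reduce each part to the criteria of Theorem \ref{thm1} (parts (4) and (5)) for the ambient M-polyfold $X(N) := X^{3,\delta}_{\mathcal{D},\varphi}(\mathbb{R}^N)$, via an arbitrary proper smooth embedding $\phi: Q \to \mathbb{R}^N$. The stated independence of the resulting criteria from the choice of $\phi$ follows immediately, since by Theorem \ref{THM1.3} the M-polyfold structure on $X(Q)$ does not depend on $\phi$.

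For part (1), Theorem \ref{THM1.3}(3) realizes $\phi_*: X(Q) \to X(N)$ as an sc-smooth embedding whose image $\Sigma_\phi$ is a sub-M-polyfold, with $\phi_*: X(Q) \to \Sigma_\phi$ an sc-diffeomorphism. Consequently $A: Y \to X(Q)$ is sc-smooth if and only if $\phi_* \circ A: Y \to X(N)$ is sc-smooth, and Theorem \ref{thm1}(4) then gives the equivalence with sc-smoothness of $f^N \circ \phi_* \circ A = [\, y \mapsto f^N(\phi \circ A(y))\,]$ into $\mathbb{B} \times H^{3,\delta}_c(\mathcal{D}, \mathbb{R}^N)$, which is the stated criterion.

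For part (2), fix $\phi$, a tubular neighborhood $U \supset \phi(Q)$, and a smooth retraction $r: U \to U$ with $r(U) = \phi(Q)$. By Theorem \ref{THM1.3}(4), $(\phi^{-1} \circ r)_*: X(U) \to X(Q)$ is sc-smooth, and one checks directly (using $r|_{\phi(Q)} = \mathrm{Id}$) that $(\phi^{-1} \circ r)_* \circ \phi_* = \mathrm{Id}_{X(Q)}$ and $\phi_* \circ (\phi^{-1} \circ r)_* = r_*$ on $X(U)$. The second identity means $r_*$ is an sc-smooth retraction of $X(U)$ onto $\Sigma_\phi$, so $B: X(Q) \to Y$ is sc-smooth if and only if $B \circ \phi_*^{-1} \circ r_* = B \circ (\phi^{-1} \circ r)_*$ is sc-smooth on the open sub-M-polyfold $X(U) \subset X(N)$. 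By Theorem \ref{thm1}(5), this is in turn equivalent to sc-smoothness of $B \circ (\phi^{-1} \circ r)_* \circ \oplus: \mathcal{U} \to Y$, where $\mathcal{U} = \oplus^{-1}(X(U))$ is the open set described in the statement.

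I do not anticipate a substantive obstacle: the argument reduces to bookkeeping with the sc-smooth pushforwards $\phi_*$, $r_*$, and $(\phi^{-1} \circ r)_*$ (whose sc-smoothness is provided by Proposition \ref{prop2} and its functorial extension in Theorem \ref{THM1.3}) together with the two algebraic identities relating them, after which Theorem \ref{thm1} applied on the open subset $X(U) \subset X(N)$ closes the argument in both directions.
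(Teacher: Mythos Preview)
Your proposal is correct and follows exactly the route the paper indicates: the text preceding the proposition says the criterion ``follows immediately from the criterion in the special [case] and the construction,'' and gives no further proof. Your reduction via $\phi_\ast$ and $(\phi^{-1}\circ r)_\ast$ to parts (4) and (5) of Theorem~\ref{thm1}, using Theorem~\ref{THM1.3}(3)--(4) and the retraction identity $(\phi^{-1}\circ r)_\ast\circ\phi_\ast=\mathrm{Id}_{X(Q)}$, is precisely the intended bookkeeping, spelled out in more detail than the paper itself provides.
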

%

We need the following result from Subsection \jwf{[broken reference]} 
  which will apply to $p:X(Q)\rightarrow {\mathbb B}$.

\begin{proposition}
Assume that $h:X\rightarrow H$ is an sc-smooth map between M-polyfolds
  $X$, and it has  the submersion property, and $k:W\rightarrow H$ is an
  sc-smooth map.
Then the fibered product with projection $pr_2:
  X{_{h}\times_k}W\rightarrow W$ defines a M-polyfold and $pr_2$ has the
  submersion property.
\end{proposition}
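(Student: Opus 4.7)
The plan is to realize $X{_h\times_k}W$ as the image of an sc-smooth retraction on the product M-polyfold $X\times W$ (which endows it with an M-polyfold structure), and then to adapt the same construction to verify the submersion property for $pr_2$.

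First I would unpack the submersion property of $h$: near any pair $(x_0,q_0)$ with $q_0=h(x_0)$, there is an open neighborhood $U\times V \subset X\times H$ and an sc-smooth map $\rho:U\times V\to U\times V$ satisfying $\rho\circ\rho=\rho$ whose image is the graph $\{(x,q)\in U\times V : h(x)=q\}$. Given $(x_0,w_0)\in X{_h\times_k}W$ with common value $q_0=h(x_0)=k(w_0)$, choose $U\times V$ as above and an open neighborhood $W_0\subset k^{-1}(V)$ of $w_0$. Define
\begin{equation*}
\widetilde{\rho}:U\times W_0\to U\times W_0,\qquad \widetilde{\rho}(x,w)=\big(pr_1\rho(x,k(w)),\, w\big).
\end{equation*}
This is sc-smooth as a composition of sc-smooth maps. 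Because $\rho(x,k(w))=(x',k(w))$ with $h(x')=k(w)$, a second application of $\rho$ fixes this point, so $\widetilde{\rho}\circ\widetilde{\rho}=\widetilde{\rho}$. The image of $\widetilde{\rho}$ is precisely $\{(x,w)\in U\times W_0 : h(x)=k(w)\}$, i.e.\ the fibered product restricted to this open set. Since the image of an sc-smooth retraction of an M-polyfold is again an M-polyfold, this produces a local M-polyfold structure near $(x_0,w_0)$; patching over all base points gives the desired M-polyfold structure on $X{_h\times_k}W$, and $pr_2$ is sc-smooth by construction.

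Next, to establish that $pr_2$ has the submersion property, I would build a retraction on the product of $X{_h\times_k}W$ with $W$ (in a neighborhood of a graph point). On the open set $((U\times W_0)\cap(X{_h\times_k}W))\times W_0$ define
\begin{equation*}
\sigma\big((x,w),w'\big)=\big((pr_1\rho(x,k(w')),\,w'),\,w'\big).
\end{equation*}
Again sc-smoothness is immediate from the chain rule, and the same idempotence argument applied to $\rho$ shows $\sigma\circ\sigma=\sigma$. The image of $\sigma$ consists of all triples $((x',w'),w')$ with $h(x')=k(w')$, which is exactly the local graph of $pr_2$. This verifies the submersion property.

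The main obstacle is essentially bookkeeping: one needs to be careful that the local neighborhoods chosen for $\rho$ and those on the $W$-side are compatible, and that the retractions constructed actually land in the open sets on which they are defined. The structural ingredients — that $X\times W$ is a product M-polyfold, that images of sc-smooth retractions are M-polyfolds, and that the submersion property is stable under this kind of fibered construction — are already built into the theory, so once the formula for $\widetilde{\rho}$ is in hand the remaining verifications are routine.
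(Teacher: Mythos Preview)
The paper does not give a proof of this proposition in the present text; it is imported from the companion paper \cite{FH-notes-I} (the reference to the relevant subsection is marked as broken in the source). So there is no in-paper argument to compare against.

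Your proposal is the standard argument and is correct, with one point worth making explicit. You assume that the retraction $\rho$ furnished by the submersion property preserves the $H$-coordinate, i.e.\ has the form $\rho(x,q)=(\rho_1(x,q),q)$; this is indeed how the submersion property is set up in the companion notes and is exactly how the paper uses it (see the explicit retraction $\rho(u,b)=(\oplus(b,\bar f(u)),b)$ in the proof of Theorem~\ref{THM1.3}). With that understood, your computations of idempotence and of the images of $\widetilde\rho$ and $\sigma$ go through, and the sc-smoothness of $\sigma$ on the fibered product follows because the formula extends to the ambient open set $U\times W_0\times W_0$ and then restricts to the sub-M-polyfold.
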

%

We end the subsection with a useful remark, see also Remark
  \ref{REMARK2.11}.

\begin{remark}\label{REMdotX}
The construction of $X(Q)= X^{3,\delta}_{{\mathcal D},\varphi}(Q)$ depends
  on the gluing profile $\varphi$ and the weight sequence $\delta$.
We also have shown that $p_{\mathbb B}: X(q)\rightarrow {\mathbb B}$, the
  extraction of the domain gluing parameter, is submersive.
The subset $\dot{X}^{3,\delta}_{{\mathcal D},\varphi}(Q)$ defined by
  $$
    \dot{X}^{3,\delta}_{{\mathcal D},\varphi}(Q)=p_{\mathbb
    B}^{-1}({\mathbb B}\setminus\{0\})
    $$
  is open and has an induced M-polyfold structure. 
It is not difficult to show that the structure on
  $\dot{X}^{3,\delta}_{{\mathcal D},\varphi}(Q)$ does not depend on
  $\varphi$ and $\delta$.  
The level $m$ consists of maps of regularity $H^{3+m}$.
\end{remark}
%

%
\subsection{Group Action}\label{SEC_group_action}
Let $G_{\mathcal D}$ be the group of holomorphic isomorphisms of the
  un-ordered disk pair ${\mathcal D}=(D_x\sqcup D_y,\{x,y\})$.
We abbreviate $H^{3,\delta}_{{\mathcal D},c}= H^{3,\delta}_c({\mathcal
  D},{\mathbb R}^N)$ and define an action
  \begin{eqnarray}\label{EQN3}
    G_{\mathcal D}\times H^{3,\delta}_{{\mathcal D},c}\rightarrow
    H^{3,\delta}_{{\mathcal D},c}
    \end{eqnarray}
  by $g\ast (u^x,u^y)= (u^x,u^y)\circ g^{-1}$. 
Along the lines of the result in \cite{HWZ8.7}, Proposition 4.2, but much
  easier since we only have to consider rotations and reflection we can
  prove the following proposition.

\begin{proposition}\label{PROP2.6}
  \hfill\\
The group action in (\ref{EQN3}) is sc-smooth.
\end{proposition}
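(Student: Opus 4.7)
The plan is to reduce the action to translation on weighted Sobolev spaces via holomorphic polar coordinates and then apply the sc-smoothness criteria of Section \ref{SEC_sc_smoothness_vs_smoothness}, paralleling the argument of \cite{HWZ8.7}, Proposition 4.2 in a much simpler setting. First I would identify $G_{\mathcal D}$ explicitly. A holomorphic automorphism of the unordered disk pair $(D_x\sqcup D_y,\{x,y\})$ must fix the nodal set, so it is either a pair of rotations fixing each disk at its nodal point, or such a pair composed with a biholomorphism swapping the two disks. Thus $G_{\mathcal D}\cong (S^1\times S^1)\rtimes \mathbb{Z}/2$, a compact two-component Lie group. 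The non-identity coset is a translate of the identity component by the fixed swap involution $(u^x,u^y)\mapsto (u^y\circ\tau,u^x\circ\tau^{-1})$; by the symmetry of the definition of $H^{3,\delta}_{\mathcal D,c}$ in $x,y$ and preservation of the matching-asymptotic-constant condition, this swap is a linear sc-isometry on each level. Hence sc-smoothness of the full action reduces to sc-smoothness on the identity component $S^1\times S^1$.

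Second, I would transport the rotation action to cylindrical coordinates via the maps $\sigma^+_{\widehat x},\sigma^-_{\widehat y}$ of (\ref{EQ_sig_xhat})--(\ref{EQ_sig_yhat}). A rotation of $D_x$ by angle $\theta_x\in S^1$ becomes the pure translation $(s,t)\mapsto (s,t-\theta_x)$ on $\mathbb{R}^+\times S^1$, and similarly on the $y$-side. Crucially this translation preserves the weights $e^{\delta_m|s|}$ exactly and preserves the asymptotic value at $s=+\infty$, so the action descends to the matching-constant subspace. The problem thus reduces to showing sc-smoothness of
\[
  \Phi\colon S^1\times S^1\times H^{3,\delta}_c(\mathcal D,\mathbb{R}^N)\to H^{3,\delta}_c(\mathcal D,\mathbb{R}^N),\qquad (\theta_x,\theta_y,u^x,u^y)\mapsto \bigl(u^x(\cdot,\cdot-\theta_x),\,u^y(\cdot,\cdot-\theta_y)\bigr),
\]
a pure translation model on weighted Sobolev spaces.

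Third, I would verify sc-smoothness iteratively via Theorem \ref{x1}. Each differentiation in $\theta_x$ or $\theta_y$ produces a factor of $-\partial_t$ acting on $u^x$ or $u^y$, which costs exactly one Sobolev derivative and leaves the weights unchanged. This matches the one-level drop in Theorem \ref{x1}, since level $m$ corresponds to regularity $3+m$ and level $m-1$ to $3+m-1$; hence $d\Phi$ sends $U_m$ continuously into $F_{m-1}$. The required extension $Df(x)\colon E_{m-1}\to F_{m-1}$ of condition (2) in Theorem \ref{x1} is automatic here: $d\Phi$ is linear in the function-variable direction (translation, defined on every level) and involves only a single tangential derivative in the group direction (bounded from level $m$ to $m-1$). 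Joint continuity of the linearization in all variables follows from strong continuity of translation on each $H^{3+m,\delta_m}_c$, which is standard. Applying Theorem \ref{x1} inductively to these linearizations yields sc$^k$-smoothness for every $k$.

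The main obstacle is the bookkeeping of extensions and joint continuity at each iteration, but this is the same mechanism as in \cite{HWZ8.7}, Proposition 4.2; here the absence of a Möbius component means only the translation terms appear and no additional factors must be estimated.
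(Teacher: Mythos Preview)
Your proposal is correct and follows precisely the approach the paper indicates: the paper's entire ``proof'' is the remark preceding the proposition that one argues along the lines of \cite{HWZ8.7}, Proposition~4.2, but more simply because only rotations and the reflection (swap) arise. Your reduction to the identity component via the swap sc-isomorphism, the passage to $t$-translation in cylindrical coordinates, and the iterative verification of Theorem~\ref{x1} is exactly that argument spelled out in detail.
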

%

We note that $G_{\mathcal D}$ acts on the set of natural gluing parameters
  so that $ g_a :Z_a\rightarrow Z_{g\ast a}$, which also includes the case
  $a=0$. 
We define
  $$
    G_{\mathcal D}\times X(N)\rightarrow X(N): (g,u)\rightarrow g\ast u:=
    u\circ g_{a(u)}^{-1}.
    $$
One easily verifies that we have the commutative diagram
  $$
    \begin{CD}
    G_{\mathcal D}\times H^{3,\delta}_{{\mathcal D},c} @>>>
    H^{3,\delta}_{{\mathcal D},c}\\
    @V Id\times \oplus VV     @A f AA\\
    G_{\mathcal D}\times X(N)   @>>>   X(N)
    \end{CD}
    $$
  where the horizontal maps are the obvious projections.
By the definition of the M-polyfold structure this precisely means that
  the action of $G_{\mathcal D}$ is sc-smooth. 
Since we can identify $X(Q)$ with a subset of some $X(N)$ the associated
  group action just restricts.
Hence we obtain the following result.

\begin{theorem}
Given an unordered disk pair ${\mathcal D}$, the gluing profile $\varphi$,
  an increasing sequence $\delta$ and a smooth manifold $Q$ without
  boundary.
Then the natural action
  $$
    G_{\mathcal D}\times X^{3,\delta}_{{\mathcal D},\varphi}(Q)\rightarrow
    X^{3,\delta}_{{\mathcal D},\varphi}(Q)
    $$
  is sc-smooth.
\end{theorem}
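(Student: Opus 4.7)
The plan is to proceed in two stages: first establish sc-smoothness of the action on $X(N) = X^{3,\delta}_{\mathcal{D},\varphi}(\mathbb{R}^N)$, then descend to general $Q$ via the functorial extension of Theorem~\ref{THM1.3}.

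For the $\mathbb{R}^N$-case, I would denote the action by $\alpha\colon G_{\mathcal D} \times X(N) \to X(N)$. By Theorem~\ref{thm1}(4), $\alpha$ is sc-smooth if and only if $f \circ \alpha$ is sc-smooth into $\mathbb{B} \times H^{3,\delta}_c$, and by Theorem~\ref{thm1}(5) applied in the $X(N)$-variable one is reduced to proving sc-smoothness of the composition
\[
G_{\mathcal D} \times \mathbb{B} \times H^{3,\delta}_c \xrightarrow{\mathrm{id}\times\oplus} G_{\mathcal D} \times X(N) \xrightarrow{\alpha} X(N) \xrightarrow{f} \mathbb{B} \times H^{3,\delta}_c.
\]
The central identity to verify is the $G_{\mathcal D}$-equivariance of $\oplus$:
\[
g \ast \oplus(a, (\xi^x,\xi^y)) = \oplus\bigl(g\ast a,\, g \ast(\xi^x,\xi^y)\bigr).
\]
This I would check by direct inspection of formula~(\ref{OPL1}): $G_{\mathcal D}$ acts on each disk by rotations (possibly composed with the swap of $D_x$ and $D_y$), the coordinates $s_x, s_y$ are radial and hence $G_{\mathcal D}$-invariant, so the cutoffs $\beta^x_a, \beta^y_a$ transform equivariantly. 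Granting this identity, the displayed composition equals
\[
(g, a, (\xi^x,\xi^y)) \;\longmapsto\; (f\circ \oplus)\bigl(g\ast a,\, g\ast(\xi^x,\xi^y)\bigr),
\]
and the proof finishes by invoking sc-smoothness of $f \circ \oplus$ (Theorem~\ref{thm1}(2)), sc-smoothness of the $G_{\mathcal D}$-action on $H^{3,\delta}_{{\mathcal D},c}$ (Proposition~\ref{PROP2.6}), and smoothness of the $G_{\mathcal D}$-action on $\mathbb{B}$, which is immediate since $G_{\mathcal D}$ is a finite-dimensional compact Lie group acting on the complex gluing parameter by rotations and at most one reflection; the chain rule then assembles these.

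For general $Q$, I would choose a proper smooth embedding $\phi\colon Q \hookrightarrow \mathbb{R}^N$ on each connected component. By Theorem~\ref{THM1.3}(3), $\phi_\ast\colon X(Q) \to X(N)$ is an sc-smooth embedding onto the sub-M-polyfold $\Sigma_\phi$ of maps with image in $\phi(Q)$. The computation $g\ast(\phi\circ u) = (\phi\circ u)\circ g_{a(u)}^{-1} = \phi\circ(g\ast u)$ shows $\phi_\ast$ is $G_{\mathcal D}$-equivariant, so the action on $X(Q)$ is just the restriction of the action on $X(N)$ to $\Sigma_\phi$. Combined with the $\mathbb{R}^N$-case and the embedding criterion of Proposition~\ref{PROP1.4}, this will yield sc-smoothness of the action on $X(Q)$.

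The main (and only non-routine) obstacle will be confirming the equivariance identity $g\ast \oplus = \oplus \circ (g\ast, g\ast)$. Since $G_{\mathcal D}$ consists precisely of rotations of each disk together with the involution swapping $D_x$ and $D_y$, these symmetries preserve the radial distance functions and permute the gluing parameters in $\mathbb{B}$ by rotations/reflection; once conventions for how the decorations $\widehat x, \widehat y$ transform under $g$ are fixed, the identity reduces to a pointwise check on formula~(\ref{OPL1}). Everything else is a mechanical application of the sc-smoothness criteria in Theorem~\ref{thm1} and the chain rule.
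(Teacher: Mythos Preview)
Your proposal is correct and follows essentially the same route as the paper: both arguments rest on the $G_{\mathcal D}$-equivariance of $\oplus$ and reduce the $X(N)$-action to the already-known sc-smooth action on $\mathbb{B}\times H^{3,\delta}_{{\mathcal D},c}$ (Proposition~\ref{PROP2.6}), then pass to general $Q$ by restriction via the embedding of Theorem~\ref{THM1.3}. The only cosmetic difference is that the paper packages the reduction as a single commutative square (implicitly also using equivariance of the explicit $f$), whereas you factor through the composition $(f\circ\oplus)\circ(g\ast\,\cdot\,,g\ast\,\cdot\,)$, which has the mild advantage of not needing to check that $f$ itself is equivariant.
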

%

%
\subsection{A Variation and Strong Bundles}
  \label{VARIATION1}\label{SEC_variation_sb}
The main goal is to construct certain strong bundles which requires some
  preparation.
Principally, however, this can be viewed as a generalization of the
  previous M-polyfold constructions.

%
\subsubsection{A Variation}
This is a slight modification of the construction in the previous
  subsection.
We introduce it for the purpose of constructing strong bundles later on
  and therefore we impose a different regularity assumption.
We assume ${\mathcal D}$, $\varphi$, and $\delta$ are as before. 
We shall write ${\mathbb K}$ for ${\mathbb R}$ or ${\mathbb C}$.
We denote by $H^{2,\delta}({\mathcal D},{\mathbb K}^N)$ the sc-Hilbert
  space over the field ${\mathbb K}$, consisting of maps of class
  $(2,\delta_0)$ with vanishing asymptotic limits, i.e. the nodal values
  are $0$.
The $m$-th level is defined by regularity $(2+m,\delta_m)$. 
Then we define the set
  \begin{eqnarray}\label{WEQN3.18}
    &X^{2,\delta_0}_{{\mathcal D},\varphi,0}({\mathbb K}^N)=\left(
    \{0\}\times H^{2,\delta_0}\right)\coprod
    \left( \coprod_{0<|a|<1/4} H^2(Z_a,{\mathbb K}^N)\right)&
    \end{eqnarray}
As before we take a cut-off function $\beta$ with the properties specified
  before and define
  $$
    \wh{\oplus}: {\mathbb B}\times H^{2,\delta}\rightarrow
    X^{2,\delta_0}_{{\mathcal D},\varphi,0}({\mathbb K}^N)
    \index{$X^{2,\delta_0}_{{\mathcal D},\varphi,0}({\mathbb K}^N)$}
    $$
  by the same formula already used for $\oplus$. 
We use the notation $\wh{\oplus}$ since the regularity is different and
  since due to the vanishing of asymptotic constants the map $f$ in the
  $\oplus$-context has to be replaced by a $\wh{f}$ which has a different
  structure.
The main result about this modified construction (\ref{WEQN3.18}) is the
  following theorem whose details are left to the reader with the exception
  that we exhibit a suitable $\wh{f}$.
The proof is otherwise completely analogue to Theorem \ref{thm1}.
We denote by $H^{2,\delta}$ the sc-Hilbert space of maps with the obvious
  regularity vanshing over the two nodal points.
Its levels are defined by regularity $(2+m,\delta_m)$.

\begin{theorem}\label{THM1.7}
For every natural number $N\geq 1$ the set  $X^{2,\delta_0}_{{\mathcal
  D},\varphi,0}({\mathbb K}^N)$ has a uniquely defined metrizable topology
  ${\mathcal T}$, as well as uniquely defined  M-polyfold structure
  characterized by the  requirement that there exists a map
  $\wh{f}:X^{2,\delta_0}_{{\mathcal D},\varphi,0}({\mathbb K}^N)\rightarrow
  {\mathbb B}\times H^{2,\delta_0}({\mathcal D},{\mathbb R}^N)$ preserving
  the fibers over ${\mathbb B}$, i.e. we have the commutative diagram
  $$
    \begin{CD}
    X^{2,\delta_0}_{{\mathcal D},\varphi,0}({\mathbb K}^N)  @> \wh{f} >>
    {\mathbb B}\times H^{2,\delta_0}({\mathcal D},{\mathbb K}^N)\\
    @V \wh{p}_{\mathbb B} VV @V pr_1VV\\
    {\mathbb B} @= {\mathbb B}
    \end{CD}
    $$
  such that 
  \begin{itemize}
  \item[(1)] 
  $\wh{\oplus}\circ \wh{f} =Id$.
  \item[(2)] 
  $\wh{f}\circ \wh{\oplus}$ as a map ${\mathbb B}\times
  H^{2,\delta}({\mathcal D},{\mathbb K}^N)\rightarrow
  {\mathbb B}\times H^{2,\delta}({\mathcal D},{\mathbb K}^N)$ is
  sc-smooth.
  \end{itemize}
The M-polyfold structure on the set $X^{2,\delta_0}_{{\mathcal
  D},\varphi,0}({\mathbb K}^N)$ is denoted by $X^{2,\delta}_{{\mathcal
  D},\varphi,0}({\mathbb K}^N)$ and  has then the following additional
  properties, where we abbreviate the space by $X_0$ and $H^{2,\delta}$.
\begin{itemize}
  \item[(3)] 
  The maps $\wh{\oplus}$ and $\wh{f}$ are continuous.
  \item[(4)] 
  The M-polyfold structure on $X_0$ does not depend on the choice of
  $\beta$ nor on the choice of $\wh{f}$ with the stated properties.
  \item[(5)] 
  A map $h:Y\rightarrow X_0$, where $Y$ is a M-polyfold, is sc-smooth if
  and only if $\wh{f}\circ h:Y\rightarrow {\mathbb B}\times H^{2,\delta}$
  is sc-smooth.
  \item[(6)] 
  A map $k:X_0\rightarrow Y$, where $Y$ is a M-polyfold is sc-smooth if
    and only if $k\circ \wh{\oplus} :{\mathbb B}\times
    H^{2,\delta}\rightarrow Y$ is sc-smooth.
  In particular $\wh{f}$ and $\wh{\oplus}$ are sc-smooth.
  \end{itemize}
\end{theorem}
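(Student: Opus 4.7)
The plan is to mirror the proof of Theorem \ref{thm1}, adapted to the simpler setting where asymptotic constants vanish. Because the common nodal value is always $0$, the averaging term $\text{av}(u)$ that appeared in the construction of $f$ disappears, and one can use a straight cut-off definition for $\wh{f}$. Concretely, for $a=0$ set $\wh{f}(u^x,u^y)=(0,(u^x,u^y))$, and for $a\neq 0$, with $R=\varphi(|a|)$ and $\{z,z'\}\in Z_a$, define $\wh{f}(u)=(a,(\wh\eta^x,\wh\eta^y))$ by
\begin{equation*}
\wh\eta^x(z) = \beta^x_{a,-2}(z)\cdot u(\{z,z'\}),\qquad \wh\eta^y(z')=\beta^y_{a,-2}(z')\cdot u(\{z,z'\}),
\end{equation*}
where we employ the same convention as in the proof of Theorem \ref{thm1} for interpreting $u(\{z,z'\})$ at points where $z$ or $z'$ lies outside the annular overlap (there $\beta^x_{a,-2}$, respectively $\beta^y_{a,-2}$, vanishes, so the product is $0$). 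Since $\beta^x_{a,-2}(z)\to 0$ as $z\to x$ and similarly at $y$, each $\wh\eta^\star$ has vanishing asymptotic limit, so $\wh f$ lands in $\{0\}\times H^{2,\delta_0}$-style pairs over the vanishing-nodal subspace, as required.

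The identity $\wh\oplus\circ\wh f=\mathrm{Id}$ follows exactly as in Theorem \ref{thm1}, but is actually shorter: using $\beta^x_a\cdot\beta^x_{a,-2}=\beta^x_a$, and $\beta^x_a(z)+\beta^y_a(z')=1$ for $\{z,z'\}\in Z_a$, one computes
\begin{equation*}
(\wh\oplus\circ\wh f(u))(\{z,z'\}) = \beta^x_a(z)\cdot u(\{z,z'\}) + \beta^y_a(z')\cdot u(\{z,z'\}) = u(\{z,z'\}),
\end{equation*}
with no averaging term to absorb. The case $a=0$ is immediate by construction.

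The main technical step, sc-smoothness of $\wh f\circ \wh\oplus$, reduces to the Fundamental Lemmas. Writing $\wh f\circ\wh\oplus(a,(\wh\xi^x,\wh\xi^y))=(a,(\wh\eta^x,\wh\eta^y))$, we find
\begin{equation*}
\wh\eta^x(z) = \beta^x_a(z)\cdot \wh\xi^x(z) + \bigl(\beta^x_{a,-2}(z)\cdot\beta^y_a(z')\bigr)\cdot \wh\xi^y(z'),
\end{equation*}
and symmetrically for $\wh\eta^y$; there is no averaged term because the asymptotic constants vanish. Conjugating by the isomorphism $\Sigma$ to half-cylinder coordinates (using $\sigma^+_{\wh x},\sigma^-_{\wh y}$ as in the proof of Theorem \ref{thm1}), we obtain that each summand has exactly the form of one of the maps $\Gamma_1,\Gamma_2,\Gamma_3,\Gamma_4$ treated in Lemmas \ref{FUNDAMENTAL -I} and \ref{FUNDAMENTAL -II}, applied with $G=F$. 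In particular $\beta(\cdot-R/2)$ (and its shift by $-2$) is a map of type $\Gamma_1$/$\Gamma_2$, while the product $\beta^x_{a,-2}\cdot\beta^y_a$ acting on the shifted function $\wh\xi^y(s-R,t-\theta)$ is of type $\Gamma_3$/$\Gamma_4$, using that $\sigma(s)=\beta(s-2)(1-\beta(s))$ is compactly supported. Thus $\wh f\circ\wh\oplus$ is a finite sum of sc-smooth maps and is sc-smooth.

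Having verified that $(\wh\oplus,\wh f)$ is a $\oplus$-polyfold construction with a global $\wh f$, the abstract results of Part I, \cite{FH-notes-I}, produce a unique metrizable topology $\mathcal T$ and a unique M-polyfold structure on $X^{2,\delta_0}_{\mathcal D,\varphi,0}(\mathbb K^N)$ characterized by (1) and (2); continuity of $\wh\oplus$ and $\wh f$ (property (3)) is built into this machinery, as are the sc-smooth universal properties (5) and (6) for maps in and out of $X_0$. Independence of the choice of $\beta$ and $\wh f$ (property (4)) is proved by the same device as in Theorem \ref{thm1}: given two such choices one shows that the identity map is sc-smooth in either direction by composing with $\wh\oplus^i$ and $\wh f^j$, obtaining an analogous expression whose sc-smoothness is again immediate from the Fundamental Lemmas. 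The only real obstacle is the bookkeeping in identifying each term of $\wh f\circ\wh\oplus$ with the prescribed models $\Gamma_i$, but this is essentially a notational repackaging of the argument already carried out for Theorem \ref{thm1}.
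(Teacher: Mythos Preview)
Your proposal is correct and follows exactly the approach the paper takes: the paper's proof consists of writing down the same $\wh f$ (cut-off without averaging, since asymptotic constants vanish), stating that the argument runs parallel to Theorem~\ref{thm1}, and citing the Fundamental Lemma results (Propositions 2.8 and 2.17 of \cite{HWZ8.7}). You have simply supplied more of the detail that the paper leaves implicit.
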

%
\begin{proof}
We give the formula for $\wh{f}$ with respect to the model, where the maps
  are defined on standard half-cylinders.
A suitable $\wh{f}$ is defined by
  $\wh{f}(a,u^x,u^y)=(a,\wh{\eta}^x,\wh{\eta}^y)$ where with
  $R=\varphi(|a|)$
  \begin{align*}                                                          
    \wh{\eta}^x([s,t]) 
    &=\beta(s-\textstyle{\frac{1}{2}}R-2)\cdot u([s,t])    
    \end{align*}
  and 
  \begin{align*}                                                          
    \wh{\eta}^y([s',t']')
    & =\beta\left(-s'- {\textstyle \frac{1}{2}}R-2\right)\cdot
      u\left(\left[s',t'\right]'\right)
    \end{align*}
The proof follows the same line as the proof of Theorem \ref{thm1} and relies
  on the results Proposition 2.8 and Proposition 2.17 in \cite{HWZ8.7}.
\end{proof}

\begin{definition}
We denote by $f^{\wh{\oplus}}$ the specific $\wh{f}$ introduced in Theorem
  \ref{THM1.7}.
\end{definition}
%

We have an obvious map $\wh{p}:X^{2,\delta}_{{\mathcal
  D},\varphi,0}({\mathbb K}^N)\rightarrow {\mathbb B}$ and as in the
  previous case this map has the submersion property.
We also have an sc-smooth action by $G_{\mathcal D}$. 
All these properties can be proved along the lines as in the previous case. 
The following theorem summarizes the results.

\begin{theorem}\label{THM1.7X} 
The following holds.
\begin{itemize}
  \item[(1)] 
  The sc-smooth map $\wh{p}:X^{2,\delta}_{{\mathcal D},\varphi,0}({\mathbb
  K}^N)\rightarrow {\mathbb B}$ has the submersion property.
  \item[(2)] 
  The natural group action by $G_{\mathcal D}$ is sc-smooth.
  \end{itemize}
\end{theorem}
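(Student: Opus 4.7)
The plan is to mirror the arguments already given for the unweighted-constant analogue in Theorem \ref{THM1.3} and Proposition \ref{PROP2.6}, using that Theorem \ref{THM1.7} provides a structurally identical pair $(\wh{\oplus},\wh{f})$ which determines the M-polyfold structure on $X_0=X^{2,\delta}_{{\mathcal D},\varphi,0}({\mathbb K}^N)$ through the same characterization properties (5) and (6). The key observation is that the characterization says sc-smoothness of maps into or out of $X_0$ reduces entirely to sc-smoothness of maps into or out of ${\mathbb B}\times H^{2,\delta}$, the latter being a standard sc-Hilbert model.

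For assertion (1), I would first note that $\wh{p}\circ\wh{\oplus}=\mathrm{pr}_1$ on ${\mathbb B}\times H^{2,\delta}$ and write $\wh{f}(u)=(a(u),\bar{\wh{f}}(u))$, where $a=a(u)$ is the domain parameter. Then I define
\begin{equation*}
\wh{\rho}:X_0\times{\mathbb B}\to X_0\times {\mathbb B},\qquad \wh{\rho}(u,b)=\big(\wh{\oplus}(b,\bar{\wh{f}}(u)),b\big),
\end{equation*}
which is sc-smooth by Theorem \ref{THM1.7}(5),(6). A direct computation using $\wh{\oplus}\circ\wh{f}=\mathrm{Id}$ (so $\wh{\oplus}\circ\wh{f}\circ\wh{\oplus}=\wh{\oplus}$) shows $\wh{\rho}\circ\wh{\rho}=\wh{\rho}$ and that its image is the graph $\{(w,b)\mid a(w)=b\}=\mathrm{Gr}(\wh{p})$. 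Thus $\wh{\rho}$ is an sc-smooth retraction onto the graph of $\wh{p}$, which is precisely the submersion property in the sense of Definition \ref{I-DEF_submersion_property} from \cite{FH-notes-I}. This is a routine adaptation; no new analysis is needed.

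For assertion (2), I would first establish the auxiliary statement that the $G_{\mathcal D}$-action
\begin{equation*}
G_{\mathcal D}\times H^{2,\delta}({\mathcal D},{\mathbb K}^N)\to H^{2,\delta}({\mathcal D},{\mathbb K}^N),\qquad g\ast(u^x,u^y)=(u^x,u^y)\circ g^{-1},
\end{equation*}
is sc-smooth. This is the direct analogue of Proposition \ref{PROP2.6} (which invokes \cite{HWZ8.7}, Proposition 4.2); since $G_{\mathcal D}$ consists only of rotations and the reflection exchanging the two disks, only a pre-composition-by-rotation and a component-swap estimate are needed, both of which are standard and even easier in the vanishing-asymptotic-constant case. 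Having this, I note that $G_{\mathcal D}$ acts on gluing parameters compatibly with the identification $Z_a\cong Z_{g\ast a}$, and I lift the action to $X_0$ via $g\ast u=u\circ g_{a(u)}^{-1}$. The commutative diagram
\begin{equation*}
\begin{CD}
G_{\mathcal D}\times H^{2,\delta} @>>> H^{2,\delta}\\
@V \mathrm{Id}\times \wh{\oplus} VV  @A \wh{f} AA\\
G_{\mathcal D}\times X_0 @>>> X_0
\end{CD}
\end{equation*}
shows that, after pre- and post-composing with $\wh{\oplus}$ and $\wh{f}$, the action on $X_0$ becomes the (already sc-smooth) action on $H^{2,\delta}$; by characterizations (5) and (6) of Theorem \ref{THM1.7} this forces sc-smoothness of the $X_0$-action.

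The main obstacle, if any, is the auxiliary sc-smoothness of the $G_{\mathcal D}$-action on $H^{2,\delta}({\mathcal D},{\mathbb K}^N)$. Unlike the Möbius setting in \cite{HWZ8.7}, here the group is essentially $U(1)$ together with a reflection, so the estimate boils down to the sc-smoothness of $(\theta,u)\mapsto u(\cdot-\theta)$ on weighted cylinder Sobolev spaces; this was handled in \cite{HWZ8.7} and the present weighted version with vanishing asymptotic constants is strictly easier. Everything else is formal bookkeeping via the characterizations in Theorem \ref{THM1.7}.
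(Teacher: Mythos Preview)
Your proposal is correct and follows essentially the same approach as the paper, which in fact gives no detailed proof but simply states that both properties ``can be proved along the lines as in the previous case,'' referring precisely to the submersion argument via the retraction $\rho(u,b)=(\oplus(b,\bar f(u)),b)$ in Theorem~\ref{THM1.3}(5) and the commutative-diagram argument for the group action in Section~\ref{SEC_group_action}. Your write-up is a faithful hat-version of those arguments, including the reduction of the auxiliary $G_{\mathcal D}$-action on $H^{2,\delta}$ to rotations and reflections as in Proposition~\ref{PROP2.6}.
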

%

\begin{remark}\label{REMARK9.11}
We also note that we could work here with a different regularity and for
  example define the set
  $$
    X^{3,\delta_0}_{{\mathcal D},\varphi,0}({\mathbb K}^N)=\left(
    \{0\}\times H^{3,\delta_0}\right)\coprod
    \left( \coprod_{0<|a|<1/4} H^3(Z_a,{\mathbb K}^N)\right).
    $$
We shall equip this set with a M-polyfold structure.
When we compare this with the definition (\ref{plot007}) we see that in
  the case ${\mathbb R}$ the M-polyfold structures on
  $X^{3,\delta_0}_{{\mathcal D}}({\mathbb R}^N)$ induces the same structure
  on the open dense subspace $\coprod_{0<|a|<1/4} H^3(Z_a,{\mathbb K}^N)$.
With other words the resulting M-polyfolds $X^{3,\delta}_{{\mathcal
  D},\varphi,0}({\mathbb R}^N)$ and $X^{3,\delta}_{{\mathcal
  D},\varphi}({\mathbb R}^N)$ can be viewed as different smooth completions
  of the same underlying  sc-manifold.
There is another remark concerning this fact later on, see Remark
  \ref{REMARK2.11}.
\end{remark}
%

\begin{exercise}\label{EXERC100}
Construct a M-polyfold structure on $X^{3,\delta_0}_{{\mathcal
  D},\varphi,0}({\mathbb K}^N)$, the set defined in Remark \ref{REMARK9.11},
  using the $\oplus$-method with a map defined
  on ${\mathbb B}\times H^{3,\delta}$. 
This results in a construction which associates to $N$ a M-polyfold
  $X^{3,\delta}_{{\mathcal D},\varphi,0}({\mathbb K}^N)$.
Also show that $N\rightarrow X^{3,\delta}_{{\mathcal
  D},\varphi,0}({\mathbb R}^N)$ together with the smooth maps $h:({\mathbb
  R}^N,0)\rightarrow ({\mathbb R}^L,0)$  as morphisms can be viewed as
  construction functor.
\end{exercise}
%

Here is another variation.  

\begin{remark}\label{REMARK2.11}
Consider the set $\dot{X}_{\mathcal D}^3({\mathbb R}^N)$ consisting of all
  maps $w:Z_a\rightarrow {\mathbb R}^N$ of class $H^3$ with $a\in {\mathbb
  B}\setminus\{0\}$.
We can identify this with an open subset of $X^{3,\delta}_{{\mathcal
  D},\varphi}({\mathbb R}^N)$ or as an open subset of
  $X^{3,\delta}_{{\mathcal D},\varphi,0}({\mathbb R}^N)$.
We leave it as an exercise that the induced structure would be in both
  cases the same and that it would not depend on the choice of $\delta$.
The filtration on our new space gives level $m$ as Sobolev regularity
  $H^{m+3}$. 
The independence on $\delta$ follows from the fact that we can work with
  maps in $H^{3,\delta}$ which vanish near the nodal point.
As a consequence the M-polyfold structure on $\dot{X}_{\mathcal
  D}^3({\mathbb R}^N)$ is given by the surjective map
  $$
    \oplus: ({\mathbb B}\setminus\{0\})\times H^{3,\delta}({\mathcal
    D},{\mathbb R}^N)\rightarrow \dot{X}_{\mathcal D}^3({\mathbb R}^N)
    $$
  defined by $\oplus(a,h^x,h^y)(\{z,z'\}) =\beta^x(a,z)\cdot h^x(z) +
  \beta^y(a,z')\cdot h^y(z')$ which admits a map $f$ such that
  $f\circ \oplus$ is sc-smooth such that $\oplus\circ f=Id$. 
Going a step further one can define a natural  sc-manifold structure on
  the set $\dot{X}_{\mathcal D}^3({\mathbb R}^N)$ so that the identity map
  between the same set, but either equipped with the induced M-polyfold
  structure or the natural sc-manifold structure is a sc-diffeomorphism.
We leave the details to he reader, see the next exercise.
\end{remark}
%

\begin{exercise}\label{EXERC101}
  \hfill\\
We have two M-polyfold constructions by the $\oplus$-method defining
  M-polyfolds $X^{3,\delta}_{{\mathcal D},\varphi}({\mathbb R}^N)$ and
  $X^{3,\delta}_{{\mathcal D},\varphi,0}({\mathbb R}^N)$.
Both M-polyfolds come a submersive map to ${\mathbb B}$ and we denote by
  $\dot{X}^{3,\delta}_{{\mathcal D},\varphi}({\mathbb R}^N)$ and
  $\dot{X}^{3,\delta}_{{\mathcal D},\varphi,0}({\mathbb R}^N)$ the
  pre-images of ${\mathbb B}\setminus\{0\}$.  
Show that the following.
\begin{itemize}
  \item[(1)] 
  As sets $\dot{X}^{3,\delta_0}_{{\mathcal D},\varphi}({\mathbb
    R}^N)=\dot{X}^{3,\delta_0}_{{\mathcal D},\varphi,0}({\mathbb R}^N)$.
  Also show that these sets do not depend on the choice of $\delta_0$. 
  \item[(2)] 
  The identity map $I: \dot{X}^{3,\delta}_{{\mathcal D},\varphi}({\mathbb
    R}^N)\rightarrow \dot{X}^{3,\delta}_{{\mathcal D},\varphi,0}({\mathbb
    R}^N):u\rightarrow u$ is a sc-diffeomorphism.  
  \item[(3)] 
  Show that the M-polyfold structure on the (identical) spaces
    $\dot{X}^{3,\delta}_{{\mathcal D},\varphi}({\mathbb R}^N)$ and $
    \dot{X}^{3,\delta}_{{\mathcal D},\varphi,0}({\mathbb R}^N)$ does not
    depend on $\delta$ and $\varphi$, and denote it by
    $\dot{X}^3_{\mathcal D}({\mathbb R}^N)$.
  \item[(4)] 
  Show that the M-polyfold $\dot{X}^3_{\mathcal D}({\mathbb R}^N)$ admits
    a sc-manifold atlas compatible with the existing M-polyfold structure.
  Moreover $\dot{X}^3_{\mathcal D}({\mathbb R}^N)$ is sc-diffeomorphic to
    $({\mathbb B}\setminus\{0\})\times H^3([0,1]\times S^1,{\mathbb R}^N)$,
    where the latter has the obvious sc-manifold structure.
  The sc-diffeomorphism can be picked compatible with $p_{\mathcal B}$ and
    $\text{pr}_1$.
  \end{itemize}
\end{exercise}
%

%
\subsubsection{A Strong Bundle Construction}\label{SSEc7.3.2}
Denote by ${\mathbb K}$ either ${\mathbb R}$ or ${\mathbb C}$. 
For natural numbers $N,L\geq 1$ we consider the pull-back diagram, using
  the underlying gluing parameters,
  $$
  \begin{CD}
  @.    X^{2,\delta}_{{\mathcal D},\varphi,0}({\mathbb K}^L)\\
  @. @V \wh{p} VV\\
  X^{3,\delta}_{{\mathcal D},\varphi}({\mathbb R}^N)@> p>>  {\mathbb B}
  \end{CD}
  $$
  and use it to define a strong bundle which also comes with an sc-smooth
  map having the submersion property.
We define $W^{3,2,\delta}_{{\mathcal D},\varphi}({\mathbb R}^N\times
  {\mathbb K}^L)$\index{$W^{3,2,\delta}_{{\mathcal D},\varphi}({\mathbb
  R}^N\times {\mathbb K}^L)$} to consist of all pairs $(u,w)\in
  X^{3,\delta}_{{\mathcal D},\varphi}({\mathbb R}^N)\times
  X^{2,\delta}_{{\mathcal D},\varphi,0}({\mathbb K}^L)$ satisfying
  $p(u)=\wh{p}(w)$ and introduce the following maps
  \begin{eqnarray}
    &p_X:W^{3,2,\delta}_{{\mathcal D},\varphi}({\mathbb R}^N\times
    {\mathbb K}^L)\rightarrow
    X^{3,\delta}_{{\mathcal D},\varphi}({\mathbb R}^N):p_W(u,w)=u&\\
    &p_{\mathbb B}:W^{3,2,\delta}_{{\mathcal D},\varphi}({\mathbb
    R}^N\times {\mathbb K}^L)\rightarrow {\mathbb B}:p_{\mathbb
    B}(u,w):=p(u)&\nonumber
    \end{eqnarray}
These maps fit into the commutative diagram
  $$
    \begin{CD}
    W^{3,2,\delta}_{{\mathcal D},\varphi}({\mathbb R}^N\times {\mathbb
    K}^L)@>p_X>> X^{3,\delta}_{{\mathcal D},\varphi}({\mathbb R}^N)\\
    @V p_{\mathbb B} VV   @V p VV\\
    {\mathbb B} @= {\mathbb B}
    \end{CD}
    $$

\begin{theorem}
The following holds.
\begin{itemize}
  \item[(1)] 
  $p_X:W^{3,2,\delta}_{{\mathcal D},\varphi}({\mathbb R}^N\times {\mathbb
  K}^L)\rightarrow X^{3,\delta}_{{\mathcal D},\varphi}({\mathbb R}^N)$ has
  naturally the structure of a strong bundle.
  \item[(2)] 
  $p_{\mathbb B}$ has the submersion property.
  \end{itemize}
\end{theorem}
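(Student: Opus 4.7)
The plan is to build $W$ by a parallel $\oplus$-construction that tracks both the base map $u$ and the fiber data $w$ under the constraint of a common gluing parameter, and to read off both the strong bundle and submersion properties from this representation.

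First, I would combine the maps $\oplus,f$ from Theorem \ref{thm1} with $\wh{\oplus},\wh{f}$ from Theorem \ref{THM1.7} by setting
\begin{align*}
\Phi(a,\xi,\zeta) &= \big(\oplus(a,\xi),\,\wh{\oplus}(a,\zeta)\big),\\
F(u,w) &= \big(a(u),\,\mathrm{pr}_2(f(u)),\,\mathrm{pr}_2(\wh{f}(w))\big),
\end{align*}
viewed as mutually inverse in one direction maps between ${\mathbb B}\times H^{3,\delta}_c({\mathcal D},{\mathbb R}^N)\times H^{2,\delta}({\mathcal D},{\mathbb K}^L)$ and $W$. The constraint $p(u)=\wh{p}(w)$ ensures that $f$ and $\wh{f}$ produce a common first component, so $F$ is well-defined. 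The identities $\oplus\circ f=\id$ and $\wh{\oplus}\circ\wh{f}=\id$ yield $\Phi\circ F=\id$, and $F\circ \Phi$ is sc-smooth componentwise by Theorems \ref{thm1} and \ref{THM1.7}. Hence $W$ acquires a canonical M-polyfold structure via the $\oplus$-method, and under $(\Phi,F)$ the projections $p_X$ and $p_{\mathbb B}$ correspond respectively to the coordinate projections onto $(a,\xi)$ and $a$.

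Next, to upgrade this to a strong bundle structure on $p_X$, the essential observation is that the explicit formula for $\wh{f}\circ\wh{\oplus}$ exhibited in Theorem \ref{THM1.7} is, for each fixed $a$, bounded and linear in the $\zeta$-variable alone, with no coupling to the base variable $\xi$, because $\wh{\oplus}$ is linear and $\wh{f}$ takes no input from $u$. Thus $\Phi$ realises $W$ as a bundle whose fiber over $u$ is the Banach space $\wh{p}^{-1}(a(u))$, namely $H^2(Z_{a(u)},{\mathbb K}^L)$ when $a(u)\neq 0$ and $H^{2,\delta_0}({\mathcal D},{\mathbb K}^L)$ when $a(u)=0$. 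The required double filtration on $W$ is inherited from two shifted sc-structures on $H^{2,\delta}({\mathcal D},{\mathbb K}^L)$: in filtration $i\in\{0,1\}$, the level-$m$ fiber part is $H^{m+2+i,\delta_{m+i}}$. Preservation of both filtrations under transitions reduces, via $\Phi$ and $F$, to the fact that the operations in the formula for $\wh{f}\circ\wh{\oplus}$---multiplication by cut-offs of the form $\beta(s-R/2)$ and translations of $\zeta$ by $R$---shift regularity uniformly across levels, which is exactly the content of the Fundamental Lemmas and propositions (M1)--(M5).

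For part (2), I would adapt the retraction construction in the proof of Theorem \ref{THM1.3}(5). Writing $f(u)=(a,\bar{f}(u))$ and $\wh{f}(w)=(a,\bar{\wh{f}}(w))$, define
\[
\sigma: W\times {\mathbb B}\to W\times{\mathbb B},\qquad
\sigma(u,w,b)=\big(\oplus(b,\bar{f}(u)),\,\wh{\oplus}(b,\bar{\wh{f}}(w)),\,b\big).
\]
This is sc-smooth; the identities $\oplus\circ f=\id$ and $\wh{\oplus}\circ\wh{f}=\id$ immediately give $\sigma\circ\sigma=\sigma$; and the image of $\sigma$ is the graph $\{(u',w',b):p_{\mathbb B}(u',w')=b\}$, establishing the submersion property for $p_{\mathbb B}$. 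The main obstacle is the strong bundle check: verifying that transitions between local $\oplus$-trivializations are not only sc-smooth but also fiberwise linear and respect the double filtration. By the decoupled nature of $\Phi$---the $\xi$- and $\zeta$-variables interact only through the shared $a$---this factorizes into the base-level argument already handled in Theorem \ref{thm1} and a fiberwise-linear variant of the argument in Theorem \ref{THM1.7}; linearity in $\zeta$ is preserved throughout because all operations are $a$-dependent coefficients applied to $\zeta$ alone.
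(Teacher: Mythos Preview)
Your argument is correct but takes a different route from the paper. The paper proceeds abstractly: it observes that the product $X^{3,\delta}_{{\mathcal D},\varphi}({\mathbb R}^N)\times X^{2,\delta}_{{\mathcal D},\varphi,0}({\mathbb K}^L)$ is already a strong bundle over the first factor (the trivial one, with the fiber shift $i\in\{0,1\}$), notes that $p\times\wh{p}$ is submersive onto ${\mathbb B}\times{\mathbb B}$, and then pulls back along the diagonal $\Delta:{\mathbb B}\to{\mathbb B}\times{\mathbb B}$. Both the strong bundle structure and the submersion property of $p_{\mathbb B}$ are then inherited formally from the general machinery of fibered products along submersive maps developed in Part~I.

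You instead unpack everything explicitly: your pair $(\Phi,F)$ is precisely the $(\oplus\times_{\mathbb B}\wh{\oplus},\,f\times_{\mathbb B}\wh{f})$ that the paper introduces only \emph{after} this theorem (in Theorem~\ref{THM1.11}) as a smoothness criterion, and your retraction $\sigma$ is the direct analogue of the one in the proof of Theorem~\ref{THM1.3}(5), now run simultaneously in both factors. The paper's approach is shorter and illustrates the payoff of the abstract fibered-product toolkit; yours is more self-contained and makes explicit why the strong bundle axioms hold (decoupling of $\xi$ and $\zeta$ through the shared $a$, fiberwise linearity of $\wh{f}\circ\wh{\oplus}$, and uniformity of the Fundamental Lemma estimates under the level shift). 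Either is acceptable here.
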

%
\begin{proof}
Consider the submersive product
  $$
    X^{3,\delta}_{{\mathcal D},\varphi}({\mathbb R}^N)\times
    X^{2,\delta}_{{\mathcal D},\varphi,0}({\mathbb K}^L)\xrightarrow{p\times
    \wh{p}} {\mathbb B}\times {\mathbb B}.
    $$
First we note  that $X^{3,\delta}_{{\mathcal D},\varphi}({\mathbb
  R}^N)\times X^{2,\delta}_{{\mathcal D},\varphi,0}({\mathbb K}^L)$ has an
  obvious strong bundle structure over $X^{3,\delta}_{{\mathcal
  D},\varphi}({\mathbb R}^N)$ by allowing a shift by $i\in \{0,1\}$ in the
  second factor.
We pull back by the diagonal $\Delta:{\mathbb B}\rightarrow {\mathbb
  B}\times {\mathbb B}$.
Then $\Delta(p\times\wh{p})$ is a strong bundle with submersive map to
  ${\mathbb B}$.
Clearly $ \Delta(p\times\wh{p})\rightarrow X^{3,\delta}_{{\mathcal
  D},\varphi}({\mathbb R}^N)$ is a strong bundle by construction and  can
  naturally be identified with $p_X$.
\end{proof}
%

\begin{remark}
The elements in $W^{3,2,\delta}_{{\mathcal D},\varphi}({\mathbb R}^N\times
  {\mathbb K}^L)$ van be viewed as
  $$
    \langle u,h\rangle:Z_a\rightarrow {\mathbb R}^N\times {\mathbb K}^L
    $$
  with mixed regularity. 
With the data being clear we abbreviate 
\begin{eqnarray*}
  &W(N,L):= W^{3,2,\delta}_{{\mathcal D},\varphi}({\mathbb R}^N\times
  {\mathbb K}^L)&\\
  & p_X:W(N,L)\rightarrow X(N),\ \ p_{\mathbb B}:W(N,L)\rightarrow
  {\mathbb B}.&
  \end{eqnarray*}
\end{remark}
%

Using the results from Theorems \ref{thm1} and \ref{THM1.7} we can give a
  sufficient and necessary criterion for a map from a strong bundle into
  $W(N,L)$ or for a map defined on the latter being a strong bundle map.

\begin{theorem}\label{THM1.11}
With $X(N)$ and $W(N,L)$ associated to the data ${\mathcal D}$, $\varphi$,
  and $\delta$ the following holds true.
\begin{itemize}
  \item[(1)] 
  Let $E\rightarrow Y$ be a strong bundle and $\Phi:E\rightarrow W(N,L)$ a
    map, linear on the fibers,  covering a map $\phi:Y\rightarrow X(N)$.
  Then $\Phi$ is an sc-smooth strong bundle map provided $(f\times_{\mathbb
    B} \wh{f})\circ \Phi$ is an sc-smooth strong bundle map into the
    ssc-smooth strong bundle ${\mathbb B}\times H^{3,\delta}_c({\mathcal
    D}\times {\mathbb R}^N)\times H^{2,\delta}({\mathcal D}\times {\mathbb
    K}^L)$.
  Here
    $$
      (f\times_{\mathbb B} \wh{f})(u,h)= (a,(u^x,u^y),(\eta^x,\eta^y)),
      $$
    where $f(u)=(a,(u^x,u^y))$ and $\wh{f}(h)=(a,(\eta^x,\eta^y))$.
  \item[(2)]  
  Let $E\rightarrow Y$ be a strong bundle and $\Phi:W(N,L)\rightarrow Y$ a
    map, linear on the fibers,  covering a map $\phi:X(N)\rightarrow Y$.
  Then $\Phi$ is an sc-smooth strong bundle map provided $\Phi\circ
    (\oplus\times_{\mathbb B} \wh{\oplus})$ is an sc-smooth strong bundle map
    defined on ${\mathbb B}\times H^{3,\delta}_c({\mathcal D}\times {\mathbb
    R}^N)$.
  Here 
    $$
      (\oplus\times_{\mathbb B} \wh{\oplus})(a,(u^x,u^y),(\eta^x,\eta^y))
      =(\oplus(a,(u^x,u^y)),\wh{\oplus}(a,(\eta^x,\eta^y))).
      $$
  \end{itemize}
\end{theorem}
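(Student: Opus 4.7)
The plan is to mimic the strategy used for items (4)--(5) of Theorem \ref{thm1}, upgrading the sc-smoothness criterion to a strong bundle criterion by combining the base theorem with Theorem \ref{THM1.7}. The engine of the argument is the pair of identities
\begin{equation*}
  (\oplus\times_{\mathbb B}\wh{\oplus})\circ (f\times_{\mathbb B}\wh{f}) = \mathrm{Id}_{W(N,L)},
  \qquad
  (f\times_{\mathbb B}\wh{f})\circ (\oplus\times_{\mathbb B}\wh{\oplus}) \text{ is sc-smooth},
\end{equation*}
obtained fiberwise from $\oplus\circ f = \mathrm{Id}$, $\wh{\oplus}\circ\wh{f}=\mathrm{Id}$, and Theorems \ref{thm1}(2), \ref{THM1.7}(2).

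First I would check that $(f\times_{\mathbb B}\wh{f})$ and $(\oplus\times_{\mathbb B}\wh{\oplus})$ are themselves sc-smooth strong bundle maps between $W(N,L)$ and the ssc-smooth model strong bundle ${\mathbb B}\times H^{3,\delta}_c({\mathcal D},{\mathbb R}^N)\times H^{2,\delta}({\mathcal D},{\mathbb K}^L)$. Linearity on the $H^{2,\delta}$-fibers of $\wh{f}$ is built into its explicit formula (multiplication by the cutoff $\beta(\cdot-R/2-2)$ and, analogously, by the dual cutoff on the $y$-side), and the same is true of $\wh{\oplus}$. Thus the $X(N)$-components give a sc-diffeomorphism-style bijection on bases while the $X_0$-components act linearly and sc-smoothly on the fibers, preserving the shift-by-one regularity structure built into the strong bundle $W(N,L)$. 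This step is essentially bookkeeping once one notes that $W(N,L)$ was defined as the fibered product of $X(N)$ with $X_0$ over ${\mathbb B}$ and that the strong bundle structure is the obvious shift by $i\in\{0,1\}$ in the $X_0$-factor.

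For part (1), the forward implication is immediate from the chain rule: if $\Phi$ is an sc-smooth strong bundle map, then $(f\times_{\mathbb B}\wh{f})\circ\Phi$ is the composition of two such maps. For the converse, write
\begin{equation*}
  \Phi = (\oplus\times_{\mathbb B}\wh{\oplus})\circ\big((f\times_{\mathbb B}\wh{f})\circ\Phi\big),
\end{equation*}
using $(\oplus\times_{\mathbb B}\wh{\oplus})\circ (f\times_{\mathbb B}\wh{f})=\mathrm{Id}$, and conclude again from the chain rule that $\Phi$ is an sc-smooth strong bundle map. Part (2) is strictly parallel: the forward implication composes $\Phi$ with the sc-smooth strong bundle map $(\oplus\times_{\mathbb B}\wh{\oplus})$, while the converse uses
\begin{equation*}
  \Phi = \big(\Phi\circ (\oplus\times_{\mathbb B}\wh{\oplus})\big)\circ (f\times_{\mathbb B}\wh{f}).
\end{equation*}

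The only genuine difficulty beyond bookkeeping is verifying that the composed maps respect the \emph{strong} bundle structure (not merely sc-smoothness), i.e.\ that the level $(m,m)$ and $(m,m+1)$ regularity is preserved under $(f\times_{\mathbb B}\wh{f})$ and $(\oplus\times_{\mathbb B}\wh{\oplus})$. This reduces to the observation that the explicit formulas for $\wh{f}$ and $\wh{\oplus}$ involve only multiplication by the smooth cutoffs $\beta(s-R/2-2)$ (and its reflected analog) and shifts/averaging of the fiber data, and each of these operations extends from level $m$ to level $m+1$ in the fiber --- which is exactly the content of the Fundamental Lemmas in Subsection \ref{FUND0} applied levelwise. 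Once this is in hand, both parts follow from the chain rule argument sketched above.
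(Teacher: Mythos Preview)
Your approach is correct and is precisely what the paper intends: the theorem is stated without an explicit proof, with the preceding sentence indicating that it follows directly from Theorems \ref{thm1} and \ref{THM1.7}; your argument is the natural strong-bundle upgrade of items (4)--(5) of Theorem \ref{thm1}, using the fibered-product definition of $W(N,L)$ to combine the two $\oplus$-constructions. One minor remark: the theorem as stated only asserts the sufficient direction (``provided''), so your forward implications, while true and trivially so, are more than is being claimed.
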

%

We are interested in smooth bundle maps $H:{\mathbb R}^N\times {\mathbb
  K}^K\rightarrow {\mathbb R}^M\times {\mathbb K}^L$, which are linear in
  the fibers and cover a smooth map $h$. 
Given such $H$, on the level of sets $H_\ast: W(N,K)\rightarrow W(M,L)$
  given by $H_\ast\langle u,v\rangle = H\circ \langle u,v\rangle$ is
  well-defined and covers the sc-smooth map $h_\ast:X(N)\rightarrow X(M)$.

\begin{proposition}\label{propx9.19}
For every smooth map  $H:{\mathbb R}^N\times {\mathbb K}^K\rightarrow
  {\mathbb R}^M\times {\mathbb K}^L$, which is linear in the fibers and
  covers a smooth map $h$ the map $H_\ast$ is a  sc-smooth strong bundle
  covering the sc-smooth map $h_\ast$ and fitting into the commutative
  diagram
  $$
    \begin{CD}
    W(N,K) @>H_\ast >>  W(M,L)\\
    @V p_X VV  @V p_X VV\\
    X(N) @> h_\ast >> X(M).
    \end{CD}
    $$
\end{proposition}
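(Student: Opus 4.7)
The plan is to follow the strategy of Proposition \ref{prop2}: sc-smoothness on the ``bad'' M-polyfolds $W(N,K)$ and $W(M,L)$ is reduced to sc-smoothness on the associated ``good'' sc-Hilbert spaces by conjugating with the pair $(f\times_{\mathbb B}\widehat{f})$ and $(\oplus\times_{\mathbb B}\widehat{\oplus})$, after which the chain rule finishes the argument. First decompose $H(x,v)=(h(x),A(x)v)$, with $A\colon{\mathbb R}^N\to \Hom({\mathbb K}^K,{\mathbb K}^L)$ smooth; then $H_\ast\langle u,w\rangle=\langle h\circ u,\,A(u)\cdot w\rangle$ is fiber-wise linear, preserves the gluing parameter, and covers $h_\ast$. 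Since $A(u)\cdot w$ has the same Sobolev regularity as $w$, the map is level-preserving, which will yield the strong bundle property once sc-smoothness is established.

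Introduce the model
$$
H_\sharp:{\mathbb B}\times H^{3,\delta}_c({\mathcal D},{\mathbb R}^N)\times H^{2,\delta}({\mathcal D},{\mathbb K}^K)\to {\mathbb B}\times H^{3,\delta}_c({\mathcal D},{\mathbb R}^M)\times H^{2,\delta}({\mathcal D},{\mathbb K}^L)
$$
defined by $(a,u,w)\mapsto(a,h\circ u,A(u)\cdot w)$. The component $u\mapsto h\circ u$ is ssc-smooth by the level-wise Fr\'echet smoothness results of \cite{El} already used in Proposition \ref{prop2}. The same results show that $u\mapsto A(u)$ is ssc-smooth into the $H^{m+3,\delta_m}$ matrix-valued sections on every level. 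Multiplying such a section by $w\in H^{m+2,\delta_m}$ is a bounded bilinear operation into $H^{m+2,\delta_m}$ in the two-dimensional Riemann surface setting, so the bundle component $(u,w)\mapsto A(u)\cdot w$ is classically smooth at every level, and hence ssc- (thus sc-) smooth. Note also that the vanishing asymptotic constant of $w$ is preserved, since $A(u)$ stays bounded and $w$ decays, so the target space is indeed $H^{2,\delta}$.

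Finally, observe the factorization
$$
H_\ast=(\oplus\times_{\mathbb B}\widehat\oplus)\circ H_\sharp\circ (f\times_{\mathbb B}\widehat f),
$$
where the outer maps are sc-smooth by Theorems \ref{thm1} and \ref{THM1.7} together with the pullback description of $W(N,K)$ and $W(M,L)$. By Theorem \ref{THM1.11}(2), combined with the chain rule, $H_\ast$ is sc-smooth; the fiber-linearity and the preservation of the $\{0,1\}$-shift on the $H^{2,\delta}$-factor (which defines the strong bundle structure of $W$) make it a strong bundle map covering $h_\ast$. The main obstacle is the sc-smoothness of $H_\sharp$: although each ingredient is classical, one must carefully track the interaction between pullback by the $H^3$-map $u$ and the multiplication with the $H^2$-section $w$ across the weight sequence $\delta$, which is standard in the two-dimensional Sobolev calculus used throughout \cite{HWZ8.7}.
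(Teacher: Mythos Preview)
Your proof is correct and follows essentially the same strategy as the paper's: both reduce $H_\ast$ to the composition $(\oplus\times_{\mathbb B}\widehat{\oplus})\circ H_\sharp\circ (f\times_{\mathbb B}\widehat{f})$ (the bundle analogue of diagram (\ref{DIAG1}) in Proposition~\ref{prop2}), observe that $H_\sharp$ is ssc-smooth by the classical results of \cite{El}, and conclude by the chain rule. You give more explicit detail on the decomposition $H(x,v)=(h(x),A(x)v)$ and the Sobolev multiplication estimate needed for the fiber part, which the paper leaves implicit in the phrase ``the right interpretation of the results in \cite{El}'', but the overall architecture is the same.
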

%
\begin{proof}
The proof is very similar to the proof of Proposition \ref{prop2}.  
By the definition of the structures involved we only need to consider the
  map
  $$
    (\oplus^M\times_{\mathbb B} \wh{\oplus}^L)\circ H_\sharp\circ
    (f^N\times_{\mathbb B} \wh{f}^K)
    $$
  which for the current situation corresponds to the diagram (\ref{DIAG1})
  in the proof of Proposition  \ref{prop2}. 
The vertical maps are sc-smooth essentially by definition of the smooth
  sc-structures, and one only needs to show that the map in the classical
  context $H_\sharp$ is an sc-smooth strong bundle map.
Of course, it is even an ssc-smooth strong bundle map, which follows from
  the right interpretation of the results in \cite{El}.
\end{proof}
%

\begin{exercise} \label{EXERC102}
Fill in the details of the proof of Proposition \ref{propx9.19}.
\end{exercise}
%

When we constructed the functor $N\rightarrow X(N)$ we showed that it has
  a canonical extension to $\mathfrak{M}$.
Having the additional functor $(N,L)\rightarrow W(N,L)$, which `fibers'
  over $X$, we are interested in naturally extending our construction to
  smooth ${\mathbb K}$-vector bundles $p_Q: E\rightarrow Q$, where $Q$ is an
  object in $\mathfrak{M}$.
Assuming $Q$ is connected we can embed $p_Q$ properly (this is a
  requirement on the base) into a suitable ${\mathbb R}^N\times {\mathbb
  K}^L$ so that the map is fiber-wise ${\mathbb K}$-linear and fits into the
  commutative diagram
  $$
    \begin{CD}
    E @>\Phi>>  {\mathbb R}^N\times {\mathbb K}^L\\
    @V p_Q VV   @V pr_1 VV\\
    Q@>\phi >>   {\mathbb R}^N.
    \end{CD}
    $$
Note that for a suitable open neighborhood $U=U(Q)\subset {\mathbb R}^N$
  which admits a smooth retraction onto $Q$ , say $r:U\rightarrow U$ there
  exists a canonical lift to a smooth bundle retraction of $U\times {\mathbb
  K}$ to to $\Phi(E) \rightarrow\phi(Q)$, covering $r$, by taking orthogonal
  projections in the fibers using the standard complex inner product on
  ${\mathbb K}^L$.
As a consequence of the previous discussion, using precisely the method
  from the proof of Theorem \ref{THM1.3}, we can define for
  $p_Q:E\rightarrow Q$ the strong bundle ${(p_Q)}_\ast:W(E) \rightarrow
  X(Q)$.
This gives immediate the following result, where $\mathfrak{V}_{\mathbb
  K}\mathfrak{M}$ is the category of  ${\mathbb K}$-vector bundles over
  objects in $\mathfrak{M}$, with smooth ${\mathbb K}$-vector bundle maps
  between them.

\begin{theorem}\label{THM9.20}
Assume that ${\mathcal D}=(D_x\sqcup D_y,\{x,y\})$ is an un-ordered disk
  pair, $\varphi$ the exponential gluing profile, and $\delta$ an increasing
  sequence of weights starting at $\delta_0>0$. 
Abbreviate $X(N):=X^{3,\delta}_{{\mathcal D},\varphi}({\mathbb R}^N)$ and
  $W(N,L)=W^{3,2,\delta}_{{\mathcal D},\varphi}({\mathbb R}^N\times {\mathbb
  K}^L)$ so that $W(N,L)\rightarrow X(N)$ is a strong bundle.
The functorial construction of the latter has a unique extension to the
  category $\mathfrak{V}_{\mathbb K}\mathfrak{M}$ characterized uniquely by
  the following properties where we consider $p_Q:E\rightarrow Q$.
\begin{itemize} 
  \item[(1)] 
  If $Q=\coprod Q_\lambda$, where the $Q_\lambda$ are the connected
    components, writing $E_\lambda=E|Q_\lambda$ then $W(E)=\coprod
    W(E_\lambda)$.
  Moreover if $E={\mathbb R}^N\times {\mathbb K}^L$ then $W(E)=W(N,L)$.
  \item[(2)] 
  If $Q$ properly embeds into some ${\mathbb R}^N$ then, with
    $\Phi:E\rightarrow {\mathbb R}^N\times {\mathbb K}^L$ being a complex
    vector bundle embedding as a set
    $$
    W(E)=\{ w:Z_a\rightarrow E\ |  \Phi\circ w\in W(N,L)\}.
    $$
  \item[(3)] 
  The map $W(E)\rightarrow W(N,L): u\rightarrow \Phi\circ u$, where
    $\Phi:E\rightarrow {\mathbb R}^N\times {\mathbb K}^L$  is a proper
    smooth bundle embedding is an  sc-smooth embedding of M-polyfolds.
  \end{itemize}
The properties {\em(1)}, {\em(2)}, and {\em(3)} uniquely characterize the
  sc-smooth structure on $X(Q)$.
For the M-polyfold structure on $X(Q)$ the following properties hold.
\begin{itemize}
  \item[(4)] 
  For an open neighborhood $U$ of $\phi(Q)$ and a smooth complex bundle
    map $R:U\times {\mathbb K}^L\rightarrow U\times {\mathbb K}^L$ covering
    $r:U\rightarrow U$ with $r(U)=\phi(Q)$ and $R\circ R=R$ the map from the
    open set $W(U\times {\mathbb K}^L)\rightarrow X(E): w\rightarrow
    \Phi^{-1}\circ R\circ w$ is sc-smooth strong bundle map.
  \item[(5)] 
  The obvious map $p_{\mathbb B}:W(E)\rightarrow {\mathbb B}$ is sc-smooth
    and $p_{\mathbb B}$ has the submersion property.
  \end{itemize}
\end{theorem}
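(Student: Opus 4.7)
The plan is to mirror the proof of Theorem \ref{THM1.3}, applying the extension principle (Proposition \ref{I-PROP_extension_m_poly_construction_functors} from \cite{FH-notes-I}) to the bundle construction functor $(N,L)\mapsto W(N,L)$ established by Proposition \ref{propx9.19}. The new ingredient compared to Theorem \ref{THM1.3} is that we must respect the strong bundle structure throughout, so smooth retractions of the base must be lifted to smooth bundle retractions of the total space.

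First I would fix, for a connected vector bundle $p_Q:E\to Q$, a proper $\mathbb{K}$-vector bundle embedding $\Phi:E\to \mathbb{R}^N\times \mathbb{K}^L$ covering a proper smooth embedding $\phi:Q\to \mathbb{R}^N$. Picking a tubular neighborhood $U=U(\phi(Q))\subset \mathbb{R}^N$ together with a smooth retraction $r:U\to U$ onto $\phi(Q)$, one lifts $r$ to a smooth $\mathbb{K}$-bundle retraction $R:U\times \mathbb{K}^L\to U\times \mathbb{K}^L$ by composing parallel transport along $r$ with fiberwise orthogonal projection (using the standard Hermitian inner product on $\mathbb{K}^L$) onto $\Phi(E)$; by construction $R\circ R=R$ and $R(U\times \mathbb{K}^L)=\Phi(E)$.

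Next I would invoke Proposition \ref{propx9.19} to conclude that $R_\ast$ is an sc-smooth strong bundle map on the open sub-strong-bundle $W(U\times \mathbb{K}^L)\subset W(N,L)$ (open because its base lies over the open set of maps with image in $U$), and that $R_\ast\circ R_\ast=R_\ast$. This exhibits the image $\Sigma_\Phi:=\{w\in W(N,L)\mid w \text{ has image in } \Phi(E)\}$ as the image of a sc-smooth strong bundle retraction, so $\Sigma_\Phi$ inherits a canonical strong bundle structure over $\Sigma_\phi\subset X(N)$. Transporting along the tautological bijection $W(E)\to \Sigma_\Phi,\ w\mapsto \Phi\circ w$ equips $W(E)$ with a strong bundle structure $W_\Phi(E)$, establishing (1), (2), (3) relative to the chosen embedding, and giving (4) directly from the definition.

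Independence of $\Phi$ is handled exactly as in Theorem \ref{THM1.3}: given a second proper bundle embedding $\Psi:E\to \mathbb{R}^M\times \mathbb{K}^{L'}$, both $\Psi\circ \Phi^{-1}$ and $\Phi\circ \Psi^{-1}$ are restrictions of smooth $\mathbb{K}$-bundle maps defined on tubular neighborhoods; by Proposition \ref{propx9.19} these induce mutually inverse sc-smooth strong bundle maps between $\Sigma_\Phi$ and $\Sigma_\Psi$, so $W_\Phi(E)$ and $W_\Psi(E)$ coincide. Functoriality (that a smooth bundle map $E\to E'$ induces a sc-smooth strong bundle map $W(E)\to W(E')$) then follows from the commutative diagram argument used in Theorem \ref{THM1.3} coupled with Proposition \ref{propx9.19}.

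For (5), the submersion property of $p_{\mathbb{B}}:W(E)\to \mathbb{B}$ is inherited from the corresponding submersion property for the ambient $W(N,L)$. Concretely, the strong-bundle analogue of the map $\rho$ in equation (\ref{sub-restZZ}) is built using $(\oplus\times_{\mathbb{B}}\wh{\oplus})$ and $(f\times_{\mathbb{B}}\wh{f})$ from Theorem \ref{THM1.11}, and then composed with the fiberwise retraction $R$ to remain inside $W(E)$. The main obstacle I anticipate is purely bookkeeping: verifying that the retraction $R_\ast$ respects the \emph{double} filtration encoded in the $(3,2)$-regularity of the strong bundle $W(N,L)\to X(N)$, so that we genuinely get a strong (and not merely an sc-smooth) sub-bundle structure on $\Sigma_\Phi$. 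Once this is confirmed, every other step reduces to mechanical application of Proposition \ref{propx9.19}, Theorem \ref{THM1.11}, and the extension principle cited above.
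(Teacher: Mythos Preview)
Your proposal is correct and follows essentially the same approach as the paper, which does not give a formal proof but sketches it in the paragraph preceding the theorem: embed $E$ as a $\mathbb{K}$-vector bundle into some $\mathbb{R}^N\times\mathbb{K}^L$, lift a tubular retraction $r$ on the base to a bundle retraction $R$ via fiberwise orthogonal projection onto $\Phi(E)$ using the standard Hermitian inner product on $\mathbb{K}^L$, and then run the argument of Theorem~\ref{THM1.3} with Proposition~\ref{propx9.19} supplying the sc-smoothness of $R_\ast$. Your write-up is in fact more detailed than the paper's own treatment; the only cosmetic discrepancy is that the paper's canonical lift uses orthogonal projection alone (no parallel transport step), but this does not affect the argument.
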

%
In the spirit of Proposition \ref{PROP1.4} one can characterize
  sc-smoothness of bundle maps into $W(E)$ or out of $W(E)$.
This again follows from the construction of $W(E)\rightarrow X(Q)$ and the
  smoothness characterization in Theorem \ref{THM1.11}.
This immediately gives the following result.

\begin{proposition}\label{EQNN9.21}
Let ${\mathcal D}$, $\varphi$, and $\delta$ be given and abbreviate
  $X(Q)=X^{3,\delta}_{{\mathcal D},\varphi}(Q)$ and
  $W(E)=W^{3,2,\delta}_{{\mathcal D},\varphi}(E)$.
Assume that $Q$ is connected. 
\begin{itemize}
\item[(1)] 
Let $K\rightarrow Y$ be a strong bundle over a M-polyfold 
  and $\wh{A}:K\rightarrow W(E)$ a map covering $A:Y\rightarrow X(Q)$. 
Then $\wh{A}$ is an sc-smooth strong bundle if and only if for one proper
  smooth complex bundle embedding $\Phi:E\rightarrow {\mathbb R}^N\times
  {\mathbb K}^L$ the map $K\rightarrow {\mathbb B}\times
  H^{3,\delta}_c({\mathcal D},{\mathbb R}^N)\times H^{2,\delta}({\mathcal
  D},{\mathbb K}^L)$ defined by
  $$
  k \rightarrow (f^N\times_{\mathbb B} \wh{f}^L)( \Phi \circ \wh{A}(k))
  $$
  is an sc-smooth strong bundle map.
\item[(2)] 
Let $K\rightarrow Y$ be a strong bundle over a M-polyfold. 
A map $\wh{B}:W(E)\rightarrow K$ is an sc-smooth strong bundle map if and
  only if for one proper smooth complex bundle embedding $\Phi:E\rightarrow
  {\mathbb R}^N\times {\mathbb K}^L$ and tubular neighborhood
  $U=U(\phi(Q))$ which via $r:U\rightarrow U$ smoothly retracts to
  $\phi(Q)$ the composition $\wh{\mathcal U}\rightarrow K$ defined by
  \begin{eqnarray*}
    &(a,(u^x,u^y),(h^x,h^y))\longrightarrow &\\
    &\wh{B}(\Phi^{-1}\circ R \circ 
    ((\oplus\times_{\mathbb B} \wh{\oplus})(a,(u^x,u^y),(h^x,h^y))))&
    \end{eqnarray*}
  is an sc-smooth strong bundle map. 
Here $\wh{\mathcal U}$ is the open subset of ${\mathbb B}\times
  H^{3,\delta}_c({\mathcal D},{\mathbb R}^N)\times H^{2,\delta}({\mathcal
  D},{\mathbb K}^L)$ consisting of all elements $(a,(u^x,u^y),(h^x,h^y)))$
  so that $\oplus\times_{\mathbb B}\wh{\oplus} (a,(u^x,u^y),(h^x,h^y)))$ has
  the image in $U\times {\mathbb K}^L$.
\end{itemize}
\end{proposition}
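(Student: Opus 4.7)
The plan is to reduce everything to the ``linear'' case $W(N,L)$, where the characterization of sc-smoothness for strong bundle maps has already been established in Theorem \ref{THM1.11}, and then transfer back to $W(E)$ via the proper bundle embedding $\Phi: E \to \mathbb{R}^N \times \mathbb{K}^L$ and the smooth bundle retraction $R: U \times \mathbb{K}^L \to U \times \mathbb{K}^L$ covering $r: U \to U$. This is the strong-bundle version of the strategy used to deduce Proposition \ref{PROP1.4} from Theorem \ref{thm1}. The two key intermediaries, both sc-smooth strong bundle maps by Theorem \ref{THM9.20}, are the bundle embedding
\[
\Phi_\ast : W(E) \to W(N,L), \qquad w \mapsto \Phi \circ w,
\]
and the retraction-type map
\[
R_\ast : W(U \times \mathbb{K}^L) \to W(E), \qquad w \mapsto \Phi^{-1} \circ R \circ w,
\]
defined on the open subset of $W(N,L)$ whose elements have image in $U \times \mathbb{K}^L$; these satisfy $R_\ast \circ \Phi_\ast = \mathrm{Id}_{W(E)}$.

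For part (1), if $\widehat{A}: K \to W(E)$ is an sc-smooth strong bundle map then so is $\Phi_\ast \circ \widehat{A}: K \to W(N,L)$, and Theorem \ref{THM1.11}(1) yields sc-smoothness of the stated composition $k \mapsto (f^N \times_{\mathbb{B}} \widehat{f}^L)(\Phi \circ \widehat{A}(k))$ into the ssc-smooth strong bundle over $\mathbb{B} \times H^{3,\delta}_c \times H^{2,\delta}$. Conversely, given sc-smoothness of that composition, the converse direction of Theorem \ref{THM1.11}(1) (which invokes the identity $(\oplus \times_{\mathbb{B}} \widehat{\oplus}) \circ (f^N \times_{\mathbb{B}} \widehat{f}^L) = \mathrm{Id}$ on $W(N,L)$) shows that $\Phi_\ast \circ \widehat{A}: K \to W(N,L)$ is sc-smooth, and then post-composing with $R_\ast$ recovers $\widehat{A} = R_\ast \circ \Phi_\ast \circ \widehat{A}$ as an sc-smooth strong bundle map.

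For part (2) the argument is symmetric. If $\widehat{B}: W(E) \to K$ is sc-smooth, then $\widehat{B} \circ R_\ast : W(U \times \mathbb{K}^L) \to K$ is an sc-smooth strong bundle map, and Theorem \ref{THM1.11}(2) furnishes sc-smoothness of $\widehat{B} \circ R_\ast \circ (\oplus \times_{\mathbb{B}} \widehat{\oplus})$ on $\widehat{\mathcal{U}}$, which is exactly the composition in the statement. Conversely, sc-smoothness of that composition on $\widehat{\mathcal{U}}$ yields, via Theorem \ref{THM1.11}(2), sc-smoothness of $\widehat{B} \circ R_\ast$ on $W(U \times \mathbb{K}^L)$; pre-composing with $\Phi_\ast$ and using $R_\ast \circ \Phi_\ast = \mathrm{Id}$ gives $\widehat{B} = \widehat{B} \circ R_\ast \circ \Phi_\ast$, so $\widehat{B}$ is sc-smooth. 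The main obstacle is nothing analytic — all of the hard analysis has already been absorbed into Theorems \ref{thm1}, \ref{THM1.7}, \ref{THM1.11}, and \ref{THM9.20} — but rather the careful bookkeeping of strong bundle structures, verifying that $\Phi_\ast$ and $R_\ast$ are genuinely sc-smooth \emph{strong} bundle maps (not merely sc-smooth on the level of base M-polyfolds), after which the result is a diagram chase combining the chain rule with $\oplus \circ f = \mathrm{Id}$ and $\widehat{\oplus} \circ \widehat{f} = \mathrm{Id}$.
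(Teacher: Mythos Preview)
Your proposal is correct and follows exactly the approach the paper indicates: the text preceding the proposition says only that the result ``follows from the construction of $W(E)\rightarrow X(Q)$ and the smoothness characterization in Theorem \ref{THM1.11}'' in the spirit of Proposition \ref{PROP1.4}, and you have spelled out precisely that reduction via $\Phi_\ast$, $R_\ast$, and the identity $R_\ast\circ\Phi_\ast=\mathrm{Id}_{W(E)}$. The only minor remark is that Theorem \ref{THM1.11} as stated gives one implication (``provided''), but the reverse implication is immediate from the sc-smoothness of $f^N\times_{\mathbb B}\wh{f}^L$ and $\oplus\times_{\mathbb B}\wh{\oplus}$ together with the chain rule, which you correctly invoke.
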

%
In practice, the above criteria give expressions which can be dealt with
  using results in \cite{HWZ8.7}.

%
\subsubsection{A Strong Bundle with $(0,1)$-Forms}
Let ${\mathcal D}$ be an unordered disk pair, $\varphi$ the exponential
  gluing profile, and $\delta$ a strictly increasing sequence of weights
  starting at $\delta_0>0$.
As a target manifold we consider the case of an almost complex manifold
  $(Q,J)$ without boundary.
Then $(TQ,J)\rightarrow Q$ can be considered as a complex vector bundle
  and we obtain the strong bundle
  $$
    W^{3,2,\delta}_{{\mathcal D},\varphi}(TQ,J)\rightarrow
    X^{3,\delta}_{{\mathcal D},\varphi}(Q).
    $$
On every $Z_a$ with $0<|a|<1/4$ we have the canonical vector field $v_a$
  defined by
  \begin{eqnarray}\label{vavector}
    v_a(\sigma^a_{\wh{x}}(s,t)) =\frac{\partial
    \sigma^a_{\wh{x}}}{\partial s}(s,t)\ \ \text{for}\ (s,t)\in
    [0,\varphi(|a|)]\times S^1.
    \end{eqnarray}
It does not depend on the choice of $\wh{x}$. 
If $a=0$ we can define $v_0$ on $D_x\setminus\{x\}$ and
  $D_y\setminus\{y\}$ using $h_{\wh{x}}$ and $h_{\wh{y}}$ as follows
  \begin{eqnarray*}
    v_a(\sigma^+_{\wh{x}}(s,t)) &=&\frac{\partial
    \sigma^+_{\wh{x}}}{\partial s}(s,t)\ \text{for}\ (s,t)\in {\mathbb
    R}^+\times S^1\\
    v_a(\sigma^-_{\wh{y}}(s',t')&= &\frac{\partial
    \sigma^-_{\wh{y}}}{\partial s'}(s',t')\ \text{for}\ (s',t')\in
    {\mathbb R}^-\times S^1.
    \end{eqnarray*}
We define a new strong bundle over $X^{3,\delta}_{{\mathcal
  D},\varphi}(Q)$
  \begin{eqnarray}\label{EQN6}
    \Omega_{{\mathcal D},\varphi}^{3,2,\delta}(Q,J)\rightarrow
    X^{3,\delta}_{{\mathcal D},\varphi}(Q)
    \index{$\Omega_{{\mathcal D},\varphi}^{3,2,\delta}(Q,J)\rightarrow
    X^{3,\delta}_{{\mathcal D},\varphi}(Q)$}
    \end{eqnarray}
  as follows. 
The elements of $\Omega_{{\mathcal D},\varphi}^{3,2,\delta}(Q,J)$ are
  pairs $(u,\xi)$, where $u:Z_a\rightarrow Q$ belongs to
  $X^{3,\delta}_{{\mathcal D},\varphi}(Q)$ and $\xi$ is a map of class $H^2$
  such that $\xi(z):T_zZ_a\rightarrow (T_{u(z)}Q,J)$ is complex anti-linear. 
Using the vector fields $v_a$ we obtain a map 
  \begin{eqnarray}\label{WEQN9.22}
    \Omega_{{\mathcal D},\varphi}^{3,2,\delta}(Q,J)\rightarrow
    W^{3,2,\delta}_{{\mathcal D},\varphi}(TQ,J)
    \end{eqnarray}
  defined by
  $$
    (u,\xi)\rightarrow (u,\xi\circ v_{a(u)}).
    $$
This map is ${\mathbb C}$-linear in the fibers and a bijection. 
The map covers the identity and  equips (\ref{EQN6}) with the strong
  bundle structure making the above bijection a strong bundle isomorphism.

\begin{theorem}\label{THMx7.22}
Given an ordered disk pair ${\mathcal D}$, the exponential gluing profile
  $\varphi$, and a strictly increasing sequence of weights $\delta$ starting
  at $\delta_0>0$ there exists a natural construction of a strong bundle
  over a M-polyfold which associates to a almost complex manifold $(Q,J)$
  without boundary and associated complex bundle $(TQ,J)\rightarrow Q$ a
  strong bundle
  $$
    \Omega^{3,2,\delta}_{{\mathcal D},\varphi}(Q,J)\rightarrow
    X^{3,\delta}_{{\mathcal D},\varphi}(Q).
    $$
The underlying sets are defined in (\ref{EQN6}). 
The identification (\ref{WEQN9.22}) defines the strong bundle structure in
  terms of the strong bundle
  $$
    W^{3,2,\delta}_{{\mathcal D},\varphi}(TQ,J)\rightarrow
    X^{3,\delta}_{{\mathcal D},\varphi}(Q),
    $$
  which is obtained via the extension result Theorem \ref{THM9.20} for
  construction functors.
Via these identifications sc-smooth questions concerning maps from or into
  $\Omega^{3,2,\delta}_{{\mathcal D},\varphi}(Q,J)$ can be decided via
  Proposition \ref{EQNN9.21}.
\end{theorem}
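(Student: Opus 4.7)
The plan is to reduce the whole statement to an application of the identification map (\ref{WEQN9.22}) together with the already-established Theorem \ref{THM9.20} for $W^{3,2,\delta}_{{\mathcal D},\varphi}(TQ,J)$. In other words, $\Omega^{3,2,\delta}_{{\mathcal D},\varphi}(Q,J)$ is introduced as a set equipped with a canonical fiberwise $\mathbb{C}$-linear bijection to a strong bundle whose sc-structure is already known, and the structure is then transported across this bijection. Concretely, I would first verify that the map
$$
\Theta : \Omega^{3,2,\delta}_{{\mathcal D},\varphi}(Q,J) \to W^{3,2,\delta}_{{\mathcal D},\varphi}(TQ,J), \qquad (u,\xi) \mapsto (u, \xi \circ v_{a(u)}),
$$
is well-defined for every $a \in {\mathbb B}$. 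For $a \neq 0$ this is immediate from (\ref{vavector}): $v_a$ is a nowhere-vanishing smooth vector field on $Z_a$, independent of the choice of decoration, and composing an anti-$\mathbb{C}$-linear bundle map with a non-vanishing real vector field produces a section of $u^*TQ$ whose regularity matches the $H^2$ regularity of $\xi$. For $a=0$ the formulas using $\sigma^{\pm}_{\widehat{x}}$ and $\sigma^{-}_{\widehat{y}}$ produce a pair $(\xi^x, \xi^y)$ on ${\mathbb R}^\pm \times S^1$ with the decay matching the weight $\delta_0$, so that the image lies in $W^{3,2,\delta}_{{\mathcal D},\varphi,0}$ after the appropriate identification; this step uses that an anti-$\mathbb{C}$-linear form $\xi$ with $\xi \to 0$ as $s \to \pm\infty$ produces vanishing asymptotic value of $\xi \circ v_0$, matching the $0$-subscript convention of $W$.

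Next I would verify that $\Theta$ is fiberwise $\mathbb{C}$-linear and a bijection. Linearity is immediate since $v_a$ is independent of $\xi$. For the inverse, given a section $\eta : Z_a \to u^*(TQ,J)$ of the appropriate regularity, one defines $\xi$ by the requirement $\xi(v_a) = \eta$ together with $\mathbb{C}$-antilinearity; concretely $\xi(X) = \tfrac12(\eta \cdot \alpha(X) - J \eta \cdot \alpha(JX))$ where $\alpha$ is the $1$-form dual to $v_a$ with respect to the standard metric coming from $\sigma^a_{\widehat{x}}$. Because $v_a$ spans $T_zZ_a$ as a complex line at every point, this formula produces the unique anti-$\mathbb{C}$-linear $\xi$ mapping $v_a$ to $\eta$, giving bijectivity, and its dependence on $a$ is smooth in the same sense that $v_a$ is smooth in $a$.

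Having established $\Theta$ as a fiberwise $\mathbb{C}$-linear bijection covering the identity, I would then declare the strong bundle structure on $\Omega$ to be the pullback structure under $\Theta$, so that $\Theta$ is by definition a strong bundle isomorphism. Theorem \ref{THM9.20} supplies the strong bundle structure on $W^{3,2,\delta}_{{\mathcal D},\varphi}(TQ,J) \to X^{3,\delta}_{{\mathcal D},\varphi}(Q)$ and its naturality, so this transfer is unambiguous. Uniqueness of the structure on $\Omega$ follows because any other strong bundle structure for which $\Theta$ is sc-smooth in both directions must coincide with this one. Finally, the statement that sc-smoothness questions for maps into or out of $\Omega^{3,2,\delta}_{{\mathcal D},\varphi}(Q,J)$ reduce to Proposition \ref{EQNN9.21} follows by composing with $\Theta^{\pm 1}$: a bundle map into $\Omega$ is sc-smooth iff its composition with $\Theta$ is, and dually for maps out of $\Omega$.

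The main obstacle, and the only genuinely analytic point, is the well-definedness and smooth dependence on $a$ when passing through the node, i.e.\ matching the regularity/weight data of $\xi$ on $Z_a$ to the bundle $W^{3,2,\delta}_{{\mathcal D},\varphi,0}(TQ,J)$ across the seam $a=0$. This is essentially the same gluing-of-sections issue that drives the Fundamental Lemmas; one invokes them (or the consequences {\bf (M1)}--{\bf (M5)}) to see that $\xi \mapsto \xi \circ v_a$ is sc-smooth with respect to the parameter $a \in {\mathbb B}$ uniformly in the target. Once this is in hand, all remaining items (naturality in $(Q,J)$, the extension functoriality, and the smoothness criterion) are formal consequences of Theorem \ref{THM9.20} and Proposition \ref{EQNN9.21}.
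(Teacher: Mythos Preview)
Your approach matches the paper's: the theorem is really a summary of the construction given immediately before it, and that construction is exactly what you describe---transport the already-built strong bundle structure on $W^{3,2,\delta}_{{\mathcal D},\varphi}(TQ,J)$ (from Theorem~\ref{THM9.20}) across the fiberwise $\mathbb{C}$-linear bijection $\Theta:(u,\xi)\mapsto(u,\xi\circ v_{a(u)})$, and then read off the sc-smoothness criteria from Proposition~\ref{EQNN9.21}.

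One remark: your final paragraph about the ``main obstacle'' is unnecessary. Because the strong bundle structure on $\Omega$ is \emph{defined} by declaring $\Theta$ to be a strong bundle isomorphism, there is no independent sc-smoothness of $\Theta$ to verify and hence no need to invoke the Fundamental Lemmas here. The only genuine checks are set-theoretic: that $\Theta$ is a well-defined bijection, fiberwise $\mathbb{C}$-linear, and covers the identity---all of which you handle correctly. The analytic content (gluing behaviour across $a=0$, weights, etc.) has already been absorbed into the construction of $W^{3,2,\delta}_{{\mathcal D},\varphi}(TQ,J)$ via Theorem~\ref{THM9.20}.
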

%

%
\subsection{Some Useful Remarks}\label{USEFul0}
For the analysis it is sometimes useful to have isomorphic models for
  $X^{3,\delta}_{{\mathcal D},\varphi}(Q)$ or $X^{3,\delta}_{{\mathcal
  D},\varphi}({\mathbb R}^N)$ which rather than the abstract glued domain
  use those arising in the Subsection \ref{BASICidea}.
In this case we start with the infinite half-cylinders ${\mathbb
  R}^\pm\times S^1$ and  take complex gluing parameters $a=|a|\cdot e^{2\pi
  i\theta}$ with $|a|<1/4$.
The gluing with $a=0$ results in the disjoint union of the half-cylinders. 
If $a\neq 0$ we define $Z_a^0$ with $R=\varphi(|a|)$ by 
  \begin{eqnarray}
    \ \ \ Z^0_a&=&\{\{(s,t),(s',t')\}\ |\ t,t'\in S^1,\ s\in [0,R],\ s'\in
    [-R,0],\\
    &&\ \ \ \ \ s=s'+R,\ t=t'+\theta\}.\nonumber
    \end{eqnarray}
If $a\neq 0$ we have two sets of natural coordinates on $Z^0_a$ given by
  the bijections
  \begin{eqnarray}
    &[0,R]\times S^1\leftarrow Z^0_a\rightarrow [-R,0]\times S^1&\\
    &(s,t)\leftarrow \{(s,t),(s',t')\}\rightarrow (s',t').&\nonumber
    \end{eqnarray}
There is a unique smooth structure on $Z^0_a$ making both maps
  diffeomorphisms.
Consider the set obtained by taking the union of all Hilbert spaces
  $H^3(Z^0_a,{\mathbb R}^N)$  for $0<|a|<1/4$ together with the set of pairs
  $(\wt{u}^+,\wt{u}^-)\in H^{3,\delta_0}_c({\mathbb R}^+\times S^1,{\mathbb
  R}^N)\times H^{3,\delta_0}_c({\mathbb R}^-\times S^1,{\mathbb R}^N)$ with
  matching asymptotic constants.
Let us denote this set by $X^{3,\delta_0}_{\varphi}({\mathbb R}^N)$.
Given an unordered disk pair ${\mathcal D}=(D_x\sqcup D_y,\{x,y\})$ we
  pick an ordering of $\{x,y\}$ and obtain, say the ordered nodal pair
  $(x,y)$.
Then we fix  decorations resulting in $(\wh{x}_0,\wh{y}_0)$ defining
  $[\wh{x}_0,\wh{y}_0]$.
The map
  $$
  \mathfrak{a}:\{a\in {\mathbb C}\ |\ |a|<1/4\}\rightarrow {\mathbb
  B}_{\mathcal D}:|a|\cdot e^{2\pi i\theta }\rightarrow |a|\cdot [e^{2\pi
  i\theta}\cdot \wh{x}_0,\wh{y}_0]
  $$
  defines a biholmorphic map by the definition of the structure on
  ${\mathbb B}_{\mathcal D}$. 
With $(\wh{x}_0,\wh{y}_0)$ in place we take the uniquely determined
  biholomorphic maps $h_{\wh{x}_0}:(D_x,x)\rightarrow ({\mathbb D},0)$ and
  $h_{\wh{y}_0}:(D_y,y)\rightarrow ({\mathbb D},0)$ such that
  $Th_{\wh{x}_0}\wh{x}_0= {\mathbb R}$ and similarly $Th_{\wh{y}_0}\wh{y}_0=
  {\mathbb R}$.
We note that $h_{e^{2\pi i\theta}\cdot \wh{x}_0}= e^{-2\pi i\theta}\cdot
  h_{\wh{x}_0}$.
Associated to $h_{e^{2\pi i\theta}\cdot\wh{x}_0}$ we take positive
  holomorphic polar coordinates denoted by
  $$
    \sigma_{\theta}^+(s,t) =h_{e^{2\pi i\theta}\cdot
    \wh{x}_0}^{-1}(e^{-2\pi (s+it)}),
    $$
  and negative holomorphic polar coordinates associated to $h_{\wh{y}_0}$
  defined by
  $$
    \sigma^-(s',t')= h_{\wh{y}_0}^{-1}(e^{2\pi (s+it)}).
    $$
Using these maps we can define for $a\in {\mathbb B}$ with $|a|<1/4$ the
  following maps.
If $a=0$ and consequently $\mathfrak{a}(a)=0$ we define 
  $$
  \phi_0:({\mathbb R}^+\times S^1)\sqcup ({\mathbb R}^-\times
  S^1)\rightarrow D_x\sqcup D_y
  $$
  by $\phi_0(s,t) = \sigma^+(s,t)$ and $\phi_0(s',t')=\sigma^-(s',t')$. 
If $0<|a|<1/4$ with $\mathfrak{a}=\mathfrak{a}(a)$ we define
  $Z^0_a\rightarrow Z_{\mathfrak{a}}$ by 
  $$
    \phi_a(\{(s,t),(s',t')\}) = \{\sigma_\theta^+(s,t),\sigma^-(s',t')\}.
    $$
Using these maps we can define a bijection from an M-polyfold to a set
  $$
    X^{3,\delta}_{{\mathcal D},\varphi}({\mathbb R}^N) \rightarrow
    X^{3,\delta_0}_{\varphi}({\mathbb R}^N)
    $$
  by mapping $\wt{u}:Z_{\mathfrak{a}}\rightarrow {\mathbb R}^N$ to
  $\wt{u}\circ\phi_a$.
There exists a unique M-polyfold structure on the set making this
  bijection a sc-diffeomorphism.
It is an easy exercise using the results about actions by diffeomorphism
  that making different choices in the construction defines the same
  M-polyfold structure on the set.
This also holds if we replace ${\mathbb R}^N$ by $Q$.  
We can, of course, fit strong bundles into this context and finally look
  at the Cauchy-Riemann section, which is more explicit for the coordinates
  coming from polar coordinates.
We shall use this in the next subsection.

%
\subsection{Sc-Smoothness of the  CR-Operator}  \label{CROPP}
Assume that ${\mathcal D}=(D_x\sqcup D_y,\{x,y\})$ is an un-ordered disk
  pair and $(Q,J)$ a manifold without boundary equipped with a smooth almost
  complex structure.
With the exponential gluing profile $\varphi$ and the usual weight
  sequence $\delta$ we obtain the strong bundle
  $p_X:\Omega^{3,2,\delta}_{{\mathcal D},\varphi}(Q,J)\rightarrow
  X^{3,\delta}_{{\mathcal D},\varphi}(Q)$.
For simplicity of notation we abbreviate it by
  \begin{eqnarray}
    p:\Omega\rightarrow X.
    \end{eqnarray}
Denoting the almost complex structure on $Z_a$ by $j_a$, the CR-section
  $\bar{\partial}:=\bar{\partial}_{J}$ \index{$\bar{\partial}_{J}$} of $p$
  given by
  \begin{eqnarray}
    X\rightarrow\Omega: u\rightarrow \left(u,\frac{1}{2}\cdot\left
    [Tu+J\circ Tu\circ j_{a(u)}\right]\right)
    \end{eqnarray}
  is well-defined.

%
\subsubsection{Sc-Smoothness}
We shall establish the sc-smoothness of the CR-section as well as its
  regularizing property.
These are two of the four properties which define a so-called {\bf
  pre-Fredholm section} (Note that we do not have boundary conditions, so
  hat we cannot expect a Fredholm property yet.).

\begin{proposition}[$\bar{\partial}$-Sc-Smoothness]\label{PROPI9.20}
 \hfill\\
The section $\bar{\partial}$ is sc-smooth.
\end{proposition}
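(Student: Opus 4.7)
The plan is to apply the sc-smoothness criteria of Proposition \ref{EQNN9.21}, in combination with the strong bundle identification \eqref{WEQN9.22} of $\Omega^{3,2,\delta}_{{\mathcal D},\varphi}(Q,J)$ with $W^{3,2,\delta}_{{\mathcal D},\varphi}(TQ,J)$ via the canonical vector field $v_a$, to reduce the sc-smoothness of $\bar\partial$ to an explicit computation on the concrete model ${\mathbb B}\times H^{3,\delta}_c({\mathcal D},{\mathbb R}^N)$. Properly embedding $Q$ in some ${\mathbb R}^N$ and passing to a tubular neighborhood with bundle retraction, it suffices by Proposition \ref{EQNN9.21}(1) and the chain rule to show that the composite
\[
(a,(\xi^x,\xi^y))\;\longmapsto\;\bar\partial_J\!\bigl(\oplus(a,(\xi^x,\xi^y))\bigr)\circ v_a
\]
is an sc-smooth strong bundle map from $\mathbb{B}\times H^{3,\delta}_c$ into the model for $W(N,L)$, after which applying the sc-smooth map $\widehat f$ of Theorem \ref{THM1.7} lands us inside the required codomain.

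Using the isomorphic polar-coordinate model of Subsection \ref{USEFul0}, one has $v_a=\partial_s$ (equivalently $\partial_{s'}$) and $j_a v_a=\partial_t$, so in the glued cylinder $Z_a^0$ the section reads
\[
\bar\partial_J u\cdot v_a \;=\; \tfrac12\bigl(\partial_s u+J(u)\,\partial_t u\bigr),
\]
where $u=\oplus(a,(\xi^x,\xi^y))$ is the explicit $\beta$-interpolation between $\xi^x(s,t)$ and $\xi^y(s',t')=\xi^y(s-R,t-\theta)$. Applying Leibniz to this expression breaks the computation into three qualitative blocks: the \emph{diagonal pieces} $\beta(s-R/2)\cdot\partial_s\xi^x$ and $\beta(-s'-R/2)\cdot\partial_{s'}\xi^y$, and their $t$-derivative analogues, which are sc-smooth by Fundamental Lemma I (Lemma \ref{FUNDAMENTAL -I}); the \emph{cut-off derivative pieces} $\beta'(s-R/2)\cdot\xi^x$ and $\beta'(s-R/2)\cdot\xi^y(s-R,t-\theta)$, which are compactly supported in $s-R/2$ and are handled by Fundamental Lemma II (Lemma \ref{FUNDAMENTAL -II}); and the \emph{nonlinear composition} $J(u)\cdot\partial_t u$, whose sc-smoothness follows from classical level-wise smoothness of Nemytskii operators for smooth $J$ (the underlying composition map $(u,w)\mapsto J(u)w$ is level-wise classically smooth, hence sc$^\infty$ by Theorem \ref{ABC-x} and the remark following Proposition \ref{lower}), combined with the sc-smoothness of extracting the asymptotic constant and the exponentially decaying part.

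After subtracting off the asymptotic constant of $u$ (handled by \textbf{(M1)} and \textbf{(M2)}) and applying $\widehat f$, the remaining contributions reduce to sums and products of the maps $\Gamma_1,\dots,\Gamma_4$ of Lemmas \ref{FUNDAMENTAL -I}--\ref{FUNDAMENTAL -II} composed with the multiplication and composition maps of \textbf{(M3)}--\textbf{(M5)}, with a loss of exactly one derivative that matches the strong bundle shift ($H^3$ to $H^2$). The claim then follows from the chain rule and the fact that pointwise products and compositions of sc-smooth maps into sc-Hilbert spaces of the required type are sc-smooth. The main technical obstacle will be the cross-terms on the gluing neck, where both cut-offs $\beta^x_a$ and $\beta^y_a$ are simultaneously nontrivial and the coordinate change $(s,t)\leftrightarrow(s-R,t-\theta)$ must be implemented sc-smoothly in the gluing parameter $a$; this is precisely the content of $\Gamma_3,\Gamma_4$ in Lemma \ref{FUNDAMENTAL -II} and its $\delta$-weighted variant \textbf{(M5)}, so once the local expression for $\bar\partial$ is written out in the coordinates of Subsection \ref{USEFul0} the result is assembled by the chain rule.
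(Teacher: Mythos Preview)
Your approach is correct and follows essentially the same blueprint as the paper: reduce via the bundle identification and an embedding $Q\hookrightarrow{\mathbb R}^N$ to an explicit computation on ${\mathbb B}\times H^{3,\delta}_c$, pass to the half-cylinder model, and check sc-smoothness of the resulting expressions via the Fundamental Lemma.

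The one organizational difference is in how $J$ is handled. You keep $J$ inside the formula and treat $J(u)\,\partial_t u$ as a Nemytskii-type nonlinearity, appealing to level-wise classical smoothness. The paper instead observes at the outset that $J:TQ\to TQ$ is a smooth bundle map and hence induces, by Proposition~\ref{propx9.19}, an sc-smooth strong bundle map $J_\ast:W(TQ)\to W(TQ)$; this lets one factor $\bar\partial_J$ as $\tfrac12(\mathsf{T}+J_\ast\circ\mathsf{S})$ with $\mathsf{T}(u)=Tu\circ v_{a(u)}$ and $\mathsf{S}(u)=Tu\circ j_{a(u)}\circ v_{a(u)}$, so that ``at this point $J$ is irrelevant'' and only the two \emph{linear} derivative sections need to be analyzed. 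The paper's factorization is cleaner because it pushes the nonlinearity entirely into the already-established functorial machinery and leaves only first-order linear operators to be computed on the model; your route is more direct but requires you to verify that the Nemytskii map $(u,w)\mapsto J(u)w$ interacts well with the gluing parameter $a$, which ultimately amounts to the same Proposition~\ref{propx9.19} you would be re-proving in coordinates. Either way the endgame is identical: after writing out $\widehat f\circ(\text{derivative})\circ\oplus$ in the polar-coordinate model, everything reduces to the maps (M3)--(M5) and the cases of the Fundamental Lemma.
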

%
\begin{proof}
The sc-smoothness of $\bar{\partial}$ is a property which can be studied
  around a given point $u$. 
From the definition of the strong bundle structure on $\Omega\rightarrow
  X$ it suffices to study $X\rightarrow W^{3,2,\delta}_{{\mathcal
  D},\varphi}(TQ):u\rightarrow (\bar{\partial} u)\circ v_{a(u)}$. 
Since $J:TQ\rightarrow TQ$ defines a smooth bundle map it suffices to show
  that the sections of $W^{3,2,\delta}_{{\mathcal D},\varphi}(TQ)\rightarrow
  X^{3,\delta}_{{\mathcal D},\varphi}(Q)$ given by
  \begin{eqnarray}\label{XEQN13}
    u\rightarrow Tu\circ v_{a(u)}\ \ \text{and}\ \ u\rightarrow Tu\circ
    j_{a(u)}\circ v_{a(u)}
    \end{eqnarray}
  are sc-smooth.
Note that at this point $J$ is irrelevant.  
By definition of the M-polyfold structures we may  assume that $Q$ is a
  properly embedded submanifold  in some ${\mathbb R}^N$
  which defines automatically a bundle embedding of $TQ$ into ${\mathbb
  R}^N\times {\mathbb R}^N\subset {\mathbb R}^N\times {\mathbb K}^N$.
In this case  the two sections in (\ref{XEQN13}) are restrictions of the
  sections of the models
  $$
    W(N,N)\rightarrow X(N)
    $$
  defined by the same formulae $u\rightarrow (u,Tu)$ and $u\rightarrow
  (u,Tu\circ j_{a(u)}\circ v_{a(u)})$.
By definition of $W(N,N)$  sc-smoothness follows from the sc-smoothness of
  the principal parts
  $$
  X^{3,\delta}_{{\mathcal D},\varphi}({\mathbb R}^N)\rightarrow
  X^{2,\delta}_{{\mathcal D},\varphi,0}({\mathbb K}^N).
  $$
Abbreviate the two maps, which we are studying, by $\mathsf{T}$ and
  $\mathsf{S}$, where $\mathsf{T}(u)= Tu\circ v_{a(u)}$ and
  $\mathsf{S}(u)=Tu\circ j_{u(a)}\circ v_{a(u)}$.
By the definition of the M-polyfold structures we need to show that maps
  $$
    {\mathbb B}\times H^{3,\delta}_c({\mathcal D},{\mathbb R}^N)\rightarrow
    H^{2,\delta}({\mathcal D},{\mathbb K}^N)
    $$
  defined by
  $$
    (\eta^x,\eta^y)\rightarrow \wh{f}\circ \mathsf{T}\circ
    \oplus(a,(\eta^x,\eta^y)),\ \ (\eta^x,\eta^y)\rightarrow \wh{f}\circ
    \mathsf{S}\circ \oplus(a,(\eta^x,\eta^y))
    $$
  are sc-smooth. 
We define $(\xi^x,\xi^y)=\wh{f}\circ \mathsf{T}\circ
  \oplus(a,(\eta^x,\eta^y))$.
Then 
  \begin{eqnarray*}
    &\xi^x(z) =\beta_{-2}(a,z)\cdot &\\
    &\cdot T([\{z,z'\})\rightarrow
    (\beta(z,a)\eta^x(z)+\beta(a,z')\cdot\eta^y(z'))])\circ
    v_{a(u)}(\{z,z'\})&
    \end{eqnarray*}
Due to the symmetry of the situation it suffices to show that the map
  ${\mathbb B}\times H^{3,\delta}_c({\mathcal D},{\mathbb R}^N)\rightarrow
  H^{2,\delta}((D_x,x),{\mathbb K}^N):(a,(\eta^x,\eta^y))\rightarrow
  \xi^x$ is sc-smooth.
The involved sc-Hilbert spaces are defined via isomorphisms to the
  corresponding sc-Hilbert spaces associated to ${\mathbb R}^\pm \times
  S^1$. 
As a consequence we need to show the sc-smoothness of certain expressions
  $$
    {\mathbb B}_{\mathbb C}\times E\rightarrow H^{2,\delta}({\mathbb
    R}^+\times S^1,{\mathbb K}^N),
    $$
  where 
  $$
  E\subset H^{3,\delta}_c({\mathbb R}^+\times S^1,{\mathbb R}^N)\times
    H^{3,\delta}_c({\mathbb R}^-\times S^1,{\mathbb R}^N)
    $$
  is the the codimension $N$ linear subspace consisting of pairs having
  matching asymptotic constants.
Using the same letters we rewrite the expression for $\xi^x$ in this
  context, were $R=\varphi(|a|)$ and $a=|a|\cdot e^{-2\pi i\theta}$.
We also decompose $\eta^x=c+r^x$ and $\eta^y=c+r^y$, where $c$ is the
  common asymptotic constant:
\begin{eqnarray}
  &&\xi^x(s,t)\\
  &=&\beta(s-R/2-2)\cdot \frac{\partial}{\partial
  t}\left(\beta(s-R/2)\eta^x(s,t)\right)\nonumber\\
  &&+\beta(s-R/2-2)\cdot \frac{\partial}{\partial
  t}\left((1-\beta(s-R/2))\eta^y(s-R,t-\theta)\right).\nonumber\\
  &=& \beta(s-R/2-2)\cdot \frac{\partial}{\partial
  t}\left(\beta(s-R/2)r^x(s,t)\right)\nonumber\\ &&+\beta(s-R/2-2)\cdot
  \frac{\partial}{\partial
  t}\left((1-\beta(s-R/2))r^y(s-R,t-\theta)\right)\nonumber 
  \end{eqnarray}
The operator $\partial/\partial t$ (and also $\partial/\partial s$) $
  H^{3,\delta}_c({\mathbb R}^+\times S^1,{\mathbb R}^N)\rightarrow
  H^{2,\delta}_0({\mathbb R }^+\times S^1,{\mathbb R}^N)$ is obviously an
  sc-operator and therefore sc-smooth.
For the sc-smoothness assertion we only have to consider the following
  maps:
\begin{itemize}
  \item[(1)] 
  ${\mathbb B}_{\mathcal D}\times H^{3,\delta}_c({\mathbb R}^+\times
  S^1,{\mathbb R }^N)\rightarrow H^{3,\delta}({\mathbb R}^+\times
  S^1,{\mathbb R}^N):\eta^x\rightarrow (\beta(s-R/2))\cdot r^x$.
  \item[(2)] 
    ${\mathbb B}_{\mathcal D}\times H^{3,\delta}_c({\mathbb R}^-\times
    S^1,{\mathbb R}^N)\rightarrow H^{3,\delta}({\mathbb R}^+\times
    S^1,{\mathbb R }^N):\eta^y\rightarrow ((1-\beta(s-R/2))r^y(s-R,t-\theta)$
  \item[(3)] 
  ${\mathbb B}_{\mathbb K}\times H^{2,\delta}_0({\mathbb R}^+\times
    S^1,{\mathbb R}^N)\rightarrow H^{2,\delta}({\mathbb R}^+\times
    S^1,{\mathbb K}^N): (a,e^x)\rightarrow \beta(s-R/2-2)\cdot e^x$.
  \end{itemize}
These expressions are all sc-smooth by the already previously used result
  the ``Fundamental  Lemma'' described in Subsection \ref{FUND0}. 
The discussion of $\mathsf{S}$ is similar with $\partial/\partial t$
  replaced by $\partial/\partial s$ and which involves some more terms, but
  all covered by the fundamental lemma.
\end{proof}
%

%
\subsection{More Examples and Exercises}\label{SSUB7.6}
First we shall derive some more examples of $\oplus$-constructions and
  later introduce a filled version, see Definition \jwf{[broken reference]}
  of the nodal Cauchy-Riemann operator.

%
\subsubsection{The $\ominus$-Map and Associated Spaces}\index{$\ominus$}
Consider for an ordered nodal disk pair ${\mathcal D}=(D_x\sqcup D_y,
  (x,y))$ the sc-Hilbert space $H^{3,\delta}_c({\mathcal D},{\mathbb R}^N)$,
  which consists of pairs of maps $(\wt{u}^x,\wt{u}^y)$ having at $x$ and
  $y$ the common asymptotic limit.
Using ${\mathcal D}$ and a gluing parameter in ${\mathbb B}_{\mathcal D}$
  we define the following Riemann surfaces.
First of all we put $C_0=\emptyset$.  
If $0<|a|<1/4$ we take the disjoint union $ D_y\sqcup D_x$ and with
  $a=|a|\cdot [\wh{x},\wh{y}]$ we take a representative
  $(\wh{x},\wh{y})$ and identify the points $\sigma^+_{\wh{x}}(s,t)$ with
  $s\in [0,R]\times S^1$,  with the points $\sigma^-_{\wh{y}}(s-R,t)$.
We obtain the Riemann surface $C_a$ which is a Riemann sphere with
  distinguished points $x$ and $y$.
We note that we can identify naturally $Z_a$ with a subset of $C_a$
  provided $a\neq 0$.
Given $\wh{x}$ there is a canonical biholomorphic map
  $$
    \sigma_{\wh{x}}^{C_a}:{\mathbb R}\times S^1\rightarrow C_a
    $$
  which on $Z_a$ is $\sigma_{\wh{x}}^{a,+}$.  
We pick a cut-off model $\beta$  and assuming $a\neq 0$ we define with
  $R=\varphi(|a|)$ and $c\in {\mathbb R}^N$ the map
  $\Xi_{a,c}$ by
  \begin{eqnarray}
    &\Xi_{a,c}(\sigma_{\wh{x}}^{C_a}(s,t)) =-(1-\beta(s-R/2))\cdot c +
    \beta(s-R/2)\cdot c&\\
    & \Xi_{a,c}(x)=c\ \ \text{and}\ \ \Xi_{a,c}(y)=-c.&\nonumber
    \end{eqnarray}
We introduce the sc-Hilbert space $H^{3,\delta}_{ap}(C_a,{\mathbb
  R}^N)$\index{$H^{3,\delta}_{ap}(C_a,{\mathbb R}^N)$} as follows.  
If $a=0$ the space has precisely one element, namely the  unique map
  defined on $\emptyset$ with image in ${\mathbb R}^N$. If $a\neq 0$ we
  consider all maps $u$  defined on $C_a$
  such that for a suitable $c\in {\mathbb R}^N$
  $$
    (u-\Xi_{a,c})\circ \sigma^{C_a}_{\wh{x}}\in H^{3,\delta_0}({\mathbb
    R}\times S^1,{\mathbb R}^N).
    $$
The definition of the space does not depend on the cut-off model nor on
  the choice of decoration $\wh{x}$.
We can turn $H^{3,\delta}_{ap}(C_a,{\mathbb R}^N)$ into an sc-Hilbert
  space by requiring level $m$ to consist of objects $u$ for which
  $u-\Xi_{a,c}$ is of class $(m+3,\delta_m)$.
Finally we define the set 
  $$
    X^{\ominus,3,\delta}_{{\mathcal D},\varphi}\index{$
    X^{\ominus,3,\delta}_{{\mathcal D},\varphi}$}=\bigcup_{a\in {\mathbb
    B}_{\mathcal D}}H^{3,\delta}_{ap}(C_a, {\mathbb R}^N).
    $$
Next we define a surjective map 
  $$
    \ominus: {\mathbb B}_{\mathcal D}\times H^{3,\delta}_c({\mathcal
    D},{\mathbb R}^N)\rightarrow  X^{\ominus,3,\delta}_{{\mathcal
    D},\varphi}({\mathbb R}^N)
    $$
  by setting $\ominus(0,u^x,u^y)=0$ and for $a\neq 0$ we proceed as
  follows. 
We define
  $$
  \text{av}_a(u^x):=\int_{S^1} u^x\circ\sigma^+_{\wh{x}}(R/2,t)\cdot dt\
  \ \text{and} \ \  \text{av}_a(u^y):=\int_{S^1}
  u^y\circ\sigma^-_{\wh{y}}(-R/2,t)\cdot dt.
  $$
We also put 
  \begin{eqnarray}
    \text{av}_a(u^x,u^y):=\frac{1}{2}\cdot
    [\text{av}_a(u^x)+\text{av}_a(u^y)].
    \end{eqnarray}
Finally we define $\ominus(a,u^x,u^y)$ as follows.
  \begin{eqnarray}
    &&\ \ \ \ \ominus(a,u^x,u^y)\circ\sigma^{C_a}_{\wh{x}}(s,t) \\
    &=& -(1-\beta(s-R/2))\cdot
    (u^x\circ\sigma^+_{\wh{x}}(s,t)-\text{av}_a(u^x,u^y))\nonumber\\
    && +\beta(s-R/2)\cdot (u^y\circ
    \sigma^-_{\wh{y}}(s-R,t)-\text{av}_a(u^x,u^y)).\nonumber
    \end{eqnarray}

\begin{theorem}\label{THMBB7.26}
The map $\ominus: {\mathbb B}_{\mathcal D}\times H^{3,\delta}_c({\mathcal
  D},{\mathbb R}^N)\rightarrow  X^{\ominus,3,\delta}_{{\mathcal
  D},\varphi}$ is surjective and an $\oplus$-construction. 
The canonical map $P^{\ominus}_{\mathbb B}:
  X^{\ominus,3,\delta}_{{\mathcal D},\varphi}({\mathbb R}^N)\rightarrow
  {\mathbb B}_{\mathcal D}$ which extracts the domain parameter is
  sc-smooth and submersive.
\end{theorem}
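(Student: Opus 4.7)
The plan is to prove this result by essentially the same strategy as Theorem \ref{thm1}, namely by producing a globally defined section $f^\ominus : X^{\ominus,3,\delta}_{{\mathcal D},\varphi}({\mathbb R}^N) \to {\mathbb B}_{\mathcal D} \times H^{3,\delta}_c({\mathcal D},{\mathbb R}^N)$ over ${\mathbb B}_{\mathcal D}$ such that $\ominus\circ f^\ominus = \mathrm{Id}$ and $f^\ominus \circ \ominus$ is sc-smooth. Surjectivity of $\ominus$ is then immediate, and the $\oplus$-construction property yields the M-polyfold structure on $X^{\ominus,3,\delta}_{{\mathcal D},\varphi}$; the submersion property of $P^\ominus_{\mathbb B}$ is extracted exactly as in the proof of Theorem \ref{THM1.3}(5).

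First I would construct $f^\ominus$. For $a \neq 0$ and $u \in H^{3,\delta}_{ap}(C_a,{\mathbb R}^N)$, extract the unique antisymmetric asymptotic constant $c \in {\mathbb R}^N$ for which $u - \Xi_{a,c} \in H^{3,\delta_0}$; this is sc-smooth by an argument directly parallel to $(\mathbf{M1})$, while the middle-loop average $\mathrm{av}_a(u) = \int_{S^1} u\circ \sigma^{C_a}_{\wh x}(0,t)\cdot dt$ is sc-smooth by $(\mathbf{M2})$. Then define $(u^x,u^y) := (\bar f^\ominus)(u)$ by cut-off formulae of the shape
\begin{align*}
u^x\circ \sigma^+_{\wh x}(s,t) &= \beta(s - \tfrac{R}{2}-2)\cdot \bigl(-u(\sigma^{C_a}_{\wh x}(s,t)) + \mathrm{av}_a(u)\bigr) + \bigl(1-\beta(s-\tfrac{R}{2}-2)\bigr)\cdot c, \\
u^y\circ \sigma^-_{\wh y}(s',t') &= \beta(-s' - \tfrac{R}{2}-2)\cdot \bigl(u(\sigma^{C_a}_{\wh x}(s'+R,t')) + \mathrm{av}_a(u)\bigr) + \bigl(1-\beta(-s'-\tfrac{R}{2}-2)\bigr)\cdot c,
\end{align*}
with the $\pm$ pattern inherited from $\Xi_{a,c}$, and set $f^\ominus(u)=(0,u)$ when $a=0$. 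By construction $(u^x,u^y)$ has common asymptotic constant $c$, so $f^\ominus$ takes values in ${\mathbb B}_{\mathcal D}\times H^{3,\delta}_c$.

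The identity $\ominus\circ f^\ominus = \mathrm{Id}$ is then verified by a direct algebraic manipulation using $\beta^x_a\cdot\beta^x_{a,-2} = \beta^x_a$, $\beta^x_a + \beta^y_a = 1$ on $C_a$, and the fact that the constant terms proportional to $c$ recombine to reproduce $\Xi_{a,c}$ while the $\mathrm{av}_a$ contributions cancel — completely parallel to the $\oplus\circ f = \mathrm{Id}$ computation in the proof of Theorem \ref{thm1}, but with $\Xi_{a,c}$ replacing the common nodal value. To show $f^\ominus\circ \ominus$ is sc-smooth, one conjugates via the isomorphism $\Sigma$ of $H^{3,\delta}_c({\mathcal D},{\mathbb R}^N)$ with the codimension-$N$ matched-asymptotics subspace $E$, writes $\xi^x = c + r^x$, $\xi^y = c + r^y$, and expands the composition into a finite sum of pieces of three flavors: multiplication by $\beta(\pm\cdot - R/2)$-type cut-offs, shifts in the argument by $(R,\theta)$ combined with compactly supported cut-offs, and the averaging operator $\mathrm{av}_a$ feeding into a factor $(1-\beta)$. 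Each flavor is sc-smooth by Fundamental Lemmata \ref{FUNDAMENTAL -I}--\ref{FUNDAMENTAL -II} together with $(\mathbf{M1})$--$(\mathbf{M5})$.

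The main obstacle I expect is the sign-bookkeeping introduced by the antisymmetric model $\Xi_{a,c}$: unlike in the $\oplus$ case, the two sides contribute with opposite signs and the relevant ``asymptotic matching'' is between $u$ and $\Xi_{a,c}$ rather than a single constant, so one must be careful that the $c$-dependence in $f^\ominus$ is coherent with the cancellations in $\ominus\circ f^\ominus$. Once this is arranged, no new analytic input is required. Finally, the submersion property of $P^\ominus_{\mathbb B}$ is obtained exactly as in Theorem \ref{THM1.3}(5): writing $f^\ominus(u)=(a(u),\bar f^\ominus(u))$, the sc-smooth map $\rho(u,b) := (\ominus(b,\bar f^\ominus(u)),b)$ satisfies $\rho\circ\rho = \rho$ by virtue of $\ominus\circ f^\ominus = \mathrm{Id}$, and its image is $\mathrm{Gr}(P^\ominus_{\mathbb B})$, which is the desired sc-smooth retraction.
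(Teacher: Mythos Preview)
The paper does not give a proof of this theorem; it leaves it as Exercise~\ref{EXERC107} with a pointer to the gluing/anti-gluing discussion in \cite{HWZ8.7}. Your strategy---construct a global section $f^\ominus$ with $\ominus\circ f^\ominus=\mathrm{Id}$, verify sc-smoothness of $f^\ominus\circ\ominus$ via the Fundamental Lemmata, and derive the submersion property exactly as in Theorem~\ref{THM1.3}(5)---is precisely the intended approach and mirrors the proof of Theorem~\ref{thm1}.

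One point needs correction: your treatment of the fiber over $a=0$ is confused. When $a=0$ the fiber $H^{3,\delta}_{ap}(C_0,{\mathbb R}^N)$ consists of a single element (the unique map on $\emptyset$), so writing ``$f^\ominus(u)=(0,u)$'' does not parse---there is no $u\in H^{3,\delta}_c({\mathcal D},{\mathbb R}^N)$ to preserve. You must set $f^\ominus(\text{empty map})=(0,0)$ (or any fixed point), and then the retraction $f^\ominus\circ\ominus$ collapses the entire fiber $\{0\}\times H^{3,\delta}_c$ to a single point, in contrast to the $\oplus$-case where $f\circ\oplus$ restricts to the identity over $a=0$. This is the genuine structural difference between $\oplus$ and $\ominus$, and it is what makes the limit $a\to 0$ in the sc-smoothness check of $f^\ominus\circ\ominus$ work: every term in your expansion must vanish as $a\to 0$, which follows from the Fundamental Lemma~II (compactly supported cutoffs) and (\textbf{M3})--(\textbf{M5}). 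Your explicit cutoff formulae for $u^x,u^y$ also need adjustment (for instance, the loop average should be at $s=R/2$, not $s=0$, and the asymptotic constant of $(u^x,u^y)$ will not be $c$ but rather $A-c$ where $A$ is the average recovered from the construction); once the $a=0$ picture is straightened out, the correct formulae fall into place.
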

%

\begin{exercise}\label{EXERC107}
Prove Theorem \ref{THMBB7.26}.  
See also in \cite{HWZ8.7} the discussion of gluing and anti-gluing.
\end{exercise}
%

Recall the M-polyfold with submersive $\bar{a}$ defined in
  (\ref{plot007}), see also Theorem \ref{thm1},
  $$
    p_{\mathbb B}: X^{3,\delta}_{{\mathcal D},\varphi}({\mathbb
    R}^N)\rightarrow {\mathbb B}_{\mathcal D}.
    $$
Recall that the structure on $X^{3,\delta}_{{\mathcal D},\varphi}({\mathbb
  R}^N)$ was defined by a map
  $$
  \oplus:  {\mathbb B}_{\mathcal D}\times H^{3,\delta}_c({\mathcal
  D},{\mathbb R}^N)\rightarrow  X^{3,\delta}_{{\mathcal
  D},\varphi}({\mathbb R}^N).
  $$
In view of a following result and to contrast the latter construction to
  the previous one, let us define
  $$
    X^{\oplus,3,\delta}_{{\mathcal D},\varphi}({\mathbb
    R}^N):=X^{3,\delta}_{{\mathcal D},\varphi}({\mathbb R}^N) \ \
    \text{and}\ \ p_{\mathbb B}^\oplus:=p_{\mathbb B}.
    $$

\begin{theorem}\label{THY7.267}
The following holds.
\begin{itemize}
  \item[(1)] 
  The subset
    $$
      X^{\oplus, 3,\delta}_{{\mathcal D},\varphi}({\mathbb
      R}^N){_{p_{\mathbb B}^\oplus}\times_{p_{\mathbb B}^\ominus}}
      X^{\ominus,3,\delta}_{{\mathcal D},\varphi}({\mathbb R}^N)\subset
      X^{\oplus, 3,\delta}_{{\mathcal D},\varphi}({\mathbb R}^N)\times
      X^{\ominus,3,\delta}_{{\mathcal D},\varphi}({\mathbb R}^N)
      $$
    consisting of all $(u,v)$ with $p^{\oplus}_{\mathbb
    B}(u)=p^{\ominus}_{\mathbb B}(v)$ is a sub-M-polyfold.
  \item[(2)] 
  The map
    \begin{eqnarray*}
      &{\mathbb B}_{\mathcal D}\times H^{3,\delta}_c({\mathcal D},{\mathbb
      R}^N)\rightarrow X^{\oplus, 3,\delta}_{{\mathcal D},\varphi}({\mathbb
      R}^N)\times  X^{\ominus,3,\delta}_{{\mathcal D},\varphi}({\mathbb
      R}^N)&\\
      &(a,(u^x,u^y))\rightarrow
      (\oplus(a,u^x,u^y),\ominus(a,u^x,u^y))\nonumber&
      \end{eqnarray*}
    is an sc-diffeomorphism onto $X^{\oplus, 3,\delta}_{{\mathcal
    D},\varphi}({\mathbb R}^N){_{p_{\mathbb B}^\oplus}\times_{p_{\mathbb
    B}^\ominus}} X^{\ominus,3,\delta}_{{\mathcal D},\varphi}({\mathbb
    R}^N)$.
  \item[(3)] 
    The M-polyfold $X^{\oplus, 3,\delta}_{{\mathcal D},\varphi}({\mathbb
    R}^N){_{p_{\mathbb B}^\oplus}\times_{p_{\mathbb B}^\ominus}}
    X^{\ominus,3,\delta}_{{\mathcal D},\varphi}({\mathbb R}^N)$ has a
    uniquely determined sc-manifold structure inducing the M-polyfold
    structure.
  \end{itemize}
\end{theorem}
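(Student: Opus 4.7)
The strategy is to identify the fibered product with the sc-manifold $\mathcal{H} := {\mathbb B}_{\mathcal D} \times H^{3,\delta}_c({\mathcal D}, {\mathbb R}^N)$ via $\Phi = (\oplus, \ominus)$, and to deduce both the sub-M-polyfold and sc-manifold structures from this identification. For part (1), I realize the fibered product as the image of an sc-smooth retraction on $X^\oplus \times X^\ominus$: using the global section $f^\oplus$ of $p^\oplus_{\mathbb B}$ from Theorem \ref{thm1}, writing $f^\oplus(u) = (p^\oplus_{\mathbb B}(u), \bar{f}^\oplus(u))$, set
\[
\rho(u, v) = \bigl(\oplus(p^\ominus_{\mathbb B}(v),\, \bar{f}^\oplus(u)),\, v\bigr).
\]
Sc-smoothness of $\rho$ is immediate, and $\rho \circ \rho = \rho$ follows from $\oplus \circ f^\oplus = \mathrm{Id}$ applied to the inner expression. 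The image is precisely the fibered product.

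For part (2), $\Phi$ is sc-smooth by Theorems \ref{thm1} and \ref{THMBB7.26}, with image in the fibered product by construction. For bijectivity, I construct an explicit inverse $\Psi$: given $(u, v)$ in the fiber over $a$, the middle-loop average $[u]_a = \tfrac{1}{2}(\mathrm{av}_a(u^x) + \mathrm{av}_a(u^y))$ is read from $u$, while the asymptotic value $c_v$ of $v$ at $x$ equals $c - [u]_a$ by direct substitution into the $\ominus$-formula, giving the common nodal constant $c = c_v + [u]_a$. On the gluing annulus, in $(s,t)$-coordinates with $R = \varphi(|a|)$ and $\alpha = \beta(s - R/2)$, the identities
\begin{align*}
u &= \alpha\, u^x + (1-\alpha)\, u^y(\cdot - R,\, \cdot - \theta),\\
v &= -(1-\alpha)\, u^x + \alpha\, u^y(\cdot - R,\, \cdot - \theta) + (1 - 2\alpha)\,[u]_a
\end{align*}
have coefficient matrix with determinant $\alpha^2 + (1-\alpha)^2 \in [\tfrac{1}{2}, 1]$, so they invert to produce $(u^x, u^y)$ there; on the complementary regions one of the cutoffs is identically $1$ and $(u^x, u^y)$ is read directly off $u$. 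This yields a two-sided inverse $\Psi$. To verify that $\Psi$ is sc-smooth on the sub-M-polyfold, I extend it via the retraction: the composite $\Psi \circ \rho: X^\oplus \times X^\ominus \to \mathcal H$ is a sum of terms each built by multiplying by a cut-off $\beta(s - R/2 + \mathrm{const})$, composing with a shift $(s,t) \mapsto (s - R, t - \theta)$, or extracting an average over a middle loop; each class is sc-smooth by the Fundamental Lemmas (Lemmas \ref{FUNDAMENTAL -I} and \ref{FUNDAMENTAL -II}) and Propositions M1--M5. Restriction to the fibered product yields sc-smoothness of $\Psi$ itself.

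For part (3), since $\mathcal H$ is canonically an sc-manifold (product of the finite-dimensional complex manifold ${\mathbb B}_{\mathcal D}$ with the sc-Hilbert space $H^{3,\delta}_c$), the sc-diffeomorphism $\Phi$ transports this structure onto the fibered product; any other compatible sc-manifold structure must make $\Phi$ an sc-manifold-diffeomorphism and so agrees with this one. The main obstacle is the sc-smoothness of $\Psi \circ \rho$ as $|a| \to 0$: the gluing length $R$ tends to infinity while the cutoffs $\beta(\cdot - R/2)$ translate off to infinity, so the naive pointwise algebraic inversion at fixed $a$ does not by itself yield uniform sc-smoothness. Precisely this parametric $R \to \infty$ limit is what the Fundamental Lemmas were designed to handle, so the verification reduces to recognizing that each term in $\Psi \circ \rho$ fits one of the templates M3--M5 and Lemmas \ref{FUNDAMENTAL -I}--\ref{FUNDAMENTAL -II}.
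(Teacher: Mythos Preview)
Your approach is correct and is precisely the gluing/anti-gluing argument the paper intends: the paper does not supply its own proof here but leaves the result as Exercise~\ref{EXERC108}, referring to the discussion of gluing and anti-gluing in \cite{HWZ8.7}. Your explicit retraction for (1), the $2\times 2$ linear inversion with determinant $\alpha^2+(1-\alpha)^2$ for (2), and the reduction of sc-smoothness of $\Psi\circ\rho$ to the Fundamental Lemmas constitute exactly that argument. Two minor remarks. First, for (1) you could alternatively invoke directly the general fibered-product proposition quoted just before Remark~\ref{REMdotX}, since both $p^\oplus_{\mathbb B}$ and $p^\ominus_{\mathbb B}$ are submersive by Theorems~\ref{thm1} and~\ref{THMBB7.26}; your explicit retraction is essentially how that proposition is proved, so either route is fine. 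Second, there is a sign slip in your formula $c=c_v+[u]_a$: tracing the $\ominus$-formula at $s\to+\infty$ gives $v\to -(c-[u]_a)$, so $c=[u]_a-c_v$ (the paper's own displayed formula for $\Xi_{a,c}$ versus the stated values $\Xi_{a,c}(x)=c$, $\Xi_{a,c}(y)=-c$ contains an apparent inconsistency here as well). Neither point affects the validity of the argument.
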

%

\begin{exercise}\label{EXERC108}
Prove Theorem \ref{THY7.267}.  
See also in \cite{HWZ8.7} the discussion of gluing and anti-gluing.
\end{exercise}
%

\subsubsection{The $\wh{\ominus}$-Map and Associated Spaces}
  \index{$\wh{\ominus}$}
Using the variations described in Subsection \ref{VARIATION1} we can
  introduce some more examples.
The important construction shown to be an $\oplus$-construction is
  $$
  \wh{\oplus}: {\mathbb B}_{\mathcal D}\times H^{2,\delta}({\mathcal
  D},{\mathbb K}^N)\rightarrow X^{2,\delta}_{{\mathcal
  D},\varphi,0}({\mathbb K}^N),\index{$\wh{\oplus}$}
  $$
  see (\ref{WEQN3.18}). 
The domain extraction is also known to be submersive.  
There is an associated $\wh{\ominus}$-construction which has a somewhat
  easier form than that for $\ominus$.
For that reason we rename the above data as follows
  $$
    p_{\mathbb B}^{\wh{\oplus}}: X^{\wh{\oplus},2,\delta}_{{\mathcal
    D},\varphi,0}({\mathbb K}^N)\rightarrow {\mathbb B}_{\mathcal D}.
    $$
Using the previous definitions of the $C_a$ we can define
  $H^{2,\delta}(C_a,{\mathbb K}^N)$ to be the sc-Hilbert space consisting of
  all the maps $u:C_a\rightarrow {\mathbb K}^N$ of class $(2,\delta_0)$
  (with vanishing asymptotic limit).
The level $m$ consists of regularity $(m+2,\delta_m)$. 
We define
  \begin{eqnarray}
    X^{\wh{\ominus},2,\delta}_{{\mathcal D},\varphi,0}({\mathbb K}^N) :=
    \bigcup_{a\in {\mathbb B}_{\mathcal D}} H^{2,\delta}(C_a,{\mathbb
    K}^N)\index{$X^{\wh{\ominus},2,\delta}_{{\mathcal
    D},\varphi,0}({\mathbb K}^N)$}
    \end{eqnarray}
  and the domain parameter extraction
  \begin{eqnarray*}
    p^{\wh{\ominus}}_{{\mathbb B}}:X^{\wh{\ominus},2,\delta}_{{\mathcal
    D},\varphi,0}({\mathbb K}^N) \rightarrow {\mathbb B}_{\mathcal D}.
    \end{eqnarray*}
We observe that the fiber over $a=0$ consists precisely of the zero
  element.
We define 
  $$
  \wh{\ominus}:{\mathbb B}_{\mathcal D}\times H^{2,\delta}({\mathcal
  D},{\mathbb K}^N)\rightarrow X^{\wh{\ominus},2,\delta}_{{\mathcal
  D},\varphi,0}({\mathbb K}^N)
  $$
  by mapping $(0,u^x,u^y)$ to $0$ and for $a\neq 0$ with $R=\varphi(|a|)$
  and $a=|a|\cdot [\wh{x},\wh{y}]$
  \begin{eqnarray*}
    \wh{\ominus}(a,u^x,u^y)\circ \sigma^{C_a}_{\wh{x}}(s,t)
    &=& -(1-\beta(s-R/2))\cdot u^x\circ\sigma_{\wh{x}}^+(s,t) \\
    &&+\beta(s-R/2)\cdot u^y\circ \sigma^-_{\wh{y}}(s-R,t)\cdot dt.
    \end{eqnarray*}
The following theorem is related to the gluing and anti-gluing discussion
  in \cite{HWZ8.7}.

\begin{theorem}\label{THY7.2671}
The following holds.
\begin{itemize}
  \item[(1)] 
  The map $\wh{\ominus}$ is a M-polyfold construction.
  \item[(2)] 
  For the M-polyfold structure on $X^{\wh{\ominus},2,\delta}_{{\mathcal
    D},\varphi,0}({\mathbb K}^N)$ the map $p^{\wh{\ominus}}_{\mathbb B}$ is
    sc-smooth and submersive.
  \item[(3)] 
  The subset
    $$
      X^{\wh{\oplus}, 2,\delta}_{{\mathcal D},\varphi,0}({\mathbb
      K}^N){_{p_{\mathbb B}^{\wh{\oplus}}}\times_{p_{\mathbb
      B}^{\wh{\ominus}}}} X^{\wh{\ominus},2,\delta}_{{\mathcal
      D},\varphi,0}({\mathbb K}^N)\subset X^{\wh{\oplus},
      2,\delta}_{{\mathcal D},\varphi,0}({\mathbb K}^N)\times
      X^{\wh{\ominus},2,\delta}_{{\mathcal D},\varphi,0}({\mathbb K}^N)
      $$
    consisting of all $(u,v)$ with $p^{\wh{\oplus}}_{\mathbb
    B}(u)=p^{\wh{\ominus}}_{\mathbb B}(v)$ is a sub-M-polyfold.
  \item[(4)] 
  The map
    \begin{eqnarray*}
      &{\mathbb B}_{\mathcal D}\times H^{2,\delta}({\mathcal D},{\mathbb
      K}^N)\rightarrow X^{\wh{\oplus}, 2,\delta}_{{\mathcal
      D},\varphi,0}({\mathbb K}^N)\times
      X^{\wh{\ominus},2,\delta}_{{\mathcal D},\varphi,0}({\mathbb K}^N)&\\
      &(a,(u^x,u^y))\rightarrow (\wh{\oplus}(a,u^x,u^y),
      \wh{\ominus}(a,u^x,u^y))\nonumber&
      \end{eqnarray*}
    is an sc-diffeomorphism onto $X^{\wh{\oplus}, 2,\delta}_{{\mathcal
    D},\varphi,0}({\mathbb K}^N){_{p_{\mathbb
    B}^{\wh{\oplus}}}\times_{p_{\mathbb B}^{\wh{\ominus}}}}
    X^{\wh{\ominus},2,\delta}_{{\mathcal D},\varphi,0}({\mathbb K}^N)$.
  \item[(5)] 
  The M-polyfold  $X^{\wh{\oplus}, 2,\delta}_{{\mathcal
  D},\varphi,0}({\mathbb K}^N){_{p_{\mathbb
  B}^{\wh{\oplus}}}\times_{p_{\mathbb B}^{\wh{\ominus}}}}
  X^{\wh{\ominus},2,\delta}_{{\mathcal D},\varphi,0}({\mathbb K}^N)$ has a
  uniquely determined sc-manifold structure inducing the M-polyfold
  structure.
  \end{itemize}
\end{theorem}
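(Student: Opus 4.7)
The argument will parallel Theorem \ref{THM1.7} for parts (1)--(2) and Theorem \ref{THY7.267} for parts (3)--(5). The simplification here is that elements of $H^{2,\delta}(\mathcal{D}, \mathbb{K}^N)$ have vanishing asymptotic limits, so no averaging terms will appear.

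For part (1), I will exhibit an extraction map $\wh{g}\colon X^{\wh{\ominus}, 2, \delta}_{\mathcal{D}, \varphi, 0}(\mathbb{K}^N) \to \mathbb{B}_{\mathcal{D}} \times H^{2,\delta}(\mathcal{D}, \mathbb{K}^N)$ preserving the domain parameter. I set $\wh{g}(0) = (0, (0, 0))$, and for $a \neq 0$ define $\wh{g}(v) = (a, (w^x, w^y))$ by
\begin{align*}
  w^x \circ \sigma^+_{\wh{x}}(s,t)
  &= -(1 - \beta(s - R/2 + 2)) \cdot v \circ \sigma^{C_a}_{\wh{x}}(s,t),\\
  w^y \circ \sigma^-_{\wh{y}}(s',t')
  &= (1 - \beta(-s' - R/2 + 2)) \cdot v \circ \sigma^{C_a}_{\wh{x}}(s'+R, t').
\end{align*}
Using the $\beta$-identities $\beta(\tau)\beta(\tau-2) = \beta(\tau)$ and $\beta(\tau) + \beta(-\tau) = 1$, a direct computation will yield $\wh{\ominus} \circ \wh{g} = \mathrm{Id}$. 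Substituting into $\wh{g} \circ \wh{\ominus}$ and passing through the standard conjugation $\Sigma$ used in the proof of Theorem \ref{thm1}, each resulting summand reduces to either $\alpha(s - R/2)\, u^x(s,t)$ or $\gamma(s - R/2)\, u^y(s - R, t - \theta)$ (and $y$-reflections), where $\alpha$ is smooth and constant outside a compact set and $\gamma$ is smooth and compactly supported. Sc-smoothness then follows from the Fundamental Lemmas of Subsection \ref{FUND0}. Independence of the cut-off $\beta$ is handled by the same $\beta_1$-versus-$\beta_2$ comparison as in Theorem \ref{thm1}, which again yields only Fundamental-Lemma expressions.

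For part (2), I will follow the template of equation (\ref{sub-restZZ}): set $\wh{\rho}(v, b) = (\wh{\ominus}(b, \bar{\wh{g}}(v)), b)$, where $\bar{\wh{g}}$ denotes the $H^{2,\delta}(\mathcal{D}, \mathbb{K}^N)$-component of $\wh{g}$. Idempotency $\wh{\rho}\circ\wh{\rho} = \wh{\rho}$ is immediate from $\wh{\ominus} \circ \wh{g} = \mathrm{Id}$, and the image of $\wh{\rho}$ is exactly the graph of $p^{\wh{\ominus}}_{\mathbb{B}}$, giving the submersion property.

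For parts (3)--(5), I will assemble the pair map $\Psi(a, u^x, u^y) = (\wh{\oplus}(a, u^x, u^y),\, \wh{\ominus}(a, u^x, u^y))$. It is sc-smooth since each component is, lands in the fibered product since both components have $\mathbb{B}_{\mathcal{D}}$-image equal to $a$, and is a bijection onto it with inverse obtained by reading off $a$ from either factor and then applying the joint extraction $(f^{\wh{\oplus}}, \wh{g})$; sc-smoothness of $\Psi^{-1}$ reduces to the individual extraction sc-smoothness already established. Transferring the sc-manifold structure on $\mathbb{B}_{\mathcal{D}} \times H^{2,\delta}(\mathcal{D}, \mathbb{K}^N)$ along $\Psi$ will prove (3), (4), and (5) simultaneously, and uniqueness of the sc-manifold refinement is automatic. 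The main technical obstacle lies in step (1): verifying that each summand from $\wh{g} \circ \wh{\ominus}$ and from the cut-off comparison matches a $\Gamma_i$-profile from the Fundamental Lemmas. The vanishing-limits hypothesis removes the average terms that complicated the analogous verification in Theorem \ref{THY7.267}, making this bookkeeping strictly easier.
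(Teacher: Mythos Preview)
The paper does not give a proof; it leaves the result as Exercise~\ref{EXERC109} with a pointer to the hat-version of gluing and anti-gluing in \cite{HWZ8.7}. Your overall strategy---construct an explicit co-retraction $\wh{g}$ for $\wh{\ominus}$, reduce $\wh{g}\circ\wh{\ominus}$ to Fundamental-Lemma profiles, then handle the pair map $\Psi=(\wh{\oplus},\wh{\ominus})$---is precisely the intended one, and your treatment of (1) and (2) is correct.

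There is, however, a genuine gap in your handling of (4). You write that the inverse of $\Psi$ is ``obtained by reading off $a$ from either factor and then applying the joint extraction $(f^{\wh{\oplus}},\wh{g})$,'' and that sc-smoothness of $\Psi^{-1}$ ``reduces to the individual extraction sc-smoothness already established.'' This is not right: applying $f^{\wh{\oplus}}$ to $\wh{\oplus}(a,u^x,u^y)$ returns the retraction $\wh{f}\circ\wh{\oplus}(a,u^x,u^y)$, not $(a,u^x,u^y)$ itself, and similarly for $\wh{g}$. The individual extractions do not invert $\Psi$; they only exhibit each factor as an sc-smooth retract. The actual inverse comes from observing that in $(s,t)$-coordinates, with $\beta=\beta(s-R/2)$, the pair $(\wh{\oplus},\wh{\ominus})$ acts on $(u^x,u^y)$ by the pointwise matrix
\[
\begin{pmatrix} \beta & 1-\beta \\ -(1-\beta) & \beta \end{pmatrix},
\]
whose determinant $\gamma_a=\beta^2+(1-\beta)^2$ is bounded away from zero. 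Inverting this matrix gives the explicit formula for $\Psi^{-1}$, and its sc-smoothness again reduces to Fundamental-Lemma expressions, now with profiles $\beta/\gamma_a$ and $(1-\beta)/\gamma_a$ in place of $\beta$ and $1-\beta$. This is exactly the hat-gluing/anti-gluing computation in \cite{HWZ8.7} that the exercise points to (and it is why the functions $\beta_a/\gamma_a$ and $\beta_a(1-\beta_a)/\gamma_a$ appear in maps {\bf M3}--{\bf M5} of Proposition~\ref{klopx}). Once you supply this step, your argument for (3)--(5) goes through as written.
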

%

\begin{exercise}\label{EXERC109}
Prove Theorem \ref{THY7.2671}.  
See also in \cite{HWZ8.7} the discussion of the hat-version of gluing and
  anti-gluing.
\end{exercise}
%

%
\section{Periodic Orbit Case}\label{SECX2}
We introduce the local models describing the stretching of a map near the
  cylinder over a periodic orbit.

%
\subsection{The Basic Results}
This subsection describes the basic result and we begin with the
  underlying idea.

%
\subsubsection{The Basic Idea}
This is a more complicated situation than the nodal case and we give our
  heuristics in a simple case.  
We consider maps into ${\mathbb R}\times {\mathbb R}^N$ and assume that we
  are given a smooth embedding
  $$
    \gamma:S^1\rightarrow {\mathbb R}^N.
    $$
We denote by $[\gamma]$ the collection of all $t\rightarrow \gamma(t+d)$,
  where $d\in S^1$. 
This defines a $S^1$-family of preferred parameterizations of the
  submanifold $\gamma(S^1)$.
Then ${\mathbb R}\times \gamma(S^1)$ is an infinite cylinder in ${\mathbb
  R}\times {\mathbb R}^N$, see Figure \ref{FIG10000}.

\begin{figure}[h]
\begin{center}
\includegraphics[width=6.5cm]{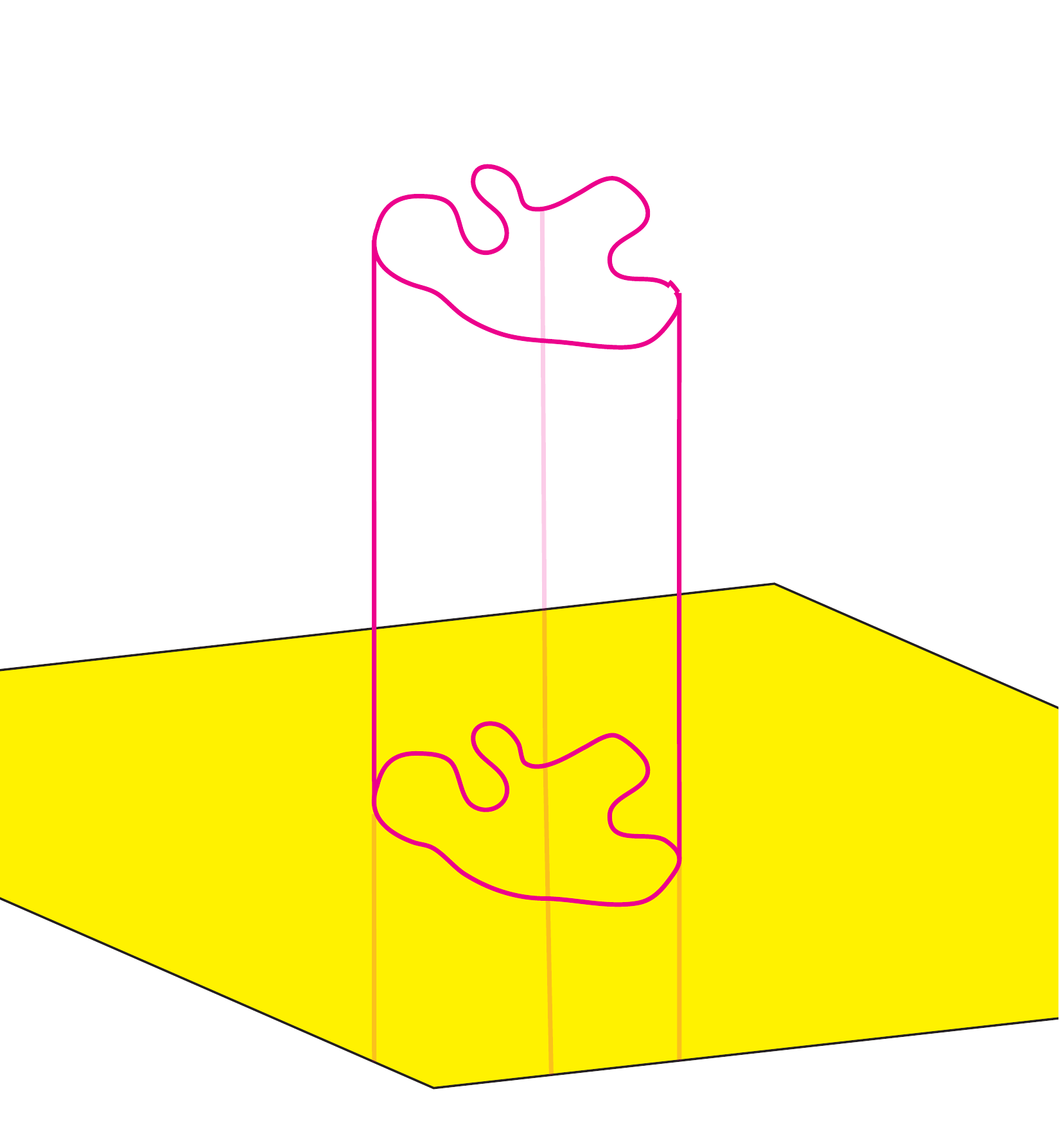}
\end{center}
\caption{The cylinder over $[\gamma]$.}\label{FIG10000}
\end{figure}

The embedded $\gamma(S^1)$ comes with preferred parameterizations
  $[\gamma]$, and due to its linear structure
  ${\mathbb R}$ has several preferred parameterizations as well. 
For example given a number $T>0$ we can consider the following preferred
  parametrization of the infinite cylinder
  \begin{eqnarray}
    &{\mathbb R}\times S^1: (s,t)\rightarrow (Ts+c,\gamma(t+d)),&\\
    &\text{where}\ c\in {\mathbb R}\ \text{and}\ \ d\in S^1.&\nonumber
    \end{eqnarray}
There are modifications of the above which are being used later on. 
For example for the above we can derive some preferred parameterization
  covering the cylinder $k$-fold, obtained from the above by considering the
  preferred parameterizations
  $$
    (s,t)\rightarrow (Ts+c,\gamma(kt+d)).
    $$
We note that $\bm{\gamma}:=([\gamma],T,k)$ captures the needed
  information.

Next we describe a basic idea in the case $([\gamma],T,1)$.
We start by considering tuples $(\wt{u}^+,\wt{u}^-)$  where
  $\wt{u}^\pm:{\mathbb R}^\pm\times S^1\rightarrow {\mathbb R}\times
  {\mathbb R}^N$. 
Moreover these maps are continuous and have the following form
\begin{eqnarray}
  &\wt{u}^+(s,t)= (Ts+c^+, \gamma(t+d^+) + r^+(s,t))&\\
  &\wt{u}^-(s',t')=(Ts'+c^-,\gamma(t+d^-)+r^-(s,t)).&\nonumber
  \end{eqnarray}
Here $r^\pm(s,t)\rightarrow 0$ as $s\rightarrow \pm\infty$, $c^\pm\in
  {\mathbb R}$ and $d^\pm\in S^1$.
We see that the maps $\wt{u}^\pm$ approximate the cylinder associated to
  $[\gamma]$ as $s\rightarrow \pm\infty$, respecting in an approximate
  sense the preferred parameterizations. 
Assume for the moment $r^\pm=0$ and a large number $S>>0$ is given. 
We would like to construct from $\wt{u}^\pm$ which in some sense
  approximate the cylinder at their ends,  a map on a long finite
  cylinder, which approximates the cylinder in its middle part.
The whole process should produce no unnecessary wrinkles, i.e. in some
  sense should be as efficient as possible.
To do so we  glue $\wt{u}^+$ and the shifted $S\ast \wt{u}^-$,  where we
  add $S$ to the first factor.
In order to avoid wrinkles the gluing parameter for the domain has to be
  picked carefully.
Our maps are  $\wt{u}^+(s,t)= (Ts+c^+,\gamma(t+d^+))$ and $S\ast
  \wt{u}^-(s',t')=(Ts'+c^-+S,\gamma(t'+d^-))$.
There is only one way to glue these maps in a way which avoids wrinkles.
Namely we define $R$ by 
  $$
  TR = S+ c^--c^+\ \ \text{and}\ \  d=d^--d^+.
  $$
Then we define the glued map $\wt{w}$ on $Z_{(R,d)}$ by the usual formula 
  \begin{eqnarray}
    && \wt{w}(\{(s,t),(s',t')\})\\
    & =&\beta(s-R/2)\cdot \wt{u}^+(s,t) +\beta(-s'-R/2)\cdot
    S\ast\wt{u}^-(s',t').\nonumber
    \end{eqnarray}
We compute 
  \begin{eqnarray*}
    && \wt{w}(\{(s,t),(s',t')\})\\
    &=& \beta(s-R/2)\cdot (Ts+c^+,\gamma(t+d^+))\\
    &&+\beta(-s'-R/2)\cdot (T(s-R)+c^-+S,\gamma(t-d+d^-))\\
    &=& (Ts+c^+,\gamma(t+d^+))
    \end{eqnarray*}
If $r^\pm$ is nonzero we obtain from $\wt{u}^\pm$ the constants $c^\pm$
  and $d=d^--d^+$ from the asymptotic behavior of $\wt{u}^\pm$.
Then for given $S>>0$ we can compute $R$ and use the gluing associated to
  $(R,d)$.
Hence the modified $\wt{w}$ is given by 
  $$
    \wt{w} = (Ts+c^-,\gamma(t+d^+))+\oplus(R,d,r^+,r^-).
    $$
This defines
\begin{eqnarray*}
  &&\bar{\oplus}(S, \wt{u}^+,\wt{u}^-)(\{(s,t),(s',t')\})\\
  &=& \beta(s-R/2)\cdot  \wt{u}^+(s,t)+\beta(-s'-R/2)\cdot
  \wt{u}^+(s',t').
  \end{eqnarray*}
Again, proper formulated, this will lead to an $\oplus$-construction.  
This time there is no global $f$ 
  which partially inverts $\bm{\bar{\oplus}}$, but one can get away with
  two such maps.
This time the  constructions are more subtle as in the nodal case.
The easier map is like in the nodal case, but the more 
 interesting map is obtained as follows.

We are given $(S,\wt{w})$, where  $\wt{w}:Z_{(R,d)}\rightarrow {\mathbb
  R}\times {\mathbb R}^N$ for $R>>0$ so that in addition the image of a
  middle-loop is close to the cylinder suitably parametrized, i.e.
  compatible with the distinguished coordinates.
From the domain of $\wt{w}$ we extract the parameter $(R,d)$. 
By a subtle averaging construction we can find a candidate for $d^+$ which
  together with $d$ determines $d^-$.
Using  the averaging construction we obtain a candidate for $c^+$ and with
  the help of $S$ and $R$ we obtain $c^-$.
Then we use a cut-off construction as in (\ref{qwas6.3}) to define an
  associated $\wt{w}^\pm$ obtained by transitioning
  from $\wt{w}$ to one of the distinguished parameterizations of the
  cylinder at infinity.
Recall that in (\ref{qwas6.3}) we transitioned to a constant value.
Our considerations in the following sections will also deal with the case
  that the cylinder associated to $[\gamma]$ is multiply-covered, i.e. a
  construction associated to $\bm{\gamma}=([\gamma],T,k)$.

\begin{remark}
There are two versions of the above, which are equally valid.  
Above we glued $\wt{u}^+$ and $S\ast\wt{u}^-$.
We can equally well also glue  $(-S/2)\ast\wt{u}^+$ and $(S/2)\ast
  \wt{u}^-$, or $(-S)\ast \wt{u}^+$ and $\wt{u}^-$.
Indeed, we shall utilize version 1 and version 3 in the case when we
  construct the M-polyfolds associated to symplectic cobordisms. 
Namely version 1 at the positive ends and version 3 at the negative ends.  
In the case of symplectizations one can take any of these versions. 
However, the fomulae for iterated constructions (more than two maps) get
  more cumbersome for version 2 and our choice is the version 1 gluing.
One should point out that  in this context one  deals with maps modulo the
  ${\mathbb R}$-action $\ast$, so that the different versions only produce
  different representatives, which are mod ${\mathbb R}$ identical.
\end{remark}
%

%
\subsubsection{Constructions around Periodic Orbits}
We begin with the  definition of a periodic orbit.

\begin{definition}
A {\bf periodic orbit}\index{periodic orbit} in ${\mathbb R}^N$ is a tuple
  $\boldsymbol{\gamma}=([\gamma],T,k)$, where $T$ is a positive real number,
  $k\geq 1$ a positive integer, $\gamma:S^1\rightarrow {\mathbb R}^N$ a
  smooth embedding, and $[\gamma]$ denotes the set of reparameterizations
  $t\rightarrow \gamma(t+\theta)$ with $\theta\in S^1$.
$T$ is called the {\bf period} and $k$ the {\bf covering number}. 
The number $T_0=T/k$ is called the {\bf minimal period}\index{minimal
  period}.
A {\bf weighted periodic orbit}\index{weighted periodic orbit}
  $\boldsymbol{\bar{\gamma}}$ (in ${\mathbb R}^N$) is a tuple
  $(\boldsymbol{\gamma},\delta)$, where $\delta={(\delta_i)}_{i=0}^\infty$
  is a strictly increasing sequence of weights
  $0<\delta_0<\delta_1<..<\delta_i<\delta_{i+1}$.  
There  are obvious generalization to a notion of periodic orbit in a
  smooth manifold $Q$, where we require that $\gamma:S^1\rightarrow Q$ is an
  embedding.
\end{definition}
%

There are several natural forgetful maps of interest to us, namely
  $\boldsymbol{\bar{\gamma}}\rightarrow \boldsymbol{\gamma}$,
  $\boldsymbol{\bar{\gamma}}\rightarrow \delta$ and
  $\boldsymbol{\bar{\gamma}}\rightarrow \delta_0$.  
In a moment we will define a so-called collection of standard maps,
  however before doing so it will be useful to recall the following
  preliminaries.
Given a disk \(D_x\) with \(x\in D_x\setminus \partial D_x\), we let
  \(\widehat{x}\subset T_x D_x\) denote an oriented line passing through
  \(0\in T_x D_x\), and call \(\widehat{x}\) a decoration.
The set of all decorations in \(T_x D_x\) is denoted \(\mathbb{S}_x\).
Given an ordered\footnote{The same definition holds for an unordered disk
  pair.} disk pair \(((D_x, D_y), (x,y))\), we say that \(\{\widehat{x},
  \widehat{y}\}\) and \(\{\widehat{x}', \widehat{y}'\}\) are equivalent
  provided \(\widehat{x}, \widehat{x}' \in \mathbb{S}_x\) and
  \(\widehat{y}, \widehat{y}' \in \mathbb{S}_y\) satisfy
  \begin{align*}                                                          
    \widehat{x}' = e^{2\pi i\theta}\widehat{x}
    \qquad\text{and}\qquad\widehat{y}' = e^{-2\pi i\theta}\widehat{y},
    \end{align*}
  for some \(\theta\in S^1\). 
It can be shown that our notion of equivalency does indeed define an
  equivalence relation, and we let \([\widehat{x}, \widehat{y}]\) denote
  the equivalence class associated to \(\{\widehat{x}, \widehat{y}\}\).
We call such an \([\widehat{x}, \widehat{y}]\) a natural angle, and the
  set of natural angles is denoted by \(\mathbb{S}_{x,y}\).  Further details
  can be found in Appendix \ref{SEC_basic_notions}.
With these notions recalled, we are now prepared to define the collection
  of standard maps associated to a periodic orbit.

\begin{definition}\label{PPP10.3}
Let ${\mathcal D}=((D_x,D_y),(x,y))$ be an ordered disk pair.
Given a periodic orbit $\boldsymbol{\gamma}$ in ${\mathbb R}^N$, the
  associated collection of {\bf standard maps in ${\mathbb R}\times {\mathbb
  R}^N$} \index{standard maps} is  the set
  $\mathsf{S}_{\boldsymbol{\gamma}}$ consisting of tuples
  $(\wt{q}^x,[\wh{x},\wh{y}],\wt{q}^y)$, where
  \([\widehat{x},\widehat{y}]\in \mathbb{S}_{x,y}\) is a natural angle,
  and
  \begin{align*}                                                          
    &\tilde{q}^x:D_x\setminus \{x\}\to \mathbb{R}\times \mathbb{R}^N
    \\
    &\tilde{q}^y:D_y\setminus \{y\}\to \mathbb{R}\times \mathbb{R}^N,
    \end{align*}
  satisfying the following conditions.
There exists \(\gamma\in [\gamma]\), \(c^x,c^y\in \mathbb{R}\), and
  representative \(\{\widehat{x},\widehat{y}\}\in [\widehat{x},
  \widehat{y}]\) for which the following holds.
  \begin{eqnarray}
    \wt{q}^x\circ \sigma_{\wh{x}}^+(s,t)&=&(Ts +c^x,\gamma(kt))\in
    {\mathbb R}\times {\mathbb R}^N\\
    \wt{q}^y\circ \sigma_{\wh{y}}^-(s',t')&=&(Ts'+c^y,\gamma(kt'))\in
    {\mathbb R}\times {\mathbb R}^N\nonumber;
    \end{eqnarray} 
  here \(\sigma_{\widehat{x}}^+\) and \(\sigma_{\widehat{y}}^-\) are as in
  equation (\ref{EQ_sig_xhat}) and equation (\ref{EQ_sig_yhat})
\end{definition}
%
Given a periodic orbit $\boldsymbol{\gamma}$ in ${\mathbb R}^N$ we have
  for every representative $\gamma$ in $[\gamma]$ a canonical map
  $$
    \phi_\gamma:{\mathbb R}\times {\mathbb S}_x\times {\mathbb R}\times
    {\mathbb S}_y\rightarrow \mathsf{S}_{\boldsymbol{\gamma}}
    $$
  defined by $\phi_\gamma(c^x,\wh{x},c^y,\wh{y})=
  (\wt{q}^\gamma_{c^x,\wh{x}},[\wh{x},\wh{y}],\wt{q}^\gamma_{c^y,\wh{y}})$,
  where
  \begin{eqnarray}\label{EQNC16}
    &\wt{q}^\gamma_{c^x,\wh{x}}\circ\sigma^+_{\wh{x}}(s,t)
    =(Ts+c^x,\gamma(kt))&\\
    &\wt{q}^\gamma_{c^y,\wh{y}}\circ\sigma^-_{\wh{y}}(s',t')=(Ts'
    +c^y,\gamma(kt')).&\nonumber
    \end{eqnarray}
The map $\phi_\gamma$ is $k:1$ and surjective. 
The cyclic group ${\mathbb Z}_k=\{0,...,k-1\}$ acts freely on
  $\wt{\Sigma}_{x,y}:= {\mathbb R}\times {\mathbb S}_x\times {\mathbb
  R}\times {\mathbb S}_y$ via
  $$
    j\ast (c^x,\wh{x},c^y,\wh{y})= (c^x,e^{2\pi i (j/k)}\cdot \wh{x},
    c^y,e^{-2\pi i(j/k)}\cdot \wh{y}).
    $$

\begin{proposition}\label{PROPT8.84}
Given $\bm{\gamma}=([\gamma],T,k)$ the set $\mathsf{S}_{\bm{\gamma}}$
  admits a natural smooth manifold structure characterized by the property
  that for every $\gamma\in [\gamma]$, the map
  $\phi_\gamma:\wt{\Sigma}_{x,y}\rightarrow \mathsf{S}_\gamma$ is a local
  diffeomorphism.
The preimage of a point under $\phi_\gamma$ is a ${\mathbb Z}_k$-orbit
  and the quotient $\Sigma_{x,y}= \wt{\Sigma}_{x,y}/{\mathbb Z}_k$
  has a natural smooth manifold structure  obtained by the standard
  procedure of dividing out the smooth ${\mathbb Z}_k$-action. 
The induced map $\Sigma_{x,y}\rightarrow \mathsf{S}_\gamma$ is a bijection
  and the desired smooth manifold structure on $\mathsf{S}_{\bm{\gamma}}$
  is characterized by the requirement that this map is a diffeomorphism.
\end{proposition}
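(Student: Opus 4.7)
The plan is to build the smooth structure on $\mathsf{S}_{\bm{\gamma}}$ by descent from $\wt{\Sigma}_{x,y}$. First I will verify that the $\mathbb{Z}_k$-action on $\wt{\Sigma}_{x,y}$ is free and smooth: freeness is immediate from the fact that $e^{2\pi i (j/k)}\cdot \wh{x}\neq \wh{x}$ for $j\in\{1,\ldots,k-1\}$, and smoothness is clear from the explicit formula. Hence the quotient $\Sigma_{x,y}=\wt{\Sigma}_{x,y}/\mathbb{Z}_k$ inherits a canonical smooth manifold structure by the standard result on quotients by free, proper, smooth actions of finite groups.

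Next I will identify the fibers of $\phi_\gamma$. Given two tuples $(c^x,\wh{x},c^y,\wh{y})$ and $(c^{x\prime},\wh{x}',c^{y\prime},\wh{y}')$ mapping to the same standard map, equality of the natural angles forces $\wh{x}'=e^{2\pi i\theta}\wh{x}$ and $\wh{y}'=e^{-2\pi i\theta}\wh{y}$ for some $\theta\in S^1$. Using the transformation rule $h_{e^{2\pi i\theta}\wh{x}}=e^{-2\pi i\theta}h_{\wh{x}}$ from Subsection \ref{USEFul0}, one computes
\begin{align*}
\sigma^+_{\wh{x}'}(s,t)=\sigma^+_{\wh{x}}(s,t-\theta),\qquad \sigma^-_{\wh{y}'}(s',t')=\sigma^-_{\wh{y}}(s',t'-\theta).
\end{align*}
Substituting into (\ref{EQNC16}) and equating with the $\wh{x}'$-version, the $\mathbb{R}$-component gives $c^{x\prime}=c^x$ and $c^{y\prime}=c^y$, while the $\mathbb{R}^N$-component gives $\gamma(kt)=\gamma(k(t-\theta))$ for all $t$; since $\gamma$ is an embedding of $S^1$, this forces $k\theta\in\mathbb{Z}$, i.e.\ $\theta=j/k$ for some $j\in\{0,\ldots,k-1\}$. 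Thus the fibers of $\phi_\gamma$ are precisely the $\mathbb{Z}_k$-orbits, so $\phi_\gamma$ descends to a bijection $\bar\phi_\gamma:\Sigma_{x,y}\to \mathsf{S}_{\bm{\gamma}}$. I then transport the smooth structure along $\bar\phi_\gamma$, which makes it a diffeomorphism and $\phi_\gamma$ a local diffeomorphism, and uniqueness of the structure with this characterizing property is automatic.

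The main point that requires care is independence of the resulting smooth structure from the choice of representative $\gamma\in[\gamma]$. If $\gamma'(\tau)=\gamma(\tau+\alpha)$ for some $\alpha\in S^1$, a similar substitution shows
\begin{align*}
\phi_{\gamma'}(c^x,\wh{x},c^y,\wh{y})=\phi_{\gamma}\bigl(c^x,\,e^{2\pi i\alpha/k}\wh{x},\,c^y,\,e^{-2\pi i\alpha/k}\wh{y}\bigr),
\end{align*}
i.e.\ $\phi_{\gamma'}=\phi_\gamma\circ\Psi_\alpha$, where $\Psi_\alpha$ is the obvious rotation diffeomorphism of $\wt{\Sigma}_{x,y}$ that commutes with the $\mathbb{Z}_k$-action. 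Hence $\Psi_\alpha$ descends to a diffeomorphism of $\Sigma_{x,y}$, and the two induced smooth structures on $\mathsf{S}_{\bm{\gamma}}$ coincide. The main obstacle in carrying this out is bookkeeping: one must be careful with the signs and with which side ($\wh{x}$ versus $\wh{y}$) picks up which rotation, but once the transformation rule for $\sigma^{\pm}$ under rescaling of the decoration is fixed, everything is a direct computation.
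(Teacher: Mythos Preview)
Your proof is correct and follows essentially the same route as the paper: both identify the fibers of $\phi_\gamma$ as $\mathbb{Z}_k$-orbits via the transformation rule $\sigma^+_{e^{2\pi i\theta}\wh{x}}(s,t)=\sigma^+_{\wh{x}}(s,t-\theta)$, which forces $\gamma(kt)=\gamma(k(t-\theta))$ and hence $k\theta\in\mathbb{Z}$. You are in fact slightly more thorough than the paper, which dismisses everything after the fiber computation with ``the rest of the proof is obvious''; in particular your explicit verification that $\phi_{\gamma'}=\phi_\gamma\circ\Psi_\alpha$ for the rotation $\Psi_\alpha$, and hence that the induced structure is independent of the representative $\gamma\in[\gamma]$, is a point the paper leaves to the reader.
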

%
\begin{proof}
Assume that $\phi_{\gamma}(c^x,\wh{x},c^y,\wh{y})=\phi_\gamma(d^x,\wh{x}',
  d^y,\wh{y}')$.
We deduce that $[\wh{x},\wh{y}]=[\wh{x}',\wh{y}']$ and therefore we find
  $\tau\in [0,1)$ such that
  $$
  \wh{x}'=e^{2\pi i\tau}\cdot \wh{x}\ \ \text{and}\ \ \wh{y}'=e^{-2\pi
  i\tau}\cdot \wh{y}.
  $$
Moreover
  \begin{eqnarray*}
    (Ts+c^x,\gamma(kt))
    &=&\wt{q}^x_{c^x,\wh{x}}\circ\sigma^+_{\wh{x}}(s,t)\\
    &=&\wt{q}^x_{d^x,\wh{x}'}\circ\sigma^+_{\wh{x}}(s,t)\\
    &=&\wt{q}^x_{d^x,\wh{x}'}\circ\sigma^+_{\wh{x}'}(s,t-\tau)\\
    &=&(Ts+d^x,\gamma(k(t-\tau));)
    \end{eqnarray*}
  where we have used the previously established fact that
  \begin{align*}                                                          
    h_{e^{2\pi i \tau} \widehat{x}} = e^{-2\pi i \tau} h_{\widehat{x}}
    \end{align*}
  and hence 
  \begin{align*}                                                          
    \sigma_{\widehat{x}'}^+ (s,t) = h_{\widehat{x}}^{-1}
    (e^{-2\pi(s+i(t-\tau))}) = \sigma_{\widehat{x}}^+(s,t-\tau)).
    \end{align*}
This implies $k\tau=0$ mod $1$, and  since $\tau\in [0,1)$ that $\tau=j/k$
  for some $j\in \{0,...,k-1\}$.
Further $c^x=d^x$.  
Similarly we show that
  $$
    (Ts'+c^y,\gamma(kt')) = (Ts'+d^y,\gamma(k(t'-\tau))
    $$
  implying that $\tau=j/k$ as before. 
This discussion shows that the preimage of a point is a ${\mathbb
  Z}_k$-orbit.
The rest of the proof is obvious.
\end{proof}

Let $G$ be the automorphism group of the ordered ${\mathcal D}$. Then $G$
  is diffeomorphic to $S^1\times S^1$ and acts on
  $\mathsf{S}_{\boldsymbol{\gamma}}$ via
  $$
  g\ast (\wt{q}^x,[\wh{x},\wh{y}],\wt{q}^y) := (\wt{q}^x\circ
  g^{-1},[Tg\cdot \wh{x}, Tg\cdot \wh{y}],\wt{q}^y\circ g^{-1}).
  $$
This action is smooth since it corresponds under a $\Phi_\gamma$ to the
  smooth action
  $g\ast (c^x,\wh{x},c^y,\wh{y})=(c^x, Tg\cdot \wh{x},c^y, Tg\cdot
  \wh{y})$.

In a next step we introduce a set of maps which, as we shall show, has a
  natural ssc-manifold structure.

\begin{definition}
Consider a map $\wt{w}:{\mathbb R}^\pm\times S^1\rightarrow {\mathbb
  R}\times {\mathbb R}^{N}$ and a periodic orbit $\boldsymbol{\gamma}$ in
  ${\mathbb R}^N$.  
We say that $\wt{w}$ is of class
  $H^{m,\tau}_{\boldsymbol{\gamma}}(\mathbb{R}^\pm \times S^1,
  \mathbb{R}^N)$ provided that there exists $\gamma\in [\gamma]$ and $c\in
  {\mathbb R}$ with the property that the map defined by
  $$
    \wt{v}(s,t) =\wt{w}(s,t) - (Ts+c,\gamma(kt))
    $$
  belongs to $H^{m,\tau}({\mathbb R}^\pm\times S^1,{\mathbb R}^N)$.
\end{definition}
%

\begin{definition}\label{DEFn8.6X}
Let ${\mathcal D}$ be an ordered disk pair and
  $\boldsymbol{\bar{\gamma}}=(\boldsymbol{\gamma},\delta)$ a weighted
  periodic orbit in ${\mathbb R}^N$.
By  $Z_{\mathcal D}^{3,\delta_0}({\mathbb R}\times {\mathbb
  R}^N,\boldsymbol{{\gamma}})$ we denote the set which  consists of all
  $(\wt{u}^x,[\wh{x},\wh{y}],\wt{u}^y)$ of class $(3,\delta_0)$ converging
  to $\boldsymbol{\gamma}$ in a matching way.
This means that $\wt{u}^x$ is of class
  $H^{3,\delta_0}_{\boldsymbol{\gamma}+}$, $\wt{u}^y$ of class
  $H^{3,\delta_0}_{\boldsymbol{\gamma}-}$ and $\wt{u}^x$ and $\wt{u}^y$
  are $[\wh{x},\wh{y}]$-directionally matching.
(see  Appendix \jwf{[broken reference]} 
Definition \jwf{[broken reference]} 
and Definition \jwf{[broken reference]} 
). 
\end{definition}
%

By $H^{3,\delta}({\mathcal D},{\mathbb R}\times {\mathbb R}^N)$ we denote
  the sc-Hilbert space of maps of class $(3,\delta_0)$ with vanishing
  asymptotic limit.
We define the map
  \begin{eqnarray}\label{PP10.6}
    &\Psi:\mathsf{S}_{\boldsymbol{\gamma}}\times H^{3,\delta_0}({\mathcal
    D},{\mathbb R}\times {\mathbb R}^N)
    \rightarrow Z^{3,\delta_0}_{\mathcal D}({\mathbb R}\times {\mathbb
    R}^N,\boldsymbol{{\gamma}}):&\\
    &(\wt{q},(\wt{h}^x,\wt{h}^y))\rightarrow
    \wt{q}\dotplus(\wt{h}^x,\wt{h}^y),&\nonumber
    \end{eqnarray}
  where the expression $\wt{q}\dotplus(\wt{h}^x,\wt{h}^y)$ for
  $\wt{q}=(\wt{q}^x,[\wh{x},\wh{y}],\wt{q}^y)$ is defined by
  \begin{eqnarray*}
    \wt{q}\dotplus(\wt{h}^x,\wt{h}^y)
    =(\wt{q}^x+\wt{h}^x,[\wh{x},\wh{y}],\wt{q}^y+\wt{h}^y).
    \end{eqnarray*}
It is an easy exercise to verify the following lemma.

\begin{lemma}
Let $\boldsymbol{\bar{\gamma}}$ be a weighted periodic orbit. 
Then the map $\Psi$ defined in \em{(\ref{PP10.6})} is a bijection. 
\end{lemma}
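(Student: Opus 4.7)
The plan is to prove the two inclusions separately, using the asymptotic data of $Z_{\mathcal D}^{3,\delta_0}$-elements to recover the standard-map part, and then using the vanishing of asymptotic limits in $H^{3,\delta_0}$ to pin down the correction terms.

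For surjectivity, fix an element $(\wt{u}^x,[\wh{x},\wh{y}],\wt{u}^y)$ in $Z_{\mathcal D}^{3,\delta_0}$. By definition, $\wt{u}^x$ is of class $H^{3,\delta_0}_{\boldsymbol{\gamma}+}$, so there exist $\gamma^x \in [\gamma]$ and $c^x \in \mathbb{R}$ (with some choice of representative decoration $\wh{x}$) such that $\wt{u}^x\circ\sigma^+_{\wh{x}}(s,t) - (Ts+c^x,\gamma^x(kt))$ lies in $H^{3,\delta_0}(\mathbb{R}^+\times S^1,\mathbb{R}^N)$, and analogously for $\wt{u}^y$ with data $\gamma^y, c^y$. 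The $[\wh{x},\wh{y}]$-directional matching condition (Definition in Appendix, referenced in Definition \ref{DEFn8.6X}) is exactly what forces $\gamma^x = \gamma^y =: \gamma \in [\gamma]$ with respect to a common representative $\{\wh{x},\wh{y}\}$ of $[\wh{x},\wh{y}]$. This data then assembles into a standard map $\wt{q} = (\wt{q}^\gamma_{c^x,\wh{x}},[\wh{x},\wh{y}],\wt{q}^\gamma_{c^y,\wh{y}}) \in \mathsf{S}_{\boldsymbol{\gamma}}$ via formula (\ref{EQNC16}), and we set $\wt{h}^x = \wt{u}^x - \wt{q}^x$, $\wt{h}^y = \wt{u}^y - \wt{q}^y$. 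By construction, $(\wt{h}^x,\wt{h}^y)$ lies in $H^{3,\delta_0}({\mathcal D},\mathbb{R}\times\mathbb{R}^N)$, and $\Psi(\wt{q},(\wt{h}^x,\wt{h}^y))$ returns the original tuple.

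For injectivity, suppose $\Psi(\wt{q},(\wt{h}^x,\wt{h}^y)) = \Psi(\wt{q}',(\wt{h}'^x,\wt{h}'^y))$. Comparing the middle components immediately gives $[\wh{x},\wh{y}] = [\wh{x}',\wh{y}']$. Comparing the other two components gives $\wt{q}^x - \wt{q}'^x = \wt{h}'^x - \wt{h}^x$ and likewise for $y$. The right-hand sides lie in $H^{3,\delta_0}$, i.e.\ decay exponentially; the left-hand sides, being differences of standard maps in the same standard form, are asymptotically affine of the shape $(\Delta c^x, \gamma(kt) - \gamma'(k(t-\tau)))$ (after choosing a common representative of $[\wh{x},\wh{y}]$ via Proposition \ref{PROPT8.84}). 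The only way such an expression can decay is $\Delta c^x = 0$ and $\gamma = \gamma'$ with $\tau \equiv 0$ in the relevant sense; the same analysis on the $y$-side, together with the constraint from having fixed the same natural angle, eliminates the residual $\mathbb{Z}_k$-ambiguity in Proposition \ref{PROPT8.84} and forces $\wt{q} = \wt{q}'$ as elements of $\mathsf{S}_{\boldsymbol{\gamma}}$. Thus $(\wt{h}^x,\wt{h}^y) = (\wt{h}'^x,\wt{h}'^y)$ as well.

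The main obstacle will be book-keeping around the $\mathbb{Z}_k$-action and the equivalence relation defining $[\wh{x},\wh{y}]$: one must check that the choice of representative $\{\wh{x},\wh{y}\}$ and the choice of $\gamma \in [\gamma]$ extracted from the asymptotics are consistent, and that the ambiguity of rotating $\wh{x} \mapsto e^{2\pi i j/k}\wh{x}$, $\wh{y} \mapsto e^{-2\pi i j/k}\wh{y}$ (which leaves $[\wh{x},\wh{y}]$ unchanged but reparameterizes $\gamma(kt)$ by $\gamma(k(t+j/k)) = \gamma(kt)$ mod $1$) is precisely absorbed by the $\mathbb{Z}_k$-quotient in Proposition \ref{PROPT8.84}. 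Once this compatibility is verified, both directions are essentially reading off the definitions.
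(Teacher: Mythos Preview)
Your proof is correct and supplies exactly the details the paper omits: the paper states only that ``It is an easy exercise to verify the following lemma'' and gives no argument at all. Your surjectivity and injectivity arguments are the natural ones. One minor remark: you slightly overcomplicate injectivity by invoking the $\mathbb{Z}_k$-quotient from Proposition~\ref{PROPT8.84}. Since elements of $\mathsf{S}_{\boldsymbol{\gamma}}$ are literal triples $(\wt{q}^x,[\wh{x},\wh{y}],\wt{q}^y)$ of maps and a natural angle, once you observe that $\wt{q}^x-\wt{q}'^x$ is independent of $s$ in cylindrical coordinates and lies in $H^{3,\delta_0}$ (hence vanishes identically), you get $\wt{q}^x=\wt{q}'^x$ as maps outright; no residual $\mathbb{Z}_k$-ambiguity needs to be eliminated, because that ambiguity lives in the parametrisation $\phi_\gamma:\wt{\Sigma}_{x,y}\to\mathsf{S}_{\boldsymbol{\gamma}}$, not in the set $\mathsf{S}_{\boldsymbol{\gamma}}$ itself.
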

%
As a consequence we obtain the following result.

\begin{proposition}\label{PROPOSITION3.5}
Let $\boldsymbol{\bar{\gamma}}=(\boldsymbol{\gamma},\delta)$ be a weighted
  periodic orbit in ${\mathbb R}^N$ and ${\mathcal D}$ be an ordered disk
  pair.
The set $Z^{3,\delta_0}_{\mathcal D}({\mathbb R}\times {\mathbb
  R}^N,\boldsymbol{{\gamma}})$ has a natural   ssc-manifold structure where
  the $m$-th level is given by regularity $(3+m,\delta_m)$.
This structure is characterized by the fact that the map $\Psi$ is a
  ssc-diffeomorphism.
We shall write $Z_{\mathcal D}({\mathbb R}\times {\mathbb
  R}^N,\boldsymbol{\bar{\gamma}})$ for the associated ssc-manifold.
\end{proposition}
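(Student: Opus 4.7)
The plan is to transport an ssc-manifold structure from the product $\mathsf{S}_{\boldsymbol{\gamma}}\times H^{3,\delta_0}({\mathcal D},{\mathbb R}\times {\mathbb R}^N)$ onto $Z^{3,\delta_0}_{\mathcal D}({\mathbb R}\times {\mathbb R}^N,\boldsymbol{\gamma})$ via the bijection $\Psi$. The source already carries a canonical ssc-manifold structure: by Proposition \ref{PPP10.3} and Proposition \ref{PROPT8.84} the factor $\mathsf{S}_{\boldsymbol{\gamma}}$ is a finite-dimensional smooth manifold (which is automatically ssc-smooth with trivial filtration), and the factor $H^{3,\delta_0}({\mathcal D},{\mathbb R}\times {\mathbb R}^N)$ carries the canonical sc-Hilbert filtration with level $m$ corresponding to regularity $(3+m,\delta_m)$. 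The product ssc-manifold has level $m$ equal to $\mathsf{S}_{\boldsymbol{\gamma}}\times (H^{3,\delta_0})_m$.

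Since $\Psi$ is a bijection by the preceding lemma, declaring it to be an ssc-diffeomorphism uniquely determines an ssc-manifold structure on $Z^{3,\delta_0}_{\mathcal D}$. Uniqueness of any structure with the stated characterization is immediate, because there is at most one smooth structure on a set making a given bijection into a diffeomorphism.

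The substantive point to verify is that the transported level filtration matches the intrinsic description in Definition \ref{DEFn8.6X}, namely that level $m$ consists of tuples $(\wt{u}^x,[\wh{x},\wh{y}],\wt{u}^y)$ of regularity $(3+m,\delta_m)$ with $[\wh{x},\wh{y}]$-directionally matching asymptotic behavior. If $\Psi(\wt{q},(\wt{h}^x,\wt{h}^y))$ sits at level $m$ of the product, then $(\wt{h}^x,\wt{h}^y)$ is of class $(3+m,\delta_m)$ while the standard-map components $\wt{q}^x,\wt{q}^y$ are smooth, so each $\wt{u}^x = \wt{q}^x + \wt{h}^x$ lies in $H^{3+m,\delta_m}_{\boldsymbol{\gamma}+}$, and similarly for $\wt{u}^y$; matching of decorations is preserved because $\wt{q}$ already carries $[\wh{x},\wh{y}]$. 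Conversely, any element $(\wt{u}^x,[\wh{x},\wh{y}],\wt{u}^y)$ satisfying Definition \ref{DEFn8.6X} at level $m$ produces, by subtracting off the asymptotic standard data, a unique preimage under $\Psi$ sitting at level $m$ of the product.

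The only potentially delicate point is naturality, i.e.\ independence of the construction from the choice of representative $\gamma\in[\gamma]$ and $\{\wh{x},\wh{y}\}\in[\wh{x},\wh{y}]$ entering Definition \ref{PPP10.3}. This is where Proposition \ref{PROPT8.84} does the real work: the $\mathbb{Z}_k$-action by which $\wt{\Sigma}_{x,y}$ covers $\mathsf{S}_{\boldsymbol{\gamma}}$ already quotients out precisely those ambiguities, so the smooth structure on $\mathsf{S}_{\boldsymbol{\gamma}}$, and hence the ssc-manifold structure transported via $\Psi$, is independent of all choices. This yields the asserted ssc-manifold $Z_{\mathcal D}({\mathbb R}\times {\mathbb R}^N,\boldsymbol{\bar{\gamma}})$.
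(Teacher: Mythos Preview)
Your proposal is correct and follows exactly the approach the paper takes: the paper states this proposition as an immediate consequence of the preceding lemma (that $\Psi$ is a bijection), with the ssc-structure on the domain coming from Proposition~\ref{PROPT8.84} and the standard sc-Hilbert structure on $H^{3,\delta}$. You have simply spelled out the details of the transport-of-structure argument, including the verification that the filtration levels agree and the naturality discussion, which the paper leaves implicit.
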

%
The construction $Z_{\mathcal D}({\mathbb R}\times {\mathbb
  R}^N,\boldsymbol{\bar{\gamma}})$ is part of a construction functor.
Quite similar to a procedure described in Theorem \ref{THM1.3} we have a
  canonical extension to target manifolds $Q$ equipped with a weighted
  periodic orbit.
Namely, consider the category whose objects are pairs $({\mathbb
  R}^N,\boldsymbol{\bar{\gamma}})$, where
  $\boldsymbol{\bar{\gamma}}$ is weighted periodic orbit in ${\mathbb R}^N$. 
A morphism
  $$
    h:({\mathbb R}^N,\boldsymbol{\bar{\gamma}})\rightarrow ({\mathbb
    R}^{N'},\boldsymbol{\bar{\gamma}'})
    $$
  consists of a smooth map $h:{\mathbb R}^N\rightarrow {\mathbb R}^{N'}$,
  where  with $\boldsymbol{\bar{\gamma}}=(([\gamma],T,k),\delta)$ we have
  that $\boldsymbol{\bar{\gamma}'}=(([h\circ\gamma],T,k),\delta)$.  
Then with $\wt{h}=Id_{\mathbb R}\times h$ we obtain an induced map
  $$
    \wt{h}_\ast:Z_{\mathcal D}({\mathbb R}\times {\mathbb
    R}^N,\boldsymbol{\bar{\gamma}})\rightarrow Z_{\mathcal D}({\mathbb
    R}\times {\mathbb R}^{N'},\boldsymbol{\bar{\gamma}'}):
    \wt{u}\rightarrow \wt{h}\circ \wt{u}.
    $$
By considering the map on each level of regularity $(3+m,\delta_m)$ we
  obtain a map between Hilbert manifolds and classical smoothness belongs to
  the realm of \cite{El}. As a consequence we obtain the following result
  and the details of the proof are left the reader.

\begin{proposition}
A morphism $h:({\mathbb R}^N,\boldsymbol{\bar{\gamma}})\rightarrow
  ({\mathbb R}^{N'},\boldsymbol{\bar{\gamma}'})$ induces an ssc-smooth map
  $\wt{h}_\ast:Z_{\mathcal D}({\mathbb R}\times {\mathbb
  R}^N,\boldsymbol{\bar{\gamma}})\rightarrow Z_{\mathcal D}({\mathbb
  R}\times {\mathbb R}^{N'},\boldsymbol{\bar{\gamma}'})$.
\end{proposition}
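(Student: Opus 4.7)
The plan is to reduce ssc-smoothness of $\wt{h}_\ast$ to classical Fr\'echet differentiability of composition operators via the ssc-diffeomorphism $\Psi$ of Proposition \ref{PROPOSITION3.5}. Since the ssc-manifold structures on source and target are characterized by the requirement that $\Psi$ and $\Psi'$ are ssc-diffeomorphisms, it suffices to prove that the conjugated map
$$
(\Psi')^{-1}\circ \wt{h}_\ast \circ \Psi : \mathsf{S}_{\boldsymbol{\gamma}}\times H^{3,\delta_0}({\mathcal D},{\mathbb R}\times {\mathbb R}^N)\longrightarrow \mathsf{S}_{\boldsymbol{\gamma}'}\times H^{3,\delta_0}({\mathcal D},{\mathbb R}\times {\mathbb R}^{N'})
$$
is ssc-smooth.

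The central observation is that composition with $\wt{h}=Id_{\mathbb R}\times h$ sends standard maps to standard maps: if $\wt{q}^x\circ \sigma^+_{\wh{x}}(s,t)=(Ts+c^x,\gamma(kt))$, then $\wt{h}\circ \wt{q}^x\circ \sigma^+_{\wh{x}}(s,t)=(Ts+c^x,(h\circ \gamma)(kt))$, which is the defining form of a standard map for $\boldsymbol{\gamma}'=([h\circ \gamma],T,k)$. I would therefore decompose the conjugated map as
$$
(\wt{q},(\wt{h}^x,\wt{h}^y))\longmapsto \bigl(\wt{h}_\ast(\wt{q}),(\wt{\eta}^x,\wt{\eta}^y)\bigr),
$$
where $\wt{\eta}^x=\wt{h}\circ(\wt{q}^x+\wt{h}^x)-\wt{h}\circ \wt{q}^x$ and analogously for $\wt{\eta}^y$. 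In the local coordinates provided by $\phi_\gamma$ and $\phi_{h\circ \gamma}$ (Proposition \ref{PROPT8.84}), the induced map $\wt{h}_\ast:\mathsf{S}_{\boldsymbol{\gamma}}\to \mathsf{S}_{\boldsymbol{\gamma}'}$ on standard maps is the identity on the coordinates $(c^x,\wh{x},c^y,\wh{y})$, and is in particular smooth.

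For the remainder $(\wt{q},(\wt{h}^x,\wt{h}^y))\mapsto (\wt{\eta}^x,\wt{\eta}^y)$, I would work level-wise. Writing
$$
\wt{\eta}^x=\int_0^1 D\wt{h}(\wt{q}^x+\tau \wt{h}^x)\cdot \wt{h}^x \, d\tau,
$$
this is a classical composition-type operator of a smooth map applied to Sobolev maps of regularity $(3+m,\delta_m)$, and its level-wise classical smoothness follows from the Fr\'echet differentiability results of \cite{El} already invoked in Proposition \ref{prop2}. Invoking level-wise classical smoothness implies $\ssc^\infty$ (cf.\ the Remark following Proposition \ref{lower}, applied per level in the ssc-scale) then yields the desired ssc-smoothness. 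The main obstacle is controlling the exponential weights: one must verify that $\wt{\eta}^x$ actually lies in $H^{3+m,\delta_m}$ despite the shift of base-point by the non-decaying cylinder $\wt{q}^x$. This is handled by noting that $\wt{h}^x$ decays like $e^{-\delta_m|s|}$ and that the derivatives $D^j h$ evaluated along a tubular neighborhood of the compact set $\gamma(S^1)$ are uniformly bounded, so the integrand inherits the weighted decay of $\wt{h}^x$. This completes the reduction to the classical results and establishes the proposition.
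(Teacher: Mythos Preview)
Your proposal is correct and follows the same approach the paper indicates: the paper simply says to work level by level, where at regularity $(3+m,\delta_m)$ one has maps between Hilbert manifolds and classical smoothness follows from \cite{El}, leaving the details to the reader. Your argument is a faithful and appropriately detailed execution of exactly this plan, making explicit the reduction via $\Psi$ and the decomposition into the standard-map part and the decaying remainder.
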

%
The automorphism group $G_{\mathcal D}$ of the ordered disk pair
  ${\mathcal D}$ acts on the disks by individual rotations and hereby
  defines an action on $Z_{\mathcal D}({\mathbb R}\times {\mathbb
  R}^N,\boldsymbol{\bar{\gamma}})$ via
  $$
    g\ast (\wt{u}^x,[\wh{x},\wh{y}],\wt{u}^y)= (\wt{u}^x\circ
    g^{-1},[Tg\cdot \wh{x},Tg\cdot \wh{y}],\wt{u}^y\circ g^{-1}).
    $$
Via the map $\Psi$ this action corresponds to the sc-smooth action on
  $\mathsf{S}_{\boldsymbol{\gamma}}\times H^{3,\delta}({\mathcal D},{\mathbb
  R}\times {\mathbb R}^N)$ defined by
  $$
    g\ast (\wt{q},(\wt{h}^x,\wt{h}^y)) = (g\ast \wt{q},(\wt{h}^x\circ
    g^{-1},\wt{h}^y\circ g^{-1})).
    $$
Here we use the sc-smoothness of the $G$-action on $H^{3,\delta}({\mathcal
  D},{\mathbb R}\times {\mathbb R}^N)$, see  Proposition \ref{PROP2.6} and
  note that our space is an invariant finite co-dimension subspace. 
Hence we have established the following result.

\begin{proposition}
  \hfill\\
Let $\boldsymbol{\bar{\gamma}}$ be a weighted periodic orbit,  ${\mathcal
  D}$ be an ordered disk pair and assume that $Z_{\mathcal D}({\mathbb
  R}\times {\mathbb R}^N,\boldsymbol{\bar{\gamma}})$ is the associated
  ssc-manifold.
Then the action of the  automorphism group $G$ of ${\mathcal D}$ is
  sc-smooth.
The action is not(!) ssc-smooth.

\end{proposition}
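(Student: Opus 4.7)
The plan is to transport the question along the ssc-diffeomorphism \(\Psi\) of Proposition \ref{PROPOSITION3.5} to the product \(\mathsf{S}_{\boldsymbol{\gamma}} \times H^{3,\delta}(\mathcal{D},\mathbb{R}\times \mathbb{R}^N)\), where the action splits as a product of (i) the \(G\)-action on the finite-dimensional manifold \(\mathsf{S}_{\boldsymbol{\gamma}}\) and (ii) the precomposition action on the sc-Hilbert space of vanishing remainders. Since \(\Psi\) is ssc-smooth, both sc-smoothness and the failure of ssc-smoothness are preserved by the conjugation, so it suffices to verify each assertion on the product side.

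First I would handle the action on \(\mathsf{S}_{\boldsymbol{\gamma}}\). Fixing a representative \(\gamma\in[\gamma]\) and using the local diffeomorphism \(\phi_\gamma:\widetilde{\Sigma}_{x,y}\to \mathsf{S}_{\boldsymbol{\gamma}}\) from Proposition \ref{PROPT8.84}, one checks that the \(G\cong S^1\times S^1\)-action pulls back to the manifestly smooth action \((c^x,\widehat{x},c^y,\widehat{y})\mapsto (c^x,Tg\cdot \widehat{x},c^y,Tg\cdot \widehat{y})\) on \(\widetilde{\Sigma}_{x,y}\). Being smooth on a finite-dimensional manifold, it is automatically sc-smooth (and in fact ssc-smooth) on \(\mathsf{S}_{\boldsymbol{\gamma}}\).

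Next I would treat the action on \(H^{3,\delta}(\mathcal{D},\mathbb{R}\times \mathbb{R}^N)\) by \((g,(\widetilde{h}^x,\widetilde{h}^y))\mapsto (\widetilde{h}^x\circ g^{-1},\widetilde{h}^y\circ g^{-1})\). This is a finite-codimension \(G\)-invariant sc-Hilbert subspace of the ambient space considered in Proposition \ref{PROP2.6}, and the latter proposition asserts that the rotation action on \(H^{3,\delta}_c(\mathcal{D},\mathbb{R}^M)\) is sc-smooth; restricting to the invariant subspace gives sc-smoothness in our setting. Combining with the previous paragraph, the product action is sc-smooth, and transporting back by \(\Psi\) yields sc-smoothness of the original action.

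The main obstacle, and really the only genuinely new content, is the negative statement: the action fails to be ssc-smooth. The idea is to fix a level \(m\) and consider the directional derivative in the \(G\)-variable at the identity. For \(\widetilde{u}\) of regularity \((3+m,\delta_m)\), the candidate derivative of \(\theta\mapsto \widetilde{u}\circ g_\theta^{-1}\) at \(\theta=0\) is (a rotation of) \(\partial_t \widetilde{u}\), which only lies in the level \((2+m,\delta_m)\) space, i.e.\ one level down. To make this into an actual non-smoothness proof I would exhibit an element \(\widetilde{u}\in Z_{\mathcal{D}}(\mathbb{R}\times \mathbb{R}^N,\boldsymbol{\bar{\gamma}})\) whose derivative along the rotation direction is not in the \(m\)-th regularity class (take \(\widetilde{u}\) with \(\partial_t\widetilde{u}\) living strictly in \(H^{m+2}\setminus H^{m+3}\)) and show that the difference quotients cannot converge at level \(m\); hence the action is not even \(C^1\) as a map at a fixed level, which is stronger than negating ssc-smoothness. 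Via \(\Psi\) this construction descends to an element of \(Z_{\mathcal{D}}\), completing the proof.
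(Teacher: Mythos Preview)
Your argument is correct and follows essentially the same route as the paper: the paper also conjugates the action through the ssc-diffeomorphism $\Psi$ to the product $\mathsf{S}_{\boldsymbol{\gamma}}\times H^{3,\delta}(\mathcal{D},\mathbb{R}\times\mathbb{R}^N)$, observes that the $G$-action on the finite-dimensional factor is smooth, and then invokes Proposition~\ref{PROP2.6} together with the remark that $H^{3,\delta}$ is an invariant finite-codimension subspace of $H^{3,\delta}_c$ for the second factor. Your treatment of the non-ssc-smoothness is more explicit than the paper's, which simply asserts it; your derivative-loss argument (that $\theta\mapsto \widetilde h\circ g_\theta^{-1}$ differentiates to a $\partial_t$-term dropping one Sobolev level) is the standard reason and is fine.
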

%

\begin{remark}
If $\boldsymbol{\bar{\gamma}}$ is a weighted periodic orbit in a manifold
  $Q$ without boundary,  which can be properly embedded into some ${\mathbb
  R}^N$ we have a well-defined $Z_{\mathcal D}({\mathbb R}\times
  Q,\boldsymbol{\bar{\gamma}})$.
Since the maps involved are ssc-smooth the procedure from Theorem
  \ref{THM1.3} produces ssc-manifolds.
We leave the details for this classical case to the reader. 
\end{remark}
%

\begin{exercise}\label{EXERC100003}
Show that the construction $Z_{\mathcal D}$ which associates to $({\mathbb
  R}^N,\bm{\bar{\gamma}})$ the ssc-manifold $Z_{\mathcal D}({\mathbb
  R}\times {\mathbb R}^N,\bm{\bar{\gamma}})$ and to a morphism
  $h$ the ssc-smooth map $\wt{h}_{\ast}$ is a construction functor. 
Conclude using the ideas from Section 
  \jwf{[broken reference]} 
  that the construction has a natural extension to cover periodic orbits
  in manifolds, i.e. $(Q,\bm{\bar{\gamma}})$.
\end{exercise}
%

%
\subsubsection{The M-Polyfold $Y_{{\mathcal D},\varphi}({\mathbb
  R}\times{\mathbb R}^N,\boldsymbol{\bar{\gamma}})$}\label{SUBSUB}
Consider the ssc-manifold with boundary 
  $$
    \mathfrak{Z}:=[0,1)\times Z_{\mathcal D}({\mathbb R}\times {\mathbb
    R}^N,\boldsymbol{\bar{\gamma}})
    $$
  and recall that given $(\wt{u}^x,[\wh{x},\wh{y}],\wt{y}^y)$ in
  $Z_{\mathcal D}({\mathbb R}\times {\mathbb
  R}^N,\boldsymbol{\bar{\gamma}})$ we can write it uniquely as
  $(\wt{q}^x+\wt{h}^x,[\wh{x},\wh{y}],\wt{q}^y+\wt{h}^y)$,
  where $(\wt{q}^x,[\wh{x},\wh{y}],\wt{q}^y)$
  is a standard map. 
We can extract the asymptotic constants $c^x,c^y$ ssc-smoothly, i.e. the
  maps
  $$
    \bar{c}^x,\bar{c}^y: Z_{\mathcal D}({\mathbb R}\times {\mathbb
    R}^N,\boldsymbol{\bar{\gamma}})\rightarrow {\mathbb R}
    $$
  are ssc-smooth. 
We define an open neighborhood ${\mathcal V}$ of
  $\partial\mathfrak{Z}=\{0\}\times Z_{\mathcal D}({\mathbb R}\times
  {\mathbb R}^N,\boldsymbol{\bar{\gamma}})$ in $\mathfrak{Z}$  as follows.

\begin{definition} \label{DEFX3.7}
The open subset ${\mathcal V}$ of $\mathfrak{Z}$ consists of all tuples 
  $$
    (r,(\wt{u}^x,[\wh{x},\wh{y}],\wt{u}^y)) =:(r,\wt{u})
    $$
  such that either $r=0$, or in the case $r\in (0,1)$ the following holds.
  \begin{itemize}
    \item[(1)] 
    $\varphi(r)+c^y(\wt{u})-c^x(\wt{u})>0$.
    \item[(2)] 
    $\varphi^{-1}\left(\frac{1}{T}\cdot(\varphi(r)+c^y(\wt{u})-
    c^x(\wt{u}))\right)\in (0,1/4)$.
    \end{itemize}

\end{definition}
%
We note that ${\mathcal V}$ is a ssc-manifold with boundary. 
An important map is $\bar{r}:{\mathcal V}\rightarrow [0,1):(r,
  \wt{u})\rightarrow r.$
This is the restriction of the projection onto the first factor
  $[0,1)\times Z_{\mathcal D}({\mathbb R}\times {\mathbb
  R}^N,\boldsymbol{\bar{\gamma}})\rightarrow [0,1)$ onto an open subset.
Since the latter map is submersive the same holds for $\bar{r}$. It is
  clear that $\bar{r}$ is surjective. Hence we  obtain.

\begin{lemma}
The map 
  $$
    \bar{r}:{\mathcal V}\rightarrow [0,1):(r, \wt{u})\rightarrow r.
    $$
  is a surjective and submersive ssc-smooth map.
\end{lemma}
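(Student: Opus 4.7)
The plan is to observe that $\bar{r}$ is nothing more than the restriction of a product projection to an open subset, so all three properties fall out almost immediately from corresponding properties of the unrestricted projection.

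First, I would consider the unrestricted projection
\[
  \pi:[0,1)\times Z_{\mathcal D}({\mathbb R}\times {\mathbb R}^N,\boldsymbol{\bar{\gamma}})\longrightarrow [0,1),\qquad (r,\wt{u})\mapsto r,
\]
which is ssc-smooth as the first-factor projection of a product of ssc-manifolds. It is also submersive in the sense of Definition \ref{I-DEF_submersion_property} from \cite{FH-notes-I}: a witnessing sc-smooth retraction is provided by fixing any $\wt{u}_{0}\in Z_{\mathcal D}$ (e.g.\ a standard map $\Psi(\wt{q},0)$) and using the map $(r,\wt{u},b)\mapsto (b,\wt{u}_{0},b)$, in complete analogy with the constructions of $\rho$ in the proof of Theorem~\ref{THM1.3}.

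Next, since $\mathcal V\subset [0,1)\times Z_{\mathcal D}$ is open and $\bar r = \pi|_{\mathcal V}$, the map $\bar r$ is automatically ssc-smooth, and the submersion property, being local around each point of the domain, passes to open subsets; so $\bar r$ is submersive as well. (If desired, one can write down explicitly the restricted retraction, only keeping the points which land back in $\mathcal V$, which is open.)

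Finally, for surjectivity, I would treat the two cases separately. For $r=0$, Definition \ref{DEFX3.7} places no constraint beyond $r=0$ itself, and $Z_{\mathcal D}({\mathbb R}\times {\mathbb R}^N,\boldsymbol{\bar\gamma})$ is nonempty (it contains every standard map), so $0\in\operatorname{im}(\bar r)$. For $r\in(0,1)$, recall from Proposition \ref{PROPOSITION3.5} that every $\wt{u}\in Z_{\mathcal D}$ has well-defined asymptotic constants $c^{x}(\wt u),c^{y}(\wt u)\in\mathbb{R}$ which can be prescribed freely by choosing a standard map via $\Psi$. Since $\varphi\colon (0,1]\to[0,\infty)$ is a decreasing diffeomorphism, condition (2) of Definition \ref{DEFX3.7} reads
\[
  \tfrac{1}{T}\bigl(\varphi(r)+c^{y}(\wt u)-c^{x}(\wt u)\bigr) > \varphi(1/4),
\]
which entails condition (1) and is arranged by picking $\wt u=\Psi(\wt q,0)$ with $c^{y}-c^{x} > T\varphi(1/4)-\varphi(r)$; hence $r\in\operatorname{im}(\bar r)$.

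The main obstacle, such as it is, is simply to confirm that the submersion property of Definition \ref{I-DEF_submersion_property} genuinely restricts to open subsets of the domain; but this is immediate from the local nature of the defining retraction, and no real calculation is required once the product projection case is in hand.
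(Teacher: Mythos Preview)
Your proposal is correct and follows the same approach as the paper: the paper simply remarks, just before stating the lemma, that $\bar{r}$ is the restriction to an open subset of the first-factor projection $[0,1)\times Z_{\mathcal D}\to[0,1)$, that submersiveness passes to this restriction, and that surjectivity ``is clear.'' Your version spells out the surjectivity argument in more detail than the paper does, but the underlying idea is identical.
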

%

Recall the M-polyfold $X^{3,\delta}_{{\mathcal D},\varphi}({\mathbb
  R}\times {\mathbb R}^N)$ with submersive map
  $$
    p_{\mathbb B} :X^{3,\delta}_{{\mathcal D},\varphi}({\mathbb R}\times
    {\mathbb R}^N)\rightarrow {\mathbb B}
    $$
  introduced in the previous Section \ref{SECX1}. 
We denote by $\dot{X}^{3,\delta}_{{\mathcal D},\varphi}({\mathbb R}\times
  {\mathbb R}^N)$ the preimage of ${\mathbb B}\setminus\{0\}$ under
  $p_{\mathbb B}$.
We have already discussed earlier that the set
  $\dot{X}^{3,\delta}_{{\mathcal D},\varphi}({\mathbb R}\times {\mathbb
  R}^N)$ does neither depend on $\varphi$ nor $\delta$ and the same is true
  for the induced M-polyfold structure as an open subset, see Remarks
  \ref{REMdotX} and \ref{REMARK2.11}.
For that reason we denote this M-polyfold, which also has a natural
  sc-manifold structure inducing the M-polyfold structure in question, by
  $\dot{X}_{\mathcal D}({\mathbb R}\times {\mathbb R}^N)$, see Exercise
  \ref{EXERC101}.

\begin{definition}
  \hfill\\
The M-polyfold  $\mathfrak{X}$ is by definition the open subset of
  $(0,1)\times {X}^{3,\delta}_{{\mathcal D},\varphi}({\mathbb R}\times
  {\mathbb R}^N)$ defined as 
  $$
    \mathfrak{X}=(0,1)\times \dot{X}_{\mathcal D}({\mathbb R}\times
    {\mathbb R}^N)
    $$
  and equipped with the induced M-polyfold structure. 
As we just noted before the latter comes, in fact,  from a uniquely
  natural  defined sc-manifold structure.
We note that the degeneracy index associated to $\mathfrak{X}$ vanishes
  identically.
\end{definition}
%
Next we define the set $Y^{3,\delta_0}_{{\mathcal D},\varphi}({\mathbb
  R}\times {\mathbb R}^N,\boldsymbol{\gamma})$ we are interested in, and
  which we shall equip with a M-polyfold structure by the $\oplus$-method.

\begin{definition}
  \hfill\\
Given an ordered disk pair ${\mathcal D}$ and a weighted periodic orbit
  $\boldsymbol{\bar{\gamma}}$ in ${\mathbb R}^N$
  \index{$Y^{3,\delta_0}_{{\mathcal D},\varphi}$} the set
  $Y^{3,\delta_0}_{{\mathcal D},\varphi}:=Y^{3,\delta_0}_{{\mathcal
  D},\varphi}({\mathbb R}\times {\mathbb R}^N,\boldsymbol{\gamma})$ is
  defined as the disjoint union
  $$
    Y^{3,\delta_0}_{{\mathcal D},\varphi}= \left(\{0\}\times
    Z^{3,\delta_0}_{\mathcal D}({\mathbb R}\times {\mathbb
    R}^N,\boldsymbol{\gamma})\right)\coprod \mathfrak{X} =
    \partial\mathfrak{Z}\coprod \mathfrak{X}.
    $$
\end{definition}
%

\begin{remark}\label{rem10.14}
We observe that the space is a union of a ssc-manifold
  $\partial\mathfrak{Z}$, provided we use the full $\delta$, and a
  sc-manifold $\mathfrak{X}$.
Our aim is to define a natural (up to a choice of the gluing profile
  $\varphi$) M-polyfold structure on
  $Y^{3,\delta_0}_{{\mathcal D},\varphi}$ in such a way that
  $\mathfrak{X}$ is an open and dense sub-M-polyfold with the induced
  structure being the original one.
Moreover, also $\partial\mathfrak{Z}$ will be a sub-M-polyfold and the
  induced structure will be the M-polyfold structure diffeomorphic to the
  sc-structure derived from  the original ssc-manifold structure.
\end{remark}
%
In order to define a M-polyfold structure on $ Y^{3,\delta_0}_{{\mathcal
  D},\varphi}$ we shall use the  $\oplus$-method.
This case will turn out to be somewhat more involved than the nodal case
  since we need to construct two maps defined on open subsets (for the
  quotient topology) of $Y^{3,\delta_0}_{{\mathcal D},\varphi}$ rather than
  the single $f$ in the nodal case.

The set  ${\mathcal V}$ introduced in Definition \ref{DEFX3.7} has
  naturally the structure of a ssc-manifold and we shall define a map
  \begin{eqnarray}
    \boldsymbol{\bar{\oplus}}:{\mathcal V}\rightarrow
    Y^{3,\delta_0}_{{\mathcal
    D},\varphi},\index{$\boldsymbol{\bar{\oplus}}:{\mathcal V}\rightarrow
    Y^{3,\delta_0}_{{\mathcal D},\varphi}$}
    \end{eqnarray}
  where the right-hand side for the moment is viewed as just a set.

\begin{definition}
The  map $\mathsf{a}:{\mathcal V}\rightarrow {\mathbb B}$ is defined  by
  associating to the tuple $(0,(\wt{u}^x,[\wh{x},\wt{y}],\wt{u}^y))$ the
  element $0$ and in the case $r\neq 0$ the element
  $a=|a|\cdot[\wh{x},\wh{y}]$, where
  $$
    T\cdot \varphi(|a|) =\varphi(r)+c^y-c^x.
    $$
Here $c^x=c^x(\wt{u}^x)$ and $c^y=c^y(\wt{u}^y)$.
\end{definition}
%
As a consequence of \cite{HWZ8.7}, Lemma 4.4, the following holds.

\begin{lemma}\label{LEM3.14}
The map $\mathsf{a}:{\mathcal V}\rightarrow {\mathbb B} $ is ssc-smooth.
\end{lemma}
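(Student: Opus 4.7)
The plan is to reduce the ssc-smoothness of $\mathsf{a}\colon {\mathcal V}\to {\mathbb B}$ to the smoothness of a concrete finite-dimensional map, composed with the ssc-smooth asymptotic constant extractions already in hand.

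First, I would decompose $\mathsf{a}$ into its two natural components: the natural angle $[\wh{x},\wh{y}]\in \mathbb{S}_{x,y}$ and the modulus $|a|\in [0,1/4)$. The angle component is just the projection of $(r,\wt{u})$ onto the angle factor of $Z_{\mathcal D}({\mathbb R}\times {\mathbb R}^N,\bm{\bar{\gamma}})$, which is ssc-smooth by construction of the ssc-manifold structure on $Z_{\mathcal D}$ in Proposition \ref{PROPOSITION3.5} (the map $\Psi$ of (\ref{PP10.6}) is a ssc-diffeomorphism, and the angle is a smooth factor). The modulus component factors as
\[
(r,\wt{u}) \longmapsto (r,\bar{c}^x(\wt{u}),\bar{c}^y(\wt{u})) \xrightarrow{\ \Phi\ } |a|,
\]
where the first arrow is ssc-smooth because $\bar{c}^x,\bar{c}^y$ were noted to be ssc-smooth, and
\[
\Phi(r,c^x,c^y) = \varphi^{-1}\!\Big(\tfrac{1}{T}\big(\varphi(r)+c^y-c^x\big)\Big),
\]
extended by $\Phi(0,c^x,c^y)=0$. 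The domain of $\Phi$ is precisely cut out by conditions (1), (2) in Definition \ref{DEFX3.7}, ensuring that the argument of $\varphi^{-1}$ lies in its domain.

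Second, I would verify that $\Phi$ is classically smooth on its domain in $[0,1)\times {\mathbb R}^2$, including smoothness up to the boundary $r=0$. Plugging in $\varphi(r)=e^{1/r}-e$ gives, for $r>0$,
\[
\frac{1}{|a|} = \log\!\Big(\tfrac{1}{T}e^{1/r}+\tfrac{Te-e+c^y-c^x}{T}\Big) = \frac{1}{r}+\log\frac{1}{T}+\log\!\big(1+\mu(r,c^x,c^y)\big),
\]
where $\mu(r,c^x,c^y)=\tfrac{Te-e+c^y-c^x}{T}\,e^{-1/r}$ is a smooth function of $(r,c^x,c^y)$ on $[0,1)\times {\mathbb R}^2$, flat to infinite order at $r=0$. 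Thus $1/|a|-1/r$ extends smoothly across $r=0$, which in turn forces $|a|$ itself to extend smoothly with $|a|\to 0$ as $r\to 0$.

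Third, once $\Phi$ is classically smooth and the extractions $\bar{c}^x,\bar{c}^y$ are ssc-smooth, ssc-smoothness of the composition follows by the chain rule, and combining with the ssc-smooth angle extraction yields the claim. This is the content of \cite[Lemma~4.4]{HWZ8.7}, which I would cite directly. The main obstacle is establishing smoothness of $\Phi$ at $r=0$; the point that makes it work is the specific choice of the exponential gluing profile, which causes the correction $\mu$ to vanish flat at $r=0$ and thereby absorbs the factor of $1/T$ into a smooth perturbation of $1/r$.
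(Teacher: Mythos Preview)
Your proposal is correct and takes essentially the same approach as the paper, which simply cites \cite[Lemma~4.4]{HWZ8.7}; you spell out the decomposition into angle and modulus and sketch why the modulus map $\Phi(r,c^x,c^y)=\varphi^{-1}\big(\tfrac{1}{T}(\varphi(r)+c^y-c^x)\big)$ extends smoothly across $r=0$, which is precisely the content of that lemma (and of the paper's own Appendix~\ref{CALCAPP}). One harmless slip: in your expansion the correction term is $\mu=(Te-e+c^y-c^x)e^{-1/r}$ without the extra $1/T$, but this does not affect the flatness argument.
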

%
Now we begin with the crucial construction.

\begin{definition}\label{DEF3.15}
The map $\boldsymbol{\bar{\oplus}}:{\mathcal V}\rightarrow
  Y^{3,\delta_0}_{{\mathcal D},\varphi}({\mathbb R}\times {\mathbb
  R}^N,\boldsymbol{\gamma})$ is defined as follows.
We put, if $r=0$,
  $$
  \boldsymbol{\bar{\oplus}}(0,(\wt{u}^x,[\wh{x},\wh{y}],\wt{u}^y))=
  (0,\bar{\oplus}(0,(\wt{u}^x,[\wh{x},\wh{y}],\wt{u}^y)))=(0,(\wt{u}^x,
  [\wh{x},\wh{y}],\wt{u}^y)).
  $$
If $r\neq 0$ we define
  \begin{eqnarray}
    \boldsymbol{\bar{\oplus}}(r,(\wt{u}^x,[\wh{x},\wh{y}],\wt{u}^y))=
    (r,\bar{\oplus}(r,(\wt{u}^x,[\wh{x},\wh{y}],\wt{u}^y)))
    =    (r,\wt{w})
    \end{eqnarray}
  where, with $a=\mathsf{a}(r,(\wt{u}^x,[\wh{x},\wt{y}],\wt{u}^y))$, the
  map  $\wt{w}:Z_a\rightarrow {\mathbb R}\times {\mathbb R}^N$ is given by
  $$
    \wt{w}(z,z') =\beta^x_a(z)\cdot \wt{u}^x (z)+\beta^y_a(z')\cdot
    (\varphi(r)\ast \wt{u}^y(z')).
    $$
Above, $``\ast"$ denotes the additive ${\mathbb R}$-action on the first
  factor.
\end{definition}
%
Here is the main result in this subsection, which will be proved later. 

\begin{theorem}\label{RRR}
Let ${\mathcal D}$ be an ordered disk pair and $\boldsymbol{\bar{\gamma}}$
  a weighted periodic orbit in ${\mathbb R}^N$. 
The map $\boldsymbol{\bar{\oplus}}:{\mathcal V}\rightarrow
  Y^{3,\delta_0}_{{\mathcal D},\varphi}({\mathbb R}\times {\mathbb
  R}^N,\boldsymbol{\gamma})$ is a M-polyfold construction by the
  $\oplus$-method fitting into the commutative diagram
  $$
    \begin{CD}
    {\mathcal V} @> \bm{\bar{\oplus}}>> Y^{3,\delta_0}_{{\mathcal
    D},\varphi}({\mathbb R}\times {\mathbb R}^N,\boldsymbol{\gamma})\\
    @V pr_1 VV    @V \bar{r} VV\\
    [0,1)\ \ \ \  @= [0,1),
    \end{CD}
    $$
  where the vertical arrows are the obvious extractions of the
  $r$-parameter.  
Moreover the following holds.   
\jwf{\marginnote{Check that these items have been proved later}}
\begin{itemize}
  \item[(1)] 
  For the defined M-polyfold structure an element $(r,\wt{u})$ has
  degeneracy index $1$ if $r=0$ and $0$ otherwise.
  \item[(2)] 
  The  M-polyfold structure is tame. 
  \end{itemize}
\end{theorem}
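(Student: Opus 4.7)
The plan is to follow the template of Theorem \ref{thm1}, adapted to the extra complications of the current setting: the domain $\mathcal{V}$ carries an additional parameter $r\in[0,1)$ (with boundary at $r=0$), the target $Y^{3,\delta_0}_{{\mathcal D},\varphi}$ decomposes into an ssc-manifold piece $\partial\mathfrak{Z}$ and an open M-polyfold piece $\mathfrak{X}$, and the collection of standard maps $\mathsf{S}_{\boldsymbol{\gamma}}$ carries a free $\mathbb Z_k$-action from the $k$-fold cover in $\boldsymbol{\gamma}$. Because of the $\mathbb Z_k$-ambiguity one cannot expect a single global inverse $f$ of $\boldsymbol{\bar\oplus}$; instead I would construct finitely many local sections $f_\alpha$ defined on open subsets covering $Y^{3,\delta_0}_{{\mathcal D},\varphi}$, each choosing a local lift of $[\hat x,\hat y]$, and then assemble the M-polyfold structure by applying the abstract $\oplus$-construction machinery from \cite{FH-notes-I}.

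Concretely, given a point $(r,\widetilde{w})\in\mathfrak{X}$ with $\widetilde{w}\colon Z_a\to\mathbb R\times\mathbb R^N$ where $a=\mathsf{a}(r,\widetilde{w})$ (recall Lemma \ref{LEM3.14}), I would define $f_\alpha$ by the obvious averaging-and-cutoff recipe sketched in Subsection \ref{SUBSUB}. Namely: extract a candidate asymptotic constant $c^x$ by averaging the $\mathbb R$-component of $\widetilde{w}$ over a middle loop, recover $c^y$ from the identity $T\varphi(|a|)=\varphi(r)+c^y-c^x$, extract a local representative $\hat x$ by comparing the $\mathbb R^N$-component of $\widetilde{w}$ on a middle loop with the $k$-fold parameterization $\gamma(kt)$, and set $\hat y$ via $[\hat x,\hat y]=a/|a|$. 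Then, using cutoffs $\beta(s-R/2-2)$ of the same type as in the proof of Theorem \ref{thm1}, construct $\widetilde{u}^x,\widetilde{u}^y$ interpolating between $\widetilde{w}$ in the middle and the standard maps $\widetilde{q}^\gamma_{c^x,\hat x},\widetilde{q}^\gamma_{c^y,\hat y}$ at the ends. The identity $\boldsymbol{\bar\oplus}\circ f_\alpha=\mathrm{Id}$ is a direct computation using $\beta^x_a\cdot\beta^x_{a,-2}=\beta^x_a$ and $\beta^x_a+\beta^y_a=1$, exactly as in the nodal case.

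The core sc-smoothness statement, that $f_\alpha\circ\boldsymbol{\bar\oplus}$ is sc-smooth, proceeds by splitting the composition into (i) extraction of the scalar data $(c^x,c^y,\hat x,\hat y)$, which composed with $\boldsymbol{\bar\oplus}$ simply recovers the input data sc-smoothly via Proposition \ref{PROPOSITION3.5} and Lemma \ref{LEM3.14}, and (ii) the remaining cutoff-shift-average expressions, which after conjugation by $\Sigma$ reduce to finite sums of the primitive maps handled by Fundamental Lemmas \ref{FUNDAMENTAL -I} and \ref{FUNDAMENTAL -II} and by propositions \textbf{(M1)}--\textbf{(M5)} of Section \ref{FUND0}. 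The main technical obstacle, and the source of most of the bookkeeping, is that every cutoff involves $R=\varphi(|a|)$ where $a=\mathsf{a}(r,\widetilde{u})$ depends nontrivially on both the parameter $r$ and on the asymptotic constants $c^x,c^y$ of the input via Definition \ref{DEF3.15}; this coupling is what forces the use of the Fundamental Lemmas rather than the simpler sc-chain-rule argument. Independence from $\beta$ and from the choice of local section then follows exactly as in the proof of Theorem \ref{thm1}, comparing two constructions by a further application of Section \ref{FUND0}.

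For the degeneracy and tameness claims, note that $\mathcal{V}$ is open in the ssc-manifold-with-boundary $\mathfrak{Z}=[0,1)\times Z_{\mathcal D}(\mathbb R\times\mathbb R^N,\boldsymbol{\bar\gamma})$, whose boundary $\partial\mathfrak{Z}$ has corner index one at $r=0$ and zero elsewhere. Because $\boldsymbol{\bar\oplus}$ is an $\oplus$-polyfold construction with $\bar r\circ\boldsymbol{\bar\oplus}=\mathrm{pr}_1$, the degeneracy index of a point $(r,\widetilde{u})\in Y^{3,\delta_0}_{{\mathcal D},\varphi}$ equals that of its $f_\alpha$-lift in $\mathcal{V}$, giving (1). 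Tameness reduces to the local model: the image of $\boldsymbol{\bar\oplus}$ near any boundary point is defined inside a partial quadrant by a single real halfspace condition ($r\geq 0$), and the retract $\boldsymbol{\bar\oplus}\circ f_\alpha$ preserves this halfspace structure with a tangent-space splitting transverse to $\{r=0\}$, which is the criterion for tameness. This last step is routine once the sc-smoothness and degeneracy computation are in hand.
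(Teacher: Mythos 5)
Your overall architecture (finitely many local sections partially inverting $\boldsymbol{\bar{\oplus}}$, reduction to the Fundamental Lemmas, splicing argument for tameness) matches the paper's, but three substantive points are missing or wrong. First, the reason two coretractions are needed is not the $\mathbb{Z}_k$-ambiguity in $[\wh{x},\wh{y}]$ (that ambiguity is resolved inside the averaging construction itself: $c^x,c^y$ are independent of the representative and the approximating standard map $\wt{p}\in\mathsf{S}_{\bm{\gamma}}$ is \emph{unique}). The real issue is that the two regimes $r\neq 0$ and $r=0$ require genuinely different recipes. On all of $\mathfrak{X}$ the paper's coretraction $H$ cuts off against a \emph{fixed} reference standard map $\wt{q}^{\gamma_0}_{0,\wh{x}_0}$ and never compares $\wt{w}$ with the orbit cylinder --- it cannot, since a general $\wt{w}\in\mathfrak{X}$ need not come anywhere near $\gamma(S^1)$ in its middle annulus. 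Your single ``averaging-and-cutoff recipe,'' which extracts $\wh{x}$ by comparing the middle loop of $\wt{w}$ with $\gamma(kt)$, is only meaningful on the restricted set ${\mathcal W}$ (image of the middle annulus in a tubular neighborhood $U$, degree-$k$ condition), so it is not defined on all of $\mathfrak{X}$; conversely a fixed-reference construction like $H$ degenerates at $r=0$. You need both maps, with the second built from a good averaging coordinate $\Phi:U\rightarrow S^1$ and the $\oint_{S^1}$-average, as in Definition \ref{ewr3.37}.

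Second, the analytic heart of the $r=0$ coretraction is precisely the step you dismiss as bookkeeping. The expression $\beta^x_{a,-2}\cdot(\wt{q}^x-\wt{p}^x)$ involves a cutoff of \emph{growing} support multiplying a difference that must be shown to decay fast enough as $r\to 0$; it is not one of the primitive maps covered by the Fundamental Lemmas. One needs the quantitative comparison of Proposition \ref{COMPARE}: the identity $\gamma_0(t+d)-\gamma_0(t+d')=(d-d')\cdot\mathsf{B}(t,d,d')$ together with the formula expressing $d_{\wh{x}_0}(\wt{p})-d_{\wh{x}_0}(\wt{q})$ through middle-loop values of the exponentially decaying parts $h^x,h^y$, and only then does the Fundamental Lemma apply. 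Third, you do not address metrizability of the quotient topology ${\mathcal T}$ at all; in the paper this is the entire remaining content of the proof of Theorem \ref{RRR} once the coretractions are in place (second countability, Hausdorff, and regularity via the closure Lemma \ref{LEMR3.24}, then Urysohn, then ${\mathcal T}={\mathcal T}'$ via Proposition \ref{PPROPN3.48}). Since being an M-polyfold construction includes producing a metrizable topology, this omission is a genuine gap, not a formality. Your treatment of the degeneracy index and of tameness via the $r$-preserving splicing is essentially the paper's argument and is fine.
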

%
\begin{proof}
The proof of Theorem \ref{RRR} is given in Subsection \ref{SSEC10.6.1},
  but requires a considerable amount of preparation provided in the
  following sections.
\end{proof}

Recall that the set $Y^{3,\delta_0}_{{\mathcal D},\varphi}({\mathbb
  R}\times {\mathbb R}^N,\boldsymbol{\gamma})$ is the disjoint union
  of $\partial\mathfrak{Z}$ and $\mathfrak{X}$, where
  $\partial\mathfrak{Z}$ has already a ssc-manifold structure and
  $\mathfrak{X}$ a sc-manifold structure.  
The $\bm{\bar{\oplus}}$-structure will be related to the already existing
  structures. 
The following theorem summarizes the additional properties, where
  ${\mathcal T}$ is the quotient topology underlying the M-polyfold
  structure arising from Theorem \ref{RRR}. 

\begin{definition}
The set $Y^{3,\delta_0}_{{\mathcal D},\varphi}({\mathbb R}\times {\mathbb
  R}^N,\boldsymbol{\gamma})$ equipped with the M-polyfold structure
  resulting from Theorem \ref{RRRR} via
  $$
    \boldsymbol{\bar{\oplus}}:{\mathcal V}\rightarrow
    Y^{3,\delta_0}_{{\mathcal D},\varphi}({\mathbb R}\times {\mathbb
    R}^N,\boldsymbol{\gamma})
    $$
 is denoted by $Y^{3,\delta}_{{\mathcal D},\varphi}({\mathbb R}\times
 {\mathbb R}^N,\boldsymbol{\bar{\gamma}})$, or, more explicitly,
 $Y_{{\mathcal D},\varphi}^{3,\delta}({\mathbb R}\times {\mathbb
 R}^N,\boldsymbol{\gamma})$.
\end{definition}
%
Recall that the subsets $\partial\mathfrak{Z}$ and $\mathfrak{X}$ of
  $Y^{3,\delta}_{{\mathcal D},\varphi}({\mathbb R}\times {\mathbb
  R}^N,\boldsymbol{\bar{\gamma}})$ already possessed previously defined
  structures, see Remark \ref{rem10.14}.
The following Theorem \ref{RRRR} and Proposition \ref{PROPN3.15}  show
  that the new construction recovers the original structures in a suitable
  sense. 
In addition they show  that the M-polyfold structure does not depend on
  the choice of the cut-off model $\beta$.

\begin{theorem}\label{RRRR}
The $\bm{\bar{\oplus}}$-polyfold structure on $Y^{3,\delta_0}_{{\mathcal
  D},\varphi}({\mathbb R}\times {\mathbb R}^N,\boldsymbol{\gamma})$ has the
  following properties.
\begin{itemize}
  \item[(1)] 
  The topologies induced by ${\mathcal T}$  on $\mathfrak{X}$ and
    $\partial\mathfrak{Z}$ are the original topologies for the already
    existing sc-manifold structure on $\mathfrak{X}$ and ssc-manifold
    structure on $\partial\mathfrak{Z}$.
  \item[(2)]
  The M-polyfold structure on $Y^{3,\delta_0}_{{\mathcal
    D},\varphi}({\mathbb R}\times {\mathbb R}^N,\boldsymbol{\gamma})$ does
    not(!) depend on the smooth $\beta:{\mathbb R}\rightarrow [0,1]$ which
    was taken in the definition of $\boldsymbol{\bar{\oplus}}$ as long as it
    satisfies the usual properties $\beta(s)+\beta(-s)=1$,
    $\beta(s)=1$ for $s\leq -1$, and $\beta'(s)<0$ for $s\in (-1,1)$.
  \end{itemize}
\end{theorem}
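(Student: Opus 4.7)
The plan is to adapt the strategy from the proof of Theorem \ref{thm1}, treating the two components $\mathfrak{X}$ and $\partial\mathfrak{Z}$ of $Y^{3,\delta_0}_{\mathcal{D},\varphi}$ separately for part (1), and then handling the change of cut-off in part (2) by the same uniqueness-up-to-$\beta$ argument used there, reduced to an sc-smoothness check for a composition of the form $f_2\circ\bar{\boldsymbol{\oplus}}_1$.

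For part (1) on $\mathfrak{X}$, I would restrict the $\oplus$-construction to the open piece $\mathcal{V}^\circ := \mathcal{V}\cap((0,1)\times Z_{\mathcal{D}})$, where by Definition \ref{DEFX3.7} the domain parameter $\mathsf{a}$ takes values in $\mathbb{B}\setminus\{0\}$. On $\mathcal{V}^\circ$ the map $\bar{\boldsymbol{\oplus}}$ factors as $(r,\tilde u)\mapsto (r,\tilde w)$ where $\tilde w$ is the nodal plus-gluing of Theorem \ref{thm1} applied to $(\tilde u^x,\varphi(r)\ast\tilde u^y)$ at parameter $a=\mathsf{a}(r,\tilde u)$. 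Because $\mathsf{a}$ is ssc-smooth (Lemma \ref{LEM3.14}) and the $\mathbb{R}$-shift $\varphi(r)\ast$ is ssc-smooth in $(r,\tilde u^y)$, the quotient topology and sc-structure on $\mathfrak{X}$ inherited from $\mathcal{T}$ coincide with the product sc-structure it carries as an open subset of $(0,1)\times\dot X_{\mathcal D}(\mathbb{R}\times\mathbb{R}^N)$; the independence of $\varphi,\delta$ recorded in Remark \ref{REMdotX} and Exercise \ref{EXERC101} ensures this match is canonical. For $\partial\mathfrak{Z}$, one uses that $\bar{\boldsymbol{\oplus}}$ is the identity at $r=0$, so any sc-smooth $f$ furnished by Theorem \ref{RRR} may be chosen to restrict to the identity on $\{0\}\times Z_{\mathcal D}$. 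The quotient topology on $\partial\mathfrak{Z}$ is therefore exactly the topology of the ssc-manifold $Z_{\mathcal{D}}(\mathbb{R}\times\mathbb{R}^N,\bar{\boldsymbol{\gamma}})$, and the induced sc-structure is the one obtained by forgetting its stricter filtration.

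For part (2), let $\beta_1,\beta_2$ be two admissible cut-off profiles, yielding constructions $\bar{\boldsymbol{\oplus}}_1,\bar{\boldsymbol{\oplus}}_2$ and partial inverses $f_1,f_2$ from Theorem \ref{RRR}. By the characterisation of sc-smooth maps out of an $\oplus$-M-polyfold (the analogue of item (5) in Theorem \ref{thm1}), it suffices to verify that $f_2\circ\bar{\boldsymbol{\oplus}}_1:\mathcal{V}\to\mathcal{V}$ is sc-smooth, and symmetrically for $f_1\circ\bar{\boldsymbol{\oplus}}_2$. Under the parametrisation $\Psi$ of Proposition \ref{PROPOSITION3.5}, this composition decomposes into an ssc-smooth extraction of the standard-map data $(c^x,c^y,[\hat x,\hat y])$ together with a re-gluing of the exponentially decaying remainders; the resulting formula is structurally identical to equations \eqref{EQNH4}--\eqref{EQNH10}, with the cross cut-off $\beta_2(\cdot-2)(1-\beta_1(\cdot))$ playing the role played by $\sigma$ in equation \eqref{EQN1}. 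Each summand is sc-smooth by the Fundamental Lemmas of Subsection \ref{FUND0} together with the ssc-smoothness of $\mathsf{a}$, so sc-smoothness of the composition follows.

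The principal obstacle, distinguishing the periodic-orbit case from the nodal case, is the coupling at $r=0$: although $\bar{\boldsymbol{\oplus}}$ is the identity on $\partial\mathfrak{Z}$ as a set map, the M-polyfold structure remembers its derivatives transverse to $\{r=0\}$, and these mix the $r$-derivative with the $\mathbb{R}$-shift $\varphi(r)\ast\tilde u^y$ and the relation $T\varphi(|a|)=\varphi(r)+c^y-c^x$. Verifying that this coupling neither spoils the agreement with the ssc-manifold structure on $\partial\mathfrak{Z}$ (part 1) nor obstructs the $\beta$-independence computation (part 2) is the most delicate bookkeeping. Both issues reduce in the end to the submersivity and ssc-smoothness of $\mathsf{a}$ given by Lemma \ref{LEM3.14}, which let the $r$-dependence be absorbed into the Fundamental Lemma estimates exactly as the $|a|$-dependence is absorbed in the nodal proof.
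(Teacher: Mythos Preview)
Your overall strategy---reduce (2) to checking that a mixed composition $f_2\circ\bar{\boldsymbol{\oplus}}_1$ is sc-smooth, and invoke the Fundamental Lemma---is the right shape, but there is a genuine gap concerning what the ``$f$'' actually is in the periodic-orbit setting. Unlike the nodal Theorem~\ref{thm1}, here there is \emph{no} single global coretraction: the paper explicitly builds two, $H:\mathfrak{X}\to\mathcal{V}$ (Subsection~\ref{CORETRH}) and $K:\mathcal{W}\to\mathcal{V}$ (Subsection~\ref{SSS3.2.4}), with $\mathcal{W}\supset\partial\mathfrak{Z}$. Your phrase ``any sc-smooth $f$ furnished by Theorem~\ref{RRR} may be chosen to restrict to the identity on $\{0\}\times Z_{\mathcal D}$'' presupposes exactly what must be constructed, namely $K$, and the paper's proof of (2) accordingly checks \emph{both} $H'\circ\bar{\boldsymbol{\oplus}}$ and $K'\circ\bar{\boldsymbol{\oplus}}$ for sc-smoothness.

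The more serious issue is your claim that the resulting formula is ``structurally identical to equations~\eqref{EQNH4}--\eqref{EQNH10}.'' For the $H$-piece (i.e.\ on $\mathfrak{X}$, away from $r=0$) this is essentially true and your sketch is fine. But the $K$-piece is not of that form: $K$ is built from the averaging map $\mathsf{A}_\Phi$ of Definition~\ref{ewr3.37}, so $K_2\circ\bar{\boldsymbol{\oplus}}_1(r,\wt{q}\dotplus\wt{h})$ produces $(r,\wt{p}\dotplus\wt{k})$ where $\wt{p}=\mathsf{A}_\Phi\circ\bar{\boldsymbol{\oplus}}_1(r,\wt{q}\dotplus\wt{h})$ is the averaged standard map, \emph{not} the original $\wt{q}$. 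The term $\wt{k}$ then contains $\beta^x_{a,-2}(\wt{q}^x-\wt{p}^x)$, and showing this is sc-smooth as $r\to 0$ is exactly the content of Proposition~\ref{COMPARE} (the map $\mathsf{D}$): one must show that $\wt{p}$ converges to $\wt{q}$ \emph{fast enough} to survive the growing cut-off support. This step does not reduce to Lemma~\ref{LEM3.14} and the Fundamental Lemma alone; it requires the separate analysis of $d_{\wh{x}_0}(\wt{p})-d_{\wh{x}_0}(\wt{q})$ carried out in Subsection preceding Proposition~\ref{COMPARE}. Once Proposition~\ref{COMPARE} is in hand, the $\beta$-independence argument goes through as in Lemma~\ref{K-lemmar}, with the mixed cut-off $\beta_2(\cdot-2)(1-\beta_1(\cdot))$ you describe---but that proposition is the missing ingredient, not a bookkeeping detail.
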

%
\begin{proof}
The proof of Theorem \ref{RRRR} is given in Subsection \ref{SSEC10.6.1}
  together with the proof of Theorem \ref{RRR}.
Both need some common preparation provided in the coming sections. 
\end{proof}

The following result will be a corollary of Theorem \ref{CORRX3.26} which
  will be stated and proved later.

\begin{proposition}\label{PROPN3.15}
  \hfill\\
The following holds.
\begin{itemize}
  \item[(1)] 
  The subset $\mathfrak{X}$ of $Y_{{\mathcal D},\varphi}({\mathbb R}\times
    {\mathbb R}^N,\boldsymbol{\bar{\gamma}})$ is open and dense for the
    topology ${\mathcal T}$ and the induced M-polyfold structure is the
    original existing one.
  \item[(2)]
  The subset $\partial\mathfrak{Z}$ is closed for the topology ${\mathcal
    T}$ and carries the structure of a sub-M-polyfold which is the
    M-polyfold structure induced by the original ssc-manifold structure.
  \end{itemize}
\end{proposition}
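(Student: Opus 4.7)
The plan is to exploit the fact that the gluing map $\bm{\bar{\oplus}}$ preserves the $r$-coordinate (it acts as the identity on $r$), so that its behavior splits cleanly according to whether $r=0$ or $r\in(0,1)$, and each piece can be compared directly with the preexisting structure it is supposed to recover. Throughout, I would use that $\bar{r}\colon Y^{3,\delta}_{{\mathcal D},\varphi}\to [0,1)$ is sc-smooth by Theorem \ref{RRR}, and hence continuous for the quotient topology $\mathcal{T}$.

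First I would handle the topological assertions. Since $\bar{r}$ is continuous, $\mathfrak{X}=\bar{r}^{-1}((0,1))$ is open in $\mathcal{T}$ and $\partial\mathfrak{Z}=\bar{r}^{-1}(\{0\})$ is closed. For density of $\mathfrak{X}$, given any smooth point $(0,\widetilde{u})\in\partial\mathfrak{Z}$, the openness of $\mathcal{V}\subset\mathfrak{Z}$ around $\{0\}\times Z_{\mathcal D}$ lets me pick $r_n\to 0^+$ with $(r_n,\widetilde{u})\in\mathcal{V}$; then $\bm{\bar{\oplus}}(r_n,\widetilde{u})\in\mathfrak{X}$ converges in $\mathcal{T}$ to $\bm{\bar{\oplus}}(0,\widetilde{u})=(0,\widetilde{u})$ by continuity of $\bm{\bar{\oplus}}$, yielding density.

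Next I would address part (1). The open subset $\mathcal{V}^\ast:=\mathcal{V}\cap\bigl((0,1)\times Z_{\mathcal D}\bigr)$ inherits an sc-manifold structure, and the restriction $\bm{\bar{\oplus}}|_{\mathcal{V}^\ast}\colon\mathcal{V}^\ast\to\mathfrak{X}$ is sc-smooth. The point is to show it is an sc-diffeomorphism: away from the nodal parameter $a=0$, the plus-gluing in Section \ref{SECX1} is invertible via the splitting recipe used to define $f$ in Theorem \ref{thm1}, and the same splitting (together with the determination of $r$ from $\mathsf{a}$ and the $\ast$-action) produces an sc-smooth inverse here. Compatibility then follows because the resulting sc-manifold structure on $\mathfrak{X}$ coincides with the product sc-manifold structure $(0,1)\times\dot{X}_{\mathcal D}({\mathbb R}\times{\mathbb R}^N)$ described in Remarks \ref{REMdotX} and \ref{REMARK2.11}. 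Since by construction a map into $Y$ is sc-smooth iff its pullback through $\bm{\bar{\oplus}}$ is, this identifies the sub-M-polyfold structure on $\mathfrak{X}$ with the original one.

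For part (2) I would use that, by Definition \ref{DEF3.15}, $\bm{\bar{\oplus}}(0,\widetilde{u})=(0,\widetilde{u})$, so $\bm{\bar{\oplus}}|_{\{0\}\times Z_{\mathcal D}}$ is the identity bijection onto $\partial\mathfrak{Z}$. Hence the restriction of any chart of $Y$ to the $r=0$ slice agrees with the identity chart on $Z_{\mathcal D}({\mathbb R}\times{\mathbb R}^N,\boldsymbol{\bar{\gamma}})$, giving the induced M-polyfold structure as exactly the sc-structure underlying the ssc-manifold $Z_{\mathcal D}$. The sub-M-polyfold property can be produced directly: take the sc-smooth retraction $R\colon Y\to Y$ given by $R(r,\widetilde{w})=\bm{\bar{\oplus}}\bigl(0,g(r,\widetilde{w})\bigr)$, where $g$ is a local section of $\bm{\bar{\oplus}}$; then $R\circ R=R$ and $R(Y)=\partial\mathfrak{Z}$ locally, identifying $\partial\mathfrak{Z}$ as a sub-M-polyfold in the sense of the polyfold axioms. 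Alternatively one invokes Theorem \ref{CORRX3.26} directly as the proposition suggests. The main obstacle is part (1): rigorously matching the three descriptions of $\dot{X}_{\mathcal D}$—as the $a\ne 0$ part of the $\oplus$-polyfold, as an open sub-M-polyfold of $Y$, and as the sc-manifold of Remark \ref{REMARK2.11}—which essentially reduces to verifying sc-smoothness of the splitting inverse, whereas the $r=0$ slice is handled by the trivial observation that $\bm{\bar{\oplus}}$ is the identity there.
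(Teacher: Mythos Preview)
Your topological assertions (openness, closedness, density) and your treatment of part (2) are essentially fine. The genuine gap is in part (1): the restriction $\bm{\bar{\oplus}}|_{\mathcal{V}^\ast}\colon\mathcal{V}^\ast\to\mathfrak{X}$ is \emph{not} an sc-diffeomorphism, because it is not injective. An element $(r,(\wt{u}^x,[\wh{x},\wh{y}],\wt{u}^y))\in\mathcal{V}^\ast$ involves maps $\wt{u}^x,\wt{u}^y$ defined on the full punctured disks $D_x\setminus\{x\}$ and $D_y\setminus\{y\}$, whereas the glued map $\wt{w}$ lives only on the finite cylinder $Z_a$. Any modification of $\wt{u}^x$ beyond the annulus $A_x(R)$ (i.e.\ closer to the puncture $x$) that preserves the asymptotic constant $c^x$ leaves $\wt{w}$ unchanged, so $\bm{\bar{\oplus}}$ forgets information. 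The map $f$ from Theorem~\ref{thm1} is only a \emph{coretraction}: one has $\oplus\circ f=\mathrm{Id}$, but $f\circ\oplus$ is a nontrivial retraction, not the identity. The ``splitting recipe'' therefore does not produce an inverse.

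The paper's route through Theorem~\ref{CORRX3.26} is precisely designed to circumvent this. One constructs the explicit coretraction $H\colon\mathfrak{X}\to\dot{\mathcal V}$ (equations (\ref{eqnrr7.5})--(\ref{eqnrr7.6})), verifies in Lemma~\ref{LEMMN3.16} that $H$ is sc-smooth for the \emph{original} M-polyfold structure on $\mathfrak{X}$ and that $\rho:=H\circ\bm{\bar{\oplus}}\colon\dot{\mathcal V}\to\dot{\mathcal V}$ is an sc-smooth retraction, and then observes that $\bm{\bar{\oplus}}$ restricted to the retract $O_{\dot{\mathcal V}}=\rho(\dot{\mathcal V})$ is a bijection onto $\mathfrak{X}$. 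This bijection defines a second M-polyfold structure $\mathfrak{X}'$, and the substance of the argument is Proposition~\ref{ABER} (via Lemmata~\ref{LEM3.25} and~\ref{LEM3.26}), which shows $\mathfrak{X}=\mathfrak{X}'$ by factoring the identity both ways through $H$ and $\bm{\bar{\oplus}}$. Your final sentence (``a map into $Y$ is sc-smooth iff its pullback through $\bm{\bar{\oplus}}$ is'') is the right spirit, but making it rigorous requires exactly this retraction machinery, not an inverse that does not exist.
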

%
\begin{proof}
Follows from Theorem \ref{CORRX3.26}. See also the end of Subsection
  \ref{SSEC10.6.1}.
\end{proof}

For later constructions we introduce several maps. 
The first one already occurred in the statement of Theorem \ref{RRR}.

\begin{definition}
We have the following canonical maps:
\begin{itemize}
  \item[(1)]  
  $\bar{r}:Y_{{\mathcal D},\varphi}({\mathbb R}\times {\mathbb
    R}^N,\boldsymbol{\bar{\gamma}})\rightarrow [0,1)$
    extracts the parameter $r$. 
  \item[(2)] 
  $\bar{a}_d:Y_{{\mathcal D},\varphi}({\mathbb R}\times {\mathbb
  R}^N,\boldsymbol{\bar{\gamma}})\rightarrow [0,1/4)\times {\mathbb S}$
  extracts the decorated domain gluing parameter $(|a|,[\wh{x},\wh{y}])$.
  \item[(3)] 
  $\bar{a}:Y_{{\mathcal D},\varphi}({\mathbb R}\times {\mathbb
    R}^N,\boldsymbol{\bar{\gamma}})\rightarrow {\mathbb B}$ extracts the
    domain gluing parameter $a =|a| \cdot [\wh{x},\wh{y}] =: m\circ
    \bar{a}_d(|a|,[\wh{x},\wh{y}])$, where $m(b,[\wh{x},\wh{y}])=b\cdot
    [\wh{x},\wh{y}]$.
  \end{itemize}
\end{definition}
%

The following proposition summarizes the properties of the three maps and
  follows from Proposition \ref{PPROPRRR} stated later on.

\begin{proposition}\label{PROPRRR}
The following holds.
\begin{itemize}
  \item[(1)] 
  For the previously defined M-polyfold structure on $Y_{{\mathcal
    D},\varphi}({\mathbb R}\times {\mathbb R}^N,\boldsymbol{\bar{\gamma}})$
    the natural maps $\bar{r}$, $\bar{a}_d$, and $\bar{a}$ are sc-smooth.
  \item[(2)] 
  The map $\bar{r}$ is submersive.
  \end{itemize}
\end{proposition}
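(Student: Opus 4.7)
The plan is to reduce both claims to the universal properties of the $\oplus$-polyfold construction $\bm{\bar{\oplus}}\colon \mathcal{V}\to Y^{3,\delta}_{{\mathcal D},\varphi}({\mathbb R}\times {\mathbb R}^N,\boldsymbol{\bar{\gamma}})$ delivered by Theorem \ref{RRR}: (a) the analogue of Theorem \ref{thm1}(5), i.e.\ a map $F$ out of $Y$ is sc-smooth iff $F\circ\bm{\bar{\oplus}}$ is sc-smooth on $\mathcal{V}$; and (b) the existence of local right inverses $f\colon U\to\mathcal{V}$ of $\bm{\bar{\oplus}}$ satisfying $\bm{\bar{\oplus}}\circ f=\mathrm{id}_U$, which is part of what it means to be an $\oplus$-polyfold construction.

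For part (1), I apply criterion (a) to each of the three maps. The commutative diagram in Theorem \ref{RRR} gives $\bar{r}\circ\bm{\bar{\oplus}}=\mathrm{pr}_1\colon\mathcal{V}\to[0,1)$, the restriction to the open set $\mathcal{V}$ of the first-factor projection from the ssc-manifold $[0,1)\times Z_{\mathcal D}({\mathbb R}\times{\mathbb R}^N,\boldsymbol{\bar{\gamma}})$, hence ssc-smooth. For $\bar{a}$, the composition $\bar{a}\circ\bm{\bar{\oplus}}=\mathsf{a}$ is ssc-smooth by Lemma \ref{LEM3.14}. For $\bar{a}_d$, I would factor the extraction into its two components: the absolute value is given by the explicit formula $|a|=\varphi^{-1}\bigl(T^{-1}(\varphi(r)+c^y(\wt{u})-c^x(\wt{u}))\bigr)$, ssc-smooth by ssc-smoothness of the asymptotic-constant extractions $\bar{c}^x,\bar{c}^y$ together with smoothness of $\varphi$ and $\varphi^{-1}$ on the relevant range (guaranteed by the open conditions in Definition \ref{DEFX3.7}); the natural angle $[\wh{x},\wh{y}]$ is read off directly from the input $\wt{u}\in Z_{\mathcal D}$, and its extraction is ssc-smooth via the manifold structure on $\mathsf{S}_{\boldsymbol{\gamma}}$ from Proposition \ref{PROPT8.84}.

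For part (2), I imitate the template from the proof of Theorem \ref{THM1.3}(5). Given $y_0\in Y$, pick a local right inverse $f\colon U\to\mathcal{V}$ of $\bm{\bar{\oplus}}$ provided by (b); locality suffices even though a single global $f$ is unavailable in this construction. Writing $f(y)=(\bar{r}(y),F(y))$, define
\[
    \wt{\rho}(y,r) := \bm{\bar{\oplus}}(r,F(y))
\]
for $(y,r)$ in a neighborhood of $(y_0,\bar{r}(y_0))$ small enough that $(r,F(y))\in\mathcal{V}$, which is an open condition by Definition \ref{DEFX3.7}. Then $\wt{\rho}$ is sc-smooth as a composition of sc-smooth maps; the commutative diagram gives $\bar{r}(\wt{\rho}(y,r))=r$; and idempotency $\wt{\rho}(\wt{\rho}(y,r),r)=\wt{\rho}(y,r)$ follows exactly as in the template, namely, with $w:=\bm{\bar{\oplus}}(r,F(y))$, the identity $\bar{r}(w)=r$ forces $f(w)=(r,F(w))$, and then $\bm{\bar{\oplus}}\circ f=\mathrm{id}_U$ yields $\bm{\bar{\oplus}}(r,F(w))=w$. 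This is the required submersion data at $y_0$, and since $y_0$ was arbitrary, $\bar{r}$ is submersive.

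The only real obstacle sits inside criterion (b): the existence and sufficient regularity of the local right inverses $f$, valid near every point of $Y$ including those with $r_0=0$. But this is precisely what Theorem \ref{RRR} asserts — that $\bm{\bar{\oplus}}$ really is an $\oplus$-polyfold construction by the $\oplus$-method — so once that theorem is in hand (proved in Subsection \ref{SSEC10.6.1}) the present proposition is a formal consequence.
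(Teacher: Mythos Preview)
Your approach is correct and largely parallels the paper's, with one minor gap in (1) and a genuinely different (though equally valid) route in (2).

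For (1), your argument for $\bar r$ and $\bar a$ matches the paper's exactly. For $\bar{a}_d$, however, your justification ``smoothness of $\varphi$ and $\varphi^{-1}$ on the relevant range (guaranteed by the open conditions in Definition~\ref{DEFX3.7})'' does not cover the point $r=0$: there the formula $|a|=\varphi^{-1}\bigl(T^{-1}(\varphi(r)+c^y-c^x)\bigr)$ is not directly defined, and one must show the resulting map $[0,1)\times{\mathbb R}\to[0,1/4)$ extends smoothly by $0$ at $r=0$. This is exactly the content of the calculus lemma (Lemma~4.4 in \cite{HWZ8.7}, reproduced in Appendix~\ref{CALCAPP} as Theorem~\ref{thm-h1}), which the paper invokes at this step and which also underlies Lemma~\ref{LEM3.14}. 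Once you cite that, your argument is complete.

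For (2), the paper takes a different route. Rather than building a retraction on $Y\times[0,1)$ as you do (following the template of Theorem~\ref{THM1.3}(5)), the paper works upstairs in $\mathcal V$: the sc-smooth retraction $\rho:=H_y\circ\bm{\bar{\oplus}}$ on $\bm{\bar{\oplus}}^{-1}(U(y))\subset[0,1)\times Z^{3,\delta}_{\mathcal D}$ preserves the $[0,1)$-factor, so the local M-polyfold model of $Y$ is a splicing over $[0,1)$, and submersiveness of $\bar r$ then follows from general theory. Your downstairs construction is equally valid and arguably more self-contained, since it produces the submersion data for $\bar r$ directly without appealing to the splicing characterization; the paper's approach is shorter but leans more on the abstract framework.
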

%
\begin{proof}
Follows from Proposition \ref{PPROPRRR}.
\end{proof}
%

%
\subsubsection{Extensions to Manifolds}
In the construction of $Y_{{\mathcal D},\varphi}({\mathbb R}\times
  {\mathbb R}^N,\boldsymbol{\bar{\gamma}})$ we would like to replace
  $({\mathbb R}^N,\boldsymbol{\bar{\gamma}})$ by a periodic orbit in some
  manifold $Q$, in order to define a M-polyfold denoted by $Y_{{\mathcal
  D},\varphi}({\mathbb R}\times Q,\boldsymbol{\bar{\gamma}})$ In order to do
  we will proceed as in the nodal case.
Namely we show that the construction $Y_{{\mathcal D},\varphi}$ defines a
  functor on the category having pairs $({\mathbb R}^N,\bm{\bar{\gamma}})$
  as objects and suitable morphisms based on smooth maps between them.
Then the idea from Subsection \jwf{[broken reference]} 
  can be applied.

Consider first the category ${\mathcal P}_w^0$, whose objects are pairs
  $({\mathbb R}^N,\boldsymbol{\bar{\gamma}})$, where
  $\boldsymbol{\bar{\gamma}}$ is a weighted periodic orbit in ${\mathbb
  R}^N$.
A morphism $({\mathbb R}^N,\boldsymbol{\bar{\gamma}})\rightarrow ({\mathbb
  R}^{N'},\boldsymbol{\bar{\gamma}}')$ only exists provided
  $(\delta,T,k)=(\delta',T',k')$ and is given by a smooth map $h:{\mathbb
  R}^N\rightarrow {\mathbb R}^{N'}$ having the property that
  $h\circ\gamma\in [\gamma']$ for $\gamma\in [\gamma]$. 
Associated to $h$ we have the map $\wt{h}=Id_{\mathbb R}\times h:{\mathbb
  R}\times {\mathbb R}^N\rightarrow {\mathbb R}\times {\mathbb R}^{N'}$. We
  define
  $$
  \wt{h}_\ast: Y_{{\mathcal D},\varphi}({\mathbb R}\times{\mathbb
    R}^N,\boldsymbol{\bar{\gamma}})\rightarrow Y_{{\mathcal
    D},\varphi}({\mathbb R}\times{\mathbb
    R}^{N'},\boldsymbol{\bar{\gamma}}')
    $$
  by $(r,\wt{w})\rightarrow (r,\wt{h}\circ \wt{w})$ if $r\in (0,1)$ and
  $(0,(\wt{u}^x,[\wh{x},\wh{y}],\wt{u}^y))\rightarrow (0,(\wt{h}\circ\wt{u}^x,[\wh{x},\wh{y}],\wt{h}\circ\wt{u}^y))$. 
The following holds.  

\begin{proposition}\label{PPP3.22}
Let $h:({\mathbb R}^N,\boldsymbol{\bar{\gamma}})\rightarrow ({\mathbb
  R}^{N'},\boldsymbol{\bar{\gamma}}')$ be a morphism.
Then the map
  $$
    \wt{h}_\ast: Y_{{\mathcal D},\varphi}({\mathbb R}\times{\mathbb
    R}^N,\boldsymbol{\bar{\gamma}})\rightarrow
    Y_{{\mathcal D},\varphi}({\mathbb R}\times{\mathbb
    R}^{N'},\boldsymbol{\bar{\gamma}}')
    $$
  is sc-smooth and $\wt{(h'\circ h)}_\ast =\wt{h}'_{\ast}\circ
  \wt{h}_\ast$ if $h'$ is another morphism composeable
  with $h$. 
Also $\wt{Id}_\ast = Id$.
\end{proposition}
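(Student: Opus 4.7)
The functorial identities $\wt{(h'\circ h)}_\ast = \wt{h}'_\ast\circ \wt{h}_\ast$ and $\wt{Id}_\ast = Id$ follow immediately from the pointwise definition $\wt{h}_\ast(\wt{u})=\wt{h}\circ\wt{u}$ together with associativity of composition, so the substantive content is the sc-smoothness of $\wt{h}_\ast$. My plan is to follow the template of Proposition \ref{prop2}, exploiting the fact that both $Y_{{\mathcal D},\varphi}({\mathbb R}\times{\mathbb R}^N,\bm{\bar{\gamma}})$ and $Y_{{\mathcal D},\varphi}({\mathbb R}\times{\mathbb R}^{N'},\bm{\bar{\gamma}}')$ are constructed by the $\oplus$-method via $\bm{\bar{\oplus}}$ and $\bm{\bar{\oplus}}'$ as in Theorem \ref{RRR}.

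First I would define a lift $H_\ast\colon{\mathcal V}\to{\mathcal V}'$ by
\[
H_\ast\bigl(r,(\wt{u}^x,[\wh{x},\wh{y}],\wt{u}^y)\bigr)=\bigl(r,(\wt{h}\circ\wt{u}^x,[\wh{x},\wh{y}],\wt{h}\circ\wt{u}^y)\bigr).
\]
Well-definedness, i.e.\ $H_\ast({\mathcal V})\subset{\mathcal V}'$, is immediate because $\wt{h}=Id_{\mathbb R}\times h$ preserves the first-factor asymptotic constants $c^x,c^y$ appearing in Definition \ref{DEFX3.7}. Moreover, $H_\ast$ is the product of $Id_{[0,1)}$ with the ssc-smooth induced map on $Z_{\mathcal D}({\mathbb R}\times{\mathbb R}^N,\bm{\bar{\gamma}})$ established in the earlier proposition of this subsection (asserting ssc-smoothness of $\wt{h}_\ast$ on $Z_{\mathcal D}$); hence $H_\ast$ is itself ssc-smooth and in particular sc-smooth.

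Next I would establish, locally around each $y_0\in Y_{{\mathcal D},\varphi}({\mathbb R}\times{\mathbb R}^N,\bm{\bar{\gamma}})$, a commutative diagram
\[
\begin{CD}
{\mathcal V} @>H_\ast>> {\mathcal V}'\\
@A \bm{\bar{f}} AA @V\bm{\bar{\oplus}}'VV\\
Y_{{\mathcal D},\varphi}({\mathbb R}\times{\mathbb R}^N,\bm{\bar{\gamma}}) @>\wt{h}_\ast>> Y_{{\mathcal D},\varphi}({\mathbb R}\times{\mathbb R}^{N'},\bm{\bar{\gamma}}')
\end{CD}
\]
where $\bm{\bar{f}}$ is a local sc-smooth section of $\bm{\bar{\oplus}}$ with $\bm{\bar{\oplus}}\circ\bm{\bar{f}}=Id$, furnished by the $\oplus$-construction property of Theorem \ref{RRR} (the analogue for $\bm{\bar{\oplus}}$ of the explicit $f$ in Theorem \ref{thm1}, built via the shifted cut-off $\beta_{-2}$). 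Once this diagram commutes, the identity $\wt{h}_\ast=\bm{\bar{\oplus}}'\circ H_\ast\circ\bm{\bar{f}}$ exhibits $\wt{h}_\ast$ locally as a composition of sc-smooth maps, and sc-smoothness of $\wt{h}_\ast$ follows from the chain rule.

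The hard part will be verifying commutativity of this diagram, since $\wt{h}$ does not commute with the convex combination $\beta^x_a(\cdot)+\beta^y_a(\cdot)$ defining $\bm{\bar{\oplus}}$ in general. The resolution mirrors the nodal case: the local section $\bm{\bar{f}}$ is constructed so that the reconstructed components $(\wt{u}^x,\wt{u}^y)=\bm{\bar{f}}(\wt{w})$ agree with the glued map $\wt{w}$ on the supports of $\beta^x_a$ and $\beta^y_a$ respectively (this is exactly the shift-by-$2$ trick exploited in Theorem \ref{thm1}); consequently the convex combination $\beta^x_a\,\wt{h}\circ\wt{u}^x+\beta^y_a\,(\varphi(r)\ast\wt{h}\circ\wt{u}^y)$ collapses to $(\beta^x_a+\beta^y_a)\,\wt{h}\circ\wt{w}=\wt{h}\circ\wt{w}$, which is precisely $\wt{h}_\ast\bm{\bar{\oplus}}(r,\wt{u})$. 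The remaining sc-smoothness estimates for the constituent maps will reduce to the Fundamental Lemma of Subsection \ref{FUND0} and to the classical level-wise smoothness of post-composition by $h$ from \cite{El}.
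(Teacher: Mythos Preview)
Your approach is essentially the paper's own: the paper defers to Proposition~\ref{PROX3.49}, whose proof is exactly the factorization $\wt{h}_\ast=\bm{\bar{\oplus}}'\circ(Id_{[0,1)}\times\wt{h}_\ast)\circ H_{y_0}$ through a local coretraction $H_{y_0}$ of $\bm{\bar{\oplus}}$, citing the same mechanism as Proposition~\ref{prop2}. Your observation that commutativity hinges on the $\beta_{-2}$-shift---so that the specific coretractions $H$ and $K$ of Subsections~\ref{CORETRH} and~\ref{SSS3.2.4} reproduce $\wt{w}$ on the supports of $\beta^x_a,\beta^y_a$---is precisely the point the paper leaves implicit, and your verification that $\wt{h}=Id_{\mathbb R}\times h$ preserves $c^x,c^y$ (hence the domain gluing parameter $a$) and commutes with the $\varphi(r)\ast$-shift completes the argument correctly.
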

%
\begin{proof}
This follows from  Proposition \ref{PROX3.49}.
\end{proof}

Now it is clear that we can use the procedure of the previous section to
  extend our construction to pairs $(Q,\boldsymbol{\bar{\gamma}})$.

\begin{definition}
Let $Q$ be a smooth connected manifold without boundary which allows a
  proper embedding into some ${\mathbb R}^N$.  
A periodic orbit in $Q$ is a tuple $\boldsymbol{\gamma}=([\gamma],T,k)$,
  where $\gamma:S^1\rightarrow Q$ is a smooth embedding and $[\gamma]$
  consists of all parameterizations of the form $t\rightarrow
  \gamma(t+\theta)$.
As in the ${\mathbb R}^N$-case we can define weighted periodic orbits
  $\boldsymbol{\bar{\gamma}}=(\boldsymbol{\gamma},\delta)$.
\end{definition}
%

We consider the category $\mathcal{P}_w$ whose objects are pairs
  $(Q,\boldsymbol{\bar{\gamma}})$ as just described.
Morphisms $(Q,\boldsymbol{\bar{\gamma}})\rightarrow
  (Q',\boldsymbol{\bar{\gamma}}')$ only exist if $\delta=\delta'$,
  $(T,k)=(T',k')$, and this case they are given by a smooth map
  $h:Q\rightarrow Q'$ such that for $\gamma\in [\gamma]$ the map
  $h\circ\gamma\in [\gamma']$.

\begin{theorem}\label{TTT3.24}
The polyfold construction $Y_{{\mathcal D},\varphi}({\mathbb
  R}\times{\mathbb R}^N,\boldsymbol{\bar{\gamma}})$ for pairs $({\mathbb
  R}^N,\bm{\bar{\gamma}})$  and the morphisms between them extends to a
  functorial construction  defined on ${\mathcal P}_w$.
\end{theorem}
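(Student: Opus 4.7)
The plan is to mimic the extension argument used for Theorem \ref{THM1.3} in the nodal case, feeding the present situation into Proposition \ref{I-PROP_extension_m_poly_construction_functors} of \cite{FH-notes-I}. First I would package what is already available. Proposition \ref{PPP3.22} shows that a morphism $h\colon(\mathbb{R}^{N},\bm{\bar\gamma})\to(\mathbb{R}^{N'},\bm{\bar\gamma}')$ in $\mathcal{P}_{w}^{0}$ induces an sc-smooth map $\widetilde{h}_\ast$ between the associated M-polyfolds, and this assignment is compatible with composition and sends identities to identities. Thus $Y_{\mathcal{D},\varphi}$ is an M-polyfold construction functor on $\mathcal{P}_{w}^{0}$ in the sense required by the abstract extension result.

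Next I would extend to connected $Q$ in $\mathcal{P}_{w}$ by embedding. Given $(Q,\bm{\bar\gamma})$ with $\bm{\bar\gamma}=(([\gamma],T,k),\delta)$, choose a proper smooth embedding $\phi\colon Q\hookrightarrow\mathbb{R}^{N}$. The periodic orbit $\gamma$ transports to the periodic orbit $\bm{\bar\gamma}_{\phi}=(([\phi\circ\gamma],T,k),\delta)$ in $\mathbb{R}^{N}$, so $\phi$ is by construction a morphism in $\mathcal{P}_{w}$ (it extends tautologically along $\widetilde{\phi}=\mathrm{Id}_{\mathbb{R}}\times\phi$). Following the recipe used in the proof of Theorem \ref{THM1.3}, I would set, as a set,
$$
Y_{\mathcal{D},\varphi}(\mathbb{R}\times Q,\bm{\bar\gamma})
\;=\;
\{(r,\widetilde{w})\in Y_{\mathcal{D},\varphi}(\mathbb{R}\times\mathbb{R}^{N},\bm{\bar\gamma}_{\phi})\;|\;\widetilde{w}\text{ has image in }\mathbb{R}\times\phi(Q)\},
$$
and endow this set with an M-polyfold structure as follows. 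Pick an open neighborhood $U=U(\phi(Q))\subset\mathbb{R}^{N}$ together with a smooth retraction $R\colon U\to U$ with $R(U)=\phi(Q)$ and $R\circ R=R$. Because $R$ sends $\phi(Q)$ to itself, and $\phi\circ\gamma$ lies in $\phi(Q)$, the pair $R\colon(U,\bm{\bar\gamma}_{\phi}|_{U})\to(U,\bm{\bar\gamma}_{\phi}|_{U})$ is a morphism of $\mathcal{P}_{w}^{0}$-type data (on the open sub-object), so by Proposition \ref{PPP3.22} the induced map $\widetilde{R}_{\ast}$ defines an sc-smooth retraction on the open subset $Y_{U}\subset Y_{\mathcal{D},\varphi}(\mathbb{R}\times\mathbb{R}^{N},\bm{\bar\gamma}_{\phi})$ of maps with image in $\mathbb{R}\times U$. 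Its image is precisely the set of maps landing in $\mathbb{R}\times\phi(Q)$; this is a sub-M-polyfold and is by construction bijective with $Y_{\mathcal{D},\varphi}(\mathbb{R}\times Q,\bm{\bar\gamma})$, so we transport the M-polyfold structure.

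Third, I would verify independence of the embedding. Given another proper embedding $\psi\colon Q\hookrightarrow\mathbb{R}^{M}$, the smooth maps $\psi\circ\phi^{-1}$ and $\phi\circ\psi^{-1}$ defined on $\phi(Q)$ resp.\ $\psi(Q)$ extend to ambient smooth maps $A\colon\mathbb{R}^{N}\to\mathbb{R}^{M}$ and $B\colon\mathbb{R}^{M}\to\mathbb{R}^{N}$ that qualify as morphisms of $\mathcal{P}_{w}^{0}$ between the respective orbits, hence $\widetilde{A}_{\ast}$ and $\widetilde{B}_{\ast}$ are sc-smooth by Proposition \ref{PPP3.22} and restrict to inverse sc-diffeomorphisms between the $\Sigma_{\phi}$ and $\Sigma_{\psi}$ models, identical to the argument in Theorem \ref{THM1.3}. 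Extension to disconnected $Q$ is done componentwise. Finally, given a morphism $h\colon(Q,\bm{\bar\gamma})\to(Q',\bm{\bar\gamma}')$ in $\mathcal{P}_{w}$, pick proper embeddings $\phi,\psi$ of $Q,Q'$; then $\psi\circ h\circ\phi^{-1}$ extends to a smooth ambient map $H\colon\mathbb{R}^{N}\to\mathbb{R}^{M}$ intertwining $\bm{\bar\gamma}_{\phi}$ and $\bm{\bar\gamma}'_{\psi}$, so Proposition \ref{PPP3.22} gives sc-smooth $\widetilde{H}_{\ast}$, and its restriction defines $\widetilde{h}_{\ast}$. Functoriality is inherited from the ambient functoriality, and uniqueness of the extension follows as in Theorem \ref{THM1.3}.

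The main obstacle I anticipate is bookkeeping rather than analytic: making sure that at every step the ambient smooth maps one writes down truly preserve the periodic-orbit data (same $T$, $k$, $\delta$ and same asymptotic loop up to $[\,\cdot\,]$), so that they qualify as morphisms in $\mathcal{P}_{w}^{0}$ and Proposition \ref{PPP3.22} applies. Once that is checked for the retraction $R$ and for the embeddings $A,B,H$, the construction functor machinery of \cite{FH-notes-I}, Proposition \ref{I-PROP_extension_m_poly_construction_functors}, delivers the required extension automatically.
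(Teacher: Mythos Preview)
Your proposal is correct and follows essentially the same route as the paper. The paper's own proof is a brief sketch that invokes the construction-functor technology from the nodal case (Theorem~\ref{THM1.3}), picks a proper embedding $\phi:Q\to\mathbb{R}^N$, transports $\bm{\bar\gamma}$, defines $\Sigma_\phi$ as the maps landing in $\mathbb{R}\times\phi(Q)$, and declares it a sub-M-polyfold; it then leaves the details as an exercise. Your version spells out those details---the tubular retraction $R$, the independence of embedding via ambient extensions $A,B$, the definition of $\widetilde{h}_\ast$ via an ambient $H$---and correctly flags the one point that is specific to the periodic-orbit setting: each ambient map must be a morphism in $\mathcal{P}_w^0$, i.e.\ preserve $(T,k,\delta)$ and the class $[\gamma]$. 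For the retraction this is immediate since $R$ fixes $\phi(Q)$ pointwise and $\phi\circ\gamma$ has image there; for $A,B,H$ it follows by construction.
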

%
\begin{proof} 
The idea follows along the line of the construction functor "technology"
  used already in the nodal case, see also Section \jwf{[broken reference]}
We sketch the proof. 
$Y_{{\mathcal D},\varphi}({\mathbb R}\times Q,\boldsymbol{\bar{\gamma}})$
  is defined by taking a proper embedding $\phi:Q\rightarrow {\mathbb
  R}^N$ for a suitable $N$.
With $\boldsymbol{\bar{\gamma}}=(([\gamma],T,k),\delta)$ we define 
  $\boldsymbol{\bar{\gamma}'}=(([\gamma'],T,k),\delta)$, where
  $\gamma'=\phi\circ\gamma$.
We consider the subset $\Sigma_\phi$ of $Y_{{\mathcal D},\varphi}({\mathbb
  R}\times {\mathbb R}^N,\boldsymbol{\bar{\gamma}'})$ consisting of tuples
  $(r,\wt{u})$ with $\wt{u}$ having image in ${\mathbb R}\times \phi(Q)$.
The set $\Sigma_\phi$ is a sub-M-polyfold using the ideas described in
  Section \ref{SECX1}.
Then  $Y_{{\mathcal D},\varphi}({\mathbb R}\times
  Q,\boldsymbol{\bar{\gamma}})$ is the set of tuples $(r,\wt{v})$, where
  $\wt{v}$ is a map into ${\mathbb R}\times Q$, so that
  $(r,(Id\times\phi)\circ \wt{v})\in\Sigma_{\phi}$.
The M-polyfold structure is the one making it a sc-diffeomorphism. 
We leave the details to the reader since it follows closely the already
  explained construction functor idea, see the next exercise.
\end{proof}
%

\begin{exercise}\label{EXERC100003b} 
Carry out the proof of Theorem \ref{TTT3.24} in more detail assuming
  Proposition \ref{PPP3.22}.
\end{exercise}
%

In the following subsections we shall provide the basic constructions
  which we just presented.

%
\subsection{The Coretraction \texorpdfstring{$H$}{H}}\label{CORETRH}
We have to study 
\begin{eqnarray}\label{HKL1}
  \boldsymbol{\bar{\oplus}}:{\mathcal V}\rightarrow
  Y^{3,\delta_0}_{{\mathcal D},\varphi}({\mathbb R}\times {\mathbb
  R}^N,\boldsymbol{\gamma})
  \end{eqnarray}
  and show that it defines a M-polyfold structure on
  $Y^{3,\delta_0}_{{\mathcal D},\varphi}({\mathbb R}\times {\mathbb
  R}^N,\boldsymbol{\gamma})$ by the $\oplus$-method. 
As we have previously mentioned we need to construct two maps in order to
  show that $\bm{\bar{\oplus}}$ is an $\oplus$-construction. 
These maps denoted by $H$ and $K$ provided the local $H_y$ by restriction
  as they occur in Definition \jwf{[broken reference]} 
Since we also want to prove some additional properties we have to take
  special care.
The set $Y^{3,\delta_0}_{{\mathcal D},\varphi}$ is the disjoint union of
  $\partial\mathfrak{Z}$ and $\mathfrak{X}$, which already have certain
  sc-smooth structures and our aim is to find a M-polyfold structure on
  the whole set which induces on these subsets the already existing
  structures. 
We shall denote by ${\mathcal T}$  the quotient topology on
  $Y^{3,\delta_0}_{{\mathcal D},\varphi}({\mathbb R}\times {\mathbb
  R}^N,\boldsymbol{\gamma})$ associated to (\ref{HKL1}).

Recall from Section \ref{SECX1}, Remarks \ref{REMdotX} and
  \ref{REMARK2.11} the following fact which we shall use repeatedly.

\begin{FACT} 
\jwf{Check this ``fact'' envinronment}
The space 
  $$
    \dot{X}^3_{{\mathcal D}}({\mathbb R}\times {\mathbb R}^N):=\{\wt{w}\in
    {X}^3_{{\mathcal D}}\ |\ a(\wt{w})\in {\mathbb B}\setminus\{0\}\}
    $$
  has a M-polyfold structure characterized by
  $$
    \oplus: ({\mathbb B}\setminus \{0\})\times H^{3,\delta}({\mathcal
    D},{\mathbb R}\times {\mathbb R}^N)\rightarrow
    \dot{X}^3_{{\mathcal D}}({\mathbb R}\times {\mathbb R}^N)
    $$
  where 
  $$
    \oplus(a,\wt{h}^x,\wt{h}^y)(z,z')=\beta^x_a(z)\cdot \wt{h}^x(z)+
    \beta^y_a(z')\cdot \wt{h}^y(z').
    $$
The $\oplus$-method gives a well-defined M-polyfold structure on
  $\dot{X}^3_{{\mathcal D}}({\mathbb R}\times {\mathbb R}^N)$.
This structure has the property that the  map $\oplus$ is sc-smooth,
  surjective, and open, and admits an sc-smooth $f$ such that $\oplus\circ f
  =Id$.
This was previously discussed.  
As already previously mentioned we can also define a natural sc-manifold
  structure on $\dot{X}^3_{{\mathcal D}}({\mathbb R}\times {\mathbb R}^N)$,
  see Exercise \ref{EXERC101}.
More  details are also given in Subsection \jwf{[broken reference]} 
The identity map is an sc-diffeomorphism between the two structures. 
\end{FACT}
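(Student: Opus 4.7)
The plan is to derive this \textbf{Fact} as a consequence of Theorem \ref{thm1} together with the observations in Remarks \ref{REMdotX} and \ref{REMARK2.11} and Exercise \ref{EXERC101}, by carefully using that on the open locus $\{a\neq 0\}$ there are no asymptotic ends to worry about, so the weighted Sobolev scale collapses to the ordinary one.

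First, I would invoke Theorem \ref{thm1} applied to the target $\mathbb{R}\times\mathbb{R}^N$ to equip $X^{3,\delta}_{{\mathcal D},\varphi}(\mathbb{R}\times\mathbb{R}^N)$ with an M-polyfold structure via the $\oplus$-method, and then take the open subset defined by $a\neq 0$. On this subset the fiber over $a$ is just $H^3(Z_a,\mathbb{R}\times\mathbb{R}^N)$ (no weights and no matching constants are needed, since the domains $Z_a$ are compact cylinders). The inherited $\oplus$ and $f$ maps restrict to sc-smooth maps with $\oplus\circ f=\mathrm{Id}$, and the stated formula for $\oplus$ is just the specialization of (\ref{OPL1}) to $a\neq 0$. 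Openness and surjectivity of $\oplus$ follow from the same argument as in Theorem \ref{thm1}, where openness on this stratum is clearer because the $\beta$-cut-off coefficients are smooth in $a\in\mathbb{B}\setminus\{0\}$.

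Next, I would address independence of $\delta$ and $\varphi$. The idea is that any element $(u^x,u^y)\in H^{3,\delta}_c({\mathcal D},\mathbb{R}\times\mathbb{R}^N)$ which actually appears in the image over $a\neq 0$ only contributes through the compact piece cut out by $\beta_a^x,\beta_a^y$; so one can replace $(u^x,u^y)$ by any pair supported near the two boundary annuli without changing $\oplus(a,u^x,u^y)$. Concretely, if $\delta,\delta'$ are two weight sequences and $\varphi,\varphi'$ two exponential gluing profiles, one constructs a bridging diagram using cut-offs supported where $\beta_a^x=1$ and $\beta_a^y=1$; then the transition between the two $\oplus$-constructions factors through ssc-smooth multiplication by compactly supported cut-offs, and the Fundamental Lemmas of Subsection \ref{FUND0} identify the transition as sc-smooth in both directions. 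This is essentially the content of Exercise \ref{EXERC101}.

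Finally, I would produce the sc-manifold atlas. Over any point $a_0\in\mathbb{B}\setminus\{0\}$, a neighborhood can be parametrized by $(a,w)$ with $w\in H^3(Z_{a_0},\mathbb{R}\times\mathbb{R}^N)$ via a smoothly varying identification of $Z_a$ with $Z_{a_0}$ (using the biholomorphic reparametrizations coming from the $\sigma^{C_a}$ and $h_{\widehat{x}},h_{\widehat{y}}$ of Appendix material). This gives local sc-manifold charts into $(\mathbb{B}\setminus\{0\})\times H^3(Z_{a_0},\mathbb{R}\times\mathbb{R}^N)$, whose overlap maps are level-wise smooth by the classical Eliasson-type results cited in \cite{El}, and hence sc-smooth. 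The identity from the sc-manifold structure to the M-polyfold structure is then sc-smooth in both directions because its composition with $f$ (respectively $\oplus$) is ssc-smooth by the same reparametrization argument; the main technical point, and the place I expect the work to concentrate, is verifying that the chart transitions remain sc-smooth uniformly as $a\to 0$ from within $\mathbb{B}\setminus\{0\}$, which is precisely where the Fundamental Lemmas of Subsection \ref{FUND0} must be invoked to absorb the $R(a)$-dependent shifts in the gluing formula.
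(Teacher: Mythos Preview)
Your approach matches the paper's: this \textsc{Fact} is not given an independent proof in the paper but is stated as a recall of Theorem~\ref{thm1}, Remarks~\ref{REMdotX} and~\ref{REMARK2.11}, and Exercise~\ref{EXERC101}, which is exactly the route you take. Your first two paragraphs are correct and in line with how the paper intends the argument to go.

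There is one minor misconception in your final paragraph. You identify the ``main technical point'' as verifying sc-smoothness of chart transitions \emph{uniformly as $a\to 0$ from within $\mathbb{B}\setminus\{0\}$}, and say this is where the Fundamental Lemmas of Subsection~\ref{FUND0} must be invoked. This is not the case: on $\mathbb{B}\setminus\{0\}$ you work locally, and locally $|a|$ is bounded away from $0$, so $R=\varphi(|a|)$ stays bounded and all the $R(a)$-dependent shifts and cut-offs are classically smooth in $a$. The Fundamental Lemmas are designed precisely to handle the passage through $a=0$ (where $R\to\infty$), and that is exactly what is \emph{absent} here. The sc-manifold atlas and the comparison with the induced M-polyfold structure are therefore handled by level-wise classical smoothness in the sense of \cite{El}, not by the Fundamental Lemmas. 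This is in fact the point of Remark~\ref{REMARK2.11} and Exercise~\ref{EXERC101}(4): away from $a=0$ the construction is classical, and the M-polyfold structure there is nothing more than an sc-manifold.
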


By construction $\mathfrak{X}=(0,1)\times \dot{X}^3_{{\mathcal
  D}}({\mathbb R}\times {\mathbb R}^N)$ and in view of the previous fact it
  has a M-polyfold structure coming from a sc-manifold structure. 
We always view $\mathfrak{X}$ as being equipped with this M-polyfold
  structure.
Clearly the structure on $\mathfrak{X}$ can also be obtained by the
  $\oplus$-method.  
The surjective map in this case is $Id_{(0,1)}\times \oplus:(0,1)\times (
  ({\mathbb B}\setminus \{0\})\times H^{3,\delta}({\mathcal D},{\mathbb
  R}\times {\mathbb R}^N))\rightarrow
  \mathfrak{X}$.

\begin{AIM}[{\bf 1}]
\jwf{[check this environment]}
We shall construct with ${\mathcal V}$ given in Definition \ref{DEFX3.7} a
  map $H: \mathfrak{X}\rightarrow   {\mathcal V}$\index{$H:
  \mathfrak{X}\rightarrow   {\mathcal V}$} preserving the fiber over
  $(0,1)$, i.e. fitting into the commutative diagram
  $$
    \begin{CD}
    \mathfrak{X} @> H>> {\mathcal V}\\
    @V pr_1 VV   @V \bar{r} VV\\
    (0,1) @>>>[0,1),
    \end{CD}
    $$
  such that $H$ is sc-smooth (for the already existing M-polyfold
  structure on $\mathfrak{X}$) and $\boldsymbol{\bar{\oplus}}\circ H =
  Id_{\mathfrak{X}}$, where $\boldsymbol{\bar{\oplus}}$ is introduced
  in Definition \ref{DEF3.15}.
\end{AIM}

We fix a representative $\gamma_0$ in $[\gamma]$, a decoration $\wh{x}_0$,
  and consider with $c^x=0$ the associated $\wt{q}_{0,\wh{x}_0}^{\gamma_0}$
  defined in (\ref{EQNC16}).
Given $(r,\wt{w})\in \mathfrak{X}$ let $a=a(\wt{w})$, $R=\varphi(|a|)$, 
  and write $a=|a|\cdot [\wh{x}_0,\wh{y}]$.  
We define $H(r,\wt{w})=(r,(\wt{u}^x,[\wh{x}_0,\wh{y}],\wt{u}^y))$,  where
  \begin{eqnarray}\label{eqnrr7.5}
    \wt{u}^x(z) &=&  \wt{q}^{\gamma_0}_{0,\wh{x}_0} (z)+
    \beta^x_{a,-2}(z)\cdot\left( \wt{w}(z,z')
    -\wt{q}^{\gamma_0}_{0,\wh{x}_0}(z)\right),
    \end{eqnarray}
  with $z=\sigma_{\wh{x}_0}^+(s,t)$.  
Then the asymptotic constant of $\wt{u}^x$ is $c^x=0$ and the
$\wh{x}_0$-directional limit is $\gamma_0(0)$.  
Define $c^y$, a function of $(r,\wt{w})$, by the equation
  $$
    T\cdot \varphi(|a|)=\varphi(r)+c^y
    $$
  and put
\begin{eqnarray}\label{eqnrr7.6}
  \ \ \ \ \ \ \ \ \ \ \wt{u}^y(z') &=&\wt{q}^{\gamma_0}_{c^y,\wh{y}}(z') +
  \beta^y_{a,-2}(z') \cdot  ((-\varphi(r))\ast \wt{w}(z,z')
  -\wt{q}^{\gamma_0}_{c^y,\wh{y}}(z')).
  \end{eqnarray}
Considering the maps $\wt{u}^x$ and $\wt{u}^y$ we see that $\wt{u}^x$ near
  the lower boundary is $\wt{w}$ and near the upper boundary
  $\wt{u}^y$ is equal to $\wt{w}$ but moved downward by $-\varphi(r)$.
Further the $\wh{y}$-directional asymptotic limit is $\gamma_0(0)$ and the
  asymptotic constant is $c^y$.  
Consequently $(r,(\wt{u}^x,[\wh{x}_0,\wh{y}],\wt{u}^y))\in
  (0,1)\times Z_{\mathcal D}({\mathbb R}\times {\mathbb
  R}^N,\boldsymbol{\bar{\gamma}})$.
We note that by construction $T\cdot \varphi(|a|)=\varphi(r)+c^y-c^x$
  (recall $c^x=0$), and $a\in {\mathbb B}\setminus\{0\}$,
  which implies that $\varphi(r)+c^y-c^x>0$ and
  $\varphi^{-1}\left(\frac{1}{T}\cdot (\varphi(r)+c^y-c^x)\right)\in
  (0,1/4)$ and hence $H(r,\wt{w})\in {\mathcal V}$.  
From a trivial computation it follows that
  \begin{eqnarray}\label{pqn10.12}
    \boldsymbol{\bar{\oplus}}\circ H(r,\wt{w})=(r,\wt{w}).
    \end{eqnarray}
We also note that $(\wt{q}^{\gamma_0}_{0,\wh{x}_0},
  [\wh{x}_0,\wh{y}],\wt{q}^{\gamma_0}_{c^y,\wh{y}})\in
  \mathsf{S}_{\boldsymbol{\gamma}}$ and shall prove the  following result.

\begin{lemma}\label{LEMMN3.16}
The map $H:\mathfrak{X}\rightarrow {\mathcal V}$ has the following
  properties.
\begin{itemize}
  \item[(1)]  
  $\boldsymbol{\bar{\oplus}}\circ H= Id_{\mathfrak{X}}$. 
    In particular $\boldsymbol{\bar{\oplus}}: {\mathcal V}\rightarrow
    Y^{3,\delta_0}_{{\mathcal D},\varphi}({\mathbb R}\times {\mathbb
    R}^N,\boldsymbol{\gamma})$ is surjective.
  \item[(2)] 
  Given the (original) M-polyfold structure on $\mathfrak{X}$ the map
    $H:\mathfrak{X}\rightarrow {\mathcal V}$ is sc-smooth.
  \item[(3)] 
  The set $\dot{\mathcal V}=\boldsymbol{\bar{\oplus}}^{-1}(\mathfrak{X})$ is
    open and the map $H\circ\boldsymbol{\bar{\oplus}}: \dot{\mathcal
    V}\rightarrow {\mathcal V}$ is sc-smooth.  
  \item[(4)] 
  The map $\boldsymbol{\bar{\oplus}}:\dot{\mathcal V}\rightarrow
    \mathfrak{X}$ is sc-smooth, where $\mathfrak{X}$ is equipped with its
    original stucture.
  \end{itemize}
\end{lemma}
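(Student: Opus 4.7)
The proof breaks into the four claims, unified by common tools: the explicit $\oplus$-method characterization of the sc-manifold structure on $\mathfrak{X}$ (the Fact above, together with Remarks \ref{REMdotX} and \ref{REMARK2.11}); the $\Psi$-identification of Proposition \ref{PROPOSITION3.5} which makes $Z_{\mathcal D}({\mathbb R}\times{\mathbb R}^N,\boldsymbol{\bar\gamma})$ ssc-diffeomorphic to $\mathsf{S}_{\bm\gamma}\times H^{3,\delta_0}({\mathcal D},{\mathbb R}\times{\mathbb R}^N)$; the sc-smoothness of $\mathsf{a}$ from Lemma \ref{LEM3.14}; and the Fundamental Lemmas of Subsection \ref{FUND0} together with (M1)--(M5). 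Item (1) is a pointwise algebraic identity on $Z_a$: using the partition-of-unity relation $\beta^x_a(z)+\beta^y_a(z')=1$ on $\{z,z'\}\in Z_a$, the absorption identities $\beta^x_a\cdot\beta^x_{a,-2}=\beta^x_a$ (and its $y$-analog), and the key matching $T\varphi(|a|)=\varphi(r)+c^y$ which guarantees that shifting $\wt q^{\gamma_0}_{c^y,\wh y}$ upward by $\varphi(r)$ produces a standard map agreeing with $\wt q^{\gamma_0}_{0,\wh{x}_0}$ across the neck of $Z_a$, the ``standard-map'' terms cancel in the expansion of $\bar{\bm{\oplus}}\circ H(r,\wt w)$, leaving $\beta^x_a(z)\wt w(z,z')+\beta^y_a(z')\wt w(z,z')=\wt w(z,z')$.

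For item (2), since the sc-manifold structure on $\mathfrak{X}$ is characterized via $\mathrm{Id}_{(0,1)}\times\oplus$, and by Proposition \ref{PROPOSITION3.5} the target ${\mathcal V}$ is identified with an open subset of $[0,1)\times\mathsf{S}_{\bm\gamma}\times H^{3,\delta_0}$, it suffices to check sc-smoothness of the composite $\Psi^{-1}\circ H\circ(\mathrm{Id}\times\oplus)$. This composite produces: (i) the identity on $r$; (ii) the decorated gluing parameter extracted from $a$ together with the relation $T\varphi(|a|)=\varphi(r)+c^y$, classically smooth in $(r,|a|)$ and sc-smooth by Lemma \ref{LEM3.14}; and (iii) two remainder terms given by (\ref{eqnrr7.5})--(\ref{eqnrr7.6}), each of which is a sum of pieces of the form $\beta^x_{a,-2}(\cdot)\cdot \wt h^x(\cdot)$ (covered by (M4) / Fundamental Lemma I), cross-terms $\beta^x_{a,-2}(z)\cdot \wt h^y(z')$ with $z'$ a shifted function of $z$ and $a$ (covered by (M5) / Fundamental Lemma II), plus classically-smooth shifts by $\varphi(r)$ on the ${\mathbb R}$-factor of the target.

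Items (3) and (4) follow by the same strategy. The set $\dot{\mathcal V}=\mathrm{pr}_1^{-1}((0,1))\cap {\mathcal V}$ is open, so (3) reduces to the sc-smoothness of $H\circ\bar{\bm{\oplus}}$ which, in $\Psi$-coordinates on both source and target, is again a sum of expressions covered by the Fundamental Lemmas and (M1)--(M5), this time also invoking the middle-loop average (M2) to handle the averaging step implicit in $H$. For (4), using $\Psi$ on the source and the $\mathrm{Id}_{(0,1)}\times\oplus$ characterization of sc-smoothness into $\mathfrak{X}$, the claim reduces to sc-smoothness of the assignment $(r,a,(\wt u^x,[\wh x,\wh y],\wt u^y))\mapsto(r,a,\beta^x_a\cdot\wt u^x+\beta^y_a\cdot(\varphi(r)\ast\wt u^y))$; once $\wt u^x,\wt u^y$ are split through the $\dotplus$-decomposition into standard-map plus $H^{3,\delta}$-decay parts, this is another direct application of the Fundamental Lemmas. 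The main technical obstacle throughout is bookkeeping: the three coupled parameters $(r,a,c^y)$ linked by $T\varphi(|a|)=\varphi(r)+c^y$, the ${\mathbb R}$-shifts by $\varphi(r)$ on the target, and the interplay between $\beta^x_a$ and $\beta^x_{a,-2}$ at the two cut-off heights demand careful tracking before invoking each Fundamental Lemma, but no new analytical input is needed beyond the results of Subsection \ref{FUND0}.
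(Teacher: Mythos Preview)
Your approach matches the paper's: both reduce each item to the $\oplus$-characterization of $\mathfrak{X}$, the $\Psi$-identification of $\mathcal{V}$ via Proposition~\ref{PROPOSITION3.5}, Lemma~\ref{LEM3.14} for the parameter map, and then the Fundamental Lemmas of Subsection~\ref{FUND0}. Two small inaccuracies to fix: in item~(3) there is no ``averaging step implicit in $H$''---the map $H$ is defined in (\ref{eqnrr7.5})--(\ref{eqnrr7.6}) using the \emph{fixed} data $(\gamma_0,\wh{x}_0,c^x=0)$ with no middle-loop average (you are confusing $H$ with the later coretraction $K$), so (M2) plays no role here; and in item~(4) the correct reduction is to check sc-smoothness of $f\circ\boldsymbol{\bar{\oplus}}$ into $(0,1)\times\mathbb{B}\times H^{3,\delta}_c$ (Theorem~\ref{thm1}(4)), not to write down the glued map itself---your formula is the intermediate element of $\mathfrak{X}$, and one more composition with $f$ is needed before the Fundamental Lemmas apply.
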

%
\begin{proof}
{(1)} The elements in $\mathfrak{X}$ are in the image by the existence of
  $H$ and the property displayed in (\ref{pqn10.12}).
The elements in $\partial\mathfrak{Z}$ belong trivially to the image of
  $\boldsymbol{\bar{\oplus}}$. 
Hence $\bm{\bar{\oplus}}$ is surjective.\\

\noindent{(2)}  
By the definition of the original  M-polyfold structure on $\mathfrak{X}$
  we just need to show that
  $$
    H\circ (Id_{(0,1)}\times \oplus): (0,1)\times (({\mathbb
    B}\setminus\{0\})\times H^{3,\delta}_0({\mathcal D},{\mathbb R}\times
    {\mathbb R}^N)) \rightarrow {\mathcal V}
    $$
  is sc-smooth.  
Given $(r,(a,(\wt{h}^x,\wt{h}^y)))$ with $r\in (0,1)$ and $a\in {\mathbb
  B}\setminus\{0\}$ the element
  $$
    (r,(\wt{u}^x,[\wh{x},\wh{y}],\wt{u}^y))=H\circ (Id_{(0,1)}\times
    \oplus)(r,(a,(\wt{h}^x,\wt{h}^y)))\in {\mathcal V}
    $$
  is obtained as follows
  \begin{eqnarray*}
    \wt{u}^x(z) 
    &=&\wt{q}^{\gamma_0}_{0,\wh{x}_0}(z)\\
    &&+ \beta^x_{a,-2}(z)\cdot \left( \beta^x_a(z)\cdot
    \wt{h}^x(z)+\beta^y_a(z')\cdot \wt{h}^y(z')
    -\wt{q}^{\gamma_0}_{0,\wh{x}_0}(z)\right)\\
    &=:& \wt{q}^x(z)+\wt{w}^x(z).
    \end{eqnarray*}
Moreover,
  \begin{eqnarray*}
    \wt{u}^y(z')
    &=&\wt{q}^{\gamma_0}_{c^y,\wh{y}}(z') + \beta_{a,-2}^y(z')\cdot\\
    &&\ \ \ \left((-\varphi(r))\ast\left(\beta^x_a(z)\cdot
    \wt{h}^x(z)+\beta^y_a(z')\cdot
    \wt{h}^y(z')\right)-\wt{q}^{\gamma_0}_{c^y,\wh{y}}(z')\right)\\
    &=:&\wt{q}^y(z')+\wt{w}^y(z').
    \end{eqnarray*}
We note that with $a=|a|\cdot [\wh{x}_0,\wh{y}]$ the element
  $(\wt{q}^x,[\wh{x}_0,\wh{y}],\wt{q}^y)$ belongs to
  $\mathsf{S}_{\boldsymbol{\gamma}}$.
Recall that we always take $c^x=0$ and $c^y$ is determined by $T\cdot
  \varphi(|a|)=\varphi(r)+c^y$.
From this we see that the map, after having fixed $\wh{x}_0$ (as we did),
  $(r,a)\rightarrow (\wt{q}^x,[\wh{x}_0,\wh{y}],\wt{q}^y)$ is smooth.
The pair $(\wt{u}^x,\wt{u}^y)$ depends sc-smoothly on the input data which
  follows from the Fundamental Lemma, in Subsection  \ref{FUND0},
  together with the sc-smooth dependence of $-\beta^x_{a,-2}\cdot
  \wt{q}^{\gamma_0}_{0,\wh{x}_0}$ and $-\beta^y_{a,-2}\cdot
  \wt{q}^{\gamma_0}_{c^y,\wh{y}}$.
It is, of course, important that $a\in {\mathbb B}\setminus\{0\}$ and
  $r\in (0,1)$.\\
 
\noindent{(3)} 
The set $\dot{\mathcal V}$ is obviously open.
We consider $(r,(\wt{u}^x,[\wh{x},\wh{y}],\wt{u}^y))\in \dot{\mathcal V}$,
  which implies $r\in (0,1)$. 
We can write $\wt{u}^x =\wt{q}^{\gamma_0}_{c^x,\wt{x}}+\wt{h}^x$ and
  $\wt{u}^y=\wt{q}^{\gamma_0}_{c^y,\wh{y}}+\wt{h}^y$. 
It suffices to show that the map
$$
  ((c^x,\wh{x},c^y,\wh{y}),(\wt{h}^x,\wt{h}^y))\rightarrow
  H\circ\boldsymbol{\bar{\oplus}}(r,(\wt{q}^{\gamma_0}_{c^x,\wt{x}}+
  \wt{h}^x,[\wh{x},\wh{y}],\wt{q}^{\gamma_0}_{c^y,\wh{y}}+\wt{h}^y))
  $$
  is sc-smooth.
From this data we obtain $a=|a|\cdot [\wh{x},\wh{y}]$, where $|a|$
  satisfies $T\cdot \varphi(|a|)=\varphi(r)+c^y-c^x$ and see that $a$
  depends sc-smoothly on the input. 
We obtain
  \begin{eqnarray*}
    \boldsymbol{\bar{\oplus}}(r,(\wt{u}^x,[\wh{x},\wh{y}],\wt{u}^y))(z,z')
    &=&\wt{q}^{\gamma_0,a}_{c^x,\wh{x}}(z,z') +
    \oplus_a(\wt{h}^x,\wt{h}^y)(z,z')
    \end{eqnarray*}
  which is defined on $Z_a$.  
For the construction of $H$ we have fixed a decoration $\wh{x}_0$ and we
  note that given $[\wh{x},\wh{y}]$ there exists a unique
  $\wh{y}_0=\wh{y}_0([\wh{x},\wh{y}])$ such that
  $[\wh{x},\wh{y}]=[\wh{x}_0,\wh{y}_0]$. 
Moreover $T\cdot \varphi(|a|)=\varphi(r)+c^y-c^x$.
For the construction we take $c^x_0=0$ and $c^y_0$ satisfies consequently 
  $$
    c^y_0=c^y-c^x.
    $$
Applying $H$ we obtain $(\wt{w}^x,[\wh{x},\wh{y}],\wt{w}^y)$ via
  \begin{eqnarray*}
    \wt{w}^x (z)&=&
    \wt{q}^{\gamma_0}_{0,\wh{x}_0}(z)+\beta^x_{a,-2}(z)\cdot
    (\wt{q}^{\gamma_0}_{c^x,\wh{x}}(z,z') +
    \oplus_a(\wt{h}^x,\wt{h}^y)(z,z')-\wt{q}^{\gamma_0}_{0,\wh{x}}(z))\\
    &=:&  \wt{q}^{\gamma_0}_{0,\wh{x}_0}(z)+\wt{k}^x
    \end{eqnarray*}
  and 
  \begin{eqnarray*}
    \wt{w}^y(z')&=& \wt{q}^{\gamma_0}_{c^y-c^x,\wh{y}_0}(z')
    +\beta_{a,-2}(z')\cdot \\
    &&((-\varphi(r))\ast (\wt{q}^{\gamma_0}_{c^x,\wh{x}}(z,z') +
    \oplus_a(\wt{h}^x,\wt{h}^y)(z,z')-
    \wt{q}^{\gamma_0}_{c^y-c^x,\wh{y}_0}(z')))\\
    &=&\wt{q}^{\gamma_0}_{c^y-c^x,\wh{y}_0}(z')+\wt{k}^y(z').
    \end{eqnarray*}
It is clear that the map 
\begin{eqnarray*}
  &(0,1)\times {\mathbb R}\times {\mathbb S}_x\times {\mathbb R}\times
  {\mathbb S}_y\rightarrow
  \mathsf{S}_{\boldsymbol{\gamma}}:&\\
  &(r,(c^x,\wh{x},c^y,\wh{y}))
  \rightarrow
  (\wt{q}^{\gamma_0}_{0,\wh{x}_0},[\wh{x}_0,\wh{y}_0],
  \wt{q}^{\gamma_0}_{c^y-c^x,\wh{y}_0})&
  \end{eqnarray*}
  is smooth. 
With $a=a(r,(c^x,\wh{x},c^y,\wh{y}))$ being a smooth map we see that
  \begin{eqnarray*}
    &(0,1)\times {\mathbb R}\times {\mathbb S}_x\times {\mathbb R}\times
    {\mathbb S}_y\times H^{3,\delta}_0\rightarrow H^{3,\delta}_0:&\\
    &(r,(c^x,\wh{x},c^y,\wh{y}), (\wt{h}^x,\wt{h}^y))\rightarrow
    (\wt{k}^x,\wt{k}^y)&
    \end{eqnarray*}
is sc-smooth in view of the results  in Subsection \ref{FUND0}.\\

\noindent{(4)} 
By the definition of the M-polyfold structure on $\mathfrak{X}$ we need to
  show that $f\circ \boldsymbol{\bar{\oplus}}:\dot{\mathcal V} \rightarrow
  (0,1)\times H^{3,\delta}_0({\mathcal D},{\mathbb R}\times {\mathbb R}^N)$
  is sc-smooth.
Recall that $f$ is being constructed in the proof of Theorem \ref{thm1},
  see (\ref{Hf}).
It is important in this argument that the occuring values $r$ are
  different from $0$.
We leave this argument to the reader. 
It can be, after some mild computation, again reduced to an application
  of the results in Subsection \ref{FUND0}.
\end{proof}

Let us draw some of the consequences of the previous result.  
Denote by ${\mathcal T}$ the finest topology so that
  $\boldsymbol{\bar{\oplus}}:{\mathcal V}\rightarrow
  Y^{3,\delta_0}_{{\mathcal D},\varphi}=\partial\mathfrak{Z} \coprod
  \mathfrak{X}$ is continuous, i.e. ${\mathcal T}$ is the quotient
  topology.
The map
  \begin{eqnarray}\label{rho-def}
    \rho:=H\circ\boldsymbol{\bar{\oplus}}:\dot{\mathcal V}\rightarrow
    {\mathcal V}
    \end{eqnarray}
  has image in $\dot{\mathcal V}$ and is an sc-smooth retraction.
Abbreviating $O_{\dot{\mathcal V}}:=\rho(\dot{\mathcal V})$ the map
  $$
    \boldsymbol{\bar{\oplus}}:O_{\dot{\mathcal V}}\rightarrow \mathfrak{X}
    $$
  is a bijection and defines a metrizable topology on $\mathfrak{X}$,
  denoted by ${\mathcal T}_{\mathfrak{X}}'$, and it  defines uniquely a
  (possibly new)  M-polyfold structure for $(\mathfrak{X},{\mathcal
  T}_{\mathfrak{X}}')$ for which this map is a sc-diffeomorphism.
We denote the set $\mathfrak{X}$, equipped with this M-polyfold structure
  and topology by $\mathfrak{X}'$.
Hence we obtain the  tautological result

\begin{lemma}\label{LOP:}
The map $\boldsymbol{\bar{\oplus}}:O_{\dot{\mathcal V}}\rightarrow
  \mathfrak{X}'$ is a sc-diffeomorphism and $H:\mathfrak{X}'\rightarrow
  O_{\dot{\mathcal V}}$ is the inverse sc-diffeomorphism.
\end{lemma}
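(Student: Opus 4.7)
The proof is essentially tautological, but it requires three pieces of bookkeeping: checking that $H$ really does land in $O_{\dot{\mathcal V}}$, checking that the two restricted maps are mutually inverse, and then unpacking the definition of the M-polyfold structure on $\mathfrak{X}'$. The plan is to do these in order.

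First I would verify the set-theoretic claim that $H(\mathfrak{X}) \subset O_{\dot{\mathcal V}}$. Fix $x \in \mathfrak{X}$. By Lemma \ref{LEMMN3.16}(1), $\bm{\bar{\oplus}}(H(x)) = x \in \mathfrak{X}$, hence $H(x) \in \bm{\bar{\oplus}}^{-1}(\mathfrak{X}) = \dot{\mathcal V}$. Applying $\rho = H \circ \bm{\bar{\oplus}}$ then gives $\rho(H(x)) = H(\bm{\bar{\oplus}}(H(x))) = H(x)$, so $H(x)$ is a fixed point of the sc-smooth retraction $\rho$. Since the image of any idempotent equals its fixed-point set, $H(x) \in \rho(\dot{\mathcal V}) = O_{\dot{\mathcal V}}$, and $H$ co-restricts to a map $\mathfrak{X} \to O_{\dot{\mathcal V}}$ as required.

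Second, I would check the mutual-inverse relations between the restrictions. The identity $\bm{\bar{\oplus}} \circ H = \mathrm{Id}_{\mathfrak{X}}$ is Lemma \ref{LEMMN3.16}(1). In the other direction, for $v \in O_{\dot{\mathcal V}} = \rho(\dot{\mathcal V})$, the retraction property gives $\rho(v) = v$, which unfolds to $H(\bm{\bar{\oplus}}(v)) = v$. Hence $\bm{\bar{\oplus}}\colon O_{\dot{\mathcal V}} \to \mathfrak{X}$ and $H\colon \mathfrak{X} \to O_{\dot{\mathcal V}}$ are mutually inverse set-theoretic bijections.

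Finally, the sc-smoothness statement is built in: the topology $\mathcal{T}'_{\mathfrak{X}}$ and the M-polyfold structure on $\mathfrak{X}'$ were declared, just above the lemma, to be the unique ones such that the bijection $\bm{\bar{\oplus}}\colon O_{\dot{\mathcal V}} \to \mathfrak{X}$ becomes a sc-diffeomorphism. Combined with the previous paragraph, this forces $H\colon \mathfrak{X}' \to O_{\dot{\mathcal V}}$ to be the inverse sc-diffeomorphism. There is no genuine obstacle; the only subtle point is keeping straight that $O_{\dot{\mathcal V}}$ is simultaneously $\rho(\dot{\mathcal V})$ and $\mathrm{Fix}(\rho)$, so that all the analytic content has already been absorbed into Lemma \ref{LEMMN3.16} and the definition of $\mathfrak{X}'$.
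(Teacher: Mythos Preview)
Your proof is correct and matches the paper's treatment: the paper explicitly calls this ``the tautological result'' and gives no proof beyond the preceding definitions, so your careful unpacking of why $H$ lands in $O_{\dot{\mathcal V}}$, why the two maps are mutually inverse, and why sc-smoothness is built into the definition of $\mathfrak{X}'$ is exactly the intended (if unwritten) argument.
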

%
Since $\bm{\bar{\oplus}}^{-1}(\mathfrak{X})=\dot{\mathcal V}$ is open in
  ${\mathcal V}$ we see that $\mathfrak{X}\in {\mathcal T}$ is open.
Denote by ${\mathcal T}_{\mathfrak{X}}$ the restriction of ${\mathcal T}$
  to $\mathfrak{X}$.
Hence an element $U\in {\mathcal T}_{\mathfrak{X}}$ has the form
  $$
    U= \mathfrak{X}\cap V\ \ \text{where}\
    \boldsymbol{\bar{\oplus}}^{-1}(V)\ \text{is open},\ V\subset
    Y^{3,\delta_0}_{{\mathcal D},\varphi}.
    $$
Since $\mathfrak{X}\in {\mathcal T}$ the restriction of ${\mathcal T}$ to
  $\mathfrak{X}$ consists of all subsets $U$ of $\mathfrak{X}$ for which
  $\boldsymbol{\bar{\oplus}}^{-1}(U)$ is open in ${\mathcal V}$.

\begin{proposition}\label{ABER}
As M-polyfolds $\mathfrak{X}=\mathfrak{X}'$.  
In particular this implies that the restriction of ${\mathcal T}$ to
  $\mathfrak{X}$ denoted by ${\mathcal T}_{\mathfrak{X}}$ satisfies
  ${\mathcal T}_{\mathfrak{X}}={\mathcal T}'_{\mathfrak{X}}$.
\end{proposition}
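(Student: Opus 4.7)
The plan is to show that the identity map $\mathrm{Id}\colon \mathfrak{X}\to \mathfrak{X}'$ is an sc-diffeomorphism; since a M-polyfold structure determines the underlying topology, this will simultaneously yield $\mathcal{T}_{\mathfrak{X}}=\mathcal{T}'_{\mathfrak{X}}$. Both M-polyfold structures live on the same set, and all the needed sc-smoothness statements relating the two are already packaged in Lemma \ref{LEMMN3.16} and Lemma \ref{LOP:}; the proof is essentially a diagram chase.

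For sc-smoothness of $\mathrm{Id}\colon \mathfrak{X}\to \mathfrak{X}'$ I would factor it as
\[
\mathfrak{X}\xrightarrow{\,H\,}O_{\dot{\mathcal V}}\xrightarrow{\,\boldsymbol{\bar\oplus}\,}\mathfrak{X}'.
\]
By item (1) of Lemma \ref{LEMMN3.16} this composition equals $\mathrm{Id}_{\mathfrak{X}}$ as a set map, and $H(\mathfrak{X})=H\circ\boldsymbol{\bar\oplus}(\dot{\mathcal V})=\rho(\dot{\mathcal V})=O_{\dot{\mathcal V}}$, so the factorization is legitimate. Item (2) of Lemma \ref{LEMMN3.16} gives sc-smoothness of $H\colon \mathfrak{X}\to\mathcal{V}$, which restricts to $H\colon\mathfrak{X}\to O_{\dot{\mathcal V}}$ since $O_{\dot{\mathcal V}}$ is the image of the sc-smooth retraction $\rho$ (hence a sub-M-polyfold of $\mathcal{V}$), and Lemma \ref{LOP:} gives sc-smoothness of $\boldsymbol{\bar\oplus}\colon O_{\dot{\mathcal V}}\to\mathfrak{X}'$ by the very definition of the structure on $\mathfrak{X}'$.

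For sc-smoothness of the inverse $\mathrm{Id}\colon \mathfrak{X}'\to\mathfrak{X}$ I would factor as
\[
\mathfrak{X}'\xrightarrow{\,H\,}O_{\dot{\mathcal V}}\hookrightarrow\dot{\mathcal V}\xrightarrow{\,\boldsymbol{\bar\oplus}\,}\mathfrak{X},
\]
which again equals the identity on the level of sets. Lemma \ref{LOP:} says $H\colon\mathfrak{X}'\to O_{\dot{\mathcal V}}$ is an sc-diffeomorphism, the inclusion $O_{\dot{\mathcal V}}\hookrightarrow\dot{\mathcal V}$ is sc-smooth as the inclusion of a sub-M-polyfold, and item (4) of Lemma \ref{LEMMN3.16} asserts precisely that $\boldsymbol{\bar\oplus}\colon\dot{\mathcal V}\to\mathfrak{X}$ (with $\mathfrak{X}$ carrying its original structure) is sc-smooth. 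Composing proves the claim.

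Since $\mathrm{Id}$ is then an sc-diffeomorphism between the two M-polyfold structures on the set $\mathfrak{X}$, they coincide as M-polyfolds, and in particular induce the same topology, so $\mathcal{T}_{\mathfrak{X}}=\mathcal{T}'_{\mathfrak{X}}$. I do not expect a genuine obstacle here; the only mild bookkeeping point is verifying that $O_{\dot{\mathcal V}}=\rho(\dot{\mathcal V})$ naturally inherits a sub-M-polyfold structure from $\mathcal{V}$ and that this is the structure with respect to which the sc-diffeomorphism of Lemma \ref{LOP:} is formulated, but this is immediate from $\rho$ being an sc-smooth retraction by (\ref{rho-def}).
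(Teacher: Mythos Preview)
Your sc-diffeomorphism argument is correct and essentially identical to the paper's Lemma~\ref{LEM3.26}: the factorizations you write down are the same as the paper's (the paper writes $H^{-1}$ where you write $\boldsymbol{\bar\oplus}|_{O_{\dot{\mathcal V}}}$, but these agree by Lemma~\ref{LOP:}). So the main claim $\mathfrak{X}=\mathfrak{X}'$ as M-polyfolds is handled just as in the paper.

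There is, however, a small slip in your deduction of ${\mathcal T}_{\mathfrak{X}}={\mathcal T}'_{\mathfrak{X}}$. An sc-diffeomorphism between the two M-polyfold structures shows that the \emph{original} topology on $\mathfrak{X}$ equals ${\mathcal T}'_{\mathfrak{X}}$; but ${\mathcal T}_{\mathfrak{X}}$ is defined as the restriction of the quotient topology ${\mathcal T}$ from $\boldsymbol{\bar\oplus}:{\mathcal V}\to Y$, which is not a priori the original topology on $\mathfrak{X}$. The paper addresses this with a separate short argument (Lemma~\ref{LEM3.25}), proving ${\mathcal T}_{\mathfrak{X}}={\mathcal T}'_{\mathfrak{X}}$ directly from the definition of the quotient topology. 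Your argument can be patched in one line: if $U\in{\mathcal T}_{\mathfrak{X}}$ then $\boldsymbol{\bar\oplus}^{-1}(U)\subset\dot{\mathcal V}$ is open, hence $U=(\boldsymbol{\bar\oplus}\circ H)^{-1}(U)=H^{-1}(\boldsymbol{\bar\oplus}^{-1}(U))$ is open in the original topology by continuity of $H$ (Lemma~\ref{LEMMN3.16}(2)); conversely, continuity of $\boldsymbol{\bar\oplus}:\dot{\mathcal V}\to\mathfrak{X}$ for the original topology (Lemma~\ref{LEMMN3.16}(4)) gives the other inclusion. With this added, your proof and the paper's coincide.
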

%
\begin{proof}
We shall derive the proposition via the following two lemmata.

\begin{lemma}\label{LEM3.25}
The restriction ${\mathcal T}_{\mathfrak{X}}$  of ${\mathcal T}$ to
  $\mathfrak{X}$ is ${\mathcal T}_{\mathfrak{X}}'$.
\end{lemma}
%
\begin{proof}
Indeed, if  $U\in {\mathcal T}_{\mathfrak{X}}$ then
  $\boldsymbol{\bar{\oplus}}^{-1}(U)$ is open.
Pick $u\in U$ and consider $H(u)\in \boldsymbol{\bar{\oplus}}^{-1}(U)$ and
  note that $H(u)\in  O_{\dot{\mathcal V}}$.
We find an open neighborhood $W$  of $H(u)$ contained in
  $\boldsymbol{\bar{\oplus}}^{-1}(U)$.
Then $\boldsymbol{\bar{\oplus}}(W\cap O_{\dot{\mathcal V}})$ is open for
  ${\mathcal T}_{\mathfrak{X}}'$ and contained in $U$.  
This shows that $U$ can be written as union of elements of  ${\mathcal
  T}_{\mathfrak{X}}'$ and therefore $U\in  {\mathcal T}_{\mathfrak{X}}'$.

If $U\in  {\mathcal T}_{\mathfrak{X}}'$, since the map
  $\bm{\bar{\oplus}}:\dot{\mathcal V}\rightarrow \mathfrak{X}'$ is
  sc-smooth it is also continuous for ${\mathcal T}_{\mathfrak{X}}'$ and
  therefore $\bm{\bar{\oplus}}^{-1}(U)$ is open.
This shows hat $U\in {\mathcal T}_{\mathfrak{X}}$.  
\end{proof}
%

\begin{lemma}\label{LEM3.26}
The identity maps $I: \mathfrak{X}\rightarrow \mathfrak{X}'$ and
  $J:\mathfrak{X}'\rightarrow \mathfrak{X}$ are sc-smooth.
\end{lemma}
%
\begin{proof} 
As we have established  $H:\mathfrak{X}' \rightarrow  O_{\dot{\mathcal
  V}}$ is an sc-diffeomorphism and $H:\mathfrak{X}\rightarrow \dot{\mathcal
  V}$ is sc-smooth. 
It follows that $I:\mathfrak{X}\rightarrow \mathfrak{X}'$ can be written
  as  the composition of sc-smooth maps
  $$
    \mathfrak{X}\xrightarrow{H} O_{\dot{\mathcal V}}\xrightarrow{H^{-1}}
    \mathfrak{X}'.
    $$
We can write $J$  as the composition
  $$
  \mathfrak{X}' \xrightarrow{H}\dot{\mathcal V}
  \xrightarrow{\boldsymbol{\bar{\oplus}}} \mathfrak{X}.
  $$
The first map sc-smooth by the definition of the M-polyfolds structure
  $\mathfrak{X}'$ and the second map  is sc-smooth using Lemma
  \ref{LEMMN3.16} {(4)}.
\end{proof}
This completes the proof of the Proposition \ref{ABER}.
\end{proof}

In view of this discussion and particularly Lemmata \ref{LEM3.25} and
  \ref{LEM3.26} we obtain the following result.

\begin{theorem}\label{CORRX3.26}
With the map $\boldsymbol{\bar{\oplus}}:{\mathcal V}\rightarrow
  Y^{3,\delta_0}_{{\mathcal D},\varphi}({\mathbb R}\times {\mathbb
  R}^n,\boldsymbol{\gamma})$ as given in Definition \ref{DEF3.15} the
  following holds.
The restricted map
  $$
    \boldsymbol{\bar{\oplus}}:\dot{\mathcal V}\rightarrow \mathfrak{X}
    $$
  induces by the $\oplus$-method a M-polyfold structure on $\mathfrak{X}$
  together with a topology.
This topology is the same as ${\mathcal T}_{\mathfrak{X}}$ and the
  M-polyfold structure is the original one.
The map $H:\mathfrak{X}\rightarrow \dot{\mathcal V}$ constructed before
  Lemma \ref{LEMMN3.16} is sc-smooth for this M-polyfold structure and
  satisfies $\boldsymbol{\bar{\oplus}}\circ H=Id_{\mathfrak{X}}$.
\end{theorem}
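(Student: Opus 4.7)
The plan is to observe that Theorem \ref{CORRX3.26} is essentially a repackaging of Lemma \ref{LEMMN3.16}, Lemma \ref{LOP:}, and Proposition \ref{ABER}, and the proof amounts to assembling these previously established results. No new analytic input is needed beyond what was already used to prove sc-smoothness of $H$ and of the retraction $\rho = H\circ\boldsymbol{\bar{\oplus}}$.

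First I would verify that the data $(\boldsymbol{\bar{\oplus}}:\dot{\mathcal V}\to \mathfrak{X},\, H)$ fits the hypotheses of an $\oplus$-construction. Part (1) of Lemma \ref{LEMMN3.16} yields surjectivity of $\boldsymbol{\bar{\oplus}}$ together with the relation $\boldsymbol{\bar{\oplus}}\circ H = \mathrm{Id}_{\mathfrak{X}}$; part (4) gives sc-smoothness of $\boldsymbol{\bar{\oplus}}:\dot{\mathcal V}\to\mathfrak{X}$ (with respect to the original structure on $\mathfrak{X}$); part (3) gives that $\rho = H\circ\boldsymbol{\bar{\oplus}}$ is an sc-smooth self-map of $\dot{\mathcal V}$. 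A direct computation using $\boldsymbol{\bar{\oplus}}\circ H = \mathrm{Id}$ shows $\rho\circ\rho=\rho$, so $\rho$ is an sc-smooth retraction with image $O_{\dot{\mathcal V}}$. By the general $\oplus$-method, this endows $\mathfrak{X}$ with a metrizable quotient topology ${\mathcal T}'_{\mathfrak{X}}$ and an associated M-polyfold structure $\mathfrak{X}'$, for which $\boldsymbol{\bar{\oplus}}:O_{\dot{\mathcal V}}\to\mathfrak{X}'$ is by construction an sc-diffeomorphism with inverse $H$; this is precisely Lemma \ref{LOP:}.

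Next I would identify $\mathfrak{X}'$ with the original $\mathfrak{X}$, both topologically and as M-polyfolds. The topological coincidence ${\mathcal T}'_{\mathfrak{X}} = {\mathcal T}_{\mathfrak{X}}$ is Lemma \ref{LEM3.25}: every ${\mathcal T}_{\mathfrak{X}}$-open set is a union of images $\boldsymbol{\bar{\oplus}}(W\cap O_{\dot{\mathcal V}})$ (open in ${\mathcal T}'_{\mathfrak{X}}$ since $\boldsymbol{\bar{\oplus}}$ is a homeomorphism from $O_{\dot{\mathcal V}}$ to $\mathfrak{X}'$), and conversely any ${\mathcal T}'_{\mathfrak{X}}$-open set pulls back along the sc-smooth (hence continuous) $\boldsymbol{\bar{\oplus}}$ to an open set. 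The agreement as M-polyfolds follows from Lemma \ref{LEM3.26}: the identities $I:\mathfrak{X}\to\mathfrak{X}'$ and $J:\mathfrak{X}'\to\mathfrak{X}$ both factor through $H$ and $\boldsymbol{\bar{\oplus}}$ and are therefore sc-smooth, making the identity an sc-diffeomorphism. This is the content of Proposition \ref{ABER}.

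Finally, sc-smoothness of $H:\mathfrak{X}\to\dot{\mathcal V}$ with respect to the (now identified) M-polyfold structure is exactly part (2) of Lemma \ref{LEMMN3.16}, and $\boldsymbol{\bar{\oplus}}\circ H = \mathrm{Id}_{\mathfrak{X}}$ is part (1). The substantive obstacle — verifying that transporting the M-polyfold structure from $O_{\dot{\mathcal V}}$ back to $\mathfrak{X}$ via $\boldsymbol{\bar{\oplus}}$ reproduces the original sc-manifold-induced structure on $\mathfrak{X} = (0,1)\times \dot{X}_{\mathcal D}({\mathbb R}\times{\mathbb R}^N)$ — was resolved in proving Lemma \ref{LEMMN3.16}(2) and (4), which depend on the Fundamental Lemma from Subsection \ref{FUND0} and crucially on the fact that on $\dot{\mathcal V}$ one has $r\in(0,1)$ and $a\in{\mathbb B}\setminus\{0\}$, so no degeneration occurs. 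With these pieces in hand, the proof is a matter of citation and assembly.
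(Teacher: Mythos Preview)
Your proposal is correct and follows essentially the same approach as the paper. The paper presents Theorem~\ref{CORRX3.26} as an immediate consequence of the preceding discussion, stating explicitly that ``In view of this discussion and particularly Lemmata~\ref{LEM3.25} and~\ref{LEM3.26} we obtain the following result''; your write-up simply makes the assembly of Lemma~\ref{LEMMN3.16}, Lemma~\ref{LOP:}, and Proposition~\ref{ABER} (which itself packages Lemmata~\ref{LEM3.25} and~\ref{LEM3.26}) more explicit than the paper does.
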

%
We note that as a consequence of Theorem \ref{CORRX3.26} we can derive
  Proposition \ref{PROPN3.15}.

This concludes the easier part of the proof that $\bm{\bar{\oplus}}$ is an
  $\oplus$-construction.
It also shows that on $\mathfrak{X}$ the new construction rediscovers the
  already existing structure coming from the nodal $\oplus$-construction.

%
\subsection{Averaging}
As we have already mentioned several times we need to construct two maps
  partially inverting $\bm{\bar{\oplus}}$.
One of them, $H$, we constructed in the previous subsection. 
In order to construct $K$, we have to introduce an averaging construction,
  which finds for a map defined on a long cylinder, which  also
  approximates a cylinder over a periodic orbit, the right
  parameterization of the latter among the preferred parameterizations.

\begin{figure}[h]
\begin{center}
\includegraphics[width=3.5cm]{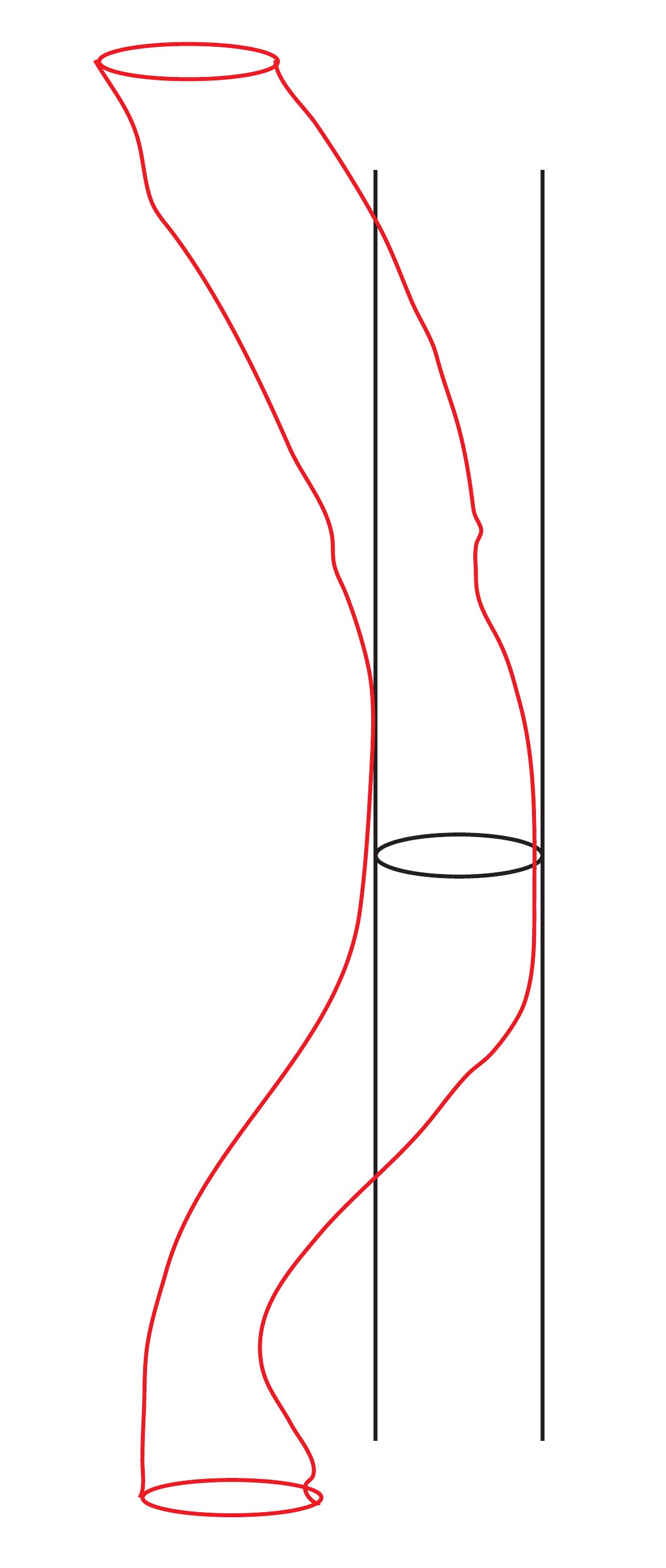}
\end{center}
\caption{Averaging of the approximation near the middle.}\label{FIG200}
\end{figure}

%
\subsubsection{The Averaging Map  $\mathsf{A}_{\Phi}$}\label{SSS3.2.3}
With $S^1={\mathbb R}/{\mathbb Z}$ we have the canonical periodic orbit
  $t\rightarrow t$ and its shifts $t\rightarrow t+c$.
Consider for the Abelian group $S^1$ the standard covering map 
  $$
    e:{\mathbb R}\rightarrow S^1:t\rightarrow t\ \text{mod}\ {\mathbb Z}.
    $$
In the following we shall consider (continuous) maps $q:S^1\rightarrow
  S^1$.
For such a $q$ take a continuous lift $\wt{q}:{\mathbb R}\rightarrow
  {\mathbb R}$ which then satisifies
  $$
    e\circ \wt{q}(t) =q(e(t))\ \text{for all}\ t\in {\mathbb R}.
    $$
We define
  $$
    \oint_{S^1} q(t)\cdot dt := \int_{[0,1]} \wt{q}(t) \cdot dt\
    \text{mod}\ {\mathbb Z}.
    $$
The definition does not depend on the choice of the lift. 
We shall also sometimes refer to this integral as the $S^1$-average or
  $\oint_{S^1}$-average of $q$.

\begin{definition}
With $\boldsymbol{{\gamma}}$ being a periodic orbit in ${\mathbb R}^N$, a
  {\bf good averaging coordinate}\index{good averaging coordinate}
  $\Phi:U\rightarrow S^1$ consists of an open neighborhood $U\subset
  {\mathbb R}^N$ of $\gamma(S^1)$ and a smooth map $\Phi:U\rightarrow S^1$
  such that  for  a suitable $\gamma_0\in [\gamma]$ it holds that
  $\Phi\circ\gamma_0(t)=t$ for $t\in S^1$. (Note that $\gamma_0$ is
  uniquely determined by $\Phi$.) 
\end{definition}
%
Given $\bm{\gamma}$ and $\Phi:U\rightarrow S^1$ assume that
  $w:Z_a\rightarrow {\mathbb R}^N$ is a continuous map, where $Z_a$ is
  obtained from the gluing parameter $a=|a|\cdot [\wh{x},\wh{y}]$ and the
  ordered disk pair $((D_x,D_y),(x,y))$.  
Next we need the notion of a `middle annulus' on $Z_a$.

\begin{definition}
Define $M^h_a$, the so-called {\bf middle annulus} \index{middle annulus}
  of $Z_a$ of width $2\cdot h$, for $h\in [0,15)$,  to consist of all
  $(z,z')\in Z_a$ such that $z=\sigma^{+,a}_{\wh{x}}(s,t)$ with $s\in
  [R/2-h,R/2+h]$, where $R=\varphi(|a|)$, and $t\in S^1$.
Since $|a|\in (0,1/4)$ it always holds that $M^h_a\subset Z_a$ provided
  $h\in [0,25)$.
For smaller $|a|$ it contains larger middle annuli.  
See also  Appendix \jwf{[broken reference]} 
\end{definition}
%

We impose the following two properties on ${w}:Z_a\rightarrow {\mathbb
  R}^N$:
  \begin{itemize}
    \item[(1)]   
    ${w}(M^3_a)\subset U$.
    \item[(2)] 
    For a decoration $\wh{x}$ the map $S^1\rightarrow S^1: t\rightarrow
      \Phi\circ w\circ \sigma_{\wh{x}}^{+,a}(R/2,t)
      $ has degree $k$.   
    Of course, the degree does not depend on the decoration of $x$.
    \end{itemize}
The $\oint_{S^1}$-average of the map in (2) depends on the choice of
  $\wh{x}$, but we shall introduce later on  new expressions which do not,
  and which carriy nontrivial information. For this reason we need to
  understand fully certain dependencies.

\begin{lemma}\label{THETAtau}
Let $\theta=e^{2\pi i\tau}$ for $\tau\in [0,1)$. 
Then mod ${\mathbb Z}$ we have the identity
  $$
  \oint_{S^1} \Phi\circ w\circ\sigma_{\theta\cdot\wh{x}}^{+,a}(R/2,t)\cdot
  dt = \oint_{S^1} \Phi\circ w\circ\sigma_{\wh{x}}^{+,a}(R/2,t)\cdot dt -
  k\tau.
  $$
\end{lemma}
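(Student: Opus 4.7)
The plan is to reduce the claim to a simple shift-of-variable computation that uses (i) the compatibility of polar coordinates with rotations of decorations, and (ii) the degree-$k$ hypothesis together with the behavior of lifts under a unit shift.

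First I would rewrite the integrand on the left-hand side using the identity already derived earlier in the paper, namely $h_{e^{2\pi i\tau}\widehat{x}} = e^{-2\pi i\tau} h_{\widehat{x}}$, which gives
\begin{align*}
\sigma_{\theta\cdot \widehat{x}}^{+,a}(R/2, t)
= h_{\theta\widehat{x}}^{-1}\bigl(e^{-2\pi(R/2+it)}\bigr)
= h_{\widehat{x}}^{-1}\bigl(e^{-2\pi(R/2 + i(t-\tau))}\bigr)
= \sigma_{\widehat{x}}^{+,a}(R/2, t-\tau).
\end{align*}
Thus, writing $f(t) := \Phi\circ w\circ \sigma_{\widehat{x}}^{+,a}(R/2, t)$, the integrand on the left becomes simply $f(t-\tau)$. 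By hypothesis $f\colon S^1\to S^1$ has degree $k$, so any continuous lift $\tilde f\colon \mathbb{R}\to \mathbb{R}$ of $f$ satisfies $\tilde f(t+1) = \tilde f(t) + k$, and $t\mapsto \tilde f(t-\tau)$ is a lift of $t\mapsto f(t-\tau)$.

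Next I would evaluate the two $\oint_{S^1}$-averages by integrating these lifts over $[0,1]$ and taking the result mod $\mathbb{Z}$. The change of variables $s = t-\tau$ yields
\begin{equation*}
\int_0^1 \tilde f(t-\tau)\, dt = \int_{-\tau}^{1-\tau} \tilde f(s)\, ds
= \int_0^1 \tilde f(s)\, ds + \int_{-\tau}^{0}\tilde f(s)\, ds - \int_{1-\tau}^{1}\tilde f(s)\, ds.
\end{equation*}
In the first correction term I would substitute $s' = s+1$ and use the translation property $\tilde f(s' - 1) = \tilde f(s') - k$ to obtain
\begin{equation*}
\int_{-\tau}^{0} \tilde f(s)\, ds = \int_{1-\tau}^{1} \tilde f(s')\, ds' - k\tau.
\end{equation*}
The two integrals over $[1-\tau, 1]$ cancel, leaving $\int_0^1 \tilde f(s)\, ds - k\tau$. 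Reducing mod $\mathbb{Z}$ yields exactly the asserted identity.

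There is no real obstacle here: everything reduces to the polar-coordinate transformation law under a rotation of $\widehat{x}$ and a one-line computation for the winding of a degree-$k$ circle map under a shift of argument. The only point to be a bit careful with is that the identity is stated mod $\mathbb{Z}$, so the choice of lift is immaterial and the $-k\tau$ correction comes cleanly from $\tilde f(s+1) = \tilde f(s) + k$.
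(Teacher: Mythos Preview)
Your proof is correct and follows essentially the same route as the paper: both establish the shift identity $\sigma_{\theta\cdot\widehat{x}}^{+,a}(R/2,t)=\sigma_{\widehat{x}}^{+,a}(R/2,t-\tau)$ from the transformation law for $h_{\widehat{x}}$, and then carry out the same change-of-variables computation using $\tilde f(t+1)=\tilde f(t)+k$ for a lift of a degree-$k$ circle map. One tiny notational point: in your first display the intermediate expressions $h_{\theta\widehat{x}}^{-1}(\cdots)$ lie in $D_x$ rather than in $Z_a$, so strictly speaking you have computed the shift for $\sigma^+_{\widehat{x}}$; since the $D_x$-component determines the point of $Z_a$ via $\pi_x^a$, the conclusion for $\sigma^{+,a}_{\widehat{x}}$ follows immediately.
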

%
\begin{proof}
We note that $\sigma^+_{\theta\cdot
  \wh{x}}(s,t+\tau)=\sigma_{\wh{x}}^+(s,t)$.
We define 
  $$
    \alpha(t):= \Phi\circ w\circ\sigma_{\theta\cdot\wh{x}}^{+,a}(R/2,t)
    $$
  and consequently
  \begin{eqnarray*}
    \alpha(t+\tau)=\Phi\circ
    w\circ\sigma_{\theta\cdot\wh{x}}^{+,a}(R/2,t+\tau)= \Phi\circ
    w\circ\sigma_{\wh{x}}^{+,a}(R/2,t).
    \end{eqnarray*} 
Then we compute mod ${\mathbb Z}$
  \begin{eqnarray*}
    &&\oint_{S^1} \alpha(t)\cdot dt =\int_0^1 \wt{\alpha}(t)\cdot dt \ \
    \text{mod}\ \ {\mathbb Z}\\
    &=& \int_{-\tau}^{1-\tau} \wt{\alpha}(t+\tau)\cdot dt \ \ \text{mod}\
    \ {\mathbb Z}\\
    &=&\int_{-\tau}^0 \wt{\alpha}(t+\tau)\cdot dt +
    \int_{0}^{1-\tau}\wt{\alpha}(t+\tau)\cdot dt \ \ \text{mod}\ \
    {\mathbb Z}\\
    &=&\int_{1-\tau}^1( \wt{\alpha}(t+\tau)-k)\cdot dt +
    \int_0^{1-\tau}\wt{\alpha}(t+\tau)\cdot dt \ \
    \text{mod}\ \ {\mathbb Z}\\
    &=&\int_0^1\wt{\alpha}(t+\tau)\cdot dt -k\cdot\tau \ \ \text{mod}\ \
    {\mathbb Z}\\
    &=&\oint_{S^1} \alpha(t+\tau) \cdot dt -k\cdot \tau \ \ \text{mod}\ \
    {\mathbb Z}.
    \end{eqnarray*}
\end{proof}

Given $\Phi$ there exists a unique $\gamma_0$ satisfying
  $\Phi\circ\gamma_0(t)=t$ mod $1$.
Every other $\gamma\in [\gamma_0]$ can be written as
  $\gamma(t)=\gamma_0(t+c)$ for some $c\in S^1$ and we abbreviate
  $\gamma_c(t)=\gamma_0(t+c)$ and compute

\begin{eqnarray}\label{I^4}
  &&\oint_{S^1} \Phi\circ \gamma_c(kt)\cdot dt\\
  &=& \oint_{S^1} \Phi\circ\gamma_0(kt+c)= \oint_{S^1}  (kt+c\ \text{mod}\
  {\mathbb Z})\cdot dt\nonumber\\
  &=&\int_{0}^1 (kt+c)\cdot dt\ \text{mod}\ {\mathbb Z}= c + k/2\
  \text{mod}\ {\mathbb Z}.\nonumber
  \end{eqnarray}
We extend the discussion from the previous Subsection \ref{CORETRH} to
  also deal with data of the form
  $(0,(\wt{w}^x,[\wh{x},\wh{y}],\wt{w}^y))\in \partial\mathfrak{Z}$.
Write $\boldsymbol{\gamma}=([\gamma],T,k)$.

\begin{definition}\label{KLM3.33}
Having fixed a good averaging coordinate $\Phi:U\rightarrow S^1$ for
  $\boldsymbol{\gamma}$, the subset ${\mathcal W}$  of
  $Y^{3,\delta_0}_{{\mathcal D},\varphi}({\mathbb R}\times {\mathbb
  R}^N,\boldsymbol{\gamma})$ consists of the following elements, where as
  usual $\wt{w}=(b,w)$.
\begin{itemize}
  \item[(1)] 
  All $(0,(\wt{w}^x,[\wh{x},\wh{y}],\wt{w}^y))$.
  \item[(2)]  
  All $(r,\wt{w})$ with $r\in (0,1)$ so that $w|M^3_a$ has the image in
  $U$ and it holds for a suitable decoration $\wh{x}$ that for every $s\in
  [R/2-3,R/2+3]$, $R=R(|a|)=R(|a(\wt{w})|)$, the map
  $$
  S^1\rightarrow S^1 :t\rightarrow \Phi\circ w\circ
  \sigma^{+,a}_{\wh{x}}(s,t)
  $$
  has degree $k$. 
  \end{itemize}
\end{definition}
%

\begin{lemma}\label{LEMMA3.34}
The preimage $\boldsymbol{\bar{\oplus}}^{-1}({\mathcal W})$ is open in
  ${\mathcal V}$.
\end{lemma}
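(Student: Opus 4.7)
The plan is to show openness at each point $(r_0,\tilde{u}_0)\in\boldsymbol{\bar{\oplus}}^{-1}({\mathcal W})$ separately by exhibiting an open neighborhood still contained in the preimage. The proof naturally splits into the two cases $r_0>0$ and $r_0=0$, since Definition \ref{KLM3.33} treats these cases on different footings. First I would record two facts to be used throughout: (i) the map $\boldsymbol{\bar{\oplus}}:{\mathcal V}\to Y^{3,\delta_0}_{{\mathcal D},\varphi}$ is continuous into the quotient topology ${\mathcal T}$, and, more importantly, when pulled back by the holomorphic polar coordinates $\sigma^{+,a}_{\hat{x}}$, the assignment $(r,\tilde{u})\mapsto \big(w\circ\sigma^{+,a}_{\hat{x}}\big)\big|_{[R/2-3,R/2+3]\times S^1}$ depends continuously on $(r,\tilde{u})\in{\mathcal V}$ in the $C^0$-topology on the fixed compact $[R/2-3,R/2+3]\times S^1$ (for any fixed $R=R_0$, and deforming with $R=\varphi(|a|)$); and (ii) the degree of a continuous loop $S^1\to S^1$ is locally constant in the $C^0$-topology.

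For $r_0>0$: then $a_0=\mathsf{a}(r_0,\tilde{u}_0)\in{\mathbb B}\setminus\{0\}$ by Lemma \ref{LEM3.14}, and the glued map $w_0:Z_{a_0}\to{\mathbb R}\times{\mathbb R}^N$ is honest. The condition $w(M^3_a)\subset U$ is open in $C^0$ because $U$ is open. For the degree condition, note that by continuity of $(s,t)\mapsto\Phi\circ w\circ\sigma^{+,a}_{\hat{x}}(s,t)$, on the connected interval $[R/2-3,R/2+3]$ the degree is automatically constant, so the condition reduces to degree $=k$ at a single slice, say $s=R/2$, and this is a $C^0$-open condition by (ii). Using the ssc-smoothness of $\mathsf{a}$ and the Fundamental Lemma applied to the definition of $\boldsymbol{\bar{\oplus}}$, both conditions in Definition \ref{KLM3.33}(2) persist on a full neighborhood of $(r_0,\tilde{u}_0)$ in ${\mathcal V}$.

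For $r_0=0$: the point $(0,\tilde{u}_0)$ automatically satisfies Definition \ref{KLM3.33}(1), so it remains to show that all nearby $(r,\tilde{u})$ with $r>0$ land in ${\mathcal W}$. Pick a chart $\Psi^{-1}$ near $\tilde{u}_0$, so that $\tilde{u}^x=\tilde{q}^x+\tilde{h}^x$ with $\tilde{q}^x$ a standard map and $\tilde{h}^x\in H^{3,\delta_0}({\mathbb R}^+\times S^1,{\mathbb R}\times{\mathbb R}^N)$, and similarly for $\tilde{u}^y$. The key observation is that on the middle annulus $M^3_a$, which in $\sigma^{+,a}_{\hat{x}}$-coordinates is $[R/2-3,R/2+3]\times S^1$ with $R=\varphi(|a|)\to\infty$ as $|a|\to 0$, the glued map reduces to
\[
w\circ\sigma^{+,a}_{\hat{x}}(s,t)=\big(Ts+c^x,\gamma(kt+d^x)\big)+O\big(e^{-\delta_0 R/2}\|(\tilde{h}^x,\tilde{h}^y)\|_{3,\delta_0}\big)
\]
in $C^0$, by the $H^{3,\delta_0}\hookrightarrow C^0$ Sobolev embedding together with the exponential weight decay. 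Since $\gamma(S^1)\subset U$ and $U$ is open, this error estimate plus the continuous dependence of $(c^x,d^x)$ on $\tilde{u}$ shows $w(M^3_a)\subset U$ for all $(r,\tilde{u})$ sufficiently close to $(0,\tilde{u}_0)$. Likewise $\Phi\circ w\circ\sigma^{+,a}_{\hat{x}}(s,\cdot)$ is $C^0$-close to $\Phi\circ\gamma(k\cdot+d^x)$, which has degree $k$ by construction of $\Phi$; hence the degree condition holds on a neighborhood.

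The main obstacle is the uniformity statement in the $r_0=0$ case: we need the exponential remainder estimate above to hold uniformly in $\tilde{u}$ over a full ssc-open neighborhood of $\tilde{u}_0$, not just pointwise. This requires a small computation using that the $H^{3,\delta_0}$-norm of $\tilde{h}^x,\tilde{h}^y$ stays bounded on a neighborhood and that the $C^0$ operator norm of restriction to $[R/2-3,R/2+3]\times S^1$ into $H^{3,\delta_0}$ decays like $e^{-\delta_0 R/2}$; once this uniform estimate is in hand, combining the two cases produces the desired open neighborhood in ${\mathcal V}$ and the lemma follows.
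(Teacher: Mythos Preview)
Your argument is correct. The paper's own proof consists of the single word ``Obvious,'' so you have supplied precisely the routine verification the authors chose to omit: for $r_0>0$ the defining conditions of ${\mathcal W}$ are manifestly $C^0$-open, and for $r_0=0$ the exponential decay of $\wt{h}^x,\wt{h}^y$ forces the glued map on $M^3_a$ to be $C^0$-close to the standard cylinder over $\gamma$ once $R$ is large, giving both the image condition and the degree condition. Your uniformity remark is the right thing to check and is handled exactly as you indicate, by boundedness of the $H^{3,\delta_0}$-norms on a neighborhood combined with the weight factor $e^{-\delta_0 R/2}$.
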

%
\begin{proof}
Obvious.
\end{proof}

Assume that $(r,\wt{w})\in {\mathcal W}$ with $r\in (0,1)$ and write
  $a=|a|\cdot [\wh{x},\wh{y}]$.
We pick a representative $(\wh{x},\wh{y})\in {\mathbb S}_x\times {\mathbb
  S}_y$  and take the $\oint_{S^1}$-average, denoted by
  $d_{\wh{x}}(r,\wt{w})$
  \begin{eqnarray}\label{I^5}
    d_{\wh{x}}(r,\wt{w}):=\oint_{S^1} \Phi\circ w\circ
    \sigma^{+,a}_{\wh{x}}(R/2,t)\cdot dt\in S^1,
    \end{eqnarray}
  which will depend on $\wh{x}$ as described in Lemma \ref{THETAtau}. 
Here $R=\varphi(|a|)$ and $a=|a|\cdot [\wh{x},\wh{y}]$.
Then we define numbers $c^x(r,\wt{w})$ and $c^y(r,\wt{w})$, where
  $\wt{w}=(b,w)$, by
  \begin{eqnarray}\label{I^6}
    &&c^x(r,\wt{w}) :=\int_{S^1} b\circ \sigma^{+,a}_{\wh{x}}(R/2,t)\cdot
    dt  - \frac{1}{2}\cdot T\cdot \varphi(|a|),\\
    &&c^y(r,\wt{w}):= c^x(r,\wt{w}) +T\cdot \varphi(|a|)-
    \varphi(r)\nonumber\\
    &&\ \ \ \ \ \ \ = \int_{S^1} b\circ \sigma^{+,a}_{\wh{x}}(R/2,t)\cdot
    dt +\frac{1}{2}\cdot T\cdot \varphi(|a|)-\varphi(r).\nonumber
    \end{eqnarray}
We note that $d_{\wh{x}}(r,\wt{w})$ depends on the choice of $\wh{x}$, but
  that the two real numbers defined in (\ref{I^6}) do not depend on this
  choice.
Note that at this stage, given $(r,\wt{w})\in {\mathcal W}$ we can extract
  from the domain of $\wt{w}$ the domain gluing parameter $a=|a|\cdot
  [\wh{x},\wh{y}]$ and the averages as described above.
From this data we shall be able to construct some kind of approximation of
  $\wt{w}$ in $\mathsf{S}_{\bm{\gamma}}$.

\begin{lemma}
Given $(r,\wt{w})\in {\mathcal W}$ there exists a uniquely determined element $\wt{p}:=(\wt{p}^x,[\wh{x},\wh{y}],\wt{p}^y)\in \mathsf{S}_{\bm{\gamma}}$ 
characterized by the following properties, where we write $\wt{p}^x=(b^x,p^x)$ and $\wt{p}^y=(b^y,p^y)$, and $(\wh{x},\wh{y})$
is a representative of $[\wh{x},\wh{y}]$.
\begin{itemize}
\item[(1)] $c^x(\wt{p}) = c^x(r,\wt{w})$ and $c^y(\wt{p})=c^y(r,\wt{w})$.
\item[(2)] $\oint_{S^1} \Phi\circ p^x\circ \sigma^{+}_{\wh{x}}(s,t)\cdot dt=   d_{\wh{x}}(r,\wt{w})$ for all $s\in {\mathbb R}^+$.
\item[(3)]  $\oint_{S^1} \Phi\circ p^y\circ \sigma^{-}_{\wh{y}}(s',t')\cdot dt'  =  d_{\wh{x}}(r,\wt{w})$ for all
$s'\in {\mathbb R}^-$.
\end{itemize}

\end{lemma}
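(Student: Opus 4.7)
The plan is to reduce the problem to determining three scalar parameters — $c^x, c^y \in \mathbb{R}$ and a single phase $\tau\in S^1$ — by composing the lemma's requirements with the parametrization $\phi_{\gamma_0}\colon \wt{\Sigma}_{x,y}\to \mathsf{S}_{\bm{\gamma}}$ of Proposition \ref{PROPT8.84}, taking $\gamma_0$ to be the canonical element of $[\gamma]$ singled out by $\Phi\circ\gamma_0(t)=t$. Since the natural angle of $\wt{p}$ must equal $[\wh{x},\wh{y}]$ coming from $a(\wt{w})$ (so that the formulas in (2), (3) even make sense), and the lemma fixes a representative $(\wh{x},\wh{y})$ of this angle, I would write an arbitrary candidate $\wt{p}$ with this natural angle in the form
$$
\wt{p}=\phi_{\gamma_0}\bigl(c^x,\, e^{2\pi i\tau}\wh{x},\, c^y,\, e^{-2\pi i\tau}\wh{y}\bigr),\qquad (c^x,c^y,\tau)\in\mathbb{R}\times\mathbb{R}\times S^1,
$$
with choices of $\tau$ differing by $j/k$ producing the same element of $\mathsf{S}_{\bm{\gamma}}$ under the $\mathbb{Z}_k$-quotient.

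Property (1) then instantly pins down $c^x=c^x(r,\wt{w})$ and $c^y=c^y(r,\wt{w})$, hence also the first components $b^x,b^y$. To handle property (2), I would use the identity $\sigma^+_{e^{2\pi i\tau}\wh{x}}(s,t)=\sigma^+_{\wh{x}}(s,t-\tau)$ already established in the excerpt to rewrite the second component of $\wt{p}^x\circ\sigma^+_{\wh{x}}(s,t)$ as $\gamma_0(kt-k\tau)$, which is independent of $s$. Since $\Phi\circ\gamma_0(u)=u \bmod 1$, the same integration carried out in (\ref{I^4}) yields
$$
\oint_{S^1}\Phi\circ p^x\circ\sigma^+_{\wh{x}}(s,t)\cdot dt \;=\; \tfrac{k}{2}-k\tau \bmod \mathbb{Z},
$$
so that property (2) collapses to the single congruence $k\tau\equiv \tfrac{k}{2}-d_{\wh{x}}(r,\wt{w}) \bmod 1$. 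This has exactly $k$ solutions in $S^1$, forming a single $\mathbb{Z}_k$-orbit, and therefore determines $\wt{p}\in\mathsf{S}_{\bm{\gamma}}$ uniquely.

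Property (3) is then an automatic consistency check: the analogous calculation on the $y$-side, using $\sigma^-_{e^{-2\pi i\tau}\wh{y}}(s',t')=\sigma^-_{\wh{y}}(s',t'-\tau)$, gives $\oint_{S^1}\Phi\circ p^y\circ\sigma^-_{\wh{y}}(s',t')\cdot dt' = \tfrac{k}{2}-k\tau \bmod 1$, which by the solution of (2) equals $d_{\wh{x}}(r,\wt{w})$. The main subtlety — and the only step where I would tread carefully — is keeping the bookkeeping of $\wh{x}$-dependence clean: a change of representative by $e^{2\pi i\tau_0}$ shifts $d_{\wh{x}}(r,\wt{w})$ by $-k\tau_0$ via Lemma \ref{THETAtau} and simultaneously shifts the solved $\tau$ by $\tau_0$, so the resulting element of $\mathsf{S}_{\bm{\gamma}}$ is genuinely representative-independent. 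Once this matching is verified, existence, uniqueness, and the automatic validity of (3) are immediate from the computation above.
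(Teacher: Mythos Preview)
Your approach is correct and essentially the same as the paper's: both rely on the parametrization $\phi_{\gamma_0}$ of Proposition~\ref{PROPT8.84}, the integral computation (\ref{I^4}), and Lemma~\ref{THETAtau}. The paper splits the argument into an explicit existence construction (defining $p^x\circ\sigma^+_{\wh{x}}(s,t)=\gamma_0(kt+d_{\wh{x}}(r,\wt{w})-k/2)$ directly) followed by a separate uniqueness argument, whereas you handle both at once by solving for $(c^x,c^y,\tau)$ in the parametrized family; this is a harmless reorganization.

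One computational slip: from $\sigma^+_{e^{2\pi i\tau}\wh{x}}(s,t)=\sigma^+_{\wh{x}}(s,t-\tau)$ and $p^x\circ\sigma^+_{e^{2\pi i\tau}\wh{x}}(s,t)=\gamma_0(kt)$ you get $p^x\circ\sigma^+_{\wh{x}}(s,t)=\gamma_0(kt+k\tau)$, not $\gamma_0(kt-k\tau)$. Consequently the congruence should read $k\tau\equiv d_{\wh{x}}(r,\wt{w})-\tfrac{k}{2}\bmod 1$. This does not affect the logic (still $k$ solutions forming one $\mathbb{Z}_k$-orbit), and your verification that (3) is automatic and that the construction is independent of the representative $(\wh{x},\wh{y})$ goes through unchanged with the corrected sign.
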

%
\begin{proof}
We need to prove existence and uniqueness of the element $\wt{p}$. 
Let us note that the expressions in (2) and (3) on the right and left have
  the same dependencies with respect to $\wh{x}$ and $\wh{y}$, respectively,
  and consequently the equations do not depend on the choices made.\\

\noindent{\bf Existence:}
Denote by $\gamma_0\in [\gamma]$ the representative satisfying
  $\Phi\circ\gamma_0(t)=t$ for $t\in S^1$.
Introduce $\gamma_c$ for $c\in S^1$ by $\gamma_c(t)=\gamma_0(t+c)$.
Define $\wt{p}^x =(b^x,p^x)$ and $\wt{p}^y=(b^y,p^y)$ by 
  \begin{eqnarray}
    &\ \ \ \ \ \ b^x\circ\sigma_{\wh{x}}^+(s,t) = Ts +c^x(r,\wt{w}),\ \
    b^y\circ\sigma_{\wh{y}}^-(s',t')= Ts' +c^y(r,\wt{w})&
    \end{eqnarray}
  and
  \begin{eqnarray}
    p^x\circ \sigma^+_{\wh{x}}(s,t) &=&
    \gamma_{[d_{\wh{x}}(r,\wt{w})-(k/2)]}(kt)\\
    p^y\circ
    \sigma^-_{\wh{y}}(s',t')&=&\gamma_{[d_{\wh{x}}(r,\wt{w})-(k/2)]}(kt').\nonumber
    \end{eqnarray}
With these definitions the element
  $\wt{p}=(\wt{p}^x,[\wh{x},\wh{y}],\wt{p}^y)$ in fact belongs to
  $\mathsf{S}_{\bm{\gamma}}$. 
Namely the $\wh{x}$-directional limit of $p^x\circ\sigma^+_{\wh{x}}(s,t)$
  is given by 
  $$
    \text{lim}_{s\rightarrow
    \infty}p^x\circ\sigma^+_{\wh{x}}(s,0)=\gamma_0(d_{\wh{x}}(r,\wt{w})-k/2)
    $$
  and for $p^y$ the $\wh{y}$-directional limit by
  $$
    \text{lim}_{s'\rightarrow-\infty} p^y\circ \sigma^-_{\wh{y}}(s',0)=
    \gamma_0(d_{\wh{x}}(r,\wt{w})-k/2).
    $$
Hence the data is $[\wh{x},\wh{y}]$-matching.
The real asymptotic constants satisfy
  $$
    c^x(\wt{p})=c^x(r,\wt{w})\ \text{and}\  c^y(\wt{p})=c^y(r,\wt{w}).
    $$
We compute
  \begin{eqnarray}
    &\oint_{S^1} \Phi\circ p^x\circ \sigma_{\wh{x}}^+(s,t)\cdot dt
    =(\int_0^1 [kt + d_{\wh{x}}(r,\wt{w})-(k/2)]\cdot dt)_{/{\mathbb Z}}
    = d_{\wh{x}}(r,\wt{w})&\nonumber
    \end{eqnarray}
  and similarly
  \begin{eqnarray*}
    &\oint_{S^1} \Phi\circ p^y\circ \sigma_{\wh{y}}^+(s',t')\cdot dt'
    = d_{\wh{x}}(r,\wt{w}).\nonumber
    \end{eqnarray*}

\noindent{\bf Uniqueness:} 
Assume that $\wt{p}_1$ is another element in $\mathsf{S}_{\bm{\gamma}}$
  and $[\wh{x}_1,\wh{y}_1]=[\wh{x},\wh{y}]$ such that
  $$
    c^x(\wt{p}_1)=c^x(r,\wt{w})\ \text{and}\ \ c^y(\wt{p}_1)=c^y(r,\wt{w})
    $$
  and 
  \begin{eqnarray}
    &\oint_{S^1} \Phi\circ p^x_1\circ\sigma_{\wh{x}_1}^+(s,t)\cdot dt
    =d_{\wh{x}_1}(r,\wt{w})&\\
    &\oint_{S^1} \Phi\circ p^y_1\circ\sigma_{\wh{y}_1}^-(s',t')\cdot dt'
    =d_{\wh{x}_1}(r,\wt{w}).&\nonumber
    \end{eqnarray}
Then $c^x(\wt{p})=c^x(\wt{p}_1)$ and $c^y(\wt{p})=c^y(\wt{p}_1)$. 
We can write
  $$
  p_1^x\circ \sigma_{\wh{x}_1}^+(s,t) =\gamma_0(kt+c)
  $$
  for a suitable $c\in [0,1)$ and similarly $p^x\circ
  \sigma_{\wh{x}}^+(s,t)=\gamma_0(kt+c_0)$.
We compute (mod ${\mathbb Z}$) with $\theta=e^{2\pi i\tau}$, $\tau\in
  [0,1)$, and $\wh{x}_1=\theta\cdot\wh{x}$ and $\wh{y}_1=\theta^{-1}\cdot
  \wh{y}$  the following
  \begin{eqnarray*}
    && k/2 +c\ \text{mod}\ {\mathbb Z}=\oint_{S^1} ( kt+c\ \text{mod}\ \
    {\mathbb Z})\cdot dt\\
    &=&\oint_{S^1}\Phi\circ p^x_1\circ\sigma_{\wh{x}_1}^+(s,t)\cdot dt\\
    & =&d_{\wh{x}_1}(r,\wt{w})= d_{\wh{x}}(r,\wt{w})-k\tau\ \text{mod}\
    {\mathbb Z}\\
    &=& \oint_{S^1} \Phi\circ p^x\circ \sigma_{\wh{x}}^+(s,t)\cdot dt
    -k\tau\ \text{mod}\ {\mathbb Z}\\
    &=& \oint_{S^1} (kt+c_0\ \text{mod}\ {\mathbb Z})\cdot dt - k\tau\
    \text{mod}\ {\mathbb Z}\\
    &=&k/2 +c_0-k\tau\ \text{mod}\ {\mathbb Z}.
    \end{eqnarray*}
Consequently $c=c_0-k\tau$ mod ${\mathbb Z}$.  
Hence 
  \begin{eqnarray*}
    && p^x_1\circ \sigma^+_{\wh{x}_1}(s,t)=\gamma_0(kt+c)\\
    &=&\gamma_0(kt+c_0-k\tau)=\gamma_0(k(t-\tau) +c_0)\\
    &=&p^x\circ\sigma^+_{\wh{x}}(s,t-\tau)=p^x\circ
    \sigma^+_{\wh{x}_1}(s,t).
    \end{eqnarray*}
This shows that $p^x_1=p^x$ and similarly we can show that $p^y_1=p^y$.
This completes the proof of uniqueness.
\end{proof}

In view of this lemma we can now give  the definition of the averaging map
  $\mathsf{A}_{\Phi}$.

\begin{definition} \label{ewr3.37}
Given a good averaging coordinate  $\Phi:U\rightarrow S^1$  associated to
  $\bm{\gamma}$ the {\bf averaging map}\index{averaging map}
  $$
    \mathsf{A}_{\Phi}\index{$\mathsf{A}_{\Phi}$}:{\mathcal W}\rightarrow
    \mathsf{S}_{\bm{\gamma}}
    $$
  is defined as follows.  
For the special elements with $r=0$ we set
  $$ 
    \mathsf{A}_{\Phi}(0,(\wt{q}^x,[\wh{x},\wh{y}],
    \wt{q}^y)\dotplus(\wt{h}^x,\wt{h}^y))
    =(\wt{q}^x,[\wh{x},\wh{y}],\wt{q}^y)
    $$
For elements $(r,\wt{w})$ with $r\neq 0$ we define
  $$
    \mathsf{A}(r,\wt{w})=\wt{p},
    $$
  with $c^x(\wt{p})=c^x(r,\wt{w})$, $c^y(\wt{p})=c^y(r,\wt{w})$ and
  $$
    d_{\wh{x}}(r,\wt{w})= \oint_{S^1}\Phi\circ
    p^x\circ\sigma^+_{\wh{x}}(s,t)\cdot dt
    =\oint_{S^1}\Phi\circ p^y\circ\sigma^+_{\wh{y}}(s',t')\cdot dt'.
    $$
\end{definition}
%

\begin{remark}
$A_\Phi(r,\wt{w})$ gives an element
  $\wt{p}=(\wt{p}^x,[\wh{x},\wh{y}],\wt{p}^y)$ in $\mathsf{S}_{\gamma}$,  so
  that the   map $\bar{\oplus}(r,(\wt{p}^x,[\wh{x},\wh{y}],\wt{p}^y))$, is
  in some sense the  best approximation of  $\wt{w}$ by a simple type of
  maps.
\end{remark}
%

\begin{exercise}\label{exc300}
Prove for $(r,\wt{q})\in [0,1)\times \mathsf{S}_{\bm{\gamma}}$ the
  identity
  $$
    \wt{q}=\mathsf{A}_{\Phi}\circ \bm{\bar{\oplus}}(r,\wt{q}).
    $$
\end{exercise}
%
At this point, as the basic achievement of this subsection, we have
  constructed the averaging map $\mathsf{A}_{\Phi}$. \\
\begin{tcolorbox}
  $$
  \text{Averaging map}\  \mathsf{A}_{\Phi}:{\mathcal W}\rightarrow
  \mathsf{S}_{\bm{\gamma}}\ \text{associated to}\ \Phi:U\rightarrow S^1.
    $$
  \end{tcolorbox}

%
\subsubsection{Sc-Smoothness Properties of $\mathsf{A}_{\Phi}$}
The main goal of this subsection is the study of the sc-smoothness
  properties of $\mathsf{A}_{\Phi}$.
To begin our considerations define  the smooth manifold 
  $$
    \mathsf{M}= {\mathbb R}\times {\mathbb S}_{{\mathcal D}}\times
    S^1\times {\mathbb R}.
    $$
Here ${\mathbb S}_{\mathcal D}$ is the smooth manifold diffeomorphic to
  ${\mathbb S}^1$ consisting of the set of  all $[\wh{x},\wh{y}]$.
We fix $\wh{x}_0$  so that every element $[\wh{x}',\wh{y}'] \in {\mathbb
  S}_{{\mathcal D}}$ can be uniquely written as $[\wh{x}_0,\wh{y}]$.
Associated to this choice $\wh{x}_0$ we define a map
  $$
    \mathsf{R}_{\wh{x}_0}:\mathsf{M}\rightarrow \mathsf{S}_{\bm{\gamma}}
    $$
  by associating to $(c^x,[\wh{x}_0,\wh{y}],d,c^y)$ the element
  $$
    \wt{p}:=\mathsf{R}_{\wh{x}_0}(c^x,[\wh{x}_0,\wh{y}],d,c^y),
    $$
  defined by $\wt{p}=(\wt{p}^x,[\wh{x}_0,\wh{y}],\wt{p}^y)$ satisfying
  with $\wt{p}^x=(b^x,p^x)$ and $\wt{p}^y=(b^y,p^y)$
  $$
    b^x\circ \sigma^+_{\wh{x}_0}(s,t)=Ts+c^x\ \text{and}\ \ b^y\circ
    \sigma^-_{\wh{y}}(s',t')=Ts'+c^y
    $$
  and moreover
  \begin{eqnarray*}
    &\oint_{S^1}\Phi\circ p^x\circ \sigma^+_{\wh{x}_0}(s,t) \cdot dt = d\
    \ \text{and}\ \ \oint_{S^1}\Phi\circ p^y\circ
    \sigma^-_{\wh{y}}(s',t')\cdot dt' =d.&
    \end{eqnarray*}

\begin{lemma} \label{LEMM3.38}
The map $\mathsf{R}_{\wh{x}_0}$ is smooth.
\end{lemma}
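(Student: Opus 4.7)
My plan is to verify that $\mathsf{R}_{\wh{x}_0}$ factors, locally, through the local diffeomorphism $\phi_{\gamma_0}:\wt{\Sigma}_{x,y}\to \mathsf{S}_{\bm{\gamma}}$ from Proposition \ref{PROPT8.84}, and that the factor is a smooth map to $\wt{\Sigma}_{x,y}$. The starting point is the simple computation with the preferred representative $\gamma_0\in[\gamma]$ singled out by $\Phi\circ\gamma_0(t)=t$. Setting $\gamma_c(t):=\gamma_0(t+c)$, one observes, by direct calculation already carried out in the existence part of the preceding lemma (see equation (\ref{I^4})), that the standard map $\phi_{\gamma_{d-k/2}}(c^x,\wh{x}_0,c^y,\wh{y})$ has the correct real asymptotic constants $c^x,c^y$ and the correct $\oint_{S^1}\Phi$-average $d$ on both sides. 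By the uniqueness part of the preceding lemma, this element must equal $\mathsf{R}_{\wh{x}_0}(c^x,[\wh{x}_0,\wh{y}],d,c^y)$, where of course $\wh{y}$ is the unique decoration representing $[\wh{x}_0,\wh{y}]$ once $\wh{x}_0$ has been fixed.

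Next I would translate this factorization entirely into $\phi_{\gamma_0}$ by absorbing the ``$c$-shift'' into a rotation of decorations, using the identity $h_{e^{2\pi i\tau}\wh{x}}=e^{-2\pi i\tau}h_{\wh{x}}$ and the resulting formula $\sigma_{e^{2\pi i\tau}\wh{x}}^+(s,t)=\sigma_{\wh{x}}^+(s,t-\tau)$ (and similarly for $\sigma^-$). A brief calculation shows
\begin{align*}
\phi_{\gamma_{d-k/2}}(c^x,\wh{x}_0,c^y,\wh{y}) \;=\; \phi_{\gamma_0}\big(c^x,\,e^{2\pi i\tau}\wh{x}_0,\,c^y,\,e^{-2\pi i\tau}\wh{y}\big)
\end{align*}
whenever $k\tau\equiv d-k/2\pmod{1}$, and any two such choices of $\tau$ differ by an element of $\tfrac{1}{k}\mathbb{Z}$, which is precisely the isotropy of the $\mathbb{Z}_k$-action of Proposition \ref{PROPT8.84}. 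Thus on any simply connected open set in $\mathsf{M}$ I can fix a smooth local branch $\tau(d)$ of $(d-k/2)/k$, obtaining a smooth map
\begin{align*}
(c^x,[\wh{x}_0,\wh{y}],d,c^y) \;\longmapsto\; \big(c^x,\,e^{2\pi i\tau(d)}\wh{x}_0,\,c^y,\,e^{-2\pi i\tau(d)}\wh{y}\big) \;\in\; \wt{\Sigma}_{x,y},
\end{align*}
well-defined up to the free and smooth $\mathbb{Z}_k$-action. Composing this smooth local lift with the local diffeomorphism $\phi_{\gamma_0}$ gives $\mathsf{R}_{\wh{x}_0}$ locally, and since different choices of lift differ by a $\mathbb{Z}_k$-element, the composition is independent of the choice and patches into a globally defined map.

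I would close by invoking the characterization of the smooth structure on $\mathsf{S}_{\bm{\gamma}}$ from Proposition \ref{PROPT8.84}: smoothness of a map into $\mathsf{S}_{\bm{\gamma}}$ can be tested, in any chart, through $\phi_{\gamma_0}$, and the above exhibits $\mathsf{R}_{\wh{x}_0}$ locally as such a chart representation of a smooth map. There is no real obstacle here; the only slightly delicate point is the $\mathbb{Z}_k$-ambiguity in the choice of $\tau$, handled precisely by the passage from $\wt{\Sigma}_{x,y}$ to $\Sigma_{x,y}$ that was used to define the manifold structure on $\mathsf{S}_{\bm{\gamma}}$ in the first place. Smoothness of the extraction $[\wh{x}_0,\wh{y}]\mapsto\wh{y}$ (which is a trivialization of $\mathbb{S}_{\mathcal D}\to\mathbb{S}_{\mathcal D}$ once $\wh{x}_0$ is fixed) and of $d\mapsto e^{2\pi i(d-k/2)/k}$ (modulo a $k$-th root choice) are elementary.
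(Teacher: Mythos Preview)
Your proof is correct and follows essentially the same route as the paper: both factor $\mathsf{R}_{\wh{x}_0}$ through the local diffeomorphism $\phi_{\gamma_0}$ of Proposition~\ref{PROPT8.84} by rotating the decorations so that the $\oint_{S^1}\Phi$-average comes out equal to $d$, and then conclude smoothness from the smoothness of this rotation map together with $\phi_{\gamma_0}$ being a local diffeomorphism. Your treatment of the $\mathbb{Z}_k$-ambiguity in the choice of $\tau$ with $k\tau\equiv d-k/2$ is in fact more explicit than the paper's, which simply writes down a single global formula for the rotation and asserts its smoothness; your local-branch argument makes clear why the factorization through $\Sigma_{x,y}=\wt{\Sigma}_{x,y}/\mathbb{Z}_k$ is what actually matters.
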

%
\begin{proof}
Recall that $\Phi\circ\gamma_0(t)=t$.
We define for $d\in S^1$ the element $p^x_d$ by
  $$
    p^x_d \circ\sigma_{\wh{x}_0}(s,t) = \gamma_0(kt-(k/2)+d),
    $$
  so that $\oint_{S^1} \Phi\circ p^x_d\circ \sigma_{\wh{x}_0}(s,t)\cdot dt
  =d$.
We define $b^x_{c^x}$ by  $b^x_{c^x}\circ \sigma^+_{\wh{x}_0}(s,t) = Ts+c^x$
and $b^y_{c^y}$ by $b^y_{c^y}\circ \sigma^-_{\wh{y}}(s',t')=Ts'+c^y$. 
We note that the definition of the latter does not depend on the choice of
  $\wh{y}$.
The element $p^y_{\wh{y},d}$ is defined by
  $$
    p^y_{\wh{y},d}\circ\sigma^-_{\wh{y}}(s',t')= \gamma_0(kt'-(k/2)+d).  $$
With these definitions it follows that
  $$
    \mathsf{R}_{\wh{x}_0}(c^x,[\wh{x}_0,\wh{y}],d,c^y)
    =((b^x_{c^x},p^x_{d}),[\wh{x}_0,\wh{y}],(b^y_{c^y},p^y_{d,\wh{y}})).
    $$
We recall the maps $\phi_\gamma$ defined in (\ref{EQNC16}) which are
  smooth $k:1$ coverings.
We note that 
  $$
    \phi_{\gamma_0}(c^x,e^{-2\pi i (d-k/2)}\cdot\wh{x}_0,c^y,e^{2\pi i
    (d-k/2)}\cdot \wh{y})=
    \mathsf{R}_{\wh{x}_0}(c^x,[\wh{x}_0,\wh{y}],d,c^y),
    $$
  and the smoothness of the map
  $$
    (c^x,[\wh{x}_0,\wh{y}],d,c^y)\rightarrow (c^x,e^{-2\pi i
    (d-k/2)}\cdot\wh{x}_0,c^y,e^{2\pi i (d-k/2)}\cdot \wh{y})
    $$
  implies the desired result since $\phi_{\gamma_0}$ is a local
  diffeomorphism.
\end{proof}

Our aim is to show that $\boldsymbol{\bar{\oplus}}:{\mathcal V}\rightarrow
  Y^{3,\delta_0}_{{\mathcal D},\varphi}$ is an $\oplus$-polyfold
  construction and defines a M-polyfold structure on the target
  characterized by a list of properties. Since ${\mathcal W}$
  is a subset of $Y^{3,\delta_0}_{{\mathcal D},\varphi}$ which is open for
  the quotient topology ${\mathcal T}$, the sc-smoothnees of
  $\mathsf{A}_{\Phi}$ for the M-polyfold structure would be equivalent to
  the sc-smoothness of the map $A_\Phi\circ
  \bm{\bar{\oplus}}:\bm{\bar{\oplus}}^{-1}({\mathcal W})\rightarrow
  \mathsf{S}_{\boldsymbol{\gamma}}$, if in fact $\bm{\bar{\oplus}}$
  defines a M-polyfold structure.
In view of the previous lemma we can reduce this question to the study of
  the dependencies of $c^x,c^y$ and $d$.
The following result means that $\mathsf{A}_{\Phi}$ will be sc-smooth for
  the M-polyfold structure we are going to define on
  $Y^{3,\delta_0}_{{\mathcal D},\varphi}$.

\begin{proposition} \label{PROPP3.33}
 The map 
  $$
    \mathsf{A}_{\Phi}\circ \boldsymbol{\bar{\oplus}}:
    \boldsymbol{\bar{\oplus}}^{-1}({\mathcal W})\rightarrow
    \mathsf{S}_{\boldsymbol{\gamma}},
    $$
  which is defined on an open subset of ${\mathcal V}$ containing
  $\partial\mathfrak{Z}$, is sc-smooth.
\end{proposition}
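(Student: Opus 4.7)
The plan is to factor $\mathsf{A}_{\Phi}\circ\boldsymbol{\bar{\oplus}}$ through the smooth parameterization $\mathsf{R}_{\widehat{x}_0}:\mathsf{M}\to\mathsf{S}_{\boldsymbol{\gamma}}$ from Lemma \ref{LEMM3.38}. After fixing the reference decoration $\widehat{x}_0$, every $[\widehat{x},\widehat{y}]\in\mathbb{S}_{\mathcal{D}}$ is written uniquely as $[\widehat{x}_0,\widehat{y}]$, and $\mathsf{A}_{\Phi}\circ\boldsymbol{\bar{\oplus}}$ equals $\mathsf{R}_{\widehat{x}_0}$ composed with the map that extracts the tuple $(c^x,[\widehat{x}_0,\widehat{y}],d,c^y)$. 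Since $\mathsf{R}_{\widehat{x}_0}$ is smooth, it suffices to show that these four scalar/angular extractions, viewed as maps from $\boldsymbol{\bar{\oplus}}^{-1}(\mathcal{W})\subset\mathcal{V}$, are sc-smooth. I will work throughout in the coordinates of $\mathcal{V}$ furnished by Proposition \ref{PROPOSITION3.5}, i.e.\ parameters $(r,\widetilde{q},(\widetilde{h}^x,\widetilde{h}^y))\in [0,1)\times\mathsf{S}_{\boldsymbol{\gamma}}\times H^{3,\delta}(\mathcal{D},\mathbb{R}\times\mathbb{R}^N)$ with $\widetilde{q}=(\widetilde{q}^x,[\widehat{x}_0,\widehat{y}],\widetilde{q}^y)$; the extraction of the natural angle $[\widehat{x}_0,\widehat{y}]$ is then just a smooth coordinate projection.

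Next I handle the extraction of the real constants $c^x,c^y$. From equation (\ref{I^6}), $c^x(r,\widetilde{w})=\int_{S^1} b\circ\sigma^{+,a}_{\widehat{x}_0}(R/2,t)\,dt - \tfrac{1}{2}T\varphi(|a|)$, where $b$ is the first component of $\widetilde{w}=\boldsymbol{\bar{\oplus}}(r,\widetilde{q}\dotplus(\widetilde{h}^x,\widetilde{h}^y))$. The gluing parameter $a$ is determined by $T\varphi(|a|)=\varphi(r)+c^y(\widetilde{q})-c^x(\widetilde{q})$ (Lemma \ref{LEM3.14}) and depends ssc-smoothly on the base data. Decomposing $\widetilde{w}$ via Definition \ref{DEF3.15} into the standard piece (built from $\widetilde{q}$) plus the gluing of $(\widetilde{h}^x,\widetilde{h}^y)$, the standard part contributes a smooth function of $(r,\widetilde{q})$ by direct computation, while the remainder contribution has the form $\int_{S^1}[\beta^x_a\cdot\widetilde{h}^x + \beta^y_a\cdot(\varphi(r)\ast\widetilde{h}^y)](R/2,t)\,dt$; this is exactly covered by the {\bf(M2)} proposition and the Fundamental Lemma I (Lemma \ref{FUNDAMENTAL -I}). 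Exponential weights $\delta_0>0$ force this contribution to decay and vanish at $a=0$, so continuity and sc-smoothness extend across $r=0$. The formula for $c^y$ is then immediate.

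The hard part will be the angular average $d_{\widehat{x}_0}(r,\widetilde{w})=\oint_{S^1}\Phi\circ w\circ\sigma^{+,a}_{\widehat{x}_0}(R/2,t)\,dt$ defined on $\mathcal{W}$: composition with the nonlinear map $\Phi:U\to S^1$ is the delicate ingredient. The openness of $\mathcal{W}$ (Lemma \ref{LEMMA3.34}) and the degree-$k$ hypothesis in Definition \ref{KLM3.33}(2) give, locally and smoothly, a continuous lift $\widetilde{\Phi\circ w}:\mathbb{R}\to\mathbb{R}$ of $\Phi\circ w\circ\sigma^{+,a}_{\widehat{x}_0}(R/2,\cdot)$. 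Writing $w\circ\sigma^{+,a}_{\widehat{x}_0}(R/2,t)=\gamma(kt+\alpha(r,\widetilde{q}))+\rho(r,\widetilde{q},\widetilde{h}^x,\widetilde{h}^y)(t)$ with $\rho$ decaying exponentially as $r\to 0$ (again by the Fundamental Lemma), one Taylor-expands $\Phi(\gamma(kt+\alpha)+\rho)=\alpha+kt+\int_0^1 D\Phi(\gamma+s\rho)\rho\,ds$ in the lift. Integration against $dt$ cancels the linear-in-$t$ part modulo $\mathbb{Z}$, leaving $d$ as the sum of a smooth function of $(r,\widetilde{q})$ and a nonlinear expression in $\rho$ which, after composition with the smooth bundle map $D\Phi$, is sc-smooth by the classical smoothness results of \cite{El} combined with Fundamental Lemma I and Proposition {\bf(M2)}; vanishing at $r=0$ is again forced by the weighted decay.

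In summary, the four coordinate maps on $\boldsymbol{\bar{\oplus}}^{-1}(\mathcal{W})$ are sc-smooth, and composing with the smooth $\mathsf{R}_{\widehat{x}_0}$ yields the proposition. The main technical obstacle is this last point: writing the $S^1$-valued average $d$ as a sc-smooth function, since this requires a local lift to $\mathbb{R}$ and then careful use of the exponential decay of the remainder $\rho$ at $r=0$ to absorb the nonlinearity of $\Phi$ through the Fundamental Lemma.
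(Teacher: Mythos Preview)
Your proposal is correct and follows essentially the same architecture as the paper: factor through the smooth parameterization $\mathsf{R}_{\widehat{x}_0}$ of Lemma~\ref{LEMM3.38}, then verify sc-smoothness of the four scalar/angular extractions $[\widehat{x}_0,\widehat{y}]$, $c^x$, $c^y$, $d_{\widehat{x}_0}$ separately, using Lemma~\ref{LEM3.14} and the Fundamental Lemma for the easy parts.

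The one noteworthy difference is in how you treat the angular average $d_{\widehat{x}_0}$. You Taylor-expand $\Phi$ around the orbit $\gamma(kt+\alpha)$ and carry the remainder $\int_0^1 D\Phi(\gamma+s\rho)\rho\,ds$ through via Eliasson-type results and the Fundamental Lemma. The paper instead factors through the intermediate sc-space $C^0(S^1,\mathbb{R}^N)$: it shows that the middle-loop evaluation $(r,\widetilde{q}\dotplus\widetilde{h})\mapsto\bigl[t\mapsto w\circ\sigma_{\widehat{x}_0}(R/2,t)\bigr]$ is sc-smooth into $C^0(S^1,U)$ (using that $h^\pm\mapsto h^\pm(0,\cdot)$ is a linear sc-operator $H^{3,\delta}\to C^0$, after a shift covered by the Fundamental Lemma), then observes that $u\mapsto\Phi\circ u$ is ssc-smooth $C^0(S^1,U)\to C^0(S^1,S^1)$, and finally that $\oint_{S^1}$ is sc-smooth on degree-$k$ loops. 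This factorization is slightly cleaner because it isolates the nonlinearity of $\Phi$ in a single classical step on a fixed target, avoiding the lift-and-expand argument. Your Taylor-expansion route is in fact closer to what the paper does in the \emph{next} proposition (Proposition~\ref{COMPARE}), where quantitative control of $d_{\widehat{x}_0}(\widetilde{p})-d_{\widehat{x}_0}(\widetilde{q})$ is needed and the map $\mathsf{L}(t+d,e)\cdot e$ appears; so your approach effectively anticipates that computation.
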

%
\begin{proof}
Given an element $(r,(\wt{u}^x,[\wh{x},\wh{y}],\wt{u}^y))\in
  \boldsymbol{\bar{\oplus}}^{-1}({\mathcal W})$ we define 
  $$
    \wt{w} = \boldsymbol{\bar{\oplus}}(r,(\wt{u}^x,[\wh{x},\wh{y}],\wt{u}^y)),
    $$
  where, if $r=0$, $\wt{w}$ stands for
  $(0,(\wt{u}^x,[\wh{x},\wh{y}],\wt{u}^y))$.
The procedure produces from $[\wh{x},\wh{y}]$, if $r\neq 0$ a glued
  surface $Z_a$ on which $\wtilde{w}$ is defined, where $a$ has angular
    part $[\wh{x},\wh{y}]$ and in the case $r=0$ keeps it as part of the
  data.
The element $(\wt{u}^x,[\wh{x},\wh{y}],\wt{u}^y)$ has the form 
  $$
    \wt{q}\dotplus\wt{h}:=
    (\wt{q}^x,[\wh{x},\wh{y}],\wt{q}^y)\dotplus(\wt{h}^x,\wt{h}^y)
    $$
  and the gluing parameter $a$ is smoothly computed from $(r,\wt{q})\in
  [0,1)\times \mathsf{S}_{\boldsymbol{\gamma}}$. 
Consider $\wt{Q}:=\bm{\bar{\oplus}}(r,(\wt{q}^x,[\wh{x},
  \wh{y}],\wt{q}^y))$, which equals $(\wt{q}^x,[\wh{x},\wh{y}],\wt{q}^y)$
  if $r=0$ and $\wt{H}=\oplus_a(\wt{h}^x,\wt{h}^y)$,
  where $a=\mathsf{a}(r,(\wt{u}^x,[\wh{x},\wh{y}],\wt{u}^y))$ is the
  associated gluing parameter, which by Lemma \ref{LEM3.14} is ssc-smooth.
Then
  $$
    \bm{\bar{\oplus}}(r,(\wt{u}^x,[\wh{x},\wh{y}],\wt{u}^y))
    =\bm{\bar{\oplus}}(r,(\wt{q}^x,[\wh{x},\wh{y}],\wt{q}^y))+
    \oplus_a(\wt{h}^x,\wt{h}^y)=\wt{Q}+\wt{H}.
    $$
Write $\wt{Q}=(B,Q)$ and $\wt{H}=(C,H)$ and fix a decoration $\wh{x}_0$.
With the definitions in (\ref{I^5}) and (\ref{I^6}) we obtain using
  $R=\varphi(|a|)$
  \begin{eqnarray}\label{ew3.12}
    &&c^x(r,\wt{w})= \int_{S^1}
    [B\circ\sigma^+_{\wh{x}_0}(R/2)+C\circ\sigma^+_{\wh{x}_0}(R/2,t)]\cdot
    dt-\frac{1}{2}\cdot T\cdot R\\
    &=& c^x(\wt{q}^x) +\int_{S^1}C\circ\sigma^+_{\wh{x}_0}(R/2,t)\cdot
    dt.\nonumber
    \end{eqnarray}
Here we have used that $\wt{Q}$ is the restriction of $\wt{q}^x$ to $Z_a$.   
Given $[\wh{x},\wh{y}]$ there exists a smooth map
  $[\wh{x},\wh{y}]\rightarrow \wh{y}_0([\wh{x},\wh{y}])$ such that
  $$
    [\wh{x},\wh{y}]=[\wh{x}_0,\wh{y}_0([\wh{x},\wh{y}])].
    $$
Next we compute, abbreviating $\wh{y}_0=\wh{y}_{0}([\wh{x},\wh{y}])$
  \begin{eqnarray}\label{ew3.13}
    &&c^y(r,\wt{w})= c^x(\wt{q}^x) +T\cdot R-\varphi(r)
    +\int_{S^1}C\circ\sigma^+_{\wh{x}_0}(R/2,t)\cdot dt\\
    &=&c^y(\wt{q}^y) +\int_{S^1}C\circ\sigma^+_{\wh{x}_0}(R/2,t)\cdot
    dt.\nonumber
    \end{eqnarray}
Further we compute
  \begin{eqnarray}\label{ew3.14}
    &&d_{\wh{x}_0}(r,\wt{w})=\oint_{S^1} \Phi\circ (Q+H)\circ
    \sigma^{+,a}_{\wh{x}_0}(R/2,t)\cdot dt.
    \end{eqnarray}
In view of Lemma \ref{LEMM3.38} it suffices to show that the maps
  \begin{eqnarray*}
    &(r,\wt{q}\dotplus\wt{h})\rightarrow \wh{y}_{0}([\wh{x},\wh{y}])&\\
    &(r,\wt{q}\dotplus\wt{h})\rightarrow c^x(r,\wt{w})&\\
    &(r,\wt{q}\dotplus\wt{h})\rightarrow c^y(r,\wt{w})&\\
    &(r,\wt{q}\dotplus\wt{h})\rightarrow d_{\wh{x}_0}(r,\wt{w}).&
    \end{eqnarray*}
  are sc-smooth. 
As a consequence  of our discussion so far this boils down to the
  sc-moothness of the following maps, where we use smoothness properties
  of $c^x(\wt{q}^x)$ and $c^y(\wt{q}^y)$.
  \begin{eqnarray}
    &(r,\wt{q}\dotplus\wt{h})\rightarrow
    \wh{y}_{0}([\wh{x},\wh{y}])&\label{xc1}\\
    &(r,\wt{q}\dotplus\wt{h})\rightarrow
    \int_{S^1}C\circ\sigma^+_{\wh{x}_0}(R/2,t)\cdot dt\label{xc2}&\\
    &(r,\wt{q}\dotplus\wt{h})\rightarrow
    d_{\wh{x}_0}(r,\wt{w})&\label{xc3}.
    \end{eqnarray}
The first map (\ref{xc1}) is ssc-smooth.  
The second map (\ref{xc2}) has, with $\wt{h}^x=(b^x,h^x)$ and
  $\wt{h}^y=(b^y,h^y)$ the form
  $$
    (r,\wt{q}\dotplus\wt{h})\rightarrow \int_{S^1} \frac{1}{2}\cdot
    [b^x(R/2,t) + b^y(R/2,t)]\cdot dt
    $$
  and is sc-smooth (not ssc-smooth!) by the definition of the
  ssc-structure on  $Z_{\mathcal D}({\mathbb R}\times {\mathbb
  R}^N,\boldsymbol{\bar{\gamma}})$, see Proposition \ref{PROPOSITION3.5}
  and before and Lemma \ref{LEM3.14}, for which the map
  $(r,\wt{q}\dotplus\wt{h})\rightarrow (\mathsf{a}(r,\wt{q}),\wt{h})$ is
  ssc-smooth, and the Fundamental Lemma in Subsection  \ref{FUND0}.
It remains to show that the third map (\ref{xc3}) is sc-smooth. 
We shall write the  map $(r,\wt{q}\dotplus\wt{h})\rightarrow
  d_{\wh{x}_0}(r,\wt{w})$ more explicitly. For this note that
  \begin{eqnarray}\label{EQR3.19}
    &&w\circ\sigma_{\wh{x}_0}(R/2,t)\\
    &&+ \frac{1}{2}\cdot
    [h^x\circ\sigma^+_{\wh{x}_0}(R/2,t)+h^y\circ\sigma^-_{\wh{y}_{0}([\wh{x},
    \wh{y}])}(-R/2,t)]\nonumber\\
    &=& \gamma_0(kt+m(\wt{q}))\nonumber\\
    && + \frac{1}{2}\cdot
    [h^x\circ\sigma^+_{\wh{x}_0}(R/2,t)+h^y\circ\sigma^-_{\wh{y}_{oo}}(-R/2,
    t+\vartheta([\wh{x},\wh{y}]))],
    \nonumber
    \end{eqnarray}
  where $q^+\circ\sigma^+_{\wh{x}_0}(s,t)=\gamma_0(m(\wt{q})+kt)$,
  $\wh{y}_{oo}$ is a fixed choice so that
  $$
    \sigma^-_{\wh{y}_{oo}}(s',t'+\vartheta([\wh{x},\wh{y}]))
    =\sigma^-_{\wh{y}_{0}([\wh{x},\wh{y}])}(s',t').
    $$
We note that $m(\wt{q})$ and $\vartheta([\wh{x},\wh{y}])$ depend
  ssc-smoothly on $(r,\wt{q}+\wt{h})$.
In view of the Fundamental Lemma the map
  \begin{eqnarray*}
    &{\mathbb B}\times H^{3,\delta}({\mathbb R}^+\times S^1,{\mathbb
    R}^N)\rightarrow H^{3,\delta}({\mathbb R}^+\times S^1,{\mathbb
    R}^N):&\\
    & (a,h\circ\sigma^+_{\wh{x}_0})\rightarrow 
    h\circ\sigma^+_{\wh{x}_0}(R/2+s,t)
    \end{eqnarray*}
  is sc-smooth. 
Similarly
  \begin{eqnarray*}
    &{\mathbb B}\times H^{3,\delta}({\mathbb R}^-\times S^1,{\mathbb R}^N
    )\rightarrow
    H^{3,\delta}({\mathbb R}^-\times S^1,{\mathbb R}^N):&\\
    &(a,h\circ\sigma^-_{\wh{y}_{oo}})\rightarrow 
    h\circ\sigma^-_{\wh{y}_{oo}}(-R/2+s,t-\theta).&
    \end{eqnarray*}
In the above definition, in the case $a=0$ the image is the zero-map.
We also note that the maps $H^{3,\delta}({\mathbb R}^\pm\times
  S^1,{\mathbb R}^N)\rightarrow C^0(S^1,{\mathbb R}^N)$
  $$
    h^\pm\rightarrow h^\pm(0,.),
    $$
  where $C^0$ is equipped with the standard sc-structure, i.e. level $m$
  corresponds to $C^m$,  are linear sc-operators.  
The map
  $$
    S^1\times C^0(S^1,{\mathbb R}^N)\rightarrow C^0(S^1,{\mathbb
    R}^N):(e,\sigma)\rightarrow [t\rightarrow \sigma(t+e)]
    $$
  is sc-smooth which is an easy exercise along the line of similar results
  in \cite{HWZ8.7}.
Revisiting (\ref{EQR3.19}) we deduce from the previous discussion the
  following facts.\\

\begin{FACT}
The following maps $\bm{\bar{\oplus}}^{-1}({\mathcal W})\rightarrow
  C^0(S^1,{\mathbb R}^N)$ are sc-smooth.
  \begin{itemize}
    \item[(1)] 
    $(r,\wt{q}\dotplus\wt{h})\rightarrow [t\rightarrow
    \gamma_0(t+m(\wt{q}))]$.  
    \item[(2)]
    $(r,\wt{q}\dotplus\wt{h})\rightarrow [t\rightarrow
    h^x\circ\sigma^+_{\wh{x}_0}(R/2,t)].$
    \item[(3)] 
    $(r,\wt{q}\dotplus\wt{h})\rightarrow
    h^y\circ\sigma^-_{\wh{y}_{oo}}([\wh{x},\wh{y}])(-R/2,t+
    \vartheta([\wh{x},\wh{y}]))]$.
    \end{itemize}
\end{FACT}
As a consequence the map
  $$
    \bm{\bar{\oplus}}^{-1}({\mathcal W})\rightarrow C^0(S^1,{\mathbb
    R}^N):(r,\wt{q}+\wt{h})\rightarrow [t\rightarrow
    w\circ\sigma_{\wh{x}_0}(R/2,t)]
    $$
is sc-smooth and takes image in $C^0(S^1,U)$. 
The map $C^0(S^1,U)\rightarrow C^0(S^1,S^1)$ defined by
  $$
    u\rightarrow \Phi\circ u
    $$
  is ssc-smooth.  
From this we obtain that 
  $$
    \bm{\bar{\oplus}}^{-1}({\mathcal W})\rightarrow
    C^0(S^1,S^1):(r,\wt{q}\dotplus\wt{h})\rightarrow [t\rightarrow \Phi\circ
    w\circ\sigma_{\wh{x}_0}(R/2,t)]
    $$
  is sc-smooth and by construction takes the values in the maps of degree
  $k$.
Taking the $\oint$-average is obviously sc-smooth. 
Hence we see that we can write the map
  $$
    (r,\wt{q}\dotplus \wt{h})\rightarrow d_{\wh{x}_0}(r,\wt{w})
    $$
  as a composition of sc-smooth and ssc-smooth maps and consequently by
  the chain rule this map is sc-smooth.
The proof of the proposition is complete.
\end{proof}
The result of this subsection can be summarized as follows.\\
\begin{tcolorbox}
$$
  \text{Sc-smoothness  of the  averaging map}\
  \mathsf{A}_{\Phi}:{\mathcal W}\rightarrow \mathsf{S}_{\bm{\gamma}}.
  $$
\end{tcolorbox}

%
\subsection{Comparing Averages}
The main goal of this subsection is to understand quantitatively the
  average.
Namely starting with $(r,q\dotplus\wt{h})$ we can compare $\wt{q}$ and
  $\wt{p}=\mathsf{A}_{\Phi}\circ\bm{\bar{\oplus}}(r,\wt{q}\dotplus\wt{h})$,
  where we recall that
  $\wt{q}=\mathsf{A}_{\Phi}\circ\bm{\bar{\oplus}}(r,\wt{q})$.
The main result in this subsection is Proposition \ref{COMPARE} and this
  result will be crucial to define the (second) map $K$ which partially
  inverts $\bm{\bar{\oplus}}$ in the (next) Subsection \ref{SSS3.2.4}.

We shall use some facts which were derived in the proof of the previous
  Proposition \ref{PROPP3.33}.
Abbreviate ${\mathcal U}=\bm{\bar{\oplus}}^{-1}({\mathcal W})$ and
  consider the sc-smooth map
  $$
    \mathsf{A}_{\Phi}\circ \bm{\bar{\oplus}}:{\mathcal U}\rightarrow
    \mathsf{S}_{\bm{\gamma}}, $$
  where $\mathsf{A}_{\Phi}$ is introduced in Definition \ref{ewr3.37}.  
A given element in ${\mathcal U}$
  $$
    (r,(\wt{u}^x,[\wh{x},\wh{y}],\wt{u}^y))
    $$
  can be written uniquely as
  $$
    (r,(\wt{u}^x,[\wh{x},\wh{y}],\wt{u}^y))=(r,\wt{q}\dotplus\wt{h}),
    $$
  where $\wt{q}=(\wt{q}^x,[\wh{x},\wh{y}],\wt{q}^y)\in
  \mathsf{S}_{\bm{\gamma}}$ and $\wt{h}=(\wt{h}^x,\wt{h}^y)$ has the usual
  decay properties. 
We denote by $a=\mathsf{a}(r,\wt{q})$ the associated gluing parameter,
  which we recall does not depend on $\wt{h}$, and abbreviate
  $\wt{p}=\mathsf{A}_{\Phi}\circ\bm{\bar{\oplus}}(r,\wt{q}\dotplus\wt{h})$.
We define the map
  $$
    \mathsf{D}:{\mathcal U}\rightarrow H^{3,\delta}({\mathcal D},{\mathbb
    R}\times {\mathbb R}^N)
    $$
  as follows
  \begin{eqnarray}\label{OPLIY}
    &\ \ \ \ \ \ \
    \mathsf{D}(r,\wt{q}\dotplus\wt{h})=\left[\begin{array}{cc}
    (\beta^x_{a,-2}\cdot (\wt{q}^x-\wt{p}^x),\beta^y_{a,-2}\cdot
    (\wt{q}^y-\wt{p}^y)) & r\neq 0\\
    (0,0)& r=0.
    \end{array}
    \right.&
    \end{eqnarray}
From the discussion in the previous subsection it is clear that the map
  $\mathsf{D}$ is sc-smooth when we restrict it to $\dot{\mathcal
  U}=\{(r,\wt{q}\dotplus\wt{h})\in {\mathcal U}\ |\ r\neq 0\}$.
However, the case near $r=0$ is subtle. 
If $(r,\wt{q}\dotplus\wt{h})$ converges to $(0,\wt{q}_0\dotplus\wt{h}_0)$
  one needs to show that the average $\wt{p}$ converges to $\wt{q}$ fast
  enough, so that cutting-off their difference (with  cut-off functions of
  increasing support), the resulting map is sc-smooth, see (\ref{OPLIY}).
Here is the precise statement.

\begin{proposition}\label{COMPARE}
The map $\mathsf{D}:{\mathcal U}\rightarrow H^{3,\delta}({\mathcal
  D},{\mathbb R}\times {\mathbb R}^N)$ is sc-smooth.
\end{proposition}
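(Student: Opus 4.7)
The plan is to prove sc-smoothness of $\mathsf{D}$ by reducing to the Fundamental Lemma (Subsection \ref{FUND0}, specifically Proposition \textbf{(M3)}). Away from $\{r=0\}$ the result is essentially automatic: we already know from Proposition \ref{PROPP3.33} that $\mathsf{A}_{\Phi}\circ \bm{\bar{\oplus}}$ is sc-smooth, hence so is the map $(r,\wt{q}\dotplus\wt{h})\mapsto (\wt{q}^x-\wt{p}^x,\wt{q}^y-\wt{p}^y)$ taking values in a standard sc-Hilbert space; multiplication by $\beta^x_{a,-2}$ (resp.\ $\beta^y_{a,-2}$) on the open set $a\neq 0$ is sc-smooth by the Fundamental Lemma. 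The real content is continuity together with sc-smoothness across the boundary $r=0$, where $a=0$, $R=\varphi(|a|)=\infty$, and both the cut-off support and the averaging locus move to infinity; here $\wt{p}=\wt{q}$ so $\mathsf{D}=0$, and one must show this vanishing is strong enough to extend sc-smoothly.

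The first key step is to exploit that both $\wt{q}^x$ and $\wt{p}^x$ are standard maps in $\mathsf{S}_{\bm{\gamma}}$, so their difference is \emph{independent of $s$} in holomorphic polar coordinates: writing $\wt{q}^x\circ\sigma^+_{\wh{x}_0}(s,t)=(Ts+c^x(\wt{q}),\gamma_0(kt+d(\wt{q})-k/2))$ and similarly for $\wt{p}^x$ (choosing $[\wh{x}_0,\wh{y}]$ to realize $[\wh{x},\wh{y}]$),
\begin{align*}
(\wt{q}^x-\wt{p}^x)\circ\sigma^+_{\wh{x}_0}(s,t)=\bigl(c^x(\wt{q})-c^x(\wt{p}),\ \gamma_0(kt+d(\wt{q})-k/2)-\gamma_0(kt+d(\wt{p})-k/2)\bigr).
\end{align*}
Equations (\ref{ew3.12}) and (\ref{ew3.14}) identify the two scalar defects precisely as $S^1$-averages of trace data at the middle loop $s=R/2$, namely
\begin{align*}
c^x(\wt{p})-c^x(\wt{q})=\tfrac{1}{2}\!\int_{S^1}\!\bigl(b^x(R/2,t)+b^y(-R/2,t-\theta)\bigr)\,dt,
\end{align*}
with $\wt{h}^{x}=(b^x,h^x)$, $\wt{h}^{y}=(b^y,h^y)$, and an analogous formula for $d(\wt{p})-d(\wt{q})$ involving $\Phi$ and $\oint$-averaging. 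Taylor-expanding $\gamma_0$ around $d(\wt{q})-k/2$ writes
\begin{align*}
\gamma_0(kt+d(\wt{q})-k/2)-\gamma_0(kt+d(\wt{p})-k/2) = G(t,d(\wt{q}),d(\wt{p}))\cdot\bigl(d(\wt{q})-d(\wt{p})\bigr),
\end{align*}
where $G$ is smooth; a similar smooth expansion of $\Phi\circ(\gamma+\text{trace of }\wt{h})-\Phi\circ\gamma$ expresses $d(\wt{p})-d(\wt{q})$ as a smooth $(\wt{q},a)$-dependent function applied to the middle-loop trace of $\wt{h}$. Thus both components of $\wt{q}^x-\wt{p}^x$ have the universal form $\Lambda(\wt{q},a,t)\cdot [\wt{h}]_a$, where $[\cdot]_a$ denotes the middle-loop average functional and $\Lambda$ is a smooth (in fact ssc-smooth via Lemma \ref{LEMM3.38} and Lemma \ref{LEM3.14}) coefficient.

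The second key step is to apply the Fundamental Lemma. The composite
\begin{align*}
\beta^x_{a,-2}(z)\cdot(\wt{q}^x-\wt{p}^x)(z)\quad\longleftrightarrow\quad\beta(s-R/2-2)\cdot\Lambda(\wt{q},a,t)\cdot[\wt{h}]_a
\end{align*}
is exactly of the type treated by Proposition \textbf{(M3)}: a smooth profile $f(s)=\beta(s-2)$ with $f(+\infty)=0$ and $f(-\infty)=1$, translated to $s-R/2$, multiplied by a scalar average of $\wt{h}$ at the middle loop. At $a=0$ we set the product to be $0$, which matches the Fundamental-Lemma convention $f(-\infty)\cdot[\wt{h}]_0=0$ because $[\wt{h}]_0$ should be interpreted as the vanishing asymptotic limit (elements of $H^{3,\delta_0}({\mathcal D},\mathbb{R}\times\mathbb{R}^N)$ vanish at the node). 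The coefficient $\Lambda$ is bounded in $t$ with smooth $(\wt{q},a)$-dependence, and the decomposition of the $d$-term into a linear factor and a smooth remainder allows a routine chain-rule/Taylor argument combining \textbf{(M2)}, \textbf{(M3)}, \textbf{(M4)} with smooth multiplication by $\Lambda$.

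The main obstacle is the final combination. We must package the nonlinear part of $G$ (which involves $d(\wt{q})$ and $d(\wt{p})$, the latter depending on $\wt{h}$ via averaging) as a composition of sc-smooth operations landing in $H^{3,\delta}$ \emph{uniformly as $a\to 0$}. The delicate point is that $\Lambda(\wt{q},a,t)$ is not $H^{3,\delta}$-valued on its own — it has order one as $a\to 0$ — so the smallness needed to absorb the blow-up of the weighted norm at $s\sim R/2$ must come entirely from the trace-average factor $[\wt{h}]_a$, which is exactly the mechanism Proposition \textbf{(M3)} quantifies. Symmetry reduces the $\wt{q}^y-\wt{p}^y$ piece to the same computation with $(s,t)\leftrightarrow(-s',t')$ and $\beta^x_{a,-2}$ replaced by $\beta^y_{a,-2}$, completing the proof.
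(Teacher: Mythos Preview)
Your overall strategy matches the paper's: split $\mathsf{D}$ into the $\mathbb{R}$-component $(e^x,e^y)$ and the $\mathbb{R}^N$-component $(k^x,k^y)$, use that $\wt{q}^x-\wt{p}^x$ is constant in $s$, Taylor-expand $\gamma_0$ to factor out the scalar $d_{\wh{x}_0}(\wt{q})-d_{\wh{x}_0}(\wt{p})$, and then invoke the Fundamental Lemma. For the $e^x$-part your argument is fine and is exactly what the paper does, since the defect $c^x(\wt{q})-c^x(\wt{p})$ is \emph{linear} in the middle-loop trace of $\wt{h}$, so it literally is $[r]_a$ for an sc-smoothly varying $r\in H^{3,\delta}$, and (M3) applies directly.

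The gap is in the $k^x$-part. Your ``universal form $\Lambda(\wt{q},a,t)\cdot[\wt{h}]_a$'' is not correct for the angular defect: $d_{\wh{x}_0}(\wt{p})-d_{\wh{x}_0}(\wt{q})=\int_{S^1}\mathsf{L}\bigl(kt-\tfrac{k}{2}+d,\,e(t)\bigr)e(t)\,dt$ is a \emph{nonlinear} functional of the trace $e(t)$, not a coefficient times the average $[\wt{h}]_a$. More importantly, (M3) does not say ``$f_a$ times an sc-smooth scalar is sc-smooth'' --- indeed $a\mapsto f_a=\beta(\cdot-R/2-2)$ is not even continuous into $H^{3,\delta}$ as $a\to 0$ --- it says the \emph{composite} $(a,r^+)\mapsto f_a\cdot[r^+]_a$ is sc-smooth for $r^+\in H^{3,\delta}$. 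So once you have collapsed the trace to a scalar, you have lost the input (M3) needs. The paper's essential maneuver, which your ``routine chain-rule'' remark glosses over, is to \emph{first} inflate the trace back to an $H^{3,\delta}$-element: set $H(s,t)=\tfrac12\bigl(\beta(s-R/2-2)h^x(s,t)+\beta(s-R/2-2)h^y(s-R,t-\vartheta)\bigr)$, which is sc-smooth in $(a,\wt{h})$ by the Fundamental Lemma, then apply the pointwise (hence level-wise classically smooth) nonlinearity to get $E(s,t)=\mathsf{L}(kt-\tfrac{k}{2}+d,H(s,t))H(s,t)\in H^{3,\delta}$, so that $[E]_a$ equals the angular defect. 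Only now does (M3) give sc-smoothness of $(a,E)\mapsto \beta(\cdot-R/2-2)\,[E]_a$, after which your multiplication by $G(t,d(\wt{q}),d(\wt{p}))$ (the paper's $\mathsf{B}$, handled by its Lemma~\ref{LEMMA3.43}) finishes the argument. Without this $H\to E$ step your invocation of (M3) is circular.
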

%
The proof will follow from two lemmata.  
Abbreviate $\wt{k}=\mathsf{D}(r,\wt{q}\dotplus\wt{h})$ and 
  \begin{eqnarray}\label{ktildeX}
    \wt{k}=(\wt{k}^x,\wt{k}^y)\ \ \text{and}\ \  \wt{k}^x=(e^x,k^x),\
    \wt{k}^y=(e^y,k^y).
    \end{eqnarray}

\begin{lemma}
  \hfill\\
The map ${\mathcal U}\rightarrow H^{3,\delta}({\mathcal D},{\mathbb
R}):(r,\wt{q}\dotplus\wt{h})\rightarrow (e^x,e^y) $ is sc-smooth.
\end{lemma}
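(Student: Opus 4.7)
The plan is to reduce the sc-smoothness of $(r,\wt q\dotplus\wt h)\mapsto (e^x,e^y)$ to a direct application of Proposition {\bf (M3)} (equivalently the Fundamental Lemma I). The starting point is to compute $\wt q^x-\wt p^x$ and $\wt q^y-\wt p^y$ explicitly in their first (real-valued) coordinates. Fix the decoration $\wh{x}_0$ as in the proof of Proposition \ref{PROPP3.33}. By Definition \ref{PPP10.3} and by the construction of $\wt p$ via $\mathsf{R}_{\wh{x}_0}$ in Lemma \ref{LEMM3.38}, the real parts depend only on $s$, so
\begin{align*}
b^x_q\circ\sigma^+_{\wh{x}_0}(s,t)&=Ts+c^x(\wt q^x), & b^x_p\circ\sigma^+_{\wh{x}_0}(s,t)&=Ts+c^x(r,\wt w),\\
b^y_q\circ\sigma^-_{\wh{y}_0}(s',t')&=Ts'+c^y(\wt q^y), & b^y_p\circ\sigma^-_{\wh{y}_0}(s',t')&=Ts'+c^y(r,\wt w).
\end{align*}
The linear-in-$s$ terms cancel in the subtraction. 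Using (\ref{ew3.12}) and (\ref{ew3.13}) the remaining differences are the \emph{same} scalar, namely $-I(r,\wt q\dotplus\wt h):=-\int_{S^1}C\circ\sigma^+_{\wh{x}_0}(R/2,t)\,dt$, where $C=\tfrac12(b^x+b^y)$ is extracted from the real components of $\wt h$ (with the convention $I=0$ at $r=0$, matching the normalization of $[\cdot]_a$ in (\ref{EQw8.10})). Hence
\begin{equation*}
e^x\circ\sigma^+_{\wh{x}_0}(s,t)=-\beta(s-R/2-2)\cdot I(r,\wt q\dotplus\wt h),\qquad e^y\circ\sigma^-_{\wh{y}_0}(s',t')=-\beta(-s'-R/2-2)\cdot I(r,\wt q\dotplus\wt h).
\end{equation*}

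Now the right-hand sides are precisely of the shape treated by Proposition {\bf (M3)}: taking the smooth cut-off $f(s):=\beta(s-2)$, so that $f(+\infty)=0$ and $f_a(s)=\beta(s-R/2-2)$, the map $(a,r^+)\mapsto f_a(\cdot)\cdot[r^+]_a$ is sc-smooth from $\mathbb B_{1/2}\times H^{3,\delta}(\mathbb R^+\times S^1,\mathbb R)$ to $H^{3,\delta}(\mathbb R^+\times S^1,\mathbb R)$, and analogously with the symmetric cut-off on the $\mathbb R^-$-side. Writing $e^x=-\tfrac12 f_a\cdot([b^x]_a+[b^y]_a)$ identifies $e^x$ as the sum of two outputs of {\bf (M3)} applied to the respective first components of $\wt h^x$ and (after the natural identification provided by $\sigma^-_{\wh{y}_0}$ and the isomorphism $\Sigma$ from the proof of Theorem \ref{thm1}) $\wt h^y$; the expression for $e^y$ is handled symmetrically. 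Composing with the ssc-smooth gluing-parameter extraction $(r,\wt q)\mapsto a=\mathsf{a}(r,\wt q)$ from Lemma \ref{LEM3.14}, with the linear (hence ssc-smooth) extraction of $(b^x,b^y)$ from $\wt h$, and with the smooth angular reparametrization $[\wh x,\wh y]\mapsto \wh{y}_0([\wh x,\wh y])$ used to pass to the fixed decoration, the chain rule yields sc-smoothness of $(r,\wt q\dotplus\wt h)\mapsto(e^x,e^y)$.

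The main obstacle is purely analytic and is exactly what {\bf (M3)} encodes: the support of $\beta(\cdot-R/2-2)$ on the positive half-cylinder extends out to $s=R/2+3$, which grows without bound as $r\to 0$, so the weighted $H^{3,\delta}$-norm of $f_a\cdot c$ for a constant $c$ would grow like $|c|^2 e^{\delta_0 R}$. This is compensated by the exponential decay of $I$: since $C$ lies in the weighted space with weight $\delta_0>0$, the trace-at-$s=R/2$ estimate gives $|I|\le C_0\|C\|_{H^{3,\delta_0}}\, e^{-\delta_0 R/2}$, producing a bounded contribution in $H^{3,\delta}$. The delicate balance---and the sc-smoothness of all the $m$-th sc-derivatives---is precisely the content of Proposition {\bf (M3)}, so once the above identification is in place the assertion follows without further analytic work. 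At $r=0$ the entire expression reduces consistently to zero because $[b^x]_0=[b^y]_0=0$ in the convention of (\ref{EQw8.10}), matching the definition of $\mathsf D$ at $r=0$.
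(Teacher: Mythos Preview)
Your proof is correct and follows essentially the same route as the paper: you compute $c^x(\wt q)-c^x(\wt p)=c^y(\wt q)-c^y(\wt p)$ via (\ref{ew3.12})--(\ref{ew3.13}), identify $e^x$ and $e^y$ as a cut-off times this scalar average, and reduce to the Fundamental Lemma (the paper cites Subsection \ref{FUND0} directly, you invoke its corollary {\bf (M3)}). One cosmetic point: your description ``$C=\tfrac12(b^x+b^y)$'' is imprecise as a function---$C$ is the real part of $\oplus_a(\wt h^x,\wt h^y)$---but since you only use its value at $s=R/2$, where indeed the gluing weights are each $\tfrac12$, the subsequent identification $e^x=-\tfrac12 f_a\cdot([b^x]_a+[b^y]_a)$ is correct.
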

%
\begin{proof}
It suffices to study the problem near an element with $r=0$.
From the definition of $\wt{p}$, using (\ref{ew3.12}) and (\ref{ew3.13})
  it follows that
  \begin{eqnarray}\label{ewr3.19}
    &&c^x(\wt{q})-c^x(\wt{p})\\
     &= &c^y(\wt{q})-c^y(\wt{p})\nonumber \\
    &=& -\frac{1}{2}\cdot \int_{S^1} [b^x\circ
    \sigma^+_{\wh{x}}(R/2,t)+b^y\circ\sigma^-_{\wh{y}}(-R/2,t)]\cdot
    dt,\nonumber
    \end{eqnarray}
  where $\wt{h}^x=(b^x,h^x)$ and $\wt{h}^y=(b^y,h^y)$ and
  $\wt{h}=(\wt{h}^x,\wt{h}^y)$.
We note that the choice of $(\wh{x},\wh{y})$ is indeed irrelevant.
Writing $\wt{k}=\mathsf{D}(r,\wt{q}\dotplus\wt{h})$ the first component
  $e^x$ of $\wt{k}^x$ is given by
  \begin{eqnarray}
    &&e^x\circ\sigma^+_{\wh{x}_0}(s,t)\\
    &=&-\frac{1}{2}\cdot \beta(s-R/2-2) \cdot \int_{S^1}[b^x\circ
    \sigma^+_{\wh{x}}(R/2,t)+b^y\circ\sigma^-_{\wh{y}}(-R/2,t)].\nonumber
    \end{eqnarray}
Here $b^x$ and $b^y$ are the first components of $\wt{h}^x$ and
  $\wt{h}^y$, respectively.  
There is a similar formula for $e^y$ and in view of the Fundamental Lemma,
  see Subsection \ref{FUND0},  the map
  $$
    {\mathcal U}\rightarrow H^{3,\delta}({\mathcal D},{\mathbb
    R}):(r,\wt{q}\dotplus\wt{h})\rightarrow (e^x,e^y)
    $$
  is sc-smooth. 
This completes the proof of the lemma.
\end{proof}

In order to  consider the  pair $(k^x,k^y)$, see (\ref{ktildeX}), we find
  appropriate formulae for them.  
Denote by $\gamma_0$ the representative satisfying
  $\Phi\circ\gamma_0(t)=t$.  
Then
 \begin{eqnarray}\label{ewr3.20}
   && k^x\circ\sigma^+_{\wh{x}_0}(s,t)\\
   &=&\beta(s-R/2-2)\cdot  (q^x-p^x)\circ
   \sigma_{\wh{x}_0}^+(s,t)\nonumber\\
   &=&\beta(s-R/2-2)\cdot
   \left[\gamma_0(kt-(k/2)+d_{\wh{x}_0}(\wt{q}^x))\right.\nonumber\\
   &&\ \ \ \ \ \ \ \ \ \
   -\left.\gamma_0(kt-(k/2)+d_{\wh{x}_0}(\wt{p}^x))\right].\nonumber
  \end{eqnarray}
Similarly
  \begin{eqnarray}\label{ewr3.20b} 
    && k^y\circ\sigma^-_{\wh{y}}(s',t')\\
    &=&\beta(|s'|-R/2-2)\cdot  (q^y-p^y)\circ
    \sigma_{\wh{y}}^-(s',t')\nonumber\\
    &=&\beta(|s'|-R/2-2)\cdot
    \left[\gamma_0(kt'-(k/2)+d_{\wh{x}_0}(\wt{q}^x))\right.\nonumber\\
    &&\ \ \ \ \ \ \ \ \ \
    -\left.\gamma_0(kt'-(k/2)+d_{\wh{x}_0}(\wt{p}^x))\right].\nonumber
    \end{eqnarray}
Here $d_{\wh{x}_0}(\wt{q}^x)=\oint_{S^1}\Phi\circ q^x\circ
  \sigma^+_{\wh{x}_0}(s,t)\cdot dt$,
  $(r,\wt{w})=\bm{\bar{\oplus}}(r,\wt{q}\dotplus\wt{h})$,
  $\wt{p}=\mathsf{A}_{\Phi}(r,\wt{w})$, and
  \begin{eqnarray*}
    d_{\wh{x}_0}(\wt{p}^x)
    &=&d_{\wh{x}_0}(r,\wt{w})\\
    &=&\oint_{S^1}\Phi\circ w\circ\sigma_{\wh{x}_0}(R/2,t)\cdot dt\nonumber\\
    &=&\oint_{S^1}\Phi\circ
    \left[\gamma_0\left(kt-(k/2)+d_{\wh{x}_0}(\wt{q}^x)\right)
    \phantom{\frac{1}{2}}\right.\\
    && \left. + \frac{1}{2}\cdot\left (h^x\circ \sigma^+_{\wh{x}_0}(R/2,t)
    +h^y\circ\sigma^-_{\wh{y}}(-R/2,t)\right)\right]\cdot dt.\nonumber
    \end{eqnarray*}
There is a similar formula for the $y$-part.
Consequently 
  \begin{eqnarray}\label{ewr3.21}
    &&  d_{\wh{x}_0}(\wt{p}^x)-d_{\wh{x}_0}(\wt{q}^x)\\
    &=&\oint_{S^1}\Phi\circ
    \left[\gamma_0\left(kt-(k/2)+d_{\wh{x}_0}(\wt{q}^x)\right)
    \phantom{\frac{1}{2}}\right.\nonumber\\
    && \left. + \frac{1}{2}\cdot\left (h^x\circ \sigma^+_{\wh{x}_0}(R/2,t)
    +h^y\circ\sigma^-_{\wh{y}}(-R/2,t)\right)\right]\cdot dt\nonumber\\
    &&-\oint_{S^1}\Phi\circ
    \gamma_0\left(kt-(k/2)+d_{\wh{x}_0}(\wt{q}^x)\right)\cdot
    dt.\nonumber
    \end{eqnarray}
Our aim is to show that the map
  $$
    {\mathcal U}\rightarrow H^{3,\delta}({\mathcal D},{\mathbb
    R}^N):(r,\wt{q}\dotplus\wt{h})\rightarrow (k^x,k^y)
    $$
  is sc-smooth.  
Once this is established the proof is complete. 
This study boils down to understanding the dependence of
  $d_{\wt{x}_0}(\wt{q})$ and $d_{\wh{x}_0}(\wt{p}^x)$ on
  $(r,\wt{q}\dotplus\wt{h}^y)$. 
It suffices to study $k^x$. 
We already know that we may assume that $r$ is small.  
Consider for two real numbers $d,d'$ and $t\in S^1$ the map
  $$
    (d,d',t)\rightarrow \gamma_0(t+d)-\gamma_0(t+d'),
    $$
  which we can rewrite as
  \begin{eqnarray*}
    \gamma_0(t+d)-\gamma_0(t+d')&=& (d-d')\cdot \left(\int_0^1
    \dot{\gamma}_0(t+\tau d+(1-\tau)d')d\tau\right)\\
    &=:&(d-d')\cdot  \mathsf{B}(t,d,d').
    \end{eqnarray*}
The map $\mathsf{B}: S^1\times {\mathbb R}\times {\mathbb R}\rightarrow
  {\mathbb R}^N$ is smooth.
Assume that $(r,\wt{q}\dotplus\wt{h})$ is near the element
  $(0,\wt{q}_0\dotplus\wt{h}_0)$.  
Since $d_{\wh{x}_0}(0,\wt{q}_0\dotplus\wt{h}_0)=d_{\wh{x}_0}(\wt{q}_0)$ it
  follows that $d_{\wh{x}_0}(\wt{p})$ as well as $d_{\wh{x}_0}(\wt{q})$
  are near $d_{\wh{x}_0}(\wt{q}_0)$. 
Since we already know that $\wt{q}\rightarrow d_{\wh{x}_0}(\wt{p}),\
  d_{\wh{x}_0}(\wt{q})\in S^1$  are sc-smooth we can take a local sc-smooth
  lift to ${\mathbb R}$ denoted by $\wt{d}_{\wh{x}_0}(\wt{p})$ and
  $\wt{d}_{\wh{x}_0}(\wt{q})$, respectively. Then, using $\mathsf{B}$ we
  can rewrite (\ref{ewr3.20}) as follows.
\begin{eqnarray}\label{ewr3.23}
  && k^x\circ\sigma^+_{\wh{x}_0}(s,t)\\
  &=&\beta(s-R/2-2)\cdot
  (\wt{d}_{\wh{x}_0}(\wt{q})-\wt{d}_{\wh{x}_0}(\wt{p})) \cdot \nonumber \\
  &&\ \ \ \cdot
  \mathsf{B}(kt-(k/2)+d_{\wh{x}_0}(\wt{q}),\wt{d}_{\wh{x}_0}(\wt{q}),
  \wt{d}_{\wh{x}_0}(\wt{p})).\nonumber
  \end{eqnarray}
First we consider the map
  \begin{eqnarray}\label{ewr3.26}
    &{\mathbb R}\times {\mathbb R}\times H^{m,\tau}({\mathbb R}^+\times
    S^1,{\mathbb R})\rightarrow H^{m,\tau}({\mathbb R}^+\times
    S^1,{\mathbb R}^N)&\\
    &(d,d',h)\rightarrow [(s,t)\rightarrow \mathsf{B}(kt-k/2+d,d,d')\cdot
    h(s,t)].&\nonumber
    \end{eqnarray}
We have the following result about classical differentiability which is
  well-known, \cite{El}, and a trivial consequence.

\begin{lemma}\label{LEMMA3.43}
The map in (\ref{ewr3.26}) is classically $C^\infty$ for every choice of
  $m\geq 0$ and $\tau\geq 0$.
As a consequence this map (viewed in an sc-setting) ${\mathbb R}\times
  {\mathbb R}\times H^{3,\delta}({\mathbb R}^+\times S^1,{\mathbb
  R})\rightarrow H^{3,\delta}({\mathbb R}^+\times S^1,{\mathbb R}^N)$ is
  sc-smooth.
\end{lemma}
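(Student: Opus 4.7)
The plan is to factor the map in equation (\ref{ewr3.26}) through a smooth map into a space of bounded multipliers, followed by a continuous bilinear multiplication. First I would introduce the auxiliary map
$$\Psi:{\mathbb R}\times {\mathbb R}\to C^\infty_b({\mathbb R}^+\times S^1, {\mathbb R}^N), \qquad \Psi(d,d')(s,t)=\mathsf{B}(kt-k/2+d,d,d').$$
Since $\mathsf{B}$ is smooth on $S^1\times {\mathbb R}\times {\mathbb R}$, the key point is that $\Psi(d,d')(s,t)$ is independent of $s$ and periodic (hence bounded) in $t$, so all partial derivatives $\partial_s^i \partial_t^j \Psi(d,d')$ are uniformly bounded on ${\mathbb R}^+\times S^1$. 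A standard argument (differentiating under the integral defining $\mathsf{B}$ and using continuity of mixed partials) then shows that $(d,d')\mapsto \Psi(d,d')$ is smooth as a map into $C^k_b({\mathbb R}^+\times S^1, {\mathbb R}^N)$ for every $k$.

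Second, I would invoke the continuity of the bilinear multiplication
$$C^k_b({\mathbb R}^+\times S^1, {\mathbb R}^N)\times H^{m,\tau}({\mathbb R}^+\times S^1, {\mathbb R})\to H^{m,\tau}({\mathbb R}^+\times S^1, {\mathbb R}^N),\quad (\psi,h)\mapsto \psi\cdot h,$$
valid for $k\geq m$. This is a Leibniz/Moser-type estimate, and the exponential weight $e^{\tau|s|}$ is preserved because $\psi$ is bounded in $s$. Being continuous and bilinear it is automatically of class $C^\infty$. Composing the smooth map $(d,d')\mapsto \Psi(d,d')$ with this bilinear multiplication yields precisely the map displayed in (\ref{ewr3.26}), thereby proving classical $C^\infty$-smoothness at every fixed regularity $(m,\tau)$.

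For the sc-smoothness consequence, I would appeal directly to the Remark following Proposition \ref{lower}: a map between sc-Banach spaces that is level-wise classically $C^\infty$ is $\ssc^\infty$. The sc-structure on ${\mathbb R}\times {\mathbb R}\times H^{3,\delta}({\mathbb R}^+\times S^1,{\mathbb R})$ has level $m$ equal to ${\mathbb R}\times {\mathbb R}\times H^{3+m,\delta_m}$, and the map sends level $m$ to $H^{3+m,\delta_m}({\mathbb R}^+\times S^1,{\mathbb R}^N)$ because (as noted above) the multiplier $\Psi(d,d')$ neither diminishes Sobolev regularity nor disturbs the exponential weight. Level-wise classical $C^\infty$-ness is the first part of the lemma applied with $(m,\tau)=(3+m,\delta_m)$.

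The only real subtlety I anticipate is verifying that the multiplication map really lands in the same level $(m,\tau)$ rather than dropping a derivative or the exponential weight — this is where the fact that $\Psi$ is independent of $s$ (and thus bounded along with all its $s$-derivatives) becomes essential; a multiplier with genuine $s$-dependence of the form $e^{\varepsilon|s|}$ would destroy the weight. Once this is checked, everything reduces to invocation of already-cited results and standard chain rule bookkeeping.
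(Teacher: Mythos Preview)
Your proof is correct. The paper itself gives no argument beyond citing Eliasson \cite{El} for the classical $C^\infty$ statement and calling the sc-smoothness a ``trivial consequence''; your factorization through $C^k_b$ multipliers followed by a continuous bilinear multiplication is precisely the mechanism that underlies such a citation, so the two approaches are aligned in spirit. Your write-up is simply more explicit: the paper treats the lemma as folklore, while you unpack why the $s$-independence of $\Psi(d,d')$ preserves both the Sobolev regularity and the exponential weight, and you point to the Remark after Proposition~\ref{lower} for the level-wise-$C^\infty$-implies-sc$^\infty$ step. Nothing is missing and nothing diverges from the intended route.
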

%
As a corollary, for $(r,\wt{q}\dotplus\wt{h})$ in a suitable neighborhood
  of $(0,\wt{q}_0\dotplus\wt{h}_0)$ contained in ${\mathcal U}$ and $v\in
  H^{3,\delta}({\mathbb R}^+\times S^1,{\mathbb R})$, the map
  \begin{eqnarray*}
    &{\mathcal O}(0,\wt{q}_0\dotplus\wt{h}_0)\times H^{3,\delta}({\mathbb
    R}^+\times S^1,{\mathbb R})\rightarrow
    H^{3,\delta}({\mathbb R}^+\times S^1,{\mathbb R}^N)&\\
    &((r,\wt{q}\dotplus\wt{h}),v)\rightarrow [(s,t)\rightarrow v(s,t)
    \cdot\mathsf{B}(kt-k/2+d_{\wh{x}_0}(\wt{q}),\wt{d}_{\wh{x}_0}(\wt{q})
    ,\wt{d}_{\wh{x}_0}(\wt{p}))]
    \end{eqnarray*}
  is sc-smooth.
In view of the discussion so far we only need to show the following lemma,
  since a similar argument will hold for $k^y$, to complete the proof of
  Proposition \ref{COMPARE}.

\begin{lemma}
  \hfill\\
The map 
  \begin{eqnarray}\label{ewr3.24}
    &(r,\wt{q}\dotplus\wt{h})\rightarrow [(s,t)\rightarrow
    \beta(s-R/2-2)\cdot
    (\wt{d}_{\wh{x}_0}(\wt{q})-\wt{d}_{\wh{x}_0}(\wt{p}))]&
    \end{eqnarray}
  defined on a suitable open neighborhood of ${\mathcal
  O}(0,\wt{q}_0\dotplus\wt{h}_0)$ in\ \ ${\mathcal U}$ with image in
  $H^{3,\delta}({\mathbb R}^+\times S^1,{\mathbb R})$ is sc-smooth.
\end{lemma}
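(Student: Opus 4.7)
The strategy is to express the scalar quantity $c(r,\wt{q}\dotplus\wt{h}) := \wt{d}_{\wh{x}_0}(\wt{q})-\wt{d}_{\wh{x}_0}(\wt{p})$ as a smooth kernel integrated against the exponentially decaying middle-loop values of $\wt{h}^x,\wt{h}^y$, and then recognize the resulting expression $\beta(s-R/2-2)\cdot c$ as an assembly handled by the Fundamental Lemma of Subsection \ref{FUND0}. The mechanism making this work is that the exponential decay of $\wt{h}^x,\wt{h}^y$ at the middle loops $\{s=R/2\}$, $\{s'=-R/2\}$ precisely compensates the growth of the weighted $H^{3,\delta}$-norm on the support $[0,R/2+3]$ of $\beta(s-R/2-2)$ as $a\to 0$; this is exactly the mechanism already exploited for the $(e^x,e^y)$-component earlier in the proof.

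First I would apply Taylor's theorem with integral remainder to a local smooth lift $\wt{\Phi}$ of $\Phi$ around the base element. Equation (\ref{ewr3.21}) then rewrites as
$$c=-\tfrac{1}{2}\int_{S^1}\int_0^1 D\wt{\Phi}\Big(\gamma_0(kt'-k/2+\wt{d}_{\wh{x}_0}(\wt{q}))+\tfrac{\tau}{2}\mu(t')\Big)\cdot \mu(t')\,d\tau\,dt',$$
where $\mu(t'):=h^x\circ\sigma^+_{\wh{x}_0}(R/2,t')+h^y\circ\sigma^-_{\wh{y}_{oo}}(-R/2,t'+\vartheta([\wh{x},\wh{y}]))$. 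This presents $c$ as a smooth nonlinear kernel paired bilinearly with the decaying middle-loop datum $\mu$.

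Next I would verify that the data map $(r,\wt{q}\dotplus\wt{h})\mapsto(a,\wt{d}_{\wh{x}_0}(\wt{q}),\mu)$ is sc-smooth into ${\mathbb B}\times {\mathbb R}\times C^0(S^1,{\mathbb R}^N)$. Sc-smoothness of $a=\mathsf{a}(r,\wt{q})$ is Lemma \ref{LEM3.14}; sc-smoothness of $\wt{d}_{\wh{x}_0}(\wt{q})$ depends only on $\wt{q}$ and was verified inside the proof of Proposition \ref{PROPP3.33}; and sc-smoothness of the middle-loop combination $\mu$ is recorded in the Fact-list near the end of that same proof, via {\bf (M2)} together with the sc-smooth rotation by $\vartheta([\wh{x},\wh{y}])$. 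Composition with the smooth map $(d,v,w)\mapsto D\wt{\Phi}(\gamma_0(\,\cdot\,-k/2+d)+\tfrac{1}{2}v)\cdot w$ and integration in $(\tau,t')$ is harmless and yields sc-smoothness of the scalar $c$.

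The main obstacle, which I would handle last, is the sc-smoothness of the assembly map
$$\Xi:(a,d,\mu)\longmapsto \Big[(s,t)\mapsto\beta(s-R/2-2)\cdot c(a,d,\mu)\Big]\in H^{3,\delta}({\mathbb R}^+\times S^1,{\mathbb R}).$$
Substituting the Taylor expression and applying Fubini reduces this to proving sc-smoothness of a map of the form $(a,\mu)\mapsto \big[(s,t)\mapsto\beta(s-R/2-2)\cdot\mu(t')\big]$, parametrized smoothly in $t'$ by the bounded kernel $D\wt{\Phi}$ and then integrated in $(\tau,t')$. Each such piece is a mild $t'$-parametrized extension of {\bf (M3)} from Subsection \ref{FUND0}: the proof of {\bf (M3)} carries over essentially verbatim since the $t'$-dependence resides only in a smooth bounded kernel, while the essential $(a,s,[\mu]_{a,t'})$-structure, with $[\mu]_{a,t'}$ the value $\mu(t')$, matches {\bf (M3)} after Fubini. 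The exponential decay of $\mu$ in each sc-level (inherited from $\wt{h}^x,\wt{h}^y\in H^{3,\delta_0}$) is what makes the $H^{3,\delta}$-values finite and the sc-smoothness uniform as $r\to 0$; the case $r=0$, where $\mathsf{D}\equiv 0$, matches the limit by this same decay, completing the proof.
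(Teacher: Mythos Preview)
Your overall strategy---Taylor-expand $\Phi$ to expose $\wt{d}_{\wh{x}_0}(\wt{p})-\wt{d}_{\wh{x}_0}(\wt{q})$ as a smooth kernel paired with the middle-loop datum, then assemble with $\beta(s-R/2-2)$---matches the paper's in spirit. The difference is in how the assembly step is organized, and here your argument has a soft spot.

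You keep the middle-loop datum $\mu$ as an element of $C^0(S^1,{\mathbb R}^N)$, apply the nonlinear kernel there to obtain the scalar $c$, and then claim that $(s,t)\mapsto\beta(s-R/2-2)\cdot c$ is sc-smooth in $H^{3,\delta}$ by a ``$t'$-parametrized extension of {\bf (M3)}''. But {\bf (M3)} concerns the \emph{integrated} middle-loop value $[r^+]_a=\int_{S^1}r^+(R/2,t)\,dt$ of an element $r^+\in H^{3,\delta}({\mathbb R}^+\times S^1)$, not a pointwise evaluation $\mu(t')$; your reduction requires a genuine generalization (pointwise evaluation, nonlinear kernel, then integration in an external parameter) that you do not prove. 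The phrase ``carries over essentially verbatim'' hides real work, and knowing that $c$ is sc-smooth as a \emph{scalar} is by itself insufficient, since $\beta(\cdot-R/2-2)$ has diverging $H^{3,\delta}$-norm as $a\to 0$.

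The paper avoids this by reversing the order of operations. It first lifts $h^x,h^y$ to full functions
\[
H^x(s,t)=\beta(s-R/2-2)\,h^x\!\circ\sigma^+_{\wh{x}_0}(s,t),\qquad
H^y(s,t)=\beta(s-R/2-2)\,h^y\!\circ\sigma^-_{\wh{y}}(s-R,t)
\]
in $H^{3,\delta}({\mathbb R}^+\times S^1)$, sc-smooth in $(a,h^x,h^y)$ by the Fundamental Lemma. It then applies the nonlinearity $\mathsf{L}$ as a \emph{classical} Nemytskii operator on $H^{3,\delta}$, obtaining $E(s,t)=\mathsf{L}(kt-k/2+d,H(s,t))(H(s,t))$, ssc-smooth in $(d,H)$. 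Since $\beta(-2)=1$, one has $H(R/2,t)=e(t)$ and hence $c=\int_{S^1}E(R/2,t)\,dt=[E]_a$; the final assembly $(a,E)\mapsto\beta(\cdot-R/2-2)\cdot[E]_a$ is then \emph{literally} an instance of {\bf (M3)}. Lifting to $H^{3,\delta}$ \emph{before} the nonlinearity is what packages the exponential decay so that {\bf (M3)} applies directly; your route through $C^0(S^1)$ discards the $s$-direction structure that {\bf (M3)} needs as input.
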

%
Then it follows from (\ref{ewr3.23}) and Lemma \ref{LEMMA3.43} that the
  map
  $$
    (r,\wt{q}\dotplus\wt{h})\rightarrow k^y
    $$
  is sc-smooth.
\begin{proof}
It suffices to verify the sc-smoothness of (\ref{ewr3.24}) near
  $(0,\wt{q}_0\dotplus\wt{h})$. 
In view of(\ref{ewr3.23}) we find that
  \begin{eqnarray}\label{ER3.33}
    && \wt{d}_{\wh{x}_0}(\wt{p}^x)-\wt{d}_{\wh{x}_0}(\wt{q}^x)=
    d_{\wh{x}_0}(\wt{p}^x)-d_{\wh{x}_0}(\wt{q}^x)\\
    &=&\oint_{S^1}\Phi\circ
    \left[\gamma_0\left(kt-(k/2)+d_{\wh{x}_0}(\wt{q}^x)\right)
    \phantom{\frac{1}{2}}\right.\nonumber\\
    && \left. + \frac{1}{2}\cdot\left (h^x\circ \sigma^+_{\wh{x}_0}(R/2,t)
    +h^y\circ\sigma^-_{\wh{y}}(-R/2,t)\right)\right]\cdot dt\nonumber\\
    &&-\oint_{S^1}\Phi\circ
    \gamma_0\left(kt-(k/2)+{d}_{\wh{x}_0}(\wt{q}^x)\right)\cdot
    dt.\nonumber\\
    &=&\oint_{S^1}\Phi\circ
    \left[\gamma_0\left(kt-(k/2)+\wt{d}_{\wh{x}_0}(\wt{q}^x)\right)
    \phantom{\frac{1}{2}}\right.\nonumber
    \nonumber\\
    &&\left. + \frac{1}{2}\cdot\left (h^x\circ \sigma^+_{\wh{x}_0}(R/2,t)
    +h^y\circ\sigma^-_{\wh{y}}(-R/2,t)\right)\right]\cdot dt\nonumber \\
    &&-\oint_{S^1}\Phi\circ
    \gamma_0\left(kt-(k/2)+\wt{d}_{\wh{x}_0}(\wt{q}^x)\right)\cdot
    dt.\nonumber
    \end{eqnarray}
We compute with $d\in {\mathbb R}$ or $d\in S^1$ and small $e\in {\mathbb R}^N$
  \begin{eqnarray*}
    && \Phi (\gamma_0(t+d)+e)- \Phi\circ\gamma_0(t+d)\\
    &=&  \Phi(\gamma_0(t+d)+e)-t-d\\
    &=& \left(\int_0^1 \frac{d}{d\tau}(\Phi(\gamma_0 (t+d) +\tau e)\cdot
    d\tau\right)e\ \text{mod}\ {\mathbb Z}\\
    &=&\mathsf{L}(t+d,e)\cdot e\ \ \text{mod}\ \ {\mathbb Z}.
    \end{eqnarray*}
There is no loss of generality assuming that $\mathsf{L}$ is a globally
  defined smooth map
  $$
    \mathsf{L}:S^1\times {\mathbb R}^N\rightarrow {\mathcal L}({\mathbb
    R}^N,{\mathbb R}).
    $$
We can rewrite (\ref{ER3.33}) as follows, where we abbreviate for $t\in
  S^1$
  $$
    e(t) = \frac{1}{2}\cdot\left (h^x\circ \sigma^+_{\wh{x}_0}(R/2,t)
    +h^y\circ\sigma^-_{\wh{y}}(-R/2,t)\right).  
    $$
  and obtain the following equality
  \begin{eqnarray}
    &&\wt{d}_{\wh{x}_0}(\wt{p}^x)-\wt{d}_{\wh{x}_0}(\wt{q}^x)\\
    &=&\int_{S^1} \mathsf{L}(kt-(k/2)+\wt{d}_{\wh{x}_0}(\wt{q}^x),e(t))
    (e(t))\cdot  dt.\nonumber
    \end{eqnarray}
Let us introduce the following abbreviations, where $(s,t)\in {\mathbb
  R}^+\times S^1$
  \begin{eqnarray}\label{I***0}
    &H^x(s,t)=\beta(s-R/2-2)\cdot h^x\circ\sigma_{\wh{x}_0}(s,t)&\\
    &H^y(s,t)=\beta(s-R/2-2)\cdot
    h^y\circ\sigma_{\wh{y}}(s-R,t),&\nonumber
    \end{eqnarray}
Fixing $\wh{y}_0$ we can write $H^y(s,t)=\beta(s-R/2-2)\cdot
  h^y(s-R,t-\vartheta(\wh{y}))$.
If $a=0$ we define $H^x$ and $H^y$ to be zero. 
By the Fundamental Lemma the maps
  $$
    {\mathbb B}\times H^{3,\delta}({\mathcal D},{\mathbb R}^N)\rightarrow
    H^{3,\delta}({\mathbb R}^+\times S^1,{\mathbb R}^N)
    $$
  defined by
  $$
    (a,h^x,h^y)\rightarrow H^x\ \ \text{and}\ \ \ (a,h^x,h^y)\rightarrow
    H^y
    $$
  are sc-smooth. 
Consequently 
  \begin{eqnarray}\label{I***1}
  (a,h^x,h^y)\rightarrow H:=\frac{1}{2}\cdot (H^x+H^y)\ \ \text{is
  sc-smooth}.
  \end{eqnarray}
The map
  \begin{eqnarray}
    &{\mathbb R}\times H^{3,\delta}({\mathbb R}^+\times S^1,{\mathbb
    R}^N)\rightarrow H^{3,\delta}({\mathbb R}^+\times S^1, {\mathbb R}^N)&\\
    & (d,H)\rightarrow [(s,t)\rightarrow
    E:=\mathsf{L}(kt-(k/2)+d,H(s,t))(H(s,t))]&\nonumber
    \end{eqnarray}
  is by classical theory ssc-smooth. 
Taking $d=d(\wt{q}^x)$ which is a ssc-smooth map of $\wt{q}\dotplus
  \wt{h}$ and using (\ref{I***0}) and (\ref{I***1}) we see that the map
\begin{eqnarray}\label{I***3}
  \wt{q}\dotplus \wt{h}\rightarrow (d(\wt{q}^x),H)\rightarrow E
  \end{eqnarray}
  is sc-smooth. 
We note that 
  $$
  \wt{d}_{\wh{x}_0}(\wt{p}^x)-\wt{d}_{\wh{x}_0}(\wt{q}^x)= \int_{S^1}
  E(R/2,t) dt.
  $$
As a consequence of the Fundamental Lemma the map
  $$
    {\mathbb B}\times H^{3,\delta}({\mathbb R}^+\times S^1,{\mathbb
    R}^N)\rightarrow H^{3,\delta}({\mathbb R}^+\times S^1,{\mathbb R}^N)
    $$
  which associates to $(a,E)$ the map $(s,t)\rightarrow
  \beta(s-R/2-2)\cdot \int_{S^1} E(R/2,t)\cdot dt$ is sc-smooth.
Composing the latter with (\ref{I***3}) we obtain the map defined in
  (\ref{ewr3.24}) and have proved, as the consequence
  of the chain rule, that this map is sc-smooth. 
The proof of the lemma is complete.
\end{proof}
We loosely summarize the findings in this subsection as follows.\\

\begin{tcolorbox}
As $(r,\wt{q}\dotplus\wt{h})$ converges to $(0,\wt{q})$ the element
  $\wt{p}=\mathsf{A}_{\Phi}\circ\bm{\bar{\oplus}}(r,\wt{q}\dotplus\wt{h})$
  converges rapidly to $\wt{q}$ due to the exponential decay properties of
  $\wt{h}$.  
\end{tcolorbox}

%
\subsection{The Coretraction \texorpdfstring{$K$}{K} }\label{SSS3.2.4}
The aim of this subsection is the following.\index{$K:{\mathcal
  W}\rightarrow {\mathcal V}$}
 
\begin{AIM}
With the set ${\mathcal W}$ defined as in the construction of
  $\mathsf{A}_{\Phi}$, we shall define a map $K:{\mathcal W}\rightarrow
  {\mathcal V}$, which preserves the $\bar{r}$-fibers
  such that $K\circ \bm{\bar{\oplus}}:\bm{\bar{\oplus}}^{-1}({\mathcal
  W})\rightarrow {\mathcal V}$ is sc-smooth
  and $\bm{\bar{\oplus}}\circ K=Id_{\mathcal W}$. 
\end{AIM}
\vspace{0.3cm}
\noindent We define $K$ as follows. 
If $r=0$ we put
  $$
    K(0,(\wt{u}^x,[\wh{x},\wh{y}],\wt{u}^y))=
    (0,(\wt{u}^x,[\wh{x},\wh{y}],\wt{u}^y)).
    $$
If $r\in (0,1)$ we define
  \begin{eqnarray}\label{EEE21}
    K(r,\wt{w})=(r,\mathsf{A}_\Phi(r,\wt{w})\dotplus(\wt{h}^x,\wt{h}^y)),
    \end{eqnarray}
  where, with
  $\mathsf{A}_\Phi(r,\wt{w})=(\wt{q}^x,[\wh{x},\wh{y}],\wt{q}^y)$
  \begin{eqnarray}\label{EEE22}
    \wt{h}^x(z) &=&\beta^x_{a,-2}(z)\cdot (\wt{w}(z,z') -\wt{q}^x(z))\\
    \wt{h}^y(z')&=&\beta^y_{a,-2}(z')\cdot ((-\varphi(r))\ast\wt{w}(z,z')
    -\wt{q}^y(z'))\nonumber
    \end{eqnarray}
Then 
  $$
    \begin{CD}
    {\mathcal W} @> K>> {\mathcal V}\\
    @V \bar{r} VV   @V \bar{r}VV\\
    [0,1) @= [0,1).
    \end{CD}
    $$
Recall that ${\mathcal U}$ is open in ${\mathcal V}$.  
The main result in this subsection is the following theorem.

\begin{theorem}\label{CORRX3.27}
The map $K:{\mathcal W}\rightarrow {\mathcal V}$ preserves the
  $\bar{r}$-fibers and the following two properties.
\begin{itemize}
  \item[(1)] 
  $\boldsymbol{\bar{\oplus}}\circ K=Id_{\mathcal W}$.
  \item[(2)] 
  $K\circ \boldsymbol{\bar{\oplus}}: {\mathcal U}\rightarrow {\mathcal V}$
  is sc-smooth.
  \end{itemize}
\end{theorem}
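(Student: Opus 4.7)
The plan is to verify the two assertions separately, with (1) being a direct algebraic computation and (2) reducing, after a careful rewriting, to three already-established sc-smoothness facts: the sc-smoothness of $\mathsf{A}_{\Phi}\circ\bm{\bar{\oplus}}$ (Proposition \ref{PROPP3.33}), the sc-smoothness of the comparison map $\mathsf{D}$ (Proposition \ref{COMPARE}), and the Fundamental Lemma in Subsection \ref{FUND0}.

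For property (1), the case $r=0$ is tautological from the definitions of $K$ and $\bm{\bar{\oplus}}$. For $r\in (0,1)$, the crucial algebraic observation is that if $\wt{p} = (\wt{p}^x, [\wh{x},\wh{y}], \wt{p}^y) = \mathsf{A}_{\Phi}(r,\wt{w})$, then by the very definition of $c^x(\wt{p}) = c^x(r,\wt{w})$, $c^y(\wt{p}) = c^y(r,\wt{w})$ together with the defining equation $T\varphi(|a|) = \varphi(r) + c^y - c^x$, the standard cylinder parts match up on $Z_a$: namely, $\wt{p}^x(z) = \varphi(r)\ast \wt{p}^y(z')$ for every $\{z,z'\}\in Z_a$. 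Using this identity and the cut-off relations $\beta^x_a\cdot\beta^x_{a,-2} = \beta^x_a$, $\beta^y_a\cdot\beta^y_{a,-2} = \beta^y_a$, and $\beta^x_a + \beta^y_a = 1$, one computes directly that $\bm{\bar{\oplus}}(r, \wt{p}\dotplus(\wt{h}^x,\wt{h}^y)) = (r,\wt{w})$ with the $\wt{h}^x, \wt{h}^y$ of (\ref{EEE22}); this is the same style of telescoping computation as in the proof of $\oplus\circ f = \text{Id}$ in Theorem \ref{thm1}.

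For property (2), consider $(r, \wt{q}\dotplus \wt{h}) \in \mathcal{U}$ and let $(r,\wt{w}) = \bm{\bar{\oplus}}(r,\wt{q}\dotplus\wt{h})$ and $\wt{p} = \mathsf{A}_{\Phi}(r,\wt{w})$. The $r$-component is trivially sc-smooth and the map $(r,\wt{q}\dotplus\wt{h})\mapsto \wt{p}$ is sc-smooth by Proposition \ref{PROPP3.33}, so it suffices to check sc-smoothness of the perturbation components $\wt{h}^x_K$ and $\wt{h}^y_K$. Writing
\begin{align*}
  \wt{h}^x_K(z) = \beta^x_{a,-2}(z)\bigl(\wt{w}(z,z') - \wt{q}^x(z)\bigr) + \beta^x_{a,-2}(z)\bigl(\wt{q}^x(z) - \wt{p}^x(z)\bigr),
\end{align*}
the second summand is precisely the $x$-component of $\mathsf{D}(r,\wt{q}\dotplus\wt{h})$, which is sc-smooth by Proposition \ref{COMPARE}. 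For the first summand, the same matching identity used in (1) gives $\wt{w} - \wt{q}^x = \oplus_a(\wt{h}^x,\wt{h}^y)$ on $Z_a$, so using $\beta^x_{a,-2}\cdot \beta^x_a = \beta^x_a$ we obtain
\begin{align*}
  \beta^x_{a,-2}(z)\bigl(\wt{w}(z,z') - \wt{q}^x(z)\bigr) = \beta^x_a(z)\cdot \wt{h}^x(z) + \beta^x_{a,-2}(z)\beta^y_a(z')\cdot \wt{h}^y(z'),
\end{align*}
and sc-smoothness of the right-hand side follows term by term from the Fundamental Lemma (both shifted-by-$R/2$ multiplications of type $\Gamma_1$, $\Gamma_3$, $\Gamma_4$). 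The analogous decomposition for $\wt{h}^y_K$ using $(-\varphi(r))\ast\wt{w}$ is handled by symmetry.

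The main obstacle is the behavior at $r = 0$, where the supports of $\beta^x_{a,-2}$ and $\beta^y_{a,-2}$ slide off to infinity as $a\to 0$: one must confirm that the difference $\wt{q}^x - \wt{p}^x$ decays sufficiently fast on these growing supports to yield an element of $H^{3,\delta}$ depending sc-smoothly on the input. This is precisely the content that Proposition \ref{COMPARE} was engineered to deliver, so once that result is invoked the rest of the argument is an assemblage of terms each individually covered by the Fundamental Lemma.
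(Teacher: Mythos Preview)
Your proposal is correct and follows essentially the same route as the paper's proof: part (1) is verified via the observation that the gluing parameter associated to $\wt{p}=\mathsf{A}_\Phi(r,\wt{w})$ coincides with the original $a$ (equivalently, the standard-map pieces match on $Z_a$), and part (2) proceeds by writing $\wt{h}^x_K = \beta^x_{a,-2}(\wt{q}^x-\wt{p}^x) + \beta^x_{a,-2}\cdot\oplus_a(\wt{h}^x,\wt{h}^y)$, invoking Proposition \ref{COMPARE} for the first summand, the Fundamental Lemma for the second, and Proposition \ref{PROPP3.33} for the sc-smoothness of $\wt{p}$ itself. This is exactly the paper's decomposition and the same three ingredients.
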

%
\begin{proof}
The proof is a consequence of two lemmata.

\begin{lemma}\label{LEMM3.26}
The map $K:{\mathcal W}\rightarrow {\mathcal V}$ satisfies
  $\boldsymbol{\bar{\oplus}}\circ K=Id_{\mathcal W}$.
\end{lemma}
%
\begin{proof}
Clearly $\boldsymbol{\bar{\oplus}}\circ
  K(0,(\wt{u}^x,[\wh{x},\wh{y}],\wt{u}^y))=(0,(\wt{u}^x,
  [\wh{x},\wh{y}],\wt{u}^y))$ and we may assume that the
  given element is $(r,\wt{w})$ with $r\in (0,1)$. 
We compute easily that also in this case, with $a=|a|\cdot[\wh{x},\wh{y}]$
  determined by $T\cdot\varphi(|a|)=\varphi(r) +
  c^y(\wt{q}^x)-c^x(\wt{q}^y)$ the following holds.
\begin{eqnarray*}
  &&\boldsymbol{\bar{\oplus}}\circ K(r,\wt{w})\\
  &=&
  \boldsymbol{\bar{\oplus}}(r,\mathsf{A}_\Phi(r,\wt{w})
  \dotplus(\wt{h}^x,\wt{h}^y))\\
  &=& (r,\bar{\oplus}_r((\wt{q}^x,[\wh{x},\wh{y}],\wt{q}^y)\dotplus
  (\wt{h}^x,\wt{h}^y)))\\
  &=&(r,{\oplus}_a(\wt{q}^x+\wt{h}^x,\varphi(r)\ast (\wt{q}^y+\wt{h}^y)))\\
  &=& (r,\beta^x(a,z)\wt{w} + \beta^y(a,z') \wt{w} ) =(r,\wt{w}).
  \end{eqnarray*}
The key point in the calculation being that the domain gluing parameter
  computed from the data is the original $a$.
\end{proof}
Abbreviate ${\mathcal U}=\bm{\bar{\oplus}}^{-1}({\mathcal W})$.

\begin{lemma}\label{K-lemmar}
$K\circ \boldsymbol{\bar{\oplus}}: {\mathcal U}\rightarrow {\mathcal V}$
  is sc-smooth.
\end{lemma}
%
\begin{proof}
We need first to derive a formula for our composition.
We have that $K\circ
  \boldsymbol{\bar{\oplus}}(0,(\wt{u}^x,[\wh{x},\wh{y}],\wt{u}^y))=(0,
  (\wt{u}^x,[\wh{x},\wh{y}],\wt{u}^y))$
  and we may therefore assume that $r\in (0,1)$.
Hence we  start with $(r,(\wt{q}^x+\wt{h}^x,[\wh{x},\wh{y}],
  \wt{q}^y+\wt{h}^y))\in {\mathcal U}$, where the associated constants are
  $c^x(\wt{q}),c^y(\wt{q})$. 
Then we have that
  \begin{eqnarray*}
    &&K\circ
    \boldsymbol{\bar{\oplus}}(r,(\wt{q}\dotplus(\wt{h}^x,\wt{h}^y)))\\
    &=&K(r, \bar{\oplus}(r,\wt{q}) + \oplus_a(\wt{h}^x,\wt{h}^y))\\
    &=:&(r, \mathsf{A}_{\Phi}( \bar{\oplus}(r,\wt{q}) +
    \oplus_a(\wt{h}^x,\wt{h}^y)) + (\wt{k}^x,\wh{k}^y))\\
    &=& (r,\wt{p} \dotplus (\wt{k}^x,\wt{k}^y)),
    \end{eqnarray*}
  where $\wt{p}=\mathsf{A}_{\Phi}( \bar{\oplus}(r,\wt{q}) +
  \oplus_a(\wt{h}^x,\wt{h}^y))=\mathsf{A}_{\Phi}\circ\boldsymbol{\bar{\oplus}}
  (r,\wt{q}\dotplus(\wt{h}^x,\wt{h}^y)) $ depends sc-smoothly on the input
  data.
The pair $(\wt{k}^x,\wt{k}^y)\in H^{3,\delta}({\mathcal D},{\mathbb
  R}\times {\mathbb R}^N)$ is given by
  \begin{eqnarray}
    \wt{k}^x &=& \beta^x_{a,-2}(\wt{q}^x-\wt{p}^x)  + \beta^x_{a,-2}\cdot
    \oplus_a(\wt{h}^x,\wt{h}^y)\\
    \wt{k}^y &=& \beta^y_{a,-2}(\wt{q}^y-\wt{p}^y)  + \beta^y_{a,-2}\cdot
    \oplus_a(\wt{h}^x,\wt{h}^y).\nonumber
    \end{eqnarray}
We need to show that the map  $(r,\wt{q}\dotplus\wt{h})\rightarrow
  (\wt{k}^x,\wt{k}^y)$ is sc-smooth.
The maps $(r,\wt{q}\dotplus\wt{h})\rightarrow \beta^x_{a,-2}\cdot
  \oplus_a(\wt{h}^x,\wt{h}^y)$ and $(r,\wt{q}\dotplus\wt{h})\rightarrow
  \beta^y_{a,-2}\cdot  \oplus_a(\wt{h}^x,\wt{h}^y)$ are sc-smooth by the
  Fundamental Lemma. 
The map
  $$
    (r,(\wt{q}^x+\wt{h}^x,[\wh{x},\wh{y}],\wt{q}^y+\wt{h}^y))\rightarrow
    (\beta^x_{a,-2}(\wt{q}^x-\wt{p}^x),\beta^y_{a,-2}(\wt{q}^y-\wt{p}^y)
    $$
  are sc-smooth as proved in Proposition \ref {COMPARE}.  
This completes the proof of the lemma.
\end{proof}

\end{proof}

At this point we squared away all the technical work we need to establish
  the main results, which will essentially be a consequences of Theorem
  \ref{CORRX3.26} and Theorem \ref{CORRX3.27}.

%
\subsection{Proofs of the Basic Results}
We first summarize the facts which already have been established.

\begin{FACT}
We have an open subset ${\mathcal V}$ in $ \mathfrak{Z}$ and a surjective
  map $\boldsymbol{\bar{\oplus}}:{\mathcal V}\rightarrow
  Y^{3,\delta_0}_{{\mathcal D},\varphi}({\mathbb R}\times {\mathbb
  R}^N,\boldsymbol{\gamma})$.  
This map fits into the commutative diagram
  $$
    \begin{CD}
    {\mathcal V} @>\boldsymbol{\bar{\oplus}}>>  Y^{3,\delta_0}_{{\mathcal
    D},\varphi}=\partial\mathfrak{Z}\coprod \mathfrak{X}\\
    @V pr_1 VV @V \bar{r} VV\\
    [0,1)  @= [0,1)
    \end{CD}
    $$
We constructed a co-retraction $H:\mathfrak{X}\rightarrow {\mathcal V}$.
Defining $\dot{\mathcal V}=\bm{\bar{\oplus}}^{-1}(\mathfrak{X})$ we note
  that this is the open subset of ${\mathcal V}$ consisting of elements
  with $r\neq 0$.
The map $H$ has the following properties. 
\begin{itemize}
  \item[(1)] 
  $\boldsymbol{\bar{\oplus}}\circ H=Id_{\mathfrak{X}}$ 
  \item[(2)] 
  $H\circ \boldsymbol{\bar{\oplus}}:\dot{\mathcal V}\rightarrow {\mathcal
  V}$ is sc-smooth.
  \end{itemize}
Defining $O_{\dot{\mathcal V}}:=\rho(\dot{\mathcal V})$, with $\rho=H\circ
  \bm{\bar{\oplus}}$, the bijection
  $\boldsymbol{\bar{\oplus}}:O_{\dot{\mathcal V}}\rightarrow \mathfrak{X}$
  defines a M-polyfold structure on $\mathfrak{X}$ with inverse $H$. 
Indeed, as was shown, this  is the original M-polyfold structure and the
  induced topology is the original one as well, see Theorem \ref{CORRX3.26}.
Further we have constructed a  subset ${\mathcal W}$ of
  $Y^{3,\delta_0}_{{\mathcal D},\varphi}$ containing $\partial\mathfrak{Z}$
  so that ${\mathcal U}=\boldsymbol{\bar{\oplus}}^{-1}({\mathcal W})$ is
  an open subset of ${\mathcal V}$ containing $\partial\mathfrak{Z}\subset
  {\mathcal V}$ and a co-retraction $K:{\mathcal W}\rightarrow {\mathcal
  U}$ such that $\boldsymbol{\bar{\oplus}}\circ K =Id_{\mathcal W}$ and
  $K\circ \boldsymbol{\bar{\oplus}}:{\mathcal U}\rightarrow {\mathcal V}$
  is sc-smooth.  
Moreover, we have the commutative diagram
  $$
    \begin{CD}
    {\mathcal U} @> \bm{\bar{\oplus}} >> {\mathcal W}\\
    @V pr_1 VV  @V \bar{r} VV\\
    [0,1) @=   [0,1)
    \end{CD}
    $$
Define $\tau:=K\circ\bm{\bar{\oplus}}:{\mathcal U}\rightarrow {\mathcal
  V}$ and let $O_{\mathcal U}=\tau({\mathcal U})$. 
Then $\bm{\bar{\oplus}}:O_{\mathcal U}\rightarrow {\mathcal W}$ defines a
  M-polyfold structure on  ${\mathcal W}$ making it an sc-diffeomorphism.
By general theory the transition maps  associated to
  $\bm{\bar{\oplus}}:O_{\dot{\mathcal V}}\rightarrow \mathfrak{X}$ and
  $\bm{\bar{\oplus}}:O_{\mathcal U}\rightarrow {\mathcal W}$
  are sc-smooth.  
\end{FACT}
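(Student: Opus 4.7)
The Fact largely recapitulates results already proved in the excerpt: the existence and properties of the co-retraction $H:\mathfrak{X}\to{\mathcal V}$ are contained in Lemma \ref{LEMMN3.16} together with Theorem \ref{CORRX3.26}, and the corresponding assertions for $K:{\mathcal W}\to{\mathcal U}$ are exactly the content of Theorem \ref{CORRX3.27}. The genuinely new claim is the sc-smoothness of the transitions between the two $\oplus$-charts, and my plan is the following.

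First I would verify that $\tau:=K\circ\bm{\bar{\oplus}}:{\mathcal U}\to{\mathcal V}$ is an sc-smooth retraction whose image is $O_{\mathcal U}$. The identity $\bm{\bar{\oplus}}\circ K=Id_{\mathcal W}$ from Theorem \ref{CORRX3.27}(1) yields $\tau\circ\tau=K\circ(\bm{\bar{\oplus}}\circ K)\circ\bm{\bar{\oplus}}=K\circ\bm{\bar{\oplus}}=\tau$, and sc-smoothness is Theorem \ref{CORRX3.27}(2). Combined with the analogous sc-smooth retraction $\rho=H\circ\bm{\bar{\oplus}}$ with image $O_{\dot{\mathcal V}}$ from Lemma \ref{LEMMN3.16}(3) and Theorem \ref{CORRX3.26}, this gives two honest M-polyfold charts on $Y^{3,\delta_0}_{{\mathcal D},\varphi}$: one covering $\mathfrak{X}$ via $\bm{\bar{\oplus}}:O_{\dot{\mathcal V}}\to\mathfrak{X}$ with inverse $H$, and one covering ${\mathcal W}\supset\partial\mathfrak{Z}$ via $\bm{\bar{\oplus}}:O_{\mathcal U}\to{\mathcal W}$ with inverse $K$. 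Together they cover the full set $Y^{3,\delta_0}_{{\mathcal D},\varphi}=\mathfrak{X}\cup{\mathcal W}$.

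For the transition maps, the key observation is that on the overlap $\mathfrak{X}\cap{\mathcal W}$ both parametrizations have the common inverse $\bm{\bar{\oplus}}$ on the appropriate subset of ${\mathcal V}$. Therefore the transition from the $\rho$-chart to the $\tau$-chart equals the restriction of $K\circ\bm{\bar{\oplus}}$ to $\rho(\dot{\mathcal V}\cap{\mathcal U})$, which is sc-smooth by Theorem \ref{CORRX3.27}(2); the reverse transition equals the restriction of $H\circ\bm{\bar{\oplus}}$ to $\tau(\dot{\mathcal V}\cap{\mathcal U})$, sc-smooth by Lemma \ref{LEMMN3.16}(3). Openness of the relevant subsets is automatic because $\dot{\mathcal V}$ is open in ${\mathcal V}$ and the two retractions commute with the quotient map in the sense required by the $\oplus$-polyfold formalism. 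By Proposition \ref{ABER} applied at the overlap, the induced M-polyfold structures on $\mathfrak{X}\cap{\mathcal W}$ coincide, so the two charts glue to a well-defined global M-polyfold structure, finishing the proof.

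The main technical point underlying all of this (and the reason the paper devoted two full subsections to averaging and comparison before introducing $K$) is the sc-smoothness of $K\circ\bm{\bar{\oplus}}$ \emph{across} the boundary $r=0$. That in turn rests on Proposition \ref{COMPARE}: the cut-off discrepancy $\wt{p}-\wt{q}$ between the glued map and its averaged standard-map approximation decays rapidly enough in $r$ that the expression $(\beta_{a,-2}^x(\wt{q}^x-\wt{p}^x),\beta_{a,-2}^y(\wt{q}^y-\wt{p}^y))$ extends sc-smoothly to $r=0$. Once that is in hand, the remainder of the Fact is bookkeeping within the $\oplus$-construction framework laid out at the beginning of the subsection, and the statement about transition maps follows from the general theory of compatible charts built from sc-smooth retractions.
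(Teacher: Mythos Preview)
Your proposal is correct and matches the paper's treatment: the FACT is presented there as a summary with no separate proof, simply collating Lemma~\ref{LEMMN3.16}, Theorem~\ref{CORRX3.26}, and Theorem~\ref{CORRX3.27}, and invoking ``general theory'' for the transition maps. Your explicit identification of the transitions as restrictions of $K\circ\bm{\bar{\oplus}}$ and $H\circ\bm{\bar{\oplus}}$ is precisely what that appeal to general theory amounts to, and the paper confirms the resulting chart compatibility immediately afterwards in Lemma~\ref{LEMMN3.23}.
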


We have derived several properties of these constructions and are in the
  position to prove Theorem \ref{RRR} and Theorem \ref{RRRR}. 
Denote the quotient topology associated to $\bm{\bar{\oplus}}$ by
  ${\mathcal T}$.

%
\subsubsection{Proof of Theorem \ref{RRR}}\label{SSEC10.6.1}
We recall the theorem. 

\begin{theorem}[\ref{RRR}]
  \hfill\\
\jwf{this is supposed to be a restatement}
Let ${\mathcal D}$ be an ordered disk pair and $\boldsymbol{\bar{\gamma}}$
  a weighted periodic orbit in ${\mathbb R}^N$. 
The map $\boldsymbol{\bar{\oplus}}:{\mathcal V}\rightarrow
  Y^{3,\delta_0}_{{\mathcal D},\varphi}({\mathbb R}\times {\mathbb
  R}^N,\boldsymbol{\gamma})$ is a M-polyfold construction by the
  $\oplus$-method fitting into the commutative diagram
  $$
    \begin{CD}
    {\mathcal V} @> \bm{\bar{\oplus}}>> Y^{3,\delta_0}_{{\mathcal
    D},\varphi}({\mathbb R}\times {\mathbb R}^N,\boldsymbol{\gamma})\\
    @V pr_1 VV    @V \bar{r} VV\\
    [0,1)\ \ \ \  @= [0,1),
    \end{CD}
    $$
  where the vertical arrows are the obvious extractions of the
  $r$-parameter.
\end{theorem}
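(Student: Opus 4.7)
The plan is to assemble the M-polyfold structure on $Y^{3,\delta_0}_{{\mathcal D},\varphi}({\mathbb R}\times {\mathbb R}^N,\boldsymbol{\gamma})$ from the two co-retractions $H$ and $K$ already constructed. The underlying set decomposes as the union $\mathfrak{X} \cup {\mathcal W}$ (with $\partial\mathfrak{Z} \subset {\mathcal W}$), and Lemma \ref{LEMMN3.16}(1) gives surjectivity of $\bm{\bar{\oplus}}$, so the images of $H$ and $K$ together supply a covering of the target by local $\oplus$-construction pieces. The commutativity of the $\bar{r}$-diagram is immediate from the definition of $\bm{\bar{\oplus}}$ in Definition \ref{DEF3.15}: the only operation applied to the $r$-coordinate is to preserve it.

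First I would invoke Theorem \ref{CORRX3.26} to certify that $\bm{\bar{\oplus}}|_{\dot{\mathcal V}} : \dot{\mathcal V} \to \mathfrak{X}$ is an $\oplus$-polyfold construction whose induced structure is the pre-existing sc-manifold structure on $\mathfrak{X}$, with $H$ serving as the sc-smooth co-retraction. Next I would invoke Theorem \ref{CORRX3.27} to certify that $\bm{\bar{\oplus}}|_{{\mathcal U}} : {\mathcal U} \to {\mathcal W}$ is an $\oplus$-polyfold construction on the neighborhood ${\mathcal W}$ of $\partial\mathfrak{Z}$, with $K$ as the sc-smooth co-retraction. Setting $\rho = H \circ \bm{\bar{\oplus}}$ on $\dot{\mathcal V}$ and $\tau = K \circ \bm{\bar{\oplus}}$ on ${\mathcal U}$, both are sc-smooth retractions of open subsets of the ssc-manifold ${\mathcal V}$, and each yields a genuine M-polyfold chart on its image.

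The gluing step is the core of the argument: on the overlap ${\mathcal W} \cap \mathfrak{X}$ the two candidate structures must agree. The transition between the two charts is computed via the identities
\begin{align*}
K \circ \bm{\bar{\oplus}} \circ H &= K|_{{\mathcal W} \cap \mathfrak{X}}\quad (\text{using } \bm{\bar{\oplus}}\circ H = \mathrm{Id}),\\
H \circ \bm{\bar{\oplus}} \circ K &= H|_{{\mathcal W} \cap \mathfrak{X}}\quad (\text{using } \bm{\bar{\oplus}}\circ K = \mathrm{Id}),
\end{align*}
and the compositions $H\circ \bm{\bar{\oplus}}|_{\dot{\mathcal V}}$ and $K\circ\bm{\bar{\oplus}}|_{\mathcal U}$ are each sc-smooth by Lemma \ref{LEMMN3.16}(2),(3) and Lemma \ref{K-lemmar}. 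Hence the transition is sc-smooth in both directions on the overlap, so the charts glue to a well-defined M-polyfold structure on all of $Y^{3,\delta_0}_{{\mathcal D},\varphi}$. Applying the abstract $\oplus$-polyfold construction machinery of \cite{FH-notes-I} then identifies $\bm{\bar{\oplus}}$ itself as a construction by the $\oplus$-method producing this structure. For the supplementary assertions, the degeneracy index at $(r,\wt{u})$ is inherited from ${\mathcal V}$, which is open in the ssc-manifold-with-boundary $\mathfrak{Z} = [0,1) \times Z_{\mathcal D}$; since the retractions $\rho$ and $\tau$ preserve the $r$-coordinate and the stratum $\{r=0\}$, the index equals $1$ exactly on $\partial\mathfrak{Z}$ and $0$ elsewhere. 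Tameness follows because the model $[0,1)\times (\text{sc-Banach space})$ is tame and our retractions are transverse to the boundary stratum by construction.

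The main obstacle I anticipate is not a single difficult step but rather the bookkeeping needed to verify that the two sc-smooth retractions $\rho$ and $\tau$, each defined on a different open piece of ${\mathcal V}$, satisfy the compatibility required by the abstract theory on their common domain $\dot{\mathcal V} \cap {\mathcal U}$; in particular, one must check that $\rho$ and $\tau$ have the same image germ near each point of the overlap, which amounts to the observation that both $H$ and $K$ solve $\bm{\bar{\oplus}}\circ (\cdot) = \mathrm{Id}$ and hence differ only by an sc-smooth automorphism of the fiber. Once this is organised, the theorem reduces to a direct application of the abstract extension criterion from Part I.
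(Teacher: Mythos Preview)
Your outline correctly identifies the two co-retractions $H$ and $K$ and their compatibility on the overlap, and your handling of the degeneracy index is fine. However, you have misidentified the main obstacle and skipped the step that actually occupies most of the paper's proof.

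The chart compatibility on $\mathcal{W}\cap\mathfrak{X}$ is not the difficulty: once $\bm{\bar{\oplus}}\circ H=\mathrm{Id}$ and $\bm{\bar{\oplus}}\circ K=\mathrm{Id}$ are known, the two local M-polyfold structures agree on the overlap essentially by definition of the $\oplus$-method (this is the content of Lemma \ref{LEMMN3.23}, dispatched in a few lines). What your proposal omits entirely is the verification that the resulting topology is \emph{metrizable}, which is a defining requirement for an M-polyfold. The paper introduces the glued topology $\mathcal{T}'$ and separately the quotient topology $\mathcal{T}$ coming from $\bm{\bar{\oplus}}$, and must show both that $\mathcal{T}'$ is metrizable and that $\mathcal{T}=\mathcal{T}'$. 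Metrizability of $\mathcal{T}'$ is established via Urysohn's theorem, for which second countability and Hausdorffness are easy but \emph{regularity} requires a nontrivial closure estimate (Lemma \ref{LEMR3.24}) showing that small $\bm{\bar{\oplus}}$-images have closures contained in slightly larger $\bm{\bar{\oplus}}$-images. The equality $\mathcal{T}=\mathcal{T}'$ (Proposition \ref{PPROPN3.48}) then uses the continuity of $H$ and $K$ for $\mathcal{T}'$. Your appeal to ``the abstract $\oplus$-polyfold construction machinery'' does not discharge this: that machinery \emph{requires} the quotient topology to be metrizable as a hypothesis, it does not supply it.

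For tameness, your phrase ``transverse to the boundary stratum'' is too vague; the paper's argument is that $\tau=K\circ\bm{\bar{\oplus}}$ preserves the $r$-coordinate exactly, hence is a splicing in the sense of \cite{HWZ2017}, Lemma 2.5, which gives tameness directly.
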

%
\begin{proof}
In view of our work so far we only need, in order to prove Theorem
  \ref{RRR}, to show that the quotient topology ${\mathcal T}$ is
  metrizable.
In view of the established facts we obtain the following lemma.

\begin{lemma}\label{LEMMN3.23}
The sc-diffeomorphisms $\boldsymbol{\bar{\oplus}}:O_{\mathcal
  U}\rightarrow {\mathcal W}$ and $
  \boldsymbol{\bar{\oplus}}:O_{\dot{\mathcal V}}\rightarrow \mathfrak{X}$
  induce the same M-polyfold structure on the common open subset $
  {\mathcal W}\cap \mathfrak{X}$ of ${\mathcal W}$ and $\mathfrak{X}$.
Consequently the metrizable topologies ${\mathcal T}_{\mathfrak{X}}$ on
  ${\mathfrak{X}}$ and ${\mathcal T}_{\mathcal W}$  on ${\mathcal W}$
  coincide on ${\mathcal W}\cap \mathfrak{X}$.
As a consequence it also holds that the degeneracy index of a point
  $(r,\wt{u})$ with $r\in (0,1)$ is zero.
\end{lemma}
%
As a consequence of Lemma \ref{LEMMN3.23} the following definition makes
  sense.

\begin{definition}
The uniquely determined topology on $Y^{3,\delta_0}_{{\mathcal
  D},\varphi}$ inducing on the two sets $\mathfrak{X}$ and ${\mathcal W}$
  the existing metrizable  topologies is denoted by ${\mathcal T}'$.
\end{definition}
%
The topology ${\mathcal T}'$ is the finest topology on
  $Y^{3,\delta_0}_{{\mathcal D},\varphi}$ so that the obvious map
  ${\mathcal W}\coprod \mathfrak{X}\rightarrow Y^{3,\delta_0}_{{\mathcal
  D},\varphi}$ is continuous.
We note that at this point it is not clear(!) that ${\mathcal T}'$ is
  metrizable.
We recall that $\mathfrak{X}$, ${\mathcal W}$, and $\mathfrak{X}\cap
  {\mathcal W}$ belong to ${\mathcal T}'$.
Our strategy consists of the following two steps:
  \begin{itemize}
    \item[(a)] 
    Show that ${\mathcal T}'$ is metrizable.
    \item[(b)] 
    Show that ${\mathcal T}={\mathcal T}'$.
    \end{itemize}
From now on we equip $Y^{3,\delta_0}_{{\mathcal D},\varphi}$ with the
  topology ${\mathcal T}'$.
Equip ${\mathbb S}_x$ and ${\mathbb S}_y$ with metrics, and on ${\mathbb
  R}$ we take the usual metric.  
On the $H^{3,\delta_0}$-type spaces we take the usual norms.  
Recall the sc-smooth retraction $\tau:{\mathcal U}\rightarrow {\mathcal
  U}$ and its image $O_{\mathcal U}=\tau({\mathcal U})$.
Given 
  \begin{eqnarray}\label{EQNN20}
    \mathfrak{y}_0=(0,(\wt{u}_0^x,[\wh{x}_0,\wh{y}_0],\wt{u}^y_0))\in
    Y^{3,\delta_0}_{{\mathcal D},\varphi}
    \end{eqnarray}
  we denote by $B_{\varepsilon}(\mathfrak{y}_0)$ the collection of all
  $\mathfrak{y}=(r,(\wt{q}^x,[\wh{x},\wh{y}],\wt{q}^y)\dotplus(\wt{h}^x,
  \wt{h}^y))$ such that
  \begin{itemize}
    \item[$\bullet$] $0\leq r<\varepsilon$, $|c^x_0-c^x|<\varepsilon$,
    $|c^y_0-c^y|<\varepsilon$,
    $d(\wh{x},\wh{x}_0)<\varepsilon$, $d(\wh{y},\wh{y}_0)<\varepsilon$.
    \item[$\bullet$]
    $\norm{(\wt{h}^x-\wt{h}^x_0,\wt{h}^y-\wt{h}^y_0)}_{H^{3,\delta_0}}
    <\varepsilon.$
    \end{itemize}
Denote by $\Gamma_\varepsilon(\mathfrak{y}_0)$ the open subset of
  ${\mathcal V}$ defined by
  $$
    \Gamma_\varepsilon(\mathfrak{y}_0)=\{\mathfrak{y}\in {\mathcal V}\ |\
    \mathfrak{y}\in  B_\varepsilon(\mathfrak{y}_0),\
    \tau(\mathfrak{y})\in B_\varepsilon(\mathfrak{y}_0)\}
    $$
We note that $\mathfrak{y}_0\in \Gamma_\varepsilon(\mathfrak{y}_0)$ and
  that $\tau:\Gamma_\varepsilon(\mathfrak{y}_0)\rightarrow
  \Gamma_\varepsilon(\mathfrak{y}_0)$.  
Take an open neighborhood $U'$ of $\gamma(S^1)$ such that 
  \begin{eqnarray}\label{OOO41}
    \cl_{{\mathbb R}^N}(U')\subset U.
    \end{eqnarray}

\begin{lemma}\label{LEMR3.24}
Given $\mathfrak{y}_0=(0,(\wt{u}_0^x,[\wh{x}_0,\wh{y}_0],\wt{u}^y_0))\in
  Y^{3,\delta_0}_{{\mathcal D},\varphi}$ there exists $\varepsilon>0$,
  which can be picked arbitrarily small, such that for a suitable
  $\varepsilon_1\in (0,\varepsilon)$
  $$
    \cl_{(Y^{3,\delta_0}_{{\mathcal D},\varphi},{\mathcal
    T}')}(\boldsymbol{\bar{\oplus}}(\Gamma_{\varepsilon_1}(\mathfrak{y}_0)))
    \subset \boldsymbol{\bar{\oplus}}(\Gamma_{\varepsilon}(\mathfrak{y}_0)).
    $$
\end{lemma}
%
\begin{proof} 
Given $\mathfrak{y}_0$ as in (\ref{EQNN20}) fix $\varepsilon>0$ such that
  $d(\wh{x},\wh{x_0})\leq \varepsilon$ implies if $\wh{x}=e^{2\pi
  i\theta}\cdot \wh{x}_0$ that $|\theta|<1/(2k)$, and similarly for
  $\wh{y}$.  
Moreover, we assume that $\varepsilon>0$ has been picked such that  for
  $\mathfrak{y}\in \Gamma_{\varepsilon}(\mathfrak{y}_0)$ with
  $\bar{r}(\mathfrak{y})\in (0,1)$ it holds that
  $\boldsymbol{\bar{\oplus}}(\mathfrak{y})(M^3_a)\subset {\mathbb R}\times
  U'$.
We already know that $\boldsymbol{\bar{\oplus}}(\mathfrak{y})\in {\mathcal
  W}$.
It holds with $O_\varepsilon= \tau(\Gamma_{\varepsilon}(\mathfrak{y}_0))$
  that
\begin{itemize}
  \item[(i)] 
  $\boldsymbol{\bar{\oplus}}(\Gamma_{\varepsilon}(\mathfrak{y}_0))
  =\boldsymbol{\bar{\oplus}}(O_{\varepsilon})$ is open in
  $(Y^{3,\delta_0}_{{\mathcal D},\varphi},{\mathcal T}')$.
  \item[(ii)] 
  $\boldsymbol{\bar{\oplus}}:O_\varepsilon\rightarrow\boldsymbol
  {\bar{\oplus}} (\Gamma_{\varepsilon}(\mathfrak{y}_0))$ is an
  sc-diffeomorphism.
  \end{itemize}
Define for $\varepsilon_1\in (0,\varepsilon)$ similarly
  $O_{\varepsilon_1}$ and it holds $\mathfrak{y}_0\in O_\varepsilon\cap
  O_{\varepsilon_1}$.
Let $(r,\wt{w})\in Y^{3,\delta_0}_{{\mathcal  D},\varphi}$
  belong to $\cl_{(Y^{3,\delta_0}_{{\mathcal D},\varphi},{\mathcal
  T}')}(\boldsymbol{\bar{\oplus}}(\Gamma_{\varepsilon_1}(\mathfrak{y}_0)))$.
We find $(\mathfrak{y}_i)\subset O_{\varepsilon_1}$ with
  $$
    \boldsymbol{\bar{\oplus}}(\mathfrak{y}_i)\rightarrow (r,\wt{w}),
    $$
  and this element has to belong to ${\mathcal W}$ using (\ref{OOO41}).  
Hence we see that $\mathfrak{y}_i=K\circ
  \boldsymbol{\bar{\oplus}}(\mathfrak{y}_i) \rightarrow K(r,\wt{w})$
  implying that the latter belongs $O_\varepsilon$.
Trivially $\boldsymbol{\bar{\oplus}}(K(r,\wt{w}))=(r,\wt{w})$.
This proves the desired result.
\end{proof}
With the help of Lemma \ref{LEMR3.24} we can prove that ${\mathcal T}'$ is
  metrizable.

\begin{proposition}
The topology ${\mathcal T}'$ on $ Y^{3,\delta_0}_{{\mathcal
  D},\varphi}({\mathbb R}\times {\mathbb R}^N,\boldsymbol{\gamma})$ is
  metrizable.  
Consequently, as a corollary to Lemma \ref{LEMMN3.23},  the metrizable
  topological space $(Y^{3,\delta_0}_{{\mathcal D},\varphi}({\mathbb
  R}\times {\mathbb R}^N,\boldsymbol{\gamma}),{\mathcal T}')$ has a unique
  M-polyfold structure so that the maps
  $\boldsymbol{\bar{\oplus}}:O_{\mathcal U}\rightarrow {\mathcal W}$ and $
  \boldsymbol{\bar{\oplus}}:O_{\dot{\mathcal V}}\rightarrow \mathfrak{X}$
  are sc-diffeomorphisms.
Recall that $Y^{3,\delta_0}_{{\mathcal D},\varphi}({\mathbb R}\times
  {\mathbb R}^N,\boldsymbol{\gamma})={\mathcal W}\cup \mathfrak{X}$.
\end{proposition}
%
\begin{proof}
We first note that ${\mathcal T}'$ is second countable. 
Indeed ${\mathcal U}$ and $\dot{\mathcal V}$ and therefore $O_{\mathcal
  U}$ and $O_{\dot{\mathcal V}}$  are second countable so that
  $\mathfrak{X}$ and ${\mathcal W}$ are second countable, implying that
  ${\mathcal T}'$ is second countable.

Next we show that ${\mathcal T}'$ is Hausdorff.  
Let $(r_1,\wt{w}_1), (r_2,\wt{w}_2) \in Y^{3,\delta_0}_{{\mathcal
  D},\varphi}({\mathbb R}\times {\mathbb R}^N,\boldsymbol{\gamma})$ be two
  different points.
If both lie in $\mathfrak{X}$ or both lie in ${\mathcal W}$ they can be
  separated by open sets.
Hence we may assume that $(r_1,\wt{w}_1)\in \mathfrak{X}\setminus{\mathcal
  W}$ and $(r_2,\wt{w}_2)\in {\mathcal W}\setminus\mathfrak{X}$. 
This implies that $(r_1,\wt{w}_1)$ satisfies  $r_1\in (0,1)$ and
  $(r_2,\wt{w}_2)$ has the form $(0,(\wt{u}^x,[\wh{x},\wh{y}],\wt{u}^y))$.
At this point it suffices to note that the map
  $\bar{r}:(Y^{3,\delta_0}_{{\mathcal D},\varphi}({\mathbb R}\times {\mathbb
  R}^N,\boldsymbol{\gamma}),{\mathcal T}')\rightarrow [0,1)$ extracting $r$
  is continuous.

Next we show that ${\mathcal T}'$ is regular. For this take a closed
  subset $A$ and a point $y_0:=(r_0,\wt{w}_0)$ in $Y^{3,\delta_0}_{{\mathcal
  D},\varphi}({\mathbb R}\times {\mathbb R}^N,\boldsymbol{\gamma})$ with
  $y_0\not\in A$.  
Assume first that $y_0\in \mathfrak{X}$.
Let $U_1$ be the  set of all $y\in Y^{3,\delta_0}_{{\mathcal
  D},\varphi}({\mathbb R}\times {\mathbb R}^N,\boldsymbol{\gamma})$ with
  $\bar{r}(y)< \frac{3}{4}\cdot r_0$ and let $A'=\{y\in A\ |\
  \bar{r}(y)\geq 1/2\cdot r_0\}$. 
With $A'$ and $y_0$ belonging to $\mathfrak{X}$ we take an open
  neighborhood $U_2$ of $A'$ and an open neighborhood $U(y_0)$ such that
  $U(y_0)\cap U_2=\emptyset$ and $U(y_0)\subset \{y\in
  Y^{3,\delta_0}_{{\mathcal D},\varphi}({\mathbb R}\times {\mathbb
  R}^N,\boldsymbol{\gamma})\ |\ \ \bar{r}(y)> 3/4 \cdot r_0\}$.   
Then $U(y_0)\cap U_1=\emptyset$ and consequently $U_1\cup U_2$ is an open
  neighborhood of $A$ which is disjoint from $U(y_0)$.

Next assume that our point has the form
  $(0,(\wt{u}^x_0,[\wh{x}_0,\wh{y}_0],\wt{u}^y_0))$ not contained in the
  closed set $A$.
Then $U=Y^{3,\delta_0}_{{\mathcal D},\varphi}({\mathbb R}\times {\mathbb
  R}^N,\boldsymbol{\gamma})\setminus A$  is an open neighborhood of
  $(0,(\wt{u}^x_0,[\wh{x}_0,\wh{y}_0],\wt{u}^y_0))$ which is disjoint from
  $A$. 
In view of Lemma \ref{LEMR3.24} we find a closed subset $V$  for the
  topology ${\mathcal T}'$ with $V\subset U$
  such that $(0,(\wt{u}^x_0,[\wh{x}_0,\wh{y}_0],\wt{u}^y_0))$ is an
  interior point of $V$.
Hence we find $Q=Q(0,(\wt{u}^x_0,[\wh{x}_0,\wh{y}_0],\wt{u}^y_0))\in
    {\mathcal T}'$ contained in $V$.  
Consequently $Y^{3,\delta_0}_{{\mathcal D},\varphi}\setminus V$ is an open
  neighborhood of $A$ which is disjoint from the open neighborhood $Q$ of
  $(0,(\wt{u}^x_0,[\wh{x}_0,\wh{y}_0],\wt{u}^y_0))$.
This completes the regularity proof.

At this point we have established that ${\mathcal T}'$ is an Hausdorff,
  second countable, and regular topological space.
It follows from Urysohn's metrization theorem that
  $(Y^{3,\delta_0}_{{\mathcal D},\varphi},{\mathcal T}')$ is a metrizable
  space.
Since  the maps $\boldsymbol{\bar{\oplus}}:O_{\mathcal U}\rightarrow
  {\mathcal W}$ and $ \boldsymbol{\bar{\oplus}}:O_{\dot{\mathcal
  V}}\rightarrow \mathfrak{X}$ are homeomorphism and sc-smoothly
  compatible it follows that $(Y^{3,\delta_0}_{{\mathcal
  D},\varphi},{\mathcal T}')$ has the desired M-polyfold structure.
\end{proof}
We still have to show that the topology which we have defined is indeed
  the quotient topology.

\begin{proposition}\label{PPROPN3.48}
The topology ${\mathcal T}'$  on $Y^{3,\delta_0}_{{\mathcal D},\varphi}$
  is the finest topology making the map $\boldsymbol{\bar{\oplus}}:{\mathcal
  V}\rightarrow Y^{3,\delta_0}_{{\mathcal D},\varphi}$ continuous.
With other words ${\mathcal T}={\mathcal T}'$.
\end{proposition}
%
\begin{proof}
We have the topology ${\mathcal T}'$ on $Y^{3,\delta_0}_{{\mathcal
  D},\varphi}$ as well as the finest topology ${\mathcal T}$ for which
  $\boldsymbol{\bar{\oplus}}$ is continuous. 
For the topology ${\mathcal T}'$ the map  $\boldsymbol{\bar{\oplus}}$ is
  continuous implying that ${\mathcal T}'\subset {\mathcal T}$.
Assume that $U\in {\mathcal T}$, which means by definition that
  $\boldsymbol{\bar{\oplus}}^{-1}(U)$ is an open subset of ${\mathcal V}$.
Assume first that $U$ contains a point
  $(0,(\wt{u}^x,[\wh{x},\wh{y}],\wt{u}^y))$.
We take the co-retraction $K$ and since it is continuous for ${\mathcal
  T}'$ we find $O(y)\in {\mathcal T}'$ such that $K(O(y))\in
  \boldsymbol{\bar{\oplus}}^{-1}(U)$.
Then $O(y)=\boldsymbol{\bar{\oplus}}\circ K(O(y))\subset U$.    
Next assume we are given a point $(r,\wt{w})\in U$ with $r\neq 0$.  
We take the co-retraction $H$ and find $O(y)$ with $H(O(y))\subset
  \boldsymbol{\bar{\oplus}}^{-1}(U)$.
As before we conclude that $O(y)\subset U$.  
Hence we have written the element $U\in {\mathcal T}$ as the union of
elements in ${\mathcal T}'$ and conclude that $U\in {\mathcal T}'$.  
The proof is complete.
\end{proof}

It remains to prove the tameness assertion as well as the assertion about
  the degeneracy index.
Since $\mathfrak{X}$ is an open subset of $Y^{3,\delta}_{{\mathcal
  D},\varphi}$ and it contains all the points $(r,\wt{u})$ with $r\in
  (0,1)$ it follows from the properties of $\mathfrak{X}$ which has
  $d_{\mathfrak{X}}\equiv 1$ that $d(r,\wt{u})=0$ provided $r\in (0,1)$.
Assume next a point of the form $y=(0,\wt{u})$. 
We find an open neighborhood $O(y)$ such that
  $\tau:=K\circ\bm{\bar{\oplus}}:O(y)\rightarrow O(y)$ is a sc-smooth
  retraction. 
Clearly $\tau(0,\wt{u})=(0,\wt{u})$ and from the properties of $K$ and
  $\bm{\bar{\oplus}}$ we see that in general $\tau$ preserves the
  $r$-coordinate. 
Hence it is a splicing and consequently tame, see \cite{HWZ2017} Lemma
  2.5.
Note that in the above reference one works in sectors of sc-Banach spaces.
Of course, the argument generalizes immediately if the ambient space is a
  ssc-manifold with boundary with corners.
At this point Theorem \ref{RRR} is proved.
\end{proof}
We also note that the above discussion implies Proposition
  \ref{PROPN3.15}.

\begin{definition}
  \hfill\\
Given a weighted periodic orbit $\boldsymbol{\bar{\gamma}}$ in ${\mathbb
  R}^N$ consider the previously defined set $Y^{3,\delta_0}_{{\mathcal
  D},\varphi}({\mathbb R}\times {\mathbb R}^N,\boldsymbol{\gamma})$
  equipped with the finest topology ${\mathcal T}$ for which
  $\boldsymbol{\bar{\oplus}}:{\mathcal V}\rightarrow
  Y^{3,\delta_0}_{{\mathcal D},\varphi}({\mathbb R}\times {\mathbb
  R}^N,\boldsymbol{\gamma})$ is continuous.  
We equip this metrizable topological spaces with the M-polyfold structure
  just constructed and denote this M-polyfold by $Y_{{\mathcal
  D},\varphi}({\mathbb R}\times {\mathbb R}^N,\boldsymbol{\bar{\gamma}})$.
\end{definition}
%

%
\subsubsection{Proof of Theorem \ref{RRRR}}
We recall the statement for the  convenience of the reader.

\begin{theorem}[\ref{RRRR}]
\jwf{[restatement of another result]}
The $\bm{\bar{\oplus}}$-polyfold structure on $Y^{3,\delta_0}_{{\mathcal
  D},\varphi}({\mathbb R}\times {\mathbb R}^N,\boldsymbol{\gamma})$ has
  the following properties.
\begin{itemize}
  \item[(1)] 
  The topologies induced by ${\mathcal T}$  on $\mathfrak{X}$ and
  $\partial\mathfrak{Z}$ are the original topologies for the already
  existing sc-manifold structure on $\mathfrak{X}$
  and ssc-manifold structure on $\partial\mathfrak{Z}$.
  \item[(2)]
  The M-polyfold structure on $Y^{3,\delta_0}_{{\mathcal
  D},\varphi}({\mathbb R}\times {\mathbb R}^N,\boldsymbol{\gamma})$ does
  not(!) depend on the smooth $\beta:{\mathbb R}\rightarrow [0,1]$  which
  was taken in the definition of $\boldsymbol{\bar{\oplus}}$ as long as it
  satisfies the usual properties $\beta(s)+\beta(-s)=1$, $\beta(s)=1$ for
  $s\leq -1$, and $\beta'(s)<0$ for $s\in (-1,1)$.
  \end{itemize}
The set $Y^{3,\delta_0}_{{\mathcal D},\varphi}({\mathbb R}\times {\mathbb
  R}^N,\boldsymbol{\gamma})$ equipped with this M-polyfold structure is
  denoted by $Y_{{\mathcal D},\varphi}({\mathbb R}\times {\mathbb
  R}^N,\boldsymbol{\bar{\gamma}})$ or, more explicitly,  $Y_{{\mathcal
  D},\varphi}^{3,\delta}({\mathbb R}\times {\mathbb
  R}^N,\boldsymbol{\gamma})$.
\end{theorem}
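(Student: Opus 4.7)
The plan is to reduce both assertions to facts already established in the preceding technical work, namely Theorem \ref{CORRX3.26}, Theorem \ref{CORRX3.27}, Proposition \ref{ABER}, and Lemma \ref{LEMMN3.23}, together with the characterization of the $\bm{\bar{\oplus}}$-polyfold structure via the two partial inverses $H$ and $K$.

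For part (1), the assertion concerning $\mathfrak{X}$ is essentially immediate: the subset $\dot{\mathcal V}=\bm{\bar{\oplus}}^{-1}(\mathfrak{X})$ is open in ${\mathcal V}$, so $\mathfrak{X}\in {\mathcal T}$, and Theorem \ref{CORRX3.26} (together with Proposition \ref{ABER}) identifies the restricted M-polyfold structure with the original one, from which the restricted topology $\mathcal{T}_{\mathfrak{X}}$ coincides with $\mathcal{T}'_{\mathfrak{X}}$. For $\partial\mathfrak{Z}$ I would argue as follows. Recall from Lemma \ref{LEMM3.26} and its surrounding setup that $K(0,(\wt{u}^x,[\wh{x},\wh{y}],\wt{u}^y))=(0,(\wt{u}^x,[\wh{x},\wh{y}],\wt{u}^y))$, so that the sc-diffeomorphism $\bm{\bar{\oplus}}:O_{\mathcal U}\rightarrow {\mathcal W}$ restricts on $\{0\}\times\partial\mathfrak{Z}\subset O_{\mathcal U}$ to the identity map onto $\partial\mathfrak{Z}\subset {\mathcal W}$. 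Since $O_{\mathcal U}$ carries the ssc-manifold structure inherited from ${\mathcal V}$ (an open subset of $[0,1)\times Z_{\mathcal D}$), this shows that the topology and M-polyfold structure induced by $\mathcal{T}$ on $\partial\mathfrak{Z}$ agree with the structure coming from the ssc-manifold $Z_{\mathcal D}({\mathbb R}\times {\mathbb R}^N,\bm{\bar{\gamma}})$.

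For part (2), fix two cut-off models $\beta_1,\beta_2$ with the required properties. Denote by $Y^1$ and $Y^2$ the corresponding M-polyfold structures, and similarly by $\bm{\bar{\oplus}}^i, H^i, K^i$ the induced data. To show that the identity map $Id:Y^1\rightarrow Y^2$ is an sc-diffeomorphism it suffices, by the characterization of each structure via its pair of partial inverses, to verify that the two compositions
\begin{align*}
H^2\circ Id\circ \bm{\bar{\oplus}}^1=H^2\circ \bm{\bar{\oplus}}^1:\dot{\mathcal V}^1\rightarrow {\mathcal V},
\qquad
K^2\circ \bm{\bar{\oplus}}^1:{\mathcal U}^1\rightarrow {\mathcal V}
\end{align*}
are sc-smooth (and symmetrically with indices swapped). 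Writing these out using the explicit formulae in (\ref{eqnrr7.5}), (\ref{eqnrr7.6}), (\ref{EEE21}), (\ref{EEE22}) one obtains expressions that are identical in shape to those arising in the proof of Lemma \ref{LEMMN3.16} and Lemma \ref{K-lemmar}, except that products of the form $\beta^x_{a,-2}\cdot\beta^x_a$ (which had collapsed to $\beta^x_a$) are now replaced by mixed products $\beta^{x,2}_{a,-2}\cdot\beta^{x,1}_a$ and analogous expressions involving $(1-\beta^{x,1})\cdot\beta^{x,2}_{a,-2}$. Each such product still defines a smooth cut-off of the type required for the Fundamental Lemma (Subsection \ref{FUND0}), and the same sc-smoothness arguments as in the single-$\beta$ proofs go through verbatim. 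The sc-smoothness of the ``averaging part'' of $K^2\circ\bm{\bar{\oplus}}^1$ depends only on the averaging map $\mathsf{A}_\Phi$ itself and the comparison estimate Proposition \ref{COMPARE}, neither of which depends on the choice of $\beta$ used in $\bm{\bar{\oplus}}$.

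The main obstacle I expect is the second composition $K^2\circ \bm{\bar{\oplus}}^1$, because it mixes the two cut-offs through the asymptotic data entering $\mathsf{A}_\Phi$. The subtle point is to check that when $(r,\wt q \dotplus \wt h)$ approaches $(0,\wt q_0\dotplus \wt h_0)$, the quantities $c^x$, $c^y$ and the $S^1$-valued shift $d_{\wh{x}_0}$ extracted from $\bm{\bar{\oplus}}^1(r,\wt q\dotplus\wt h)$ still depend sc-smoothly on the input, and that the cut-off $\beta^{x,2}_{a,-2}$ of the difference $\wt q^x-\wt p^x$ retains enough exponential decay at $r=0$. Both facts, however, are exactly what was proved in Proposition \ref{PROPP3.33} and Proposition \ref{COMPARE}; since those proofs made no use of the particular $\beta$ appearing in the glued map beyond its standard properties, they apply equally well with $\beta_1$ used for gluing and $\beta_2$ used for the cut-off in $K^2$. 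Once both compositions are sc-smooth, reversing the roles of $\beta_1$ and $\beta_2$ gives the reverse sc-smoothness, completing the proof.
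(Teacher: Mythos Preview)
Your proposal is correct and follows essentially the same approach as the paper. For part (1) the paper simply invokes the equality ${\mathcal T}={\mathcal T}'$ established in Proposition \ref{PPROPN3.48}, while you unpack this via Theorem \ref{CORRX3.26} for $\mathfrak{X}$ and the identity behaviour of $K$ on $\partial\mathfrak{Z}$; for part (2) both you and the paper reduce to showing sc-smoothness of $H'\circ\bm{\bar{\oplus}}$ and $K'\circ\bm{\bar{\oplus}}$ and appeal to the Fundamental Lemma and the argument of Lemma \ref{K-lemmar}, with your version supplying more of the details (in particular the observation that Propositions \ref{PROPP3.33} and \ref{COMPARE} are insensitive to which $\beta$ was used in the gluing).
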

%
\begin{proof}
\noindent (1) follows immediately from the fact that ${\mathcal
  T}={\mathcal T}'$.\\

\noindent In order to prove (2) we take $\beta$ and $\beta'$. The set
  $Y^{3,\delta_0}_{{\mathcal D},\varphi}$ is by definition independent of
  the choice of $\beta$.
We abbreviate this set by $Y$ and denote by $\bm{Y}$ the set $Y$ equipped
  with the M-polyfold structure defined via $\bm{\bar{\oplus}}$ utilizing
  $\beta$, and by $\bm{Y}'$  similarly utilizing $\beta'$. 
It suffices to show that the identity map $I:\bm{Y}\rightarrow \bm{Y}'$ is
  a sc-diffeomorphism. Interchanging the role of $\beta$ and $\beta'$ it
  suffices to prove that $I$ is sc-smooth.
The surjective map $\bm{\bar{\oplus}}:{\mathcal V}\rightarrow Y$ depends
  on $\beta$ and the same holds for the coretractions $H$ and $K$. 
We denote the corresponding maps obtained when using $\beta'$ by primed
  letter.
We have to show the sc-smoothness of the maps
  \begin{eqnarray}
    &H'\circ\bm{\bar{\oplus}}&\label{xrth1}\\
    &K'\circ\bm{\bar{\oplus}}.&\label{xrth2}
    \end{eqnarray}   
The sc-smoothness of (\ref{xrth1}) follows similarly as the proof
  of Lemma \ref{LEMMN3.16} (3), where we just note that the expressions 
  one gets still allow the application of the Fundamental Lemma.
Along the same lines a modification of Lemma \ref{K-lemmar} proves
  (\ref{xrth2}).
\end{proof}
%

%
\subsubsection{More Results and Proofs}
For the further considerations we abbreviate $Y:=Y_{{\mathcal
  D},\varphi}({\mathbb R}\times {\mathbb R}^N,\boldsymbol{\bar{\gamma}})$
  and consider $\boldsymbol{\bar{\oplus}}:{\mathcal V}\rightarrow Y$. 
This map is sc-smooth and for every $y_0\in Y$ there exists an open
  neighborhood $U(y_0)$ and an sc-smooth map $H_{y_0}:U(y_0)\rightarrow
  {\mathcal V}$ such that $\boldsymbol{\bar{\oplus}}\circ
  H_{y_0}=Id_{U(y_0)}$.
The following proposition also provides a proof of Proposition
  \ref{PROPRRR} (see {(2)} and {(4)} below).

\begin{proposition}\label{PPROPRRR}
Let $Y=Y_{{\mathcal D},\varphi}({\mathbb R}\times {\mathbb
  R}^N,\boldsymbol{\bar{\gamma}})$ be  the previously defined M-polyfold.
Then the following maps are sc-smooth:
\begin{itemize}
  \item[(1)] 
  The ${\mathbb R}$-action ${\mathbb R}\times Y\rightarrow Y$ defined by
    $(c,y)\rightarrow c\ast y$. 
  Here, if $y=(0,(\wt{u}^x,[\wh{x},\wh{y}],\wt{u}^y))$ then $c\ast y=
    (0,(c\ast \wt{u}^x,[\wh{x},\wh{y}],c\ast \wt{u}^y))$ and if
    $y=(r,\wt{w})$ then $c\ast y=(r,c\ast\wt{w})$.
  On functions the action by $c$ is addition on the first ${\mathbb
    R}$-component.
  \item[(2)] 
  The maps $\bar{r},\bar{a}_d$ and $\bar{a}$ are sc-smooth.
  \item[(3)] 
  The automorphism group $G$ of the ordered disk-pair acts naturally on
    $Y$ and the action is sc-smooth.
  \item[(4)] 
  The map $\bar{r}$ is submersive.
  \end{itemize}

\end{proposition}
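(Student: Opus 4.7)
The plan is to reduce each assertion to a statement at the level of $\mathcal{V}$ (or a product $M \times \mathcal{V}$) by invoking Theorem \ref{RRR}, which says $\bm{\bar{\oplus}} : \mathcal{V} \to Y$ is an $\oplus$-polyfold construction. The governing universal property, mirroring Theorem \ref{thm1}(4)(5), is that a map $k : Y \to Z$ into an M-polyfold $Z$ is sc-smooth iff $k \circ \bm{\bar{\oplus}}$ is sc-smooth, and similarly for maps defined on $M \times Y$ via $\mathrm{Id}_M \times \bm{\bar{\oplus}}$; this follows from the local coretractions $H$ and $K$ we have constructed.

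For (2), direct computation gives $\bar{r} \circ \bm{\bar{\oplus}} = \mathrm{pr}_1$ and $\bar{a} \circ \bm{\bar{\oplus}} = \mathsf{a}$, which are ssc-smooth (the first trivially, the second by Lemma \ref{LEM3.14}). The decorated version $\bar{a}_d$ separately extracts the angle $[\wh{x},\wh{y}]$ (read off from the $Z_{\mathcal{D}}$-component of $\mathcal{V}$) and the modulus $|a|$ (read off from $\mathsf{a}$), so $\bar{a}_d \circ \bm{\bar{\oplus}}$ is ssc-smooth as well, and $\bar{a} = m \circ \bar{a}_d$ follows. For (1) and (3), both actions lift first to $\mathcal{V}$: the $\mathbb{R}$-action adds a constant to the first $\mathbb{R}$-factor of $\wt{u}^x$ and $\wt{u}^y$, which is ssc-smooth; the $G$-action rotates the disk pair and acts on decorations and on $\wt{u}^x,\wt{u}^y$ by precomposition, which is sc-smooth by the argument behind Proposition \ref{PROP2.6} together with the smooth $G$-action on $\mathsf{S}_{\bm{\gamma}}$ described after Proposition \ref{PROPT8.84}. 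A brief computation using $\beta^x_a + \beta^y_a \equiv 1$ on $Z_a$ shows that $\bm{\bar{\oplus}}$ is $\mathbb{R}$-equivariant (the common constant survives the partition of unity), and $G$-equivariance is immediate from the construction. Composing the lifted (sc-smooth) action with $\bm{\bar{\oplus}}$ produces an sc-smooth map into $Y$, and the product universal property then promotes this to sc-smoothness of the action on $Y$ itself.

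For (4) I would mirror the argument in the proof of Theorem \ref{THM1.3}(5). Given $y_0 \in Y$, the coretractions $H$ of Subsection \ref{CORETRH} (when $\bar{r}(y_0) > 0$) and $K$ of Subsection \ref{SSS3.2.4} (when $\bar{r}(y_0) = 0$) yield a sc-smooth local $f_{y_0} : U(y_0) \to \mathcal{V}$ with $\bm{\bar{\oplus}} \circ f_{y_0} = \mathrm{Id}$. Writing $f_{y_0}(y) = (\bar{r}(y), \bar{f}_{y_0}(y))$, set
\begin{equation*}
\rho(y, b) := \bigl(\bm{\bar{\oplus}}(b, \bar{f}_{y_0}(y)),\, b\bigr)
\end{equation*}
on a sufficiently small neighborhood of $(y_0, \bar{r}(y_0))$ in $U(y_0) \times [0,1)$. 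The commutative diagram of Theorem \ref{RRR} forces the first component of $\rho(y,b)$ to lie over $b$, so the image of $\rho$ is the graph of $\bar{r}$, while the identity $f_{y_0} \circ \bm{\bar{\oplus}} \circ f_{y_0} = f_{y_0}$ (a consequence of $\bm{\bar{\oplus}} \circ f_{y_0} = \mathrm{Id}$) gives $\bar{f}_{y_0}(\bm{\bar{\oplus}}(b, \bar{f}_{y_0}(y))) = \bar{f}_{y_0}(y)$ and hence $\rho \circ \rho = \rho$.

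The main bookkeeping obstacle is verifying that $(b, \bar{f}_{y_0}(y)) \in \mathcal{V}$ for $(y,b)$ near $(y_0, \bar{r}(y_0))$. When $\bar{r}(y_0) > 0$, the defining inequalities (1)–(2) of Definition \ref{DEFX3.7} hold strictly at the base point, so they persist by continuity. When $\bar{r}(y_0) = 0$, the case $b = 0$ is automatic, and for $b > 0$ small one has $\varphi(b) \to +\infty$, so $\varphi(b) + c^y - c^x > 0$ and $\varphi^{-1}((\varphi(b) + c^y - c^x)/T) \in (0, 1/4)$ hold with wide margin; this is the subtle boundary case, but it reduces to pure continuity since the $c^x, c^y$ are bounded on a small $U(y_0)$. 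Hence $\rho$ is a well-defined sc-smooth retraction with the required properties, establishing that $\bar{r}$ is submersive.
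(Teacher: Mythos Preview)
Your proof is correct and follows essentially the same strategy as the paper: for (1)--(3) you lift to $\mathcal{V}$, use equivariance of $\bm{\bar{\oplus}}$, and push back down via the local coretractions $H$ and $K$; for (2) you invoke Lemma~\ref{LEM3.14} exactly as the paper does. For (4) the paper is terser: it simply observes that the retraction $H_{y}\circ\bm{\bar{\oplus}}$ on $\bm{\bar{\oplus}}^{-1}(U(y))\subset [0,1)\times Z^{3,\delta}_{\mathcal D}$ preserves the $r$-coordinate and concludes submersiveness from that (i.e.\ it works in the local model rather than building the retraction on $Y\times[0,1)$ explicitly). Your explicit construction \`a la Theorem~\ref{THM1.3}(5) unpacks precisely this.

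One small correction in your verification of $\rho\circ\rho=\rho$: the identity $f_{y_0}\circ\bm{\bar{\oplus}}\circ f_{y_0}=f_{y_0}$ only yields $\bar f_{y_0}(\bm{\bar{\oplus}}(\bar r(y),\bar f_{y_0}(y)))=\bar f_{y_0}(y)$, not the version with an arbitrary $b$ in the first slot. The correct step is to set $y'=\bm{\bar{\oplus}}(b,\bar f_{y_0}(y))$, note $\bar r(y')=b$, and apply $\bm{\bar{\oplus}}\circ f_{y_0}=\mathrm{Id}$ to $y'$ to get $\bm{\bar{\oplus}}(b,\bar f_{y_0}(y'))=y'$, which is what $\rho\circ\rho=\rho$ actually needs (cf.\ the computation in the proof of Theorem~\ref{THM1.3}(5)).
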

%
\begin{proof}
We begin with {(1)}. We observe that $\bar{\oplus}$ is ${\mathbb
  R}$-equivariant and the ${\mathbb R}$-action on ${\mathcal V}$ is
  ssc-smooth.
Pick $y_0\in Y$ and take an sc-smooth $H_{y_0}:U(y_0)\rightarrow {\mathcal
  V}$ such $\bm{\bar{\oplus}}\circ H_{y_0}=Id_{U(y_0)}$.
For $c$ and $y$ near $y_0$ we observe that 
  $$
    \bar{\oplus} ( c\ast H_{y_0}(y)) = c\ast \bm{\bar{\oplus}}\circ
    H_{y_0}(y)= c\ast y,
    $$
  which implies sc-smoothness.\\

\noindent {(2)} The map $\bar{r}:Y^{3,\delta}_{{\mathcal
  D},\varphi}\rightarrow [0,1)$ is sc-smooth since
  $\bar{r}\circ\bm{\bar{\oplus}}:{\mathcal V}\rightarrow [0,1)$ is
  sc-smooth.
The same argument holds for $\bar{a}_d$ in view of \cite{HWZ8.7}, Lemma
  4.4.
This also implies the assertion for $\bar{a}$.\\

\noindent{(3)} The automorphism group acts sc-smoothly on ${\mathcal V}$
  via $G\times {\mathcal V}\rightarrow {\mathcal V}$ with
  $$
    (g^x,g^y)\ast (r,(\wh{u}^x,[\wh{x},\wh{y}],\wh{u}^y))=(r,\wh{u}^x\circ
    g^{x,-1},[Tg^x\cdot \wh{x},Tg^y\cdot\wh{y}],u^y\circ g^{y,-1}).
    $$
Take a point $y_0$ and consider the sc-smooth $H_{y_0}:U(y_0)\rightarrow
  {\mathcal V}$. 
Then
  $$
    g\ast y = g\ast (\bar{\oplus}\circ H_{y_0}(y))=\bar{\oplus}(g\ast
    H_{y_0}(y))
    $$
  since $\bar{\oplus}$ is $G$-equivariant. 
This implies that the action is sc-smooth. \\

\noindent{(4)} The map $\rho: =H_y\circ \boldsymbol{\bar{\oplus}}:
  \boldsymbol{\bar{\oplus}}^{-1}(U(y))\rightarrow
  \boldsymbol{\bar{\oplus}}^{-1}(U(y))$ is an sc-smooth retraction and
  preserves the $\bar{r}$-fiber.
$\boldsymbol{\bar{\oplus}}^{-1}(U(y))$ is an open subset of the product
  $[0,1)\times Z^{3,\delta}_{\mathcal D}$ where the second factor is a
  ssc-manifold.
Since  $\rho$ preserves the $r$-component  the desired result follows.
\end{proof}

We consider the category with objects being the  pairs $({\mathbb
  R}^N,\boldsymbol{\bar{\gamma}})$, where $\boldsymbol{\bar{\gamma}}$ is a
  weighted periodic orbit in ${\mathbb R}^N$.
The morphisms are the smooth maps $\phi:{\mathbb R}^N\rightarrow {\mathbb
  R}^M$ such that $\phi\circ \gamma(t) =\gamma'(t)$ for suitable
  representatives and $(T,k,\delta)=(T',k',\delta')$.
Associated to a morphism $\phi$  we can define a  $\wt{\phi}=Id\times
  \phi$ which induces a map
  $$
    \wt{\phi}_\ast : Y_{{\mathcal D},\varphi}({\mathbb R}\times {\mathbb
    R}^N,\boldsymbol{\bar{\gamma}})\rightarrow
    Y_{{\mathcal D},\varphi}({\mathbb R}\times {\mathbb
    R}^M,\boldsymbol{\bar{\gamma}'})
    $$
  by $\wt{\phi}_\ast(0,(\wt{u}^x,[\wh{x},\wh{y}],\wt{u}^y))=(0,(
  \wt{\phi}\circ\wt{u}^x,[\wh{x},\wh{y}],\wt{\phi}\circ
  \wt{u}^y))$ and $\wt{\phi}_\ast(r,\wt{u})=(r,\wt{\phi}\circ\wt{u})$.
The following prop implies Proposition \ref{PPP3.22}  and using the ideas
  as in nodal case Theorem \ref{TTT3.24}.

\begin{proposition}\label{PROX3.49}
For a morphism $\phi: ({\mathbb R}^N,\boldsymbol{\bar{\gamma}})\rightarrow
  ( {\mathbb R}^M,\boldsymbol{\bar{\gamma}'})$ the induced map
  $\wt{\phi}_\ast$ is sc-smooth.
\end{proposition}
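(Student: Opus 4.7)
The plan is to mimic the nodal-case argument of Proposition \ref{prop2} using the $\bm{\bar{\oplus}}$-construction of Theorem \ref{RRR}. By the general theory of $\oplus$-polyfold constructions (the analogue of Theorem \ref{thm1}(5) for the $\bm{\bar{\oplus}}$-structure), $\wt{\phi}_\ast:Y\to Y':=Y_{{\mathcal D},\varphi}({\mathbb R}\times{\mathbb R}^M,\bm{\bar{\gamma}}')$ is sc-smooth if and only if $\wt{\phi}_\ast\circ\bm{\bar{\oplus}}:{\mathcal V}\to Y'$ is sc-smooth. The latter, by the coretraction criterion for $\bm{\bar{\oplus}}'$, can be verified locally by composing with the coretractions $H'$ or $K'$ of Lemma \ref{LEMMN3.16} and Theorem \ref{CORRX3.27}, reducing the problem to a map into the model space ${\mathcal V}'$, which is an open subset of the ssc-manifold $[0,1)\times Z_{\mathcal D}({\mathbb R}\times{\mathbb R}^M,\bm{\bar{\gamma}}')$.

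For a point $y_0\in\mathfrak{X}$ with $\bar{r}(y_0)>0$ I would apply the coretraction $H'$ of Lemma \ref{LEMMN3.16}, built from a fixed representative $\gamma_0'\in[\gamma']$ chosen so that $\gamma_0'$ is compatible with $\phi\circ\gamma_0$. The resulting composition $H'\circ\wt{\phi}_\ast\circ\bm{\bar{\oplus}}$ is computed from equations (\ref{eqnrr7.5})--(\ref{eqnrr7.6}) applied to $\wt{\phi}\circ\bm{\bar{\oplus}}(r,\wt{q}\dotplus\wt{h})$; because $\wt{\phi}=\Id\times\phi$ preserves the ${\mathbb R}$-factor, the asymptotic constants $c^x, c^y$ and hence the gluing parameter $a$ are unchanged. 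The required sc-smoothness then follows as in Lemma \ref{LEMMN3.16}(3), using (i) ssc-smoothness of post-composition with $\wt{\phi}$ on the relevant $H^{m,\tau}$-type Hilbert spaces by the classical result of \cite{El}, (ii) smoothness of $(r,\wt{q})\mapsto a$, and (iii) the Fundamental Lemma of Subsection \ref{FUND0} for the $\beta^{x,y}_{a,-2}$-cutoff multiplications.

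For a point $y_0\in\partial\mathfrak{Z}$ I would use the coretraction $K'$ of Theorem \ref{CORRX3.27} associated to a good averaging coordinate $\Phi':U'\to S^1$ for $\bm{\gamma}'$, chosen so that $\phi^{-1}(U')\subset U$ for a corresponding good averaging coordinate $\Phi$ on the source. On a neighborhood of $y_0$ in $Y$, the map $\wt{\phi}_\ast$ lands in ${\mathcal W}'$, and the composition $K'\circ\wt{\phi}_\ast\circ\bm{\bar{\oplus}}$ is governed by the target-side averaging map $\mathsf{A}_{\Phi'}$ applied to $\wt{\phi}_\ast\circ\bm{\bar{\oplus}}(r,\wt{q}\dotplus\wt{h})$, together with cut-off correction terms of the form (\ref{EEE21})--(\ref{EEE22}). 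The averaging part reduces, since $\Phi'\circ\wt{\phi}=\Id\times(\Phi'\circ\phi)$ and $\Phi'\circ\phi$ is a smooth $S^1$-valued function defined near $\gamma(S^1)$, to exactly the type of expression analyzed in the proof of Proposition \ref{PROPP3.33}, with $\Phi$ replaced by $\Phi'\circ\phi$; the same Fundamental-Lemma/chain-rule argument then yields sc-smoothness of the averaging component.

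The main obstacle is the cut-off comparison terms as $r\to 0$. Concretely, one must establish the target-side analogue of Proposition \ref{COMPARE}, namely sc-smoothness of $(r,\wt{q}\dotplus\wt{h})\mapsto(\beta^x_{a,-2}\cdot(\wt{\phi}\circ\wt{q}^x-\wt{p}'^x),\beta^y_{a,-2}\cdot(\wt{\phi}\circ\wt{q}^y-\wt{p}'^y))$, where $\wt{p}'=\mathsf{A}_{\Phi'}\circ\bm{\bar{\oplus}}'(r,\wt{\phi}_\ast(\wt{q}\dotplus\wt{h}))$. The argument of Proposition \ref{COMPARE}, which turns on a Taylor expansion of $\gamma_0'$ and the smoothness of $\Phi'$, carries over after applying the chain rule to the smooth maps $\wt{\phi}$ and $\Phi'\circ\phi$; all building blocks (the Fundamental Lemma, ssc-smoothness of post-composition by smooth maps, and sc-smoothness of the averaging functionals $d_{\wh{x}_0}$ and $c^{x,y}$) are already in place. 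Assembling via the chain rule as in Lemma \ref{K-lemmar} completes the $r=0$ case and finishes the proof.
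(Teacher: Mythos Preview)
Your approach is correct but substantially more laborious than the paper's. You reduce to checking that $H'\circ\wt{\phi}_\ast\circ\bm{\bar{\oplus}}$ and $K'\circ\wt{\phi}_\ast\circ\bm{\bar{\oplus}}$ are sc-smooth and then essentially redo the entire analysis of Lemma~\ref{LEMMN3.16}(3), Proposition~\ref{PROPP3.33}, and Proposition~\ref{COMPARE} with $\wt{\phi}$ inserted. This works, but it re-proves machinery that was already built.

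The paper instead exploits an equivariance that you do not use: on the image of any local coretraction $H_y$ one has exactly
\[
\wt{\phi}_\ast\;=\;\bm{\bar{\oplus}}'\circ\bigl(\mathrm{Id}_{[0,1)}\times\wt{\phi}_\ast\bigr)\circ H_y,
\]
where the middle map acts on the ssc-model $Z_{\mathcal D}({\mathbb R}\times{\mathbb R}^N,\bm{\bar\gamma})\to Z_{\mathcal D}({\mathbb R}\times{\mathbb R}^M,\bm{\bar\gamma}')$ by post-composition with $\wt{\phi}=\mathrm{Id}\times\phi$. The identity holds because $\wt{\phi}$ preserves the ${\mathbb R}$-component (hence the asymptotic constants $c^x,c^y$ and the gluing parameter $a$), commutes with the $\ast$-action, and because the $-2$-shifted cutoffs in $H$ and $K$ are designed so that on the support of $1-\beta^x_a$ and $1-\beta^y_a$ the components $\wt{u}^x,\wt{u}^y$ already agree with $\wt{w}$ (resp.\ its shift); thus the nonlinearity of $\phi$ never interferes with the convex combination in $\bm{\bar{\oplus}}'$. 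This is precisely the mechanism behind the commutative diagram~(\ref{DIAG1}) in Proposition~\ref{prop2}.

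With that factorization in hand, sc-smoothness is immediate: $H_y$ is sc-smooth by construction, $\mathrm{Id}\times\wt{\phi}_\ast$ is ssc-smooth on the classical model by~\cite{El}, and $\bm{\bar{\oplus}}'$ is sc-smooth. Your route yields the same conclusion but at the cost of repeating the Fundamental-Lemma bookkeeping and the delicate $r\to 0$ comparison; the paper's route packages all of that into the single ssc-smoothness of $\wt{\phi}_\ast$ on $Z_{\mathcal D}$ and the chain rule.
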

%
\begin{proof}
We can take the $H_y$ which we constructed and use the fact that
  $\wt{\phi}$ induce ssc-smooth maps $Z_{{\mathcal D},\varphi}({\mathbb
  R}\times {\mathbb R}^N,\boldsymbol{\bar{\gamma}})\rightarrow Z_{{\mathcal
  D},\varphi}({\mathbb R}\times {\mathbb R}^M,\boldsymbol{\bar{\gamma}'})$.
Then we factor our map of interest as $\bar{\oplus}\circ (Id_{[0,1)}\times
  \wt{\phi}_\ast)\circ H_y$.
This is the same idea as used in Proposition
\ref{prop2}.
\end{proof}

Having established the above results we can show as in the nodal case that
  for a closed manifold $Q$ or more generally a manifold
  without boundary together with a weighted periodic orbit, there is a
  well-defined M-polyfold $Y_{{\mathcal D},\varphi}({\mathbb R}\times
  Q,\boldsymbol{\bar{\gamma}})$ and a morphism
  $h:(Q,\boldsymbol{\bar{\gamma}})\rightarrow
  (Q',\boldsymbol{\bar{\gamma}'})$ induces an sc-smooth map $\wt{h}_\ast$
  between the associated spaces.
This M-polyfold is constructed by the embedding method. 
These spaces have all the listed properties from the special ${\mathbb
  R}^N$-case.
We summarize the result in the following theorem.

\begin{theorem}\label{THX8.58}
For every smooth manifold $Q$ without boundary equip\-ped with a weighted
  periodic orbit $\bm{\bar{\gamma}}$, giving the pair
  $(Q,\bm{\bar{\gamma}})$, and ordered disk pair ${\mathcal D}$, the set
  $Y^{3,\delta_0}_{{\mathcal D},\varphi}({\mathbb R}\times Q,\bm{\gamma})$
  has a natural M-polyfold structure.
The set equipped with this M-polyfold structure is denoted by
  $Y^3_{{\mathcal D},\varphi}({\mathbb R}\times Q,\bm{\bar{\gamma}})$.
Moreover, the natural map
  $$
    \bar{r}:Y_{{\mathcal D},\varphi}({\mathbb R}\times
    Q,\boldsymbol{\bar{\gamma}})\rightarrow [0,1)
    $$
  is sc-smooth and submersive, and the domain parameter extraction is
  sc-smooth
  $$
    \bar{a}:Y_{{\mathcal D},\varphi}({\mathbb R}\times
    Q,\boldsymbol{\bar{\gamma}})\rightarrow {\mathbb B}.
    $$
\end{theorem}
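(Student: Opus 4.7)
The plan is to mirror the functorial extension argument that promoted the nodal $\mathbb{R}^N$-construction to manifolds in Theorem \ref{THM1.3}, with Proposition \ref{PROX3.49} (sc-smoothness of $\widetilde{h}_\ast$) now playing the role that Proposition \ref{prop2} played there. Without loss of generality $Q$ is connected. Choose a proper smooth embedding $\phi : Q \hookrightarrow \mathbb{R}^N$; the weighted periodic orbit $\bm{\bar{\gamma}} = (([\gamma],T,k),\delta)$ in $Q$ induces $\bm{\bar{\gamma}}' = (([\phi\circ\gamma],T,k),\delta)$ in $\mathbb{R}^N$, and $\widetilde{\phi} := \mathrm{Id}_\mathbb{R} \times \phi$ gives an obvious injection of sets
\[
Y^{3,\delta_0}_{{\mathcal D},\varphi}(\mathbb{R}\times Q,\bm{\gamma}) \;\hookrightarrow\; Y^{3,\delta_0}_{{\mathcal D},\varphi}(\mathbb{R}\times \mathbb{R}^N,\bm{\gamma}'),\quad \wt{u}\mapsto \widetilde{\phi}\circ \wt{u}.
\]
Its image $\Sigma_\phi$ consists of those elements whose underlying maps take values in $\mathbb{R}\times \phi(Q)$.

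Next I would equip $\Sigma_\phi$ with a sub-M-polyfold structure. Pick an open neighbourhood $U\supset \phi(Q)$ admitting a smooth retraction $R:U\to U$ with $R(U)=\phi(Q)$ and $R\circ R = R$. Let $Y_U\subset Y_{{\mathcal D},\varphi}(\mathbb{R}\times\mathbb{R}^N,\bm{\bar{\gamma}}')$ be the subset whose members take values in $\mathbb{R}\times U$; this is open because the evaluation-type characterisation of ${\mathcal T}$ combined with properness of $\phi$ makes the defining condition an open condition at each level. By Proposition \ref{PROX3.49} applied to the (locally defined) morphism $R$, the induced map $\widetilde{R}_\ast : Y_U\to Y_U$ is sc-smooth and idempotent with image $\Sigma_\phi$, so $\Sigma_\phi$ is a sub-M-polyfold. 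Transporting this structure along the bijection $\widetilde{\phi}_\ast$ defines the desired M-polyfold $Y^{3,\delta}_{{\mathcal D},\varphi}(\mathbb{R}\times Q,\bm{\bar{\gamma}})$. Independence of the choice of $\phi$ (and $R$) is verified exactly as in Theorem \ref{THM1.3}: given a second proper embedding $\psi : Q\hookrightarrow \mathbb{R}^M$, one extends $\psi\circ\phi^{-1}$ and $\phi\circ\psi^{-1}$ to smooth maps between tubular neighbourhoods, applies Proposition \ref{PROX3.49} to obtain mutually inverse sc-smooth morphisms on the respective $\Sigma$-sets, and concludes that the two M-polyfold structures on $Y^{3,\delta}_{{\mathcal D},\varphi}(\mathbb{R}\times Q,\bm{\bar{\gamma}})$ coincide.

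For the stated sc-smoothness and submersiveness, I would observe that $\widetilde{R}$ acts trivially on the first $\mathbb{R}$-factor and does not alter the domain of $\wt{u}$, so both $\bar{r}$ and $\bar{a}$ on $Y^{3,\delta}_{{\mathcal D},\varphi}(\mathbb{R}\times Q,\bm{\bar{\gamma}})$ factor as the sc-diffeomorphism onto $\Sigma_\phi$, followed by the inclusion $\Sigma_\phi\hookrightarrow Y_U$, followed by the corresponding $\bar{r}$ or $\bar{a}$ on the ambient $\mathbb{R}^N$-space, which is sc-smooth by Proposition \ref{PPROPRRR}. For the submersion property of $\bar{r}$ I would reproduce the argument at the end of the proof of Theorem \ref{THM1.3}(5): starting from the sc-smooth retraction $\rho$ on $Y(\mathbb{R}^N)\times[0,1)$ built from the co-retractions $H$ and $K$ of Section \ref{CORETRH}--\ref{SSS3.2.4}, one postcomposes the relevant $\bm{\bar{\oplus}}(b,\cdot)$ factor with $\widetilde{R}$ to force the image into $\mathbb{R}\times \phi(Q)$, producing a local sc-smooth retraction $\widetilde{\rho}$ that preserves the $r$-coordinate.

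The main obstacle will be the last point, namely checking that the modified retraction $\widetilde{\rho}$ still squares to itself and is sc-smooth near boundary points $r=0$, where the degeneracy index jumps to $1$ and tameness must be preserved. The key input is that the $r$-fibre is preserved throughout the construction (because $\widetilde{R}$ acts only on the target and $\bar{r}$ comes from the domain parameter), together with the tameness assertion in Theorem \ref{RRR}(2); combined with the general sub-M-polyfold machinery from \cite{FH-notes-I}, these should suffice to close the argument without any genuinely new analytic estimate beyond those established in the $\mathbb{R}^N$-case.
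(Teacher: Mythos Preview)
Your proposal is correct and follows essentially the same route as the paper. The paper does not give an explicit proof of Theorem~\ref{THX8.58}; it is presented as a summary of what has already been assembled, namely the construction-functor extension to manifolds sketched in Theorem~\ref{TTT3.24} (proper embedding $\phi:Q\hookrightarrow\mathbb{R}^N$, the sub-M-polyfold $\Sigma_\phi$ obtained via a tubular retraction, independence of the embedding), together with Proposition~\ref{PROX3.49} for the sc-smoothness of induced maps and Proposition~\ref{PPROPRRR} for the properties of $\bar r$ and $\bar a$ in the $\mathbb{R}^N$-model. Your write-up is in fact more explicit than the paper about the one point it leaves implicit, the submersiveness of $\bar r$ in the manifold case, and your adaptation of the argument from Theorem~\ref{THM1.3}(5) (post-composing the $\bar r$-fibre-preserving retraction with $\widetilde R$) is exactly what is intended.
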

%
The fact that $\bar{r}$ is submersive is very important as we shall see
  next when we deal with several periodic orbits.

%
\subsubsection{Several Periodic Orbits}\label{several-x-orbits}
Using this discussion we can deal with the situation of several periodic
  orbits.
We assume we are given a finite family of ordered disk pairs $\bm{D}:=
  {({\mathcal D}_{(z,z')})}_{(z,z')\in \wt{\Gamma}}$, a compact manifold $V$
  without boundary $Q$, and for every $(z,z')$ a weighted periodic orbit.
By the previous construction we obtain for every ordered pair $(z,z')\in
  \wt{\Gamma}$
  $$
    \bar{r}_{(z,z')}:Y_{{\mathcal D}_{(z,z')},\varphi}({\mathbb R}\times
    Q,\boldsymbol{\bar{\gamma}})\rightarrow [0,1).
    $$
From this it follows that the product map
  \begin{eqnarray}\label{eqw10.34}
    \prod_{(z,z')\in\wt{\Gamma}}\bar{r}_{(z,z')}:\prod_{(z,z')\in
    \wt{\Gamma}} Y_{{\mathcal D}_{(z,z')},\varphi}({\mathbb R}\times
    Q,\boldsymbol{\bar{\gamma}})\rightarrow
    \prod_{(z,z')\in\wt{\Gamma}}[0,1).
    \end{eqnarray}
  is submersive. 
Denoting  by $\Delta:[0,1)\rightarrow [0,1)^{\wt{\Gamma}}:r\rightarrow
  (r,r,...,r)$ the diagonal map we can take the pull-back of the diagram
  (\ref{eqw10.34}).
We note that the argument even works if the manifolds $Q$ depend on
  $(z,z')$.
For the following we assume that for every $(z,z')$ we have the same $Q$
  and that the collection of  periodic orbits has the following properties.
We have the finite set $\wt{\Gamma}$ of ordered pairs and a map
  $\bar{\digamma}$ which associates to $(z,z')$ a weighted periodic orbit
  $\bm{\bar{\gamma}}_{(z,z')}$ in $Q$
If the underlying classes $[\gamma_{(z_1,z_1')}]$ and
  $[\gamma_{(z_2,z_2')}]$ intersect, i.e. $\gamma_{(z_1,z_1')}(S^1)\cap
  \gamma_{(z_2,z_2')}(S^1)\neq \emptyset$
  we require that $\bar{\digamma}(z_1,z_1')=\bar{\digamma}{(z_2,z_2')}$.
  Let us call $\bar{\digamma}$ a (weighted periodic orbit) selector. 
If we forget about the weights we obtain the (periodic orbit) selector
  $\digamma$.
We shall denote by $Y^{3,\delta_0}_{\bm{D},\varphi}({\mathbb R}\times
  Q,{\digamma})$ the associated pull-back of the product of single periodic
  orbit situations.
It has as natural M-polyfold structure for which the projection
  $\bar{r}_{\bm{D}}$ onto $[0,1)$ is submersive.  
Moreover denoting by ${\mathbb B}_{\bm{D}}$ the product of the ${\mathbb
  B}_{(z,z')}$ we obtain a natural map $\bar{a}_{\bm{D}}$ which extracts the
  total gluing parameter.
Hence we obtain the following result.

\begin{theorem}\label{THX8.59}
For every smooth manifold $Q$ without boundary equip\-ped with a weighted
  periodic orbit selector $\bar{\digamma}$, defined on the set
  $\wt{\Gamma}$, which is the finite set of ordered nodal pairs
  associated to a finite collection $\bm{D}$ of ordered nodal disk pairs,
  the set  $Y^{3,\delta_0}_{\bm{D},\varphi}({\mathbb R}\times
  Q,{\digamma})$ has a natural M-polyfold structure. 
The set equipped with this M-polyfold structure is denoted by
  $Y_{\bm{D},\varphi}({\mathbb R}\times Q,\bar{\digamma})$. 
Moreover, the natural map
  $$
    \bar{r}_{\bm{D}}:Y_{\bm{D},\varphi}({\mathbb R}\times
    Q,{\bar{\digamma}})\rightarrow [0,1)
    $$
  is sc-smooth and submersive, and the domain parameter extraction is
  sc-smooth
  $$
    \bar{a}_{\bm{D}}:Y_{\bm{D},\varphi}({\mathbb R}\times
    Q,{\bar{\digamma}})\rightarrow {\mathbb B}_{\bm{D}}.
    $$
\end{theorem}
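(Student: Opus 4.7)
The plan is to obtain $Y_{\bm{D},\varphi}({\mathbb R}\times Q,\bar{\digamma})$ as an iterated pull-back, reducing the multi-orbit construction to the single-orbit construction of Theorem \ref{THX8.58} combined with the fibered product result for submersive sc-smooth maps recorded earlier in the paper (the one used in Proposition \ref{PROPN3.15}-type arguments and stated more generally just before Remark \ref{REMdotX}). First I would form the product M-polyfold
\[
\prod_{(z,z')\in\wt\Gamma} Y_{{\mathcal D}_{(z,z')},\varphi}({\mathbb R}\times Q,\bm{\bar\gamma}_{(z,z')}),
\]
which exists since the category of M-polyfolds admits finite products. By Theorem \ref{THX8.58} each factor comes with a sc-smooth submersive projection $\bar r_{(z,z')}$ to $[0,1)$, so the product map (\ref{eqw10.34}) is sc-smooth, and one checks immediately from the local submersion charts that a product of submersive sc-smooth maps is submersive (the local sc-smooth retractions on the target side can be chosen as a product of the given ones).

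Next I would pull back along the diagonal $\Delta:[0,1)\to[0,1)^{\wt\Gamma}$. Since $\prod\bar r_{(z,z')}$ is submersive and $\Delta$ is (trivially) sc-smooth, the fibered product
\[
Y^{3,\delta_0}_{\bm D,\varphi}({\mathbb R}\times Q,\digamma)
= \Big(\prod_{(z,z')} Y_{{\mathcal D}_{(z,z')},\varphi}({\mathbb R}\times Q,\bm{\bar\gamma}_{(z,z')})\Big)\ {}_{\prod\bar r_{(z,z')}}\!\times_{\Delta}\ [0,1)
\]
carries a natural M-polyfold structure, and the projection $\bar r_{\bm D}$ onto the $[0,1)$-factor is again sc-smooth and submersive, by the general fibered-product-with-submersion statement already used in the proof of Theorem \ref{THM1.3}(5). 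Here the consistency built into a selector $\bar\digamma$ (namely that intersecting orbit classes carry identical weighted data) ensures that the pull-back is set-theoretically the natural candidate and that no further identifications are needed when two pairs $(z_1,z_1')$ and $(z_2,z_2')$ share an asymptotic orbit.

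For the domain parameter extraction I would proceed factor-wise: each $Y_{{\mathcal D}_{(z,z')},\varphi}({\mathbb R}\times Q,\bm{\bar\gamma}_{(z,z')})$ admits the sc-smooth $\bar a_{(z,z')}:Y_{{\mathcal D}_{(z,z')},\varphi}\to{\mathbb B}_{(z,z')}$ from Theorem \ref{THX8.58}. Composing the inclusion of the pull-back into the product with the product map $\prod\bar a_{(z,z')}$ yields
\[
\bar a_{\bm D}:Y_{\bm D,\varphi}({\mathbb R}\times Q,\bar\digamma)\longrightarrow {\mathbb B}_{\bm D}=\prod_{(z,z')}{\mathbb B}_{(z,z')},
\]
which is sc-smooth as a composition of sc-smooth maps.

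The main subtlety, and the only step I would expect to require any real care, is verifying that the product of submersive maps is submersive in the M-polyfold (not merely sc-Banach) sense, and that the resulting fibered product is \emph{tame} at the boundary stratum corresponding to $r=0$ in every component simultaneously. The first point is handled by taking the product of the local sc-smooth retractions supplied by the submersion property for each $\bar r_{(z,z')}$; the second requires one to observe, as in the single-orbit case at the end of the proof of Theorem \ref{RRR}, that the retractions constructed for each factor preserve the respective $r$-coordinates, so the product retraction is a splicing that preserves the full tuple of $r$-coordinates. Restriction to the diagonal $\{r_1=\dots=r_N\}$ is then a tame sub-M-polyfold, and the induced degeneracy index at a point $(r,\ldots)$ equals $1$ when $r=0$ and $0$ otherwise, matching the single-orbit behaviour.
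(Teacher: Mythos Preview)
Your proposal is correct and follows essentially the same route as the paper: the paper constructs $Y_{\bm D,\varphi}({\mathbb R}\times Q,\bar\digamma)$ precisely by forming the product (\ref{eqw10.34}), noting it is submersive, and pulling back along the diagonal $\Delta:[0,1)\to[0,1)^{\wt\Gamma}$, then reading off $\bar r_{\bm D}$ and $\bar a_{\bm D}$ factor-wise. Your discussion of tameness and the degeneracy index goes beyond what the theorem statement requires and beyond what the paper argues here, but it is correct and harmless.
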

%
Sometimes we shall write $Y^{3,\delta}_{\bm{D},\varphi}({\mathbb R}\times
  Q,\digamma)$ if we want to make the underlying choices more explicit.

\subsection{Strong Bundles}
The construction is almost identical to the nodal case.   
\subsubsection{One Periodic Orbit}
We have the submersive situation
$$
p_{\mathbb B}:X^{2,\delta}_{{\mathcal D},\varphi,0}( {\mathbb C}^L)\rightarrow {\mathbb B}
$$
and consider the diagram
$$
\begin{CD}
@.     X^{2,\delta}_{{\mathcal D},\varphi,0}( {\mathbb C}^L)\\
@. @V p_{\mathbb B} VV\\
Y^{3,\delta}_{{\mathcal D},\varphi}({\mathbb R}\times {\mathbb R}^N,\boldsymbol{\gamma})@>\mathfrak{a}>> {\mathbb B}
\end{CD}
$$
Here $\mathfrak{a}$ is the extraction of the gluing parameter.  The fibered product is a strong bundle submersive over
$[0,1)$.  We denote it by 
$$
E^{3,2,\delta}_{{\mathcal D},\varphi,\boldsymbol{\gamma}}(({\mathbb R}\times {\mathbb R}^N)\times  {\mathbb C}^L)\rightarrow [0,1).
$$
We also have the sc-smooth maps
$$
E^{3,2,\delta}_{{\mathcal D},\varphi,\boldsymbol{\gamma}}(({\mathbb R}\times {\mathbb R}^N)\times  {\mathbb C}^L)\rightarrow{\mathbb B}
$$
and
$$
E^{3,2,\delta}_{{\mathcal D},\varphi,\boldsymbol{\gamma}}(({\mathbb R}\times {\mathbb R}^N)\times  {\mathbb C}^L)\rightarrow Y^{3,\delta}_{{\mathcal D},\varphi}({\mathbb R}\times {\mathbb R}^N,\boldsymbol{\gamma}).
$$
Given a closed odd-dimensional manifold $Q$ equipped with a periodic orbit $\boldsymbol{\gamma}=([\gamma],T,k)$ we assume 
that ${\mathbb R}\times Q$ is equipped with an almost complex structure $\wt{J}$ which has the following form.
There exists a co-dimension one sub-bundle and a smooth nowhere vanishing vector field $R$ on $Q$ transversal to $H$ defining the splitting
$$
T_{(a,q)} ({\mathbb R}\times Q) = {\mathbb R}\oplus {\mathbb R}R(q) \oplus H_q.
$$
In addition we assume that $\dot{\gamma}(t)=(T/k)\cdot R(\gamma(t))$.
Further we  assume that $\wt{J}(a,q)(T,0,0)=(0,-kR(q),0)$ and $H_q$ is invariant. 

Next we take a smooth embedding $\phi: Q\rightarrow  {\mathbb R}^N$ which will map the periodic orbit $\boldsymbol{\gamma}$
to a periodic orbit $\boldsymbol{\gamma}_0$. Associated to this embedding we obtain an embedding ${\mathbb R}\times Q\rightarrow
{\mathbb R}\times {\mathbb R}^N$ which is the identity in the first factor. This embedding can be lifted to an embedding of $(T({\mathbb R}\times Q),\wt{J})$
into the complex bundle $({\mathbb R}\times {\mathbb R}^N)\times {\mathbb C}^L$. This data fits into the commutative diagram
$$
\begin{CD}
(T({\mathbb R}\times Q),\wt{J})@>\Phi >>  ({\mathbb R}\times {\mathbb R}^N)\times {\mathbb C}^L\\
@VVV @VVV\\
{\mathbb R}\times Q  @> Id\times \phi >>  {\mathbb R}\times {\mathbb R}^N
\end{CD}
$$
This implies that we obtain a well-defined strong M-polyfold bundle
$$
E^{3,2,\delta}_{{\mathcal D},\varphi,\gamma}(T({\mathbb R}\times Q,\wt{J}))\rightarrow Y^{3,\delta}_{{\mathcal D},\varphi}({\mathbb R}\times Q,\boldsymbol{\gamma}).
$$
This space we can identify canonically with 
$$
\Omega^{3,2,\delta}_{{\mathcal D},\varphi,\gamma}({\mathbb R}\times Q,\wt{J})\rightarrow Y^{3,\delta}_{{\mathcal D},\varphi}({\mathbb R}\times Q,\boldsymbol{\gamma}),
$$
where this is as in the nodal case the bundle of suitable $(0,1)$-forms.  

\subsubsection{Several Periodic Orbits}
If we have several periodic orbit 
the same works as well. With the same notation as in Subsection \ref{several-x-orbits} this situation is obtained by taking products and pull-backs.
With $\bm{D}$ and $\bar{\digamma}$ as previously described we obtain for every $(z,z')\in\wt{\Gamma}$
$$
\Omega^{3,2,\delta}_{{\mathcal D}_{(z,z')},\varphi,\gamma_{(z,z')}}({\mathbb R}\times Q,\wt{J})\rightarrow Y^{3,\delta}_{{\mathcal D}_{(z,z')},\varphi}({\mathbb R}\times Q,\gamma_{(z,z')}).
$$
We take the product of these diagrams and pull-back by the natural embedding of  
$$
Y_{\bm{D},\varphi}({\mathbb R}\times Q,\bar{\digamma})\rightarrow \prod_{(z,z')\in\wt{\Gamma}} Y_{{\mathcal D}_{(z,z')},\varphi}({\mathbb R}\times Q,\boldsymbol{\bar{\gamma}})
$$
We denote the result of this construction by
\begin{eqnarray}
\Omega^{3,2,\delta}_{\bm{D},\varphi,\digamma}({\mathbb R}\times Q,\wt{J})\rightarrow Y^{3,\delta}_{\bm{D},\varphi}({\mathbb R}\times Q,\digamma),
\end{eqnarray}
or in a less explicit notation
\begin{eqnarray}
\Omega_{\bm{D},\varphi,\bar{\digamma}}({\mathbb R}\times Q,\wt{J})\rightarrow Y_{\bm{D},\varphi}({\mathbb R}\times Q,\bar{\digamma}).
\end{eqnarray}
\index{$\Omega_{\bm{D},\varphi,\bar{\digamma}}({\mathbb R}\times Q,\wt{J})\rightarrow Y_{\bm{D},\varphi}({\mathbb R}\times Q,\bar{\digamma})$}
\subsection{Sc-Smoothness of the CR-Operator}
Next we consider the nonlinear Cauchy Riemann operator. The questions we are concerned
with can be studied in the  context of one periodic orbit and the
results we shall obtain  immediately extend to the case of several periodic orbits.
\subsubsection{Sc-Smoothness}
We define the Cauchy-Riemann section of 
$$
\Omega^{3,2,\delta}_{{\mathcal D},\varphi,\gamma}({\mathbb R}\times Q,\wt{J})\rightarrow Y^{3,\delta}_{{\mathcal D},\varphi}(Q,\boldsymbol{\gamma})
$$
 in the usual way as 
\begin{eqnarray}\label{erq10.57}
\bar{\partial}_{\wt{J}}(r,\wt{u})=\left(\left(r,\wt{u}\right),\frac{1}{2}\cdot \left[T\wt{u} +\wt{J}(\wt{u})\circ T\wt{u}\circ j\right]\right)
\end{eqnarray}
and shall prove the following result.
\begin{prop}\label{PUPPY10.57}
The Cauchy-Riemann section is sc-smooth.
\end{prop}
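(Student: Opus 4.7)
The plan is to mirror the strategy used in the proof of Proposition \ref{PROPI9.20} (the nodal CR-smoothness result), adapting it to the presence of the stretching parameter $r$ and the asymptotic matching at the periodic orbit. Since sc-smoothness is a local property and $\bm{\bar{\oplus}}:{\mathcal V}\rightarrow Y^{3,\delta}_{{\mathcal D},\varphi}({\mathbb R}\times Q,\bm{\gamma})$ is an $\oplus$-construction (Theorem \ref{RRR}), it suffices, by the characterization of sc-smoothness in the $\oplus$-setting, to verify that $\bar{\partial}_{\wt{J}}\circ\bm{\bar{\oplus}}$ is an sc-smooth strong bundle map into a suitable ssc-model. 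Using the almost complex embedding
$\Phi:(T({\mathbb R}\times Q),\wt{J})\hookrightarrow ({\mathbb R}\times{\mathbb R}^N)\times{\mathbb C}^L$
and the corresponding identification of $\Omega^{3,2,\delta}_{{\mathcal D},\varphi,\gamma}({\mathbb R}\times Q,\wt{J})$ with the pullback strong bundle $E^{3,2,\delta}_{{\mathcal D},\varphi,\bm{\gamma}}$, we can reduce the question to the sc-smoothness of the two principal-part sections
$$\wt{u}\mapsto T\wt{u}\circ v_{a(\wt{u})}\qquad\text{and}\qquad \wt{u}\mapsto T\wt{u}\circ j_{a(\wt{u})}\circ v_{a(\wt{u})},$$
where $v_a$ is the canonical vector field introduced in (\ref{vavector}) and $j_a$ is the complex structure on $Z_a$.

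Next, I would trivialize via the models of Subsection \ref{USEFul0}: on the standard half-cylinders the sections above become the operators $\partial/\partial t$ and $\partial/\partial s$ applied to the glued map $\bm{\bar{\oplus}}(r,\wt{q}\dotplus\wt{h})$. Writing
$$(r,\wt{q}\dotplus\wt{h})=\bigl(r,(\wt{q}^x,[\wh{x},\wh{y}],\wt{q}^y)\dotplus(\wt{h}^x,\wt{h}^y)\bigr)$$
with $\wt{q}\in\mathsf{S}_{\bm{\gamma}}$, one has, on the glued domain, an explicit formula of the type
\begin{align*}
\bm{\bar{\oplus}}(r,\wt{q}\dotplus\wt{h})\circ\sigma^{+,a}_{\wh{x}}(s,t)
&=\wt{q}^x\circ\sigma^{+,a}_{\wh{x}}(s,t)\\
&\quad+\beta(s-R/2)\cdot\wt{h}^x(s,t)\\
&\quad+(1-\beta(s-R/2))\cdot\bigl(\varphi(r)\ast\wt{h}^y\bigr)(s-R,t-\theta),
\end{align*}
with $R=\varphi(|a|)$ determined from $r$ and $\wt{q}$ through $T\varphi(|a|)=\varphi(r)+c^y-c^x$. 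Differentiation in $s$ and $t$ produces only derivatives of $\wt{q}^x$ and $\wt{q}^y$ (which depend smoothly on the finite-dimensional data via $\phi_\gamma$ in Proposition \ref{PROPT8.84}), derivatives of the cut-off $\beta(\,\cdot-R/2\,)$, and shifted-and-cutoff derivatives of $\wt{h}^x,\wt{h}^y$. Applying $f^{\wh{\oplus}}$ on the target side (i.e., multiplying by $\beta(s-R/2-2)$ to land in the zero-asymptotic-limit model $X^{2,\delta}_{{\mathcal D},\varphi,0}({\mathbb C}^L)$) produces expressions of precisely the four types handled by the Fundamental Lemmata \ref{FUNDAMENTAL -I} and \ref{FUNDAMENTAL -II}, together with the mass-extraction maps $(\mathbf{M1})$--$(\mathbf{M5})$. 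Each term is therefore sc-smooth; the composition with $\wt{J}(\wt{u})$, which is a smooth almost complex structure, adds only an ssc-smooth fiberwise operation by the classical results from \cite{El} used throughout the paper.

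The main obstacle is the behavior at $r=0$. Unlike the nodal case, the ``standard'' part $\wt{q}\in\mathsf{S}_{\bm{\gamma}}$ carries a non-trivial asymptotic loop $\gamma(kt)$, so differentiating it in $t$ gives terms of the form $k\dot{\gamma}(kt)$ which do not vanish at the node and must be absorbed correctly. The point is that $\bar{\partial}_{\wt{J}}$ of the exact periodic-cylinder map $(Ts+c,\gamma(kt))$ is identically zero because $\wt{J}$ was normalized by $\wt{J}(T,0,0)=(0,-kR,0)$ with $\dot{\gamma}=(T/k)R$; so $\bar{\partial}_{\wt{J}}(\wt{q})\equiv 0$ at the finite-dimensional level, and the only surviving contribution as $r\to 0$ comes from the $(\wt{h}^x,\wt{h}^y)$-factors, which have exponential decay. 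This exponential decay, combined with the behavior of the cut-offs $\beta(s-R/2-2)$ as $R\to\infty$, is exactly what the Fundamental Lemma was designed to handle; it is also the mechanism already exploited in Proposition \ref{COMPARE}. Putting these observations together with the chain rule along the commutative diagram
$$\begin{CD}
{\mathcal V}@>\bm{\bar{\oplus}}>>Y^{3,\delta}_{{\mathcal D},\varphi}({\mathbb R}\times Q,\bm{\gamma})\\
@V \mathrm{Id}\times\bar{\partial}_{\wt{J},\sharp}VV @V\bar{\partial}_{\wt{J}}VV\\
{\mathcal V}\oplus H^{2,\delta}@>\bm{\bar{\oplus}}\times_{\mathbb B}\wh{\oplus}>>\Omega^{3,2,\delta}_{{\mathcal D},\varphi,\gamma}({\mathbb R}\times Q,\wt{J})
\end{CD}$$
reduces the assertion to sc-smoothness of the classical-model pre-composition, which is covered by the same Fundamental-Lemma arsenal as in the nodal proof. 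Finally, the extension from $({\mathbb R}^N,\bm{\bar{\gamma}})$ to a general target $(Q,\bm{\bar{\gamma}})$ is handled exactly as in Theorem \ref{THX8.58} via embedding and composing with a smooth bundle retraction $R$, the latter being sc-smooth on the $W(E)$-level by Proposition \ref{propx9.19}.
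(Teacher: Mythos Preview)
Your reduction to the two separate principal-part sections
\[
\wt{u}\mapsto T\wt{u}\circ v_{a(\wt{u})}\qquad\text{and}\qquad \wt{u}\mapsto T\wt{u}\circ j_{a(\wt{u})}\circ v_{a(\wt{u})}
\]
does not go through in the periodic-orbit setting. In the nodal proof (Proposition \ref{PROPI9.20}) this splitting works because differentiating a constant kills the asymptotic part, so each piece lands in $X^{2,\delta}_{{\mathcal D},\varphi,0}$. Here the asymptotic model is $\wt{q}^x(s,t)=(Ts+c^x,\gamma(kt))$, whose $s$- and $t$-derivatives are $(T,0)$ and $(0,k\dot\gamma(kt))$, neither of which decays. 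So at $r=0$ the individual maps do not take values in the zero-asymptotic-limit fiber $H^{2,\delta}({\mathcal D},{\mathbb C}^L)$; they are not even defined into the stated target. You correctly observe later that $\bar{\partial}_{\wt{J}}(\wt{q})=0$, which is exactly the cancellation that rescues the \emph{full} CR-combination---but that observation is incompatible with having already split off the $\wt{J}$-factor.

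Relatedly, your final diagram does not commute as written: the map $\bar{\partial}_{\wt{J},\sharp}$ on the disk pair, followed by $\wh{\oplus}$, does not reproduce $\bar{\mathsf{L}}\circ\bm{\bar{\oplus}}$, because $\bar{\mathsf{L}}$ is nonlinear in $\wt{u}$ (through $\mathsf{J}(\wt{u})$) and involves derivatives of the cut-off $\beta$. The paper repairs this by inserting a pre-processing map $\mathsf{C}:{\mathcal V}\to{\mathcal V}$, sc-smooth by the Fundamental Lemma, which replaces $\wt{h}$ by a $\wt{k}$ engineered so that $\wt{q}^x+\wt{k}^x$ agrees with the glued map $\wt{w}$ on the support of $\beta^x_a$, and symmetrically for $y$. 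Then one applies the \emph{full} CR-operator $\mathsf{D}$ on the disk pair (ssc-smooth by Eliasson), obtaining the factorization $\bar{\mathsf{L}}\circ\bm{\bar{\oplus}}=\wh{\oplus}\circ(\mathsf{D}\circ\mathsf{C})$. The point is that $\mathsf{D}$ lands in $H^{2,\delta}$ precisely because $\bar{\mathsf{L}}^x(\wt{q}^x)=0$, so the asymptotic part is killed \emph{after} the CR-combination is formed, not before. Your outline can be fixed by abandoning the $\mathsf{T}/\mathsf{S}$ splitting, keeping the full nonlinear $\bar{\mathsf{L}}$, and supplying the missing $\mathsf{C}$-step (or, equivalently, working with $\wh{f}\circ\bar{\mathsf{L}}\circ\bm{\bar{\oplus}}$ and expanding $\mathsf{J}(\wt{w})=\mathsf{J}(\wt{q}^x)+[\mathsf{J}(\wt{w})-\mathsf{J}(\wt{q}^x)]$ to exhibit the cancellation explicitly).
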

\begin{proof}
We first note that we can  distinguish two cases. 
Namely if we work near a  $(r,\wt{u})$ which satisfies  $r\in (0,1)$, we are precisely working with maps on cylinders of finite length
and the result follows from the nodal case, see Proposition \ref{PROPI9.20}.
  Hence we need only to study the case 
near an element of the form $(0,\wt{u})$. The question of sc-smoothness is, of course, connected with the 
behavior inside of a long cylinder, far away from the boundary.  Indeed, near the boundary the behavior is as in the (more) classical  ssc-smooth case.  We shall use the discussion in Subsection \ref{USEFul0}
which equally well applies to the periodic orbit case.

We know that by construction we may assume that $Q\subset {\mathbb R}^{N}$ for some possibly large 
$N$. We have a smooth ${\mathbb R}$-invariant almost complex structure $\wt{J}$ defined on ${\mathbb R}\times Q$.
The periodic orbit $([\gamma],T,k)$ has the property that $\gamma(S^1)\subset Q$ and consequently defines
the $\wt{J}$-complex cylinder ${\mathbb R}\times \gamma(S^1)$. 
Since ${\mathbb R}\times Q\subset {\mathbb R}\times {\mathbb R}^{N}$ we can
can find a ${\mathbb R}$-invariant map
$$
(b,u)\rightarrow {\mathcal L}({\mathbb R}\times {\mathbb R}^{N}):(b,u)\rightarrow\mathsf{J}(b,u)
$$
such that for $(b,u)\in {\mathbb R}\times Q$ it holds that the restriction of $\mathsf{J}$ to $T_{(b,u)}({\mathbb R}\times Q)$ is just $\wt{J}(b,u)$.  
With this at hand we can view the map in (\ref{erq10.57}) as the restriction (after some identifications)
of the following map 
\begin{eqnarray}\label{WEQN10.51}
&\bar{\mathsf{L}}:Y^{3,\delta}_{{\mathcal D},\varphi}({\mathbb R}\times {\mathbb R}^{N},\bm{\gamma})\rightarrow X^{2,\delta}_{{\mathcal D},\varphi,0}({\mathbb R}\times {\mathbb R}^{N})&\\
& (r,\wt{u})\rightarrow  \frac{1}{2}\cdot \left[\wt{u}_s +\mathsf{J}(\wt{u})\wt{u}_t\right].&\nonumber
\end{eqnarray}
Here the partial derivatives are with respect the obvious holomorphic coordinates
coming from the standard cylinder model for the domains which was discussed in 
Subsection \ref{USEFul0}. 
Recall that the M-polyfold structure on $Y^{3,\delta}_{{\mathcal D},\varphi}({\mathbb R}\times {\mathbb R}^{N},\bm{\gamma})$ is defined by 
$$
\bm{\bar{\oplus}}:{\mathcal V}\rightarrow Y^{3,\delta}_{{\mathcal D},\varphi}({\mathbb R}\times {\mathbb R}^{N},\bm{\gamma})
$$
and that of $X^{2,\delta}_{{\mathcal D},\varphi,0}({\mathbb R}\times {\mathbb R}^{N})$ by
$$
\wh{\oplus}:{\mathbb B}\times H^{2,\delta}\rightarrow X^{2,\delta}_{{\mathcal D},\varphi,0}({\mathbb R}\times {\mathbb R}^{N}).
$$
For the above see Definitions \ref{DEF3.15} and  Theorem \ref{RRR} for the first statement, and 
 (\ref{WEQN3.18}) and Theorem \ref{THM1.7} for the second. The sc-smoothness of  $\bar{\mathsf{L}}$ is equivalent
 to the sc-smoothness of $\bar{\mathsf{L}}\circ \bm{\bar{\oplus}}$. In order to prove the latter
we shall   define an sc-smooth  map
$\mathsf{L}:{\mathcal V}\rightarrow {\mathbb B}_{\mathcal D}\times H^{2,\delta}$
such that 
\begin{eqnarray}\label{RRREQ}
\bar{\mathsf{L}}\circ \bm{\bar{\oplus}} =\wh{\oplus} \circ \mathsf{L}.
\end{eqnarray}
Since $\wh{\oplus}$ is sc-smooth the desired result will follow.

Starting with an element   $(r,\wt{q}\dotplus \wt{h})$ we first consider $(r,\wt{q})$.
Since the element belongs to ${\mathcal V}$ the following holds true, where we write
$\wt{q}$ as $(\wt{q}^x,[\wh{x},\wh{y}],\wt{q}^y)$ as in (\ref{PPP10.3}). This data gives the asymptotic constants 
$c^x$ and $c^y$. We define  if $r=0$ the gluing parameter $a=0$, and if $r>0$ 
$a=|a|\cdot [\wh{x},\wh{y}]$, with $T\cdot\varphi(|a|)=\varphi(r)+c^y-c^x$.  Recalling Lemma
\ref{LEM3.14} the following holds.
\begin{lem}
The map ${\mathcal V}\rightarrow {\mathbb B}_{{\mathcal D}}$ defined by
$$
(r,\wt{q}\dotplus\wt{h})\rightarrow a
$$
is ssc-smooth. 
\qed
\end{lem}
Given $(r,\wt{q}\dotplus\wt{h})\in {\mathcal V}$ we define a new element 
$(r,\wt{q}\dotplus\wt{k})\in {\mathcal V}$
where $\wt{k}=(\wt{k}^x,\wt{k}^y)$ is given as follows. If $r=0$ we put $\wt{k}:=\wt{h}$. 
In the case that $r>0$ we obtain the nonzero gluing parameter  $a$ and define with $R=\varphi(|a|)$
\begin{eqnarray*}
\wt{k}^x \circ\sigma^+_{\wh{x}}(s,t)
& =& \beta(s-R/2-2)\wt{h}^x\circ\sigma_{\wh{x}}^+(s,t)\\
&& + (\beta(s-R/2-2)-\beta(s-R/2))\wt{h}^y\circ\sigma^-_{\wh{y}}(s-R,t)
\end{eqnarray*}
Similarly
\begin{eqnarray*}
&&\wt{k}^y \circ\sigma_{\wh{y}}^-(s',t')\\
& =& \beta(-s'-R/2-2)\wt{h}^y\circ\sigma_{\wh{y}}^-(s',t')\\
&& + (\beta(-s'-R/2-2)-\beta(s'-R/2))\wt{h}^x\circ\sigma^+_{\wh{x}}(-s'-R,t')
\end{eqnarray*}
As a consequence of the fundamental lemma the map
$$
{\mathbb B}_{\mathcal D}\times H^{3,\delta}({\mathcal D},{\mathbb R}\times {\mathbb R}^N)\rightarrow 
H^{3,\delta}({\mathcal D},{\mathbb R}\times {\mathbb R}^N):(a,\wt{h})\rightarrow \wt{k}
$$
is sc-smooth. Using this we define the sc-smooth map
\begin{eqnarray}
&\mathsf{C}: {\mathcal V}\rightarrow {\mathcal V}&\\
&\mathsf{C}(r,\wt{q}\dotplus \wt{h})= (r,\wt{q} \dotplus \wt{k}).&\nonumber
\end{eqnarray}
Next we consider  following map belonging to the classical context 
\begin{eqnarray}
&\mathsf{D}:{\mathcal V}\rightarrow {\mathbb B}_{\mathcal D}\times H^{2,\delta}&\\
&\wt{q}\dotplus\wt{h}\rightarrow (a, (\bar{\mathsf{L}}^x(\wt{q}^x+\wt{h}^x),\bar{\mathsf{L}}^y(\wt{q}^y+\wt{h}^y))),&\nonumber\\
\end{eqnarray}
where $\bar{\mathsf{L}}^x$ and $\bar{\mathsf{L}}^y$ are defined similarly as in (\ref{WEQN10.51}) but on punctured disks.
\begin{lem}
The map $\mathsf{D}$ is ssc-smooth.
\end{lem}
\begin{proof}
This is classical and follows from the technology in \cite{El}.
\end{proof}
As a consequence of the two lemmata the composition map $\mathsf{L}= \mathsf{D}\circ \mathsf{C}$
$$
\mathsf{L}:{\mathcal V}\rightarrow {\mathbb B}_{\mathcal D}\times H^{2,\delta}({\mathbb R}\times {\mathbb R}^N)
$$
is sc-smooth.  It is a trivial exercise to verify the validity of the property (\ref{RRREQ})
and the proof is complete. 
\end{proof}

\section{Summary of Some Classical Constructions}\label{SECX3}
We shall recall some classical results which are well-known and can be proved using the tools provided by \cite{El}.
Then we add some additional structures which will will be important for the later constructions.
\subsection{An ssc-Manifold and Strong Bundle}
Consider  a  compact Riemann surface with smooth boundary which we denote by $(\Sigma,j)$. We also assume a finite group $H$
is acting on $\Sigma$ by biholomorphic map. 

\subsubsection{A Strong Bundle}
Given an almost complex smooth manifold $(Q,J)$  we can define $X^3_\Sigma(Q)$\index{$X^3_\Sigma(Q)$} to consist of all maps of Sobolev class $H^3$ defined on $\Sigma$ with image in $Q$.
Then $X^3_\Sigma(Q)$ has the structure of an ssc-manifold where level $m$ corresponds to Sobolev regularity $H^{m+3}$. 
  Next we assume that a smooth family of almost complex structures $\mathfrak{j}: v\rightarrow j(v)$
on $\Sigma$ is given  having the following properties
\begin{eqnarray}\label{bott}
\begin{array}{cc}
 j(0)=j & \\
j(v)=j  & \ \ \text{near}\  \partial \Sigma,\  \ \text{for}\ \  v\in V. 
\end{array}
\end{eqnarray}
Taking the product with $V$ we obtain the ssc-manifold $V\times X^3_\Sigma(Q)$, which we shall abbreviate
$X^3_{\Sigma,{\mathfrak{j}}}(Q)$.  There also exists a strong ssc-bundle $W^{3,2}_{\Sigma,\mathfrak{j}}(Q,J)\rightarrow X^3_{\Sigma,\mathfrak{j}}(Q)$, where the elements in our bundle are pairs
$((v,u),\xi)$ and $\xi$ is of class $H^2$ and $\xi(z):(T_z\Sigma,j(v))\rightarrow (T_{u(z)}Q,J)$  is complex anti-linear.
The bi-grading $(m,k)$, with $0\leq k\leq m+1$, corresponds to regularity $H^{m+3}$ of $u$,
and $H^{2+k}$ for $\xi$.

\begin{thm}\label{THM18}
With the family $\mathfrak{j}$ having the above properties 
there exists a  natural ssc-smooth strong bundle structure for $p: W^{3,2}_{\Sigma,\mathfrak{j}}(Q,J)\rightarrow X^3_{\Sigma,\mathfrak{j}}(Q)$.\index{$p: W^{3,2}_{\Sigma,\mathfrak{j}}(Q,J)\rightarrow X^3_{\Sigma,\mathfrak{j}}(Q)$}
\qed
\end{thm}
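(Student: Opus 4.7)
The plan is to follow the now-familiar "embed and retract" strategy used in the nodal constructions (e.g., the proof of Theorem \ref{THM1.3} and of Theorem \ref{THM9.20}), reducing the general target $(Q,J)$ to the linear model and then invoking classical Eliasson--Palais-type results from \cite{El} to promote level-wise $C^\infty$ smoothness to ssc-smoothness. Concretely, I would first handle the case $Q=\mathbb{R}^N$. In that case $X^3_\Sigma(\mathbb{R}^N)=H^3(\Sigma,\mathbb{R}^N)$ is a single Hilbert space, so $X^3_{\Sigma,\mathfrak{j}}(\mathbb{R}^N)=V\times H^3(\Sigma,\mathbb{R}^N)$ carries a tautological ssc-manifold structure (a global chart at every level). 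The bundle $W^{3,2}_{\Sigma,\mathfrak{j}}(\mathbb{R}^N,J_0)$ — with $J_0$ meaning any chosen complex structure, e.g.\ multiplication by $i$ if we think of a second factor $\mathbb{C}^N$ — should be realized as a sub-bundle of the classical ssc-bundle of all $\mathbb{R}$-linear homomorphism fields $T\Sigma\to u^*\mathbb{R}^N$ of class $H^{2+k}$, cut out by the smoothly $v$-varying pointwise linear condition $\xi+J_0\circ\xi\circ j(v)=0$.

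The central technical point is the smooth dependence on $v$. Using the condition \eqref{bott}, namely $j(v)=j$ near $\partial\Sigma$ and $j(0)=j$, the map $v\mapsto j(v)$ takes values in a smooth finite-dimensional family of interior perturbations of $j$, so multiplication by $j(v)$ defines for each bi-level $(m,k)$ a classically smooth map from $V$ into the bounded operators on the appropriate Sobolev spaces of $T\Sigma$-valued sections. The associated smoothly-varying projector
\[
P_v(\xi)=\tfrac{1}{2}\bigl(\xi+J_0\circ\xi\circ j(v)\bigr)
\]
is then a family of bundle projections of class $C^\infty$ at each level, and a standard Eliasson-type argument (applied separately to each $(m,k)$ with the $k$-scale for the fibre) shows that the image bundle of $P_v$ is an ssc-smooth strong sub-bundle — this gives the linear model $p_0: W^{3,2}_{\Sigma,\mathfrak{j}}(\mathbb{R}^N,J_0)\to X^3_{\Sigma,\mathfrak{j}}(\mathbb{R}^N)$.

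For general $(Q,J)$ I would pass via a proper embedding $\phi:Q\to\mathbb{R}^N$ together with a compatible $J$-preserving bundle embedding $\Phi:(TQ,J)\to\mathbb{R}^N\times\mathbb{C}^L$, using orthogonal projection in the fibres to obtain a smooth bundle retraction $R$ of a tubular neighbourhood onto $\Phi(TQ)$ as in Theorem \ref{THM9.20}. The ssc-structure on $X^3_{\Sigma,\mathfrak{j}}(Q)$ is then defined as the preimage of $\phi(Q)$-valued maps inside $X^3_{\Sigma,\mathfrak{j}}(\mathbb{R}^N)$, and the bundle $W^{3,2}_{\Sigma,\mathfrak{j}}(Q,J)$ is pulled back from the linear model via $\Phi$ using $R$; independence of the embedding follows, as in the nodal case, by comparing two embeddings through smooth bundle maps between them and invoking the chain rule. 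The main obstacle I expect is verifying ssc-smoothness of the $v$-dependence uniformly across all bi-levels $(m,k)$ with $0\le k\le m+1$: one must check that both the projector $P_v$ and its interaction with the nonlinear post-composition maps $\xi\mapsto R\circ\xi$ (needed in the extension to $Q$) respect the strong-bundle bi-filtration, which reduces to a careful bookkeeping using the boundary-constancy condition \eqref{bott} and the fact that $\mathfrak{j}$ is finite-dimensional.
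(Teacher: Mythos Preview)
The paper does not actually prove this theorem: it is stated with a \qed\ at the end of the statement and no proof, under the section heading ``Summary of Some Classical Constructions,'' with the remark that these results ``are well-known and can be proved using the tools provided by \cite{El}.'' So there is no proof in the paper to compare against beyond the bare citation to Eliasson.

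Your outline is a correct and reasonable fleshing-out of what such a proof would look like, and it is fully in the spirit both of the Eliasson reference and of the paper's own embed-and-retract methodology (Theorems \ref{THM1.3} and \ref{THM9.20}). In particular, the reduction to the linear model, the use of the smoothly $v$-dependent projector $P_v$, and the passage to general $(Q,J)$ via a proper embedding and tubular retraction are exactly the right ingredients. One small remark: since $\Sigma$ here is compact with boundary and there are no punctures or gluing parameters, the situation is genuinely classical---level-wise you are dealing with ordinary Hilbert manifolds of sections, and the ssc-smoothness (as opposed to merely sc-smoothness) follows because all the maps involved are $C^\infty$ at each fixed level, with no loss of regularity. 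The boundary-constancy condition \eqref{bott} is not strictly needed for the bundle structure itself (it matters later for compatibility with the gluing constructions), so your caution about it in the bi-filtration bookkeeping is warranted but the verification is routine.
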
   
\subsubsection{Constraints}
 Assume a finite collection $\Xi$ of points lying in $\Sigma\setminus \partial\Sigma$ is given and for every $z\in \Xi$
we are given a submanifold $H_z$ of $Q$ of co-dimension two (without boundary). Denote the collection
${(H_z)}_{z\in\Xi}$ by ${\mathcal H}$. Then define the subset $X^3_{\Sigma,{\mathcal H}}(Q)\subset X^3_{\Sigma}(Q)$ \index{$X^3_{\Sigma,{\mathcal H}}(Q)$} to consist of all
$u$ such that $u$ intersects $H_z$ transversally at $z$ for all $z\in \Xi$.  Similarly as before we define
$X^3_{\Sigma,\mathfrak{j},{\mathcal H}}(Q)$\index{$X^3_{\Sigma,\mathfrak{j},{\mathcal H}}(Q)$}. The following result follows
from classical methods.
\begin{prop}
The subset $X^3_{\Sigma,{\mathcal H}}(Q)$ of $X^3_\Sigma(Q)$ is an ssc-submanifold. The same holds 
for $ X^3_{\Sigma,\mathfrak{j}, {\mathcal H}}(Q)\subset  X^3_{\Sigma,\mathfrak{j}}(Q)$.  In particular the strong ssc-bundles
$W^{3,2}_{\Sigma,\mathfrak{j}}(Q,J)$  restricted to $V\times X^3_{\Sigma,{\mathcal H}}(Q)$ are strong bundles.
\end{prop}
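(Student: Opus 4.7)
My plan is to realize $X^3_{\Sigma,{\mathcal H}}(Q)$ as a locally closed ssc-submanifold by writing the constraint at each marked point $z\in \Xi$ as (a) an incidence condition $u(z)\in H_z$ that cuts out an ssc-submanifold via a transverse evaluation, and (b) a transversality condition $du(z)(T_z\Sigma)+T_{u(z)}H_z=T_{u(z)}Q$ that is an open condition on top of (a). Since the points of $\Xi$ are finite, distinct, and lie in the interior of $\Sigma$, the local analysis at different points decouples, and the global constraint is just an intersection of finitely many such submanifolds with a single open subset.

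For the incidence step, I would use the evaluation map
\[
\mathrm{ev}_z\colon X^3_\Sigma(Q)\to Q,\qquad u\mapsto u(z).
\]
Level-wise, for $u$ of class $H^{m+3}$ on the real $2$-dimensional $\Sigma$, the embedding $H^{m+3}\hookrightarrow C^{m+1}$ combined with the classical results of Eliasson \cite{El} on composition of Sobolev maps with smooth maps shows that $\mathrm{ev}_z$ is a classical $C^{m+1}$-map between Hilbert manifolds; passing to the sc-setup this says $\mathrm{ev}_z$ is ssc-smooth. Since $z\in \Sigma\setminus\partial\Sigma$, for any $u$ and any $v\in T_{u(z)}Q$ one finds a smooth variation of $u$ supported in a coordinate disk around $z$ realizing $v$ as $d\mathrm{ev}_z(\xi)$; hence $\mathrm{ev}_z$ is an ssc-submersion, and in particular $\mathrm{ev}_z$ is transverse to $H_z\subset Q$. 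The ssc-version of the implicit function theorem (applied level-wise and then packaged sc-smoothly using the retraction viewpoint, or directly from the submersion property) then yields that $\mathrm{ev}_z^{-1}(H_z)$ is an ssc-submanifold of codimension $2$ in $X^3_\Sigma(Q)$.

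Taking intersections over $z\in \Xi$, which remains transverse (the differentials $d\mathrm{ev}_z$ are mutually independent because the $z$'s are distinct points at which one can localize the variations), one obtains the ssc-submanifold $\bigcap_{z\in\Xi}\mathrm{ev}_z^{-1}(H_z)$. On this submanifold the fibrewise transversality condition $du(z)(T_z\Sigma)+T_{u(z)}H_z=T_{u(z)}Q$ is an open condition, as it is the nonvanishing of the induced map of a two-dimensional quotient bundle; hence $X^3_{\Sigma,{\mathcal H}}(Q)$ is an ssc-open subset of $\bigcap_{z\in\Xi}\mathrm{ev}_z^{-1}(H_z)$, which by composition of the two steps is an ssc-submanifold of $X^3_\Sigma(Q)$. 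For the parameterized version $X^3_{\Sigma,\mathfrak{j},{\mathcal H}}(Q)=V\times X^3_{\Sigma,{\mathcal H}}(Q)$, the same argument applies to the product after noting that $\mathrm{ev}_z$ depends only on the $u$-factor, so the product structure passes cleanly to the submanifold. Finally, because strong bundles are preserved under pull-back along inclusions of ssc-submanifolds (the fibres over $X^3_{\Sigma,\mathfrak{j},{\mathcal H}}(Q)$ are just the restrictions of the fibres of $W^{3,2}_{\Sigma,\mathfrak{j}}(Q,J)\to X^3_{\Sigma,\mathfrak{j}}(Q)$ and all local strong-bundle trivializations restrict), the restricted bundle is a strong ssc-bundle over the constraint locus.

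The main technical obstacle I expect is verifying the ssc-smoothness and submersion property of $\mathrm{ev}_z$ cleanly in the scale-smooth category rather than merely on each level separately. Level-wise classical smoothness is immediate from \cite{El}, but to conclude ssc-smoothness one needs the loss of one derivative between consecutive levels to be compatible with differentiation of the evaluation — here this works because $\mathrm{ev}_z$ is linear in $u$ for a fixed local chart of $Q$, so the sc-derivative at each level is the evaluation itself with the same continuity estimates, matching Theorem \ref{ABC-x}. Once this is in place, the rest of the argument is a routine application of the ssc-implicit function theorem and the openness of transversality.
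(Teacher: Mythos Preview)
Your proposal is correct and follows essentially the same two-step strategy as the paper: first show that the incidence locus $\{u : u(z)\in H_z\ \text{for all}\ z\in\Xi\}$ is an ssc-submanifold, then observe that the transversality condition is open because $H^3\hookrightarrow C^1$. The paper's proof is a two-sentence sketch that simply asserts the first step ``follows directly from the classical constructions'' and the second from the Sobolev embedding; you have unpacked the first step via the evaluation maps $\mathrm{ev}_z$ and the implicit function theorem, which is exactly what those ``classical constructions'' amount to. Your worry about ssc-smoothness of $\mathrm{ev}_z$ is unnecessary: a map that is level-wise classically $C^\infty$ is automatically sc$^\infty$ (see the Remark after Proposition~\ref{lower}), and in the ssc-setting this is even more immediate.
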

\begin{proof}
The subset of $X^3_\Sigma(Q)$ consisting of maps mapping $z$ to $H_z$ (no transversality 
condition yet) is an ssc-submanifold which follows directly from the classical constructions. Since $H^3$ embeds continuously into $C^1$ it follows that $ X_{\Sigma,{\mathcal H}}(Q)$ is an open subset of the latter.
\end{proof}
Denote by
$W^{3,2}_{\Sigma,\mathfrak{j},{\mathcal H}}(Q,J)$ the restriction of $W^{3,2}_{\Sigma,\mathfrak{j}}(Q,J)$ to $X^3_{\Sigma,\mathfrak{j},{\mathcal H}}(Q)$.  Based on this, here is the first classical construction of a strong bundle
\begin{eqnarray}
W^{3,2}_{\Sigma,\mathfrak{j},{\mathcal H}}(Q,J)\rightarrow X^3_{\Sigma,\mathfrak{j},{\mathcal H}}(Q).
\end{eqnarray}
This will be used later on.
\subsection{A Special Case }
Consider  the special case,  where $Q$ is replaced by ${\mathbb R}\times Q$.
In this case we assume that $Q$ is odd-dimensional and we are given 
a smooth vector field $R$ on $Q$ and hyperplane distribution $\xi\subset TQ$ 
together with the structure of a complex vector bundle on $\xi\rightarrow Q$, so that
$TQ={\mathbb R}R \oplus \xi$. Then we obtain a complex multiplication by $i$ denoted by
$J:\xi\rightarrow \xi$, and extend this to a special almost complex structure $\wt{J}$ which maps
at $(a,q)\in {\mathbb R}\times Q$ the vector 
$(1,0)\in T_a{\mathbb R}\times T_qQ$ to $(0,R(q))$. Without the constraints we obtain
from the previous discussion the strong bundle 
\begin{eqnarray}
  W^{3,2}_{\Sigma,\mathfrak{j}}({\mathbb R}\times Q,\wt{J})\rightarrow
  X^3_{\Sigma,\mathfrak{j}}({\mathbb R}\times Q).
  \end{eqnarray}
Having the additional structures $R$ and $\xi\rightarrow Q$ and the
  ${\mathbb R}$-action on the first factor, we can define certain
  sub-M-polyfolds of $X^3_{\Sigma,\mathfrak{j}}({\mathbb R}\times Q)$ which
  will be useful later on.
Having a splitting of $T_{(a,q)}({\mathbb R}\times Q)=(T_a{\mathbb
  R}\oplus {\mathbb R}R(q)) \oplus \xi_q$ we can distinguish between
  different types of transversal constraints of codimension two.  
One type is ${\mathbb R}$-invariant and has the form ${\mathbb R}\times
  H$, where $H\subset Q$ has codimenson two.
The second type is not ${\mathbb R}$-invariant and has the form
  $\{a\}\times H$, where $H\subset Q$ has codimension one.
This will be discussed in more detail later. 

\subsection{Anchor Set}
There is also another ingredient which is called an anchor set.

\begin{definition}
Let $(\Sigma,j)$ be a compact Riemann surface with smooth boundary and $G$
  a finite group acting on $(\Sigma,j)$ by biholomorpic maps.  
An anchor set\index{anchor set} $\rup$ is a finite $G$-invariant subset of
  $\Sigma\setminus\partial\Sigma$.
The associated anchor average \index{anchor average} for a continuous map
  $\wt{u}=(b,u):\Sigma\rightarrow {\mathbb R}\times Q$ is the number
  $$
  \text{av}(\wt{u}) =\frac{1}{|\rup|}\cdot \sum_{z\in \rup} b(z)
  $$
A continuous map $\wt{u}$ is said to be anchored provided
  $\text{av}(\wt{u})=0$.
\end{definition}
%

With $(\Sigma,j)$ as previously described and a given anchor set $\rup$ we
  can define the subset $X^3_{(\Sigma,\mathfrak{j}),\rup}({\mathbb R}\times
  Q)$
  of  $X^3_{(\Sigma,\mathfrak{j})}({\mathbb R}\times Q)$ to consist of all
  pairs $(v,\wt{u})$, where $v\in V$ and $\wt{u}$ is anchored.

\begin{proposition}
  \hfill\\
$X^3_{(\Sigma,\mathfrak{j}),\rup}({\mathbb R}\times Q)$ 
  is  a ssc-submanifold of $X^3_{(\Sigma,\mathfrak{j}),\rup}({\mathbb
  R}\times Q)$.
\end{proposition}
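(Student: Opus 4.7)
The plan is to exhibit $X^3_{(\Sigma,\mathfrak{j}),\rup}({\mathbb R}\times Q)$ as the level set of a ssc-smooth functional that is submersive at every point, and apply the classical implicit function theorem levelwise. Since the anchor condition does not involve the parameter $v\in V$, we have the product decomposition
\[
X^3_{(\Sigma,\mathfrak{j}),\rup}({\mathbb R}\times Q) \;=\; V \times X^3_{\Sigma,\rup}({\mathbb R}\times Q),
\]
so it suffices to treat the case without $V$ and $\mathfrak{j}$.

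First I would verify that the anchor average extends to a globally defined map
\[
\mathrm{av}:X^3_{\Sigma}({\mathbb R}\times Q)\to{\mathbb R},\qquad \wt{u}=(b,u)\mapsto \frac{1}{|\rup|}\sum_{z\in \rup}b(z).
\]
Because $\dim\Sigma=2$ and $\rup\subset\Sigma\setminus\partial\Sigma$, the Sobolev embedding $H^{m+3}(\Sigma,\mathbb{R})\hookrightarrow C^0(\Sigma,\mathbb{R})$ shows that each point evaluation $b\mapsto b(z)$ is a bounded linear functional on every level, so $\mathrm{av}$ is linear in $b$ and levelwise classically smooth; in particular it is ssc-smooth.

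Second, set up a ssc-smooth chart around an arbitrary $\wt{u}_0\in X^3_{\Sigma}({\mathbb R}\times Q)$ using the canonical splitting $T({\mathbb R}\times Q)={\mathbb R}\oplus TQ$. Fixing a Riemannian metric and the exponential map on ${\mathbb R}\times Q$, a neighborhood of $\wt{u}_0$ is parameterized by sections
\[
\xi=(\xi^{\mathbb R},\xi^Q)\in H^3(\Sigma,\mathbb{R})\oplus H^3(\Sigma,\wt{u}_0^*TQ)
\]
via $\xi\mapsto \exp_{\wt{u}_0}\xi$. In these coordinates
\[
\mathrm{av}(\exp_{\wt{u}_0}\xi)=\mathrm{av}(\wt{u}_0)+\frac{1}{|\rup|}\sum_{z\in\rup}\xi^{\mathbb R}(z),
\]
which is affine, with linear part given by the composition of the projection onto the $\mathbb R$-factor with the finite sum of point evaluations. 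This linear functional is surjective onto $\mathbb{R}$ at every level and admits a canonical ssc-topological complement spanned, for instance, by $(\chi,0)$, where $\chi:\Sigma\to\mathbb{R}$ is a smooth bump function with $\chi(z_0)=1$ for one fixed $z_0\in\rup$ and $\chi\equiv 0$ on $\rup\setminus\{z_0\}$.

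Finally, the classical implicit function theorem applied on each level identifies the zero set of $\mathrm{av}-\mathrm{av}(\wt{u}_0)$ with the graph of a smooth function on the kernel of the linearization, and these graphs assemble compatibly across levels because both the linearization and its complement are independent of $m$. Patching produces the desired ssc-submanifold structure. The only mild technical obstacle is verifying that the exponential-map charts on $X^3_{\Sigma}({\mathbb R}\times Q)$ themselves respect the ssc-structure; this is a standard consequence of the levelwise classical smoothness results of \cite{El} already implicit in Theorem \ref{THM18}.
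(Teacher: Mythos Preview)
Your argument is correct, but the paper proceeds by a different and much shorter route. Rather than invoking the implicit function theorem, the paper simply exhibits a global ssc-smooth retraction
\[
(v,(b,u))\;\longmapsto\;(v,(b-\mathrm{av}(\wt{u}),u))
\]
of $X^3_{(\Sigma,\mathfrak{j})}(\mathbb{R}\times Q)$ onto the anchored subspace, which immediately yields the ssc-submanifold property. This works because the target is a product $\mathbb{R}\times Q$ and the anchor condition only sees the linear $\mathbb{R}$-factor, so one can correct by translation. Your approach---showing $\mathrm{av}$ is a ssc-smooth submersion and applying the levelwise implicit function theorem with exponential charts---is more general in spirit (it would adapt to constraints not admitting such an obvious retraction), but here it is considerably more work: you need the exponential-map chart machinery and the bump-function complement, whereas the retraction argument needs neither.
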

%
\begin{proof}
The map $(v,(b,u))\rightarrow (v,(b-\text{av}(\wt{u}),u))$ is a global
  ssc-retraction of $X^3_{(\Sigma,\mathfrak{j})}({\mathbb R}\times Q)$
  onto $X^3_{(\Sigma,\mathfrak{j}),\rup}({\mathbb R}\times Q)$ .
\end{proof}
We immediately obtain by restriction the strong bundle
\begin{eqnarray}
  W^{3,2}_{(\Sigma,\mathfrak{j}),\rup}({\mathbb R}\times
  Q,\wt{J})\rightarrow X^3_{(\Sigma,\mathfrak{j}),\rup}({\mathbb R}\times
  Q).  
  \end{eqnarray}
We also could add constraints to obtain a further construction. 
Again this will be discussed later.

%
\subsection{The CR-Section}
The CR-operators is defined by 
  $$
  \bar{\partial}_J(v,u)=\left((v,u),\frac{1}{2}\cdot \left[ Tu+J\circ
  Tu\circ j(v)\right]\right).
  $$
It can be viewed as a section of $p:
  W^{3,2}_{\Sigma,\mathfrak{j}}(Q,J)\rightarrow
  X^3_{\Sigma,\mathfrak{j}}(Q)$ as well as
  by restriction of $p: W^{3,2}_{\Sigma,\mathfrak{j},{\mathcal
  H}}(Q,J)\rightarrow X^3_{\Sigma,\mathfrak{j},{\mathcal H}}(Q)$.
The first result is the ssc-smoothness.

\begin{proposition}
Let $(Q,J)$ be a smooth almost complex manifold without boundary,
  $(\Sigma,j)$ a compact  Riemann surface with smooth boundary, and
  $\mathfrak{j}$ a smooth deformation of $j$ with the properties listed in
  (\ref{bott}).
The Cauchy-Riemann section $\bar{\partial}_J$ of $p:
  W^{3,2}_{\Sigma,\mathfrak{j}}(Q,J)\rightarrow
  X^3_{\Sigma,\mathfrak{j}}(Q)$ is ssc-smooth.
\end{proposition}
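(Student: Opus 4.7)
The plan is to reduce the statement to level-wise classical $C^\infty$-smoothness of $\bar{\partial}_J$ between Banach charts and then invoke the remark after Proposition \ref{lower} (``level-wise classical smoothness implies $\ssc^\infty$''), strengthened here to ssc-smoothness because there is no loss of regularity beyond the single derivative-shift that is already built into the strong bundle structure. I would split
\begin{equation*}
\bar{\partial}_J(v,u)=\tfrac{1}{2}\,Tu+\tfrac{1}{2}\,J(u)\circ Tu\circ j(v)=:L(u)+N(v,u),
\end{equation*}
so that $L$ is linear in $u$ and independent of $v$, while $N$ collects all of the nonlinearity.

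First I would reduce to a local model. Since ssc-smoothness of a section is local on the base, pick $(v_0,u_0)\in X^3_{\Sigma,\mathfrak{j}}(Q)$ and a proper embedding $\phi\colon Q\hookrightarrow\mathbb R^N$ with a tubular neighborhood retraction $r\colon U\to\phi(Q)$. The standard tubular-neighborhood argument (exactly as used for the functorial extensions in Section \ref{SECX1}) realizes $X^3_{\Sigma,\mathfrak{j}}(Q)$ locally as a smooth submanifold of the linear model $V\times H^{m+3}(\Sigma,\mathbb R^N)$ via an ssc-retraction, and the bundle $W^{3,2}_{\Sigma,\mathfrak{j}}(Q,J)$ is the pullback of the analogous model bundle. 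Hence it suffices to establish classical smoothness of the pulled-back $\bar{\partial}_J$ at each level in the linear $\mathbb R^N$-setting. The linear piece $L$ is plainly a bounded operator $H^{m+3}(\Sigma,\mathbb R^N)\to H^{m+2}(\Sigma,\mathrm{Hom}(T\Sigma,\mathbb R^N))$ for every $m\ge 0$ and is $v$-independent, so it is level-wise $C^\infty$.

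For the nonlinear piece $N$, I would decompose it into classically smooth ingredients:
(i) $v\mapsto j(v)$ is smooth into the finite-dimensional manifold of almost complex structures on $T\Sigma$ and by (\ref{bott}) is constant near $\partial\Sigma$, so post-composition with $j(v)$ is a smooth family of bounded linear operators on $\mathrm{Hom}(T\Sigma,\mathbb R^N)$-valued Sobolev spaces;
(ii) the linearization $u\mapsto Tu$ is bounded linear $H^{m+3}\to H^{m+2}$;
(iii) the Nemytskii operator $u\mapsto J(u)$ is classically $C^\infty$ as a map $H^{m+3}(\Sigma,\mathbb R^N)\to H^{m+3}(\Sigma,\mathrm{End}(\mathbb R^{2N}))$ by the standard Eliasson calculus in \cite{El}, using that $\dim_{\mathbb R}\Sigma=2$ gives $H^{m+3}\hookrightarrow C^{m+1}$ and that $H^{m+3}$ is a Banach algebra for $m\ge 0$;
(iv) pointwise multiplication $H^{m+3}\times H^{m+2}\to H^{m+2}$ is bounded bilinear, again by Sobolev algebra properties on the two-real-dimensional $\Sigma$.
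Composing (i)--(iv) exhibits $N$ as a finite composition of classically $C^\infty$ maps between Banach spaces (or open subsets thereof) at every level $m\ge 0$, without any loss of regularity beyond the single derivative consumed by $T$, which is precisely the shift $H^{m+3}\to H^{m+2}$ already encoded in the level structure of $W^{3,2}_{\Sigma,\mathfrak{j}}(Q,J)\to X^3_{\Sigma,\mathfrak{j}}(Q)$.

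Finally, invoking the level-wise-smooth $\Rightarrow$ ssc-smooth principle yields ssc-smoothness of $\bar{\partial}_J$ as a section of the strong bundle; to promote this to smoothness at the full range of admissible bi-levels $(m,k)$ with $k\le m+1$, one simply notes that the fiber regularity index $k$ does not introduce any new derivative, so (iii) and (iv) land in $H^{2+k}$ by the same multiplication estimates provided $k\le m+1$, which is the bi-level constraint. The main obstacle I anticipate is the careful bookkeeping of the Sobolev multiplication and Nemytskii estimates, particularly ensuring that $u\mapsto J(u)$ is genuinely classically smooth at every level with values in the correct space, and checking that the tubular neighborhood retraction meshes cleanly with the strong bundle structure so that the smoothness transfers from the $\mathbb R^N$-model back to $Q$. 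Both of these steps are entirely within the realm of \cite{El} and the manifold-of-maps technology recalled above.
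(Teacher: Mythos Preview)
Your proposal is correct and aligns with the paper's intended approach: the paper does not give an explicit proof of this proposition but simply places it in the section of ``classical results which are well-known and can be proved using the tools provided by \cite{El}'', so your reduction to level-wise classical smoothness via Eliasson's Nemytskii-operator calculus and Sobolev multiplication is exactly what is meant. You have supplied the details the paper omits, and the decomposition $L+N$ together with the embedding-and-retraction argument for passing from $Q$ to $\mathbb{R}^N$ is the standard way to execute this.
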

%
Due to elliptic regularity theory the section has a regularizing property.

\begin{proposition}
With the CR-operator as defined above the following holds.  
If $\bar{\partial}_J(u)\in (W^{3,2}_{\Sigma,\mathfrak{j}}(Q,J))_{m,m+1}$
  and near $\partial \Sigma$ the map $u$ belongs to $H^{m+4}_{loc}$, then $u
  \in (X^3_{\Sigma,\mathfrak{j}}(Q))_{m+1}$.
\end{proposition}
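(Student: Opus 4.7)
The claim is a standard interior elliptic regularity statement for the quasilinear $\bar{\partial}$-operator, carried out chart by chart on the domain and in local coordinates on the target. The plan is to reduce to an interior problem, put the equation in coordinates, and then bootstrap one derivative at a time using linear elliptic estimates for $\partial_s + J_0 \partial_t$.

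\textbf{Reduction and localization.} First I would cover $\Sigma$ by finitely many coordinate patches: boundary patches, where the hypothesis that $u$ lies in $H^{m+4}_{loc}$ near $\partial\Sigma$ directly supplies the desired regularity, and interior patches $U\Subset \Sigma\setminus\partial\Sigma$, where the work lies. For each interior patch, since at level $0$ we have $u\in H^3 \hookrightarrow C^0$, shrinking $U$ if necessary we may assume $u(\overline{U})$ lies in a single coordinate chart $\psi\colon W\subset Q \to \mathbb{R}^{\dim Q}$. Writing $\tilde u := \psi\circ u$ and using holomorphic coordinates $(s,t)$ on $U$, the equation $\bar{\partial}_J u = \eta$ becomes
\begin{equation*}
\partial_s\tilde u + \tilde J(v,\tilde u)\,\partial_t\tilde u = 2\tilde\eta,
\end{equation*}
where $\tilde J$ is a smooth matrix-valued function of $(v,\tilde u)$ (absorbing both $J$ and the deformation $\mathfrak{j}$) and $\tilde\eta$ is the chart representative of $\eta$, which by hypothesis belongs to $H^{m+3}(U)$. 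The finite-dimensional parameter $v$ plays no role in regularity.

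\textbf{Bootstrap.} I would induct on $m$, with the base case $m=0$ treated identically to the inductive step. Assume $\tilde u\in H^{m+3}(U)$ and $\tilde\eta\in H^{m+3}(U)$. Differentiating the equation in either direction $D\in\{\partial_s,\partial_t\}$ yields a \emph{linear} elliptic equation for $D\tilde u$:
\begin{equation*}
L(D\tilde u) := \partial_s(D\tilde u) + \tilde J(v,\tilde u)\,\partial_t(D\tilde u) = 2D\tilde\eta - (\partial_{\tilde u}\tilde J)(v,\tilde u)\big(D\tilde u,\,\partial_t\tilde u\big).
\end{equation*}
Here $L$ has coefficient $\tilde J(v,\tilde u)\in H^{m+3}\hookrightarrow C^{m+1}$, which is more than enough regularity for the classical interior elliptic estimate
\begin{equation*}
\|D\tilde u\|_{H^{m+3}(U')} \le C\big(\|L(D\tilde u)\|_{H^{m+2}(U)} + \|D\tilde u\|_{H^{m+2}(U)}\big), \qquad U'\Subset U,
\end{equation*}
to apply. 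The right-hand side lies in $H^{m+2}$: $D\tilde\eta\in H^{m+2}$ trivially, while the nonlinear term is a smooth function of $\tilde u$ multiplied by $D\tilde u, \partial_t\tilde u \in H^{m+2}$, and in dimension two the Banach algebra property of $H^{m+2}$ (which holds because $m+2 > \dim\Sigma/2 = 1$) puts this product in $H^{m+2}$. Hence $D\tilde u\in H^{m+3}(U')$ and therefore $\tilde u \in H^{m+4}(U')$. Covering $\Sigma$ by finitely many such patches produces $u\in (X^3_{\Sigma,\mathfrak{j}}(Q))_{m+1}$.

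\textbf{Main obstacle.} The serious bookkeeping is keeping track of how compositions and products of Sobolev functions behave at every stage of the induction; in particular, at the base case one has only the minimal regularity $\tilde u\in H^3$, which gives $\tilde J(v,\tilde u)\in H^3\hookrightarrow C^1$ and $\partial\tilde u\in H^2$, and one must verify the products really land in $H^2$ before invoking the linear estimate. All such statements are classical Sobolev multiplication/composition lemmas for maps of Hilbert manifolds, developed systematically in \cite{El} and already used elsewhere in this paper; once they are in hand the proof reduces, uniformly in $m$, to the constant-coefficient elliptic estimate for $\bar\partial$ on a small disk combined with a standard absorption of the lower-order terms.
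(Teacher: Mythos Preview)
Your proposal is correct and is precisely the standard elliptic bootstrapping argument that the paper has in mind: the paper does not actually write out a proof of this proposition, it simply prefaces the statement with ``Due to elliptic regularity theory the section has a regularizing property'' and leaves it at that. Your chart-by-chart reduction, differentiation of the equation, and use of the Sobolev algebra/composition properties (in the spirit of \cite{El}) is exactly the classical argument being invoked.
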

%

\appendix

%
\section{A Primer on DM-Theory}\label{DMTHX}
It is possible to carry out the Deligne-Mumford theory using the polyfold
  technology and one could view such an approach as a toy example of SFT.
However, we shall assume Deligne-Mumford theory and derive the facts we
  need, for example the use of a different gluing profile.
Our primer will use some of the results in the approach by Robbin and
  Salamon in \cite{RS,RS-err} and combines it with a consideration in
  \cite{HWZ8.7}.

%
\subsection{Basic Concepts}
Given two complex manifolds $P$ and $A$ satisfying $\dim_{\mathbb C}P-\dim_{\mathbb C}(A)=1$ we consider an holomorphic map
$\pi:P\rightarrow A$. If $p\in P$ is a regular point for $\pi$ we set $a=\pi(p)$ and obtain using the the implicit function theorem 
germs of holomorphic diffeomorphisms $\Sigma:(P,p)\rightarrow ({\mathbb C}^{n+1},0)$ and $\sigma:(A,a)\rightarrow ({\mathbb C}^n,0)$
fitting into the commutative diagram
$$
\begin{CD}
(P,p) @>\pi >>    (A,a)\\
@V \Sigma VV   @V\sigma VV\\
({\mathbb C}\times {\mathbb C}^n,0)@>\text{pr}_2>> ({\mathbb C}^n,0),
\end{CD}
$$
where $n=\text{dim}_{\mathbb C}(A)$. If $p$ is not a regular point the situation can be very complicated.
However, one of the easy cases is the {\bf nodal case}.  We say $p\in P$ is a nodal point provided 
there exist germs of holomorphic diffeomorphisms fitting into the commutative diagram
$$
\begin{CD}
(P,p) @>\pi >>    (A,a)\\
@V \Sigma VV   @V\sigma VV\\
({\mathbb C}^2\times {\mathbb C}^{n-1},0)@>\wh{\pi}>> ({\mathbb C}^n,0),
\end{CD}
$$
where $\wh{\pi}(x,y,t_2,..,t_n)=(xy,t_2,...,t_n)$.
\subsubsection{Deformations and Kodaira-Spencer Deformation}
We start with a stable 
$$
\alpha=(S,j,M,D).
$$
 Important for our construction of uniformizers is the notion of a deformation $\mathfrak{j}$  of $j$.
Roughly speaking a {\bf deformation} $\mathfrak{j}:v\rightarrow j(v)$ of $j$ is a smooth family of almost complex structures 
on $S$ with $j(v_0)=j$ for a suitable $v_0$. We shall describe a class of deformations which is useful for our constructions and to single them out we need some preparation.
Part of the material is taken from \cite{H2014}.

Given the object $\alpha$ we denote by $\Gamma_0(\alpha)$ the complex vector space of smooth 
sections of $TS\rightarrow S$ which vanish at the points in $M\cup |D|$.  We shall denote by $\Omega^{0,1}(\alpha)$ the complex vector space of smooth sections $h$  of $\text{Hom}_{\mathbb R}(TS,TS)\rightarrow S$
so that $h(z):T_zS\rightarrow T_zS$ is complex anti-linear.  The Cauchy-Riemann operator is naturally defined
$$
\bar{\partial}:\Gamma_0(\alpha)\rightarrow \Omega^{0,1}(\alpha).
$$
Namely taking a section $u\in\Gamma_0(\alpha)$ the section $\bar{\partial}(u)$ is defined by
$$
\bar{\partial}(u) (z) =\frac{1}{2}\cdot \left[ Tu(z) + j\circ Tu(z)\circ j\right],\ z\in S.
$$
Define the {\bf arithmetic genus} $g_{\text{arith}}$ of $\alpha$ by the formula 
$$
g_{\text{arith}} =1 +\sharp D +\sum_C[g(C)-1],
$$
where the sum is taken over all connected components $C$ of $S$.   As a consequence of the Riemann-Roch theorem
the stability assumption implies that $\bar{\partial}$ is injective and the complex codimension of its image
is $3\cdot g_{\text{arith}} +\sharp M-\sharp D -3$.  
\begin{definition}
The  {\bf natural deformation space for fixed combinatorial type}
of $\alpha$ is the complex vector space $H^1(\alpha)$ defined by
$$
H^1(\alpha) := \Omega^{0,1}(\alpha)/(\bar{\partial}(\Gamma_0(\alpha))).
$$
\qed
\end{definition}
The construction $H^1$ is natural and defines a functor from ${\mathcal R}$ to the category of finite-dimensional complex vector spaces.  Indeed, if $\Phi:\alpha\rightarrow \alpha'$ is a morphism, then the underlying
biholomorphic map $\phi:(S,j,M,D)\rightarrow (S',j',M',D')$ defines
$$
H^1(\Phi): H^1(\alpha)\rightarrow H^1(\alpha'): [\tau]\rightarrow [T\phi\circ\tau\circ T\phi^{-1}].
$$
Consequently the automorphism group $G$ will act on $H^1(\alpha)$.

  Assume that $U$ is an open subset
of some finite-dimensional real or complex vector space and $v\rightarrow j(v)$ is a smooth family
of almost complex structures on $S$, where $v\in U$, so that $v\rightarrow \alpha_v:=(S,j(v),M,D)$ is a family of objects.
Since $j(v)^2=-Id$ it follows that 
$$
Dj(v)(\delta v)\circ j(v) + j(v)\circ Dj(v)(\delta v)=0.
$$
For every $\delta v$, 
$Dj(v)(\delta v)$ is  complex antilinear and defines an element in $\Omega^{0,1}(\alpha_v)$.
Passing to the quotient 
we obtain an element in $H^1(\alpha_v)$ denoted by $[Dj(v)(\delta v)]$. 
Hence we obtain for every $v\in U$ an underlying real linear map
\begin{eqnarray}
[Dj(v)]: V\rightarrow H^1(\alpha_v).
\end{eqnarray}
\begin{definition}
We shall call $U\ni v\rightarrow [Dj(v)]$ the {\bf Kodaira-Spencer differential} of the deformation $v\rightarrow j(v)$.
\qed
\end{definition}
\subsubsection{Good Deformations and Uniformizers}
The crucial definition is the following.
\begin{definition}
Let $\alpha$ be an object in ${\mathcal R}$ with automorphism group $G$ and assume that a small disk structure ${\bm{D}}$ has been fixed. A {\bf good} deformation $\mathfrak{j}$  for $\alpha$ with given ${\bm{D}}$ consists of a $G$-invariant open neighborhood
$U$ of $0\in H^1(\alpha)$ and a smooth family $U\ni v\rightarrow j(v)$ of almost complex structures on $S$
so that the following holds.
\begin{itemize}
\item[(i)] $ j(0)=j$ and $j(v)=j$ on all $D_x$ for $x\in |D|$ and $v\in U$.
\item[(ii)] $[Dj(v)]:H^1(\alpha)\rightarrow H^1(\alpha_v)$ is a complex linear isomorphism for every $v\in U$.
\item[(iii)] For every $g\in G$ and $v\in U$ the map $g:\alpha_v\rightarrow \alpha_{g\ast v}$ is biholomorphic.
\end{itemize}
\qed
\end{definition}
It is a known fact that for given $\alpha$ and small disk structure a good deformation always exists.
Fix a good deformation $\mathfrak{j}$ for $\alpha$ and ${\bm{D}}$. Then we can define for a natural gluing parameter $\mathfrak{a}$ and $v\in U$ the objects $\alpha_{v,\mathfrak{a}}$ given by
$$
\alpha_{v,\mathfrak{a}} =(S_\mathfrak{a},j(v)_\mathfrak{a},M_\mathfrak{a},D_\mathfrak{a}).
$$
On $U\times {\mathbb B}_\alpha$ we have the natural action of $G$ by diffeomorphisms
defining us the translation groupoid $G\ltimes (U\times {\mathbb B}_\alpha)$. We also obtain a functor
\begin{eqnarray}\label{DM-ppsi}
\Psi:G\ltimes (U\times {\mathbb B}_\alpha)\rightarrow {\mathcal R}
\end{eqnarray}
which on objects maps $(v,\mathfrak{a})$ to $\alpha_{v,\mathfrak{a}}$ and a morphism $(g,(v,\mathfrak{a}))$
to the morphism $(\alpha_{v,\mathfrak{a}},g_\mathfrak{a},\alpha_{g\ast v,g\ast\mathfrak{a}})$.
The following theorem is well-known and a consequence of standard DM-theory and holds for every gluing profile.
A proof is given in \cite{HWZ-DM}.
\begin{thm}\label{DM_MAIN}
The orbit space $|{\mathcal R}|$ has a natural metrizable topology for which every connected component is compact. 
Moreover, the following holds, which also characterizes the topology.
Let $\alpha$, ${\bm{D}}$, and $\mathfrak{j}$ as described above and $ \Psi$ the associated functor defined in (\ref{DM-ppsi}).
Then there exists a $G$-invariant open neighborhood $O$ of $(0,0)$ in $H^1(\alpha)\times {\mathbb B}_\alpha$
such that the following holds.
\begin{itemize}
\item[(1)] $\Psi:G\ltimes O\rightarrow {\mathcal R}$ is fully faithful and injective on objects.
\item[(2)] The map $|\Psi|: |G\ltimes O|\rightarrow |{\mathcal R}|$ induced between orbit spaces
defines a homeomorphism onto an open neighborhood of $|\alpha|$.
\item[(3)] For every $(v,\mathfrak{a})$ the Kodaira-Spencer differential associated to $\alpha_{v,\mathfrak{a}}$
is an isomorphism.
\item[(4)] For every $(v,\mathfrak{a})\in O$ there exists an open neighborhood $\Lambda$ having the following property. If $(v_k,\mathfrak{a}_k)$ is a sequence in $\Lambda$ for which $|\Psi(v_k,\mathfrak{a}_k)|$ converges
to some point in $|{\mathcal R}|$, then it has a subsequence converging to some point in $O$.
\end{itemize}
\qed
\end{thm}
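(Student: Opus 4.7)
The plan is to reduce the statement to classical Deligne-Mumford theory, treating the prescribed gluing profile $\varphi$ as a reparametrization of the classical gluing that does not alter the underlying topology on $|{\mathcal R}|$. The work splits into establishing the three local properties (1)--(3) on some neighborhood $O$, using them to define and metrize the topology on $|{\mathcal R}|$, and finally establishing the local compactness statement (4).

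Property (3) is an open condition and I would handle it first. The Kodaira-Spencer differential $[Dj(v)_{\mathfrak{a}}]$ depends continuously on $(v,\mathfrak{a})$; by the defining property of a good deformation it is a complex-linear isomorphism at $(0,0)$. Since source and target are finite-dimensional complex vector spaces of equal dimension $3 g_{\text{arith}}+\sharp M-\sharp D-3$ (by Riemann-Roch), isomorphisms form an open subset of the linear maps, so (3) holds on some $G$-invariant neighborhood $O_1$. Property (1) is then a rigidity statement: shrink to a $G$-invariant $O \subset O_1$ so that every biholomorphism $\Phi\colon \alpha_{v,\mathfrak{a}}\to\alpha_{v',\mathfrak{a}'}$ is of the form $g_{\mathfrak{a}}$ for a unique $g\in G$ with $(v',\mathfrak{a}')=g\ast(v,\mathfrak{a})$. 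I would argue by contradiction: if no such $O$ existed, one extracts sequences $(v_k,\mathfrak{a}_k),(v_k',\mathfrak{a}_k')\to (0,0)$ with exotic biholomorphisms $\Phi_k$; by the finiteness of $\operatorname{Aut}(\alpha)$ and compactness of biholomorphisms between stable (possibly nodal) curves of fixed combinatorial type near $\alpha$, a subsequence of $\Phi_k$ converges to some $g\in G$, and continuity together with (3) forces $(v_k',\mathfrak{a}_k')=g\ast(v_k,\mathfrak{a}_k)$ for all large $k$, the desired contradiction. Injectivity on objects is the same argument with $g=\mathrm{id}$.

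The topology on $|{\mathcal R}|$ is now declared by prescribing that, for each isomorphism class $[\alpha]$, the sets $|\Psi(O)|$ arising from the uniformizers above form a neighborhood basis of $|\alpha|$. Well-definedness on overlaps is a formal consequence of (1) applied in the overlapping charts, and property (2) becomes tautological once this topology is in place. Hausdorffness follows from uniqueness of limits in the local charts combined with (1); second countability is inherited from the countability of combinatorial types with fixed $g_{\text{arith}}$, $\sharp M$, $\sharp D$ and the separability of each $H^1(\alpha)\times {\mathbb B}_\alpha$; regularity follows from the local Euclidean structure of the uniformizers. Urysohn's metrization theorem then yields metrizability. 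Compactness of connected components is the classical Deligne-Mumford compactness theorem, to which we appeal directly once we have verified that the topology just defined agrees with the classical one --- and this follows because classical DM-convergence can be tested locally against uniformizers that are \emph{isomorphic} as $G$-groupoids to ours (the only difference being the gluing profile, which is a homeomorphism on the parameter space ${\mathbb B}_\alpha$).

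The hard part, and the principal obstacle, is property (4). Given $(v_k,\mathfrak{a}_k)\in\Lambda$ with $|\Psi(v_k,\mathfrak{a}_k)|$ converging to some $|\beta|\in|{\mathcal R}|$, classical DM compactness yields a Gromov-type limit of the stable curves $\alpha_{v_k,\mathfrak{a}_k}$ realizing $\beta$. The delicate point is to conclude that this limit is realized inside $O$ rather than merely in some other overlapping uniformizer. The strategy is to choose $\Lambda$ so small that the DM-distance between $\Psi(v,\mathfrak{a})$ (for $(v,\mathfrak{a})\in\Lambda$) and $\alpha$ stays below a threshold within which $|\Psi(O)|$ coincides (via (2)) with a full DM-neighborhood of $|\alpha|$. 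Then $|\beta|\in|\Psi(O)|$, so $|\beta|=|\Psi(v^\ast,\mathfrak{a}^\ast)|$ for some $(v^\ast,\mathfrak{a}^\ast)\in O$, and property (1) together with the local homeomorphism property (2) lifts the DM-convergence to convergence of $(v_k,\mathfrak{a}_k)$ to a $G$-translate of $(v^\ast,\mathfrak{a}^\ast)$ in $O$. The only subtlety --- and here is where (3) is used essentially --- is unique identification of the limit point in $O$, which amounts to an inverse function theorem for the map $\Psi$ near $(v^\ast,\mathfrak{a}^\ast)$.
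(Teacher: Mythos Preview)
The paper does not give a proof of this theorem. It is stated in the appendix with the preface ``The following theorem is well-known and a consequence of standard DM-theory and holds for every gluing profile. A proof is given in \cite{HWZ-DM},'' and the statement terminates with a \qed\ symbol and no argument. So there is nothing in the paper to compare your proposal against beyond the bare citation; your sketch is an attempt to outline what the cited reference presumably contains.

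As an outline your proposal is broadly reasonable, but one point deserves care. In your treatment of (3) you assert that source and target of the Kodaira-Spencer differential have the same dimension $3g_{\text{arith}}+\sharp M-\sharp D-3$. This is correct only at $\mathfrak{a}=0$. For nonzero $\mathfrak{a}$ some nodes are resolved, $\sharp D_{\mathfrak{a}}<\sharp D$, and $\dim_{\mathbb C} H^1(\alpha_{v,\mathfrak{a}}) = 3g_{\text{arith}}+\sharp M-\sharp D_{\mathfrak{a}}-3$ is strictly larger. The Kodaira-Spencer differential in (3) is the derivative of the full family $(v,\mathfrak{a})\mapsto\alpha_{v,\mathfrak{a}}$, so its source is $T_{(v,\mathfrak{a})}(H^1(\alpha)\times{\mathbb B}_\alpha)$, of complex dimension $(3g_{\text{arith}}+\sharp M-\sharp D-3)+\sharp D = 3g_{\text{arith}}+\sharp M-3$, which matches the target only when $\sharp D_{\mathfrak{a}}=0$. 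For partial gluings one must either quotient the source by the directions tangent to the stratum $\{a_{\{x,y\}}=0\}$ for the remaining nodes, or interpret the target appropriately; in either case the openness argument you give needs the varying-target issue handled (e.g.\ by trivializing the family of $H^1$'s locally). This is standard but not automatic, and since it underpins both (3) and your inverse-function-theorem step in (4), it is worth making explicit.
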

In view of Theorem \ref{DM_MAIN} we can make the following definition.
\begin{definition}
Let $\alpha$ be an object in ${\mathcal R}$. We say that $\Psi$ is a {\bf good uniformizer} associated to $\alpha$ if
it is obtained after a choice of small disk structure ${\bm{D}}$ and deformation $\mathfrak{j}$ 
as a restriction satisfying the properties (1)--(4). Given $\alpha$ there is a set of choices we can make, resulting 
in a set of associated uniformizers denoted by $F(\alpha)$. 
\qed
\end{definition}
If $\Phi:\alpha\rightarrow \alpha'$ is a morphism it is clear that there is a 1-1 correspondence between the 
choices for $\alpha$ and $\alpha'$, respectively. Hence we obtain a natural bijection $F(\Phi):F(\alpha)\rightarrow F(\alpha')$. This defines a functor
$$
F:{\mathcal R}\rightarrow \text{SET}
$$
associating to an object $\alpha$ a set of good uniformizers having $\alpha$ in the image.
As was already said the above construction works for every gluing profile. However, if we would like 
to have a smooth transition between two uniformizers we have to be careful in the choice of the gluing profile.

Assume that we have fixed a gluing profile. For two uniformizers $\Psi\in F(\alpha)$ and $\Psi'\in F(\alpha')$
consider the transition set $\bm{M}(\Psi,\Psi')$ defined by
$$
\bm{M}(\Psi,\Psi') = \{(o,\Phi,o')\ |\ o\in O,\ o'\in O',\ \Phi\in \text{mor}(\Psi(o),\Psi'(o'))\}.
$$
We not that we have two important natural maps, namely the {\bf source map}
$$
s:\bm{M}(\Psi,\Psi') \rightarrow O:(o,\Phi,o')\rightarrow o,
$$
and the {\bf target map}
$$
t:\bm{M}(\Psi,\Psi')\rightarrow O':(o,\Phi,o')\rightarrow o'
$$
Besides this there is the {\bf  inversion map}
$$
\iota:\bm{M}(\Psi,\Psi') \rightarrow \bm{M}(\Psi,'\Psi) :(o,\Phi,o')\rightarrow (o',\Phi^{-1},o)
$$
the {\bf unit map}
$$
u:O\rightarrow\bm{M}(\Psi,\Psi) : u(o)=(\Psi(o),1_{\Psi(o)},\Psi(o))
$$
and the {\bf multiplication map}
\begin{eqnarray}
&m: \bm{M}(\Psi',\Psi'') {_{s}\times_t}\bm{M}(\Psi,\Psi') \rightarrow \bm{M}(\Psi,\Psi''): &\\
&((o',\Phi',o''),(o,\Phi,o'))\rightarrow (o,\Phi'\circ\Phi,o'').&\nonumber
\end{eqnarray}
Irrespectively of the gluing profile one can show the following.
\begin{prop}
Every $\bm{M}(\Psi,\Psi')$ carries a natural metrizable topology. For this topology all structure maps
are continuous and  $s$ and $t$ are local homeomorphisms.
\end{prop}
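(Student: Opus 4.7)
The plan is to construct a natural metrizable topology on each $\bm{M}(\Psi,\Psi')$ by producing local charts around each triple $(o_0,\Phi_0,o_0')$ using the rigidity of morphisms between nearby stable curves, and then to show that this topology is independent of choices and makes the structure maps continuous.

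First, I would fix a point $(o_0,\Phi_0,o_0')\in \bm{M}(\Psi,\Psi')$ and establish the following key lifting lemma: there exist open neighborhoods $U\subset O$ of $o_0$ and $U'\subset O'$ of $o_0'$, together with a uniquely determined continuous map $\tau\colon U\to U'$ and a continuous assignment $o\mapsto \Phi(o)\in\mathrm{mor}(\Psi(o),\Psi'(\tau(o)))$ with $\Phi(o_0)=\Phi_0$. The existence is standard DM-theory: the isomorphism $\Phi_0$ between the central stable curves, combined with the fact that a good uniformizer parametrizes a versal deformation (property (3) of Theorem \ref{DM_MAIN}, i.e. the Kodaira--Spencer differential is an isomorphism at every point of $O$), forces any nearby stable curve $\Psi(o)$ to be isomorphic to exactly one $\Psi'(o')$ up to the action of the local automorphism groups, and the isomorphism close to $\Phi_0$ is unique. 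Uniqueness of $\tau$ up to shrinking, together with property (4) (compactness of nearby sequences in the orbit space), will ensure that $\tau$ is continuous.

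Given this lemma, I would declare the map
\begin{equation*}
U\longrightarrow \bm{M}(\Psi,\Psi'),\qquad o\longmapsto (o,\Phi(o),\tau(o))
\end{equation*}
to be a chart around $(o_0,\Phi_0,o_0')$, and take the topology $\mathcal{T}$ on $\bm{M}(\Psi,\Psi')$ to be the finest one making all such maps homeomorphisms onto their images. The compatibility of two such charts on their overlap follows again from the uniqueness clause in the lifting lemma. Metrizability then reduces to second countability plus the Hausdorff property: second countability comes from pulling back second countable charts on $O$, while the Hausdorff property will follow from property (2) of Theorem \ref{DM_MAIN}, which says that the induced map $|\Psi|$ is a homeomorphism onto an open subset of $|\mathcal{R}|$, so that distinct triples can always be separated by suitable chart domains. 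Urysohn's metrization theorem then delivers metrizability.

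Continuity and the local homeomorphism property follow almost for free from the construction. In a chart $o\mapsto (o,\Phi(o),\tau(o))$ the source map $s$ corresponds to the identity on $U$ and the target map $t$ to $\tau\colon U\to U'$, both of which are continuous, and $s$ is tautologically a local homeomorphism; for $t$, swapping the roles of $\Psi$ and $\Psi'$ gives an inverse chart centered at $(o_0',\Phi_0^{-1},o_0)$ showing $\tau$ is a local homeomorphism as well. Continuity of $\iota$, $u$ and $m$ follows by expressing each in terms of these charts and using uniqueness of the local lifts to identify compositions of morphisms with the canonical lifts of their composites. The main obstacle in the proof is the lifting lemma itself, specifically proving the continuity of $o\mapsto\tau(o)$ and $o\mapsto\Phi(o)$: this requires using the sequential compactness/convergence property (4) of good uniformizers to rule out the possibility that nearby morphisms $\Phi(o)$ drift away from $\Phi_0$, combined with the local injectivity/faithfulness of the uniformizers on objects and morphisms.
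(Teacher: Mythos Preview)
The paper does not actually prove this proposition: it is stated without proof as a known consequence of standard Deligne--Mumford theory, with the underlying technical input deferred to the Universal Property theorem (Theorem~\ref{smoothfamily}), whose proof in turn is outsourced to \cite{HWZ-DM}. So there is no ``paper's own proof'' to compare against in detail.

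Your outline is correct and is exactly how one would extract the proposition from the surrounding material. Your ``key lifting lemma'' is precisely the content of Theorem~\ref{smoothfamily} applied to the smooth family $o\mapsto\Psi(o)$ and the isomorphism $\Phi_0:\Psi(o_0)\to\Psi'(o_0')$: that theorem hands you the germ $\tau=\mu$ and the family $o\mapsto\Phi(o)=\Psi(o)$ with the required uniqueness, and in fact gives them as smooth (respectively core-smooth) rather than merely continuous. The paper's subsequent sentence, ``We can equip each $\bm{M}(\Psi,\Psi')$ with the smooth manifold structure making $s$ a local diffeomorphism,'' is exactly your chart construction, and the remark that ``the universal property implies that the target map will be a local diffeomorphism as well'' is your argument for $t$ via swapping the roles of $\Psi$ and $\Psi'$. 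One small point: for Urysohn you need regularity in addition to Hausdorff and second countability, but that is immediate since your charts identify $\bm{M}(\Psi,\Psi')$ locally with open subsets of $O$.
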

We can equip each $\bm{M}(\Psi,\Psi')$ with the smooth manifold structure making $s$ a local diffeomorphism.
Observe that it even defines a holomorphic manifold structure. 
It depends now on the gluing profile if for this structure $t$ is a smooth map or a local diffeomorphism.
For example for the DM-gluing profile $\varphi(r)=-\frac{1}{2\pi}\cdot \ln(r)$ this will be the case and 
$t$ will be a local biholomorphism. This is a repacking of the Deligne-Mumford theory.
In the case of the exponential gluing profile $t$ will still be a local diffeomorphism. Depending on the gluing profile
all structure maps are in the first case holomorphic and in the second case smooth. There is a more detailed discussion
of this in \cite{H2014}. The uniformizer construction for ${\mathcal R}$ can be viewed as an example of a categorical polyfold construction, see \cite{HWZ2017}.

\subsubsection{Compatibility of Uniformizers and Universal Property}
Part of the discussion is taken from \cite{HWZ5}.   \jwf{ \marginnote{MAY BE DESCRIBE AS IN SC-SMOOTHNESS}}
Let $\Psi:G\ltimes O\rightarrow {\mathcal R}$ be a good uniformizer and let  \jwf{\marginnote{Possible change explaining the behavior in the ends, see also end, see 23.5}}
 $(S', j', M', D')$  be  another stable connected nodal  Riemann surface with  unordered marked points, and let  ${\bm{D}}'$  be a small disk structure  on $S'$. This time we allow the marked points in $M'$ to belong to $\abs{{\bm{D}}'}$.  Here 
 $$
 \abs{{\bm{D}}'} = \bigcup_{z\in |D'|} D_z'.
 $$
 Of course it holds that $M'\cap |D'|=\emptyset$.
In order to define a deformation $M'(\sigma)$ of $M'$ we choose around every point $z\in M'$ an open neighborhood $V(z)$ whose closure does not intersect $\abs{D'}$.  {Moreover, these sets $V(z)$ are chosen to be mutually disjoint. 
If $M'=\{z_1,\ldots, z_{m'}\}$, we introduce the product space 
$$V(M'):=V(z_1)\times \cdots V(z_{m'}).$$
A point $\sigma=(\sigma_1,\ldots ,\sigma_{m'})\in V(M')$ where $\sigma_j\in V(z_j)$, defines the set $M(\sigma')$ of marked points on $S'$, in the following called a 
{\bf deformation of the marked points $M'$}.} \index{deformation of marked points} 
We now take a $C^{\infty}$ neighborhood $U$ of the complex structure $j'$ and denote by $U_{{\bm{D}}'}$ the collection of complex structures $k$ in $U$ which coincide with $j'$ on the union of discs $\abs{{\bm{D}}'}$ of the small disk structure. Denote by ${\mathbb B}'$ the set of natural gluing parameters associated to $\alpha'$. We obtain 
a map associating to $(k,\mathfrak{b},\sigma)\in U_{{\bm{D}}'}\times{\mathbb B}' \times  V(M')$   the nodal Riemann surface 
$$
\beta_{(k,\mathfrak{b}, \sigma)}=(S'_{\mathfrak{b}}, k_{\mathfrak{b}}, M' (\sigma)_{\mathfrak{b}}, D'_\mathfrak{b}).
$$
 and study  
the  surfaces for  $\abs{\mathfrak{b}}$  small, $\sigma$  close to $\sigma_0$ satisfying  $M(\sigma_0)=M'$ and $k$  close to $j'$. Taking  a finite-dimensional family 
$$w\mapsto k(w)$$
of complex structures $k$,  which depend  smoothly  on a parameter $w$ varying  in a finite dimensional Euclidean space, we obtain  the map 
$$
(w,\mathfrak{b}, \sigma)\mapsto  \beta_{( k(w),\mathfrak{b}, \sigma)}=(S'_\mathfrak{b}, k(w)_\mathfrak{b}, M'(\sigma)_\mathfrak{b}, D'_\mathfrak{b})
$$
which we shall refer to as a {\bf smooth family}.  A proof of the following  result can be found in   \cite{HWZ-DM}.

\begin{thm}[{\bf Universal property}] \label{smoothfamily}
Let $\Psi:G\ltimes O\rightarrow {\mathcal R}$ be  good uniformizer 
giving us in particular the  family $(v,\mathfrak{a})\mapsto \alpha_{( v,\mathfrak{a})}$. Assume we are given a smooth family $(w,v,\mathfrak{b})\rightarrow  \beta_{( k(w),\mathfrak{b}, \sigma)}$ and there exists 
 an isomorphism $\Psi_0:\beta_{(k(w_0),\mathfrak{b}_0,\sigma_0)}\rightarrow \alpha_{(v_0,\mathfrak{a}_0)}$
 associated to a biholomorphic map $\psi_0$.
 Then  there exists a uniquely determined smooth germ
$$
(w,\mathfrak{b},\sigma)\mapsto  \mu (w, \mathfrak{b}, \sigma)=(v(w,\mathfrak{b},\sigma),a(w,\mathfrak{b},\sigma))
$$
near $(w_0, \mathfrak{b}_0, \sigma_0)$ satisfying 
$\mu (w_0, \mathfrak{b}_0, \sigma_0) =(v_0,\mathfrak{a}_0)$ and an associated core-smooth (defined below) germ
$(w,\mathfrak{b}, \sigma)\to \psi (w,\mathfrak{b}, \sigma)$ defining  isomorphisms 
$$
\Psi (w,\mathfrak{b}, \sigma):\beta_{(w,\mathfrak{b}, \sigma)}\to \alpha_{\mu (w,\mathfrak{b}, \sigma)}
$$
between the nodal Riemann surfaces satisfying $\phi (0, j', \sigma_0)=\psi$.
\qed
\end{thm}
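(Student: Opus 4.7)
The plan is to deduce Theorem \ref{smoothfamily} from an implicit function theorem whose crucial surjectivity hypothesis is precisely property (3) of Theorem \ref{DM_MAIN}: the Kodaira-Spencer differential at every $(v,\mathfrak{a})\in O$ is an isomorphism onto $H^1(\alpha_{v,\mathfrak{a}})$. Uniqueness will follow from the fully-faithful and object-injective properties (1) of Theorem \ref{DM_MAIN}, together with the injectivity of $\bar{\partial}$ on $\Gamma_0(\alpha)$ which is ensured by stability.

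First, I would reduce to the case where the base data already coincide. Using the isomorphism $\Psi_0$ (with underlying biholomorphism $\psi_0$) we may transport the source family so that at the base point $(w_0,\mathfrak{b}_0,\sigma_0)$ the surface, complex structure, nodal divisor, and marked points literally agree with those of $\alpha_{(v_0,\mathfrak{a}_0)}$, and $\psi_0$ becomes an automorphism of $\alpha_{(v_0,\mathfrak{a}_0)}$; normalising by an element of $G$, we may arrange that $\psi_0=\mathrm{id}$.

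Next, I would set up the equation to be solved. Writing $\phi = \phi_{(w,\mathfrak{b},\sigma)} : S'_{\mathfrak{b}} \to S_{\mathfrak{a}}$ for a to-be-determined diffeomorphism sending $M'(\sigma)_{\mathfrak{b}}$ to $M_{\mathfrak{a}}$ and $D'_{\mathfrak{b}}$ to $D_{\mathfrak{a}}$, the sought condition is
\begin{equation*}
T\phi \circ k(w)_{\mathfrak{b}} = j(v)_{\mathfrak{a}}\circ T\phi,
\end{equation*}
to be solved for the triple $(\phi,v,\mathfrak{a})$ as a function of $(w,\mathfrak{b},\sigma)$. Parameterizing $\phi$ near the identity by a section $\xi$ of the tangent bundle vanishing on marked and nodal points and on the fixed disks of $\bm{D}$, the equation becomes a nonlinear operator $F(\xi,v,\mathfrak{a};w,\mathfrak{b},\sigma)=0$ between appropriate Sobolev (or $C^{k,\alpha}$) completions. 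The linearization at the base point with respect to $\xi$ is essentially $\bar{\partial}:\Gamma_0(\alpha)\to\Omega^{0,1}(\alpha)$, and the linearization with respect to $(v,\mathfrak{a})$ descends, after projecting to $H^1=\Omega^{0,1}/\bar{\partial}(\Gamma_0)$, to the Kodaira-Spencer differential. By property (3), this latter map is an isomorphism; combining with stability (which makes $\bar{\partial}$ injective with image of the correct codimension), the full linearization in $(\xi,v,\mathfrak{a})$ is a Banach space isomorphism. The implicit function theorem then yields a unique smooth germ $(w,\mathfrak{b},\sigma)\mapsto (\xi(w,\mathfrak{b},\sigma),\mu(w,\mathfrak{b},\sigma))$ solving the equation, which produces the required smooth map $\mu$ and the associated family of isomorphisms $\Psi(w,\mathfrak{b},\sigma)$.

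The main obstacle is to carry out the above analysis uniformly across all values of the gluing parameters $\mathfrak{b}$ and $\mathfrak{a}$, including the degenerate value $\mathfrak{b}=0$ where nodal surfaces develop or resolve. Near the nodes one must choose coordinates compatible with both the source gluing (indexed by $\mathfrak{b}$) and the target gluing (indexed by $\mathfrak{a}$), and identify the diffeomorphism $\phi$ on the neck with the cylindrical rescaling dictated by the chosen gluing profile. This is precisely where the notion of \emph{core-smoothness} enters: smoothness is required on the thick part in the usual sense, while on the thin neck regions one parametrizes $\phi$ by the cylindrical coordinates relative to the gluing length $R=\varphi(|\mathfrak{b}|)$ and asks for the induced map of cylinders to depend smoothly on $(w,\mathfrak{b},\sigma)$ in these rescaled coordinates. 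Once the appropriate weighted function spaces are in place (which is where the choice of gluing profile, e.g.\ exponential, controls the regularity of the transition maps as discussed in the section around the Fundamental Lemma), the implicit function theorem argument goes through for all small $\mathfrak{b}$, and one obtains a smooth germ $\mu$ together with the core-smooth family of isomorphisms. Uniqueness of both $\mu$ and $\psi$ is then forced by the object-injectivity of $\Psi$ on $G\ltimes O$, since any two candidates would determine the same object of $\mathcal{R}$ and differ by an element of $G$, whose component near the identity is uniquely determined by the normalization $\psi(w_0,\mathfrak{b}_0,\sigma_0)=\psi_0$.
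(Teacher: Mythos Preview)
The paper does not actually prove this theorem: it is stated with a \qed and the sentence preceding it reads ``A proof of the following result can be found in \cite{HWZ-DM}.'' So there is no in-paper argument to compare against; the result is imported from the companion reference on Deligne--Mumford type spaces.

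That said, your outline is the standard route and is consistent with what one expects the cited proof to do: set up the holomorphicity equation for an unknown diffeomorphism together with the unknown target parameters $(v,\mathfrak{a})$, linearize, and invoke the implicit function theorem using that stability makes $\bar{\partial}$ on $\Gamma_0(\alpha)$ injective while property~(3) of Theorem~\ref{DM_MAIN} (the Kodaira--Spencer isomorphism) supplies the complementary surjectivity onto $H^1$. Your identification of the real difficulty---uniformity in the gluing parameters $\mathfrak{b},\mathfrak{a}$ near $0$ and the need to phrase smoothness in the neck via the rescaled cylindrical coordinates (whence the notion of core-smoothness)---is exactly right, and it is here that the choice of gluing profile matters. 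Two small points worth tightening in an actual write-up: the section $\xi$ parametrizing $\phi$ should vanish at the points of $M\cup|D|$ but need not vanish on the full disks of $\bm{D}$ (only the complex structure is frozen there), and the uniqueness argument should invoke not just object-injectivity but the full faithfulness of $\Psi$ on $G\ltimes O$ to pin down the isomorphism $\psi$ as well as the parameter $\mu$.
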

 Core-smoothness is defined as follows, where 
 $$
\phi (k(w), \mathfrak{b},\sigma):\beta_{(k(w),\mathfrak{b},\sigma)}\rightarrow \alpha_{\mu ( k(w),\mathfrak{b}, \sigma)}
$$
is the family of biholomorphic maps between nodal Riemann surfaces.  If the point $z$ in $S'_{b_0}$ is not a nodal point, then its image $\zeta$ under 
$\phi (b_0, k(w_0),\sigma_0)$ in $S_{a(b_0, k(w_0),\sigma_0)}$ is also not a nodal point of the target surface $\alpha_{\mu (b_0, k(w_0),\sigma_0)}.$ If $z$ is in the core, a neighborhood of $z$ can canonically be identified with a neighborhood of $z$ viewed as a point in $S'$. If $z$ does not belong to a core, we find a nodal pair $\{x', y'\}\in D'$ and can identify $z$ with a point in $D_{x'}\setminus \{x'\}$ or with a point in $D_{y'}\setminus \{y'\}$ where $D_{x'}$ and $D_{y'}$ are discs belonging to the chosen small disk structure of the nodal surface $(S', j', M', D')$. The same alternatives hold for the image $\zeta$ of $z$ under the isomorphism $\phi (b_0, k(w_0),\sigma_0)$. Via these identifications, the family $\phi (b, k(w),\sigma)$ of isomorphisms gives rise to a  family of diffeomorphisms defined on a neighborhood of $z$ in $S'$ into a neighborhood of $\zeta$ in $S$. Being   core-continuous respectively core-smooth requires that all these germs of families  of isomorphisms are continuous respectively  smooth families of  local diffeomorphisms in the familiar sense,  between the fixed Riemann surfaces $S'$ and $S$.

We can define for two uniformizers the transition set $\bm{M}(\Psi,\Psi')$ similarly as discussed in Appendix 
\jwf{[broken reference]}
The universal property allows 
to equip this set with a smooth manifold structure characterized by the property that the source map $s:\bm{M}(\Psi,\Psi')\rightarrow O$ is a local diffeomorphism.
It will turn out that the universal property implies that the target map will be a local diffeomorphism as well. All other structure maps will be smooth. Hence we see that the uniformizer construction for ${\mathcal R}$ is a very special case of a polyfold construction.

 \subsection{Smoothness Properties of Maps between Buildings}
 Assume that $(S,j,,D)$ is a nodal compact Riemann surface without boundary.   Fix a small disk structure $\bm{D}$ and 
 a smooth deformation $\mathfrak{j}$ of $j$.
We obtain  the family of glued surfaces $(\mathfrak{a},v)\rightarrow \alpha_{(\mathfrak{a},v)}$.  
Define 
$$
\bm{S}= \left\{(\mathfrak{a},v,z)\ |\ v\in {\mathcal V},\ \mathfrak{a}\in {\mathbb B}_D,\ z\in S_{\mathfrak{a}}\right\}/\sim
$$
 Here we identify two points of the form $(\mathfrak{a},v,z)=(\mathfrak{a},v,z')$ provided $\{z,z'\}\in D_{\mathfrak{a}}$.
Assume that $(\mathfrak{a}_0,v_0,z_0)\in \bm{S}$ is given and $z_0$ is not a nodal point on $S_{\mathfrak{a}_0}$. 
We would like to define the notion of an sc-smooth local section.  Let us consider the following cases.
Assume first that $z_0$ belongs to the interior of a neck which is glued by a nontrivial gluing parameter
$a^0_{\{x,y\}}=|a^0_{\{x,y\}}|\cdot [\wh{x}_0,\wh{y}_0]$. Set  $\varphi(|a^0_{\{x,y\}}|)=R_0$ and denote by $\wh{x}_0$ and $\wh{y}_0$ 
representatives of $[\wh{x}_0,\wh{y}_0]$.  Hence we can write $z_0=\sigma_{\wh{x}_0}(s_0,t_0)$ with $s_0\in (0,R_0)$
or as $z_0=\sigma^-_{\wh{y}_0}(s_0',t_0')$ with $s_0'\in (-R_0,0)$. A local section has the form 
$(\mathfrak{a},v,z(\mathfrak{a},v))$ is said to be sc-smooth if represented by $\sigma^+_{\wh{x}_0}$ or 
$\sigma^-_{\wh{y}_0}$ is smooth, i.e. if we write
$$
z(\mathfrak{a},v) =\{\sigma^+_{\wh{x}_0}(s,t),\sigma^-_{\wh{y}_0}(s',t')\}
$$
the maps $(s,t)$ or $(s',t')$ are smooth.\jwf{\marginnote{Have to add a comment to the definition of the glued surface concerning the representation of neighboring surfaces  with the same choice $(\wh{x}_0,\wh{y}_0)$}}

\jwf{ \textcolor{red}{MAY BE a nice explanation about the cylinder stuff from sc-smoothness or GW-paper
 and extrapolation to DM.   Also action on maps. 
 We go somewhat over the middle and cut-off the basemap and the map above with or without asymmpotic limit
 we cut off a little bit before that}}

In this appendix we collect known facts and introduce notation and notions which are used throughout this paper.

\section{Structures Associated to Riemann Surfaces}
The main prurpose of this appendix is to recall basic facts about Riemann surfaces and most importantly fix notions
and notation which will be used throughout this paper as well as research papers, most  notably\cite{FH0} \jwf{and \cite{FH1}},
which use the results in this lecture note.
\subsection{Basic Notions}\label{APP5.1}\label{SEC_basic_notions}
We recall some facts feeding into the DM-theory and refer the reader to \cite{HWZ5} for more detail, particularly with respect to the modified version using the exponential gluing profile.
For the latter there are also important details in \cite{HWZ8.7}.

\subsubsection*{Disk Pairs}
We are interested in disk-like Riemann surfaces $D_x$ with smooth boundary and interior point $x$. We refer to $x$ as a {\bf nodal point}.  An {\bf un-ordered nodal disk pair}
${\mathcal D}$ has the form $(D_x\sqcup D_y,\{x,y\})$, where $D_x$ and $D_y$ are as just described. An {\bf ordered nodal disk pair}
has the form $(D_x\sqcup D_y,(x,y))$. The ordered pair $(x,y)$ is called an {\bf ordered nodal pair} and $\{x,y\}$ is called an {\bf un-ordered nodal pair}.
In the case $(x,y)$ we shall refer to $x$ as the {\bf lower nodal point} and $y$ as the {\bf upper nodal point}.

Given $(D_x,x)$, a {\bf decoration}  $\wh{x}$ of the nodal point $x$ is a an oriented real line $\wh{x}\subset T_xD_x$.
The circle 
 $S^1={\mathbb R}/{\mathbb Z}$ acts naturally on the tangent spaces using their complex structures and therefore
 it acts also on the possible decorations for $x$ by 
\begin{eqnarray}
 (\theta,\wh{x})\rightarrow \theta\ast \wh{x}:=e^{2\pi i\theta}\cdot \wh{x}.
\end{eqnarray}
 Next we consider unordered pairs $\{\wh{x},\wh{y}\}$ which we call a {\bf decorated unordered nodal pair} or a {\bf decoration}
 of the nodal pair $\{x,y\}$.
 We declare $\{\wh{x},\wh{y}\}$  to be {\bf equivalent} to
$\{\theta\ast \wh{x},\theta^{-1}\ast \wh{y}\}$ where $\theta\in S^1$. The symbol $[\wh{x},\wh{y}]$
denotes the equivalence class associated to $\{\wh{x},\wh{y}\}$
\begin{eqnarray}
[\wh{x},\wh{y}] =\left\{ \{\theta\cdot\wh{x},\theta^{-1}\cdot \wh{y}\}\ |\ \theta\in S^1\right\}.
\end{eqnarray}
We call $[\wh{x},\wh{y}]$ a  {\bf natural angle} or {\bf argument} associated to $\{x,y\}$.
We denote by ${\mathbb S}_{\{x,y\}}$ the collection of all $[\wh{x},\wh{y}]$  associated to $\{x,y\}$ 
and call it  the {\bf set of  arguments or angles} associated to $\{x,y\}$.
Denote by ${\mathbb S}^1$ the standard unit circle in ${\mathbb C}$.
Fixing $z=[\wh{x}_0,\wh{y}_0]$ the map 
\begin{eqnarray}
\text{ar}_z: {\mathbb S}_{\{x,y\}}\rightarrow {\mathbb S}^1: [\theta\ast \wh{x}_0,\theta'\ast \wh{y}_0]\rightarrow e^{2\pi i(\theta+\theta')}
\end{eqnarray}
 is a bijection and
any two such maps, say $\text{ar}_z$ and $\text{ar}_{z'}$  have a transition map $\text{ar}_{z'}\circ \text{ar}_z^{-1}$ which is a rotation on ${\mathbb S}^1$.
Hence ${\mathbb S}_{\{x,y\}}$ has a natural smooth structure. It also has a natural orientation by requiring that $\text{ar}_z$ is orientation preserving,
where ${\mathbb S}^1$ is equipped with the orientation as a boundary of the unit disk.

Consider formal expressions $r\cdot [\wh{x},\wh{y}]$, where $r\in [0,1/4)$ (the choice of 1/4 has no deeper meaning other
than that certain constructions need a bound on the choice of $r$ and in our case $1/4$ is always a good bound). We say that $r\cdot[\wh{x},\wh{y}]=r'\cdot [\wh{x}',\wh{y}']$ provided either  $r=r'=0$,  or $r=r'>0$ and $[\wh{x},\wh{y}]=[\wh{x}',\wh{y}']$. In the following we shall call $r\cdot [\wh{x},\wh{y}]$ a {\bf natural gluing parameter} associated  to $\{x,y\}$. The collection ${\mathbb B}_{\{x,y\}}$  of these formal gluing parameters $r\cdot [\wh{x},\wh{y}]$ has a natural one-dimensional holomorphic 
manifold structure, so that fixing any $\{\wh{x}_0,\wh{y}_0\}$ the map
$$
r\cdot [\theta\cdot \wh{x}_0,\theta'\cdot \wh{y}_0]\rightarrow r\cdot e^{2\pi i(\theta +\theta')}
$$
 onto the standard
open disk in ${\mathbb R}$ of radius $1/4$ is a biholomorphic map.

When we deal with a finite number of disk pairs we can take their unordered or ordered 
nodal pair as an index set. We shall for example write $D$ for the whole collection of all occurring $\{x,y\}$
and we shall write ${\bm{D}}$ for the collection, i.e.
\begin{eqnarray}
{\bm{D}}=\left\{ (D_x\sqcup D_y,\{x,y\})\ |\ \{x,y\}\in D\right \}.
\end{eqnarray}
Sometimes, always clear from the context, we also view $\bm{D}$ as defining the disjoint union of all $D_x$, where $x$ varies
over $|D|=\cup_{\{x,y\}\in D} \{x,y\}$, together with the collection $D$ of nodal pairs
\begin{eqnarray}
\bm{D}=  \left(\coprod_{z\in |D|} D_z,\ D\right).
\end{eqnarray}
This is a specific compact nodal Riemann surface with smooth boundary.
Associated to every $\{x,y\}$ we have the set of natural gluing parameter ${\mathbb B}_{\{x,y\}}$
and we shall write ${\mathbb B}_D$ for the set of total gluing parameters, which are maps
$\mathfrak{a}$ associating to $\{x,y\}$ an element $a_{\{x,y\}}\in {\mathbb B}_{\{x,y\}}$. We can view ${\mathbb B}_D$ as sections of  a bundle over the finite set $D$, namely
$$
\coprod_{\{x,y\}\in D}{\mathbb B}_{\{x,y\}}   \rightarrow D: a_{\{x,y\}}\rightarrow \{x,y\}.
$$
 From this viewpoint a natural gluing parameter is a section.

\subsubsection*{Gluing Disks}

Consider an un-ordered nodal disk pair ${\mathcal D}:=(D_x\sqcup D_y,\{x,y\})$, consisting  of disk-like Riemann surfaces  $D_x$ and $D_y$, with smooth boundaries
containing the interior points $x$ and $y$, respectively, so that $(D_x,x)$ and $(D_y,y)$ are biholomorphic 
to $({\mathbb D},0)$, where ${\mathbb D}\subset {\mathbb C}$ is the closed unit disk. These biholomorphic maps are not unique but any two of them differ by a rotation which is biholomorphic. Denote for $0\in {\mathbb D}$
by $\wh{0}$ the standard decoration given by ${\mathbb R}\subset T_0{\mathbb D}$ with the standard orientation of the real numbers. If $\wh{x}$ is a decoration of $x$ there exists a unique biholomorphic map
$$
h_{\wh{x}}:(D_x,\wh{x})\rightarrow ({\mathbb D},\wh{0}).
$$
In the following we need the {\bf exponential gluing profile} $\varphi:(0,1]\rightarrow [0,\infty)$ defined by 
$$
\varphi(r)=e^{\frac{1}{r}}-e.
$$
\begin{definition}
Given an unordered disk pair $(D_x\sqcup D_y,\{x,y\})$ and a non-zero gluing parameter $a_{\{x,y\}}=r\cdot [\wh{x},\wh{y}]$
define  the set $Z_{a_{\{x,y\}}}$ by
\begin{eqnarray}\label{KOPTY20.1}
Z_{a_{\{x,y\}}}&=&\left\{ \{z,z'\}\ |\  z\in D_x,\ z'\in D_y,\ \right.\\
&&\left.\phantom{XXXX} h_{\wh{x}}(z)\cdot h_{\wh{y}}(z') = e^{-2\pi \varphi(r)}\right\}.\nonumber
\end{eqnarray}
Here  $\{\wh{x},\wh{y}\}$  is a representative of $[\wh{x},\wh{y}]$, but the 
 definition of the set does not depend on its choice.  If the gluing parameter vanishes, i.e. if $a_{\{x,y\}}=0$
 we define $Z_0=(D_x\sqcup D_y,\{x,y\})$.   We note that $Z_a=Z_b$ if and only if $a=b$. 
 $Z_{a_{\{x,y\}}}$ is said to be obtained from $(D_x\sqcup D_y,\{x,y\})$ by gluing with gluing parameter
 $a_{\{x,y\}}$.
 \qed
 \end{definition}
 \begin{remark}
 We use this special gluing profile $\varphi$ in order to have compatibility with the sc-Freholm theory.
We obtain the classical Deligne-Mumford theory when we use the gluing profile $r\rightarrow -\frac{1}{2\pi}\cdot \ln(r)$.
 \qed
 \end{remark}
 Given a non-zero gluing parameter $a_{\{x,y\}} =r_{\{x,y\}}\cdot [\wh{x},\wh{y}]$ put $R=\varphi(r_{\{x,y\}})$
 and define the closed annuli $A_x(R)\subset D_x$ and $A_y(R)\subset D_y$ of modulus $2\pi R$  by 
\begin{eqnarray*}
 A_x(R)&=&\left\{ z\in D_x\setminus\{x\}\ |\ |h_{\wh{x}}(z)|\geq e^{-2\pi R}\right\}\\
  A_y(R)&=&\left\{ z'\in D_y\setminus\{y\}\ |\ |h_{\wh{y}}(z')|\geq e^{-2\pi R}\right\}.
\end{eqnarray*}
 The set $Z_{a_{\{x,y\}}}$ defined in (\ref{KOPTY20.1}) for non-zero gluing parameter  has a natural holomorphic manifold structure making it biholomorphic to a closed annulus of modulus $2\pi \cdot \varphi(r_{\{x,y\}})$ so that in addition the maps
\begin{eqnarray}\label{EQNA1.1}
 A_x(R) \xleftarrow{\pi_x^{a_{\{x,y\}}}} Z_{a_{\{x,y\}}}\xrightarrow{\pi_y^{a_{\{x,y\}}} }A_y(R)
\end{eqnarray}
 defined by $\pi_x(z,z')=z$ and $\pi_y(z,z')=z'$  are biholomorphic. Hence
 \begin{lem}
 $Z_{a_{\{x,y\}}}$ has a natural structure as a Riemann surface.\qed
 \end{lem}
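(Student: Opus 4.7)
The plan is to transport the complex structure from one of the annuli $A_x(R)$ or $A_y(R)$ back to $Z_{a_{\{x,y\}}}$ via the projection maps, and then verify that the result is independent of the choice of representative $\{\widehat{x},\widehat{y}\}\in [\widehat{x},\widehat{y}]$ and symmetric in the two disks. Throughout, write $a=a_{\{x,y\}}=r\cdot[\widehat{x},\widehat{y}]$ with $r\in(0,1/4)$ and $R=\varphi(r)$.

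First I would verify that the projection $\pi_x^a : Z_a\to A_x(R)$ in (\ref{EQNA1.1}) is a bijection. Given $z\in A_x(R)$, the defining equation in (\ref{KOPTY20.1}) forces
\[
h_{\widehat{y}}(z') = e^{-2\pi R}/h_{\widehat{x}}(z),
\]
and the hypothesis $|h_{\widehat{x}}(z)|\in[e^{-2\pi R},1]$ guarantees that the right-hand side lies in the closed annulus $\{w\in\mathbb{D}\setminus\{0\}: e^{-2\pi R}\leq|w|\leq 1\}$, so a unique $z'\in A_y(R)\subset D_y$ exists with the required property. Thus $\pi_x^a$ is a bijection with explicit inverse $z\mapsto\{z,\,h_{\widehat{y}}^{-1}(e^{-2\pi R}/h_{\widehat{x}}(z))\}$. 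By the same argument $\pi_y^a$ is a bijection.

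Next I would declare the complex structure on $Z_a$ to be the unique one making $\pi_x^a$ a biholomorphism onto $A_x(R)$ (as an open subset of the Riemann surface $D_x$). To see that $\pi_y^a$ is then also biholomorphic, compute the transition map $\pi_y^a\circ(\pi_x^a)^{-1}:A_x(R)\to A_y(R)$:
\[
z \;\longmapsto\; h_{\widehat{y}}^{-1}\!\left(\tfrac{e^{-2\pi R}}{h_{\widehat{x}}(z)}\right).
\]
This is the composition of the biholomorphism $h_{\widehat{x}}$, the holomorphic automorphism $w\mapsto e^{-2\pi R}/w$ of $\mathbb{C}^*$ (which biholomorphically maps $\{e^{-2\pi R}\leq|w|\leq 1\}$ to itself), and $h_{\widehat{y}}^{-1}$; hence it is biholomorphic. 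This also shows that both annular models give modulus $2\pi R$, so $Z_a$ is a closed annulus of modulus $2\pi R$.

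Finally I would check the two independence claims. If $(\widehat{x}',\widehat{y}')=(e^{2\pi i\theta}\widehat{x},e^{-2\pi i\theta}\widehat{y})$ is another representative of $[\widehat{x},\widehat{y}]$, then by the defining property of the maps $h_{\widehat{x}}$ we have $h_{\widehat{x}'}=e^{-2\pi i\theta}h_{\widehat{x}}$ and $h_{\widehat{y}'}=e^{2\pi i\theta}h_{\widehat{y}}$, so the product $h_{\widehat{x}'}(z)\cdot h_{\widehat{y}'}(z')$ agrees with $h_{\widehat{x}}(z)\cdot h_{\widehat{y}}(z')$. Hence the set $Z_a$ is well defined, and since the complex structure was defined via $\pi_x^a$ alone (which itself is intrinsic to $Z_a$ and $D_x$), it is equally independent of the representative. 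The symmetry between $x$ and $y$ follows from the fact that the two complex structures coincide, established in the previous paragraph. The only mildly subtle point is the last check that the two candidate structures agree and do not depend on the representative; everything else reduces to the elementary observation that $w\mapsto c/w$ is a holomorphic involution of $\mathbb{C}^*$ restricting to a biholomorphism of an appropriate annulus.
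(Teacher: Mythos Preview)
Your proposal is correct and follows exactly the approach the paper has in mind: the lemma is stated with a bare \qed, the preceding paragraph having already asserted that the projections $\pi_x^{a}$ and $\pi_y^{a}$ onto $A_x(R)$ and $A_y(R)$ are bijections which one uses to pull back the complex structure, with the transition $\pi_y^a\circ(\pi_x^a)^{-1}$ being biholomorphic. You have simply filled in the details (the explicit inverse via $z\mapsto h_{\widehat{y}}^{-1}(e^{-2\pi R}/h_{\widehat{x}}(z))$ and the independence of the representative) that the paper leaves implicit.
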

 Assume that $a_{\{x,y\}}$ and $a'_{\{x,y\}}$ are two nonzero gluing parameters in ${\mathbb B}_{\{x,y\}}$ with the same modulus. We abbreviate
 $$
 R:=\varphi(|a_{\{x,y\}}|)=\varphi(a_{\{x',y'\}}|).
 $$
 In this case we obtain two copies of the diagram (\ref{EQNA1.1}), say, with $a=a_{\{x,y\}}$ and $a'=a'_{\{x,y\}}$
\begin{eqnarray}
&A_x(R) \xleftarrow{\pi^a_x} Z_{a}\xrightarrow {\pi^a_y} A_y(R)&\\
&A_x(R) \xleftarrow{\pi^{a'}_x} Z_{a'}\xrightarrow {\pi^{a'}_y} A_y(R).&\nonumber
\end{eqnarray}
We can compare the following two maps $A_x(R)\rightarrow A_y(R)$
$$
\pi_y^a\circ {(\pi^a_x)}^{-1}\ \ \text{and}\ \ \pi_y^{a'}\circ {(\pi^{a'}_x)}^{-1}.
$$
Given $(D_x,x)$ there is a well-define notion of a {\bf rotation} by $\theta\in S^1$. Namely take any biholomorphic map
$h: (D_x,x)\rightarrow ({\mathbb D},0)$ and define $R^x_\theta(z) = h^{-1}( e^{2\pi i\theta}\cdot h(z))$. This definition does not depend on the choice of $h$, and it follows immediately that the following identity holds.
\begin{eqnarray}
h_{\wh{x}} \circ  R^x_{\theta}  =   h_{e^{2\pi i\theta}\cdot \wh{x}}  = e^{2\pi i\theta}\cdot h_{\wh{x}}.
 \end{eqnarray}
We obtain the following lemma, which can be verified by a straight forward calculation.
 \begin{lem}
 Wring $a=|a|\cdot [\wh{x},\wh{y}]$ and $a'=|a'|\cdot [\wh{x}, e^{2\pi i\theta}\cdot \wh{y}]$, where  $|a|=|a'|\neq 0$, it holds that 
 $$
 \pi_y^a\circ {(\pi^a_x)}^{-1} = R_{\theta}^y  \circ\pi_y^{a'}\circ {(\pi^{a'}_x)}^{-1}.
$$
\end{lem}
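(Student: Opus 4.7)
The plan is to verify the identity pointwise by unwinding the defining equation of $Z_a$ using the rotation identity $h_{e^{2\pi i\theta}\cdot\widehat{y}} = e^{2\pi i\theta}\cdot h_{\widehat{y}}$ that was established in the line preceding the lemma, together with the analogous identity $h_{\widehat{y}}\circ R^y_\theta = e^{2\pi i\theta}\cdot h_{\widehat{y}}$ for the intrinsic rotation on $(D_y,y)$. Both identities follow directly from the normalization characterizing $h_{\widehat{y}}$ as the unique biholomorphism $(D_y,\widehat{y})\to(\mathbb{D},\widehat{0})$ by composing with the rotation $w\mapsto e^{2\pi i\theta}w$ of the unit disk.

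First, I would fix $z\in A_x(R)$. For $a=|a|\cdot[\widehat{x},\widehat{y}]$, using the representative $\{\widehat{x},\widehat{y}\}$, the unique $z'\in A_y(R)$ with $\{z,z'\}\in Z_a$ is characterized by
\[
h_{\widehat{y}}(z')=\frac{e^{-2\pi R}}{h_{\widehat{x}}(z)},
\]
so $z'=\pi_y^a\circ(\pi_x^a)^{-1}(z)$. For $a'=|a|\cdot[\widehat{x},e^{2\pi i\theta}\cdot\widehat{y}]$, using the representative $\{\widehat{x},e^{2\pi i\theta}\cdot\widehat{y}\}$, the unique $z''\in A_y(R)$ with $\{z,z''\}\in Z_{a'}$ satisfies
\[
h_{e^{2\pi i\theta}\cdot\widehat{y}}(z'')=\frac{e^{-2\pi R}}{h_{\widehat{x}}(z)}.
\]
Applying the rotation identity for decorations converts this to $e^{2\pi i\theta}\cdot h_{\widehat{y}}(z'')=e^{-2\pi R}/h_{\widehat{x}}(z)$, hence $h_{\widehat{y}}(z'')=e^{-2\pi i\theta}\cdot h_{\widehat{y}}(z')$.

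Finally, I would apply $R^y_\theta$ to $z''$ and use the second rotation identity to get
\[
h_{\widehat{y}}\bigl(R^y_\theta(z'')\bigr)=e^{2\pi i\theta}\cdot h_{\widehat{y}}(z'')=h_{\widehat{y}}(z').
\]
Since $h_{\widehat{y}}$ is a biholomorphism and hence injective, $R^y_\theta(z'')=z'$, which is precisely the claimed equality $\pi_y^a\circ(\pi_x^a)^{-1}(z)=R^y_\theta\circ\pi_y^{a'}\circ(\pi_x^{a'})^{-1}(z)$. I do not expect any genuine obstacle here: the content is a straightforward bookkeeping of how the identifications built into $Z_a$ transform when the angular part of the gluing parameter is twisted, and the rotation on the $y$-side exactly compensates. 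The only point requiring slight care is choosing representatives of $[\widehat{x},\widehat{y}]$ and $[\widehat{x},e^{2\pi i\theta}\cdot\widehat{y}]$ consistently (both with the same $\widehat{x}$), so that the comparison reduces to the rotation identity for $h_{\widehat{y}}$.
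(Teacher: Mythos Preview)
Your proof is correct and essentially identical to the paper's: both use the identities $h_{e^{2\pi i\theta}\cdot\wh{y}}=e^{2\pi i\theta}\cdot h_{\wh{y}}$ and $h_{\wh{y}}\circ R^y_\theta=e^{2\pi i\theta}\cdot h_{\wh{y}}$ to compare the defining equations of $Z_a$ and $Z_{a'}$ and read off $\pi_y^{a'}\circ(\pi_x^{a'})^{-1}(z)=R^y_{-\theta}(z')$. The only cosmetic difference is that the paper rewrites the set $Z_{a'}$ globally before extracting the pointwise relation, whereas you work pointwise from the start.
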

\begin{proof}
By definition $Z_a=\{\{z,z'\}\ |\ h_{\wh{x}}(z)\cdot h_{\wh{y}}(z')=e^{-2\pi \varphi(r)}\}$ and with $a'=|a| \cdot [\wh{x},e^{2\pi i\theta}\cdot \wh{y}]$ we see that
\begin{eqnarray}
Z_{a'} &= &\{\{z,z'\}\ |\ h_{\wh{x}}(z)\cdot e^{2\pi i\theta}\cdot h_{\wh{y}}(z')= e^{-2\pi \varphi(r)}\}\\
&=& \{\{z,z'\}\ |\ h_{\wh{x}}(z) \cdot h_{\wh{y}}(R_{\theta}(z'))= e^{-2\pi \varphi(r)}\}.\nonumber\\
&=& \{\{z,R_{-\theta}(z')\}\ |\ h_{\wh{x}}(z) \cdot h_{\wh{y}}(z')= e^{-2\pi \varphi(r)}\}.\nonumber
\end{eqnarray}
From this it follows that if  $  \pi_y^a\circ {(\pi^a_x)}^{-1} (z)= z'$ then  $\pi_y^{a'}\circ {(\pi^{a'}_x)}^{-1}(z) = R_{-\theta}(z')$, and consequently 
$$
R_{\theta}\circ \pi_y^{a'}\circ {(\pi^{a'}_x)}^{-1}(z) =z' =  \pi_y^a\circ {(\pi^a_x)}^{-1} (z).
$$
\end{proof}

 \begin{definition}
If $a=a_{\{x,y\}}\neq 0$ with $R=\varphi(|a_{\{x,y\}}|)$ we denote by $M^p_{{\mathcal D},a}$ the collection of all $\{z,z'\}\in Z_a$
 such that $-\frac{1}{2\pi}\cdot \ln(|h_{\wh{x}}(z)|\in (R/2 -p/2,R/2+p/2)$. Note that this is only well-defined if $|a_{\{x,y\}}|$ is sufficiently small given $p$. 
 We call $M^p_{{\mathcal D},a}$ the {\bf middle annulus of width} $2p$ of $Z_a$.
 \qed
 \end{definition}
 For example with $a\in {\mathbb B}_{\{x,y\}}$ this is well-defined as long as $0<p<25$.
 When we are given a finite family of unordered disk pairs ${{\bm{D}}}$ and a gluing parameter
 $\mathfrak{a}\in {\mathbb B}_D$,  we denote by ${{\bm{Z}}}_{\mathfrak{a}}$ the disjoint union of all
$Z^{\{x,y\}}_{a_{\{x,y\}}}$. In the case that $a_{\{x,y\}}=0$ we have that $Z^{\{x,y\}}_{a_{\{x,y\}}}=(D_x\sqcup D_y,\{x,y\})$ and consequently, if $\mathfrak{a}\equiv 0$
we see that ${{\bm{Z}}}_{\mathfrak{a}}=\bm{D}$.

 In the case of  ordered disk pairs we use a similar formalism. It will be clear from the context in which situation we are.
\subsubsection*{Holomorphic Polar Coordinates}\label{SEC_holo_polar_coords}
Given $(D_x,x)$  let $\wh{x}$ be a decoration.
 Take the associated $h_{\wh{x}}:(D_x,x)\rightarrow ({\mathbb D},0)$ satisfying $Th_x(\wh{x})=\wh{0}$.
 We introduce the biholomorphic maps 
 $$
 \sigma_{\wh{x}}^+:[0,\infty)\times S^1\rightarrow D_x\setminus\{x\} :(s,t)\rightarrow h_{\wh{x}}^{-1}\left(e^{-2\pi (s+it)}\right)
 $$
 and
 $$
 \sigma^-_{\wh{x}}:(-\infty,0]\times \rightarrow D_x\setminus\{x\}:(s',t')\rightarrow h_{\wh{x}}^{-1}\left( e^{2\pi (s'+it')}\right).
 $$
 We shall call $\sigma_{\wh{x}}^\pm$ {\bf positive and negative holomorphic polar coordinates} on $D_x$ around $x$ 
 associated to the decoration $\wh{x}$.
 
If $(D_x\sqcup D_y,\{x,y\})$ is nodal disk pair and $a\in {\mathbb B}_{\{x,y\}}$ we obtain through gluing the space
 $Z_a$. With $a=|a|\cdot [\wh{x},\wh{y}]$  fix  a representative  $\{\wh{x},\wh{y}\}$.  We have the special biholomorphic maps,
 where $R=\varphi(|a|)$
\begin{eqnarray}\label{ERT31}
&&\\
&\sigma_{\wh{x}}^{+,a}:  [0,R]\times S^1\rightarrow Z_a: (s,t) \rightarrow \{\sigma^+_{\wh{x}}(s,t), \sigma^-_{\wh{y}}(s-R,t)\}&\nonumber\\
&\sigma_{\wh{y}}^{a,-}:[-R,0]\times S^1\rightarrow Z_a: (s',t')\rightarrow \{\sigma^+_{\wh{x}}(s'+R,t'),\sigma^-_{\wh{y}}(s',t')\}&\nonumber
\end{eqnarray}
 There are also maps $\sigma_{\wh{x}}^{-,a}$ and $\sigma_{\wh{y}}^{+,a}$ obtained by interchanging the roles of 
 $x$ and $y$.  In the case of an ordered disk pair the maps in (\ref{ERT31}) are the relevant ones, i.e. we take poistive holomorphic polar coordinates
 for the lower disk and negative one for the upper disk.
 
As part of the constructions we shall consider maps $u:Z_a\rightarrow {\mathbb R}^N$ and  sometimes need to evaluate the average 
 over the loop in the middle. For this we can pick a nodal point in $\{x,y\}$, say $x$,  and take $\sigma^{+,a}_{\wh{x}}$ and calculate with $R=\varphi(|a|)$
 $$
 \int_{S^1} u\circ\sigma^{+,a}_{\wh{x}}(R/2,t) dt.
 $$
 The integral does not depend on the choice of $x$ or $y$ in $\{x,y\}$, and after the choice of $x$, it does not depend
 on the decoration $\wh{x}$.  We call the integral
 the {\bf middle loop average}.
 We call any of the maps $t\rightarrow \sigma^{\pm,a}_{\wh{x}}(\pm R/2,t)$ or $t\rightarrow \sigma^{\pm,a}_{\wh{y}}(\pm R/2,t)$  a  {\bf middle-loop map}. 
 If $a=0$ and $u$ is defined on the disk pair, being continuous over the nodal value (i.e. $u(x)=u(y)$),  then we can define the associated 
 middle loop average as $u(x)$ or $u(y)$, which are the same. 

A related concept is that of an {\bf $a$-loop}.  Assume we have a map defined on $D_x$ or the punctured $D_x\setminus\{x\}$.
Pick positive holomorphic polar coordinates centered at $x$, i.e.
$$
\sigma^+:{\mathbb R}^+\times S^1\rightarrow D_x\setminus\{x\}.
$$
Assume that $u:D_x\rightarrow {\mathbb R}^N$ is a continuous map and $a\in {\mathbb B}$ a nonzero gluing parameter.
We define an {\bf $a$-loop} as the map
$$
S^1\rightarrow D_x\setminus\{x\}:  t\rightarrow \sigma^+_a(t):= \sigma^+(R/2,t)
$$
where $R=\varphi(|a|)$. There is a whole $S^1$-family of $a$-loops. However the integral
$$
\int_{S^1}  u\circ\sigma_a^+(t)\cdot dt
$$
does not depend on the choice of the specific $a$-loop.

\subsection{Riemann Surfaces}\label{APPENDIX-RS}
After some preparation we shall  describe the category of stable Riemann surfaces and introduce auxiliary structures
for the DM-theory.

\subsubsection{Nodal Riemann Surfaces}
We shall consider tuples $(S,j,M,D)$ consisting of a compact  Riemann surface $(S,j)$ without boundary,
but possibly disconnected, where  $M$ is a finite (unordered) subset of $S$ called {\bf (unordered) marked points}, and $D$ is a finite collection  of unordered pairs $\{x,y\}$, where $x,y\in S$ are different points. We require that $D$ has the property that $\{x,y\}\cap \{x',y'\}\neq \emptyset$ implies that
$\{x,y\}=\{x',y'\}$. We shall write $|D|$ for the union of all the $\{x,y\}$, i.e.
$$
|D| =\bigcup_{\{x,y\}\in D} \{x,y\},
$$
 and require that $|D|\cap M=\emptyset$. We refer to the elements $\{x,y\}$ as {\bf nodal pairs} and to $x$ and $y$ as {\bf nodal points}.  We can view the tuples as objects of a category.
For the following discussion we assume $M$ and the elements of $D$ to be unordered.
A morphism $\Phi:\alpha\rightarrow \alpha'$ is given by a tuple $\Phi=(\alpha,\phi,\alpha')$,
where $\phi:(S,j)\rightarrow (S',j')$ is a biholomorphic map such that $\phi(M)=M'$  and $\phi_\ast(D)=D'$,
where
$$
\phi_\ast (D) =\{\{\phi(x),\phi(y)\}\ |\ \{x,y\}\in D\}.
$$
We denote the category with objects $(S,j,M,D)$ and morphisms $\Phi$ by $\bar{\mathcal R}$.
 \begin{remark}
We shall sometimes consider modifications, namely we may allow $M$ to be ordered and in this case referred to as the set of {\bf ordered}
marked points. We also sometimes allow some of the nodal pairs to be ordered, i.e. the object are $(x,y)$ rather than $\{x,y\}$ and we refer
to an {\bf ordered} nodal pair. 
\qed
\end{remark}
We shall discuss later on in more detail objects in $\bar{\mathcal R}$ together with a finite group 
$G$ acting on it by biholomorphic maps preserving the additional structure. 

\subsubsection{The Category of Stable Riemann Surfaces}\label{RRRR---}
Denote by ${\mathcal R}$ the  full subcategory  of $\bar{\mathcal R}$ associated to objects, which satisfy 
an additional condition. Namely we impose the {\bf stability condition} (\ref{STABBB}) that for every connected component $C$ of $S$ its genus
$g(C)$ and the number $\sharp C:= C\cap (M\cup |D|)$ satisfies
\begin{eqnarray}\label{STABBB}
2g(C) + \sharp C\geq 3.
\end{eqnarray}
 From the classical Deligne-Mumford theory, see \cite{DM,RS} and also \cite{HWZ-DM}, it follows that
${\mathcal R}$ is what we shall call a  {\bf groupoidal category}. Namely every morphism is an isomorphism, between two objects are at most finitely morphisms (a consequence of the stability condition),
and the collection of isomorphism classes $|{\mathcal R}|$ is a set. It is also an important fact that $|{\mathcal R}|$  has a natural metrizable topology for which the connected components are compact. We shall call ${\mathcal R}$ the {\bf category of stable Riemann surfaces} with unordered marked points and nodal pairs.

\subsubsection{Glued Riemann Surfaces} 
Let $\alpha$ be an object in $\bar{\mathcal R}$ and denote by $G$ a finite 
group  acting by automorphisms of $\alpha$.   
\begin{definition}
A pair $(\alpha,G)$, where $\alpha$ is an object in $\bar{\mathcal R}$ and $G$ a finite 
group acting by automorphisms will be called a {\bf Riemann surface with a finite group action}.
\qed
\end{definition}
\begin{remark}
Note that the biholomorphic automorphism group might be infinite. However, $G$ utilizes only a finite part of the existing 
symmetries. An obvious example is the Riemann sphere, where we can take a finite subgroup of the rational transformations.
\qed
\end{remark}
Assume $(\alpha,G)$ is given, where $\alpha$ is an object in $\bar{\mathcal R}$.
Define ${\mathbb B}_\alpha$
by
$$
{\mathbb B}_\alpha =\prod_{\{x,y\}\in D} {\mathbb B}_{\{x,y\}},
$$
which as a product of one-dimensional complex manifolds is a complex manifold.  There is a natural projection
$\pi:{\mathbb B}_\alpha\rightarrow D$ and 
a {\bf gluing parameter} for an object $\alpha$ is a section $\mathfrak{a}$ of $\pi$, i.e. it  associates to $\{x,y\}\in D$
a symbol $a_{\{x,y\}}\in {\mathbb B}_{\{x,y\}}$
$$
\mathfrak{a}: D\rightarrow {\mathbb B}_\alpha: \{x,y\}\rightarrow a_{\{x,y\}}.
$$
The natural action of $G$ on $D$ by $g\ast \{x,y\}=\{g(x),g(y)\}$ lifts to a {\bf natural action}
of $G$ on the complex manifold of natural gluing parameters
$$
G\times {\mathbb B}_\alpha\rightarrow {\mathbb B}_\alpha,
$$
by $g\ast \mathfrak{a}=\mathfrak{b}$, where, writing $a_{\{x,y\}}=r_{\{x,y\}}\cdot [\wh{x},\wh{y}]$ we have
$$
b_{\{g(x),g(y)\}} = r_{\{x,y\}}\cdot [(Tg)\wh{x},(Tg) \wh{y}]\ \ \text{for}\ \ \{x,y\}\in D.
$$
We fix for every $z\in |D|$
a closed disk-like neighborhood $D_z$ with smooth boundary and $z$ an interior point
so that the union of these $D_z$ is invariant under $G$. We also require that $M\cap D_z=\emptyset$
for all $z\in |D|$. This choice gives for every $\{x,y\}\in D$ an unordered nodal disk pair 
${\mathcal D}_{\{x,y\}}=(D_x\sqcup D_y,\{x,y\})$. 
\begin{definition}
The collection ${{\bm{D}}}$ of these disk pairs,
having the properties stated above is called a {\bf small disk structure} for $\alpha$. 
\end{definition}
In case we have an ordered nodal pair $(x,y)$ we obtain an ordered disk pair written as $(D_x\sqcup D_y,(x,y))$.
We shall refer to $D_x$ as the {\bf lower disk} and $D_y$ as the {\bf upper disk}. We usually would also assume that
the action of $G$ would map an ordered nodal pair to an ordered nodal pair and also preserve the ordering. 

Given an object $\alpha=(S,j,M,D)$ in $\bar{\mathcal R}$ and a small disk structure 
${{\bm{D}}}$ it is convenient to note that given ${\bm{D}}$ we can recover $D$.  Hence, we introduce the
objects $(S,j,M,{\bm{D}})$ which are compact Riemann surfaces with small disk structure 
as well as $((S,j,M,{\bm{D}}),G)$, which is $((S,j,M,D),G)$ equipped with a small disk structure 
so that the union of the disks is invariant.

Assume that $\alpha=(S,j,M,D)$ is a nodal stable Riemann surface with unordered marked points and nodal points and $G$ is a finite group acting on $\alpha$ as previously described.  Fix a small disk structure ${\bm{D}}$, which for every $\{x,y\}$ gives us an unordered nodal disk pair $(D_x\sqcup D_y,\{x,y\})$.  Hence we consider $((S,j,M,{\bm{D}}),G)$.  Given a gluing parameter $\mathfrak{a}$ for $\alpha$ we obtain the  $a_{\{x,y\}}$ and obtain by disk-gluing
 $$
 Z^{\{x,y\}}_\mathfrak{a} := Z_{a_{\{x,y\}}},
 $$
 which is obtained from $(D_x\sqcup D_y,\{x,y\})$ by gluing with $a_{\{x,y\}}$. If $a_{\{x,y\}}=0$ we recover the nodal
 disk pair. Remove from $S$ for every nonzero $a_{\{x,y\}}$ the complement in $D_x\sqcup D_y$ 
 of $A_x(R)\sqcup A_y(R)$, where $R=\varphi(|a_{\{x,y\}}|)$.
 We define a new surface $S_{\mathfrak{a}}$  by using for every $\{x,y\}$ with non-zero gluing parameter
   (\ref{EQNA1.1}) to define a holomorphic equivalence relation on $A_x(R)\sqcup A_y(R)$
   identifying $z\in A_x(R)$ with $z'\in A_y(R)$ provided $\{z,z'\}\in Z_\mathfrak{a}^{\{x,y\}}$.
 If the gluing parameter for some $\{x,y\}$ vanishes we do not do anything.
Having carried out this for every nodal pair  we obtain a nodal Riemann surface  surface $S_\mathfrak{a}$
with associated almost complex structure $j_\mathfrak{a}$.  We denote by $D_\mathfrak{a}$ the collection of all
 $\{x,y\}\in D$ with $a_{\{x,y\}}=0$. We shall write $M_\mathfrak{a}$ for the set $M$ viewed as a subset of $S_\mathfrak{a}$. Finally we set
 $$
 \alpha_\mathfrak{a}=(S_\mathfrak{a},j_\mathfrak{a},M_\mathfrak{a},D_\mathfrak{a}).
 $$
\begin{definition}
 We call $\alpha_\mathfrak{a}$ the stable Riemann surface {\bf obtained from $\alpha$ by gluing with $\mathfrak{a}$}.
 \qed
 \end{definition}
For $g\in G$ one easily verifies that the construction of $S_\mathfrak{a}$ allows to construct in a natural way a biholomorphic map
 $g_\mathfrak{a}:\alpha_\mathfrak{a}\rightarrow \alpha_{g\ast\mathfrak{a}}$. Given $h,g\in G$ we have the functorial properties
 $$
 h_{g\ast \mathfrak{a}}\circ g_\mathfrak{a} = (h\circ g)_{\mathfrak{a}}
 $$
 and $1_\mathfrak{a} = Id_{\alpha_\mathfrak{a}}$.

\subsection{Riemann Surface Buildings}
The building blocks are tuples $(\Gamma^-,S,j,D,\Gamma^+)$, where $(S,j)$ is a not necessarily connected compact Riemann surface, $D$ is a nodal set,
and $\Gamma^\pm$ are a finite set of positive and negative punctures.  The sets $|D|$, $\Gamma^+$, and $\Gamma^-$ are mutually disjoint. Let us denote such an object
by $\alpha$.  We allow finite groups $G$ acting on such a $\alpha$ by biholomorphic maps, where we impose the restriction that $G$ preserves the sets $\Gamma^+$, $\Gamma^-$,
and the nodal pairs $D$. A small disk structure ${\bm{D}}$ for $\alpha$ consists of a small disk structure associated to the nodal pairs in $D$ so that the union of the 
disks is invariant under $G$. The disks are assumed to be mutually disjoint and not to contain the points in $\Gamma^\pm$.  
Denoting by ${\mathbb B}_{\alpha}$ the set of natural gluing parameters we obtain through gluing $\alpha_{\mathfrak{a}}=(\Gamma^-,S_\mathfrak{a},j_\mathfrak{a},D_\mathfrak{a},\Gamma^+) $, where we identify $\Gamma^\pm$ naturally as a subset of $S_\mathfrak{a}$.  We shall call $\alpha_\mathfrak{a}$ a ${\bm{D}}$-descendent of $\alpha$.
It is convenient to consider the smooth manifold of gluing parameters ${\mathbb B}_D$ together with the $G$-action as a translation groupoid
$G\ltimes {\mathbb B}_D$.  We can also consider the groupoid whose objects are the glued $\alpha_\mathfrak{a}$ and the morphisms 
are the $(\alpha_{\mathfrak{a}},g_{\mathfrak{a}},\alpha_{g\ast\mathfrak{a}})$.  Obviously the two groupoids are isomorphic via $\mathfrak{a}\rightarrow \alpha_\mathfrak{a}$
and $(g,\mathfrak{a})\rightarrow (\alpha_\mathfrak{a},g_\mathfrak{a},\alpha_{g\ast\mathfrak{a}})$
We also note that 
$G$ defines actions $G\times \Gamma^\pm\rightarrow \Gamma^\pm$.  Denote by $G\ltimes \Gamma^\pm$ the associated translation groupoids, so hat we have the equivariant
diagram of inclusions
$$
\Gamma^+ \rightarrow \alpha_{\mathfrak{a}}\leftarrow \Gamma^-
$$
We generalize this now as follows.
We first consider tuples 
$$
(\alpha_0,b_1,...,b_{k-1},\alpha_{k-1}),
$$
 where $\alpha_i=(\Gamma^-_i,S_i,j_i,D_i,\Gamma^+_i)$ is as just described and $b_i:\Gamma^+_{i-1}\rightarrow \Gamma^-_{i}$
is a bijection.  We assume $G$ is a finite group acting acting on each  $\alpha_0,...,\alpha_{k-1}$ by biholomorphic maps  as previously described. Moreover, 
$G$ defines actions on the $\Gamma^\pm_i$ and  we assume that these actions are such that every $b_i:\Gamma^+_{i-1}\rightarrow \Gamma^-_i$ is equivariant.
We fix for every $(z,b_i(z))$ an ordered disk pair ${\mathcal D}_{(z,b_i(z))}=(D_z\sqcup D_{b_i(z)}, (z,b_i(z)))$ and assume that the union of these 
disks is invariant under $G$. Of course, the disks are mutually disjoint and do not intersect the floor disks.
An interface gluing parameter for the $(i-1,i)$-interface, $i=1,..,k-1$, is a map $\mathfrak{a}_{i-1,i}$  which assigns to $(z,b_i(z))$ a gluing parameter ${\mathfrak{a}}_{i-1,i}(z)$,
which has the additional property that either all its components are zero or all of its components are non-zero.
A total gluing parameter is given
as $(2k-1)$-tuple  $(\mathfrak{a}_0,{\mathfrak{a}}_{0,1},\mathfrak{a}_2,...,{\mathfrak{a}}_{k-2,k-1},\mathfrak{a}_{k-1})$. Denote by $1\leq i_1<..<i_\ell\leq k-1$

the ordered sequence of indices such that ${\mathfrak{a}}_{i-1,i}=0$.  We say that we have nontrivial interfaces $(i_1-1,i_1)$,..,$(i_{\ell}-1,i_{\ell})$.  Given $\alpha$ and the small disk structure applying the total gluing parameter we obtain $\alpha_\mathfrak{a}$ which again is a Riemann surface building.
The collection of all  such Riemann surface buildings we shall refer to as the set of ${\bm{D}}$-decendents of $\alpha$.  For example 
if $\mathfrak{a}$ has the nontrivial interfaces $1\leq i_1<..<i_\ell\leq k-1$ let us introduce $i_0=0$ and $i_{\ell+1}=k$. Then define $S_e= \coprod_{i\in \{i_e,i_{e+1}-1\}} S_i$,
$D_e =\coprod_{i\in \{i_e,i_{e+1}-1\}} D_i \coprod_{i\in \{i_e+1, i_{e+1}-1\}} D_{i-1,i}$.  As punctures we take $\Gamma_{i_e}^-$ and $\Gamma_{i_{e+1}-1}^+$ so that we obtain
$(\Gamma_{i_e}^-,Se,j_e,D_e,\Gamma^+_{i_{e+1}-1})$. The gluing parameters $(\mathfrak{a}_{i_e},\mathfrak{a}_{i_e,i_e+1},...,\mathfrak{a}_{i_{e+1}-1})$ allow us
to glue this surface and obtain $\alpha_{\mathfrak{a},e}$.
Together with the $b_{i_e}$ we obtain the Riemann surface building
$$
(\alpha_{\mathfrak{a},0},b_{i_1},\alpha_{\mathfrak{a},1},b_{i_2},...,b_{i_\ell},\alpha_{\mathfrak{a},\ell}).
$$

\section{Results Around Sc-Smoothness}\label{APP12}
We shall recall the sc-smoothness results used in the paper from two sources, \cite{HWZ8.7} and \cite{HWZ5}.
We also list some other results which are more classical and should be well-known.

\subsection{Classes of Maps}\label{APP12.1}

\subsection{Classical Results}
The following result is well-known and can be proved as in \cite{El}. 
\begin{lem}
Assume that $\Psi:S^1\times {\mathbb R}^N\rightarrow {\mathbb R}^M$ is a smooth map satisfying 
$\Psi(t,0)=0$. Then for every $m\geq 2$ and $\tau>0$ the map
$$
\Psi_\ast: H^{m,\tau}_{0}([0,\infty)\times S^1,{\mathbb R}^N)\rightarrow H^{m,\tau}_0({\mathbb R}^+\times S^1,{\mathbb R}^M):
h\rightarrow  [(s,t)\rightarrow \Psi(t,h(s,t))]
$$
is (classically)  $C^\infty$.  In particular, with the standard sc-structure on $H^{3,\delta_0}_c({\mathbb R}^+\times S^1,{\mathbb R}^N)$
where level $m$ corresponds to regularity $(m+3,\delta_m)$  the map $\Psi_\ast$ is sc-smooth.
\end{lem}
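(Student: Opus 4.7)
The plan is to reduce the statement to Eliasson's classical results on manifolds of mappings \cite{El} by carefully handling the exponential weights, where the hypothesis $\Psi(t,0)=0$ plays a crucial role. First I would observe that since $\Psi(t,0)=0$, a first-order Taylor expansion in the second variable gives
\begin{equation*}
\Psi(t,v) = \Bigl(\int_0^1 D_v\Psi(t,\tau v)\,d\tau\Bigr)\, v =: A(t,v)\, v,
\end{equation*}
where $A\colon S^1\times \mathbb{R}^N\to \mathcal{L}(\mathbb{R}^N,\mathbb{R}^M)$ is smooth. Consequently, for $h\in H^{m,\tau}_0(\mathbb{R}^+\times S^1,\mathbb{R}^N)$ with $m\geq 2$, pointwise one has $\Psi(t,h(s,t)) = A(t,h(s,t))\cdot h(s,t)$. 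Since $H^m$ on a $2$-dimensional cylinder embeds continuously into $C^0$ for $m\geq 2$, the values $h(s,t)$ remain in a bounded set, so $A(t,h(s,t))$ is uniformly bounded with bounded derivatives; multiplication with the exponentially decaying factor $h$ preserves the exponential decay and the Sobolev regularity.

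Second, I would establish well-definedness and continuity of $\Psi_\ast$ as a map between weighted Sobolev spaces. The key is that $H^m$ for $m\geq 2$ on the cylinder is a Banach algebra under pointwise multiplication, and that composition with a smooth map of the form $h\mapsto A(t,h)$ defines a smooth map $H^m\to H^m$ by the classical $\Omega$-lemma (Eliasson \cite{El}). The weighted version follows by conjugating with the multiplication operator $h\mapsto e^{\tau s}\, h$, which is an isometric isomorphism $H^{m,\tau}_0\to H^m_0$; under this conjugation the Nemitsky operator becomes
\begin{equation*}
\tilde{h}\mapsto e^{\tau s}\cdot A(t,e^{-\tau s}\tilde{h})\cdot e^{-\tau s}\tilde{h} = A(t,e^{-\tau s}\tilde{h})\cdot \tilde{h},
\end{equation*}
and the map $\tilde{h}\mapsto A(t,e^{-\tau s}\tilde{h})$ is smooth into $H^m$ by the $\Omega$-lemma applied to the smooth function $(s,t,v)\mapsto A(t,e^{-\tau s}v)$.

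Third, I would verify classical $C^\infty$-differentiability by induction on the order of derivative. The candidate $k$-th derivative at $h$ in directions $\xi_1,\ldots,\xi_k$ is the pointwise map
\begin{equation*}
(s,t)\longmapsto D_v^k\Psi(t,h(s,t))(\xi_1(s,t),\ldots,\xi_k(s,t)),
\end{equation*}
and the remainder estimate in Taylor's formula, combined again with the Banach-algebra property and the weight conjugation, shows that this derivative is continuous in $h$ and satisfies the required limit estimates. This is exactly the argument carried out in \cite{El} for the unweighted case; the weighted case follows by the same conjugation trick.

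The main obstacle is the interaction between the exponential weight and the nonlinearity $\Psi$, where the condition $\Psi(t,0)=0$ is indispensable: without it, $\Psi(t,h(s,t))$ would converge to $\Psi(t,0)\neq 0$ as $s\to\infty$, destroying exponential decay. Once one has verified classical $C^\infty$ smoothness on each level $H^{m+3,\delta_m}_c$, the sc-smoothness follows from Theorem \ref{ABC-x} (or the Remark following Proposition \ref{lower}) which tells us that a map that is level-wise classically smooth is automatically $\mathrm{sc}^\infty$.
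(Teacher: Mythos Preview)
The paper does not supply a proof for this lemma; it simply asserts the result is well-known and can be proved as in \cite{El}. Your proposal fills in precisely the kind of argument the paper defers to that citation, so in spirit the approaches coincide.

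One technical slip worth correcting: after conjugation by $e^{\tau s}$, the map $\tilde h\mapsto A(t,e^{-\tau s}\tilde h)$ does \emph{not} land in $H^m$, because $A(t,0)=D_v\Psi(t,0)$ need not vanish and hence the image does not decay at infinity; so the $\Omega$-lemma does not apply to that intermediate map as you stated. The easy fix is to split off the linear part: write $A(t,e^{-\tau s}\tilde h)\cdot\tilde h = A(t,0)\tilde h + \bigl[A(t,e^{-\tau s}\tilde h)-A(t,0)\bigr]\tilde h$, where the first term is a bounded linear multiplication operator and the bracket now vanishes at $\tilde h=0$, so the $\Omega$-lemma argument goes through for the second summand. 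Alternatively, verify directly that $(s,t,v)\mapsto e^{\tau s}\Psi(t,e^{-\tau s}v)$ has all derivatives uniformly bounded on $[0,\infty)\times S^1\times K$ for bounded $K\subset\mathbb R^N$, which is again where $\Psi(t,0)=0$ is used. A minor point: the conjugation $h\mapsto e^{\tau s}h$ is a topological isomorphism $H^{m,\tau}_0\to H^m$ with equivalent norms, not literally an isometry, because of the Leibniz terms.
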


\subsection{The Fundamental Lemma} \label{FUND0b} 
We continue with our study of sc-smoothnees.  We denote by $\varphi$ the exponential gluing profile 
$$\varphi (r)=e^{\frac{1}{r}}-e,  \quad \text{$r>0$. }$$
With the  nonzero complex number $a$ (gluing parameter)  we associate the gluing angle 
 $\vartheta\in S^1$ and the gluing length $R$ via  the formulae
$$a=\abs{a}e^{-2\pi i \vartheta}\quad \text{and}\quad R=\varphi (\abs{a}).$$
Note that $R\to \infty$ as $\abs{a}\to 0$.

Denote by $L$ the Hilbert sc-space $L^2(\R\times S^1,\R^N)$ equipped  the sc-structure $(L_m)_{m\in \N_0}$ defined by 
$L_m=H^{m,\delta_m}(\R\times S^1,\R^N)$, where $(\delta_m)$ is a strictly increasing sequence starting at $\delta_0=0$. Let us also introduce
the sc-Hilbert spaces $F=H^{2,\delta_0}({\mathbb R}\times S^1,{\mathbb R}^N)$ with sc-structure, where level $m$ corresponds to regularity
$(m+2,\delta_m)$, with $\delta_0>0$ and $(\delta_m)$ is a strictly increasing sequence. Finally we introduce $E=H^{3,\delta_0}({\mathbb R}\times S^1,{\mathbb R}^N)$ with level $m$ corresponding to regularity $(m+3,\delta_m)$ and $(\delta_m)$ is as in the $F$-case.

With these data fixed we prove the following prop. The  prop has many applications. In particular, it will be used in Section 
\jwf{[broken reference]}
in  order to prove that  the transition maps   between local M-polyfolds are sc-smooth. The following is Proposition 2.8 from \cite{HWZ8.7} combined with Proposition 2.17 from \cite{HWZ8.7}. 
\begin{lem}[Fundamental Lemma]\label{FUNDAMENTAL}
The following four  maps
$$
\Gamma_i:B_\frac{1}{2}\oplus G\to  G,\ \ i=1,\ldots, 4
$$
where $G=L$, $G=F$ or $G=E$ are sc-smooth.
\begin{description}
\item[(1)]  Let $f_1:\R\to  \R$ be a smooth  function which is constant outside of a compact interval so that $f_1(+\infty)=0$. Define
$$
\Gamma_1(a,h)(s, t)=f_1\left(s -\frac{R}{2}\right)h (s, t)
$$
if $a\neq 0$ and $\Gamma_1(0,h)=f(-\infty)h$ if $a=0$. \\
\item[(2)]  Let $f_2:\R\to  \R$ be a compactly supported  smooth function. Define  
$$
\Gamma_2(a,h)(s, t)=f_2\left(s -\frac{R}{2}\right)h(s-R,t-\vartheta)
$$
if  $a\neq 0$ and $\Gamma_2(0,h)=0$ if $a=0$.
\item[(3)]  Let $f_3:\R\to  \R$ be a smooth  function which is constant outside of a compact interval and satisfying $f_3(\infty)=0$. Define 
$$
\Gamma_3(a,h)(s', t')=f_3\left(-s' -\frac{R}{2}\right)h(s',t')
$$
if $a\neq 0$ and $\Gamma_3(0,h)=f_3(-\infty)h$
if $a=0$.
\item[(4)]  Let $f_4:\R\to  \R$ be a smooth function of compact support and define  
$$
\Gamma_4(a,h)(s', t')=f_4\left(-s' -\frac{R}{2}\right)h(s'+R,t'+\vartheta)
$$
if $a\neq 0$ and $\Gamma_4(0,h)=0$  if  $a=0$.
\end{description}
\end{lem}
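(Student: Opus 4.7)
The plan is to reduce each of the four assertions to the criterion in Theorem \ref{ABC-x} (Proposition 2.4 of \cite{HWZ8.7}): it suffices to show that, for each $m\geq 0$ and each $0\leq l\leq k$, the map $\Gamma_i$ sends level $m+l$ of the source to level $m$ of the target by a map of class $C^{l+1}$ in the classical sense. Sc-smoothness (for all $k$) then follows for all three targets $G=L,F,E$ simultaneously, since only the shift in the regularity index differs. The reason to expect this to work is that the exponential gluing profile $\varphi(r)=e^{1/r}-e$ makes the blow-up of derivatives of $R=\varphi(|a|)$ as $a\to 0$ exactly comparable to the exponential weight gain $e^{(\delta_{m+l}-\delta_m)|s|}$ between sc-levels.

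First I would handle $\Gamma_1$ (and then $\Gamma_3$, which is verbatim symmetric). Fix $a\neq 0$; since $f_1$ is constant outside a compact interval $[-A,A]$ with $f_1(+\infty)=0$, the product $f_1(s-R/2)h(s,t)$ equals $f_1(-\infty)h(s,t)$ for $s\leq R/2-A$ and $0$ for $s\geq R/2+A$. The difference with the nominal limit value $f_1(-\infty)h$ is supported in the transition region $|s-R/2|\leq A$, where the weight $e^{\delta_m s}$ is of size $e^{\delta_m R/2}$. Classical derivatives of $\Gamma_1$ in the direction of $a$ produce powers of $R'(|a|)=-|a|^{-2}e^{1/|a|}$ localized to this same transition region. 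Counting carefully, each $a$-derivative costs a factor $e^{c/|a|}$, while passing from level $m+1$ to level $m$ gains a factor $e^{(\delta_{m+1}-\delta_m)R/2}=e^{(\delta_{m+1}-\delta_m)(e^{1/|a|}-e)/2}$. Because the latter is a double exponential in $1/|a|$ with positive coefficient, it absorbs any fixed polynomial or single exponential prefactor, and one obtains the required $C^{l+1}$-regularity; continuity at $a=0$ follows from the same estimate and the fact that $h$ lies in $H^{m+l,\delta_{m+l}}$.

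The case $\Gamma_2$ (and symmetrically $\Gamma_4$) is structurally harder because it involves a composition with the large shift $h(s-R,t-\vartheta)$. The compact support of $f_2$ localizes the output to $s\in [R/2-A,R/2+A]$, and substituting $s'=s-R$ puts the argument in $s'\in[-R/2-A,-R/2+A]$. Thus the output weight $e^{\delta_m s}$ evaluated in this window is of order $e^{\delta_m R/2}$, and the input weight $e^{\delta_m|s'|}$ evaluated at $s'\approx -R/2$ is also of order $e^{\delta_m R/2}$; the ratio is uniformly bounded. This bounded ratio is exactly what makes translation by $R$ a bounded operator between the weighted spaces, uniformly in $a$. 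Differentiating in $a$ then produces a factor $R'(|a|)$ times a $\partial_s$-derivative of $h$; the $\partial_s$ loses one unit of regularity (so we must start at level $m+l$ with $l\geq 1$ derivatives to be charged against level $m$), and the $R'(|a|)$ factor is once again balanced by the inter-level gain as above. Continuity at $a=0$ follows because the region of support in $s$ is pushed to infinity, where $h$ decays in the weighted norm. Higher mixed derivatives are treated by iterating this bookkeeping.

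The main obstacle is precisely the combinatorial weight-bookkeeping in case $\Gamma_2/\Gamma_4$: one must show that after taking $l$ derivatives in $a$ and applying $\partial_s,\partial_t$ to the inner $h$, the product of polynomial factors, exponential factors from $R^{(k)}(|a|)$, and the shift of the weight function can be estimated uniformly by the gain $e^{(\delta_{m+l}-\delta_m)R/2}$ for any $l\geq 1$. Granted this uniform estimate, the classical differentiability of the map on each level, together with continuity at $a=0$, yields $\Gamma_i\in C^{l+1}$ from level $m+l$ to level $m$, so Theorem \ref{ABC-x} gives sc-smoothness for all three choices of $G$.
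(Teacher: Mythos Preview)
Your core heuristic is right: the double-exponential weight gain $e^{(\delta_{m+l}-\delta_m)R/2}$ with $R=e^{1/|a|}-e$ eventually dominates any finite product of factors $R^{(j)}(|a|)$, and this is what makes the maps sc-smooth. The paper itself gives no proof here; it simply cites Proposition~2.8 (and 2.17) of \cite{HWZ8.7}. So the comparison is really with that reference.

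The gap in your plan is the choice of criterion. Theorem~\ref{ABC-x} requires, for every $m$ and every $0\le l\le k$, that $\Gamma_i:G_{m+l}\to G_m$ be classically $C^{l+1}$; in particular the case $l=0$ demands $C^1$ from $G_m$ to $G_m$. This fails at $a=0$ already for $\Gamma_1$. Indeed
\[
\frac{\Gamma_1(a,h)-f_1(-\infty)h}{|a|}=\frac{1}{|a|}\bigl(f_1(\cdot-R/2)-f_1(-\infty)\bigr)h
\]
is supported in $\{s\ge R/2-A\}$, and its $G_m$-norm is controlled only by the tail $\int_{s\ge R/2-A}|D^{\le m}h|^2e^{2\delta_m|s|}$, which for general $h\in G_m$ need not be $o(|a|^2)=o\bigl((\ln R)^{-2}\bigr)$. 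So $\Gamma_1$ is not differentiable at $a=0$ as a map $G_m\to G_m$. For $\Gamma_2,\Gamma_4$ the situation is worse: the $a$-derivative contains $\partial_s h$, so even away from $a=0$ the formal derivative does not land in $G_m$ when $h\in G_m$. You noticed this (``so we must start at level $m+l$ with $l\ge 1$''), but Theorem~\ref{ABC-x} does not let you skip $l=0$.

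The repair is to use Theorem~\ref{x1} (Proposition~2.1 of \cite{HWZ8.7}) instead: one only needs $C^1$ from $G_m$ to $G_{m-1}$ together with the extension condition. Dropping one level buys exactly the weight gain $e^{(\delta_m-\delta_{m-1})|s|}$ on the relevant window $|s|\approx R/2$, and now your estimate goes through: the tail integral, respectively the $R'$ factor, is beaten by $e^{-(\delta_m-\delta_{m-1})R/2}$. This yields $\text{sc}^1$; higher sc-regularity follows by iterating (Proposition~\ref{sc_up} plus the same estimate applied to the tangent map), which is how the argument in \cite{HWZ8.7} is organized. Your weight-bookkeeping for $\Gamma_2,\Gamma_4$ is otherwise correct and is the substantive part of the proof; only the framing via Theorem~\ref{ABC-x} needs to be replaced.
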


We shall study  the following five mappings:
\begin{itemize}
\item[{\bf M1.}]  The map 
$$H^{3,\delta_0}_c(\R^+\times S^1,\R^{N})\to  \R^{N}, \quad \xi^+\mapsto c$$
 which associates with  $\xi^+$  its  asymptotic constant $c$.
 \item[{\bf M2.}]  The map 
 $$B_\frac{1}{2}\times H^{3,\delta_0}(\R^+\times S^1,\R^{N})\to  \R^{N},\quad (a,r^+)\mapsto  [r^+]_R.$$ 
\item[{\bf M3.}]   The map 
$$B_\frac{1}{2}\times H^{3,\delta_0}(\R^+\times S^1,\R^{N})\to  H^{3,\delta_0}(\R^+\times S^1,\R^{N}),  \quad 
(a,r^+)\mapsto  \frac{\beta_a}{\gamma_a}(\cdot )[r^+]_{R}.$$
\item[{\bf M4.}]  The map $B_\frac{1}{2}\oplus H^{3,\delta_0}(\R^+\times S^1,\R^{N})\to  H^{3,\delta_0}(\R^+\times S^1,\R^{N}),$
$$(a,r^+)\mapsto \frac{\beta_a^2}{\gamma_a}\cdot r^+.$$
 \item[{\bf M5.}]  The map $B_\frac{1}{2}\oplus H^{3,\delta_0}(\R^-\times S^1,\R^{N})\to  H^{3,\delta_0}(\R^+\times S^1,\R^{N})$,
$$ (a,r^-)\mapsto   \frac{\beta_a(1-\beta_a)}{\gamma_a} r^-(\cdot -R,\cdot -\vartheta).$$
\end{itemize}
 In view of the formula for the projection map $\pi_a$  the  sc-smoothness of the map $(a,(\xi^+,\xi^-))\mapsto  \pi_a(\eta^+,\eta^-)$  in Theorem 
\jwf{[broken reference]}
 is a consequence of the following prop.
 \begin{prop}\label{klopx}
 The maps {\bf M1}-{\bf M5} listed above (and suitably defined at the parameter value $a=0$) are sc-smooth in a neighborhood of $a=0$.
 \end{prop}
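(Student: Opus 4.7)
My plan is to reduce each of the five assertions in Proposition~\ref{klopx} to the Fundamental Lemma (Lemma~\ref{FUNDAMENTAL}) plus a few elementary observations about linear sc-operators and the averaging map~\textbf{M2}.

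For \textbf{M1} there is nothing to prove beyond noting that the map $\xi^+ \mapsto c$ is a bounded linear projection at every sc-level: by definition of $H^{3,\delta_0}_c$, each $\xi^+$ decomposes uniquely as $c + r^+$ with $r^+ \in H^{3,\delta_0}$, and this decomposition is preserved by the embedding $H^{3+m,\delta_m}_c \hookrightarrow H^{3,\delta_0}_c$. Hence \textbf{M1} is a linear sc-operator and in particular sc-smooth. For \textbf{M2} I would first reduce the $H^{3,\delta_0}_c$-setting to the $H^{3,\delta_0}$-setting (the constant part contributes to $[r^+]_R$ only via \textbf{M1}) and then invoke the result cited as Lemma~2.19 of \cite{HWZ8.7}; alternatively, one writes $[r^+]_R = \int_{S^1} r^+(R/2,t)\,dt$ and factors this through the sc-smooth trace at a shifted variable, which is precisely covered by the Fundamental Lemma after a standard bounded extension $H^{3,\delta_m}({\mathbb R}^+\times S^1) \to H^{3,\delta_m}({\mathbb R}\times S^1)$ together with the sc-continuity of the $S^1$-integration.

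The maps \textbf{M3}, \textbf{M4}, \textbf{M5} are then recognized as compositions of \textbf{M1}, \textbf{M2} with the specific instances of $\Gamma_1,\dots,\Gamma_4$. Concretely, choose once and for all a smooth function $\beta:{\mathbb R}\to[0,1]$ with $\beta(s)=1$ for $s\leq -1$, $\beta(s)+\beta(-s)=1$, and let $\gamma:=\gamma_a$ denote the associated normalizing denominator so that $\beta_a/\gamma_a$, $\beta_a^2/\gamma_a$, and $\beta_a(1-\beta_a)/\gamma_a$ are all well-defined smooth functions of the single variable $s-R/2$. Setting
\begin{align*}
    f_3(s) &:= (\beta/\gamma)(s), & f_4(s) &:= (\beta^2/\gamma)(s), & g_5(s) &:= (\beta(1-\beta)/\gamma)(s),
\end{align*}
one checks directly that $f_3$ and $f_4$ are constant outside a compact set with $f_j(+\infty)=0$, while $g_5$ is compactly supported. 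Thus $f_3,f_4$ satisfy the hypotheses of Lemma~\ref{FUNDAMENTAL}(1) (the map $\Gamma_1$), and $g_5$ satisfies those of Lemma~\ref{FUNDAMENTAL}(2) (the map $\Gamma_2$).

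With these choices in hand, \textbf{M4} is literally $\Gamma_1(a, r^+)$ with $f=f_4$, and \textbf{M5} is literally $\Gamma_2(a, r^-)$ with $f=g_5$, so both are sc-smooth by the Fundamental Lemma. For \textbf{M3} I would factor the map as
\begin{equation*}
    (a, r^+) \;\longmapsto\; \bigl(a,\, [r^+]_R \cdot \mathbf{1}\bigr) \;\longmapsto\; \Gamma_1\bigl(a,\,[r^+]_R \cdot \mathbf{1}\bigr),
\end{equation*}
where $\mathbf{1}$ denotes the constant function in any ambient space containing the constants (e.g.\ viewing the constant $[r^+]_R$ as an element of $H^{3,\delta_0}_c$ via \textbf{M1}). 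The first arrow is sc-smooth by \textbf{M2} together with the sc-smoothness of scalar multiplication of a fixed smooth function by an element of $\mathbb{R}^N$; the second arrow is sc-smooth by Lemma~\ref{FUNDAMENTAL}(1) applied to $f_3$. The chain rule finishes the argument. The only genuine technical point — and the one I expect to require the most care — is verifying that the denominator $\gamma_a$ is bounded away from zero uniformly in $a$ on the relevant support window, so that the quotient cutoffs $f_3, f_4, g_5$ are indeed smooth functions with the stated asymptotic/support behavior; this is a routine but essential check about the specific choice of $\beta$ and $\gamma$ made at the outset.
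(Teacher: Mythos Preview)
Your treatment of \textbf{M1}, \textbf{M2}, \textbf{M4}, and \textbf{M5} is correct and in line with the paper, which gives no self-contained argument but simply refers to Lemmas~2.19--2.22 of \cite{HWZ8.7}; your identification of \textbf{M4} and \textbf{M5} as instances of $\Gamma_1$ and $\Gamma_2$ (after the routine half-line/full-line extension) is exactly the intended reduction.

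The argument for \textbf{M3}, however, has a real gap. The factorization through $\Gamma_1\bigl(a,\,[r^+]_R\cdot\mathbf{1}\bigr)$ fails for two reasons. First, the constant function $\mathbf{1}$ does not belong to any of the sc-spaces $L,F,E$ on which Lemma~\ref{FUNDAMENTAL} is stated, and there is no sc-smooth extension of $\Gamma_1$ to constants: since $f_3(-\infty)\neq 0$ one has
\[
\|f_3(\,\cdot\,-R/2)\|_{H^{3,\delta_0}(\mathbb{R}^+\times S^1)}\;\sim\; e^{\delta_0 R/2}\;\longrightarrow\;\infty
\]
as $a\to 0$, so the map $a\mapsto f_3(\,\cdot-R/2)$ acting on a fixed nonzero constant is not even bounded into the target, let alone sc-smooth. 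Second --- and this is the substantive point --- what makes \textbf{M3} sc-smooth is precisely that this blow-up is compensated by the decay $|[r^+]_R|\lesssim e^{-\delta_0 R/2}\|r^+\|_{H^{3,\delta_0}}$; the two factors cannot be separated without destroying this cancellation. The issue you single out as ``the only genuine technical point'' --- the lower bound on $\gamma_a$ --- is in fact harmless, since $\gamma=\beta^2+(1-\beta)^2\geq 1/2$ everywhere. The actual content of \textbf{M3} is the compensation argument, which is exactly Lemma~2.20 of \cite{HWZ8.7} and must be carried out directly on the product $f_3(\,\cdot-R/2)\,[r^+]_R$ rather than via your proposed splitting.
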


\jwf{\subsection{Some Corollaries}}

\section{Maps, Nodes, and  Abstract Periodic Orbits}\label{Appendix-periodicx}
For the M-polyfold constructions we need certain classes of maps.
Two situations of interest are maps defined on a nodal disk pair which map the nodal points to the same nodal value.
Here the behavior near the nodal points is important. Note that in this case the map will have a continuous extension of the nodes.
A more complicated situation is that we have a map defined on a punctured nodal disk pair, where the map approaches a periodic orbit or  a cylinder over a periodic
orbit when we come close to to the puncture.
\subsection{Some Classes of Maps and Nodes}\label{SSSEXC16}
Given a Riemann surface $S$,  the notion of a map $u:S\rightarrow {\mathbb R}^N$  locally belonging to the Sobolev class $H^m$
is well defined. We shall say in this case that $u\in H^m_{loc}$. Here $H^m$ refers to maps which have partial derivatives up to order $m$ 
belonging to $L^2$. Of course, we can make the same definition for a map $u:S\rightarrow Q$, where $Q$ is a smooth manifold, provided
the map is at least continuous, i.e. $m\geq 2$ by the Sobolev embedding theorem.

For the M-polyfold constructions we need certain classes of maps defined on punctured Riemann surfaces.
Let $S$ be a Riemann surface and $x\in S$ an interior point, i.e. not a boundary point. We denote by ${\mathbb D}\subset {\mathbb C}$
the closed unit disk. After fixing a compact disk $D_x\subset S$ with smooth boundary around $x$ we can pick
a biholomorphic map $h:({\mathbb D},0)\rightarrow (D_x,x)$ and define the associated positive holomorphic polar coordinates by
$$
[0,\infty)\times S^1\rightarrow S:\sigma^+(s,t)=h(e^{-2\pi(s+it)}).
$$
We shall call $\sigma^+$ positive holomorphic polar coordinates around $x$. We say a map $u:S\rightarrow {\mathbb R}^N$
is of {\bf up to constant of class $(m,\varepsilon)$ on $D_x$} provided the map 
$$
[0,\infty)\times S^1\rightarrow {\mathbb R}^N: v(s,t):=u\circ \sigma^+(s,t)
$$
belongs to $H^m_{loc}$ and there is a constant $c\in {\mathbb R}^N$ so that  partial derivatives up to order $m$ of $v-c$, weighted by $e^{\varepsilon\cdot s}$ belong to
$L^2([0,\infty)\times S^1,{\mathbb R}^N$.  Note that in this case the behavior of $u$ near the point $x$ is different than with respect to other points.
The constant $c\in {\mathbb R}^N$ is called the {\bf nodal value.}

Given a Riemann surface $S$, possibly with smooth boundary, and a finite number of punctures in $S\setminus\partial S$ 
we say the map is of {\bf class $(m,\varepsilon)$} if it is of class $H^m_{loc}$ away from the punctures and of class $(m,\varepsilon)$ on suitable disks 
around the punctures. This definition does not depend on the choice of disks $D_x$.   In the case that $m\geq 2$ we can replace the target ${\mathbb R}^N$ by a smooth manifold $Q$.

The Sobolev spaces which are important for the class of maps we described above are
\begin{itemize}
\item[(1)] $H^m({\mathbb D},{\mathbb R}^N)$  arising when we talk about $H^m_{loc}$,
\end{itemize}
with norm defined by 
$$
|u|_{H^m}^2 =\sum_{|\alpha|\leq m} \int_{\mathbb D} |D^\alpha u(s,t)|^2 ds dt.
$$
\begin{itemize}
\item[(2)] $H^{m,\varepsilon}({\mathbb R}^\pm\times S^1,{\mathbb R}^N)$ arising when we talk about being of class $(m,\varepsilon)$,
\end{itemize}
with norm defined by
$$
|u|_{H^{m,\varepsilon}}^2=\sum_{|\alpha|\leq m} \int_{{\mathbb R}^\pm\times S^1} |D^\alpha u(s,t)|^2\cdot e^{2\varepsilon |s|} ds dt.
$$
\begin{itemize}
\item[(3)] $H^{m,\varepsilon}_c({\mathbb R}^\pm\times S^1,{\mathbb R}^N)$ arising when we talk about class $(m,\varepsilon)$ up to constant,
\end{itemize}
with norm defined by 
$$
|u|_{H^{m,\varepsilon}_c}^2=|c|^2+ \sum_{|\alpha|\leq m} \int_{{\mathbb R}^\pm\times S^1} |D^\alpha r(s,t)|^2\cdot e^{2\varepsilon |s|} ds dt,
$$
where we decompose $u=c+r$ where $r$ has vanishing asymptotic constant. 

By abuse of notation we very often say that a map is of class $(m,\varepsilon)$ even in  a case,where the map is of class $(m,\varepsilon)$ up to constant, if
this is obvious from the context and no confusion is possible.  Moreover given a Riemann surface we can use charts to define the appropriate mapping spaces
for maps defined on the Riemann surface or a punctured version.

\subsection{Convergence to a Periodic Orbit}\label{APP_C}
Given a positive real number $\tau\geq 0$ and an integer $m\geq 0$ we consider the Hilbert space $H^{m,\varepsilon}({\mathbb R}^\pm \times S^1,{\mathbb R}^N)$. It consists of all maps $u:{\mathbb R}^\pm\times S^1\rightarrow {\mathbb  R}^N$ such that the distributional partial derivatives $D^\alpha u$
up to order $m$ belong to $L^2_{loc}$ so that the norm $\norm{u}_{H^{m,\varepsilon}({\mathbb R}^{\pm}\times S^1)}$ is finite, where 
$$
\norm{u}_{H^{m,\varepsilon}({\mathbb R}^{\pm}\times S^1)}^2 =\sum_{|\alpha|\leq m } \int_{{\mathbb R}^\pm\times S^1} |D^\alpha u(s,t)|^2 e^{2\varepsilon |s|} ds dt <\infty.
$$
We also define the spaces $H^{m,\varepsilon}_c({\mathbb R}^\pm\times S^1,{\mathbb R}^n)$, where an element has the form $u=r+d$, where $d\in {\mathbb R}^N$
and $r\in H^{m,\varepsilon}_0({\mathbb R}^{\pm}\times S^1, {\mathbb R}^N)$.  The norm in this case is defined by 
$$
\norm{r+d}_{H^{m,\varepsilon}_c({\mathbb R}^{\pm}\times S^1)}^2 =|d|^2 +\sum_{|\alpha|\leq m } \int_{{\mathbb R}^\pm\times S^1} |D^\alpha r(s,t)|^2 e^{2\varepsilon |s|} ds dt.
$$
Often, when the domain is clear we shall just write $H^{m,\varepsilon}$ or $H^{m,\varepsilon}_c$.  Instead of ${\mathbb R}^\pm \times S^1$
we shall also consider domains of the form $[R_0,\infty)\times S^1$ for large $R_0$ or $(-\infty,-R_0]\times S^1$.

Next we consider periodic orbits which will play an important role. We start with a definition.
\begin{definition}\label{APP_E1b} 
Let $Q$ be a smooth (connected) manifold without boundary. A {\bf periodic orbit} in $Q$ is a tuple $\boldsymbol{\gamma}=([\gamma],T,k)$
where $\gamma:S^1\rightarrow Q$ is a smooth embedding, $T>0$ and $k\geq 1$ is an integer. Moreover, $[\gamma]$
is the collection of all parameterizations $t\rightarrow \gamma(t+c)$, where $c\in S^1$.  We call $T$ the {\bf period}, $k$ the {\bf covering number}, and 
$T/k$ the {\bf minimal period}.
A {\bf weighted periodic orbit}
is a pair $\boldsymbol{\bar{\gamma}}=(\boldsymbol{\gamma},\delta)$, where $\boldsymbol{\gamma}$ is a periodic orbit
and $\delta$ is a weight sequence, i.e. $\delta={(\delta_i)}_{i=0}^\infty$, $0<\delta_0<...<\delta_i<\delta_{i+1}<..$
\qed
\end{definition}
The model case is  $Q=S^1\times {\mathbb R}^{N-1}$ and the periodic orbits of interest are the   $\boldsymbol{\gamma^0}=([\gamma^0],T,k)$, with $\gamma^0(t)=(t,0)\in S^1\times {\mathbb R}^{N-1}$.  If $\boldsymbol{\gamma}$ is a periodic orbit in ${\mathbb R}^N$ we find an open neighborhood
$U$ of $\gamma(S^1)$ and a diffeomorphism $\Phi:U\rightarrow S^1\times {\mathbb R}^{N-1}$ such that $\Phi\circ\gamma(t)=\gamma^0(t)$ 
for a suitable $\gamma\in [\gamma]$.  

Next we shall give two equivalent definitions for the convergence of a map to a periodic orbit.
First we restrict ourselves to the case, where the domain is ${\mathbb R}^+\times S^1$. The first definition is the following.
\begin{definition}\label{APP_E1}
Consider a map $\wt{w}:{\mathbb R}^\pm\times S^1\rightarrow {\mathbb R}\times {\mathbb R}^{N}$ and a periodic orbit
$\boldsymbol{\gamma}$ in ${\mathbb R}^N$.  
We say $\wt{w}$ is of class $H^{m,\tau}$ with respect to $\boldsymbol{\gamma}$, or $H^{m,\tau}_{\boldsymbol{\gamma}}$ for short,
where $\tau\in (0,\infty)$ and $m\geq 2$ is an integer, provided
the following holds, where we use a $\Phi:U\rightarrow S^1\times {\mathbb R}^{N-1}$ as previously introduced.
\begin{itemize}
\item[(1)]  The map $\wt{w}$ is of class $H^{m}_{loc}$ and with $\wt{w}=(a,w)$ it holds that $w(s,t)\in U$ for all $(s,t)$ provided $|s|$ is large.
\item[(2)]  There exists a  constant $c\in {\mathbb R}$ such that the map $(s,t)\rightarrow Ts +c -a(s,t)$ belongs to $H^{m,\tau}({\mathbb R}^\pm\times S^1,{\mathbb R})$.
\end{itemize}
With $v(s,t)=\Phi\circ u(s,t)$ for $s\geq R_0$ and $\Phi$ is as previously introduced the following holds, where we write $v=(\theta,z)\in S^1\times {\mathbb R}^{N-1}$.
\begin{itemize}
\item[(3)] There
 exists  $d\in S^1$ and $r\in H^{m,\tau}({\mathbb R}^\pm\times S^1,{\mathbb R})$ such that $\theta(s,t) =kt +d +r(s,t)$, where $\theta$ is the $S^1$-factor.
\item[(4)] The ${\mathbb R}^{N-1}$-factor $z$ belongs to $H^{m,\tau}({\mathbb R}^\pm\times S^1,{\mathbb R}^{N-1})$.
\end{itemize}
\qed
\end{definition}
Here is the second definition.
\begin{definition}\label{APP_E2}
Consider a map $\wt{w}:{\mathbb R}^\pm\times S^1\rightarrow {\mathbb R}\times {\mathbb R}^{N}$ and a periodic orbit
$\boldsymbol{\gamma}$ in ${\mathbb R}^N$.  We say that $\wt{w}$ is of class $H^{m,\tau}_{\boldsymbol{\gamma}}$
provided for a suitable choice $\gamma\in [\gamma]$ and $c\in {\mathbb R}$ the map defined by 
$$
\wt{v}(s,t) =\wt{w}(s,t) - (Ts+c,\gamma(kt))
$$
belongs to $H^{m,\tau}({\mathbb R}^\pm\times S^1,{\mathbb R}^N)$.
\qed
\end{definition}
\begin{prop}
The Definition \ref{APP_E1} and the Definition \ref{APP_E2} are equivalent.
\end{prop}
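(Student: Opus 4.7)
The plan is to establish both implications of the equivalence by unpacking both definitions in a tubular neighborhood of $\gamma(S^1)$ and then invoking the Nemitsky-type smoothness results for composition with smooth maps, in the spirit of Appendix \ref{APP12.1} and the classical results of Eliasson \cite{El}. The core observation is that both definitions say exactly the same thing: $\wt{w}$ approaches the cylinder over the periodic orbit at the exponential rate $\tau$; the only difference is whether this is phrased intrinsically (Definition \ref{APP_E2}) or after pushing everything into the standard coordinates $S^1\times\mathbb{R}^{N-1}$ via a trivializing diffeomorphism (Definition \ref{APP_E1}).

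First I would prove Definition \ref{APP_E2} $\Rightarrow$ Definition \ref{APP_E1}. Given $\wt{w}(s,t)=(Ts+c,\gamma(kt))+\wt{v}(s,t)$ with $\wt{v}\in H^{m,\tau}({\mathbb R}^\pm\times S^1,{\mathbb R}\times{\mathbb R}^N)$, condition (2) is immediate from the $\mathbb{R}$-component of $\wt{v}$, and the Sobolev embedding (for $m\ge 2$) together with the decay of $\wt{v}$ shows that $w(s,t)\in U$ for $|s|\ge R_0$, giving condition (1). Pick $\Phi:U\to S^1\times{\mathbb R}^{N-1}$ and $\gamma$ with $\Phi\circ\gamma(t)=(t,0)$, and write $w(s,t)=\gamma(kt)+v^{(2)}(s,t)$ with $v^{(2)}\in H^{m,\tau}({\mathbb R}^\pm\times S^1,{\mathbb R}^N)$. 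Then
\begin{equation*}
\Phi(w(s,t))-\Phi(\gamma(kt))=F(kt,v^{(2)}(s,t)),
\end{equation*}
where $F:S^1\times{\mathbb R}^N\to{\mathbb R}\times{\mathbb R}^{N-1}$ (after working in a sufficiently small tube, allowing one to lift the $S^1$-factor of $\Phi$ to $\mathbb{R}$) is a smooth map satisfying $F(\theta,0)=0$. The Nemitsky lemma then gives $F(kt,v^{(2)}(s,t))\in H^{m,\tau}({\mathbb R}^\pm\times S^1,{\mathbb R}^N)$. Reading off the two components produces the decompositions $\theta(s,t)=kt+r(s,t)$ and $z(s,t)\in H^{m,\tau}$ of (3) and (4), with $d=0$ in this choice of $\gamma$. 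For an arbitrary allowed $\gamma'\in[\gamma]$ any nonzero $d\in S^1$ can be absorbed by reparametrization.

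For the converse Definition \ref{APP_E1} $\Rightarrow$ Definition \ref{APP_E2}, given $\Phi,\gamma$ with $\Phi\circ\gamma(t)=(t,0)$ together with the decompositions (3)--(4), define $\gamma_d\in[\gamma]$ by $\gamma_d(t):=\gamma(t+d)$. Then
\begin{equation*}
w(s,t)-\gamma_d(kt)=\Phi^{-1}(kt+d+r(s,t),z(s,t))-\Phi^{-1}(kt+d,0),
\end{equation*}
which is the value of a smooth map $G(\theta,\cdot,\cdot):{\mathbb R}\times{\mathbb R}^{N-1}\to{\mathbb R}^N$ (smoothly parametrized by $\theta=kt+d$) that vanishes at the origin, applied to $(r(s,t),z(s,t))\in H^{m,\tau}$. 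Again invoking the Nemitsky lemma, $w(\cdot)-\gamma_d(k\cdot)\in H^{m,\tau}$; combined with condition (2) on the $\mathbb{R}$-factor, this produces $\wt{w}-(Ts+c,\gamma_d(kt))\in H^{m,\tau}$, which is Definition \ref{APP_E2} with the representative $\gamma_d$.

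The main obstacle will be bookkeeping the $S^1$-valued first component of $\Phi\circ w$: because $p_1\circ\Phi\circ w$ has degree $k$ around each loop $\{s\}\times S^1$, one needs a covering-space argument to pass between the $S^1$-valued function $\theta(s,t)$ and its remainder $r(s,t)$ as an $\mathbb{R}$-valued element of $H^{m,\tau}$. Once one works on the universal cover of a neighborhood of $\gamma(S^1)$ in $S^1\times\mathbb{R}^{N-1}$, the $S^1$-factor lifts canonically, all the maps $F,G$ become genuinely $\mathbb{R}$-valued, and the remaining assertions reduce to the classical composition smoothness for Sobolev maps with exponential weights as recorded in Appendix \ref{APP12.1}.
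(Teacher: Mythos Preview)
Your proof is correct and is precisely the argument the authors have in mind; the paper itself does not give a proof, stating only that ``the easy proof is left to the reader.'' Your reduction to the Nemitsky-type composition lemma from Appendix~\ref{APP12} (smooth maps $\Psi(t,\cdot)$ with $\Psi(t,0)=0$ preserve $H^{m,\tau}$) and your handling of the $S^1$-valued component via the universal cover are exactly the expected ingredients.
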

\begin{proof}
The easy proof is left to the reader.
\end{proof}
\begin{prop}
Assume that $\boldsymbol{\gamma}=([\gamma],T,k)$ is a periodic orbit in ${\mathbb R}^N$ and $f:{\mathbb R}^N\rightarrow {\mathbb R}^M$ a smooth map
such that $\gamma':=f\circ\gamma:S^1\rightarrow {\mathbb R}^M$ is a smooth embedding. Consider the periodic orbit $\boldsymbol{\gamma}'=([\gamma'],T,k)$
in ${\mathbb R}^M$. If $\wt{w}:{\mathbb R}^\pm\times S^1\rightarrow {\mathbb R}\times {\mathbb R}^N$ is of class $H^{m,\tau}_{\boldsymbol{\gamma}}$ then
$(Id_{\mathbb R}\times f)\circ \wt{u}$ is of class $H^{m,\tau}_{\boldsymbol{\gamma'}}$.
\end{prop}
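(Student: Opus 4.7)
The plan is to use Definition \ref{APP_E2} in both directions: unwrap the hypothesis to produce an $H^{m,\tau}$-error term, then repackage the conclusion in the same form, with the error term passing through $f$ via a Nemytskii-type argument. The crucial observation is that $\gamma' = f\circ \gamma$, so the choice of representative and constant for $\boldsymbol{\gamma}'$ is forced by the corresponding choice for $\boldsymbol{\gamma}$.

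Concretely, assume $\gamma\in[\gamma]$ and $c\in\mathbb{R}$ are such that, writing $\wt{w}(s,t)=(a(s,t),w(s,t))$, the map
$$\wt{v}(s,t):=\wt{w}(s,t)-(Ts+c,\gamma(kt))=(a(s,t)-Ts-c,\;w(s,t)-\gamma(kt))$$
lies in $H^{m,\tau}(\mathbb{R}^\pm\times S^1,\mathbb{R}^N)$. I would take the same constant $c'=c$ and the representative $\gamma''=f\circ\gamma\in[\gamma']$. Since
$$(\mathrm{Id}_{\mathbb{R}}\times f)\circ \wt{w}-(Ts+c,f(\gamma(kt)))=\bigl(a(s,t)-Ts-c,\;f(w(s,t))-f(\gamma(kt))\bigr),$$
the first coordinate already belongs to $H^{m,\tau}(\mathbb{R}^\pm\times S^1,\mathbb{R})$, so the problem reduces to showing that the second coordinate lies in $H^{m,\tau}(\mathbb{R}^\pm\times S^1,\mathbb{R}^M)$.

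For the second coordinate I would introduce the smooth map
$$\Phi:S^1\times\mathbb{R}^N\longrightarrow\mathbb{R}^M,\qquad \Phi(t,\xi):=f\bigl(\gamma(kt)+\xi\bigr)-f(\gamma(kt)),$$
which satisfies $\Phi(t,0)=0$. Setting $h(s,t):=w(s,t)-\gamma(kt)\in H^{m,\tau}(\mathbb{R}^\pm\times S^1,\mathbb{R}^N)$, we have
$$f(w(s,t))-f(\gamma(kt))=\Phi\bigl(t,h(s,t)\bigr).$$
Thus it suffices to show that the Nemytskii-type map $h\mapsto\Phi(\cdot,h(\cdot))$ sends $H^{m,\tau}$ to $H^{m,\tau}$. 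This is a classical fact: since $\Phi$ is smooth with $\Phi(t,0)=0$, one writes $\Phi(t,\xi)=\bigl(\int_0^1 D_2\Phi(t,\tau\xi)\,d\tau\bigr)\xi$ and uses that all partial derivatives of $\Phi$ (in $t$ and $\xi$) are bounded on compact sets together with the multiplicative structure of weighted Sobolev spaces on $\mathbb{R}^\pm\times S^1$. This is the content of the Appendix \ref{APP12.1} lemma (and is proved along the lines of \cite{El}).

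The only real obstacle is verifying the Nemytskii step itself, which requires chasing chain-rule expansions of $D^\alpha\Phi(t,h(s,t))$ for multi-indices $|\alpha|\le m$ and estimating the resulting sums of products in $L^2$ against the exponential weight $e^{2\tau|s|}$. Because the exponential weight enters only through $h$ and its derivatives (the $\gamma(kt)$ part is $s$-independent and bounded together with all its $t$-derivatives), each term in the chain-rule expansion is a bounded multiple of products of derivatives of $h$, and the standard Sobolev multiplication inequalities for $H^{m,\tau}(\mathbb{R}^\pm\times S^1)$ (for $m\ge 2$) close the estimate. Once this is in hand, $\Phi(\cdot,h(\cdot))\in H^{m,\tau}$ and the proposition follows directly from Definition \ref{APP_E2} applied to $\boldsymbol{\gamma}'$.
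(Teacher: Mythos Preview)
Your argument is correct and is exactly the intended one: the paper's own ``proof'' reads in full ``The proof is left to the reader,'' so you have supplied precisely the details the authors had in mind. In particular, your reduction via Definition~\ref{APP_E2} to the Nemytskii map $\Phi(t,\xi)=f(\gamma(kt)+\xi)-f(\gamma(kt))$ with $\Phi(t,0)=0$ lands directly on the lemma in Appendix~\ref{APP12.1}, which handles the weighted Sobolev mapping property for $m\geq 2$.
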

\begin{proof}
The proof is left to the reader.
\end{proof}
In view of the previous discussion we can  make the following definition, which will be the relevant definition for us in this paper.
\begin{definition}\label{DEF-E1}
Let $Q$ be a smooth (connected) manifold without boundary and $\boldsymbol{\gamma}=([\gamma],T,k)$ a periodic orbit in $Q$.
Assume that $(D_x,x)$ is a compact disk-like Riemann surface with smooth boundary and interior point $x$. 
\begin{itemize}
\item[(1)] A map $\wt{u}:D_x\setminus\{x\}\rightarrow {\mathbb R}\times Q$ is said to be of class $H^{m,\tau}_{\boldsymbol{\gamma}+}$ 
provided for positive holomorphic polar coordinates $\sigma_{\wh{x}}^+:{\mathbb R}^+\times S^1 \rightarrow D_x\setminus\{x\}$ and a proper embedding
$\phi:Q\rightarrow {\mathbb R}^N$ with $\boldsymbol{\gamma}'=([\phi\circ\gamma],T,k)$ the map $\wt{w}=(Id_{\mathbb R}\times \phi)\circ\wt{u}\circ \sigma^+_{\wh{x}}$ is of class $H^{m,\delta}_{\boldsymbol{\gamma'}}({\mathbb R}^+\times S^1)$.
\item[(2)] A map $\wt{u}:D_x\setminus\{x\}\rightarrow {\mathbb R}\times Q$ is said to be of class $H^{m,\tau}_{\boldsymbol{\gamma}-}$ 
provided for negative holomorphic polar coordinates $\sigma_{\wh{x}}^+:{\mathbb R}^-\times S^1 \rightarrow D_x\setminus\{x\}$ and a proper embedding
$\phi:Q\rightarrow {\mathbb R}^N$ with $\boldsymbol{\gamma}'=([\phi\circ\gamma],T,k)$ the map $\wt{w}=(Id_{\mathbb R}\times \phi)\circ\wt{u}\circ \sigma^+_{\wh{x}}$ is of class $H^{m,\delta}_{\boldsymbol{\gamma'}}({\mathbb R}^-\times S^1)$.
\end{itemize}
\qed
\end{definition}
The  notion is well-defined independent of the choice of ${\mathbb R}^N$ and  the proper embedding.
Next we define a matching condition.
\begin{definition}\label{DEF-E2}
Assume that $(D_x\sqcup D_y,(x,y))$ is an ordered disk pair and $\wt{w}^x$ is defined on the punctured $D_x$ positively asymptotic to $\boldsymbol{\gamma}$
of class $(m,\tau)$ and $\wt{w}^y$ defined on the punctured $D_y$, negatively asymptotic to $\boldsymbol{\gamma}$ of class $(m,\tau)$. 
We say that $\wt{w}^x$ and $\wt{w}^y$ are  {\bf $[\wh{x},\wh{y}]$-directionally matching} provided with $\wt{w}^x\circ\sigma_{\wh{x}}^+$ and $\wt{w}^y\circ\sigma_{\wh{y}}^-$ satisfy
$$
\lim_{s\rightarrow\infty}w^x\circ\sigma_{\wh{x}}^+(s,t) =
\lim_{s'\rightarrow-\infty}w^y\circ\sigma_{\wh{y}}^-(s',t) 
$$
for all $t\in S^1$.
\qed
\end{definition}

\section{Stable Hamiltonian Structures and  Periodic Orbits}\label{APP11}
We recall the notion of a stable Hamiltonian structure and derive useful results which are needed in the book and
quite well-known.
\subsection{Stable Hamiltonian Structures}

One of the important objects is that of a stable Hamiltonian structure. A detailed study of these structures can be found in 
\cite{CV1}.
\subsubsection{Basic Definition}
We begin with the definitions of a stable Hamiltonian structure.
\begin{definition} \label{28.1QQ}
Let $Q$ be a closed odd-dimensional manifold of dimension $\dim(Q)=2n-1$. A stable Hamiltonian structure on $Q$ is given by a pair
$(\lambda,\omega)$, where $\omega$ is a closed two-form of maximal rank on $Q$ and $\lambda$ a one-form such that
\begin{itemize}
\item[(1)] $\lambda\wedge \omega^{n-1}$ is a volume-form.
\item[(2)] The vector field $R$, called the {\bf Reeb vector field},  defined by
$$
i_R\lambda =1\ \ \hbox{and}\ \  {i_R} \omega=0
$$
 satisfies
$$
L_R\lambda =0.
$$
\end{itemize}
\qed
\end{definition}
The latter condition implies by Cartan's formula
$$
0=L_R\lambda= di_R\lambda +i_Rd\lambda = i_Rd\lambda.
$$
Since $R$ spans the kernel of $\omega$ this implies
\begin{eqnarray}\label{EQP28}
\text{ker}(\omega)\subset \text{ker}(d\lambda).
\end{eqnarray}
 The fact that $i_R\omega=0$ implies again by the Cartan formula that $L_R\omega=0$.
\begin{remark}
The standard example for a stable Hamiltonian structure is $(\lambda,d\lambda)$, where $\lambda$ is a contact form on $Q$.
\qed
\end{remark}
Stable Hamiltonian structures are an interesting object to study, see \cite{CV1}.  It is important to have such structures when studying pseudoholomorphic curves,
see \cite{EGH}. These structures allow to control certain area-based energy functionals, which is important in obtaining a priori estimates.

Associated to a stable Hamiltonian structure $(\lambda,\omega)$ on $Q$
we have the distribution $\xi=\ker(\lambda)$ and the natural splitting
of the tangent bundle
$$
TQ={\mathbb R}R\oplus \xi.
$$
We observe that the line bundle ${\mathbb R}R$ has a distinguished section $R$ and $\xi$ is in a natural way a symplectic vector bundle with symplectic structure being  $\omega|\xi\oplus \xi$. Let us observe that the flow $\phi_t$ associated to $R$ maps a vector in $\xi_x$ to a vector in $\xi_{\phi_t(x)}$
\begin{eqnarray}
T\phi_t:\xi\rightarrow \xi_{\phi_t(x)}.
\end{eqnarray}
\subsubsection{Symplectic Forms Associated to $(Q,\lambda,\omega)$}
We discuss stable Hamiltonian manifolds $(Q,\lambda,\omega)$ in somewhat more detail.
Denote by $p:{\mathbb R}\times Q \rightarrow Q $ the obvious projection and given a smooth map $\phi:{\mathbb R}\rightarrow {\mathbb R}$ 
we denote by $\wh{\phi}$ the map ${\mathbb R}\times Q\rightarrow {\mathbb R}$ defined by $\wh{\phi}(s,q)=\phi(s)$.
Given $(Q,\lambda,\omega)$ and $\phi$ we  can consider the two-form
$\Omega_{\phi}$  on ${\mathbb R}\times Q$ defined by
$$
\Omega_{\phi} = p^\ast \omega + d(\wh{\phi}\cdot p^\ast\lambda)
$$
which we shall write sloppily as $\omega+d(\phi\lambda)$. 
We observe that $\Omega_{\phi}(s,q)= \omega_q +\phi(s) \cdot d\lambda_q + \phi'(s) ds\wedge \lambda$. 
If $|\phi(s)|$ is small enough we see that $\omega_q+\phi(s) d\lambda_q$ as a two-form on $Q$ is maximally non-degenerate
and $\text{ker}(\omega+\phi(s) d\lambda)=\text{ker}(\omega)$ in view of (\ref{EQP28}).
The maximal non-degeneracy implies that $\omega+\phi(s) d\lambda$ restricted to 
$\{0\}\times\xi_q \subset T_{(s,q)}({\mathbb R}\times Q)$ is non-degenerate, i.e. a symplectic form.
 If $\phi$ satisfies this smallness condition 
and in addition $\phi'(s)>0$ for all $s$ then $\Omega_{\phi}$ is a symplectic form.
Hence we have obtain.
\begin{lem}\label{LEMMA-epsilon}
Given a smooth manifold $Q$ equipped with a stable Hamiltonian structure $(Q,\lambda,\omega)$ 
there exists $\varepsilon>0$ such that for every smooth $\phi:{\mathbb R}\rightarrow [-\varepsilon,\varepsilon]$ 
with $\phi'(s)>0$ for all $s\in {\mathbb R}$ the two-form $\Omega_{\phi}$ is symplectic.
\qed
\end{lem}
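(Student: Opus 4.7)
The plan is to show that $\Omega_\phi$ is closed and that its top power $\Omega_\phi^n$ is a nowhere vanishing volume form on the $2n$-dimensional manifold $\mathbb{R}\times Q$, where $\dim Q = 2n-1$. Closedness is automatic: since $\omega$ is closed and $d(\wh{\phi}\cdot p^{\ast}\lambda)$ is manifestly exact, $d\Omega_\phi = 0$. So the entire content is the non-degeneracy statement, which I will reduce to an open condition on the base $Q$ and invoke compactness.

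First I would set up the algebra by writing $\Omega_\phi = \tau_{\phi(s)} + \phi'(s)\, ds\wedge\lambda$, where $\tau_t := \omega + t\cdot d\lambda$ is a family of closed $2$-forms on $Q$ (viewed via $p^{\ast}$ on $\mathbb{R}\times Q$). Using that $(ds\wedge\lambda)^2 = 0$ and the binomial formula,
\begin{equation*}
\Omega_\phi^n = \tau_{\phi(s)}^n + n\, \phi'(s)\, \tau_{\phi(s)}^{\,n-1}\wedge ds\wedge\lambda.
\end{equation*}
Since $\tau_{\phi(s)}$ is pulled back from the $(2n-1)$-dimensional $Q$, any power $\tau_{\phi(s)}^k$ with $k\geq n$ vanishes identically on $\mathbb{R}\times Q$. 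Consequently, up to sign,
\begin{equation*}
\Omega_\phi^n = n\,\phi'(s)\, ds\wedge\bigl(\lambda\wedge\tau_{\phi(s)}^{\,n-1}\bigr).
\end{equation*}
Hence $\Omega_\phi$ is symplectic if and only if $\phi'(s)>0$ (which is assumed) and the $(2n-1)$-form $\lambda\wedge\tau_t^{\,n-1}$ is a volume form on $Q$ for every $t$ in the image of $\phi$.

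The next step is to prove the latter for $|t|\leq \varepsilon$ by continuity and compactness. At $t=0$ we have $\lambda\wedge\tau_0^{\,n-1} = \lambda\wedge\omega^{n-1}$, which is a volume form on $Q$ by the defining property (1) of a stable Hamiltonian structure. Fix any volume form $\mathrm{vol}_Q$ on the closed manifold $Q$ and write $\lambda\wedge\tau_t^{\,n-1} = f(t,q)\cdot\mathrm{vol}_Q$ for a smooth function $f\colon \mathbb{R}\times Q \to \mathbb{R}$ with $f(0,q)\neq 0$ for all $q\in Q$. Since $Q$ is compact and $f(0,\cdot)$ is continuous and nowhere zero, there is a lower bound $|f(0,q)|\geq c>0$; by continuity of $f$ on the compact slice $\{0\}\times Q$ there exists $\varepsilon>0$ so that $|f(t,q)|\geq c/2$ for all $(t,q)\in [-\varepsilon,\varepsilon]\times Q$. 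Thus $\lambda\wedge\tau_t^{\,n-1}$ remains a volume form on $Q$ for every $|t|\leq\varepsilon$.

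Combining the two steps: for any smooth $\phi\colon\mathbb{R}\to[-\varepsilon,\varepsilon]$ with $\phi'(s)>0$ everywhere, the value $\phi(s)$ lies in $[-\varepsilon,\varepsilon]$ for all $s$, so $\lambda\wedge\tau_{\phi(s)}^{\,n-1}$ is a pointwise volume form on $Q$, and $\phi'(s)\, ds$ is a nonvanishing $1$-form on $\mathbb{R}$. Their wedge product $\Omega_\phi^n$ is therefore a nowhere vanishing top form on $\mathbb{R}\times Q$, so $\Omega_\phi$ is non-degenerate and hence symplectic. There is no genuine obstacle here; the only subtle point is the appeal to compactness of $Q$ to convert the pointwise non-degeneracy at $t=0$ into a uniform $\varepsilon$, and one must be careful that the claim $\tau_{\phi(s)}^n = 0$ uses that $\tau$ is horizontal (pulled back from $Q$) even though its coefficient $\phi(s)$ depends on $s$, which is fine because $d\phi\wedge\lambda$ has already been separated out as the $\phi'\, ds\wedge\lambda$ term.
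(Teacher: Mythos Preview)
Your proof is correct. It differs from the paper's argument, which is the short paragraph immediately preceding the lemma. The paper works pointwise: using the splitting $T_{(s,q)}({\mathbb R}\times Q)=({\mathbb R}\partial_s\oplus{\mathbb R}R)\oplus\xi_q$, it observes that $\Omega_\phi$ block-diagonalizes, with the block on $\xi_q$ equal to $\omega+\phi(s)\,d\lambda$ and the block on ${\mathbb R}\partial_s\oplus{\mathbb R}R$ equal to $\phi'(s)\,ds\wedge\lambda$. The key step is that $\ker(\omega+\phi(s)\,d\lambda)=\ker(\omega)={\mathbb R}R$ for $|\phi(s)|$ small, which uses the stable Hamiltonian condition $\ker(\omega)\subset\ker(d\lambda)$ (equation~(\ref{EQP28})). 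Non-degeneracy on each block then gives the result.

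You instead compute the top power $\Omega_\phi^n$ directly and reduce to the statement that $\lambda\wedge(\omega+t\,d\lambda)^{n-1}$ remains a volume form on $Q$ for small $|t|$, which follows by continuity and compactness from axiom~(1) alone. Your route is slightly more elementary in that it never invokes the Reeb splitting or the inclusion $\ker(\omega)\subset\ker(d\lambda)$; on the other hand, the paper's block-diagonal picture is exactly what is reused a few lines later when checking compatibility with the almost complex structure $\wt{J}$, so its extra structural input is not wasted. Your handling of the subtlety that $\tau_{\phi(s)}$ is horizontal (so $\tau_{\phi(s)}^n=0$) despite the $s$-dependent coefficient is correct and worth keeping explicit.
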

In view of this lemma we make the following definition.
\begin{definition}\label{DEFQQ00}
We denote for $\varepsilon>0$ by $\Sigma_{\varepsilon}$ the set of all smooth maps $\phi:{\mathbb R}\rightarrow [-\varepsilon,\varepsilon]
$ satisfying $\phi'(s)\geq 0$ for all $s\in {\mathbb R}$.\qed
\end{definition}
In the case of $Q$ equipped with a contact form we obtain the stable Hamiltonian structure $(Q,\lambda,d\lambda)$.
In this case we can take any smooth map $\phi:{\mathbb R}\rightarrow [-1,\infty)$ and assuming that $\phi'(s)>0$ 
it follows that
$$
\Omega_{\phi}= (1+\phi)d\lambda + \phi'(s)ds\wedge \lambda= d((1+\phi)\lambda)
$$
is symplectic.  A possible example is the map $\phi(s)=e^s-1$ which gives $\Omega_{\phi} = d(e^t\lambda)$ which is the usual symplectization form. We note that in the general case of stable Hamiltonian structures the upper bound on $\phi$ is important to get symplectic forms.

\subsubsection{Compatible Almost Complex Structures}
Starting with a stable Hamiltonian structure $(Q,\lambda,\omega)$ we take the manifold ${\mathbb R}\times Q$
and fix $\varepsilon>0$ with the properties guaranteed by Lemma \ref{LEMMA-epsilon}.
We consider the set of all 2-forms $\Omega_{\phi}$ on ${\mathbb R}\times Q$ with $\phi\in\Sigma_{\varepsilon}$.
Then this collection is invariant under the ${\mathbb R}$-action on ${\mathbb R}\times Q$ via addition on the first factor.
With $\xi=\ker(\lambda)$ we obtain the symplectic vector bundle $(\xi,\omega)\rightarrow Q$ and fix a complex structure 
for this vector bundle, i.e. a smooth fiber-preserving map $J:\xi\rightarrow \xi$ with the following two properties
\begin{itemize}
\item[(1)] $J^2=-Id$.
\item[(2)] $g_J:\xi\oplus\xi\rightarrow {\mathbb R}$ defined by $g_J(q)(h,k)=\omega_q(h,J(q)k)$ is fiber-wise a positive definite inner product.
\end{itemize}
With $R$ being the Reeb vectorfield we define a smooth ${\mathbb R}$-invariant  almost complex structure $\wt{J}$ for ${\mathbb R}\times Q$ by
\begin{eqnarray}\label{REDF28.3}
\wt{J}(a,q)(h,kR(q)+\Delta)= (-k, hR(q)+J(q)\Delta),
\end{eqnarray}
where $h,k\in {\mathbb R}$ and $\Delta\in\xi$.  Consider for $\phi\in\Sigma_{\varepsilon}$ the fiber-wise bilinear form
$$
\Omega_{\phi}\circ (Id\oplus \wt{J}).
$$
We compute with a vector $(h,kR(q)+\Delta)\in T_{(s,q)}({\mathbb R}\times Q)$
\begin{eqnarray}
&&\Omega_{\phi}\circ (Id\oplus \wt{J})((h,kR(q)+\Delta),(h,kR(q)+\Delta))\\
&=&(\omega+\phi(s)d\lambda)(\Delta,J(q)\Delta) + \phi'(s)(h^2 +k^2)\nonumber\\
&\geq & 0.\nonumber
\end{eqnarray}
Since $\omega+\phi(s)d\lambda$ is non-degenerate on $\xi$ we see that in case $\phi'(s)>0$ the expression is a positive definite
quadratic form.  
\begin{lem}
Given a smooth manifold with stable Hamiltonian structure $(Q,\lambda,\omega)$ and a ${\mathbb R}$-invariant 
almost complex structure $\wt{J}$ as described in (\ref{REDF28.3}) pick an admissible $\varepsilon>0$ as in Definition 
\ref{DEFQQ00}.  Then for every $\phi\in\Sigma_{\varepsilon}$ the (fiber-wise) symmetric quadratic form 
$\mathsf{Q}^{\phi}$ defined with $(h,kR(q)+\Delta)\in T_{(s,q)}({\mathbb R}\times Q)$ by 
$$
\mathsf{Q}^{\phi}_{(s,q)}(h,kR(q)+\Delta):= \Omega_{\phi}((h,kR(q)+\Delta,\wt{J}(s,q)(h,kR(q)+\Delta))
$$
satisfies $\mathsf{Q}^{\phi}_{(s,q)}\geq 0$.  If $\phi\in\Sigma_{\varepsilon}$ has the property $\phi'(s)>0$ for all $s\in {\mathbb R}$
it satisfies $\mathsf{Q}_{(s,q)}^{\phi}>0$.
\qed
\end{lem}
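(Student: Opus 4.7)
The proof is essentially the direct computation sketched immediately before the lemma, and the plan is to organize that computation cleanly and verify the sign claims. First I would use the ${\mathbb R}$-invariance to fix a point $(s,q)$ and decompose an arbitrary tangent vector as $v=(h,kR(q)+\Delta)$ with $\Delta\in\xi_q$. Applying the definition (\ref{REDF28.3}) gives $\wt{J}(s,q)v=(-k,hR(q)+J(q)\Delta)$, and writing $\Omega_\phi=p^\ast\omega+\wh{\phi}\,p^\ast d\lambda+\phi'(s)\,ds\wedge p^\ast\lambda$ reduces $\mathsf{Q}^{\phi}_{(s,q)}(v)$ to three terms, one for each summand.

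The plan is then to evaluate each summand using the structural identities coming from the stable Hamiltonian condition. For the $p^\ast\omega$ term one uses $i_R\omega=0$, which kills the Reeb components and leaves $\omega_q(\Delta,J(q)\Delta)$. For the $\wh{\phi}\,p^\ast d\lambda$ term one uses the consequence $i_R d\lambda=0$ derived in the discussion around (\ref{EQP28}), which similarly collapses the expression to $\phi(s)\,d\lambda_q(\Delta,J(q)\Delta)$. For the $\phi'(s)\,ds\wedge p^\ast\lambda$ term one uses $\lambda(R)=1$ together with $\lambda|_\xi=0$ to produce exactly $\phi'(s)(h^2+k^2)$. Adding the three contributions recovers
$$\mathsf{Q}^{\phi}_{(s,q)}(v)=\bigl(\omega+\phi(s)\,d\lambda\bigr)_q(\Delta,J(q)\Delta)+\phi'(s)(h^2+k^2).$$

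The second and most delicate step is the positivity statement. The $J$-compatibility with $\omega$ gives $\omega_q(\Delta,J(q)\Delta)=g_J(q)(\Delta,\Delta)\ge 0$, strictly positive when $\Delta\neq 0$. I would handle the $\phi(s)\,d\lambda$ perturbation by noting that since $Q$ is closed, the bilinear form $d\lambda(\cdot,J\cdot)$ on $\xi$ is bounded relative to $g_J$ by a uniform constant, so by shrinking the admissible $\varepsilon$ from Definition \ref{DEFQQ00} if necessary, one has $(\omega+\phi(s)d\lambda)(\Delta,J\Delta)\ge\tfrac{1}{2}g_J(\Delta,\Delta)\ge 0$ whenever $|\phi(s)|\le\varepsilon$. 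Combined with $\phi'(s)\ge 0$, this yields $\mathsf{Q}^{\phi}\ge 0$; when $\phi'(s)>0$ and $v\neq 0$, either the $(h^2+k^2)$-term or the $g_J$-term is strictly positive, giving $\mathsf{Q}^{\phi}>0$.

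The main obstacle is just bookkeeping: choosing the right notational conventions so that the Reeb cross-terms visibly cancel, and verifying that the admissible $\varepsilon$ from Lemma \ref{LEMMA-epsilon} (which only guarantees non-degeneracy of $\omega+\phi(s)d\lambda$ on $\xi$) can simultaneously be chosen small enough to keep that form positive against $J$. Since the non-degeneracy argument in Lemma \ref{LEMMA-epsilon} uses the same boundedness on $Q$, no new uniform estimate is needed, and one may simply take $\varepsilon$ to mean ``admissible for both Lemma \ref{LEMMA-epsilon} and for positivity on $\xi$''—an assumption consistent with how ``admissible'' is used in the statement.
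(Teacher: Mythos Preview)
Your proposal is correct and follows exactly the paper's approach: the paper performs the identical computation immediately before the lemma, arriving at $(\omega+\phi(s)d\lambda)(\Delta,J(q)\Delta)+\phi'(s)(h^2+k^2)$, and then asserts non-negativity and strict positivity from this formula. Your version is in fact more careful than the paper's, since you explicitly justify why $(\omega+\phi(s)d\lambda)(\Delta,J\Delta)\geq 0$ via the compactness bound on $d\lambda(\cdot,J\cdot)$ relative to $g_J$, a point the paper leaves implicit in its choice of admissible $\varepsilon$.
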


\subsection{Periodic Orbits}
In the Appendix \ref{Appendix-periodicx} we have introduced in Definition \ref{APP_E1} the abstract notion of a periodic orbit in a smooth manifold
$Q$ and this quite general notion was sufficient for our space constructions in the main body of the book.

It is an important fact that a given Hamiltonian structure $(\lambda,\omega)$ on an odd-dimensional manifold $Q$
produces automatically a set of periodic orbits. We shall explain this next, where we give a formulation compatible with the general definition.
\begin{definition}
Assume $Q$ is equipped with $(\lambda,\omega)$ and $R$ is the associated Reeb vector field. A periodic orbit for $(V,\lambda,\omega)$ is a tuple
$([\gamma],k,T)$ with $k$ being a nonzero positive integer, $T$ a positive number, $\gamma:S^1\rightarrow Q$ a smooth embedding 
and $[\gamma]$ the set of reparameterisations of $\gamma$, i.e. $t\rightarrow \gamma(t+c)$, where $c\in S^1$,
such that the following property holds
$$
\frac{d\gamma}{dt}(t) = \frac{T}{k}\cdot R(\gamma(t)).
$$
We call $T$ the {\bf period} and $k$ the {\bf covering number}, and $T/k$ the {\bf minimal period}.
\qed
\end{definition} 
\begin{remark}
The way to think about a periodic orbit for $(Q,\lambda,\omega)$ is as follows.  Take the Reeb vector field $R$ 
and solve $\dot{x}=R(x)$. Assume we have a periodic orbit $(x,T)$, i.e. there exists a $T>0$ such that $x(0)=x(T)$.
Then there exists an integer $k\geq 1$ such that $x(t)\neq x(0)$ for $0<t<T/k$, $x(0)=x(T/k)$, and $T/k$ is called the minimal period.
We can define  the embedding $\gamma:{\mathbb R}/{\mathbb Z}\rightarrow Q$ by 
$$
\gamma(t) = x(tT/k).
$$
Then $\frac{d\gamma}{dt}(t) = (T/k)\cdot R(\gamma(t))$. If we take another point on $x({\mathbb R})$ and solve the differential equation
with this as starting point we obtain a map $y$, which again can be viewed as a $T$-periodic solution $(y,T)$. Applying the same procedure 
we obtain another element in $[\gamma]$. Hence our notation keeps track of the period,  the minimal period, and the set $\gamma(S^1)$ with
a preferred class of parameterizations.
\qed
\end{remark}

\begin{definition}
Consider a periodic orbit $([\gamma],k,T)$ associated to $(Q,\lambda,\omega)$.
We say that $([\gamma],k,T)$ is {\bf non-degenerate} if the symplectic map $T\phi_{kT}(p):\xi_p\rightarrow \xi_p$ for some fixed 
$p\in \gamma(S^1)$  does not have $1$ in its spectrum. The definition does not depend on the choice of $p$. If for a Hamiltonian structure $(\lambda,\omega)$ all periodic orbits are non-degenerate we say that $(\lambda,\omega)$ is a {\bf non-degenerate stable Hamiltonian structure}.
\qed
\end{definition}

\begin{remark}
It is always possible to perturb a contact form in $C^\infty$, by keeping the associated contact structure, so that   the new form is non-degenerate.
The situation for stable Hamiltonian structure is more subtle, see \cite{CFP,CV1}. It iseems to be  possible to always perturb them to a Morse-Bott situation.
Our discussion of polyfold structures can be generalized to this case, but we shall not do it here and concentrate on the non-degenerate case.
\qed
\end{remark}

For the sc-Fredholm Theory in this book we are interested in the situation where the periodic orbits come from a 
non-degenerate stable Hamiltonian structure $(\lambda,\omega)$ on $Q$. 
\begin{definition}
Given a smooth compact manifold equipped with a non-degenerate stable Hamiltonian structure $(Q,\lambda,\omega)$ 
we denote by ${\mathcal P}(Q,\lambda,\omega)$ the collection of all
periodic orbits $([\gamma],k,T)$. We denote by  ${\mathcal P}^\ast(Q,\lambda,\omega)$ the union
$$
{\mathcal P}^\ast(Q,\lambda,\omega) := {\mathcal P}(Q,\lambda,\omega)\bigcup \{\emptyset\}.
$$
\qed
\end{definition}
For the sc-Fredholm theory it will be important  to associate to the elements in ${\mathcal P}^\ast(Q,\-\lambda,\omega) $
weight sequences. However, care has to be taken that these choices are compatible with spectral gaps coming from
a certain class of self-adjoint operators which occur naturally after a choice of almost complex structure compatible 
with $(\lambda,\omega)$ has been made. We shall discuss this in end of the next subsection, after introducing
the before-mentioned class of self-adjoint operators, called asymptotic operators.

\subsection{Special Coordinates and Asymptotic Operators}
Consider  $(Q,\lambda,\omega)$,  a smooth manifold with a stable Hamiltonian structure. With $\xi:=\text{ker}(\lambda)$
we equip the symplectic vector bundle $(\xi,\omega)\rightarrow Q$ with a compatible complex structure $J:\xi\rightarrow \xi$, $J^2=-Id$,
so that $\omega\circ (Id\oplus J)$ equips each fiber with a positive definite inner product.  We equip we can equip $Q$ with the Riemannian metric
\begin{eqnarray}
g_J:= \lambda\otimes \lambda +\omega\circ(Id\oplus J).
\end{eqnarray}
  As we already explained before, the data  $(\lambda,\omega,J)$ will determine a ${\mathbb R}$-invariant 
almost complex structure $\wt{J}$ on ${\mathbb R}\times Q$, see (\ref{REDF28.3}) and this structure  will be important in
the sc-Fredholm theory.  However, not every $\wt{J}$ will work for a pseudoholomorphic curve theory
in ${\mathbb R}\times Q$.  It will be important that the underlying $(Q,\lambda,\omega)$ is non-degenerate and that weight sequences
are picked appropriately.  One can express this in various ways. The choice here is to pick suitable special coordinates and to bring 
the study of a periodic orbit into the context of a special model. Of course, it will be important to verify that 
different choices of special coordinates lead to the same conclusion.
\subsubsection{Special Coordinates}
We are interested in the geometry near
a given periodic orbit 
$$
([\gamma],T,k).
$$
The idea is to transfer the general problem to the model case $Q_0:=S^1\times {\mathbb R}^{2n-2}$
with periodic orbit $([\gamma_0],T,k)$, where $\gamma_0(t)=(t,0)$. The structure on $Q_0$ is given by $\lambda_0=dt$ and $\omega_0=\sum_{i=1}^{n-1} dx_i\wedge dy_i$
and $J_0$ is the standard structure on ${\mathbb R}^{2n-2}={\mathbb R}^2\oplus..\oplus {\mathbb R}^2$, where on the ${\mathbb R}^2$-factors 
$(1,0)$ is mapped to $(0,1)$. We define $\xi^0=\text{ker}(dt)$ and as inner product $dt\otimes dt + \langle.,.\rangle_{{\mathbb R}^{2n-2}}$.  Note that 
$\omega_0\circ (Id\oplus J_0)=\langle.,.\rangle_{{\mathbb R}^{2n-2}}$.  

What will be of interest to us are structures 
$(\lambda',\omega',J')$ defined near $\gamma_0(S^1)$, which coincide with $(\lambda_0,\omega_0,J_0)$ on $\gamma_0(S^1)$.
This data $(\lambda',\omega',J')$ will be obtained as the push forward of a restriction of $(\lambda,\omega,J)$ on $Q$ to a small 
open neighborhood of $\gamma(S^1)$ by a special choice of coordinates.
\begin{definition}
Let $(Q,\lambda,\omega)$ be a smooth manifold with a stable Hamiltonian structure.  Consider a periodic orbit
$([\gamma],T,k)$.
A {\bf special coordinate transformation} is a smooth diffeomorphism  $\phi:U(\gamma(S^1))\rightarrow U(\gamma_0(S^1))$ which has the following properties.
\begin{itemize}
\item[(1)] There exists a representative $\gamma^{\phi}\in[\gamma]$ such that $\phi\circ\gamma^{\phi}(t)=\gamma_0(t)$.
\item[(2)] For every $t\in S^1$ the tangent map 
$$
(T\phi)(\gamma^{\phi}(t)):T_{\gamma^{\phi}(t)}Q\rightarrow T_{\gamma_0(t)}Q_0
$$
induces  a linear isomorphism $\wh{\phi}_t:\xi_{\gamma^{\phi}(t)}\rightarrow \xi^0_{\gamma_0(t)}$
which is complex linear and isometric for the distinguished inner products.
\end{itemize}
\qed
\end{definition}
The following is left as an exercise.
\begin{lem}
Given $(Q,\lambda,\omega,J)$ and a periodic orbit $([\gamma],k,T)$ there exist for given representative $\gamma^{\phi}$
suitable open neighborhoods
$U(\gamma(S^1))$ and $U(\gamma_0(S^1))$ and a special coordinate transformation $\phi:U(\gamma(S^1))\rightarrow
U(\gamma_0(S^1))$ with the property $\phi\circ \gamma^{\phi}(t)=\gamma_0(t)$.
\qed
\end{lem}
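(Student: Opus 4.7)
The plan is to construct $\phi$ as the inverse of an exponential-type map from the model $Q_0 = S^1 \times \mathbb{R}^{2n-2}$ to $Q$, built using a unitary trivialization of the contact distribution $\xi$ along the orbit. First I would fix the representative $\gamma^{\phi} \in [\gamma]$, which gives an embedding $\gamma^{\phi}: S^1 \to Q$ satisfying $\dot{\gamma}^{\phi}(t) = (T/k) R(\gamma^{\phi}(t))$. The pullback bundle $(\gamma^{\phi})^\ast \xi \to S^1$ is a rank $(n-1)$ complex vector bundle (via $J$) equipped with the Hermitian inner product $\omega \circ (\mathrm{Id} \oplus J)$. Because $H^2(S^1;\mathbb{Z}) = 0$, its first Chern class vanishes, so this Hermitian bundle admits a unitary trivialization
$$\Psi: S^1 \times \mathbb{C}^{n-1} \to (\gamma^{\phi})^\ast \xi,$$
meaning that each fiber map $\Psi_t : \mathbb{C}^{n-1} \to \xi_{\gamma^{\phi}(t)}$ is complex linear and isometric.

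Second, I would define a map $F: S^1 \times \mathbb{R}^{2n-2} \to Q$, near the zero section, by
$$F(t, v) = \exp^{g_J}_{\gamma^{\phi}(t)}\bigl(\Psi_t(v)\bigr),$$
where $\exp^{g_J}$ is the Riemannian exponential for the metric $g_J = \lambda \otimes \lambda + \omega \circ (\mathrm{Id} \oplus J)$, and $\mathbb{R}^{2n-2}$ is identified with $\mathbb{C}^{n-1}$ in the standard way. A direct computation shows $TF$ at $(t,0)$ sends $\partial_t$ to $\dot{\gamma}^{\phi}(t) = (T/k) R(\gamma^{\phi}(t))$ and $\partial_{v_i}$ to $\Psi_t(e_i) \in \xi_{\gamma^{\phi}(t)}$. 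Since $\mathbb{R}R \oplus \xi = TQ$, this differential is an isomorphism at every point of $S^1 \times \{0\}$.

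Third, by compactness of $S^1$ and the inverse function theorem applied uniformly, $F$ is a local diffeomorphism on a tubular neighborhood $N_\varepsilon = \{(t,v) : |v| < \varepsilon\}$ for sufficiently small $\varepsilon > 0$. Since $\gamma^{\phi}$ is an embedding and $\gamma^{\phi}(S^1)$ is compact, shrinking $\varepsilon$ further if necessary ensures that $F|_{N_\varepsilon}$ is globally injective, hence a diffeomorphism onto an open neighborhood $U(\gamma(S^1))$ of $\gamma(S^1)$ in $Q$. Setting $U(\gamma_0(S^1)) := N_\varepsilon$ and $\phi := (F|_{N_\varepsilon})^{-1}$ produces the required diffeomorphism.

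It remains to verify the two defining properties. Since $F(t,0) = \gamma^{\phi}(t)$, we get $\phi \circ \gamma^{\phi}(t) = (t,0) = \gamma_0(t)$. Next, $T\phi$ at $\gamma^{\phi}(t)$ is the inverse of $TF$ at $(t,0)$; restricted to $\xi_{\gamma^{\phi}(t)}$, it sends $\Psi_t(e_i)$ to $\partial_{v_i} \in \xi^0_{\gamma_0(t)}$, so $\hat{\phi}_t = \Psi_t^{-1}$, which is complex linear and isometric by the choice of $\Psi$. The main obstacle in making this rigorous is the global injectivity of $F|_{N_\varepsilon}$: it uses both the embedding hypothesis on $\gamma$ and the compactness of $S^1$, and requires shrinking the radius $\varepsilon$ depending on the geometry of $Q$ near $\gamma(S^1)$. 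The triviality of the Hermitian bundle $(\gamma^{\phi})^\ast \xi$ is standard but worth stating, since without it no global unitary trivialization would exist.
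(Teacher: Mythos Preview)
The paper does not give a proof of this lemma: it is preceded by ``The following is left as an exercise'' and the statement ends with \verb|\qed|. Your construction---unitarily trivializing $(\gamma^{\phi})^{\ast}\xi$ over $S^1$ and using the Riemannian exponential map for $g_J$ to build a tubular neighborhood diffeomorphism---is the standard argument and is correct. In particular, your verification that $T\phi$ restricted to $\xi_{\gamma^{\phi}(t)}$ equals $\Psi_t^{-1}$, hence is complex linear and isometric, is exactly what property~(2) of the definition demands; note that the definition does \emph{not} require $T\phi$ to take $R$ to the model Reeb vector field $\partial_t$, so the factor $T/k$ appearing in $T\phi(R) = (k/T)\partial_t$ causes no difficulty.
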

\subsubsection{Asymptotic Operator}\label{ASYMOperator}
We start with $(Q,\lambda,\omega)$ and an associated periodic orbit $([\gamma],T_0,k_0)$
Consider the map which associates to an element $y$  in  $C^1(S^1,Q)$ the loop $\frac{d}{dt}y - T_0R(y)$.
The latter can be viewed as a $C^0$-section of $y^\ast TQ$. The section vanishes at $x=\gamma$ and consequently 
has a linearization $L_{\gamma}$ at every representative $\gamma$ of $[\gamma]$. 

Having fixed $\gamma:=\gamma^{\phi}\in [\gamma]$ take a special coordinate 
$\phi$ such that $\phi(\gamma(t))=\gamma_0(t)$. 
With this  choice of $\phi$ and given a loop $y$ near $\gamma$ we obtain a loop $\phi\circ y$ near $\gamma_0$.
We consider the following which associates to a smooth loop $z$ near $\gamma_0$ the smooth loop $\eta=\eta(z)$ in ${\mathbb R}^{2n-1}$
defined by
\begin{eqnarray}
t\rightarrow \text{pr}_2\circ T\phi\left(\frac{d}{dt}(\phi^{-1}(z(t)))- T_0\cdot R(\phi^{-1}(z(t)))\right).
\end{eqnarray}
We note that we can rewrite this as
$$
t\rightarrow \frac{d}{dt}z(t) - T_0\cdot [pr_2\circ T\phi \circ R\circ \phi^{-1}(z(t))]
$$
We differentiate this expression at $z=\gamma_0$ in the direction $h$ to obtain $L^{\phi}(h)$, which is a loop $S^1\rightarrow {\mathbb R}\times {\mathbb R}^{2n-2}$.
With the linear map 
$$
\wh{B}^{\phi}(t):{\mathbb R}\times {\mathbb R}^{2n-2}\rightarrow {\mathbb R}\times {\mathbb R}^{2n-2}
$$
 being obtained by differentiating $y\rightarrow T_0\cdot [pr_2\circ T\phi\circ  R\circ \phi^{-1}(y)$ at $\gamma_0(t)$ we see that
$L^{\phi}$ has the form 
$$
L^{\phi}(h)=\frac{d}{dt}h - \wh{B}(t)h.
$$
\begin{lem}
The following holds true.
\begin{itemize}
\item[(1)]  The map $\wh{B}^{\phi}$ has the form 
$$
\wh{B}^{\phi}(t)(h_1,\Delta)=(0,B^{\phi}(t)\Delta),
$$
where $h=(h_1,\Delta)\in {\mathbb R}\times {\mathbb R}^{2n-2}$.
\item[(2)] $-J_0B^{\phi}(t)$ is symmetric for the standard structure on ${\mathbb R}^{2n-2}$.
\item[(3)] The unbounded operator defined in $L^2(S^1,{\mathbb R}^{2n-2})$ with domain $H^1(S^1,{\mathbb R}^{2n-2})$
by $h\rightarrow -J_0[\frac{d}{dt}h -B^{\phi}(t)h]$ is self-adjoint and has a compact resolvent.
\end{itemize}
\end{lem}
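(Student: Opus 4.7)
The three assertions will follow from three geometric facts about $(Q,\lambda,\omega)$, translated through the special coordinate transformation $\phi$ to the model $(Q_0, \lambda_0, \omega_0, J_0)$ on $Q_0 = S^1\times {\mathbb R}^{2n-2}$: (a) the tangency condition $\dot\gamma = (T_0/k_0) R\circ\gamma$, which forces $\phi_\ast R \circ \gamma_0$ to be the constant vector $(k_0/T_0, 0) \in {\mathbb R} \times {\mathbb R}^{2n-2}$ along the orbit; (b) $L_R\lambda = 0$ together with the complex-linear isometric identification $T\phi: \xi_{\gamma(t)} \to \xi^0_{\gamma_0(t)} \cong \{0\}\times {\mathbb R}^{2n-2}$ built into the definition of special coordinates; and (c) $L_R\omega = 0$, which makes the Reeb flow symplectic on $\xi$ and hence, after transport by $\phi$, on $\xi^0$.

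For part (1), I would first rewrite the defining formula using $T\phi\circ \dot{(\phi^{-1}z)} = \dot z$, so that $\eta(z)(t) = \mathrm{pr}_2(\dot z(t) - T_0(\phi_\ast R)(z(t)))$, where $\mathrm{pr}_2: TQ_0 \to \xi^0$ is the projection along ${\mathbb R}\partial_t$ determined by the splitting $TQ_0 = {\mathbb R}\partial_t \oplus \xi^0$. This alone places the image of $\eta$, and hence of its linearization $\wh{B}^\phi$, into $\{0\}\times {\mathbb R}^{2n-2}$, giving the first-component-vanishing claim. For the independence of the output on $h_1$, observe that (a) gives $\mathrm{pr}_2(\phi_\ast R)|_{\gamma_0(t)} \equiv 0$, a constant function of $t$, so differentiating in the direction $(h_1,0)$ tangent to the orbit produces $0$. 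Hence $\wh{B}^\phi(t)(h_1,\Delta) = (0, B^\phi(t)\Delta)$ for a linear map $B^\phi(t): {\mathbb R}^{2n-2} \to {\mathbb R}^{2n-2}$.

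For part (2), the plan is to exhibit $B^\phi(t)$ as the infinitesimal generator of the transverse linearization of the Reeb flow along $\gamma_0(S^1)$. More precisely, the flow of $\phi_\ast R$ preserves $\omega_0$ on $\xi^0$ by (c), so its linearization along the orbit gives a smooth family of $\omega_0$-symplectic maps on $({\mathbb R}^{2n-2}, \omega_0)$ whose infinitesimal generator coincides (tracing definitions) with $B^\phi(t)$. Then the standard linear-algebra equivalence \emph{``$A$ generates an $\omega_0$-symplectic flow on ${\mathbb R}^{2n-2}$ iff $J_0 A$ is symmetric with respect to the Euclidean inner product''} immediately yields the symmetry of $-J_0 B^\phi(t)$.

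For part (3), write the operator as $\mathcal{A} = -J_0\partial_t + J_0 B^\phi(t)$. The unperturbed piece $-J_0 \partial_t$ on $L^2(S^1, {\mathbb R}^{2n-2})$ with domain $H^1$ is self-adjoint (by integration by parts using $J_0^T = -J_0$, or equivalently by Fourier diagonalization with real eigenvalues $2\pi {\mathbb Z}$ once ${\mathbb R}^{2n-2}$ is identified with ${\mathbb C}^{n-1}$ via $J_0$), and has compact resolvent because the Rellich--Kondrachov inclusion $H^1(S^1) \hookrightarrow L^2(S^1)$ is compact. The perturbation $J_0 B^\phi(t)$ is a bounded, smooth, pointwise-symmetric multiplication operator by part (2), so Kato--Rellich gives self-adjointness of $\mathcal{A}$, and the resolvent identity $(\mathcal{A}-\lambda)^{-1} = (-J_0\partial_t -\lambda)^{-1}[I + J_0 B^\phi (-J_0\partial_t -\lambda)^{-1}]^{-1}$, valid for $\lambda$ with sufficiently large imaginary part, presents $(\mathcal{A}-\lambda)^{-1}$ as a compact operator times a bounded invertible one. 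The only genuine obstacle in the whole proof is careful bookkeeping of the scalars $T_0$ and $k_0$ and of the identifications made by $\phi$; in particular, verifying that $\mathrm{pr}_2 \phi_\ast R$ really vanishes identically on the orbit (rather than merely at a single point) and that the symplectic generator produced in part (2) really is $B^\phi(t)$ and not a scalar multiple. Once these identifications are nailed down, everything else reduces to standard facts from linear symplectic algebra and one-dimensional Sobolev theory.
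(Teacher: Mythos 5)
Your treatment of (2) and (3) is sound, and the independence of $\wh{B}^\phi(t)h$ on $h_1$ (constancy of $\phi_*R$ along $\gamma_0$) is correctly argued. The gap is in the other half of (1), the vanishing of the first component of the output. You interpret $\text{pr}_2$ as the linear projection $TQ_0\to\xi^0$ along ${\mathbb R}\partial_t$, which makes that vanishing tautological; but this reading is incompatible with the surrounding text. The paper declares $\eta(z)$ to be a loop in ${\mathbb R}^{2n-1}$ (the full dimension of $Q$, not $\dim\xi^0$), and the paper's own rewrite $\eta(z)(t)=\frac{d}{dt}z(t)-T_0\,\text{pr}_2\circ T\phi\circ R\circ\phi^{-1}(z(t))$ only parses if $\text{pr}_2$ is the fiber projection $TQ_0\cong Q_0\times({\mathbb R}\times{\mathbb R}^{2n-2})\to{\mathbb R}\times{\mathbb R}^{2n-2}$, since otherwise $\dot z(t)$ and the second term would not lie in a common vector space. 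Under the correct reading $\wh{B}^\phi(t)=T_0\,D(\phi_*R)(\gamma_0(t))$ is an endomorphism of ${\mathbb R}\times{\mathbb R}^{2n-2}$, and the vanishing of the $\partial_t$-component of $\wh{B}^\phi(t)(0,\Delta)$ is precisely where the stability condition $L_R\lambda=0$ must enter: it forces the Reeb flow to preserve $\xi=\ker\lambda$, hence the linearized flow $D\psi_s(\gamma_0(t))$ carries $\xi^0$ into $\xi^0$ along the orbit, and differentiating that containment at $s=0$ gives $D(\phi_*R)(\gamma_0(t))(\xi^0)\subset\xi^0$. You list $L_R\lambda=0$ as ingredient (b) in your preamble, but the argument you actually give for (1) never uses it --- it rests entirely on the misread $\text{pr}_2$.

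For the record, the paper does not prove this lemma; it cites \cite{BEHWZ,Dragnev,Wendl}. Your symplectic-generator argument for (2) and the Kato--Rellich plus Rellich--Kondrachov argument for (3) are the standard route followed in those references, so with the fix above your proof is essentially the expected one.
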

\begin{proof}
These are known results and we refer for a discussion  to \cite{BEHWZ,Dragnev,Wendl}.
\end{proof}
With the above discussion in mind we make the following definition.
\begin{definition}
Given $(Q,\lambda,\omega,J)$ and a periodic orbit $([\gamma],T_0,k_0)$ we denote for given special coordinates 
by $\bm{L}^{\phi}$ the linear unbounded  self-adjoint operator
\begin{eqnarray}
&\bm{L}^{\phi}: L^2(S^1,{\mathbb R}^{2n-2})\supset H^1(S^1,{\mathbb R}^{2n-2})\rightarrow L^2(S^1,{\mathbb R}^{2n-2}):&\\
&h\rightarrow 
-J_0[\frac{d}{dt}h -B^{\phi}(t)h].&\nonumber
\end{eqnarray}
\qed
\end{definition}
It is important to know the relationship between $\bm{L}^{\phi}$ and $\bm{L}^{\psi}$ given two different choices 
of special coordinates.   Given an ordered pair $(\phi,\psi)$ we have that 
$$
\phi\circ \gamma^{\phi}(t)=\gamma_0(t)=\psi\circ\gamma^{\psi}(t),\ t\in S^1.
$$
There exists a well-define element $c=c(\phi,\psi)\in S^1$, called {\bf phase},  such that $\gamma^{\psi}(t)=\gamma^{\phi}(t+c)$ for $t\in S^1$.
Next we study a pair $(\phi,\psi)$ with phase $c=c(\phi,\psi)$,  We consider the transition map
$\sigma:=\psi\circ \phi^{-1}$ which is defined on an open neighborhood of $\gamma_0(S^1)$ in $S^1\times {\mathbb R}^{2n-2}$.
\begin{lem}
The following identity holds  for a pair $(\phi,\psi)$ with phase $c$
$$
\psi\circ\phi^{-1}(\gamma_0(t+c))=\gamma_0(t)
$$
\end{lem}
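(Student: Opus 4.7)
The plan is to unwind the definitions in the order in which they were introduced. The key fact is that both $\phi$ and $\psi$ are special coordinate transformations, so each comes with its own distinguished representative $\gamma^{\phi}, \gamma^{\psi} \in [\gamma]$ satisfying $\phi \circ \gamma^{\phi}(t) = \gamma_0(t)$ and $\psi \circ \gamma^{\psi}(t) = \gamma_0(t)$, and the phase $c = c(\phi,\psi) \in S^1$ is defined precisely by $\gamma^{\psi}(t) = \gamma^{\phi}(t+c)$.

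First I would rewrite $\phi^{-1}(\gamma_0(t+c))$. Evaluating the identity $\phi \circ \gamma^{\phi} = \gamma_0$ at $t+c$ instead of $t$ gives $\phi(\gamma^{\phi}(t+c)) = \gamma_0(t+c)$, and since $\phi$ is a diffeomorphism on the relevant neighborhood, this is equivalent to
\[
\phi^{-1}(\gamma_0(t+c)) = \gamma^{\phi}(t+c).
\]
Next I would apply the phase relation to rewrite the right-hand side as $\gamma^{\psi}(t)$, yielding $\phi^{-1}(\gamma_0(t+c)) = \gamma^{\psi}(t)$. Finally, applying $\psi$ to both sides and using the defining property $\psi \circ \gamma^{\psi}(t) = \gamma_0(t)$ of the special coordinate $\psi$ gives
\[
\psi \circ \phi^{-1}(\gamma_0(t+c)) = \psi(\gamma^{\psi}(t)) = \gamma_0(t),
\]
which is the desired identity.

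There is no real obstacle here: the statement is a tautology once one unpacks the three definitions (what it means to be a special coordinate, what the phase is, and that $\phi,\psi$ are diffeomorphisms on neighborhoods of the respective loops). The only mild point to note is that one must restrict attention to $t$ for which $\gamma_0(t+c)$ lies in the image $\phi(U(\gamma(S^1)))$, which is automatic since $\gamma_0(S^1)$ is contained in this image by assumption. The lemma is thus essentially a bookkeeping step that will be used in the sequel to compare the associated operators $\bm{L}^{\phi}$ and $\bm{L}^{\psi}$ via the linearization of the transition map $\sigma = \psi \circ \phi^{-1}$ along the loop $\gamma_0$.
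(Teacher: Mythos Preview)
Your proof is correct and is essentially identical to the paper's argument: both use the defining identities $\phi\circ\gamma^{\phi}=\gamma_0$, $\psi\circ\gamma^{\psi}=\gamma_0$, and the phase relation $\gamma^{\psi}(t)=\gamma^{\phi}(t+c)$, just strung together in the opposite order (the paper writes a single chain $\gamma_0(t)=\psi\circ\gamma^{\psi}(t)=\psi\circ\gamma^{\phi}(t+c)=\psi\circ\phi^{-1}\circ\gamma_0(t+c)$).
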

\begin{proof}
We compute
$$
 \gamma_0(t) =\psi\circ \gamma^{\psi}(t)=\psi\circ \gamma^{\phi}(t+c) = \psi\circ\phi^{-1} \circ\gamma_0(t+c).
$$
\end{proof}
Abbreviate $\sigma=\psi\circ\phi^{-1}$.  From the properties of $\phi$ and $\psi$ we know that for $(t,z)\in S^1\times {\mathbb R}^{2n-2}$
it holds that $\sigma(t+c,0)=(t,0)$ and moreover $D\sigma(t,0)(\{0\}\times {\mathbb R}^{2n-2})\subset \{0\}\times {\mathbb R}^{2n-2}$.
Further the induced map ${\mathbb R}^{2n-2}\rightarrow {\mathbb R}^{2n-2}$ is unitary for using the complex structure coming from $J_0$.
Denoting this map by $U_{(\phi,\psi)}(t)$ we obtain a loop of unitary matrices. We note that 
$$
D\sigma(t,0)(h,k) =(h,U(t)k),\ (h,k)\in {\mathbb R}\times {\mathbb R}^{2n-2}.
$$
Unitary here means that the operators commute with $J_0$ and are isometric.
Given the unitary loop  
$U_{(\phi,\psi)}$ we shall write $U_{(\phi,\psi)}^{-1}$  for the point-wise inverted loop.
\begin{lem}
For given $(\phi,\psi,\sigma)$
we have the identity 
\begin{eqnarray}\label{Gldfg}
U_{\phi,\sigma}(t) = U_{\psi,\sigma}(t-c(\phi,\psi))\circ U_{\phi,\psi}(t).
\end{eqnarray}
\end{lem}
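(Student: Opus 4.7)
The plan is to derive the identity from the cocycle property of the transition maps together with a careful application of the chain rule, paying close attention to the phase shifts built into the definition of the $U$'s.

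First I would establish the cocycle identity for the phases, namely $c(\phi,\sigma) = c(\phi,\psi) + c(\psi,\sigma)$. This is immediate from the defining property $\gamma^{\psi}(t) = \gamma^{\phi}(t + c(\phi,\psi))$ by chasing:
\begin{align*}
\gamma^{\sigma}(t) = \gamma^{\phi}(t + c(\phi,\sigma)) = \gamma^{\psi}(t + c(\phi,\sigma) - c(\phi,\psi)),
\end{align*}
which, compared with $\gamma^{\sigma}(t) = \gamma^{\psi}(t + c(\psi,\sigma))$, yields the claim. Equivalently, the transition map $\sigma \circ \phi^{-1}$ sends $\gamma_0(t + c(\phi,\sigma))$ to $\gamma_0(t)$, and factors as the composition of $\psi \circ \phi^{-1}$ (shifting the base parameter by $c(\phi,\psi)$) followed by $\sigma \circ \psi^{-1}$ (shifting by $c(\psi,\sigma)$).

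Next I would compute the differential of the factorization $\sigma \circ \phi^{-1} = (\sigma \circ \psi^{-1}) \circ (\psi \circ \phi^{-1})$ by the chain rule at the base point on $\gamma_0$ corresponding to parameter $t$. The key bookkeeping is that the transition $\psi \circ \phi^{-1}$ carries the point on $\gamma_0$ with parameter shifted by $c(\phi,\psi)$ to the point on $\gamma_0$ with the original parameter. Using the lemma preceding the statement, the derivative of $\psi \circ \phi^{-1}$ along the fiber $\{0\} \times \mathbb{R}^{2n-2}$ at the appropriate base point is $U_{\phi,\psi}(t)$, and similarly for $U_{\psi,\sigma}$. The only subtlety is that when we compose, the intermediate base point along $\gamma_0$ comes out with parameter $t - c(\phi,\psi)$, which is exactly the place at which the unitary factor $U_{\psi,\sigma}$ must be evaluated. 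Restricting the chain rule to the fiber $\{0\} \times \mathbb{R}^{2n-2}$ (which is preserved by all three differentials) then gives
\begin{align*}
U_{\phi,\sigma}(t) = U_{\psi,\sigma}(t - c(\phi,\psi)) \circ U_{\phi,\psi}(t),
\end{align*}
as asserted.

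The main obstacle — really more of a pitfall than a difficulty — is the bookkeeping of phase shifts: one must consistently decide whether $U_{\phi,\psi}(t)$ is the unitary acting at the source fiber over $\gamma_0(t + c(\phi,\psi))$ or at the target fiber over $\gamma_0(t)$, since these differ by a reparameterization. Once a convention is fixed (the one used in the preceding lemma) the chain-rule identity and the cocycle $c(\phi,\sigma) = c(\phi,\psi) + c(\psi,\sigma)$ combine into the stated formula in a single line. No analytic input beyond smoothness of the transition maps is required; the argument is purely a computation on $1$-jets along the embedded orbit $\gamma_0(S^1)$.
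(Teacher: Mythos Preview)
Your proposal is correct and follows essentially the same route as the paper: factor $\sigma\circ\phi^{-1} = (\sigma\circ\psi^{-1})\circ(\psi\circ\phi^{-1})$, differentiate along $\gamma_0(t)$, and read off the $\mathbb{R}^{2n-2}$-part using the chain rule, noting that the intermediate base point is $\gamma_0(t-c(\phi,\psi))$. The phase cocycle identity $c(\phi,\sigma)=c(\phi,\psi)+c(\psi,\sigma)$ you establish first is true but not actually needed for the argument, and the paper omits it.
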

\begin{proof}
We shall write $\sigma\circ\phi^{-1} =(\sigma\circ\psi^{-1})\circ (\psi\circ\phi^{-1})$ and 
 recall  that $\psi\circ\phi^{-1}(\gamma_0(t)=\gamma_0(t-c(\phi,\psi))$.  Differentiating the first expression along $\gamma_0(t)$
and taking the ${\mathbb R}^{2n-2}$-part we obtain at $\gamma_0(t)$
$$
U_{(\phi,\sigma)}(t) = U_{(\psi,\sigma)}(t-c(\phi,\psi))\circ U_{(\phi,\psi)}(t).
$$
\end{proof}
Finally we show the following.
\begin{prop}
For given $(Q,\lambda,\omega,J)$ and a choice of periodic orbit $([\gamma],T_0,k_0)$ consider 
for associated special coordinates $\phi,\psi$ the asymptotic operators $\bm{L}^{\phi}$ and $\bm{L}^{\psi}$.
Then the following equality holds
$$
\bm{L}^{\psi} = \mathsf{U}_{(\phi,\psi)}\circ \bm{L}^{\phi}\circ \mathsf{U}^{-1}_{(\phi,\psi)}.
$$
\end{prop}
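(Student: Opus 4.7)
The operator $\bm{L}^\phi$ is, by its derivation, a matrix representation (in the trivialization determined by $\phi$) of an intrinsic self-adjoint operator on $L^2(S^1,\gamma^\ast\xi)$ — namely, the linearization of the Reeb flow map $y\mapsto \dot y - T_0 R(y)$ along the embedded loop $\gamma(S^1)\subset Q$, followed by $-J$. Since $\phi$ and $\psi$ merely change how one coordinatizes the same data (and the representative of $[\gamma]$ differs only by a reparameterization by the phase $c:=c(\phi,\psi)$), my plan is to exhibit $\mathsf{U}_{(\phi,\psi)}$ as the unitary implementing that change of representation, so that conjugation is forced by naturality.

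Concretely, I will first define $\mathsf{U}_{(\phi,\psi)}\colon L^2(S^1,\mathbb R^{2n-2})\to L^2(S^1,\mathbb R^{2n-2})$ by
\begin{equation*}
(\mathsf{U}_{(\phi,\psi)}h)(t)\;=\;U_{(\phi,\psi)}(t)\,h(t-c),
\end{equation*}
and verify, using the unitarity of $U_{(\phi,\psi)}(t)$ established in the preceding lemma, that this is a unitary operator on $L^2(S^1,\mathbb R^{2n-2})$ which preserves the Sobolev scale and in particular sends $H^1$ to $H^1$, so that the conjugation formula is meaningful. The next step is to show that under $\mathsf{U}_{(\phi,\psi)}$, a $\phi$-loop $h$ corresponds precisely to the $\psi$-loop obtained by first interpreting $h(t)$ as a normal vector to $\gamma^\phi(t)$ via $\widehat\phi_t^{-1}$, then restricting to $\gamma^\psi(s) = \gamma^\phi(s+c)$, then re-expressing in the $\psi$-trivialization via $\widehat\psi_s$. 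Working through the identifications, this is exactly the composition $\widehat\psi_s\circ\widehat\phi_{s+c}^{-1}\circ h(s+c)$, and differentiating $\psi\circ\phi^{-1}$ at $\gamma_0(s+c)$ identifies $\widehat\psi_s\circ\widehat\phi_{s+c}^{-1}$ with $U_{(\phi,\psi)}(s+c)^{-1}$ after a reindexing — so one must carefully fix whether $\mathsf U$ or $\mathsf U^{-1}$ sits on the left; the composition formula \eqref{Gldfg} is a useful consistency check.

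Once this identification is in place, the conjugation formula reduces to the abstract statement that $\bm L^\phi$ and $\bm L^\psi$ are the matrix representations of the same intrinsic operator in the two frames, which follows from the chain rule: differentiating the identity
\begin{equation*}
\operatorname{pr}_2\circ T\psi\!\left(\tfrac{d}{dt}\psi^{-1}(z(t))-T_0\,R(\psi^{-1}(z(t)))\right)\;=\;U_{(\phi,\psi)}\cdot \operatorname{pr}_2\circ T\phi\!\left(\tfrac{d}{dt}\phi^{-1}(\tilde z)-T_0\,R(\phi^{-1}(\tilde z))\right)
\end{equation*}
at the loop $z=\gamma_0$, where $\tilde z(t)=z(t-c)$ is the shifted loop, gives the relation $\bm L^\psi\circ \mathsf U_{(\phi,\psi)}=\mathsf U_{(\phi,\psi)}\circ \bm L^\phi$ after accounting for the $-J_0$ factor (which commutes with $U_{(\phi,\psi)}(t)$ by unitarity).

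The main obstacle will be bookkeeping rather than analysis: keeping straight in which direction the shift by $c$ acts, whether the unitary loop is evaluated at $t$ or $t-c$, and ensuring that the definition of $\mathsf U_{(\phi,\psi)}$ is consistent with the composition law \eqref{Gldfg} (which one should test as a sanity check by verifying $\mathsf U_{(\psi,\sigma)}\circ \mathsf U_{(\phi,\psi)}=\mathsf U_{(\phi,\sigma)}$, noting that the phases add: $c(\phi,\sigma)=c(\phi,\psi)+c(\psi,\sigma)$). No deep PDE estimates are required; everything is a chain-rule computation once the correct coordinate-free picture is set up.
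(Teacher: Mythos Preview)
Your approach is essentially the same as the paper's: both linearize the nonlinear map $y\mapsto \dot y - T_0 R(y)$ in the two coordinate systems, relate the inputs via $y(t-c)=\sigma\circ z(t)$ with $\sigma=\psi\circ\phi^{-1}$, and apply the chain rule to obtain the intertwining relation, then multiply by $-J_0$ (which commutes with the unitaries). One caveat on bookkeeping: from $k(t-c)=U_{(\phi,\psi)}(t)h(t)$ the paper's implicit convention is $(\mathsf{U}_{(\phi,\psi)}h)(s)=U_{(\phi,\psi)}(s+c)\,h(s+c)$, not $U_{(\phi,\psi)}(t)h(t-c)$ as you wrote, and your identification $\widehat\psi_s\circ\widehat\phi_{s+c}^{-1}=U_{(\phi,\psi)}(s+c)^{-1}$ should read $U_{(\phi,\psi)}(s+c)$ (no inverse); you already flagged this indexing as the main hazard, and once corrected the argument goes through exactly as in the paper.
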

\begin{proof}
We have the following two expressions  from which $\bm{L}^{\phi}$ and $\bm{L}^{\psi}$ are being derived.
\begin{eqnarray}
& z\rightarrow \frac{d}{dt} z -T_0\cdot [\text{pr}_2\circ T\phi\circ R\circ\phi^{-1}(z)]&\\
& y\rightarrow \frac{d}{dt} y -T_0\cdot [\text{pr}_2\circ T\psi\circ R\circ\psi^{-1}(y)]&\nonumber
\end{eqnarray}
Let us define $\sigma=\psi\circ\phi^{-1}$. We note that $\sigma\circ\gamma_0(t) = \gamma_0(t-c)$
where $c=c(\phi,\psi)$. We define the map $z\rightarrow y$ by $y(t-c)=\sigma\circ z(t)$.

We compute
\begin{eqnarray*}
 &&\left(\frac{d}{dt} y -T_0\cdot [\text{pr}_2\circ T\psi\circ R\circ\psi^{-1}(y)]\right)(t-c)\\
&=& \frac{d}{dt} y(t-c) -T_0\cdot [\text{pr}_2\circ T\psi\circ R\circ\psi^{-1}(y(t-c))]\\
&=& \frac{d}{dt} (\sigma\circ z(t)) - T_0 \cdot  [\text{pr}_2\circ T\sigma\circ T\phi\circ R\circ\phi^{-1}(z(t))]\\
&=& D\sigma(z(t))[ \frac{d}{dt}z(t) -T_0 \cdot T\phi\circ R\circ\phi^{-1}(z(t))].
\end{eqnarray*}
Differentiating the relationship $y(t-c)=\sigma\circ z(t)$ between the input loop $z$
and the output loop $y$ with respect to $z$ at $\gamma_0$ in the direction of $h$ gives
$$
k(t-c) = D\sigma(t,0)h(t).
$$
Hence we obtain
\begin{eqnarray*}
&&  \frac{d}{dt}k(t-c) - \wh{B}^{\psi}(t-c) k(t-c)\\
&=& D\sigma(t,0)[\frac{d}{dt}h(t) - \wh{B}^{\phi}(t) h(t)]
\end{eqnarray*}
Specializing we obtain from this for $h\in H^1(S^1,{\mathbb R}^{2n-2})$ also the relation ship
$$
\frac{d}{dt}k(t-c) -B^{\psi}(t-c)k(t-c)= U_{(\phi,\psi)}(t) [\frac{d}{dt}h(t) - B^{\phi}(t) h(t)]
$$
This means that
$$
(\frac{d}{dt} -B^{\psi})\mathsf{U}_{(\phi,\psi)}h = \mathsf{U}_{(\phi,\psi)} (\frac{d}{dt}h- B^{\phi}h),
$$
and after multiplying by $-J_0$ we obtain the desired result.
\end{proof}
As a consequence of the previous discussion we can define the $J$-spectral interval around $0$ 
associated to a periodic orbit $([\gamma],T,k)$ associated to $(Q,\lambda,\omega)$ and $J$.
\begin{definition}\label{SPECTrum}
Let $(Q,\lambda,\omega)$ be a closed manifold equipped with a stable Hamiltonian structure 
and $J$ an admissible complex multiplication for $\text{ker}(\lambda)\rightarrow Q$. 
The $J$-{\bf spectral interval} associated to a periodic orbit $\bm{\gamma}=([\gamma],T_0,k_0)$
is the largest interval $(a,b)\subset {\mathbb R}$,  $a\leq 0\leq b$, such that $\sigma(\bm{L}^{\phi})\cap (a,b)=\emptyset$.
Here $\sigma{\bm{L}}^{\phi})$ is the spectrum associated to this self-adjoint operator. 
We also define  $\sigma(\bm{\gamma},J):= \sigma(\bm{L}^{\phi})$, which, of course, does not depend 
on $\phi$, and call it the $J$-{\bf spectrum} of $\bm{\gamma}$.
\qed
\end{definition}
\begin{remark}
Since for $\phi,\psi$ the associated operators are unitarily conjugated the $J$-interval does not depend 
on the choice of special coordinate.  In the case that $\bm{\gamma}$ is non-degenerate the spectral interval will be 
nonempty, containing $0$ in the interior.
\qed
\end{remark}
As just mentioned the non-degeneracy assumption implies that $0\not\in \sigma (\bm{\gamma},J)$.
Since the operator $\bm{L}^{\phi}$  has a compact resolvent we have a spectral gap around
$0$. We call a positive number $\delta$ associated to $\bm{\gamma}=([\gamma],T,k)$  admissible, provided
$$
\sigma(\bm{\gamma},J)\cap [-\delta,\delta]=\emptyset.
$$
Note that the admissibility of $\delta$ depends on the original choice of $J$, which most of the time is fixed
from the beginning. If we want to stress the dependence on $J$ we call $\delta$ admissible for $(\bm{\gamma},J)$.
There are several notions and results associated to periodic orbits, which are frequently used in constructions.
\begin{definition}\label{DEFSpectrum}
Given a non-degenerate stable Hamiltonian structure $(\lambda,\omega)$ on the closed
manifold $	Q$ and a compatible $J$ we call a map
$$
\delta_o:{\mathcal P^\ast}\rightarrow (0,\infty)
$$
a {\bf weight selector} associated to $J$ provided for every $\bm{\gamma}=([\gamma],k,T) \in {\mathcal P}$,  the  number $\delta(\bm{\gamma},J)$ associated to $(\bm{\gamma},J)$ is admissible and bounded strictly by $2\pi$.  In addition we require that $\delta(\emptyset)\in (0,2\pi)$. 
A {\bf weight sequence} $(\delta_i)$ for $(\lambda,\omega,J)$ is a sequence
of weight functions $\delta_m$ so that for every periodic orbit $\gamma$ or $\gamma=\emptyset $ we have
$$
0<\delta_0(\gamma)<\delta_1(\gamma)<....
$$
\qed
\end{definition}

\subsection{Conley-Zehnder and Maslov Index}\label{CZMaslov}
In the (Fredholm) index theory for the CR-oper\-ator the Conley-Zehnder index plays an important role. We follow 
\cite{Dragnev}, which is based on \cite{FloerH1,FloerH2,HWZ-Em,SZ}. We view ${\mathbb R}^2$ as a symplectic vector space with coordinates 
$(x,y)$ and symplectic form $dx\wedge dy$.  Then ${\mathbb R}^{2n}$ is identified with the direct sum ${\mathbb R}^2\oplus..\oplus {\mathbb R}^2$,
coordinates $(x_1,y_1,...,x_n,y_n)$, and symplectic form $\omega=\sum_{i=1}^n dx_i\wedge dy_i$. 
\subsubsection{Conley-Zehnder Index} Denoting by $\text{Sp}(n)$ the group of linear symplectic maps 
${\mathbb R}^{2n}\rightarrow {\mathbb R}^{2n}$ we consider the space of continuous arcs $\Phi:[0,1]\rightarrow \text{Sp}(n)$ starting at the identity $\text{Id}_{2n}$ 
at $t=0$ and ending at $\Phi(1)$ which is a symplectic map not having $1$ in the spectrum. We denote by $ \Sigma(n)$ the maps 
$\alpha:[0,1]\rightarrow \text{Sp}(n)$ starting and ending at $\text{Id}_{2n}$. The map
$$
\text{G}(n)\times\Sigma(n)\rightarrow \Sigma(n): (\alpha,\Phi)\rightarrow \alpha\cdot\Phi, \ (\alpha\cdot\Phi)(t)=\alpha(t)\circ\Phi(t)
$$
is well-defined. We also have the inversion map
$$
\Sigma(n)\rightarrow \Sigma(n): \Phi\rightarrow \Phi^{-1},\ \Phi^{-1}(t):= (\Phi(t))^{-1}.
$$
Finally there is the obvious map
$$
\Sigma(n)\times\Sigma(m)\rightarrow\Sigma(n+m): (\Phi,\Psi)\rightarrow \Phi\oplus\Psi.
$$
A classical map $\mu^n_M:\text{G}(n)\rightarrow {\mathbb Z}$ is the Maslov index which is characterized by the following theorem.
\begin{thm}
The maps $\mu^n_M$ for $n\in \{1,....\}$ are characterized by the following requirements.
\begin{itemize}
\item[(1)] Two loops  $\alpha_1,\alpha_2\in \text{G}(n)$ are homotopic in $\text{G}(n)$ if and only if $\mu^n_M(\alpha_1)=\mu^n_M(\alpha_2)$.
\item[(2)] The map induced on $\pi_1(\text{Sp}(n),Id)$ by $\mu^n_M$ is a group isomorphism to ${\mathbb Z}$, i.e in this particular case equivalently
$$
\mu^n_M(\alpha_1\cdot \alpha_2) =\mu^n_M(\alpha_1)+\mu^n_M(\alpha_1).
$$
Here $(\alpha_1\cdot \alpha_2)(t)=\alpha_1(t)\circ\alpha_2(t)$.
\item[(3)] It holds 
$$
\mu^M_n\left(\left[t\rightarrow\left(e^{2\pi it}Id_2 \oplus Id_{2n-2}\right)\right]\right)=1.
$$
\end{itemize}
\qed
\end{thm}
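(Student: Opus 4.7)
The plan is to split the statement into existence and uniqueness, both of which I would reduce to the topological fact that $\pi_1(\text{Sp}(n),\operatorname{Id})\cong\mathbb{Z}$, generated by the loop appearing in (3).

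First I would establish this topological fact. The standard route is via the polar/Iwasawa decomposition $\text{Sp}(n)=U(n)\cdot P$, where $P$ is a contractible space of positive-definite symmetric symplectic matrices, giving a strong deformation retract $\text{Sp}(n)\simeq U(n)$. Then the fibration $SU(n)\hookrightarrow U(n)\xrightarrow{\det}U(1)$ together with $\pi_1(SU(n))=0$ yields $\pi_1(\text{Sp}(n))\cong\pi_1(U(1))\cong\mathbb{Z}$. A direct computation then shows that the distinguished loop $\alpha_0(t)=e^{2\pi it}\operatorname{Id}_2\oplus\operatorname{Id}_{2n-2}$ maps under this chain of isomorphisms to $t\mapsto e^{2\pi it}\in U(1)$, whose winding number is $1$, so $[\alpha_0]$ is a generator.

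Given this, uniqueness is immediate. Property (1) says exactly that $\mu^n_M$ descends to, and is injective on, $\pi_1(\text{Sp}(n))$; property (2) upgrades this to a group isomorphism with $\mathbb{Z}$. But any group isomorphism $\mathbb{Z}\to\mathbb{Z}$ is $\pm\operatorname{id}$, and (3) pins down the sign by forcing $\mu^n_M([\alpha_0])=+1$. Existence is then witnessed by the composite isomorphism $\pi_1(\text{Sp}(n))\xrightarrow{\sim}\pi_1(U(n))\xrightarrow{\det_*}\pi_1(U(1))\xrightarrow{\sim}\mathbb{Z}$ produced above, which visibly satisfies (1) (it is induced from a bijection on $\pi_0$ of the loop space), satisfies (2) by functoriality, and satisfies (3) by the winding number computation.

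The main obstacle, I expect, is more a matter of careful bookkeeping than of real difficulty: one must verify that $[\alpha_0]$ is indeed a generator of $\pi_1(\text{Sp}(n))$ rather than merely a nonzero class, because an injective homomorphism $\mathbb{Z}\to\mathbb{Z}$ need only be multiplication by some nonzero $k$, and one wants $k=\pm 1$. Since $\alpha_0$ already lies in the unitary subgroup, the deformation retract is unnecessary here and the computation collapses to $\det(\alpha_0(t))=e^{2\pi it}$, which is the standard generator of $\pi_1(U(1))$. Once this is in place, the three axioms force $\mu^n_M$ uniquely, completing both the existence and uniqueness parts of the characterization.
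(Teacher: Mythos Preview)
The paper does not actually prove this theorem: it is stated as a classical fact and closed with \qed, with the surrounding section referring the reader to \cite{Dragnev,FloerH1,FloerH2,HWZ-Em,SZ} for background. Your sketch is correct and is exactly the standard argument one finds in those references, so there is nothing to compare against here.
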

Having characterized the Maslov index we state the main result about the Conley-Zehnder index.
The Conley-Zehnder index refers to a family of maps $\mu_{CZ}^n:\Sigma(n)\rightarrow {\mathbb Z}$, $n\in \{1,2,..\}$.

\begin{thm}
There exists a unique family $\mu_{CZ}^n: \Sigma(n)\rightarrow {\mathbb Z}$ for $n\in\{1,2,..\}$
characterized by the following properties:
\begin{itemize}
\item[(1)] Homotopic maps in $\text{G}(n)$ have the same index $\mu_{CZ}^n$.
\item[(2)] For $\alpha\in \text{G}(n)$ and $\Phi\in \Sigma(n)$ the identity
$$
\mu_{CZ}^n(\alpha\cdot\Phi) =\mu_{CZ}^n(\Phi) + 2\cdot \mu_M^n(\alpha).
$$
\item[(3)] $\mu_{CZ}^n(\Phi^{-1})+\mu_{CZ}^n(\Phi)=0$.
\item[(4)] $\mu_{CZ}^1(\gamma)=1$, where $\gamma(t)=e^{\pi it}Id_{{\mathbb R}^2}$.
\item[(5)] $\mu^{n+m}_{CZ}(\Phi\oplus \Psi)=\mu^n_{CZ}(\Phi) +\mu^m_{CZ}(\Psi)$.
\end{itemize}
\qed
\end{thm}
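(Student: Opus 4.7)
The proof splits into two parts: uniqueness (which turns out to follow from a small subset of the axioms) and existence (by exhibiting an explicit construction of $\mu^n_{CZ}$ and verifying each axiom).

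\textbf{Uniqueness.} Suppose $\mu$ and $\mu'$ both satisfy (1)--(5), and set $\nu := \mu - \mu'$. Axiom (1) makes $\nu$ a homotopy invariant on $\Sigma(n)$, and axiom (2), applied to both $\mu$ and $\mu'$, gives $\nu(\alpha \cdot \Phi) = \nu(\Phi)$ for every loop $\alpha \in \text{G}(n)$ (the Maslov contributions cancel in the difference). Hence $\nu$ descends to a function on the quotient $\pi_0(\Sigma(n))/\pi_1(\text{Sp}(n), Id)$, which is canonically identified with $\pi_0(\text{Sp}^\ast(n))$, where $\text{Sp}^\ast(n) := \{A \in \text{Sp}(n) : 1 \notin \sigma(A)\}$ is the space of allowed endpoints. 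A standard computation shows $\pi_0(\text{Sp}^\ast(n)) \cong \mathbb{Z}/2$, with the two components distinguished by the sign of $\det(Id - \Phi(1))$. Since $\det\Phi(1) = 1$ implies $\det(Id - \Phi(1)^{-1}) = \det(Id - \Phi(1))$, the paths $\Phi$ and its pointwise inverse $\Phi^{-1}$ lie in the same component; axiom (3) then forces $\nu(\Phi) = -\nu(\Phi^{-1}) = -\nu(\Phi)$, so $\nu \equiv 0$.

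\textbf{Existence.} For existence, I would follow one of the standard constructions (winding-number / rotation-number approach in the manner of Salamon--Zehnder, or the crossing-form formula of Robbin--Salamon). The strategy is to take axiom (5) as the definition of $\mu^n_{CZ}$ in terms of $\mu^1_{CZ}$, after reducing an arbitrary $\Phi \in \Sigma(n)$ by a homotopy in $\Sigma(n)$ to a direct sum of one-dimensional blocks. For $\Phi \in \Sigma(1)$, write the polar decomposition $\Phi(t) = R(\pi \theta_\Phi(t)) P(t)$ with $R \in SO(2)$ and $P(t)$ symmetric positive definite, lift $\theta_\Phi$ continuously with $\theta_\Phi(0) = 0$, and define $\mu^1_{CZ}(\Phi)$ to be the odd integer in a prescribed neighborhood of $2\theta_\Phi(1)$ when $\Phi(1)$ is in the positive component of $\text{Sp}^\ast(1)$, and the analogous even integer when $\Phi(1)$ is in the negative component. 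Axiom (4) then holds by direct computation; axiom (2) follows from $\theta_{\alpha \cdot \Phi} = \theta_\alpha + \theta_\Phi$ together with the normalization $\mu_M^1(\alpha) = \theta_\alpha(1)$; axiom (3) follows from $\theta_{\Phi^{-1}} = -\theta_\Phi$ and the symmetry of the rounding; and axiom (5) is built into the $n$-dimensional extension.

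\textbf{Main obstacle.} The principal technical difficulty is to justify that the inductive reduction to one-dimensional blocks can be carried out through a homotopy \emph{inside} $\Sigma(n)$, i.e. without $1$ ever entering $\sigma(\Phi(1))$ during the deformation; this uses the classification of symplectic normal forms avoiding the eigenvalue $1$ together with the openness of $\text{Sp}^\ast(n)$ in $\text{Sp}(n)$. A secondary subtlety is to verify that the odd/even rounding prescriptions on the two components of $\text{Sp}^\ast(1)$ are compatible with axioms (2) and (3) simultaneously, particularly near the boundary between the elliptic and hyperbolic regimes. Once these are settled, uniqueness above completes the proof of the theorem.
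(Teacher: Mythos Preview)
The paper does not actually prove this theorem: it is stated with a bare \qed, and the surrounding text simply refers the reader to \cite{Dragnev} and \cite{FloerH1,FloerH2,HWZ-Em,SZ} for the construction and properties of the Conley--Zehnder index. So there is no in-paper argument to compare against; your proposal has to be judged on its own.

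Your uniqueness argument is clean and correct. Axioms (1) and (2) force the difference $\nu=\mu-\mu'$ to factor through $\pi_0(\text{Sp}^\ast(n))\cong{\mathbb Z}/2$, and your observation that $\Phi(1)$ and $\Phi(1)^{-1}$ lie in the same component (since $\det(\text{Id}-A^{-1})=\det(\text{Id}-A)$ for $A\in\text{Sp}(n)$, the dimension being even) combined with axiom (3) kills $\nu$. This is in fact a bit sharper than what one often sees written out, since you use only (1)--(3).

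Your existence sketch is weaker. The plan to \emph{define} $\mu^n_{CZ}$ by first homotoping an arbitrary $\Phi\in\Sigma(n)$ to a direct sum of one-dimensional blocks and then invoking (5) runs into precisely the obstacle you flag, and it does not really go away: while each component of $\text{Sp}^\ast(n)$ contains a split representative (e.g.\ $W^+=-\text{Id}$ and $W^-=\text{diag}(2,1/2,-1,\dots,-1)$), arranging that an arbitrary path in $\Sigma(n)$ is homotopic \emph{within $\Sigma(n)$} to a direct sum of paths in $\Sigma(1)$, and that the resulting $\mu^1$-sum is independent of all choices, is essentially as hard as the theorem itself. The standard existence proofs avoid this reduction. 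The Salamon--Zehnder construction \cite{SZ} defines a continuous rotation map $\rho:\text{Sp}(n)\to S^1$ directly in dimension $n$ (via the complex determinant of the unitary factor in the polar decomposition), lifts it along $\Phi$, and reads the index off the endpoint; the Robbin--Salamon construction \cite{RS2} counts crossings of the eigenvalue $1$ with signs from the crossing form. Both work intrinsically in $\text{Sp}(n)$ and then \emph{verify} (5) as a consequence rather than using it as a definition. For a complete existence proof you should follow one of these routes; your one-dimensional formula is correct and is exactly what either construction specializes to when $n=1$.
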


\subsubsection{Trivializations and CZ-Index}
\jwf{\marginnote{ADD HERE}}
\section{Stable Hamiltonian Symplectic Cobordisms   }\label{APPENDIX28.3}
We discuss a variety of issues around compact symplectic manifolds with boundaries admitting compatible stable Hamiltonian structures.
\subsection{Basic Concepts}
Ina first step we define the objects of interest and present them in a way which fits with our larger goals.
\begin{definition}
Let $(W,\Omega)$ be a compact symplectic manifold with smooth boundary. We say the boundary is of {\bf stable Hamiltonian type}
provided there exists a  one-form $\lambda$ on $\partial W$ such that with $\omega$ being the pull-back of $\Omega$ to $\partial W$
the pair $(\lambda,\omega)$ is a stable Hamiltonian structure on $\partial W$ in the sense of Definition \ref{28.1QQ}
\qed
\end{definition}
We note that if $(\lambda,\omega)$ is a stable Hamiltonian structure so is $(-\lambda,\omega)$.
If $\partial W$ has different boundary components we obviously can make  sign changes on the different components.
Given $(W,\Omega)$ with $W$ of dimension $2n$ the $n$-fold exterior power $\Omega^n$ is a volume form and 
defines a {\bf natural orientation }on $W$.  Taking an outward pointing vector field $X$ on $\partial W$ the volume form
$\Omega^n(X,.)$ on $\partial W$ orients $\partial W$ and this is called the standard orientation for the boundary. 
Given $(W,\Omega)$ take $\lambda$ as described so that $(\lambda,\omega)$ is a stable Hamiltonian structure.
Then $\lambda\wedge \omega^{n-1}$ is a volume form on $\partial X$ and induces an orientation. 
Given a connected component $Q$ of $\partial X$ we say that $(\lambda,\omega)$ is {\bf positive} or {\bf negative} over $Q$,
provided the stable Hamiltonian orientation is the same or the opposite of the natural orientation.  For  the applications 
it is important to designate a union of some boundary components as positive and the rest as negative.
With such an additional choice one might view a compact symplectic manifold with stable Hamiltonian boundary 
as a directed symplectic cobordism. We make this precise next.

\begin{definition}
A {\bf stable Hamiltonian cobordism}  is given by a tuple $(W,\Omega,\upsilon)$, where $(W,\Omega)$ is a compact symplectic manifold
with smooth boundary of stable Hamiltonian type, and $\upsilon:\partial W\rightarrow \{-1,1\}$ is a locally constant map.
We call $\partial_\pm W=\{w\in\partial W\ |\ \upsilon(w)=\pm 1\}$ the positive and negative boundary, respectively.
\qed
\end{definition}
These orientations can be understood as follows. Given $w\in \partial W$ we consider the full tangent space $T_wW$ with with $\Omega_w$ is a symplectic vector space. 
Pick a vector $R\neq 0$ in $T_w\partial W$ lying in the kernel of $\omega$. We can take a complement $C$ in $T_w\partial W$ to the line generated by $R$. 
For $(C,\omega)$ we take a symplectic basis $a_1,b_1,...,a_{n-1},b_{n-1}$. Finally let $X\in T_wW$ be outward pointing so that it is $\Omega$-orthogonal
to $C$ and by changing the sign of $R$ 
we may assume that $\Omega_w(X,R)>0$. Then $X,R,a_1,b_1,...,a_{n-1},b_{n-1}$ is a symplectic basis for $T_wW$, which also defines the standard orientation,
and $R,a_1,b_1,...,a_{n-1},b_{n-1}$ defines the natural orientation for $\partial W$ at $w$ by definition. 
A one-form $\lambda$ so that $(\lambda,\omega)$ is a stable Hamiltonian structure is then positive provided $\lambda(R)>0$. 

Given a stable Hamiltonian cobordism $(W,\Omega,\upsilon)$ we can pick a one form $\lambda$ on $\partial W$
which is positive and negative on $\partial_\pm W$ so that $(\lambda,\omega)$ is a stable Hamiltonian structure.  Let us abbreviate $Q^\pm:=\partial_{\pm}W$
and $\lambda_{\pm}=\lambda|Q^\pm$ and similarly $\omega_{\pm}$.
If we pick $\lambda$ small enough we assume the following.
\begin{itemize}
\item[(1)] $\bar{\Omega}_{\pm}:=\omega_{\pm} + d(a\lambda_{\pm})$ is a symplectic form on $(-3,3)\times Q^\pm$. 
\item[(2)]  There exist neighborhoods $U(Q^\pm)$ with disjoint closures in $W$, and 
 diffeomorphisms $\Psi_+:(-3,0]\times Q^+\rightarrow U(Q^+)$, $\Psi_-:[0,3)\times Q^-\rightarrow U(Q^-)$
such that $\Psi_{\pm}(0,q)=q$ and 
$$
\Psi^\ast _{\pm}\Omega = \Omega_{\pm}
$$
\end{itemize}
Here $\Omega_-$ is a symplectic form on $[0,3)\times Q^-$ obtained by restricting $\bar{\Omega}_-$  and $\Omega_+$ similarly for $(-3,0]\times Q^+$. 

We shall study stable maps with image in $W$ and shall also define a Cauchy-Riemann section.
Stable maps of height $1$ will actually take its image in $W\setminus \partial W$ and there will be concatenations with buildings in 
${\mathbb R}\times Q^-$ and ${\mathbb R}\times Q^+$.  The concatenated objects will appear as limits of height-$1$-sequences.
In the present situation the choice os suitable almost complex structures on $W\setminus\partial W$ will be important and the fact these
structures which degenerate towards the boundary are compatible with whole families of symplectic structures which is used 
in the compactness results, \cite{BEHWZ}, to control the areas of pseudoholomorphic curves near $\partial W$.

\subsection{A Family of Symplectic Structures}
We shall introduce a particular family ${\mathcal F}$ of non-negative two-forms and symplectic structures on $W$ derived in some sense from $\Omega$
and the choices of $\lambda_{\pm}$ and $\Psi_{\pm}$. The family will turn out to be compatible with an important class of almost complex structures
on $W\setminus\partial W$.  We consider pairs $\phi=(\phi_-,\phi_+)$ of smooth maps satisfying 
\begin{itemize}
\item[(1)] $\phi_+:(-3,0]\rightarrow (-3,3)$ satisfies
\begin{eqnarray}
&\phi_+(s)=s,\ s\in (-3,-1]&\\
&\phi'_+(s)\geq 0,\ s\in (-1,0]\nonumber
\end{eqnarray}
\item[(2)]  $\phi_-:[0,3)\rightarrow (-3,3)$ satisfies
\begin{eqnarray}
&\phi_-(s)=s,\ s\in [2,3)&\\
&\phi'_-(s)\geq 0,\ s\in  [0,1)\nonumber
\end{eqnarray}
\end{itemize}
We denote by $\Sigma$ the collection of all $\phi$ having the above properties. For given $\phi\in \Sigma$ we define 
the two-form $\Omega_{\phi}$ on $W$ as follows. If $w\in W\setminus(U(Q^-)\cup U(Q^-))$ we put
\begin{eqnarray}
\Omega_{\phi}(w)=\Omega(w).
\end{eqnarray}
Further we define implicitly on $U(Q^+)$ and $U(Q^-)$.
\begin{eqnarray}
&\Psi_-^\ast\Omega_{\phi} = \omega_- + d(\phi_- \lambda_-)\ \text{on}\ [0,3)\times Q^-&\\
&\Psi_+^\ast\Omega_{\phi}=\omega_+ +d(\phi_+ \lambda_+)\ \text{on}\ (-3,0]\times Q^+&\nonumber
\end{eqnarray}
The family $\{\Omega_{\phi}\ |\ \phi\in\Sigma\}$ is called the {\bf family of compatible two-forms}. We note that if $\phi_{\pm}$ have positive 
derivatives the associated $\Omega_{\phi}$ is symplectic.  In this case we say that $\Omega_{\phi}$ is an {\bf admissible symplectic form}.
Admissibility refers to the data $(W,\Omega,\upsilon)$, the choice of a small enough $\lambda$, $(\lambda,\omega)$ being a stable Hamiltonian structure
 compatible with $\upsilon$, 
and $\Psi_{\pm}$.  We shall use the abbreviation 
\begin{eqnarray}
\mathscr{D} =((W,\Omega,\upsilon),\lambda_-,\lambda_+,\Psi_-,\Psi_+),
\end{eqnarray}
where $\mathscr{D}$ stands for `data'.
\subsection{Admissible Almost Complex Structures}
Our aim in this subsection is to define a suitable class of smooth almost complex structures $\wt{J}$ on $\dot{W}:=W\setminus\partial W$ compatible in some sense, to be made precise,  with the data $\mathscr{D}$. We introduce the abbreviations 
$$
\dot{U}(Q^\pm) = U(Q^\pm)\setminus Q^\pm.
$$

Let $\varphi_+:(-3,0)\rightarrow (-3,+\infty)$ be the map having the following properties
\begin{eqnarray}
&\varphi_+(s)=s\ \text{for}\ s\in (-3,-2].&\\
&\varphi'_+(s)>0\ \text{for}\ s\in (-2,-1).&\nonumber\\
&\varphi_+(s)= e^{-\frac{1}{s}}-e\ \text{for}\ s\in[-1,0).&\nonumber
\end{eqnarray}
We define the diffeomorphism $\bm{\Psi_+}$ through its inverse
$$
\bm{\Psi_+}^{-1}: \dot{U}(Q^+)\rightarrow (-3,\infty)\times Q^+: w\rightarrow (\phi_+\times Id_{Q^+})\circ \Psi_+^{-1}(w).
$$
\jwf{
\begin{definition}
A smooth almost complex structure $\wt{J}$ on $W\setminus\partial W$ is said to be {\bf admissible} provided for every $\phi\in\Sigma$ the following holds.
Over $ W_1$ the composition  $\Omega_{\phi}\circ (Id\oplus \wt{J})$ is a Riemannian metric (note that this expression does not depend on $\phi$ oever $W_1$.)
Over $\dot{U}_1(Q^+)$ the structure $\wt{J}$ is the pull-back of 
$$
.....
$$
\end{definition} 
\textcolor{red}{Variable symplectic structures models etc}
}

Consider a closed manifold equipped with a stable Hamiltonian structure $(Q,\lambda,\omega)$ as introduced in  Definition \ref{28.1QQ}.
Given a smooth map $\phi:{\mathbb R}\rightarrow [0,1]$ satisfying $\phi'(s)\geq 0$ consider the two form
on ${\mathbb R}\times Q$ defined with $(b,h),(b',h')\in T_{(a,q)}({\mathbb R}\times Q)$ by
\begin{eqnarray}
\ \ \ \ \Omega_{\phi}(a,q)((b,h),(b',h'))&=& \omega(q)(h,h') + \phi(a)d\lambda(q)(h,h')\\
&+& \phi'(a)(b\cdot \lambda(q)(h')-b'\cdot \lambda(q)(h)).\nonumber
\end{eqnarray}
Sloppily we may write this as $\omega+d(\phi\lambda)$. With $R$ being the Reeb vector field we know that $L_R\lambda=0$.
In view of the Cartan formula this implies that $0=d(\lambda(R))+ d\lambda(R,.)=d\lambda(R,.)$ using that $\lambda(R)\equiv 1$.
With other words $\text{ker}(\omega)\subset \text{ker}(D\lambda)$. Using the splitting $TQ={\mathbb R}R\oplus \xi$
we write $h= sR(q) +\Delta$ and similarly $h'= s'R(q)+\Delta'$.  Continuing the previous calculation 
we see that 
\begin{eqnarray}
&\omega(q)(h,h')+\phi(a)d\lambda(q)(h,h')= \omega(q)(\Delta,\Delta')+\phi(a)d\lambda(q)(\Delta,\Delta')&
\end{eqnarray}
There exists an $\varepsilon\in (0,1]$ such that the above expression on the right-hand side is non-degenerate for 
every $\phi:{\mathbb R}\rightarrow [0,\varepsilon]$. Moreover
\begin{eqnarray}
\phi'(a)(b\cdot\lambda(q)(h')-b'\cdot \lambda(q)(h))&=& b\cdot s' - b'\cdot s,
\end{eqnarray}
which a non-degenerate expression on the right-hand side in case $\phi'(a)>0$.  Given $J:\xi\rightarrow \xi$
with $J^2=-Id$ so that $\omega\circ (Id_\xi\oplus J)$ defines a positive definition inner product on the fibers 
of $\xi\rightarrow Q$ the associated $\wt{J}$ has the following property noting that
$$
\wt{J}(a,q)(b,sR(q)+\Delta)= (-s,bR(q)+J(q)\Delta).
$$
Namely 
\begin{eqnarray}
&& \Omega_{\phi}(a,q)((b,h),\wt{J}(a,q)(b,h))\\
&=& (\omega(q)(\Delta,\Delta')+\phi(a) d\lambda(q)(\Delta,\Delta')) + \phi'(a)(s^2 + b^2).\nonumber
\end{eqnarray}
Hence if $\phi:{\mathbb R}\rightarrow [0,\varepsilon]$ with $\phi'(a)\geq 0$ the above expression 
is always non-negative and if $\phi'(a)$ even symplectic.  Denote by $\Sigma$ the set of all 
smooth maps ${\mathbb R}\rightarrow [0,\varepsilon]$  such that $\phi'(a)\geq 0$. 
As we have seen $\Omega_\phi$ is a symplectic form on ${\mathbb R}\times Q$ provided $\phi'(a)>0$ for all $a$.
The family of all $\Omega_\phi$ on ${\mathbb R}\times Q$ is invariant under the ${\mathbb R}$-action 
and as a result of the previous discussion contains an ${\mathbb R}$-invariant family of symplectic forms.

Another important fact is that given a closed hypersurface $Q$ of dimension $2n-1$ in a symplectic 
manifold $(W,\Omega)$ so that there exists a nowhere vanishing one-form $\lambda$ on $Q$ for which the 
pull-back $\omega$ of $\Omega$ to $Q$ defines a stable Hamiltonian structure on $Q$ there exists an embedding
 $\Psi: (-\varepsilon,\varepsilon)\times Q\rightarrow W$ such that $\Psi^\ast\Omega = \omega+ d(a\lambda)$.
 We can apply this fact in the following situation.
 
\subsection{A Convenient Representation}
\jwf{\textcolor{red}{With ends glued in}

\textcolor{red}{NEED THE FORM FOR $\wt{J}$ which can be achieved in the $S^1$-model}}

\section{Estimates for  CR-Operators}\label{CRTYPOP}
We shall discuss a variety of estimates for Cauchy-Riemann type operators which are being used in establishing 
the properties of the CR-sections occurring in the polyfold theory.

\subsection{CR-Type Operators on the Riemann Sphere}
Consider the Riemann sphere $S^2$ and consider  for some $m\geq 1$ the Sobolev space
$H^m(S^2,{\mathbb C}^n)$.  We denote by $\Omega\rightarrow S^2$ the complex vector bundle 
whose fiber over $z\in S^2$ consists of all complex anti-linear maps $T_zS^2\rightarrow {\mathbb C}^n$. 
We denote by $H^{m-1}(\Omega)$ the Sobolev space of sections of $\Omega\rightarrow S^2$ of Sobolev class
$H^{m-1}$.  We can define the standard Cauchy-Riemann opertor 
$$
\bar{\partial}: H^m(S^2,{\mathbb C}^n)\rightarrow H^{m-1}(\Omega):u\rightarrow \frac{1}{2}\cdot [Du+i\circ Du\circ i],
$$
where $Du=\text{pr}_2\circ Tu$. The basic result about this operator is the following.
\begin{thm}\label{ATHEOREM1}
For every $m\geq 1$ the CR-operator $\bar{\partial}$ is a complex linear and surjective Fredholm operator
of complex Fredholm index $n$. The kernel is spanned by the constant maps.
\qed
\end{thm}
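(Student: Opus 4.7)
The plan is to reduce the vector-valued statement to the scalar case $n=1$, since $\bar\partial$ acts diagonally on the $n$ components of a map $u=(u_1,\ldots,u_n)\in H^m(S^2,\mathbb{C}^n)$. Complex linearity is immediate from the algebraic form: the expression $\tfrac12(Du+i\circ Du\circ i)$ is manifestly $\mathbb{R}$-linear, and a direct check shows that multiplying $u$ by $i$ in the target produces $i\circ Du$ (an $\mathbb{R}$-linear map), whose anti-linear part with respect to $j_{S^2}$ is $i$ times the anti-linear part of $Du$; this uses only that the complex structure on the fibres of $\Omega$ is post-composition with $i$. The Fredholm property follows from the fact that $\bar\partial$ is a first-order elliptic differential operator on the compact closed manifold $S^2$, so by standard elliptic theory (G\aa rding's inequality and compact Sobolev embeddings) it is Fredholm as a bounded operator $H^m\to H^{m-1}$ for every $m\geq 1$, and its kernel and cokernel (computed in some fixed regularity) are independent of $m$ by elliptic regularity.

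Next I would identify the kernel. An element $u\in\ker\bar\partial$ is, by elliptic regularity, smooth, and satisfies the usual Cauchy-Riemann equations in any holomorphic chart; thus each component $u_j:S^2\to\mathbb{C}$ is holomorphic. Since $S^2$ is compact and $\mathbb{C}$ is affine, the maximum modulus principle (or equivalently Liouville applied after restricting to $\mathbb{C}=S^2\setminus\{\infty\}$ and using removable-singularity at $\infty$) forces $u_j$ to be constant. Hence $\ker\bar\partial\cong\mathbb{C}^n$, of complex dimension $n$.

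For the cokernel I would invoke the Dolbeault isomorphism identifying $\operatorname{coker}\bar\partial$ (in the smooth category, hence in any Sobolev completion by elliptic regularity) with the first sheaf cohomology of the trivial holomorphic bundle,
\[
\operatorname{coker}\bar\partial \;\cong\; H^{0,1}(S^2,\mathbb{C}^n)\;\cong\;H^1(S^2,\mathcal{O}^n).
\]
Serre duality then gives $H^1(S^2,\mathcal{O}^n)\cong H^0(S^2,K_{S^2}\otimes\mathcal{O}^n)^*$, and this vanishes because $\deg K_{S^2}=-2<0$, so $K_{S^2}$ has no non-zero holomorphic sections. Thus $\bar\partial$ is surjective, and the complex Fredholm index equals $\dim_{\mathbb{C}}\ker\bar\partial - \dim_{\mathbb{C}}\operatorname{coker}\bar\partial = n$. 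As a cross-check, the Riemann-Roch theorem applied to the trivial holomorphic vector bundle of rank $n$ on a genus-zero surface gives index $=n(1-g)=n$, agreeing with the direct computation.

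The only substantive point, and the one I would be most careful about, is the identification of $\operatorname{coker}\bar\partial$ with the sheaf cohomology group in the Sobolev category: one must know that the $L^2$-orthogonal complement of the image of $\bar\partial$ (which is closed by the Fredholm property) consists of smooth forms, and then that every smooth $\bar\partial$-closed $(0,1)$-form representing a class in $H^1(S^2,\mathcal{O})$ is $\bar\partial$-exact. Both steps are standard consequences of elliptic regularity for $\bar\partial$ together with the Hodge/Dolbeault decomposition on a compact K\"ahler manifold, but they are the only places where real analytic work (as opposed to formal algebraic geometry) intervenes. Once this identification is in hand, the theorem follows immediately from the vanishing of $H^0(S^2,K_{S^2})$.
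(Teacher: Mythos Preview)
The paper itself does not prove this theorem; it places a \qed after the statement and simply refers the reader to the appendix of \cite{HZ}. Your argument is correct: ellipticity on the closed surface $S^2$ gives the Fredholm property with regularity-independent kernel and cokernel, Liouville/maximum modulus identifies the kernel with the constants, and Dolbeault plus Serre duality (or equivalently Riemann--Roch for the trivial rank-$n$ bundle on a genus-zero curve) shows the cokernel vanishes.

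By way of comparison, the treatment in \cite{HZ} is more hands-on and analytic, working in the two standard affine charts on $S^2$ with explicit estimates rather than invoking sheaf cohomology and duality. Your route is somewhat higher-powered but entirely standard; the one step you correctly flag as substantive---that the Sobolev cokernel coincides with the smooth Dolbeault group $H^{0,1}$---is exactly where elliptic regularity does the real work, and your justification via the closed range and smoothness of the $L^2$-orthogonal complement is adequate. Either approach is fine here; yours has the advantage of making the index formula transparent and immediately generalisable to other holomorphic bundles, while the \cite{HZ} argument is self-contained and avoids the Dolbeault/Serre machinery.
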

A proof can be found in the appendix of \cite{HZ}. It turns out this result can be used as one of the basic results 
for deriving  all other results relevant to us.

If we remove $0$ and $\infty$ from $S^2$ we can identify $S^2\setminus\{0,\infty\}\equiv {\mathbb C}\setminus\{0\}$
which we can parametrize using holomorphic polar coordinates 
$$
{\mathbb R}\times S^1\rightarrow {\mathbb C}\setminus\{0\}:(s,t) \rightarrow e^{2\pi(s+it)}.
$$
In SFT certain Cauchy-Riemann type operators on punctured Riemann surfaces will be important and the description 
near the punctures it best done using positive or negative holomorphic polar coordinates. 

We give next version of Theorem \ref{ATHEOREM1} in this spirit. 
Fix a negative weight $-\delta\in (-2\pi, 0)$ and define for $m\geq 1$ the Sobolev space $H^{m,-\delta}({\mathbb R}\times S^1,{\mathbb C}^n)$
to consist of all maps $u:{\mathbb R}\times S^1\rightarrow {\mathbb C}^n$ which belong to $H^m_{loc}$ so that in addtion
$|u|_{H^{m,-\delta}}<\infty$, where 
$$
|u|^2_{H^{m,-\delta}}=\sum_{|\alpha|\leq m} \int_{{\mathbb R}\times S^1} |D^{\alpha}u(s,t)|^2 e^{-2\delta |s|} ds dt.
$$
We can define 
\begin{eqnarray}\label{CREQ1}
&\ \ \ \ \ \ \ \ \ \bar{\partial}:H^{m,-\delta}({\mathbb R}\times S^1,{\mathbb C}^n)\rightarrow H^{m-1,-\delta}({\mathbb R}\times S^1,{\mathbb C}^n):u\rightarrow u_s+iu_t.&
\end{eqnarray}
\begin{thm}\label{ACXZ1}
Let $m\geq 1$ and $-\delta\in (-2\pi,0)$. Then the Cauchy-Riemann operator in (\ref{CREQ1}) is a complex linear surjective 
Fredholm operator whose kernel consists of the constant maps.  In particular the complex Fredholm index is $n$.
\qed
\end{thm}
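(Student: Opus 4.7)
The plan is to reduce the Fredholm statement to standard theory for asymptotically translation-invariant elliptic operators on the cylinder, and then to identify the kernel and the index by a direct computation.

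First I would conjugate the weight away. Choose a smooth odd function $\chi:{\mathbb R}\to{\mathbb R}$ with $\chi(s)=|s|$ for $|s|\geq 1$, and set $\rho(s):=e^{\delta\chi(s)}$. Multiplication by $\rho$ gives a topological isomorphism
$$
M_\rho:H^m({\mathbb R}\times S^1,{\mathbb C}^n)\longrightarrow H^{m,-\delta}({\mathbb R}\times S^1,{\mathbb C}^n),
$$
and a direct calculation yields $\tilde L:=M_\rho^{-1}\bar\partial M_\rho=\bar\partial+\delta\chi'(s)$, a bounded zeroth-order perturbation of $\bar\partial$ on the unweighted cylinder. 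Its asymptotic limits at $\pm\infty$ are the constant-coefficient operators $\partial_s+A_\pm$ with $A_\pm:=i\partial_t\pm\delta$ self-adjoint on $L^2(S^1,{\mathbb C}^n)$ and with spectra $\{2\pi k\pm\delta:k\in{\mathbb Z}\}$. The hypothesis $\delta\in(0,2\pi)$ is precisely what makes both $A_\pm$ invertible, so standard Fredholm theory for asymptotically translation-invariant elliptic operators on cylinders (à la Lockhart--McOwen, and underlying Proposition 2.8 in \cite{HWZ8.7}) gives Fredholmness of $\tilde L:H^m\to H^{m-1}$; conjugating back proves $\bar\partial:H^{m,-\delta}\to H^{m-1,-\delta}$ is Fredholm.

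I would identify the kernel by Fourier series in $t$: writing $u=\sum_k u_k(s)e^{2\pi ikt}$, the equation $u_s+iu_t=0$ forces $u_k(s)=c_k e^{2\pi ks}$, and the weighted integrability $\int e^{4\pi ks-2\delta|s|}ds<\infty$ with $\delta\in(0,2\pi)$ is satisfied only for $k=0$, giving $\ker\bar\partial={\mathbb C}^n$ (the constants). For the complex Fredholm index I would compute the spectral flow of the family $s\mapsto A(s):=i\partial_t+\delta\chi'(s)$ connecting $A_-$ to $A_+$: for each $\mathbb C$-factor, exactly the $k=0$ eigenvalue crosses zero (from $-\delta$ up to $+\delta$), so the spectral flow and hence the complex Fredholm index equals $n$, matching $\dim_{\mathbb C}\ker\bar\partial$. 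Surjectivity follows, and complex linearity is immediate from $\bar\partial u=u_s+iu_t$.

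The hardest part will be pinning down the Fredholm theorem for asymptotically translation-invariant operators in exactly the form needed, together with the spectral flow index calculation; although these are classical in the Floer/SFT literature, writing them out carefully requires some functional analysis on cylindrical ends. An alternative route that avoids spectral flow is to transport the problem to $S^2\setminus\{0,\infty\}$ via $z=e^{2\pi(s+it)}$—under which a direct computation gives $\partial_s+i\partial_t=4\pi\bar z\,\partial_{\bar z}$—and then invoke the surjectivity of Theorem \ref{ATHEOREM1}, with the delicate point being to verify that the weight range $\delta\in(0,2\pi)$ is exactly what makes the weighted Sobolev regularity transfer as $H^m$ across the two removable punctures.
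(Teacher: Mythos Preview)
Your argument is essentially correct, with one trivial slip: a smooth \emph{odd} function cannot agree with $|s|$ for $|s|\geq 1$; you simply want a smooth even function with $\chi(s)=|s|$ for $|s|\geq 1$ (so that $\chi'(s)\to\pm 1$). With that fix, the conjugation, the asymptotic operators $A_\pm=i\partial_t\pm\delta$, the Fourier computation of the kernel, and the spectral flow count are all fine.

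The paper, however, gives no proof at all: it marks the statement \qed, remarks that this is ``essentially the same as Theorem~\ref{ATHEOREM1}'' in holomorphic polar coordinates, and says it ``can be proved using Fourier series''. Both of those are lighter than your main route. A direct Fourier argument handles everything at once: for each mode $e^{2\pi ikt}$ the equation becomes the ODE $u_k'-2\pi k\,u_k=f_k$, whose explicit variation-of-constants solution lies in the weighted space precisely when $\delta\in(0,2\pi)$; this simultaneously gives surjectivity, the kernel, and the index, with no appeal to Lockhart--McOwen or spectral flow. Your alternative route via $z=e^{2\pi(s+it)}$ and Theorem~\ref{ATHEOREM1} is exactly what the paper points to, and the weight range $\delta\in(0,2\pi)$ is indeed what makes $H^{m,-\delta}$ on the cylinder match $H^m$ across the two removable punctures on $S^2$. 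So your approach is correct but heavier than necessary; the Fourier route buys a self-contained and elementary argument, while yours places the result in the general framework that one needs anyway for the non-constant-coefficient asymptotic operators appearing later in the SFT Fredholm theory.
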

This is essential the same as Theorem \ref{ATHEOREM1}, but using different function spaces. 
This can be proved using Fourier series.   If we take $\delta\in (0,2\pi)$ as a weight and work with $H^{m,\delta}$ it will turn out that 
$\bar{\partial}$ will stay Fredholm, but will change its Fredholm index to $-n$ and  will be injective.  We can obtain an isomorphism by slightly enlarging the domain.
Define for $\delta\in (0,2\pi)$ and $m\geq 1$ the Sobolev space $H^{m,\delta}_{ap}({\mathbb R}\times S^1,{\mathbb C}^n)$ as follows.
It consists of all $u\in H^m_{loc}$ so that there exists a smooth map $v:{\mathbb R}\times S^1\rightarrow {\mathbb C}^n$ (depending on $u$)  which is constant outside of a compact set
with $v(+\infty)+v(-\infty)=0$ so that
$$
|u-v|_{H^{m,\delta}}<\infty.
$$
The definition does not depend on the choice of $v=v(u)$. We can define a norm as follows. Take a smooth cut-off model $\beta:{\mathbb R}\rightarrow [0,1]$ and define 
$v_c:{\mathbb R}\times S^1\rightarrow {\mathbb C}^n$ as follows.
$$
v_c(s,t) =(1-2\beta(s))c,
$$
where $c\in {\mathbb C}$. An element $u\in H^{m,\delta}_{ap}({\mathbb R}\times S^1,{\mathbb C}^n)$ can be uniquely written as 
$$
u = v_c +w
$$
for a suitable $c\in {\mathbb C}$ and $w\in H^{m,\delta}({\mathbb R}\times S^1,{\mathbb C}^n)$.  We define the norm on $H^{m,\delta}_{ap}({\mathbb R}\times S^1,{\mathbb C}^n)$
by
$$
|w|^2_{H^{m,\delta}_{ap}}({\mathbb R}\times S^1,{\mathbb C}^n) =|c|^2 + |w|_{H^{m,\delta}}^2.
$$
\begin{thm}\label{ACXZ2}
For $m\geq 1$ and $\delta\in (0,2\pi)$ consider the Cauchy-Riemann operator 
$$
\bar{\partial}:H^{m,\delta}_{ap}({\mathbb R}\times S^1,{\mathbb C}^n)\rightarrow H^{m-1,\delta}({\mathbb R}\times S^1,{\mathbb C}^n):u\rightarrow u_s+iu_t.
$$
Then $\bar{\partial}$ is a complex linear topological isomorphism.
\qed
\end{thm}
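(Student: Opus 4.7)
The plan is to reduce the problem to Fourier analysis in the $S^1$-variable. Writing $u(s,t)=\sum_{k\in {\mathbb Z}} u_k(s) e^{2\pi i k t}$ and $f(s,t)=\sum_k f_k(s) e^{2\pi i k t}$, the operator $\bar{\partial}$ decouples as $(\bar{\partial} u)_k = u_k'(s) - 2\pi k\, u_k(s)$. So the global problem splits into an infinite family of one-dimensional ODEs indexed by $k$, and the whole question is whether each ODE is uniquely solvable in the appropriate weighted $L^2$-class and whether the solutions assemble back into an element of $H^{m,\delta}_{ap}({\mathbb R}\times S^1,{\mathbb C}^n)$.

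For the nonzero modes $k\neq 0$, the homogeneous solutions grow like $e^{2\pi k s}$. The hypothesis $\delta\in(0,2\pi)$ guarantees $|2\pi k|\geq 2\pi>\delta$, so the exponential weight $e^{\delta|s|}$ is strictly dominated by the natural rate on each end. Variation of constants gives a unique solution by integrating from $+\infty$ when $k>0$ and from $-\infty$ when $k<0$; the associated convolution kernels are controlled uniformly in $k$ by the spectral gap between $2\pi |k|$ and $\delta$, which yields $\|u_k\|_{H^{m,\delta}}\lesssim |k|^{-1}\|f_k\|_{H^{m-1,\delta}}$ with constants independent of $k$.

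The zero mode is where the enlargement of the domain becomes essential. The equation $u_0'(s)=f_0(s)$ admits the symmetric primitive
$$
u_0(s)=\tfrac{1}{2}\int_{-\infty}^{s} f_0(\tau)\,d\tau-\tfrac{1}{2}\int_{s}^{+\infty} f_0(\tau)\,d\tau,
$$
whose asymptotic limits are $\pm\tfrac{1}{2}\int_{\mathbb R} f_0$; these sum to zero, so $u_0$ decomposes as $v_c+w$ with $c=\tfrac{1}{2}\int_{\mathbb R} f_0$ and exponentially decaying $w$, placing it exactly in the asymptotically-constant correction space. Uniqueness in $H^{m,\delta}_{ap}$ follows because two solutions differ by a constant $d\in {\mathbb C}^n$, and $d\in H^{m,\delta}_{ap}$ forces the asymptotic limits $d$ and $d$ to satisfy the antipodal constraint $d+d=0$, hence $d=0$. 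This gives injectivity globally, since any $u\in \ker\bar{\partial}$ has $u_k\equiv 0$ for $k\neq 0$ (by the nonzero-mode analysis) and $u_0\equiv 0$ by what we just said.

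The remaining step, and the main technical obstacle, is to assemble the mode-by-mode solutions into an actual element of $H^{m,\delta}_{ap}$ and to produce a uniform norm bound $\|u\|_{H^{m,\delta}_{ap}}\lesssim \|f\|_{H^{m-1,\delta}}$. The issue is that the Fourier decomposition gives good control over pure $s$- and pure $t$-derivatives through Parseval, but mixed derivatives require combining the exponential convolution estimates with the $\ell^2(k)$-summation. I would handle this by first establishing the estimate at the $m=1$ level via a direct $L^2$-Parseval computation (using that the convolution with $e^{-2\pi|k|\,|s-\tau|}$ is a bounded operator on $L^2(e^{2\delta|s|}\,ds)$ uniformly in $k$), then bootstrap to general $m$ by commuting $\partial_s$ and $\partial_t$ through $\bar{\partial}$; continuity of the inverse, together with the injectivity and surjectivity already established, produces the isomorphism claim via the open mapping theorem. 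A cleaner alternative would be to invoke Theorem \ref{ACXZ1} and an adjoint pairing, observing that $\bar{\partial}$ on $H^{m,-\delta}$ has cokernel exactly the antipodal constants, which under the duality $\langle u,v\rangle=\int e^{0}\, u\bar v$ corresponds to the one-dimensional enlargement $v_c$ on the positive-weight side.
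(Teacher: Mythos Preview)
The paper does not give a proof: the theorem is stated with a terminal \qed, and the surrounding text only remarks (after Theorem~\ref{ACXZ1}) that such results ``can be proved using Fourier series.'' Your mode-by-mode argument is precisely that method and is correct---in particular the handling of the zero mode via the symmetric primitive, whose limits $\pm\tfrac{1}{2}\int_{\mathbb R} f_0$ land exactly in the antipodal enlargement $H^{m,\delta}_{ap}$, is the point of the construction.
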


The next class of operators of interest are of the following form. Given $u:{\mathbb R}\times S^1\rightarrow {\mathbb C}^n$
we consider the operator 
$$
u\rightarrow u_s+iu_t +A(t) u,
$$
where $S^1\rightarrow L_{\mathbb R}({\mathbb C}^n)$ is a smooth arc of real linear maps, which are self-adjoint for the real inner product
on ${\mathbb C}^n\equiv {\mathbb R}^2\oplus..\oplus{\mathbb R}^2$. Note that identifying ${\mathbb C}\equiv {\mathbb R}^2$ we identify $i$ with the $2\times 2$ matrix $J_0$.
First of all we recall that 
$$
\bm{L}_A: L^2(S^1,{\mathbb C}^n)\supset H^1(S^1,{\mathbb C}^n)\rightarrow L^2(S^1,{\mathbb C}^n): x\rightarrow -i\frac{dx}{dt}-A(t)x
$$
is a real self-adjoint operator with compact resolvent and has a pure point spectrum with real multiplicities bounded by $2n$.
\begin{thm}
Let $\delta\in {\mathbb R}$ such that $[-|\delta|,|\delta|]$ is not contained in the spectrum $\sigma(\bm{L}_A)$ of $\bm{L}_A$.  
Then for $m\geq 1$ the real linear operator
\begin{eqnarray*}
&H^{m,\delta}({\mathbb R}\times S^1,{\mathbb C}^n)\rightarrow H^{m-1,\delta}({\mathbb R}\times S^1,{\mathbb C}^n)&\\
& u\rightarrow u_s +i u_t +A(t) u&
\end{eqnarray*}
is topological linear isomorphism.
\qed
\end{thm}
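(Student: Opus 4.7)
The first step is to recognize that the operator $u \mapsto u_s + iu_t + A(t)u$ is precisely $\partial_s - \bm{L}_A$ acting in the $s$-variable, where $\bm{L}_A$ is the asymptotic operator from Subsection~\ref{ASYMOperator}: indeed $-\bm{L}_A v = iv' + Av$ for $v \in H^1(S^1, \mathbb{C}^n)$, so the equation to solve becomes
$$\partial_s u - \bm{L}_A u = f.$$
Since $\bm{L}_A$ is self-adjoint with compact resolvent, $L^2(S^1, \mathbb{C}^n)$ admits a real-orthonormal basis of eigenfunctions $\{\phi_k\}_{k \in \mathbb{Z}}$ with real eigenvalues $\lambda_k$, and $|\lambda_k| \to \infty$. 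The hypothesis (interpreted as $\sigma(\bm{L}_A) \cap [-|\delta|, |\delta|] = \emptyset$) guarantees some $\eta > 0$ with $|\lambda_k| \geq |\delta| + \eta$ for all $k$. Expanding $u(s,t) = \sum_k c_k(s)\phi_k(t)$ and $f(s,t) = \sum_k f_k(s)\phi_k(t)$ decouples the problem into the family of scalar ODEs $c_k'(s) - \lambda_k c_k(s) = f_k(s)$ on $\mathbb{R}$.

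Next I would show that for any real $\lambda$ with $|\lambda| \geq |\delta| + \eta$, the operator $\partial_s - \lambda \colon H^{1,\delta}(\mathbb{R}) \to L^{2,\delta}(\mathbb{R})$ is a topological isomorphism with inverse norm bounded by $C/\eta$, uniformly in $\lambda$. The inverse is given explicitly by
$$(G_\lambda f)(s) = \begin{cases} -\int_s^{\infty} e^{\lambda(s-\tau)}f(\tau)\,d\tau & \text{if } \lambda > 0, \\ \phantom{-}\int_{-\infty}^s e^{\lambda(s-\tau)}f(\tau)\,d\tau & \text{if } \lambda < 0,\end{cases}$$
and the weighted $L^2$ estimate follows from a Young-type inequality applied to the exponentially decaying kernel against the weight $e^{\delta|s|}$, using crucially that $|\lambda| - |\delta| \geq \eta$. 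Assembling the scalar inverses via Parseval in the eigenbasis and the uniform bound yields a bounded linear inverse on $H^{1,\delta}(\mathbb{R}\times S^1, \mathbb{C}^n)$, settling the theorem for $m = 1$. The case $m \geq 2$ is then handled by elliptic bootstrapping: the operator $\partial_s + i\partial_t + A(t)$ is an elliptic first-order system with coefficients smooth in $t$ on the compact $S^1$, so differentiating the equation in $s$ and $t$ upgrades the regularity of $u$, and the exponential weight $e^{\delta|s|}$ (a function of $s$ alone) commutes with these derivatives up to lower-order terms already controlled by the $m=1$ estimate.

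The main obstacle is the weighted scalar-ODE analysis, specifically the uniform inverse-norm bound and the verification that the explicit Green's-function formula lands in $H^{1,\delta}$ despite the non-smoothness of the weight $e^{\delta|s|}$ at $s = 0$ (best handled by replacing the weight with a smoothed version, equivalent up to equivalent norms). One further subtlety is that $t \mapsto A(t)$ is only \emph{real}-linear, so the eigenbasis $\{\phi_k\}$ should be interpreted as a real-orthonormal basis of $L^2(S^1, \mathbb{R}^{2n})$; this does not affect the argument but must be kept in mind so that the signs of $\lambda_k$ and the choice of integration direction in $G_\lambda$ are consistent when one is careful about the real structure underlying $\bm{L}_A$.
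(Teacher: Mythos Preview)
The paper does not actually prove this theorem: the statement is followed by \qed with no argument, and the surrounding text simply defers to the reference [RS2] (Robbin--Salamon). So there is no ``paper's own proof'' to compare against; your proposal is filling a gap the authors deliberately left to the literature.

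That said, your outline is the standard and correct one. The identification $u_s + iu_t + A(t)u = \partial_s u - \bm{L}_A u$ is right, the eigenfunction expansion decouples the problem into scalar ODEs, and the explicit Green's functions $G_\lambda$ with the uniform bound coming from $|\lambda_k| - |\delta| \geq \eta > 0$ give the inverse on the base level. Two minor points worth tightening: first, your interpretation of the hypothesis is the only sensible one (the phrase ``not contained in'' is a typo for ``does not intersect,'' since a discrete spectrum can never contain an interval). Second, for the bootstrap to $m \geq 2$, note that differentiating in $t$ introduces the commutator $[\partial_t, A(t)] = A'(t)$, which is a bounded zeroth-order term and hence absorbable; this is routine but should be stated. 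Your remark about smoothing the weight $e^{\delta|s|}$ near $s=0$ is also the correct way to handle the mild non-smoothness, and the equivalence of norms makes this harmless.
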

For more results along these lines and particularly the relationship between the spectral flow and the Maslov/ Conley-Zehnder index, see  \cite{RS2}.

\jwf{
\subsection{Classical Case Estimates}
Consider a smooth almost complex structure $J$  on ${\mathbb C}^n$ such that $J(0)\equiv i$, i.e. complex multiplication by $i$. 
Our aim is derive some estimate for the map $(v,u)\rightarrow Tu +J(u)\circ Tu\circ j(v)$, where $v\rightarrow j(v)$, $j(0)=i$,  is a finite-dimensional deformation 
of $i$ and $u(0)=0$.  We are only interested in estimates for $u$  restricted to a small neighborhood of $0$ and $v$ near $0$. Global studies can be be brought by partitions
of unity arguments into this special case. We write the expression as 
\begin{eqnarray*}
&&Tu +J(u)\circ Tu\circ j(v)\\
&=& Tu + J(0) \circ Tu\circ i  \\
&& + J(0)\circ Tu\circ (j(v)-i)  +(J(u)-J(0))\circ Tu\circ j(v) \\
&=& Tu + i \circ Tu\circ i  \\
&& + J(0)\circ Tu\circ (j(v)-i)  +(J(u)-J(0))\circ Tu\circ j(v) \\
\end{eqnarray*}
We assume next that $u\in H^m(D,{\mathbb C}^N)$

\subsection{Nodal Case Estimates}
Our Cauchy Riemann operator is an sc-smooth section

\subsection{Periodic Orbit Case Estimates}

\subsection{Reflexive Compactness Estimates}
}

\section{A Calculus Lemma}\label{CALCAPP}
The result discussed here is  a generalization for Lemma 4.4. in \cite{HWZ8.7}.
Denote by $\varphi:(0,1]\rightarrow [0,\infty)$ the exponential gluing profile defined by 
$$
\varphi(r) =e^{\frac{1}{r}}-e
$$
We start by defining a map. For this let $T>0$ be a positive number and define 
$$
\mathsf{B_T}:[0,1]\rightarrow [0,1]
$$
by setting  $\mathsf{B}(0)=0$ and for $x>0$ we  put
$$
\mathsf{B}_T(x)=\varphi^{-1}(T\cdot\varphi(x)).
$$
We note that $\mathsf{B}(1)= \varphi^{-1}(0)=1$. Clearly $\mathsf{B}_T$ is continuous on $[0,1]$
and restricted to $(0,1)$ it is a smooth diffeomorphism $(0,1)\rightarrow (0,1)$.
Define $y=\varphi(x)$. Then 
$$
\varphi^{-1}(y)=\frac{1}{\ln(y+e)}
$$
Using this identity we compute if $x\in (0,1)$
\begin{eqnarray*}
& \varphi^{-1}\left(T \cdot \varphi(x)\right)=\frac{1}{\ln \left( e +T\cdot \left( e^{\frac{1}{x}}-e\right)\right)}
=: \frac{1}{\ln \left( T\cdot e^{\frac{1}{x}} + D \right)}, &
\end{eqnarray*}
where $D=e-Te$. 
Continuing we find 
\begin{eqnarray*}
 \varphi^{-1}\left(T \cdot \varphi(x)\right)
&=& x\cdot \frac{1}{1+g(x)},
\end{eqnarray*}
where
$
g(x)= x\cdot \ln(T + D\cdot e^{-\frac{1}{x}}).
$
Hence,
\begin{eqnarray*}
\mathsf{B}_T(x)= \frac{x}{1+g(x)}.
\end{eqnarray*}
We note that $\lim_{x\rightarrow g(x)}=0$ from which we deduce that 
$$
\lim_{x\rightarrow 0}\frac{\mathsf{B}(x)}{x}=1.
$$
Hence the derivative $\mathsf{B}_T'(0)$ exists.
Also, for $x\in (0,1)$ we see that 
$$
\mathsf{B}'(x)= \frac{1}{1+g(x)} -\frac{x}{(1+g(x))^2}\cdot g'(x).
$$
Since $g'(x)=\ln\left( T+D\cdot e^{-\frac{1}{x}}\right)+\frac{x}{T+D\cdot e^{-\frac{1}{x}}}\cdot e^{-\frac{1}{x}}\cdot\frac{1}{x^2}$ we see that $\mathsf{B}'_T(x)\rightarrow 1$ as $x\rightarrow 0$. Consequently
$\mathsf{B}_T:[0,1]\rightarrow[0,1]$ is a $C^1$-diffeomorphism which we know is smooth on $(0,1]$.
In order to establish the smoothness of $\mathsf{B}$ near $0$, using that $g(x)$ for $x$ near $0$ is close to $0$ and
$$
\mathsf{B}_T(x)=\frac{1}{1+g(x)}\cdot x
$$
 it suffices  to prove the smoothness of $g$ near $0$.  We define $g(0)=0$ and compute 
 for $x>0$ near $0$
\begin{eqnarray*}
 g'(x)&=&\ln(T+ D\cdot e^{-\frac{1}{x}}) +\frac{x}{T+D\cdot e^{-\frac{1}{x}}} \cdot D\cdot e^{-\frac{1}{x}}\cdot \frac{1}{x^2}\\
 &=& \ln(T+ D\cdot e^{-\frac{1}{x}}) +\frac{D}{T+D\cdot e^{-\frac{1}{x}}} \cdot e^{-\frac{1}{x}}\cdot \frac{1}{x}\\
 &=:& \ln(T+ D\cdot e^{-\frac{1}{x}}) +h(x) \cdot e^{-\frac{1}{x}}\cdot \frac{1}{x}.
\end{eqnarray*}
 As $x\rightarrow 0$ we see that $g'(x)\rightarrow \ln(T)$. We also see that
 $$
 \lim_{x\rightarrow 0} \frac{g(x)}{x}=\ln(T).
 $$
 This shows that $g$ is $C^1$ near $x$. It is a trivial exercise that for every
 order of derivative we have 
 that 
 $$
 \lim_{x\rightarrow 0} \frac{d^n}{dx^n}\left( h(x)\cdot \frac{1}{x}\cdot e^{-\frac{1}{x}}\right)=0.
 $$
 Hence
 $$
 \lim_{x\rightarrow 0} g''(x) = \lim_{x\rightarrow 0} \frac{D}{T+D\cdot e^{-\frac{1}{x}}}\cdot e^{-\frac{1}{x}}\cdot \frac{1}{x^2}=\lim_{x\rightarrow } h(x)\cdot e^{-\frac{1}{x}}\cdot \frac{1}{x^2} =0
 $$
 Since it is easily established that $lim_{x\rightarrow 0} \frac{d^n}{dx^n} h(x)\cdot e^{-\frac{1}{x}}\cdot \frac{1}{x^2}=0$ we deduce from the previous discussion that
 $$
 \lim_{x\rightarrow 0} \frac{d^n}{dx^n} g(x)=0\ \ \text{for all} \ n\geq 2.
 $$
Hence we have proved the following
\begin{lem}
The map $g$ defined near $0$ by $g(0)=0$ and for $x>0$ by 
$$
x\cdot \ln(T + D\cdot e^{-\frac{1}{x}})
$$
is smooth. It has the properties $g(0)=0$, $g'(0)=\ln(T)$ and $g^{(n)}(0)=0$ for $n\geq 2$.
\qed
\end{lem}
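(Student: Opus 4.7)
My plan is to reduce the statement to the standard fact that the function
\begin{equation*}
\phi(x) = \begin{cases} e^{-1/x}, & x>0,\\ 0, & x=0, \end{cases}
\end{equation*}
is $C^\infty$ on $[0,\infty)$ and flat at $0$, i.e.\ $\phi^{(n)}(0)=0$ for every $n\ge 0$. This is classical: by induction one shows $\phi^{(n)}(x)=P_n(1/x)\,e^{-1/x}$ on $(0,\infty)$ for some polynomial $P_n$, and then $x^{-k}e^{-1/x}\to 0$ as $x\to 0^+$ for every $k$ gives the vanishing of all one-sided derivatives at $0$.

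First I would rewrite $g$ algebraically. Since $T>0$ we factor:
\begin{equation*}
g(x) = x\ln\!\bigl(T+D\,e^{-1/x}\bigr) = x\ln(T) + x\,\psi(x),\qquad \psi(x):=\ln\!\Bigl(1+\tfrac{D}{T}\,e^{-1/x}\Bigr),
\end{equation*}
with the convention $\psi(0)=0$. For $x>0$ small, $(D/T)\phi(x)$ lies in a small neighborhood of $0$, so $\psi=F\circ\bigl((D/T)\phi\bigr)$ with $F(u)=\ln(1+u)$ real-analytic near $u=0$. Hence on $(0,\infty)$ the function $\psi$ is a composition of smooth functions and is therefore smooth.

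The next step is to show $\psi\in C^\infty([0,\infty))$ and $\psi^{(n)}(0)=0$ for all $n\ge 0$. Using Faà di Bruno's formula (or a direct induction), any derivative $\psi^{(n)}(x)$ for $x>0$ is a finite sum of terms of the form $c\cdot F^{(k)}\!\bigl((D/T)\phi(x)\bigr)\cdot \prod_j\phi^{(n_j)}(x)$ with $\sum n_j = n$ and each $n_j\ge 1$. Because $\phi$ is flat at $0$, every factor $\phi^{(n_j)}(x)\to 0$ as $x\to 0^+$, while $F^{(k)}\bigl((D/T)\phi(x)\bigr)$ stays bounded. A standard application of the mean value theorem then extends $\psi^{(n)}$ continuously to $0$ by $0$, and shows $\psi^{(n)}(0)=0$ for every $n\ge 0$.

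Finally, from $g(x)=x\ln(T)+x\,\psi(x)$ and $\psi\in C^\infty([0,\infty))$ it follows that $g\in C^\infty([0,\infty))$. Leibniz gives
\begin{equation*}
g^{(n)}(x) = n\,\psi^{(n-1)}(x) + x\,\psi^{(n)}(x) \quad (n\ge 1),
\end{equation*}
together with $g'(x)=\ln(T)+\psi(x)+x\psi'(x)$. Evaluating at $x=0$ and using $\psi^{(k)}(0)=0$ for all $k\ge 0$ yields $g(0)=0$, $g'(0)=\ln(T)$, and $g^{(n)}(0)=0$ for $n\ge 2$. The main (and essentially only) technical point is the uniform bookkeeping of the flatness of $\phi$ through Faà di Bruno; everything else is bookkeeping.
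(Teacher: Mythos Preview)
Your proof is correct. The factorisation $g(x)=x\ln(T)+x\,\psi(x)$ with $\psi=\ln\!\bigl(1+(D/T)\phi\bigr)$, together with the flatness of $\phi(x)=e^{-1/x}$ propagated through Fa\`a di Bruno, is a clean way to organise the argument, and the inductive mean-value step to transfer $\psi^{(n)}(x)\to 0$ into $\psi^{(n)}(0)=0$ is exactly what is needed. One cosmetic point: your displayed Leibniz identity $g^{(n)}=n\psi^{(n-1)}+x\psi^{(n)}$ is really the formula for $(x\psi)^{(n)}$; for $n=1$ the $\ln(T)$ term from $x\ln(T)$ is missing, as you yourself note in the next clause. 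It would be cleaner to restrict that display to $n\ge 2$.

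The paper takes a more hands-on route: it differentiates $g$ directly, splits $g'(x)=\ln(T+De^{-1/x})+h(x)\,e^{-1/x}/x$ with $h(x)=D/(T+De^{-1/x})$, and then asserts (``a trivial exercise'') that all derivatives of $h(x)\,x^{-k}e^{-1/x}$ vanish at $0$. Your approach and the paper's rest on the same underlying fact---the flatness of $e^{-1/x}$---but your decomposition isolates the linear part $x\ln(T)$ from the outset and packages the remainder as a smooth composition with $\phi$, which makes the higher-order vanishing transparent rather than something to be checked term by term. The paper's version is quicker to write down for $g'$ and $g''$ but less systematic for general $n$; yours scales uniformly.
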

Using $\mathsf{B}_T(x)= \frac{x}{1+g(x)}$ it follows that $\mathsf{B}_T:[0,1]\rightarrow [0,1]$
is a smooth diffeomorphism with $\mathsf{B}(0)=0$ and $\mathsf{B}'(0)=1$. Hence we established
the result which we need.
\begin{prop}
The map $\mathsf{B}_T:[0,1)\rightarrow [0,1)$ defined by $\mathsf{B}_T(0)=0$ and 
$\mathsf{B}_T(x)=\varphi^{-1}(T\cdot \varphi(x))$ for $x\in (0,1)$ is a smooth diffeomorphism
with $\mathsf{B}'(0)=1$.
\qed
\end{prop}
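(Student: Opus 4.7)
The plan is to read off the proposition directly from the preceding discussion, where the key work has already been done. The central observation established before the statement is the identity
\[
\mathsf{B}_T(x) \;=\; \varphi^{-1}(T\cdot \varphi(x)) \;=\; \frac{x}{1+g(x)},\qquad x\in (0,1),
\]
together with the fact that $g$, extended by $g(0)=0$, is smooth on $[0,1)$ with $g'(0)=\ln(T)$ and $g^{(n)}(0)=0$ for $n\geq 2$ (this is exactly the content of the lemma immediately preceding the proposition). So essentially all the analytic content needed has been squared away.

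First I would use the lemma to conclude that $1+g(x)$ is smooth on $[0,1)$ with $1+g(0)=1$, hence nonvanishing on a neighborhood of $0$ in $[0,1)$; shrinking if necessary we may assume $1+g(x)>0$ on all of $[0,1)$ (since for $x>0$ one has $1+g(x) = \tfrac{x}{\mathsf{B}_T(x)}>0$ directly from the defining formula). Consequently $\mathsf{B}_T(x)=x/(1+g(x))$ is smooth on $[0,1)$, extending smoothly through the apparent singularity at $x=0$. Smoothness on $(0,1)$ is of course also clear from the original definition as a composition of smooth maps ($\varphi$ on $(0,1]$, multiplication by $T$, and $\varphi^{-1}$ on $[0,\infty)$), so we have smoothness globally on $[0,1)$.

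Next I would compute $\mathsf{B}'_T(0)$ using the quotient rule on $x/(1+g(x))$:
\[
\mathsf{B}'_T(x) \;=\; \frac{(1+g(x)) - x\,g'(x)}{(1+g(x))^2},
\]
whose value at $x=0$ is $(1-0)/1 = 1$, using $g(0)=0$. This gives $\mathsf{B}'_T(0)=1$ regardless of $T$.

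Finally, for the diffeomorphism assertion, I would combine two facts. On $(0,1)$, $\mathsf{B}_T$ is a composition of smooth diffeomorphisms $(0,1)\to (0,\infty)\to (0,\infty)\to (0,1)$ (namely $\varphi$, multiplication by $T$, and $\varphi^{-1}$) and is therefore a smooth diffeomorphism of $(0,1)$ onto itself. Near the endpoint $0$, $\mathsf{B}_T$ is smooth with $\mathsf{B}_T(0)=0$ and $\mathsf{B}'_T(0)=1\neq 0$, so the inverse function theorem (at a boundary point of $[0,1)$) gives a smooth local inverse there. These two pieces glue together to give a smooth diffeomorphism $\mathsf{B}_T:[0,1)\to[0,1)$. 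There is no real obstacle here: all the delicate estimates controlling the essential singularities $e^{-1/x}$ at $x=0$ were absorbed into the preceding lemma about $g$, and the proposition is now just a routine consequence of that lemma together with the quotient rule and the inverse function theorem.
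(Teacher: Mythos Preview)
Your proof is correct and follows exactly the paper's approach: the proposition is stated with a \qed and is read off directly from the representation $\mathsf{B}_T(x)=x/(1+g(x))$ together with the preceding lemma on the smoothness of $g$. You have supplied a few more details (the quotient rule computation and the inverse function theorem at the boundary) than the paper spells out, but the argument is the same.
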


Define the open subset  $\Omega\subset [0,1)\times {\mathbb R}$ to consist
of all $(r,c)$ with either $r=0$ or in the case $r\in (0,1)$ with $\varphi(r)+c>0$. Define 
$$
B:\Omega\rightarrow [0,1)
$$
by
$$
B(r,c)=\left[\begin{array}{cc}
0& r=0\\
\varphi^{-1}(\varphi(r)+c)& r\in (0,1)
\end{array}
\right.
$$
The following result was established in \cite{HWZ8.7}, Lemma 4.4.
\begin{prop}
The map $B:\Omega\rightarrow [0,1)$ is smooth and  satisfies $B(0,c)=0$, $\partial_rB(0,c)=1$,
$\partial_cB(0,c)=0$
$\partial_r^2B(0,c)=0$ and for $n\geq 2$ and $m\geq 0$ 
it holds $\partial_r^n\partial^m_cB(0,c)=0$.
\qed
\end{prop}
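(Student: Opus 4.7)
The plan is to reduce everything to the well-known flatness of the function $r \mapsto e^{-1/r}$ at the origin by writing $B$ in a convenient form. First, using the explicit inverse $\varphi^{-1}(y) = 1/\ln(y+e)$, for $r \in (0,1)$ one has
\[
B(r,c) \;=\; \frac{1}{\ln(e^{1/r} + c)} \;=\; \frac{r}{1 + r\,\ln\!\bigl(1 + c\,e^{-1/r}\bigr)}.
\]
This motivates introducing the auxiliary function $h(r,c) = r\,\ln\!\bigl(1 + c\,e^{-1/r}\bigr)$ for $r>0$, extended by $h(0,c)=0$, so that $B(r,c) = r/(1+h(r,c))$ on $(0,1)\times\mathbb{R}$. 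Since $\varphi$ and $\varphi^{-1}$ are smooth diffeomorphisms on their respective positive domains, smoothness of $B$ on $\Omega \cap ((0,1)\times \mathbb{R})$ is immediate; the real content is extending $B$ smoothly across $r=0$ and computing the derivatives there.

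The next step is to show that $h$ is smooth on a neighborhood of $\{0\}\times\mathbb{R}$ in $[0,1)\times\mathbb{R}$ and that all partial derivatives $\partial_r^n\partial_c^m h$ vanish at $r=0$. This rests on the classical fact that $\psi(r):=e^{-1/r}$ ($r>0$), extended by $\psi(0)=0$, is smooth on $[0,\infty)$ with $\psi^{(k)}(0)=0$ for every $k\geq 0$. Near any point $(0,c_0)$ the product $c\psi(r)$ is therefore smooth and arbitrarily small, so $F(r,c):=\ln(1+c\psi(r))$ is smooth and, expanding as $c\psi(r)-\tfrac12(c\psi(r))^2+\cdots$ and using that every derivative of $\psi$ vanishes at $0$, one checks by induction on $n+m$ that $\partial_r^n\partial_c^m F(0,c)=0$. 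Writing $h(r,c)=r\,F(r,c)$ and applying Leibniz gives $\partial_r^n\partial_c^m h(0,c)=n\,\partial_r^{n-1}\partial_c^m F(0,c)=0$, so $h$ is flat at $r=0$.

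Since $h(0,c)=0$, the function $1/(1+h)$ is smooth in a neighborhood of $\{0\}\times\mathbb{R}$, and writing $1/(1+h)=1+g$ with $g = -h+h^2-\cdots$ the chain rule (or a direct induction on the order of the derivative) shows that $g$ inherits flatness at $r=0$, i.e.\ $\partial_r^n\partial_c^m g(0,c)=0$ for all $n,m\geq 0$. Thus $B(r,c) = r + r\,g(r,c)$, and Leibniz once more yields
\[
\partial_r^n\partial_c^m B(0,c) \;=\; \partial_r^n\partial_c^m r\big|_{r=0} \;+\; n\,\partial_r^{n-1}\partial_c^m g(0,c),
\]
which immediately gives $B(0,c)=0$, $\partial_r B(0,c)=1$, $\partial_c B(0,c)=0$, and $\partial_r^n\partial_c^m B(0,c)=0$ for all remaining $(n,m)$ with $n\geq 2$ or $(n,m)=(0,m)$ with $m\geq 1$. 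In particular smoothness across $r=0$ follows from having a smooth extension whose Taylor data agree.

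The main obstacle is the bookkeeping in the second step: one must argue cleanly that \emph{every} mixed partial of $F(r,c)=\ln(1+c\psi(r))$ vanishes at $r=0$, not just the pure $r$-derivatives. The cleanest way is an induction showing that every $\partial_r^n\partial_c^m F$ is a finite sum of terms each containing at least one factor $\psi^{(k)}(r)$ (with $k\geq 0$) or a positive power of $\psi(r)$ divided by a power of $1+c\psi(r)$, all of which vanish at $r=0$; the flatness of $g$ then follows by the same kind of argument applied to the composition $(1+h)^{-1}-1$.
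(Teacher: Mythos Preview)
Your argument is correct. The paper does not actually prove this proposition in the text; it simply cites Lemma~4.4 of \cite{HWZ8.7} and places a \qed. Your method---writing $B(r,c)=r/(1+h(r,c))$ with $h(r,c)=r\ln(1+c\,e^{-1/r})$ and deducing smoothness and flatness from the flatness of $e^{-1/r}$ at $r=0$---is exactly parallel to what the paper does in the same appendix for the one-variable map $\mathsf{B}_T(x)=\varphi^{-1}(T\varphi(x))$, where it writes $\mathsf{B}_T(x)=x/(1+g(x))$ with $g(x)=x\ln(T+De^{-1/x})$ and analyzes $g$ directly. So your approach is the natural two-variable version of the argument already present for $\mathsf{B}_T$, and in fact yields slightly more than the proposition states (you also get $\partial_r\partial_c^m B(0,c)=0$ for $m\geq 1$).
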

Finally we are ready to state the result we are interested in. For $T>0$ define $\Omega_T\subset [0,1)\times {\mathbb R}$ to consist of all $(x,c)$ with either $x=0$ or if $x\in (0,1)$ with $T\cdot \varphi(x)+c>0$.
Then define $\mathsf{C}_T: \Omega_T\rightarrow [0,1)$ by
$$
\mathsf{C}_T(x,c)=\left[\begin{array}{cc}
0& x=0\\
\varphi^{-1}(T\cdot \varphi(x)+c)& x\in (0,1)
\end{array}
\right.
$$
\begin{thm}\label{thm-h1}
The map $\mathsf{C}_T:\Omega_T\rightarrow [0,1)$ is smooth and 
$\partial_x\mathsf{C}_T(0,c)=1$ for all $c$.
\end{thm}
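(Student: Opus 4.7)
My plan is to recognize $\mathsf{C}_T$ as a composition of the two maps $\mathsf{B}_T$ and $B$ that were just shown to be smooth, and then apply the chain rule.

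The key identity is
\begin{equation*}
\mathsf{C}_T(x,c) \;=\; B(\mathsf{B}_T(x), c),
\end{equation*}
which holds because, for $x>0$,
\begin{equation*}
B(\mathsf{B}_T(x),c) \;=\; \varphi^{-1}\!\bigl(\varphi(\mathsf{B}_T(x))+c\bigr) \;=\; \varphi^{-1}\!\bigl(T\varphi(x)+c\bigr) \;=\; \mathsf{C}_T(x,c),
\end{equation*}
using $\varphi\circ\mathsf{B}_T(x)=T\varphi(x)$; for $x=0$ both sides are $0$.

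The first step is to observe that the auxiliary map $\Phi:(x,c)\mapsto (\mathsf{B}_T(x),c)$ is well-defined as a smooth map from a neighborhood of $\{0\}\times\mathbb{R}$ in $[0,1)\times\mathbb{R}$ to $[0,1)\times\mathbb{R}$, since the previous proposition shows $\mathsf{B}_T:[0,1)\to [0,1)$ is smooth with $\mathsf{B}_T(0)=0$. Next I check that $\Phi$ sends $\Omega_T$ into $\Omega$: for $(x,c)\in\Omega_T$ with $x>0$ we have $T\varphi(x)+c>0$, hence $\varphi(\mathsf{B}_T(x))+c=T\varphi(x)+c>0$, which is exactly the condition defining $\Omega$; and for $x=0$ we have $(\mathsf{B}_T(0),c)=(0,c)\in\Omega$ trivially. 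Thus $\mathsf{C}_T=B\circ\Phi$ is smooth on $\Omega_T$ as a composition of smooth maps.

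Finally, the chain rule at $(0,c)$ gives
\begin{equation*}
\partial_x \mathsf{C}_T(0,c) \;=\; \partial_r B(\mathsf{B}_T(0),c)\cdot \mathsf{B}_T'(0) \;=\; \partial_r B(0,c)\cdot 1 \;=\; 1,
\end{equation*}
where we have used the two previous propositions: $\mathsf{B}_T'(0)=1$ and $\partial_r B(0,c)=1$. Since both building blocks are already proved to be smooth, the argument contains essentially no new analytic content — the only thing to verify carefully is that $\Phi$ maps $\Omega_T$ into $\Omega$, which is the minor obstacle and follows directly from the defining inequalities of the two domains. No higher-order vanishing assertion is claimed in the statement, but if one wished it would also follow at once from the analogous vanishing in the two factors.
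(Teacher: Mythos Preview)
Your proof is correct and follows essentially the same approach as the paper: factor $\mathsf{C}_T$ through the two already-established smooth maps $\mathsf{B}_T$ and $B$ and apply the chain rule. The only difference is the order of composition: the paper writes $\mathsf{C}_T(x,c)=\mathsf{B}_T\bigl(B(x,c/T)\bigr)$, whereas you write $\mathsf{C}_T(x,c)=B\bigl(\mathsf{B}_T(x),c\bigr)$; both identities are valid and lead to the same conclusion.
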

\begin{proof}
The map is clearly smooth as long as $x\neq 0$. 
We compute for $(x,c)$ with $x$ close to $0$
\begin{eqnarray*}
\mathsf{C}_T(x,c)&=&\varphi^{-1}(T\cdot\varphi(x)+c)\\
&=&\varphi^{-1}\circ (T\cdot\varphi)\circ \varphi^{-1}(\varphi(x) +c/T)\\
&=& \mathsf{B}_T\circ B(x,c/T)
\end{eqnarray*}
This is a composition of smooth maps and the result follows from the previously established facts.
\end{proof}

\bibliographystyle{alpha}


\printindex
\end{document}